\documentclass[11pt,a4paper]{article}
\usepackage[T1]{fontenc}
\usepackage{lmodern}

\usepackage{iftex}
\RequireXeTeX
\newcommand{\siondedication}{%
  \begingroup
  \TeXXeTstate=1
  \font\hebrewdedication="[DejaVuSans.ttf]" at 10.5pt
  \hebrewdedication
  \beginR לזכרו של מוריס סיון ז״ל\endR
  \endgroup
}
\usepackage{microtype}
\usepackage{needspace}
\usepackage[margin=26mm,headheight=14pt,footskip=12mm]{geometry}
\usepackage{amsmath,amssymb,amsthm,mathtools,mathrsfs}
\numberwithin{equation}{section}
\usepackage{aliascnt}
\usepackage{hyperref}
\usepackage[nameinlink,noabbrev]{cleveref}
\hypersetup{hidelinks,pdftitle={Sion Extension in Net Convergence Groups and Pseudo-D-Lattices},pdfauthor={Alexandre Reggiolli Teixeira},pdfsubject={Net convergence, hereditary exhaustivity, bounded summation and measure extension}}
\allowdisplaybreaks[2]
\newtheorem{theorem}{Theorem}[section]
\newaliascnt{proposition}{theorem}
\newtheorem{proposition}[proposition]{Proposition}
\aliascntresetthe{proposition}
\newaliascnt{lemma}{theorem}
\newtheorem{lemma}[lemma]{Lemma}
\aliascntresetthe{lemma}
\newaliascnt{corollary}{theorem}
\newtheorem{corollary}[corollary]{Corollary}
\aliascntresetthe{corollary}
\theoremstyle{definition}
\newaliascnt{definition}{theorem}
\newtheorem{definition}[definition]{Definition}
\aliascntresetthe{definition}
\newtheorem*{definition*}{Definition}
\newaliascnt{example}{theorem}
\newtheorem{example}[example]{Example}
\aliascntresetthe{example}
\theoremstyle{remark}
\newaliascnt{remark}{theorem}
\newtheorem{remark}[remark]{Remark}
\aliascntresetthe{remark}
\newtheorem*{remark*}{Remark}
\crefname{theorem}{Theorem}{Theorems}
\crefname{proposition}{Proposition}{Propositions}
\crefname{lemma}{Lemma}{Lemmas}
\crefname{corollary}{Corollary}{Corollaries}
\crefname{definition}{Definition}{Definitions}
\crefname{example}{Example}{Examples}
\crefname{remark}{Remark}{Remarks}
\crefname{section}{Section}{Sections}
\newcommand{\N}{\mathbb N}

\newcommand{\HH}{\mathcal H}
\newcommand{\Hs}{\mathcal H_\sigma}
\newcommand{\KK}{\mathcal K}

\newcommand{\clo}{\operatorname{cl}}
\newcommand{\ev}{\mathsf E}

\newcommand{\alg}{\operatorname{alg}}
\newcommand{\qsub}{\preccurlyeq}
\newcommand{\tto}[1]{\xrightarrow{#1}}
\newcommand{\onull}{\xrightarrow{o}}

\newcommand{\symdiff}{\mathbin{\triangle}}

\title{Measure Theory in Convergence Spaces \\ \Large Part I: Sion-Carathéodory Extension in Net Convergence Groups}
\author{Alexandre Reggiolli Teixeira\\[3pt]
\small Departamento de Estat\'istica, Universidade Estadual de Campinas\\
\small Rua S\'ergio Buarque de Holanda, 651, Campinas--SP, Brazil\\
\small \texttt{riccimath123@gmail.com}, \texttt{a163407@dac.unicamp.br}}
\date{\siondedication}
\begin{document}
\maketitle
\begin{abstract}
We study extension of additive measures when the range carries a prescribed separated Abelian net convergence. On a $\sigma$-complete, lattice-$\sigma$-continuous pseudo-$D$-lattice, we prove an uncontrolled modular extension theorem using finite-response reflection, closed countable summation and uniform closure. Completeness is imposed only on the limit-stable hull of the source values. No topology, lattice order or positive control is required in its range. Modular interval identities, an order-null separation argument and two uniform transfer arguments construct the extension before establishing its normality. A separate relative lifting theorem combines this modular construction with a nonmodular Sasaki argument. Its cone conditions are verified for positive lattice-group values, lattice-normed groups, products and limit-stable subgroups. These verifications give explicit positive, dominated lattice-normed and topological corollaries. Uniqueness holds among all $\sigma$-additive extensions to the generated domain. Sion's inner and outer nets exist in the original convergence and recover the extension there. For Boolean domains we retain the stronger identification with the full regulated splitting class and the finite-variation and finite-semivariation results.
\end{abstract}
\clearpage
\tableofcontents
\clearpage

\section{Introduction}\label{sec:intro}
Our aim with this paper, the first part of a trilogy\footnote{The other two works concern convergence theorems for set functions and abstract integration.} on measure theory in convergence spaces, is to understand how general we can be in extending a given set function. The classical Carath\'eodory theorem~\cite[Chapters~II--III]{Halmos} gives this passage a familiar form: countable coverings produce an outer measure, and a splitting identity reveals the domain on which that outer measure is additive. At the heart of the argument lies a question that continues to accompany measure theory to this day, and in various levels of generality: how much of the measure on the enlarged domain is already carried by its original values, and what must convergence preserve for those values to reach their new domain?

Sion's classical and decisive construction~\cite{Sion} gives this question an answer in the form of two successive limits. Inner approximation gathers the values available from the source, and outer approximation carries them towards the events to be measured. We follow this process with convergence prescribed directly on an Abelian group, allowing the events themselves to form a pseudo-$D$-lattice. The passage brings the algebra of approximation into the foreground. Residuals record what remains between comparable events, and the possibility of extending a measure depends on how its values behave through these operations. Our contribution is to make this passage within the original net convergence, and to place the modular construction at the foundation of a common lifting theorem that also reaches positive measures without a modularity assumption. Thus, giving what can be considered, as far as we know, the most general extension of premeasures to date. We now comment on some details.

The uncontrolled extension theorem, Theorem~\ref{mod:main}, constructs a modular measure on the ambiently generated domain $\sigma_L(A)$ from its values on a $D$-sublattice $A$ of $L$. The group operations and the prescribed convergence provide the entire range structure. Completeness is required only on the least limit-stable hull of the source values. The central work is to carry the small errors supplied by hereditary exhaustivity through the residual calculus and into countable approximation. Modular interval identities control these errors through the algebraic operations. The convergence principles preserve the estimates as the approximations accumulate, until each event in the generated domain acquires a value independent of its representation. Countable additivity and ambient normality emerge from this construction. The extension is unique among all additive $\sigma$-additive extensions, and Sion's original inner and outer nets recover it on $\sigma_L(A)$.

The modular construction also opens a passage to measures whose values carry positivity. In Theorem~\ref{rel:lifting}, a continuous homomorphism connects such a measure to a modular image, and a limit-stable additive cone retains the information needed to lift the extension. A Sasaki recursion supplies hereditary estimates whose persistence at countable limit stages is a decisive issue. There, a contextual approximation argument carries normality across the preceding domains. The relative range conditions make the accumulated error estimates separate points in the kernel, so that the lifted values remain determined by the original measure. This is the technical connection through which the modular and nonmodular extension arguments become instances of one theorem.

The older extension theories give this connection its natural tests. The work of Avallone, De Simone and Vitolo~\cite{Avallone06,Avallone16} and Weber's general theorem~\cite[Corollary~6.11]{Weber18} provide the topological pseudo-$D$-lattice setting. Their extension conclusions return here through a direct verification of the intrinsic convergence conditions. In the ordered setting, the role of weak $\sigma$-distributivity has a long history in Wright's characterization~\cite{Wright} and Fremlin's integral argument~\cite{Fremlin75}. Together with super Dedekind completeness, it verifies the relative range conditions and yields our positive pseudo-$D$-lattice extension theorem. This theorem addresses the extension problem studied by Barbieri~\cite{Barbieri09,Barbieri14} on commutative $D$-lattices and carries it to nonmodular positive measures on pseudo-$D$-lattices. For a lattice-normed measure, a given positive source majorant accompanies the values as an additional coordinate. Under these lattice hypotheses and sequential bo-completeness, the graph construction verifies the lifting conditions, and the extended domination is a consequence of the extended measure.

The Boolean part of the paper retains a further freedom in the range. Its regulated extension uses a weaker reflection principle, admitting the complete non-topological example constructed below. Here the source approximations determine a full regulated $\sigma$-algebra, and the upper and lower approximation nets converge throughout that domain. Under the stronger hypotheses for the all-subset process, this domain is exactly Sion's splitting class. On a general pseudo-$D$-lattice, the two Sion processes exist on the ambient lattice and identify the extension on $\sigma_L(A)$. The subtleties of the measurable domain already appear in the study of Kalmbach measurability by d'Andrea, de Lucia and Maitland Wright~\cite{DAndreaDeLuciaWright94}.

Boolean approximation also allows the source bounds to accompany the extension in a more exact form. Volauf's positive extension results~\cite{Volauf77,Volauf80} return through a representation constructed from the original charge. In the lattice-normed theory, the finite-variation theorem of Kusraev and Malyugin~\cite{KM} leads naturally to the question of how much of the argument can be sustained by semivariation. We answer this question by representing the source values together with the positive variations of their dominated coordinates. The resulting additive lift preserves the exact semivariation on its full regulator domain, including examples with infinite vector variation. Weak $\sigma$-distributivity is needed only in the band generated by the source bound. The same construction determines the least isotone submajorant of the simple integral and proves its sequential order continuity. Extension thereby carries its own bounds into the first stage of integration.

The regularity of the extension records how the source fragments continue to approximate its values in the original convergence. This continuity between the source and the enlarged domain is the thread we follow through the paper. It also leads towards the convergence and integration questions of the subsequent parts of this work.

\section{Net convergence and the range principles}
\label{sec:nets}\label{sec:carriers}\label{gen:axioms}
We work in ZFC and use the net conventions of O'Brien, Troitsky
and van der Walt~\cite[Sections~2--3]{OTW}. The distinction between
adherence and closure is taken in the sense of Bilokopytov,
Conradie, Troitsky and van der Walt~\cite[Section~2]{BCTW}, whose
locally solid framework also encompasses topological and
non-topological order convergence.

\begin{definition}[Nets and quasi-subnets]
A \emph{directed domain} is a nonempty preordered set in which
every two elements have an upper bound. A \emph{net} in a set $X$
is a map $u:\Lambda\to X$ from a directed domain. Its value tail
at $\alpha\in\Lambda$ is $T_u(\alpha)=\{u_\beta:\beta\geq\alpha\}$.
For a net $v:\Gamma\to X$, the relation
\begin{equation}\label{eq:qsub}
 v\qsub u\quad\Longleftrightarrow\quad
 \forall\alpha\in\Lambda\ \exists\gamma\in\Gamma:\
 T_v(\gamma)\subseteq T_u(\alpha)
\end{equation}
means that $v$ is a \emph{quasi-subnet} of $u$. The nets are
\emph{tail equivalent}, written $u\sim v$, if $u\qsub v$ and
$v\qsub u$.
\end{definition}

\begin{definition}[Net convergence, separation and continuity]
A \emph{net convergence} $\theta$ on $X$ assigns limits to nets
in $X$, written $u\tto{\theta}x$, subject to the following axioms.
Every constant net with value $x$ converges to $x$. If
$u\tto{\theta}x$ and $v\qsub u$, then $v\tto{\theta}x$.
Finally, if $u:\Lambda\to X$ and $v:\Gamma\to X$ both converge
to $x$, their \emph{mixed net}
\[
 w_{(\alpha,\beta,i)}=
 \begin{cases}u_\alpha,&i=0,\\ v_\beta,&i=1,\end{cases}
 \qquad (\alpha,\beta,i)\in\Lambda\times\Gamma\times\{0,1\},
\]
converges to $x$. Here the domain is ordered by
$(\alpha,\beta,i)\leq(\alpha',\beta',i')$ if
$\alpha\leq\alpha'$ and $\beta\leq\beta'$, with no condition on
$i,i'$. This last axiom is called \emph{finite mixing}.
The pair $(X,\theta)$ is a \emph{net convergence space}.
It is \emph{separated}, or \emph{Hausdorff}, if every net has at
most one limit. It is $T_1$ if a constant net with value $x$ can
converge only to $x$. A map $f:(X,\theta)\to(Y,\eta)$ is
\emph{continuous} if $u\tto{\theta}x$ implies
$f\circ u\tto{\eta}f(x)$ for every net $u$ and every $x\in X$.
\end{definition}

The tails of the mixed net are $T_u(\alpha)\cup T_v(\beta)$.
Finite mixing is equivalent to the pointwise-selection axiom
in~\cite[Section~3]{OTW}. Indeed, for nets $u,v$ on a common
domain $\Lambda$, every net $z$ with
$z_\alpha\in\{u_\alpha,v_\alpha\}$ is a quasi-subnet of their
mixed net. For a mixed-net threshold $(\alpha_0,\beta_0,i_0)$,
an upper bound $\gamma_0\ge\alpha_0,\beta_0$ satisfies
\[
 T_z(\gamma_0)\subseteq T_u(\alpha_0)\cup T_v(\beta_0).
\]
Conversely, on
$\Lambda\times\Gamma\times\{0,1\}$ the nets with values
$u_\alpha$ and $v_\beta$ are tail equivalent to $u$ and $v$,
respectively. Pointwise selection between these two nets gives
$w$, and therefore gives finite mixing.

These conventions require only a set of representatives. Indeed,
for $\mathscr T=\{T_u(\alpha):\alpha\in\Lambda\}$, the domain
$I=\{(A,a):A\in\mathscr T,\ a\in A\}$, ordered by reverse
inclusion of $A$, carries the net $e(A,a)=a$ with
$T_e(A,a)=\bigcup_{B\in\mathscr T,\ B\subseteq A}B=A$.
The possible $\mathscr T$ form a subset of $\mathcal P(\mathcal P(X))$.
For finitely many simultaneous nets the same construction applied
to their tuple preserves every pointwise relation and all
coordinate tail families. An increasing, respectively decreasing,
net in a partially ordered set also has a monotone representative: on $\mathscr T$ use
$\min A$, respectively $\max A$, in reverse inclusion order.
For $A=T_u(\alpha)$ and $\beta\geq\alpha$,
$T_u(\beta)\subseteq A$ and
$u_\beta=\min T_u(\beta)$, respectively
$u_\beta=\max T_u(\beta)$. Thus every element of $A$ occurs
in the representative tail at $A$, and no element outside $A$ occurs.

\begin{definition}[Net convergence groups and completeness]
An \emph{abelian net convergence group} is an abelian group $X$
with a net convergence $\theta$ such that, for nets
$u:\Lambda\to X$ and $v:\Gamma\to X$,
\[
 u\tto{\theta}x,\quad v\tto{\theta}y
 \quad\Longrightarrow\quad
 u\boxplus v\tto{\theta}x+y,\qquad
 -u\tto{\theta}-x,
\]
where $u\boxplus v=(u_\alpha+v_\beta)_{(\alpha,\beta)}$ is
indexed by the coordinatewise directed product
$\Lambda\times\Gamma$, and $(-u)_\alpha=-u_\alpha$.
For $r\geq1$, write
$\Sigma_ru=(u_{\alpha_1}+\cdots+u_{\alpha_r})_{(\alpha_1,\ldots,\alpha_r)}$
on $\Lambda^r$. For subsets of $X$, sums are Minkowski sums,
$A+B=\{a+b:a\in A,\ b\in B\}$, with empty sum $\{0\}$.
A net $u$ is \emph{Cauchy} if
$u\boxplus(-u)\tto{\theta}0$. A subset $C\subseteq X$ is
\emph{complete} if every Cauchy net with values in $C$ converges
to a point of $C$.
\end{definition}

The Cauchy condition is the group form of the independent-index
definition in~\cite[Section~3]{OTW}. For convergence groups,
separation is equivalent to the $T_1$ condition. Separation
implies $T_1$ because constant nets converge to their values.
Conversely, suppose $T_1$ holds and $u\to x,y$. Then
$u\boxplus(-u)\to x-y$. Every two tails of $u$ intersect,
so every tail of $u\boxplus(-u)$ contains $0$. The constant
zero net is consequently its quasi-subnet and converges to
$x-y$. The $T_1$ condition gives $x-y=0$.
In the convergence statements below, $X$ denotes a separated Abelian
net convergence group with convergence $\theta$.

\begin{definition}[Evaluation nets and uniform convergence]
For a family of nonempty sets $B_\alpha\subseteq X$ indexed by
a directed domain $\Lambda$, its \emph{evaluation net} is
$\ev_\alpha(B_\alpha)(\alpha,b)=b$ on
$\{(\alpha,b):\alpha\in\Lambda,\ b\in B_\alpha\}$, ordered by
$(\alpha,b)\leq(\beta,c)$ if $\alpha\leq\beta$. Write
$(B_\alpha)_\alpha\Rightarrow0$ if
$\ev_\alpha(B_\alpha)\tto{\theta}0$, and call the family
\emph{null}. For a nonempty set $J$, a net of functions
$f_\alpha:J\to X$ converges \emph{uniformly} to $f:J\to X$ if
\[
 (\{f_\alpha(j)-f(j):j\in J\})_\alpha\Rightarrow0.
\]
It is \emph{uniformly Cauchy} if
\[
 (\{f_\alpha(j)-f_\beta(j):j\in J\})_{(\alpha,\beta)}
 \Rightarrow0,
\]
where the pair $(\alpha,\beta)$ ranges over
$\Lambda\times\Lambda$ with its coordinatewise order.
\end{definition}

The evaluation tails satisfy
\begin{equation}\label{eq:evaltail}
 T_{\ev(B)}(\alpha,b)=\bigcup_{\beta\geq\alpha}B_\beta.
\end{equation}
For decreasing families this is $B_\alpha$. Consequently eventual
membership in every $B_\alpha$ implies comparison with
$\ev_\alpha(B_\alpha)$ under \eqref{eq:qsub}. More explicitly, if
$v:\Gamma\to X$ satisfies
\[
 \forall\alpha\ \exists\gamma_\alpha\ \forall\gamma\ge\gamma_\alpha:
 v_\gamma\in B_\alpha,
\]
then $T_v(\gamma_\alpha)\subseteq
T_{\ev(B)}(\alpha,b)$ for every $b\in B_\alpha$.
Independent sums
satisfy
\begin{equation}\label{eq:algtails}
 T_{u\boxplus v}(\alpha,\beta)=T_u(\alpha)+T_v(\beta),\qquad
 T_{\Sigma_ru}(\alpha_1,\ldots,\alpha_r)=\sum_{j=1}^rT_u(\alpha_j).
\end{equation}
For decreasing families, evaluation of sums is tail equivalent
to the independent sum of the evaluations: at $(\alpha,\beta)$
both tails are exactly $B_\alpha+C_\beta$.
On a common directed domain, the tails of the diagonal evaluation
are $B_\gamma+C_\gamma$. Given $\alpha,\beta$, an upper bound
$\gamma\ge\alpha,\beta$ gives
$B_\gamma+C_\gamma\subseteq B_\alpha+C_\beta$.
The reverse tail comparison uses the product threshold $(\gamma,\gamma)$.
For null families on different domains, the family
$(B_\alpha\cup C_\beta)_{(\alpha,\beta)}$ has the tails of the
mixed evaluation net and is therefore null. On a common domain,
the preceding upper-bound argument gives nullity of
$(B_\alpha\cup C_\alpha)_\alpha$.
If a net $u_\alpha$ is reindexed on $\Lambda\times J$ without changing
its values, its tail at $(\alpha,j)$ is $T_u(\alpha)$.
Restriction to $\{\alpha:\alpha\ge\alpha_0\}$ has the following
tail comparisons. A restricted tail is an original tail, and for
an original threshold $\beta$, an upper bound
$\gamma\ge\beta,\alpha_0$ supplies a restricted tail contained in
$T_u(\beta)$. Applying these comparisons in both coordinates of the
independent difference net proves preservation of the Cauchy property.

\begin{definition}[Limit-stable sets and stable hulls]
A subset $C\subseteq X$ is \emph{limit-stable} if
$u:\Lambda\to C$ and $u\tto{\theta}x$ imply $x\in C$.
The \emph{stable hull} of $A\subseteq X$ is
\[
 [A]_\theta=\clo_\theta A
 =\bigcap\{C\subseteq X:A\subseteq C,\ C\text{ is limit-stable}\}.
\]
This is the closure convention of~\cite[Section~2]{BCTW}.
It is distinct from \emph{adherence}, the set of limits of nets
with values in $A$, which need not itself be limit-stable.
\end{definition}

\begin{lemma}[Stable hulls]\label{def:stablehull}
The stable hull is monotone, extensive and idempotent. For
$A,B\subseteq X$ and $x\in X$,
\begin{equation}\label{eq:hull-addition}
 [x+A]_\theta=x+[A]_\theta,\qquad [-A]_\theta=-[A]_\theta,
 \qquad [A]_\theta+[B]_\theta\subseteq[A+B]_\theta.
\end{equation}
\end{lemma}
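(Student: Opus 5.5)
The plan is to use only the intersection description of $[A]_\theta$ and two facts. First, arbitrary intersections of limit-stable sets are limit-stable. Second, limit-stability is transported by maps that are continuous bijections with continuous inverses. Monotonicity holds because every limit-stable set containing $B\supseteq A$ also contains $A$, so the family intersected for $A$ is larger. Extensivity is immediate. For idempotence, note that $[A]_\theta$ is itself limit-stable: a net in the intersection lies in each member, so its limit does too. Hence $[A]_\theta$ belongs to the family defining $[[A]_\theta]_\theta$, which gives $[[A]_\theta]_\theta\subseteq[A]_\theta$; the reverse inclusion is extensivity.

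For the translation identity, let $\tau_x(y)=x+y$. This map is continuous: if $u\to y$, then the constant net $x$ converges to $x$, so (constant net)$\boxplus u\to x+y$. That net has the same tails as $x+u$ by \eqref{eq:algtails}, so the two are tail equivalent and $x+u\to x+y$ by the quasi-subnet axiom. Its inverse is $\tau_{-x}$, so $\tau_x$ maps limit-stable sets bijectively onto limit-stable sets. Indeed, if $C$ is limit-stable and $v$ is a net in $x+C$ with $v\to z$, then $-x+v\to z-x$ is a net in $C$, so $z\in x+C$. Consequently $\tau_x$ induces a bijection between the limit-stable supersets of $A$ and those of $x+A$, and it commutes with intersections. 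Negation is handled identically, using $-u\to-x$ and the fact that negation is an involution.

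The inclusion $[A]_\theta+[B]_\theta\subseteq[A+B]_\theta$ is the only step needing a two-stage argument. First fix $a\in A$. Then $a+B\subseteq A+B\subseteq[A+B]_\theta$, so $[a+B]_\theta\subseteq[A+B]_\theta$ by monotonicity and idempotence. The translation identity then gives $a+[B]_\theta\subseteq[A+B]_\theta$. Hence $A+[B]_\theta\subseteq[A+B]_\theta$. Next fix $b\in[B]_\theta$. Then $A+b\subseteq[A+B]_\theta$, and the same reasoning gives $[A]_\theta+b\subseteq[A+B]_\theta$. Letting $b$ range over $[B]_\theta$ yields the claim.

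The main care point is not to reason with single nets into the hull, since adherence is not closure. The plan avoids this entirely: every step uses the intersection definition and the homeomorphism property of translation. The only convergence input needed is the tail-equivalence justification that $x+u$ converges.
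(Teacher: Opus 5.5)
Your proposal is correct and follows essentially the paper's argument. You get the closure properties from limit-stability of intersections, and the first two identities by transporting the defining family of limit-stable sets under translation and negation. The sum inclusion uses the same two-stage translation argument; you fix the $A$-coordinate first where the paper fixes $b\in B$ first, which is only a cosmetic difference. Your tail-equivalence justification that $x+u\to x+y$ also supplies a step the paper merely asserts.
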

\begin{proof}
For limit-stable sets $C_\xi$, a net with values in
$\bigcap_\xi C_\xi$ has every limit in every $C_\xi$. Thus
$[A]_\theta$ is limit-stable and
\[
 A\subseteq[A]_\theta,\qquad
 A\subseteq B\Longrightarrow[A]_\theta\subseteq[B]_\theta,
 \qquad [[A]_\theta]_\theta=[A]_\theta.
\]
Translation by $x$ and inversion preserve limits, and so do their
inverses. Consequently $C$ is stable if and only if $x+C$ is
stable, and if and only if $-C$ is stable. Applying these
bijections to the defining intersections proves the first two
identities in \eqref{eq:hull-addition}.

Put $K=[A+B]_\theta$. For every $b\in B$, the set $K-b$ is
stable and contains $A$, giving $[A]_\theta\subseteq K-b$.
For every $a\in[A]_\theta$, therefore, $B\subseteq K-a$.
Stability of $K-a$ gives $[B]_\theta\subseteq K-a$, and hence
\[
 a\in[A]_\theta,\ b\in[B]_\theta\Longrightarrow a+b\in K.
\]
\end{proof}

Stable hulls can also be described entirely through the prescribed convergence.
For $A\subseteq X$, write $A\vdash_\theta x$ if $x$
is the root value of a well-founded, set-sized tree whose leaves
have values in $A$ and whose every internal value is the
$\theta$-limit of a net of its child values. Then
\[
    A\vdash_\theta x
    \quad\Longleftrightarrow\quad
    x\in[A]_\theta.
\]
Indeed, well-founded induction places every derivable value in
every limit-stable set containing $A$. Conversely, the derivable
values contain $A$, by the one-node trees. If a net
$(x_\alpha)_{\alpha\in\Lambda}$ of derivable values converges to $x$,
fix one derivation tree $T_\alpha$ for each $x_\alpha$.
The disjoint union of the set-indexed family $(T_\alpha)_\alpha$,
with a new root whose children are their roots indexed by $\Lambda$,
is a set-sized tree. A strictly descending path below its new root
lies entirely in one $T_\alpha$, so the new tree is well founded.
It derives $x$. Thus the derivable values are limit-stable.
Thus every stable-hull assertion can be expressed through these
limit certificates. Requirements concerning families of hulls
retain the evaluation nets of all corresponding root values.
This reformulation preserves the distinction between adherence,
which records one passage to a limit, and closure, which allows
well-founded iterations. It changes neither the prescribed
convergence nor the hypotheses and conclusions of the theory.

\begin{definition}[Source and range data]\label{gen:premeasure}\label{gen:data}\label{gen:he}
Let $\mathbb B$ be a sigma-complete Boolean algebra, and let
$\mathcal H\subseteq\mathbb B$ be a Boolean ring. Write
$a\setminus b=a\wedge b^c$ and
\[
 \mathcal H_\sigma=\{\bigvee_na_n:a_n\in\mathcal H\},\qquad
 \mathcal H_\delta=\{u^c:u\in\mathcal H_\sigma\},\qquad
 1\in\mathcal H_\sigma.
\]
A premeasure $m:\mathcal H\to X$ is finitely additive, $m(0)=0$,
and satisfies $\sum_{n\leq N}m(a_n)\to m(a)$ whenever the $a_n$
are disjoint and $a=\bigvee_na_n\in\mathcal H$. Put
\[
\begin{gathered}
 M=[m(\mathcal H)]_\theta,\qquad M-M\subseteq D=-D,\\
 V_m(s)=\{m(a):a\in\mathcal H,\ a\leq s\},\qquad
 \Gamma_m(s)=[V_m(s)]_\theta.
\end{gathered}
\]
The range hypothesis is completeness of $M$. Uniform hereditary
exhaustivity, denoted (HE), is the condition
\begin{equation}\label{gen:HE}
 h_n\in\mathcal H\text{ disjoint}\quad\Longrightarrow\quad
 (V_m(h_n))_n\Rightarrow0. 
\end{equation}
\end{definition}
Thus $0\in\Gamma_m(s)\subseteq M\subseteq D$. No additive closure
or stability of $D$ is imposed. For $a_n\uparrow a$ in $\mathcal H$,
put $d_1=a_1$ and $d_n=a_n\setminus a_{n-1}$ for $n\ge2$.
These elements belong to $\mathcal H$, are pairwise disjoint, and satisfy
$\bigvee_{j\le n}d_j=a_n$ and $\bigvee_jd_j=a$.
Finite additivity gives $m(a_n)=\sum_{j\le n}m(d_j)$, and the
premeasure axiom gives $m(a_n)\to m(a)$.

\begin{definition}[The three range principles]\label{gen:block}\label{gen:summation}\label{gen:uniform}
Let $\mathfrak I$ be a collection of set-sized directed domains
containing $\mathbb N$. The notation $\mathsf S_{\rm seq}$ permits
only sequential rows. The notation $\mathsf S_{\rm all}$ permits every domain.
The following properties refer to the original convergence.

\emph{Block reflection}, $\mathsf B_\omega(D)$: if $0\in A_n$ and
$\sum_{n\in F}A_n\subseteq D$ for every finite $F\subseteq\mathbb N$,
then
\[
 \left[\forall\,1=k_0<k_1<\cdots:\
 \left(\sum_{k_{j-1}\leq n<k_j}A_n\right)_j\Rightarrow0\right]
 \ \Longrightarrow\
 \left(\bigcup_{F\subseteq\{N,N+1,\ldots\}\ \text{finite}}
                   \sum_{n\in F}A_n\right)_N\Rightarrow0.
\]
\emph{Closed countable summation}, $\mathsf S_{\mathfrak I}(D)$:
for decreasing rows $0\in U_{i,\alpha}\subseteq D$ with
$(U_{i,\alpha})_{\alpha\in I_i}\Rightarrow0$, $I_i\in\mathfrak I$,
put, on the coordinatewise directed product $P=\prod_iI_i$,
\begin{equation}\label{gen:Q-preliminary}
 A_\varphi=\bigcup_{F\subseteq\mathbb N\ \text{finite}}
                 \sum_{i\in F}U_{i,\varphi(i)},\qquad
 Q_\varphi=D\cap[A_\varphi]_\theta.
\end{equation}
The required conclusion is $(Q_\varphi)_{\varphi\in P}\Rightarrow0$.

\emph{Uniform closure}, $\mathsf U(M)$: for a nonempty set $J$, a
directed domain $B$, and points
$x_n(j),y_n(j),x_n^\beta(j),y_n^\beta(j)\in M$, the conditions
\[
 \forall\beta:\ (\{x_n^\beta(j)-y_n^\beta(j):j\in J\})_n\Rightarrow0,
\]
\[
 (\{x_n^\beta(j)-x_n(j):n\geq1,j\in J\})_\beta\Rightarrow0,
 \quad
 (\{y_n^\beta(j)-y_n(j):n\geq1,j\in J\})_\beta\Rightarrow0
\]
imply $(\{x_n(j)-y_n(j):j\in J\})_n\Rightarrow0$.
\end{definition}

\begin{remark*}[Motivation for the range principles]
These hypotheses abstract the control of approximation errors
afforded by zero-neighbourhoods and order bounds. Disjoint tails
must remain small under finite assembly, while countably many
vanishing errors must admit common control that survives passage
to limits. Uniform approximation must preserve these estimates.
Theorem~\ref{gen:extension} shows that the resulting
conditions suffice for extension. Their mathematical content is
established through the topological verification in
Corollary~\ref{class:topological} and the equivalence between
bounded sequential summation and weak sigma-distributivity in
Theorem~\ref{inv:wsd-local}. The classical specializations then
arise by verifying these conditions for the source data or its
additive representation. This gives a methodological parallel
with Carath\'eodory's measurability criterion: compatibility
conditions are formulated from the original data, and their force
becomes apparent in the measurable domain and additive extension
they produce.
\end{remark*}

We first use block reflection to strengthen hereditary exhaustivity
to a uniform estimate over finite disjoint unions. The next lemma
obtains this estimate using only the original source values.

\begin{lemma}[Uniform source tails]\label{gen:ut}\label{gen:ut-cauchy}
Let $m:\mathcal H\to X$ be finitely additive, with $m(0)=0$
and $m(\mathcal H)\subseteq D$. Under \textup{(HE)} and
$\mathsf B_\omega(D)$, for every disjoint
$(h_n)\subseteq\mathcal H$,
\[
 T_N=\{m(c):c\in\mathcal H,\ c\leq\bigvee_{n\in F}h_n,
            \ F\subseteq\{N,N+1,\ldots\}\text{ finite}\}
 \quad\text{satisfies}\quad (T_N)_N\Rightarrow0.
\]
Equivalently, $f_N(c)=m(c\wedge\bigvee_{n\leq N}h_n)$ is uniformly
Cauchy in $c\in\mathcal H$.
\end{lemma}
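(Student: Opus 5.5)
We apply $\mathsf B_\omega(D)$ to the sets $A_n=V_m(h_n)$. By finite additivity and disjointness, every $c\in\mathcal H$ with $c\le\bigvee_{n\in F}h_n$ splits as $c=\bigvee_{n\in F}(c\wedge h_n)$ with $c\wedge h_n\in\mathcal H$, $c\wedge h_n\le h_n$. Hence
\[
 m(c)=\sum_{n\in F}m(c\wedge h_n)\in\sum_{n\in F}A_n .
\]
Conversely, any sum $\sum_{n\in F}m(a_n)$ with $a_n\le h_n$ equals $m(\bigvee_{n\in F}a_n)$, since the $a_n$ are disjoint. Thus
\[
 \sum_{n\in F}A_n=\{m(c):c\in\mathcal H,\ c\le\textstyle\bigvee_{n\in F}h_n\}\subseteq m(\mathcal H)\subseteq D,
\]
and $0=m(0)\in A_n$. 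So the hypotheses of block reflection on the sets $A_n$ hold. Moreover, $T_N$ is exactly the union over finite $F\subseteq\{N,N+1,\dots\}$ of $\sum_{n\in F}A_n$, which is the conclusion set in $\mathsf B_\omega(D)$.

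It remains to verify the premise of $\mathsf B_\omega(D)$. Fix blocks $1=k_0<k_1<\cdots$ and put $g_j=\bigvee_{k_{j-1}\le n<k_j}h_n\in\mathcal H$. The $g_j$ are pairwise disjoint elements of $\mathcal H$. By the identity above with $F=[k_{j-1},k_j)$,
\[
 \sum_{k_{j-1}\le n<k_j}A_n=V_m(g_j).
\]
(HE) applied to the disjoint sequence $(g_j)$ gives $(V_m(g_j))_j\Rightarrow0$. This is precisely the block premise, so $\mathsf B_\omega(D)$ yields $(T_N)_N\Rightarrow0$.

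For the equivalent formulation, note that for $M<N$ we have
\[
 f_N(c)-f_M(c)=m\Bigl(c\wedge\textstyle\bigvee_{M<n\le N}h_n\Bigr)\in T_{M+1},
\]
and $f_M(c)-f_N(c)\in -T_{M+1}$. The difference is $0$ when $M=N$, and $0\in T_{N}$ for all $N$.

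Given the null decreasing family $(T_N)$, the uniform Cauchy family $\{f_N(c)-f_{N'}(c):c\}_{(N,N')}$ has, at each threshold $(N_0,N_0)$, its tail contained in $T_{N_0}\cup(-T_{N_0})$, using that $(T_N)$ is decreasing. Nullity of $(-T_N)$ follows from continuity of inversion, and nullity of the union follows from the mixing remark on common domains. A quasi-subnet comparison via \eqref{eq:evaltail} then gives nullity.

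Conversely, every element $m(c)$ of $T_N$ with $c\le\bigvee_{n\in F}h_n$ and $\min F\ge N$ equals $f_{\max F}(c)-f_{N-1}(c)$. So the tail of $T$ at $N$ lies in the tail of the Cauchy family at $(N-1,N-1)$, suitably adjusted by one index. Tail comparison again transfers nullity.

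The main point requiring care is the bookkeeping of the evaluation-net tails in this equivalence, especially with the index pairs $(N,N')$ in either order, rather than anything substantive. The core step is the identification of block sums with $V_m(g_j)$.
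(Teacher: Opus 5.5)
Your proposal is correct and follows the paper's proof essentially step for step. You identify $\sum_{n\in F}V_m(h_n)=V_m(\bigvee_{n\in F}h_n)\subseteq D$, verify the block premise by applying (HE) to the disjoint block joins $g_j$, and compare $T_N$ with the uniform Cauchy tails through $f_p(c)-f_q(c)=m(c\wedge\bigvee_{q<n\le p}h_n)$. The paper only packages your index-shift bookkeeping through the intermediate sets $E_N=\{f_p(c)-f_q(c):p,q\ge N,\ c\in\mathcal H\}$, using $T_{N+1}\subseteq E_N\subseteq T_{N+1}\cup(-T_{N+1})$.
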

\begin{proof}
Let $F\subseteq\mathbb N$ be finite. For $c_n\in\mathcal H$ with
$c_n\leq h_n$, $n\in F$, the elements $c_n$ are disjoint and
$c=\bigvee_{n\in F}c_n\in\mathcal H$. Conversely, if
$c\in\mathcal H$ and $c\leq\bigvee_{n\in F}h_n$, then
$c=\bigvee_{n\in F}(c\wedge h_n)$. Finite additivity in these two
decompositions proves
\begin{equation}\label{gen:fragment-sum}
 \sum_{n\in F}V_m(h_n)=V_m\left(\bigvee_{n\in F}h_n\right)
 \subseteq m(\mathcal H)\subseteq D.
\end{equation}
The formula includes $F=\varnothing$, when both sides are
$\{0\}$. Put $A_n=V_m(h_n)$, so $0\in A_n$. For every
$1=k_0<k_1<\cdots$, the elements
\[
 g_j=\bigvee_{k_{j-1}\leq n<k_j}h_n\in\mathcal H
 \quad(j\geq1)
\]
are pairwise disjoint. By (HE) and \eqref{gen:fragment-sum},
\[
 \left(\sum_{k_{j-1}\leq n<k_j}A_n\right)_j
 =(V_m(g_j))_j\Rightarrow0.
\]
All premises of $\mathsf B_\omega(D)$ now hold. Its conclusion,
together with \eqref{gen:fragment-sum}, gives $(T_N)_N\Rightarrow0$.

Define $E_N=\{f_p(c)-f_q(c):p,q\geq N,\ c\in\mathcal H\}$.
For $p\geq q\geq N$, finite additivity gives
\[
 f_p(c)-f_q(c)=m\left(c\wedge\bigvee_{q<n\leq p}h_n\right)
 \in T_{N+1}.
\]
For $q\geq p\geq N$ the difference belongs to $-T_{N+1}$.
If $\varnothing\ne F\subseteq\{N+1,N+2,\ldots\}$ is finite and
$c\leq\bigvee_{n\in F}h_n$, then
$f_{\max F}(c)=m(c)$ and $f_N(c)=0$. The value associated with
$F=\varnothing$ is $0\in E_N$. Therefore
\[
 T_{N+1}\subseteq E_N\subseteq T_{N+1}\cup(-T_{N+1}).
\]
Inversion and finite mixing prove that nullity of $(T_N)_N$
implies nullity of $(E_N)_N$. The first inclusion proves the
converse. The decreasing family $(T_{N+1})_N$ has a tail contained
in $T_K$ whenever $N+1\ge K$, and every $T_{N+1}$ is a tail of
the unshifted family. For the final equivalence, the evaluation
of $\{f_p(c)-f_q(c):c\in\mathcal H\}$ has at $(P,Q)$ the tail
\[
 H_{P,Q}=\{f_p(c)-f_q(c):p\ge P,\ q\ge Q,\ c\in\mathcal H\}.
\]
Here $H_{N,N}=E_N$, and
$E_{\max(P,Q)}\subseteq H_{P,Q}$. Thus this evaluation and
$(E_N)_N$ are tail equivalent, proving uniform Cauchyness.
\end{proof}

\begin{lemma}[Hull control and countable assembly]\label{gen:relative}\label{gen:flatten}
Under $\mathsf S_{\mathfrak I}(D)$, every admissible decreasing null
row $0\in U_\alpha\subseteq D$ satisfies
$(D\cap[U_\alpha]_\theta)_\alpha\Rightarrow0$.
For independently labelled null rows $U_{n,i,\alpha}$, put
\[
 A_{n,\Phi}=\bigcup_{F\ \text{finite}}\sum_{i\in F}U_{n,i,\Phi(n,i)},
 \quad Q_{n,\Phi}=D\cap[A_{n,\Phi}]_\theta,
\]
\[
 A^*_\Phi=\bigcup_{E\subseteq\mathbb N^2\ \text{finite}}
            \sum_{(n,i)\in E}U_{n,i,\Phi(n,i)},
 \qquad Q^*_\Phi=D\cap[A^*_\Phi]_\theta.
\]
Then $(Q^*_\Phi)_\Phi\Rightarrow0$, and
\begin{equation}\label{gen:flatten-inclusion}
 D\cap\left[\bigcup_{F\ \text{finite}}\sum_{n\in F}Q_{n,\Phi}\right]_\theta
 \subseteq Q^*_\Phi.
\end{equation}
\end{lemma}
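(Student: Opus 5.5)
The first assertion is the case of a single nonzero row in $\mathsf S_{\mathfrak I}(D)$. Given an admissible decreasing null row $0\in U_\alpha\subseteq D$ on $I\in\mathfrak I$, I would put $U_{0,\alpha}=U_\alpha$ and $U_{i,\alpha}=\{0\}$ for $i\ge1$, each indexed by $\mathbb N$ (which lies in $\mathfrak I$). The rows $\{0\}$ are constant, decreasing and null. Then $A_\varphi=U_{\varphi(0)}$, since finite sums with zero rows add nothing, and so $Q_\varphi=D\cap[U_{\varphi(0)}]_\theta$. The conclusion $(Q_\varphi)_{\varphi\in P}\Rightarrow0$ transfers to the row indexed by $\alpha\in I$ through tail comparisons. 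The projection $P\to I$ is monotone and surjective, and by \eqref{eq:evaltail} the evaluation tails of $(Q_\varphi)_\varphi$ at $\varphi$ are unions over $\psi\ge\varphi$. Those unions equal the tails of $(D\cap[U_\alpha]_\theta)_\alpha$ at $\varphi(0)$, because the family is decreasing in $\alpha$ by monotonicity of hulls. Hence the two evaluation nets are tail equivalent, and the quasi-subnet axiom gives nullity.

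For $(Q^*_\Phi)_\Phi\Rightarrow0$, I would fix a bijection $\pi:\mathbb N\to\mathbb N^2$ and relabel the rows as $U'_k=U_{\pi(k)}$. The directed product $\prod_{(n,i)}I_{n,i}$ is then order-isomorphic to $\prod_kI_{\pi(k)}$ via $\Phi\mapsto\Phi\circ\pi$. Finite subsets $E\subseteq\mathbb N^2$ correspond to finite $F\subseteq\mathbb N$, so $A^*_\Phi$ equals the set $A_{\Phi\circ\pi}$ of \eqref{gen:Q-preliminary}. The conclusion of $\mathsf S_{\mathfrak I}(D)$ then gives the claim directly.

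The inclusion \eqref{gen:flatten-inclusion} is the substantive step, and I expect it to be the main obstacle, because $Q_{n,\Phi}$ is intersected with $D$ and hulls are not additive. The plan is as follows.

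\begin{itemize}
\item First, for every $n$, $A_{n,\Phi}\subseteq A^*_\Phi$ since the index set $\{n\}\times F$ is finite. Hence $Q_{n,\Phi}\subseteq[A_{n,\Phi}]_\theta\subseteq[A^*_\Phi]_\theta$.
\item Next, $A^*_\Phi$ is closed under addition and contains $0$. For disjoint finite $E,E'$, a sum over $E$ plus a sum over $E'$ is a sum over $E\cup E'$. For overlapping $E,E'$ this fails, since the same row would be used twice.
\item To handle overlaps, I would take $F$ finite and distinct $n\in F$. The index sets $\{n\}\times F_n$ are then pairwise disjoint, so $\sum_{n\in F}A_{n,\Phi}\subseteq A^*_\Phi$ holds exactly.
\item Lemma~\ref{def:stablehull}, applied inductively, then gives
\[
 \sum_{n\in F}[A_{n,\Phi}]_\theta\subseteq\Bigl[\sum_{n\in F}A_{n,\Phi}\Bigr]_\theta\subseteq[A^*_\Phi]_\theta .
\]
\item Consequently $\bigcup_F\sum_{n\in F}Q_{n,\Phi}\subseteq[A^*_\Phi]_\theta$. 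Taking hulls, using idempotence, and intersecting with $D$ yields \eqref{gen:flatten-inclusion}.
\end{itemize}

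The only care needed is that dropping $D$ inside is harmless, since $Q_{n,\Phi}\subseteq[A_{n,\Phi}]_\theta$. The real point is the disjointness of the labels across different $n$, which is exactly why the rows are independently labelled.
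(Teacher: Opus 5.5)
Your proposal is correct and follows the paper's proof essentially step for step. The steps match: the one-row case uses zero rows on $\mathbb N$ and identical evaluation tails, $(Q^*_\Phi)_\Phi\Rightarrow0$ follows from a relabelling $\mathbb N\to\mathbb N^2$, and \eqref{gen:flatten-inclusion} comes from the disjoint label sets $\{n\}\times F_n$ together with iterated hull addition. One slip: the opening claim of your second bullet, that $A^*_\Phi$ is closed under addition, is false in general, as you yourself observe immediately afterwards, but nothing in your argument uses it.
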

\begin{proof}
Suppose $U_\alpha$ is indexed by $I\in\mathfrak I$. In the
summation axiom set $I_1=I$, $U_{1,\alpha}=U_\alpha$ and, for
$i\geq2$, set $I_i=\mathbb N$, $U_{i,k}=\{0\}$. Then
\[
 A_\varphi=U_{\varphi(1)},\qquad
 Q_\varphi=D\cap[U_{\varphi(1)}]_\theta.
\]
For every product index $\varphi$, the union of the evaluation
values at indices $\psi\geq\varphi$ equals
$D\cap[U_{\varphi(1)}]_\theta$, by decrease of the row.
Every $\alpha\in I$ occurs as $\varphi(1)$, so these tails are
exactly the tails of $(D\cap[U_\alpha]_\theta)_\alpha$.
The one-row conclusion therefore holds.

Write $I_{n,i}$ for the domain of $U_{n,i,\alpha}$. A bijection
$e:\mathbb N\to\mathbb N^2$ induces the order isomorphism
\[
 \prod_{(n,i)\in\mathbb N^2}I_{n,i}
 \longrightarrow\prod_{r\in\mathbb N}I_{e(r)},
 \qquad \Phi\longmapsto(\Phi(e(r)))_r.
\]
Finite subsets of $\mathbb N$ correspond under $e$ to finite
subsets of $\mathbb N^2$. Hence the summation axiom for the rows
$U_{e(r),\alpha}$ gives $(Q^*_\Phi)_\Phi\Rightarrow0$.

Fix $\Phi$ and a finite $F\subseteq\mathbb N$. For each
$n\in F$, a term of $A_{n,\Phi}$ is a finite sum indexed by
some $F_n\subseteq\mathbb N$. Their sum is indexed by
$\bigcup_{n\in F}(\{n\}\times F_n)$, whose pairs are distinct.
Thus $\sum_{n\in F}A_{n,\Phi}\subseteq A^*_\Phi$ and
\[
 \sum_{n\in F}Q_{n,\Phi}
 \subseteq\sum_{n\in F}[A_{n,\Phi}]_\theta
 \subseteq\left[\sum_{n\in F}A_{n,\Phi}\right]_\theta
 \subseteq[A^*_\Phi]_\theta.
\]
The middle inclusion is the finite iteration of
\eqref{eq:hull-addition}. Taking the union over $F$ and then its
stable hull preserves containment in the stable set
$[A^*_\Phi]_\theta$. Intersection with $D$ proves
\eqref{gen:flatten-inclusion}.
\end{proof}

\begin{definition}[Finite-response and matrix reflection]\label{def:finite-response}\label{gen:matrixreflection}
Finite-response reflection on $Y\subseteq X$, denoted $\mathsf F(Y)$,
asserts the following.
For every net $u:\Lambda\to Y$, let a strategy assign an index
$\sigma(h)\in\Lambda$ to each finite legal history
$h=((\alpha_j,S_j))_{j\leq r}$, where
$\varnothing\ne S_j\subseteq T_u(\alpha_j)$ is finite. If a
strategy satisfies
\[
 \forall((\alpha_n,S_n))_{n\geq1}\text{ with }
 \alpha_n=\sigma((\alpha_j,S_j)_{j<n}):\quad(S_n)_n\Rightarrow0,
\]
then $u\tto{\theta}0$.
Matrix reflection $\mathsf R_\omega(D)$ asserts that
\[
 0\in A_{n,k}\subseteq D,\quad A_{n+1,k}\subseteq A_{n,k}
 \subseteq A_{n,k+1},\quad
 \forall\varphi\in\mathbb N^{\mathbb N}:\ (A_{n,\varphi(n)})_n\Rightarrow0
\]
implies $(\bigcup_kA_{n,k})_n\Rightarrow0$.
\end{definition}

\begin{proposition}[Reflection comparison]\label{gen:reflection-block}
Finite-response reflection on $D$ implies $\mathsf R_\omega(D)$,
which implies $\mathsf B_\omega(D)$.
\end{proposition}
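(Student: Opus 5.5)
The plan is to prove the two implications separately. In each, the conclusion is compared with a null family by tail inclusions under \eqref{eq:qsub}, using \eqref{eq:evaltail} and \eqref{eq:algtails}.

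\emph{Finite-response reflection implies $\mathsf R_\omega(D)$.} Let $A_{n,k}$ be as in $\mathsf R_\omega(D)$, and put $B_n=\bigcup_kA_{n,k}$. These sets contain $0$, lie in $D$, and decrease in $n$. Let $u=\ev_n(B_n)$, a net in $D$ on $\Lambda=\{(n,b):b\in B_n\}$. By \eqref{eq:evaltail} and the decrease, $T_u(n,b)=B_n$. I define the strategy by
\[
 \sigma\bigl(((n_j,b_j),S_j)_{j\le r}\bigr)=(n_r+1,0),\qquad \sigma(\varnothing)=(1,0).
\]
In every play the first coordinates $n_1<n_2<\cdots$ are then strictly increasing. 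Each finite $S_j\subseteq B_{n_j}$ lies in a single $A_{n_j,k_j}$, because the sets increase in $k$. Define $\varphi(n_j)=k_j$, and let $\varphi$ be arbitrary elsewhere. The hypothesis of $\mathsf R_\omega(D)$ gives $(A_{n,\varphi(n)})_n\Rightarrow0$. We have
\[
 \bigcup_{i\ge j}S_i\subseteq\bigcup_{n\ge n_j}A_{n,\varphi(n)},
\]
and the right side is a tail of that evaluation net. Hence the evaluation of $(S_j)$ is a quasi-subnet of it, and $(S_j)_j\Rightarrow0$. Since this holds for every play, $\mathsf F(D)$ yields $u\tto{\theta}0$, which is $(B_n)_n\Rightarrow0$.

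\emph{$\mathsf R_\omega(D)$ implies $\mathsf B_\omega(D)$.} Given the $A_n$, set
\[
 A_{N,k}=\bigcup_{F\subseteq[N,k)\ \text{finite}}\sum_{n\in F}A_n,
\]
with $A_{N,k}=\{0\}$ for $k\le N$. These sets contain $0$, lie in $D$, decrease in $N$ and increase in $k$. Moreover $\bigcup_kA_{N,k}$ is exactly the set in the conclusion of $\mathsf B_\omega(D)$. So it suffices to check $(A_{N,\varphi(N)})_N\Rightarrow0$ for each $\varphi\in\mathbb N^{\mathbb N}$.

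This is the step I expect to be the main obstacle: choosing blocks adapted to $\varphi$. Recursively take $k_0=1$, $k_1=2$, and
\[
 k_{j+1}>\max\Bigl(k_j,\ \max_{N<k_j}\varphi(N)\Bigr).
\]
If $k_{j-1}\le N<k_j$, then $[N,\varphi(N))\subseteq[k_{j-1},k_{j+1})$. Write $C_j=\sum_{k_{j-1}\le n<k_j}A_n$. Because $0\in A_n$, every partial sum is absorbed, so $A_{N,\varphi(N)}\subseteq C_j+C_{j+1}$.

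By hypothesis $(C_j)_j\Rightarrow0$. Let $c$ be its evaluation net; its tail at $j$ is $\bigcup_{i\ge j}C_i$. The tail of the evaluation of $(A_{N,\varphi(N)})_N$ at $N\ge k_{j-1}$ is contained in
\[
 \Bigl(\bigcup_{i\ge j}C_i\Bigr)+\Bigl(\bigcup_{i\ge j}C_i\Bigr),
\]
which is a tail of $c\boxplus c$ by \eqref{eq:algtails}. Since $c\boxplus c\tto{\theta}0$ by the group axiom, the quasi-subnet axiom gives $(A_{N,\varphi(N)})_N\Rightarrow0$. Matrix reflection then delivers the conclusion of $\mathsf B_\omega(D)$.
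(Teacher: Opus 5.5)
Your proposal is correct and follows the paper's proof essentially step for step. For the first implication you use the same evaluation net of $\bigcup_k A_{n,k}$, the same strategy requesting the next row at index $0$, and the same selector read off from the finite responses. For the second you use the same adapted blocks $k_{j+1}>\max_{N<k_j}\varphi(N)$, which place each selected matrix entry inside two consecutive block sums.

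The differences are only cosmetic. Your matrix is the union of finite sums over $[N,k)$, which makes the final identification with the $\mathsf B_\omega(D)$ conclusion set immediate. You also compare the selected evaluation directly with $c\boxplus c$, rather than first proving $(B_j+B_{j+1})_j\Rightarrow0$ as the paper does.
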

\begin{proof}
Assume finite-response reflection on $D$. Given the matrix in
$\mathsf R_\omega(D)$, put $V_n=\bigcup_kA_{n,k}$ and
$u=\ev_n(V_n)$. Since $V_{n+1}\subseteq V_n$ and $0\in V_n$,
\[
 (n,0)\in\operatorname{dom}(u),\qquad T_u(n,0)=V_n.
\]
For every legal history of length $r$, define
$\sigma(h)=(r+1,0)$. In every play of this strategy,
$\varnothing\ne S_n\subseteq V_n$ is finite. The increasing
family $(A_{n,k})_k$ covers $V_n$, so the integer
\[
 \varphi(n)=\min\{k\geq1:S_n\subseteq A_{n,k}\}
\]
exists. The matrix premise gives
$(A_{n,\varphi(n)})_n\Rightarrow0$. Since
$\bigcup_{n\geq N}S_n\subseteq\bigcup_{n\geq N}A_{n,\varphi(n)}$
for every $N$, its evaluation has the response evaluation as a
quasi-subnet. Thus $(S_n)_n\Rightarrow0$ for every play.
Finite-response reflection gives $u\to0$.

Assume next $\mathsf R_\omega(D)$, and let $(A_n)$ satisfy the
premises of $\mathsf B_\omega(D)$. Put
\[
 E_{N,L}=\begin{cases}\displaystyle\sum_{N\leq n\leq L}A_n,&L\geq N,\\
 \{0\},&L<N.
 \end{cases}
\]
The finite-sum bound and $0\in A_n$ give
$0\in E_{N,L}\subseteq D$ and
$E_{N+1,L}\subseteq E_{N,L}\subseteq E_{N,L+1}$.
Fix $\varphi\in\mathbb N^{\mathbb N}$. Define $k_0=1$, $k_1=2$
and, for $j\geq1$,
\[
 k_{j+1}=1+\max\bigl(\{k_j\}\cup\{\varphi(n):1\leq n<k_j\}\bigr),
 \qquad B_j=\sum_{k_{j-1}\leq n<k_j}A_n.
\]
Then $k_{j+1}>k_j$, so the block premise gives
$(B_j)_j\Rightarrow0$. Moreover,
\[
 \bigcup_{j\geq J}(B_j+B_{j+1})
 \subseteq\left(\bigcup_{j\geq J}B_j\right)
             +\left(\bigcup_{j\geq J}B_j\right),
\]
which proves $(B_j+B_{j+1})_j\Rightarrow0$ by independent
addition and quasi-subnet stability. If $k_{j-1}\leq N<k_j$,
then $\varphi(N)<k_{j+1}$, and insertion of zeros gives
$E_{N,\varphi(N)}\subseteq B_j+B_{j+1}$. Consequently
\[
 \bigcup_{N\geq k_{J-1}}E_{N,\varphi(N)}
 \subseteq\bigcup_{j\geq J}(B_j+B_{j+1})\qquad(J\geq1).
\]
Thus $(E_{N,\varphi(N)})_N\Rightarrow0$ for every $\varphi$.
Matrix reflection gives $(\bigcup_LE_{N,L})_N\Rightarrow0$.
Finally, insertion of zeros and the finite maximum of every
nonempty finite $F$ give
\[
 \bigcup_LE_{N,L}
 =\bigcup_{F\subseteq\{N,N+1,\ldots\}\ \mathrm{finite}}
      \sum_{n\in F}A_n,
\]
including the empty sum $\{0\}$. This is block reflection.
\end{proof}

\begin{proposition}[Restriction]\label{gen:inheritance}
For $M_0\subseteq M$ and $0\in D_0=-D_0\subseteq D$, block,
matrix and finite-response reflection restrict to $D_0$.
Moreover, $\mathsf S_{\mathfrak I}(D)$ implies $\mathsf S_{\mathfrak I}(D_0)$,
and $\mathsf U(M)$ implies $\mathsf U(M_0)$. A stable subset of a
complete set is complete.
\end{proposition}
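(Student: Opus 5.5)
The plan is to check each principle separately, using the fact that every hypothesis posed on $D_0$ or $M_0$ is a special case of the same hypothesis on $D$ or $M$. The conclusions are nullity statements in the single convergence $\theta$, and these do not depend on the ambient set.

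\textbf{Block reflection.} The premises of $\mathsf B_\omega(D_0)$ are $0\in A_n$, $\sum_{n\in F}A_n\subseteq D_0\subseteq D$, and the block nullity condition. These are exactly premises of $\mathsf B_\omega(D)$, and the conclusion is the same statement.

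\textbf{Matrix reflection.} The argument is identical: $A_{n,k}\subseteq D_0$ implies $A_{n,k}\subseteq D$.

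\textbf{Finite-response reflection.} A net $u:\Lambda\to D_0$ is also a net in $D$. Legal histories and strategies are defined purely through the tails $T_u(\alpha)$, so they are the same objects. The premise on plays is therefore the same, and $\mathsf F(D)$ yields $u\to0$.

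\textbf{Uniform closure.} Points in $M_0$ are points in $M$. The hypotheses and conclusion of $\mathsf U$ mention only differences and nullity, so $\mathsf U(M)$ applies verbatim and gives $\mathsf U(M_0)$.

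\textbf{Closed countable summation.} This is the only case needing care, because the conclusion involves $Q_\varphi=D\cap[A_\varphi]_\theta$, and this depends on the set. Take rows with $0\in U_{i,\alpha}\subseteq D_0\subseteq D$ that are decreasing and null. They are admissible for $\mathsf S_{\mathfrak I}(D)$. The sets $A_\varphi$ are defined without reference to $D$, so they coincide for both principles. Moreover
\[
Q^0_\varphi=D_0\cap[A_\varphi]_\theta\subseteq D\cap[A_\varphi]_\theta=Q_\varphi .
\]
Fix a product index $\varphi$. The tail of the evaluation net of $(Q^0_\psi)$ at $(\varphi,b)$ is $\bigcup_{\psi\ge\varphi}Q^0_\psi$, and this is contained in the corresponding tail $\bigcup_{\psi\ge\varphi}Q_\psi$ of the $Q$-evaluation, by \eqref{eq:evaltail}. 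Two points need attention.
\begin{itemize}
\item Each $Q^0_\psi$ is nonempty, since $0\in A_\psi$ and $0\in D_0$. So the evaluation net of $(Q^0_\psi)$ is defined.
\item Tail inclusion at every threshold makes it a quasi-subnet of the $Q$-evaluation net, which converges to $0$ by $\mathsf S_{\mathfrak I}(D)$.
\end{itemize}
Quasi-subnet stability then gives $(Q^0_\varphi)_\varphi\Rightarrow0$. The general principle used here is that shrinking every member of a null family (keeping each member nonempty) preserves nullity.

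\textbf{Completeness.} Let $C$ be complete and let $S\subseteq C$ be limit-stable. A Cauchy net in $S$ is a Cauchy net in $C$, so it converges to some $x\in C$. Limit-stability of $S$ puts $x\in S$, so $S$ is complete.

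The main obstacle I expect is only the summation step. There one must notice that the hull $[A_\varphi]_\theta$ is formed in $X$, not relative to $D$, and that intersecting with the smaller set $D_0$ only shrinks the family, which preserves nullity through quasi-subnets. Everything else is an inclusion of premises.
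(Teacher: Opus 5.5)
Your proof is correct and follows the paper's argument essentially line by line. Block, matrix and finite-response reflection and $\mathsf U$ transfer because the premises on $D_0$ or $M_0$ are special cases of those on $D$ or $M$. For summation you use, as the paper does, the tail inclusion $\bigcup_{\psi\ge\varphi}(D_0\cap[A_\psi]_\theta)\subseteq\bigcup_{\psi\ge\varphi}(D\cap[A_\psi]_\theta)$, together with $0$ lying in every member, to obtain the quasi-subnet relation; your completeness argument is also the paper's.
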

\begin{proof}
For block reflection, every finite-sum inclusion
$\sum_{n\in F}A_n\subseteq D_0$ also holds with $D$ in place of
$D_0$. The block premise and conclusion involve the same sets
$A_n$. For matrix reflection, $A_{n,k}\subseteq D_0$ implies
$A_{n,k}\subseteq D$, with the same monotonicities, selected
families and row unions. The respective principles on $D$
therefore give both principles on $D_0$.
For finite-response reflection, a net $u:\Lambda\to D_0$ is also
a net into $D$, with identical tails, legal histories, strategies
and response evaluations. Every strategy satisfying the premise
on $D_0$ satisfies it on $D$, yielding $u\to0$.

Given admissible rows in $D_0$, form their finite-sum sets
$A_\varphi$. They are admissible rows in $D$, and
\[
 \bigcup_{\psi\geq\varphi}(D_0\cap[A_\psi]_\theta)
 \subseteq\bigcup_{\psi\geq\varphi}(D\cap[A_\psi]_\theta).
\]
Both families contain $0$. Hence their evaluations satisfy the
quasi-subnet relation, and $\mathsf S_{\mathfrak I}(D)$ gives
$\mathsf S_{\mathfrak I}(D_0)$.
For $\mathsf U(M_0)$, its four point families belong to $M$.
All three premises and the conclusion of $\mathsf U(M)$ use
those same point differences. Applying $\mathsf U(M)$ proves
$\mathsf U(M_0)$.

Let $C_0\subseteq C$, with $C$ complete and $C_0$ limit-stable.
For every Cauchy net $u:\Lambda\to C_0$, completeness gives
$x\in C$ with $u\to x$. Limit stability of $C_0$ gives
$x\in C_0$. Thus $C_0$ is complete.
\end{proof}

\section{Pseudo-\texorpdfstring{$D$}{D}-lattices and sequential generation}
\label{pd:section}

We first establish the source calculus used in both extension
arguments. All orders, joins, meets, and order limits in this section
belong to the source. No order on a group of values is involved.

\begin{definition}\label{pd:definition}
A pseudo-effect algebra is a partial algebra $(L,+,0,1)$ with the
following properties. The expressions $(a+b)+c$ and $a+(b+c)$ exist
under exactly the same conditions and then agree. Every $a$ has unique
complements $a^-$ and $a^\sim$ satisfying
\[
 a^-+a=1,\qquad a+a^\sim=1.
\]
If $a+b$ exists, there are $d,e\in L$ such that
$a+b=d+a=b+e$. Existence of either $1+a$ or $a+1$ implies $a=0$.
The induced order is
\[
 a\le b\quad\Longleftrightarrow\quad
 a+c=b\text{ for some }c\in L.
\]
A pseudo-$D$-lattice is a pseudo-effect algebra whose induced order
is a lattice. A $D$-sublattice contains $0,1$ and is closed under
finite meets and joins, both residuals of comparable elements, and
every ambient sum of two of its elements.
\end{definition}

We use the cancellation and residual calculus of
pseudo-effect algebras from \cite{DvurecenskijVetterlein01}, in the
interval formulation of \cite[Propositions~1.2--1.3]{AvalloneVitolo12Uniformities}.
Our conventions are
\begin{equation}\label{pd:eq-residual-conventions}
 (b\backslash a)+a=b,\qquad a+(a/b)=b
 \qquad(a\le b).
\end{equation}
In particular, the slash in $a/b$ records the smaller argument first.
The basic interval facts are recorded explicitly because the order
of the summands will matter.

\begin{lemma}[Residual and translation calculus]
\label{pd:residual-calculus}
For a fixed $b$, the maps
\[
 a\longmapsto b\backslash a,\qquad
 a\longmapsto a/b
\]
are mutually inverse order anti-isomorphisms of $[0,b]$. Left
translation by $a$ is an order isomorphism
\[
 [0,a^\sim]\longrightarrow[a,1],\qquad t\longmapsto a+t,
\]
with inverse $b\mapsto a/b$. Right translation by $a$ is an order
isomorphism
\[
 [0,a^-]\longrightarrow[a,1],\qquad t\longmapsto t+a,
\]
with inverse $b\mapsto b\backslash a$. These maps preserve every
existing supremum and infimum with the indicated order polarity.
For $a\le b\le c$,
\begin{equation}\label{pd:eq-residual-chains}
 c\backslash a=(c\backslash b)+(b\backslash a),
 \qquad
 a/c=(a/b)+(b/c).
\end{equation}
Shrinking either coordinate of a defined sum preserves its existence
and decreases its value.
\end{lemma}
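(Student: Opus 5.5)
Everything will be derived from the pseudo-effect algebra axioms, in the standard cancellation calculus of \cite{DvurecenskijVetterlein01}. The first step is cancellation. If $a+x=a+y$, then $x=y$, and the same holds on the right. To see this, write $a+x=b$, add $b^\sim$ on the right, and use associativity to get $a+(x+b^\sim)=1$. Uniqueness of $a^\sim$ gives $x+b^\sim=a^\sim$, and likewise $y+b^\sim=a^\sim$. Hence $x$ and $y$ are both the unique element with $x+b^\sim=a^\sim$, namely $(b^\sim)^-$ inside $[0,a^\sim]$. Uniqueness of $-$-complements in the interval then yields $x=y$; the symmetric version is handled by the symmetric argument. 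Cancellation makes the residuals in \eqref{pd:eq-residual-conventions} well defined for $a\le b$. The element $a/b$ exists by the definition of $\le$. The element $b\backslash a$ exists because $a+c=b$ gives some $d$ with $d+a=b$, by the commutation axiom. I also record that the order is a partial order: antisymmetry follows from the axiom that $1+a$ or $a+1$ exists only for $a=0$, together with cancellation.

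Next come the translation maps. For $t\le a^\sim$, write $t+s=a^\sim$. Then $a+t$ exists, since $a+(t+s)=1$ is defined and associativity lets us regroup; and $a+t\ge a$. Conversely, for $b\ge a$ we have $a+(a/b)=b\le1=a+a^\sim$, and left cancellation applied to $a+(a/b)+e=a+a^\sim$ gives $a/b\le a^\sim$. So the two maps are mutually inverse by cancellation. Monotonicity in both directions also follows from associativity and cancellation. If $t\le t'$, say $t+s=t'$, then $a+t'=(a+t)+s$. Conversely, if $a+t\le a+t'$, say $(a+t)+s=a+t'$, then cancelling $a$ gives $t+s=t'$. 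The same scheme, with the commutation axiom supplying right forms, handles right translation $[0,a^-]\to[a,1]$. An order isomorphism between posets preserves all existing suprema and infima, which gives the preservation claim for these maps.

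For the residual maps on $[0,b]$, the claim is that $a\mapsto b\backslash a$ and $c\mapsto c/b$ are inverse. Indeed, with $c=b\backslash a$ we have $c+a=b$, hence $c/b=a$ directly from the definition $c+(c/b)=b$ and cancellation. The reverse composition works the same way. Antitonicity follows because $a\le a'\le b$ gives $b\backslash a=(b\backslash a')+(a'\backslash a)$, so $b\backslash a'\le b\backslash a$. This is exactly the chain identity \eqref{pd:eq-residual-chains}, which I verify first. From $(c\backslash b)+b=c$ and $(b\backslash a)+a=b$, associativity gives $\bigl((c\backslash b)+(b\backslash a)\bigr)+a=c$, and right cancellation identifies the first factor with $c\backslash a$. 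The identity for $a/c$ is symmetric. Being mutually inverse antitone bijections, the residual maps are anti-isomorphisms, so they exchange existing suprema and infima. Finally, shrinking a coordinate works as follows. If $x+y$ exists and $x'\le x$, say $x'+s=x$, then $(x'+s)+y$ exists; associativity gives that $x'+(s+y)$ exists, and then shrinking inside the bracket yields $x'+y$, with $x'+y\le x+y$ via the commutation axiom.

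The main obstacle I expect is this last step and the cancellation step, where the noncommutative order of summands must be tracked carefully. In particular, "$x'\le x$" is defined by a left summand, while shrinking the left coordinate of $x+y$ requires a right-side decomposition $x=s'+x'$. This forces a use of the axiom $a+b=d+a=b+e$ to move between left and right witnesses of $\le$; I would verify first that the induced order is the same whether it is defined via left or right summands.
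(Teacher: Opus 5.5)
Your outline follows the paper's proof closely, but the one ingredient you prove that the paper instead imports, cancellation, is argued circularly. Write $w=a+x=a+y$. You attach $w^\sim$ on the right and obtain $x+w^\sim=a^\sim=y+w^\sim$. Concluding $x=y$ from this \emph{is} right cancellation. The element you call ``$(w^\sim)^-$ inside $[0,a^\sim]$'' is the residual $a^\sim\backslash w^\sim$, and the axioms give uniqueness of complements only relative to $1$, not inside subintervals. Your ``symmetric argument'' for right cancellation would in turn reduce it to left cancellation, so the two halves only refer to each other.

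The repair is to attach the complement on the side of the summand you want to isolate. From $a+x=w$ one has $w^-+(a+x)=1$, hence $(w^-+a)+x=1$ by associativity. So $x=(w^-+a)^\sim$ depends only on $a$ and $w$. Dually, $x+a=w$ gives $x+(a+w^\sim)=1$ and $x=(a+w^\sim)^-$. The paper sidesteps this by citing cancellation and residual uniqueness from Dvure\v{c}enskij--Vetterlein.

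Once cancellation is available, the rest of your plan is the paper's argument:
\begin{itemize}
\item the chain identities, by associativity and residual uniqueness;
\item the residual maps as mutually inverse antitone bijections of $[0,b]$;
\item the bound $a/b\le a^\sim$, from $a+\bigl((a/b)+b^\sim\bigr)=1$;
\item order preservation and reflection for the translations, by associativity and cancellation;
\item transfer of existing extrema through the (anti-)isomorphisms.
\end{itemize}
The only real difference is in the shrinking clause. You obtain existence of the smaller sum directly from associativity, using the commutation axiom to pass between left and right witnesses of $\le$. The paper instead first proves the existence criteria ($t+a$ is defined iff $t\le a^-$, and $a+t$ is defined iff $t\le a^\sim$) and then reads off $u+v\le u+V\le U+V$. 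Both routes are sound.
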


\begin{proof}
The cancellation law and the existence and uniqueness of the
residuals are the algebraic facts imported from the references
preceding the lemma. For $a\le b\le c$,
\[
 \bigl((c\backslash b)+(b\backslash a)\bigr)+a
 =(c\backslash b)+\bigl((b\backslash a)+a\bigr)
 =(c\backslash b)+b=c,
\]
and
\[
 a+\bigl((a/b)+(b/c)\bigr)
 =\bigl(a+(a/b)\bigr)+(b/c)=b+(b/c)=c.
\]
Residual uniqueness proves \eqref{pd:eq-residual-chains}. The
defining sums also give
\[
 (b\backslash a)/b=a,\qquad b\backslash(a/b)=a.
\]
For $a\le a'\le b$,
\[
 b\backslash a=(b\backslash a')+(a'\backslash a)
 \quad\Longrightarrow\quad b\backslash a'\le b\backslash a,
\]
\[
 a/b=(a/a')+(a'/b)
 \quad\Longrightarrow\quad a'/b\le a/b.
\]
Thus the two sectional maps are inverse order anti-isomorphisms.

If $c=t+a$ exists, then
\[
 c^-+(t+a)=1,\qquad (c^-+t)+a=1,\qquad
 c^-+t=a^-,\qquad t\le a^-.
\]
Conversely, $t\le a^-$ gives $a^-=r+t$ for
$r=a^-\backslash t$, and
\[
 (r+t)+a=a^-+a=1,\qquad r+(t+a)=1.
\]
Thus $t+a$ exists. If $c=a+t$ exists, then
\[
 (a+t)+c^\sim=1,\qquad a+(t+c^\sim)=1,\qquad
 t+c^\sim=a^\sim,\qquad t\le a^\sim.
\]
Conversely, $t\le a^\sim$ gives $a^\sim=t+r$ for
$r=t/a^\sim$, and
\[
 a+(t+r)=a+a^\sim=1,\qquad (a+t)+r=1.
\]
Thus $a+t$ exists. The residual inverses satisfy
\[
 (t+a)\backslash a=t,\qquad (b\backslash a)+a=b,
 \qquad
 a/(a+t)=t,\qquad a+(a/b)=b.
\]
If $u\le v$ and $v+a$ exists, a residual representation $v=r+u$
gives
\[
 v+a=(r+u)+a=r+(u+a),\qquad u+a\le v+a.
\]
If $u+a\le v+a$, let $r=(v+a)\backslash(u+a)$. Then
\[
 r+(u+a)=v+a,\qquad (r+u)+a=v+a,\qquad r+u=v.
\]
Hence $u\le v$. For left translation, the representation
$v=u+(u/v)$ gives
\[
 a+v=(a+u)+(u/v),\qquad a+u\le a+v.
\]
If $a+u\le a+v$ and $r=(a+u)/(a+v)$, then
\[
 (a+u)+r=a+v,\qquad a+(u+r)=a+v,\qquad u+r=v.
\]
This proves both order-isomorphism assertions. If $u\le U$,
$v\le V$, and $U+V$ exists, the two translation inequalities give
\[
 u+v\le u+V\le U+V.
\]

For an order isomorphism $\tau:I\to J$ and a nonempty family
$B\subseteq I$ with $s=\sup_I B$,
\[
 \tau(b)\le\tau(s)\quad(b\in B),
\]
\[
 \bigl[\tau(b)\le z\text{ for every }b\in B\bigr]
 \ \Longrightarrow\
 \bigl[b\le\tau^{-1}(z)\text{ for every }b\in B\bigr]
 \ \Longrightarrow\ s\le\tau^{-1}(z)
 \ \Longrightarrow\ \tau(s)\le z.
\]
Thus $\tau(s)=\sup_J\tau(B)$. Replacing upper bounds by lower
bounds proves $\tau(\inf_I B)=\inf_J\tau(B)$. For an order
anti-isomorphism $\rho:I\to J$, the corresponding implications are
\[
 \rho(s)\le\rho(b)\quad(b\in B),\qquad
 \bigl[z\le\rho(b)\text{ for all }b\in B\bigr]
 \Longrightarrow
 \bigl[b\le\rho^{-1}(z)\text{ for all }b\in B\bigr]
 \Longrightarrow z\le\rho(s).
\]
Therefore $\rho(\sup_I B)=\inf_J\rho(B)$, and
$\rho(\inf_I B)=\sup_J\rho(B)$. These formulas apply to each
translation and sectional complement above.
\end{proof}

Consequently, for $p,q\le c$,
\begin{equation}\label{pd:eq-sectional-demorgan}
 c\backslash(p\vee q)
  =(c\backslash p)\wedge(c\backslash q),
 \qquad
 (p\wedge q)/c=(p/c)\vee(q/c).
\end{equation}
The other sectional De Morgan identities follow by applying the
inverse sectional map. The same statements hold for every existing
family of extrema. We will also use the opposite pseudo-effect
algebra, whose addition is $a\mathbin{\boxplus}b=b+a$. It has the
same order and residuals
\begin{equation}\label{pd:eq-opposite}
 b\backslash_{\rm op}a=a/b,\qquad
 a/_{\rm op}b=b\backslash a.
\end{equation}

From now on $L$ is $\sigma$-complete and lattice-$\sigma$-continuous.
The latter condition means that $x_n\uparrow x$ implies
$x_n\wedge y\uparrow x\wedge y$ for every $y\in L$.
We write $x_n\onull x$ for sequential order convergence, meaning
that there are $p_n\uparrow x$ and $q_n\downarrow x$ with
$p_n\le x_n\le q_n$.

\begin{lemma}[Lattice continuity]\label{pd:sigma-continuity}
Both finite lattice operations preserve sequential order
convergence jointly. In particular,
\begin{align}
 x_n\uparrow x,\ y_n\uparrow y
   &\ \Longrightarrow\ x_n\wedge y_n\uparrow x\wedge y,
   \label{pd:eq-diagonal-meet}\\
 x_n\downarrow x,\ y_n\downarrow y
   &\ \Longrightarrow\ x_n\vee y_n\downarrow x\vee y.
   \label{pd:eq-diagonal-join}
\end{align}
Every order-convergent sequence has the canonical brackets
\begin{equation}\label{pd:eq-canonical-brackets}
 p_N=\bigwedge_{n\ge N}x_n\uparrow x,\qquad
 q_N=\bigvee_{n\ge N}x_n\downarrow x.
\end{equation}
\end{lemma}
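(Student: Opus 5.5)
We prove the diagonal identities \eqref{pd:eq-diagonal-meet} and \eqref{pd:eq-diagonal-join} first, then the canonical brackets, and finally derive joint preservation of order convergence from these. Only lattice-$\sigma$-continuity, $\sigma$-completeness, and the lattice laws of $L$ are needed; the residual calculus plays no role except through duality.

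For \eqref{pd:eq-diagonal-meet}, let $x_n\uparrow x$ and $y_n\uparrow y$. The sequence $x_n\wedge y_n$ is increasing and bounded by $x\wedge y$, so its supremum $s$ exists by $\sigma$-completeness and satisfies $s\le x\wedge y$. For fixed $k$ and $n\ge k$ we have $x_n\wedge y_k\le x_n\wedge y_n\le s$. Lattice-$\sigma$-continuity, applied to $x_n\uparrow x$ with $y=y_k$, gives $x\wedge y_k=\bigvee_n x_n\wedge y_k\le s$. Applying lattice-$\sigma$-continuity once more, now to $y_k\uparrow y$ with fixed $x$, gives $x\wedge y\le s$. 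Hence $s=x\wedge y$.

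The dual \eqref{pd:eq-diagonal-join} needs the dual continuity law $x_n\downarrow x\Rightarrow x_n\vee y\downarrow x\vee y$. This is \textbf{the main obstacle}, since it is not assumed directly. I would obtain it from the sectional anti-isomorphism of \cref{pd:residual-calculus} with $b=1$. The map $a\mapsto 1\backslash a=a^-$ is an order anti-isomorphism of $L=[0,1]$ that converts existing suprema into infima, and its inverse is $a\mapsto a/1=a^\sim$. Thus $x_n\downarrow x$ gives $x_n^-\uparrow x^-$, and
\[
 (x_n\vee y)^-=x_n^-\wedge y^-\uparrow x^-\wedge y^-=(x\vee y)^-
\]
by \eqref{pd:eq-sectional-demorgan} and lattice-$\sigma$-continuity. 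Applying the inverse anti-isomorphism converts this supremum back into an infimum, so $x_n\vee y\downarrow x\vee y$. Once the dual law is available, \eqref{pd:eq-diagonal-join} follows by the same two-step argument as \eqref{pd:eq-diagonal-meet}, with all orders reversed.

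For the canonical brackets \eqref{pd:eq-canonical-brackets}, take witnesses $p_n\uparrow x$ and $q_n\downarrow x$ with $p_n\le x_n\le q_n$. The elements $p'_N=\bigwedge_{n\ge N}x_n$ and $q'_N=\bigvee_{n\ge N}x_n$ exist by $\sigma$-completeness; $p'_N$ is increasing, $q'_N$ is decreasing, and $p_N\le p'_N\le x_N\le q'_N\le q_N$. Moreover $p'_N\le q_M$ for all $M$, since for $n\ge\max(N,M)$ we have $p'_N\le x_n\le q_n\le q_M$. Hence $p'_N\le\bigwedge_M q_M=x$. Together with $p_N\le p'_N$, this gives $\bigvee_N p'_N=x$. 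The decreasing case is symmetric, giving $\bigwedge_N q'_N=x$.

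Finally, suppose $x_n\onull x$ and $y_n\onull y$, with brackets $p_n\le x_n\le q_n$ and $r_n\le y_n\le t_n$. Then
\[
 p_n\wedge r_n\le x_n\wedge y_n\le q_n\wedge t_n .
\]
The lower bracket increases to $x\wedge y$ by \eqref{pd:eq-diagonal-meet}. The upper bracket decreases to $x\wedge y$ by the dual of \eqref{pd:eq-diagonal-join}, that is, meets of decreasing sequences; this is obtained from the complemented meet form of the dual law exactly as \eqref{pd:eq-diagonal-join} was obtained above. So $x_n\wedge y_n\onull x\wedge y$. The joins are treated symmetrically: $p_n\vee r_n\le x_n\vee y_n\le q_n\vee t_n$, where the upper bracket decreases to $x\vee y$ by \eqref{pd:eq-diagonal-join} and the lower bracket increases to $x\vee y$ by the increasing-join analogue of \eqref{pd:eq-diagonal-meet}.
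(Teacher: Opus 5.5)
Your proof is correct and follows the paper's route: you get the diagonal meet by two applications of lattice-$\sigma$-continuity, the dual fixed-coordinate law by transport through the complement anti-isomorphisms $a\mapsto a^-$, $a\mapsto a^\sim$ and the sectional De Morgan identity, joint convergence from the brackets $p_n\wedge r_n\le x_n\wedge y_n\le q_n\wedge t_n$, and the canonical brackets by the same tail argument. The differences are cosmetic: the paper obtains \eqref{pd:eq-diagonal-join} by complementing the diagonal meet formula instead of rerunning the two-step argument, and it observes that meets of decreasing sequences and joins of increasing sequences converge directly from the definition of extrema, so your detour through complements for those two cases is unnecessary.
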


\begin{proof}
For $z_n\downarrow z$, the complement identities and the assumed
increasing-meet continuity give
\[
 z_n^-\uparrow z^-,
 \qquad
 (z_n\vee c)^-=z_n^-\wedge c^-\uparrow z^-\wedge c^-=(z\vee c)^-.
\]
Applying the inverse complement $t\mapsto t^\sim$ yields
$z_n\vee c\downarrow z\vee c$. The remaining fixed-coordinate
formulas are
\[
 \bigvee_n(z_n\vee c)=\left(\bigvee_nz_n\right)\vee c
       \quad(z_n\uparrow),
 \qquad
 \bigwedge_n(z_n\wedge c)=\left(\bigwedge_nz_n\right)\wedge c
       \quad(z_n\downarrow).
\]
For the first identity, a common upper bound of all $z_n\vee c$
is an upper bound of every $z_n$ and of $c$. For the second,
a common lower bound is below every $z_n$ and below $c$.

Suppose $x_n\uparrow x$ and $y_n\uparrow y$. For
$d=\bigvee_n(x_n\wedge y_n)$ and each $k$,
\[
 x_k\wedge y_n\le x_n\wedge y_n\le d\quad(n\ge k),
 \qquad
 x_k\wedge y=\bigvee_{n\ge k}(x_k\wedge y_n)\le d,
\]
\[
 x\wedge y=\bigvee_k(x_k\wedge y)\le d\le x\wedge y.
\]
This proves \eqref{pd:eq-diagonal-meet}. Also, if
$d=\bigvee_n(x_n\vee y_n)$, then
\[
 x=\bigvee_nx_n\le d,\qquad y=\bigvee_ny_n\le d,
 \qquad x\vee y\le d\le x\vee y.
\]
Thus $x_n\vee y_n\uparrow x\vee y$.

Suppose $x_n\downarrow x$ and $y_n\downarrow y$. The preceding
increasing-meet formula gives
\[
 (x_n\vee y_n)^-=x_n^-\wedge y_n^-
       \uparrow x^-\wedge y^-=(x\vee y)^-,
\]
hence \eqref{pd:eq-diagonal-join}. If
$d=\bigwedge_n(x_n\wedge y_n)$, then
\[
 d\le\bigwedge_nx_n=x,\qquad d\le\bigwedge_ny_n=y,
 \qquad x\wedge y\le d\le x\wedge y.
\]
Thus $x_n\wedge y_n\downarrow x\wedge y$.

For $x_n\onull x$ and $y_n\onull y$, let
\[
 a_n\uparrow x,\quad b_n\downarrow x,\quad
 c_n\uparrow y,\quad d_n\downarrow y,\qquad
 a_n\le x_n\le b_n,\quad c_n\le y_n\le d_n.
\]
For each $\circ\in\{\wedge,\vee\}$,
\[
 a_n\circ c_n\le x_n\circ y_n\le b_n\circ d_n,\qquad
 a_n\circ c_n\uparrow x\circ y,\quad
 b_n\circ d_n\downarrow x\circ y.
\]
These are the required order brackets.

Finally, suppose $a_n\uparrow x$, $b_n\downarrow x$, and
$a_n\le x_n\le b_n$. For the canonical brackets and every $N$,
\[
 a_N\le\bigwedge_{n\ge N}x_n=p_N,\qquad
 q_N=\bigvee_{n\ge N}x_n\le b_N.
\]
For every $k\ge N$, $p_N\le x_k\le b_k$, whence
$p_N\le\bigwedge_{k\ge N}b_k=x$. Also
$q_N\ge x_k\ge a_k$, whence
$q_N\ge\bigvee_{k\ge N}a_k=x$. Therefore
\[
 a_N\le p_N\le x\le q_N\le b_N,\qquad
 p_N\le p_{N+1},\quad q_{N+1}\le q_N,
\]
\[
 x=\bigvee_Na_N\le\bigvee_Np_N\le x,\qquad
 x\le\bigwedge_Nq_N\le\bigwedge_Nb_N=x.
\]
\end{proof}

\begin{definition}\label{pd:normality-definition}
Let $E$ be a $D$-sublattice of $L$, and let $X$ be a separated
Abelian convergence group with its prescribed net convergence.
A map $f:E\to X$ is additive if $f(0)=0$ and
$f(a+b)=f(a)+f(b)$ for every defined sum in $E$.
It is modular if
\[
 f(a\vee b)+f(a\wedge b)=f(a)+f(b).
\]
An ordered orthogonal sequence $(a_n)$ has defined partial sums
$s_N=a_1+\cdots+a_N$. Intrinsic $\sigma$-additivity requires
\[
 f(s_N)\longrightarrow f(s)
\]
whenever $s=\sup_E s_N$ exists. Ambient $\sigma$-additivity requires
this identity when $s=\sup_Ls_N$ belongs to $E$.
Ambient normality means
\begin{equation}\label{pd:eq-ambient-normality}
 c_n\in E,\quad c_n\downarrow_L0
 \quad\Longrightarrow\quad f(c_n)\longrightarrow0.
\end{equation}
Intrinsic normality uses the infimum in $E$ instead.
\end{definition}

\begin{lemma}[Normality and additivity]\label{pd:ambient-normality}
For an additive map into a separated Abelian convergence group,
intrinsic $\sigma$-additivity is equivalent to intrinsic normality.
The ambient versions are also equivalent. Either form of normality
implies continuity at every monotone sequential limit of the
corresponding kind that belongs to the domain. Intrinsic normality
implies ambient normality.
\end{lemma}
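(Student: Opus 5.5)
The plan is to reduce everything to the residual calculus of Lemma~\ref{pd:residual-calculus}, and to use additivity to convert decreasing differences into residual values. Fix $E$ and an additive $f:E\to X$. \emph{Intrinsic $\sigma$-additivity implies intrinsic normality.} Take $c_n\in E$ with $c_n\downarrow0$ in $E$. Then $c_1\backslash c_n\in E$ increases, and I will show that its supremum in $E$ is $c_1$. The sectional map $a\mapsto c_1\backslash a$ is an anti-isomorphism of $[0,c_1]$, but it preserves extrema only in $L$. So I would verify directly that an $E$-upper bound $u$ of all $c_1\backslash c_n$ satisfies $u\wedge c_1\ge c_1\backslash c_n$, and then apply the inverse map inside $[0,c_1]\cap E$; this works because $E$ is closed under residuals of comparable elements and under meets. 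Next I write $c_1\backslash c_n$ as an ordered orthogonal partial sum $s_{n-1}=a_1+\cdots+a_{n-1}$ with $a_k=(c_1\backslash c_{k+1})\backslash(c_1\backslash c_k)$, using \eqref{pd:eq-residual-chains}. Intrinsic $\sigma$-additivity then gives $f(c_1\backslash c_n)\to f(c_1)$. By additivity, $f(c_n)=f(c_1)-f(c_1\backslash c_n)$, and the group axioms give $f(c_n)\to0$.

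\emph{The converse.} Given $s_N\uparrow s=\sup_E s_N$, put $c_N=s\backslash s_N\in E$. These decrease, and by the same anti-isomorphism argument in $[0,s]\cap E$ their $E$-infimum is $0$. Normality and $f(s_N)=f(s)-f(c_N)$ give $f(s_N)\to f(s)$. The ambient versions are identical, replacing suprema and infima in $E$ by those in $L$ and requiring the limit to lie in $E$. In that case Lemma~\ref{pd:residual-calculus} applies verbatim, since it preserves all existing extrema in $L$.

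\emph{Monotone continuity.} For $x_n\downarrow x$ with $x\in E$, apply normality to $x_n\backslash x\downarrow 0$, using that right translation preserves infima, and add $f(x)$. For $x_n\uparrow x$ with $x\in E$, use $x\backslash x_n\downarrow0$. Here sequential limits of the "corresponding kind" means infima or suprema computed in $E$ or in $L$.

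\emph{Intrinsic normality implies ambient normality.} If $c_n\in E$ and $c_n\downarrow_L0$, then $0\in E$ is a lower bound. Any $E$-lower bound is an $L$-lower bound and hence $\le0$, so $\bigwedge_E c_n=0$ and the intrinsic hypothesis applies.

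The main obstacle is the first step, namely transferring extrema through the sectional anti-isomorphism when the extremum is taken only in the sublattice $E$. I expect this to be handled by noting that for an upper bound $u\in E$ one may replace $u$ by $u\wedge c_1\in E$, and that $[0,c_1]\cap E$ is stable under both residual maps. The mutual inverse property then makes them anti-isomorphisms of this subposet. Separation of $X$ is not even needed, apart from uniqueness in writing the limits.
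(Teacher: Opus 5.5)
Your proposal is essentially the paper's proof, with one local error in the first step.

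The shared structure is as follows. The paper also telescopes $c_1\backslash c_{N+1}$ as an ordered orthogonal sum of the increments $c_n\backslash c_{n+1}$. For the converse and for increasing limits it uses a residual ($x_n/x$ in the paper; your $s\backslash s_N$ works equally well since $X$ is abelian). For decreasing limits it uses inverse right translation $x_n\backslash x$, and it proves intrinsic $\Rightarrow$ ambient normality by the same lower-bound argument. Your explicit passage from $E$-extrema to the subposet $[0,c_1]\cap E$ via $u\mapsto u\wedge c_1$ spells out what the paper compresses into one sentence about restricted anti-isomorphisms.

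The error is in the increments. They must be $a_k=c_k\backslash c_{k+1}=(c_1\backslash c_k)/(c_1\backslash c_{k+1})$, the \emph{right} residual, which is exactly what \eqref{pd:eq-residual-chains} supplies. Your formula $(c_1\backslash c_{k+1})\backslash(c_1\backslash c_k)$ is the left residual. It satisfies $a_k+(c_1\backslash c_k)=c_1\backslash c_{k+1}$, so each new term is added on the left of the previous partial sum. In a noncommutative pseudo-effect algebra, $(a_k)$ then need not be ordered orthogonal with partial sums $c_1\backslash c_{k+1}$. With the corrected increments, your argument goes through as written.
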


\begin{proof}
Let $Q=E$ for the intrinsic assertions and $Q=L$ for the ambient
assertions. The restricted sectional maps are anti-isomorphisms
of the intervals in $E$, because both residuals belong to $E$.
The restricted translations are isomorphisms between the
corresponding intervals in $E$. Thus the extremum formulas of
Lemma~\ref{pd:residual-calculus} hold with $\sup_Q$ and $\inf_Q$.

Suppose $c_n\in E$, $c_n\downarrow_Q0$, and sigma-additivity
holds for orthosums computed in $Q$ that belong to $E$. Put
$d_n=c_n\backslash c_{n+1}$. The ordered sums satisfy
\[
 D_1=d_1=c_1\backslash c_2,\qquad D_1+c_2=c_1.
\]
If $D_N+c_{N+1}=c_1$, then
\[
 D_N+(d_{N+1}+c_{N+2})=D_N+c_{N+1}=c_1,
\]
\[
 (D_N+d_{N+1})+c_{N+2}=c_1,\qquad
 D_{N+1}=D_N+d_{N+1}=c_1\backslash c_{N+2}.
\]
Induction proves the existence of every $D_N$ and
$D_N=c_1\backslash c_{N+1}$. Moreover,
\[
 \sup_QD_N
 =c_1\backslash\left(\inf_Qc_{N+1}\right)
 =c_1\backslash0=c_1.
\]
Sigma-additivity and group subtraction now give
\[
 f(D_N)\longrightarrow f(c_1),\qquad
 f(c_{N+1})=f(c_1)-f(D_N)\longrightarrow0.
\]

Conversely, assume normality at zero for infima computed in $Q$.
If $x_n\uparrow_Qx\in E$, then
\[
 r_n=x_n/x\in E,\quad r_{n+1}\le r_n,\qquad
 \inf_Qr_n=(\sup_Qx_n)/x=x/x=0,
\]
\[
 x_n+r_n=x,\qquad
 f(x_n)=f(x)-f(r_n)\longrightarrow f(x).
\]
If $x_n\downarrow_Qx\in E$, then inverse right translation gives
\[
 r_n=x_n\backslash x\in E,\quad r_{n+1}\le r_n,\qquad
 \inf_Qr_n=(\inf_Qx_n)\backslash x=x\backslash x=0,
\]
\[
 r_n+x=x_n,\qquad
 f(x_n)=f(r_n)+f(x)\longrightarrow f(x).
\]
For an ordered orthogonal sequence with partial sums
$s_N\uparrow_Qs\in E$, these increasing-limit formulas give
\[
 \sum_{j=1}^Nf(a_j)=f(s_N)\longrightarrow f(s),
\]
which is sigma-additivity.

If $c_n\in E$ and $\inf_Lc_n=0$, then
\[
 \{d\in E:d\le c_n\text{ for all }n\}=\{0\}.
\]
Indeed every such $d$ satisfies $d\le\inf_Lc_n=0$.
Hence $\inf_Ec_n=0$, and intrinsic normality implies
\eqref{pd:eq-ambient-normality}.
\end{proof}

\subsection{Sectional Sasaki transport}

For $x,y\in L$, define
\begin{equation}\label{pd:eq-sasaki-definition}
 c=x\vee y,\qquad u=x/c,\qquad v=u\vee y,\qquad
 T_x(y)=v\backslash u,
\end{equation}
and put
\begin{equation}\label{pd:eq-kernel-definition}
 K_x(y)=x\wedge((x\vee y)\backslash y).
\end{equation}
The defining sums are
\begin{equation}\label{pd:eq-sasaki-sums}
 x+u=c,\qquad T_x(y)+u=v.
\end{equation}
Since $v\le c$, right translation reflects order and gives
$T_x(y)\le x$.

\begin{lemma}[Kernel identities and order test]\label{pd:sasaki-kernel}
For all $x,y\in L$,
\begin{equation}\label{pd:eq-sasaki-kernel}
 K_x(y)+T_x(y)=x,\qquad
 T_x(y)=K_x(y)/x,\qquad K_x(y)=x\backslash T_x(y).
\end{equation}
If $z\le x$ and $r=x\backslash z$, then
\begin{equation}\label{pd:eq-sasaki-order-test}
 T_x(y)\le z
 \quad\Longleftrightarrow\quad
 r+y\text{ exists and }r+y\le x\vee y.
\end{equation}
\end{lemma}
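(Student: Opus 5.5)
The plan is to compute $x\backslash T_x(y)$ by pushing the defining sums through right translation by $u$. Then the sectional De Morgan identity \eqref{pd:eq-sectional-demorgan} will identify the result with $K_x(y)$. The order test will then follow from the anti-isomorphism property of the sectional maps together with right translation by $y$, both from Lemma~\ref{pd:residual-calculus}. Write $T=T_x(y)$, $K=K_x(y)$, and keep $c,u,v$ as in \eqref{pd:eq-sasaki-definition}.

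First, since $T\le x$ (noted after \eqref{pd:eq-sasaki-sums}), the residual $x\backslash T$ exists with $(x\backslash T)+T=x$. Because $x+u=c$ exists, associativity of the pseudo-effect algebra lets me regroup:
\[
 c=x+u=\bigl((x\backslash T)+T\bigr)+u=(x\backslash T)+(T+u)=(x\backslash T)+v .
\]
Residual uniqueness then gives $x\backslash T=c\backslash v$. Next $v=u\vee y$ with $u,y\le c$, so \eqref{pd:eq-sectional-demorgan} yields
\[
 c\backslash v=(c\backslash u)\wedge(c\backslash y).
\]
From $x+u=c$ we get $c\backslash u=x$, hence $x\backslash T=x\wedge((x\vee y)\backslash y)=K$. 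This gives $K+T=x$ and $K=x\backslash T$. Since $K+(K/x)=x$ as well, uniqueness of the right residual (equivalently, the inverse sectional maps of Lemma~\ref{pd:residual-calculus}) gives $T=K/x$. This proves \eqref{pd:eq-sasaki-kernel}.

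For \eqref{pd:eq-sasaki-order-test}, let $z\le x$ and $r=x\backslash z$. The map $a\mapsto x\backslash a$ is an order anti-isomorphism of $[0,x]$, so
\[
 T\le z\iff x\backslash z\le x\backslash T\iff r\le K .
\]
Since $r\le x$ automatically, $r\le K$ is equivalent to $r\le c\backslash y$.

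It remains to show that $r\le c\backslash y$ holds exactly when $r+y$ exists and $r+y\le c$. Because $(c\backslash y)+y=c$ exists, Lemma~\ref{pd:residual-calculus} gives $c\backslash y\le y^-$. If $r\le c\backslash y$, then $r\le y^-$, so $r+y$ exists, and the translation inequality gives $r+y\le(c\backslash y)+y=c$. Conversely, suppose $r+y$ exists and $r+y\le c$. Then both $r$ and $c\backslash y$ lie in $[0,y^-]$, and right translation by $y$ reflects order. So $r+y\le(c\backslash y)+y$ gives $r\le c\backslash y$.

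The only delicate point is the regrouping step, where the existence of the regrouped sum must be justified. It is supplied by the associativity axiom, since $x+u$ is already known to exist. The rest is bookkeeping with Lemma~\ref{pd:residual-calculus}, keeping track of which residual, left or right, is involved.
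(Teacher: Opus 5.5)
Your proof is correct and follows the paper's argument. The paper sets $k=c\backslash v$ and derives $k+T_x(y)=x$ from $(k+t)+u=k+v=c=x+u$, which is your regrouping computation run in the opposite direction. It then uses the same sectional De Morgan step and the same chain of equivalences for the order test. Your explicit check, via Lemma~\ref{pd:residual-calculus}, that $r\le(x\vee y)\backslash y$ holds exactly when $r+y$ exists and $r+y\le x\vee y$ supplies a final step the paper states without comment.
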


\begin{proof}
Let $c=x\vee y$, $u=x/c$, $v=u\vee y$, $t=v\backslash u$,
and $k=c\backslash v$. Then
\[
 (k+t)+u=k+(t+u)=k+v=c=x+u,
 \qquad k+t=x.
\]
Also $x+u=c$ implies $c\backslash u=x$, and hence
\[
 k=c\backslash(u\vee y)
   =(c\backslash u)\wedge(c\backslash y)
   =x\wedge((x\vee y)\backslash y)=K_x(y).
\]
Thus
\[
 K_x(y)+T_x(y)=x,\qquad
 K_x(y)=x\backslash T_x(y),\qquad
 T_x(y)=K_x(y)/x.
\]
For $z\le x$ and $r=x\backslash z$, we have $r\le x$ and
\[
 \begin{aligned}
 T_x(y)\le z
 &\ \Longleftrightarrow\
 x\backslash z\le x\backslash T_x(y)\\
 &\ \Longleftrightarrow\
 r\le x\wedge((x\vee y)\backslash y)\\
 &\ \Longleftrightarrow\
 r\le(x\vee y)\backslash y\\
 &\ \Longleftrightarrow\
 r+y\text{ exists and }r+y\le x\vee y.
 \end{aligned}
\]
\end{proof}

\begin{lemma}\label{pd:disjoint-sum}
If $a+b$ exists and $a\wedge b=0$, then $a+b=a\vee b$.
\end{lemma}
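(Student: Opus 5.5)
The plan is to show that $a\vee b\le a+b$ and then, conversely, that $a+b\le a\vee b$; antisymmetry of the induced order then gives the identity. Both inequalities come from the residual and translation calculus of Lemma~\ref{pd:residual-calculus}. The first inequality is immediate. Since $a+b$ exists, the representation $a+b=a+(b)$ shows $a\le a+b$ by the definition of the induced order. The pseudo-effect axiom gives $d$ with $a+b=d+a$, but it is cleaner to note that shrinking the first coordinate of a defined sum decreases its value (last sentence of Lemma~\ref{pd:residual-calculus}), so $0+b=b\le a+b$. Hence $a\vee b\le a+b$.

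For the reverse inequality, put $c=a\vee b$ and consider the residual $a/c$, which satisfies $a+(a/c)=c$. The aim is to show $b\le a/c$. Then the left translation $t\mapsto a+t$, an order isomorphism $[0,a^\sim]\to[a,1]$, gives $a+b\le a+(a/c)=c$. Note that $b\le a^\sim$ holds because $a+b$ exists, so $b$ lies in the domain of this translation.

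To obtain $b\le a/c$, the meet $b\wedge(a/c)$ should be compared with $b$ using that translation isomorphism, which preserves existing infima. Both $b$ and $a/c$ lie in $[0,a^\sim]$: the element $a/c$ does because $a+(a/c)=c$ exists. Translation therefore gives
\[
 a+\bigl(b\wedge(a/c)\bigr)=(a+b)\wedge(a+(a/c))=(a+b)\wedge c=c,
\]
where the last step uses $c\le a+b$. Thus $a+(b\wedge(a/c))=a+(a/c)$, and cancellation (injectivity of left translation) yields $b\wedge(a/c)=a/c$, that is, $a/c\le b$. This is the opposite of what is wanted, so a further step is needed.

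To close the gap, write $b=(a/c)+e$ with $e=(a/c)/b$, using the residual convention~\eqref{pd:eq-residual-conventions}. Then associativity gives
\[
 a+b=\bigl(a+(a/c)\bigr)+e=c+e.
\]
Since $b\le c$, we have $e\le b\le c$. Also $e\le a^\sim$ as a shrunken coordinate, and from $c+e$ existing, $e\le c^\sim$. The main obstacle is to exploit $a\wedge b=0$ to force $e=0$. The plan is to show $e\le a$ as well, which would give $e\le a\wedge b=0$. From $c+e$ existing and $a\le c$, the sum $a+e$ exists and lies below $c+e$. The natural route is to use the pseudo-effect commutation axiom $x+y=y+z$ to move $e$ past $a$. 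Alternatively, one can use the Riesz-type fact from \cite{DvurecenskijVetterlein01} that $x\wedge y=0$ with $x+y$ defined implies $x\wedge y$-orthogonal decomposition of $x\vee y$. If the direct computation resists, the fallback is to cite the corresponding identity in \cite[Propositions~1.2--1.3]{AvalloneVitolo12Uniformities}, namely $(x\vee y)\backslash y=x\backslash(x\wedge y)$. Applied with $x=a$, $y=b$, this gives $c\backslash b=a\backslash 0=a$, hence $a+b=(c\backslash b)+b=c$, where the first equality uses uniqueness of residuals. This fallback is in fact the shortest route, and I would present it as the proof.
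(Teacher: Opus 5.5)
\textbf{There is a genuine gap.} Your first inequality $a\vee b\le a+b$ is fine. The computations giving $a/c\le b$ and $a+b=c+e$ are also correct. But they stop exactly where $a\wedge b=0$ has to be used. The step $e\le a$ is announced (``move $e$ past $a$'') but never carried out, and the ``Riesz-type fact'' merely restates the lemma. The proof you would actually present therefore rests entirely on the cited identity $(x\vee y)\backslash y=x\backslash(x\wedge y)$, and that identity is not valid in a general pseudo-$D$-lattice. In lattice effect algebras it characterizes compatible pairs; it is not a law of all (pseudo-)$D$-lattices.

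A concrete counterexample is the orthomodular lattice $\{0,p,p^-,q,q^-,1\}$, the horizontal sum of two four-element Boolean algebras, with $x+y=x\vee y$ defined exactly when $x\le y^-$. This is a finite commutative $D$-lattice, hence a pseudo-$D$-lattice satisfying all standing hypotheses. For $x=p$, $y=q$ one gets $(p\vee q)\backslash q=1\backslash q=q^-$, whereas $p\backslash(p\wedge q)=p\backslash 0=p$.

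You could have caught this from a symptom: your final argument never uses that $a+b$ exists. Applied to $p,q$ above, where $p\wedge q=0$, it would show that $p+q$ exists and equals $1$. That is false, since $p\not\le q^-$. If the source states the identity only for pairs whose sum exists, then quoting it is quoting the lemma itself.

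\textbf{The missing idea} is a conjugation-and-cancellation step that places the defect below both $a$ and $b$. The paper's argument runs as follows.
\begin{enumerate}
\item Put $c=a+b$, $j=a\vee b$ (so $j\le c$ by your first step), and $d=c\backslash j$.
\item From $d+b\le d+j=c=a+b$, right translation gives $d\le a$.
\item Choose $e$ with $d+a=a+e$, and write $a=d+t$ with $t=d/a$. Then $d+(d+t)=(d+t)+e=d+(t+e)$, and left cancellation gives $a=t+e$. Via $t+e=e+h$ this yields $e\le a$.
\item Left translation applied to $a+e=d+a\le d+j=a+b$ gives $e\le b$.
\item Hence $e\le a\wedge b=0$, so $d+a=a=0+a$, which gives $d=0$ and $c=j$.
\end{enumerate}
The existence of $a+b$ is used essentially here, which is exactly what your fallback misses.

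Your own route closes the same way. Conjugate the right-hand defect in $a+b=c+e$ to the left, $c+e=g+c$. Then $g=(a+b)\backslash(a\vee b)$ is the paper's $d$, and the argument above applies. Some such use of the axiom $x+y=y+z$ together with cancellation is indispensable.
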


\begin{proof}
Set $c=a+b$, $j=a\vee b$, and $d=c\backslash j$.
Since $a,b\le j\le c$,
\[
 d+j=c,\qquad d+b\le c=a+b,\qquad d+a\le c.
\]
Right translation in the middle inequality gives $d\le a$.
Let $e$ satisfy $d+a=a+e$, and let $t=d/a$. Then
\[
 d+t=a,\qquad
 d+(d+t)=(d+t)+e=d+(t+e),\qquad
 d+t=t+e=a.
\]
The pseudo-effect algebra axiom applied to $t+e$ supplies $h$
with $t+e=e+h$. Thus $e\le a$. Left translation in
\[
 a+e=d+a\le c=a+b
\]
gives $e\le b$. Therefore
\[
 0\le e\le a\wedge b=0,\qquad d+a=a=0+a,\qquad d=0,
 \qquad c=d+j=j.
\]
\end{proof}

\begin{theorem}[Monotonicity and preservation of joins]
\label{pd:sasaki-joins}
For fixed $x$, the map $T_x$ is increasing and preserves every
existing nonempty join. Moreover, $T_x(d)=d$ whenever $d\le x$.
\end{theorem}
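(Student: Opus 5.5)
The plan is to deduce both monotonicity and join preservation from a single adjunction. For fixed $x$ we look for a map $G_x:[0,x]\to L$ with
\[
 T_x(y)\le z\quad\Longleftrightarrow\quad y\le G_x(z)\qquad(y\in L,\ z\le x).
\]
Since $T_x(y)\le x$ always holds, such a residuated map is automatically increasing. It also preserves every existing nonempty join. If $s=\bigvee_\xi y_\xi$ exists, then $T_x(y_\xi)\le T_x(s)$ for each $\xi$. Conversely, if every $T_x(y_\xi)\le z$, where we may replace $z$ by $z\wedge x$, then every $y_\xi\le G_x(z)$, hence $s\le G_x(z)$ and $T_x(s)\le z$. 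So the argument reduces to establishing the adjunction. The last assertion is a direct computation. If $d\le x$, then $c=x\vee d=x$ and $u=x/x=0$, so $v=0\vee d=d$ and $T_x(d)=d\backslash0=d$ by \eqref{pd:eq-sasaki-sums} and residual uniqueness.

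To obtain $G_x$, I will start from the order test \eqref{pd:eq-sasaki-order-test}. Put $r=x\backslash z$. Then $T_x(y)\le z$ holds exactly when
\[
 r\le (x\vee y)\backslash y,
\]
equivalently when $r+y$ exists and $r+y\le x\vee y$. Existence of $r+y$ is governed by the right-translation part of Lemma~\ref{pd:residual-calculus}: it holds if and only if $y\le r^-$, which is a principal downset condition. The work therefore lies in the second clause. I would show that
\[
 Y_r=\{y: y\le r^-,\ r+y\le x\vee y\}
\]
is closed downwards and under existing joins, and that it has a largest element. Since $L$ is a lattice, it suffices to produce a single candidate $G_x(z)\in Y_r$ that dominates every member of $Y_r$.

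My candidate is $G_x(z)=\bigl(x\vee r^-\bigr)\wedge r^-$ corrected by a sectional residual. The guiding case is the orthomodular one, where $G_x(z)=z\vee x^\perp$. To verify the candidate, I will rewrite $(x\vee y)\backslash y$ by means of the chain rule \eqref{pd:eq-residual-chains} and the sectional De Morgan identities \eqref{pd:eq-sectional-demorgan}. The aim is to express the condition $r\le x\wedge((x\vee y)\backslash y)=K_x(y)$ as an inequality in which $y$ occurs only once and monotonically, after a left or right translation by $r$. Lemma~\ref{pd:disjoint-sum} is expected to enter through the disjoint components $r$ and $y\wedge r$ when the sum and the join have to be identified.

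The main obstacle is exactly this downward closure, together with join-closure, of $Y_r$. Knowing $r+y'\le x\vee y'$ and $y\le y'$ yields $r+y\le x\vee y'$ at once, but not $r+y\le x\vee y$. In a general pseudo-effect algebra, $r\le(x\vee y')\backslash y'$ does not make $r$ disjoint from $y'$. If a clean identity for $(x\vee y)\backslash y$ in terms of $x\wedge y$ fails in the pseudo setting, I would fall back on the opposite algebra via \eqref{pd:eq-opposite} and argue with $u=x/c$ directly. The alternative route is to show that, for $y\le y'$, the equality $c\backslash u=x$ forces $v\backslash u\le v'\backslash u'$, by comparing both residuals inside the interval $[u',c']$, where the translations of Lemma~\ref{pd:residual-calculus} are order isomorphisms.
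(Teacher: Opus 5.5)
Your outer reduction is reasonable, but the proposal never proves the one nontrivial assertion, monotonicity. The adjunction you aim for is also stronger than the theorem and is not established.

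\textbf{The adjunction.} Your candidate is not actually defined: $(x\vee r^-)\wedge r^-=r^-$ by absorption, and ``corrected by a sectional residual'' specifies nothing. No finite expression can serve.
\begin{itemize}
\item On the MV-chain $[0,1]$, a $\sigma$-complete, lattice-$\sigma$-continuous $D$-lattice, one computes $T_x(y)=x\wedge y$.
\item For $z\le x$, the largest $y$ with $T_x(y)\le z$ is $z$ when $z<x$ and $1$ when $z=x$. This is discontinuous along the diagonal.
\item Every finite expression in $\wedge,\vee$, the complements and the total residuals is continuous on $[0,1]$.
\end{itemize}
In a merely $\sigma$-complete $L$ nothing you cite guarantees that $Y_r$ has a largest element. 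You do not need one. Join-closure of $Y_r$ is immediate once the existence condition is stated correctly. With the paper's conventions, $r+y$ is left translation by $r$, which lives on $[0,r^\sim]$, so the condition is $y\le r^\sim$, not $y\le r^-$. If $y_i\in Y_r$ and $Y=\bigvee_i y_i$, then $Y\le r^\sim$ and $r+Y=\bigvee_i(r+y_i)\le x\vee Y$. The order test \eqref{pd:eq-sasaki-order-test} then gives $T_x(Y)\le z$. Combined with monotonicity, this is exactly the paper's join argument. Your fixed-point computation $T_x(d)=d$ is fine.

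\textbf{The gap.} Downward closure of $Y_r$ for all $z\le x$ is equivalent to monotonicity of $T_x$, and you leave it open with two unspecified fallbacks. The missing idea is to compare the kernels $K_x(Y)$ and $K_x(y)$ using the residual of the two joins as the disjoint piece. For $y\le Y$ put
\[
h=x\vee y,\quad C=x\vee Y,\quad \alpha=C\backslash h,\quad q=h\backslash y,\quad \beta=C\backslash Y .
\]
\begin{enumerate}
\item Since $h\vee Y=C$, sectional De Morgan \eqref{pd:eq-sectional-demorgan} gives $\alpha\wedge\beta=C\backslash C=0$. Hence $\alpha\wedge K_x(Y)=0$, because $K_x(Y)=x\wedge\beta$.
\item The sum $\alpha+K_x(Y)$ exists, since $K_x(Y)\le x\le h$ and $\alpha+h=C$. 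Lemma~\ref{pd:disjoint-sum} then gives $\alpha+K_x(Y)=\alpha\vee K_x(Y)$.
\item By the chain rule, $C\backslash y=\alpha+q$. Both $\alpha$ and $K_x(Y)\le\beta\le C\backslash y$ lie below $\alpha+q$, so $\alpha+K_x(Y)\le\alpha+q$.
\item Left translation reflects order, so $K_x(Y)\le q$. Hence $K_x(Y)\le x\wedge q=K_x(y)$.
\item Since $T_x(\cdot)=K_x(\cdot)/x$ and $a\mapsto a/x$ is antitone on $[0,x]$, $T_x(y)\le T_x(Y)$.
\end{enumerate}
The disjointness you hoped to extract from $r$ and $y\wedge r$ comes instead from $\alpha=(x\vee Y)\backslash(x\vee y)$ and $(x\vee Y)\backslash Y$. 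This is precisely what converts $r+y\le x\vee Y$ into the needed inequality at $y$.
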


\begin{proof}
For $y\le Y$, define
\[
 h=x\vee y,\quad C=x\vee Y,\quad
 \alpha=C\backslash h,\quad q=h\backslash y,\quad
 \beta=C\backslash Y,\quad r=x\wedge\beta.
\]
We have $h\vee Y=C$, and therefore
\[
 \alpha\wedge\beta
 =(C\backslash h)\wedge(C\backslash Y)
 =C\backslash(h\vee Y)=0,
 \qquad \alpha\wedge r=0.
\]
The inequalities $r\le x\le h$ and the identity $\alpha+h=C$
give existence of $\alpha+r$. Lemma~\ref{pd:disjoint-sum} yields
$\alpha+r=\alpha\vee r$. Furthermore,
\[
 C\backslash y=(C\backslash h)+(h\backslash y)=\alpha+q,
 \qquad \alpha\le\alpha+q,\qquad
 r\le\beta\le C\backslash y=\alpha+q.
\]
Hence
\[
 \alpha+r=\alpha\vee r\le\alpha+q,\qquad r\le q,
 \qquad K_x(Y)=r\le x\wedge q=K_x(y).
\]
Sectional antitonicity gives
\[
 T_x(y)=K_x(y)/x\le K_x(Y)/x=T_x(Y).
\]

Let $Y=\bigvee_{i\in I}y_i$, with $I\ne\varnothing$.
Then $T_x(y_i)\le T_x(Y)$ for all $i$. If
$T_x(y_i)\le z$ for all $i$, define $z_0=z\wedge x$ and
$r=x\backslash z_0$. For every $i$,
\[
 T_x(y_i)\le z_0,\qquad
 r+y_i\text{ exists},\qquad
 r+y_i\le x\vee y_i\le x\vee Y
\]
by \eqref{pd:eq-sasaki-order-test}. Thus
\[
 y_i\le r^\sim\quad(i\in I),\qquad Y\le r^\sim,\qquad
 r+Y=\bigvee_{i\in I}(r+y_i)\le x\vee Y.
\]
The order test now gives $T_x(Y)\le z_0\le z$. This proves
\[
 T_x\!\left(\bigvee_{i\in I}y_i\right)
       =\bigvee_{i\in I}T_x(y_i).
\]
For $d\le x$,
\[
 x\vee d=x,\qquad x/(x\vee d)=x/x=0,\qquad
 T_x(d)=(0\vee d)\backslash0=d.
\]
\end{proof}

\begin{theorem}[One-sided absorption]\label{pd:sasaki-absorption}
If $d\le x$, then
\[
 T_d(T_x(y))\le T_d(y).
\]
\end{theorem}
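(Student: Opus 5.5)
The plan is to reduce the inequality to the order test \eqref{pd:eq-sasaki-order-test} applied to the projection $T_d$. Put $t=T_x(y)$, $z=T_d(y)\le d$ and $r=d\backslash z$. By \eqref{pd:eq-sasaki-kernel}, $r=K_d(y)=d\wedge((d\vee y)\backslash y)$. The order test, read with $T_d$ and the argument $t$, says that $T_d(t)\le z$ holds if and only if two things are true:
\[
 r+t\ \text{exists}\qquad\text{and}\qquad r+t\le d\vee t .
\]
So the whole proof consists in producing these two facts.

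\emph{Existence and a first bound.} First I will show $K_d(y)\le K_x(y)$. Since $d\le x$, we have $y\le d\vee y\le x\vee y$. The chain identity \eqref{pd:eq-residual-chains} then gives
\[
 (x\vee y)\backslash y=\bigl((x\vee y)\backslash(d\vee y)\bigr)+\bigl((d\vee y)\backslash y\bigr),
\]
hence $(d\vee y)\backslash y\le(x\vee y)\backslash y$. Meeting with $d\le x$ yields $r\le K_x(y)$. By \eqref{pd:eq-sasaki-kernel}, $K_x(y)+t=x$ is defined. The last clause of Lemma~\ref{pd:residual-calculus} (shrinking a coordinate of a defined sum) then shows that $r+t$ exists and satisfies $r+t\le x$.

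\emph{The bound by $d\vee t$.} This is the step I expect to be the main obstacle, because $x$ is in general strictly larger than $d\vee t$. My first attempt will be Lemma~\ref{pd:disjoint-sum}. If I can show $r\wedge t=0$, then $r+t=r\vee t\le d\vee t$, since $r\le d$, and the proof is finished. For the meet I will use two descriptions of $r$ and $t$. On one side, $r\le(d\vee y)\backslash y$, and also $r\le c\backslash v$ in the notation of \eqref{pd:eq-sasaki-definition} for $T_x(y)$, because $K_x(y)=c\backslash v$. On the other side, $t\le v=u\vee y$. I will then try to push the meet $r\wedge t$ into an expression of the form $(C\backslash Y)\wedge Y'$ that the sectional De Morgan identity \eqref{pd:eq-sectional-demorgan} forces to be $0$, in the same way that $\alpha\wedge\beta=0$ was obtained in the proof of Theorem~\ref{pd:sasaki-joins}.

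If disjointness fails in this noncommutative setting, the fallback is to verify $r+t\le d\vee t$ by right translation. Write $e=(d\vee t)\backslash t$. Reflection of order under right translation by $t$ (Lemma~\ref{pd:residual-calculus}) reduces the claim to $r\le e$. To get this, I will compare $e$ with $(d\vee y)\backslash y$. The comparison would use monotonicity of $T_x$ and $T_x(d)=d$ from Theorem~\ref{pd:sasaki-joins}, which give $T_x(d\vee y)=d\vee t$. It would also use the identity $(x\vee y)\backslash y=K_x(y)\vee\cdots$ obtained from the sums \eqref{pd:eq-sasaki-sums}, so that the residual of $t$ in $d\vee t$ dominates the part of $(d\vee y)\backslash y$ lying below $d$.
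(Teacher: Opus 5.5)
Your reduction is correct. With $t=T_x(y)$ and $r=K_d(y)=d\backslash T_d(y)$, the order test \eqref{pd:eq-sasaki-order-test} turns the claim into the existence of $r+t$ together with $r+t\le d\vee t$. Your first step ($r\le K_x(y)$, hence $r+t$ exists and is $\le x$) is also valid. The gap is the inequality $r+t\le d\vee t$, which is the whole content of the theorem.

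Your primary route through Lemma~\ref{pd:disjoint-sum} cannot work, and the obstruction has nothing to do with noncommutativity. If $y\le d$, then the fixed-point clause of Theorem~\ref{pd:sasaki-joins} and \eqref{pd:eq-sasaki-kernel} give $t=y$ and $r=d\backslash y$. Now take the commutative chain $[0,1]$, with $a+b$ defined exactly when $a+b\le 1$, and choose $d=1/2$, $x=1$, $y=1/4$. Then $r=t=1/4$, so $r\wedge t\ne0$, although the conclusion $r+t\le d\vee t$ still holds.

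Your fallback correctly reduces the goal to $r\le e:=(d\vee t)\backslash t$. However, the claim that the residual of $t$ in $d\vee t$ dominates the part of $(d\vee y)\backslash y$ below $d$ is just a restatement of $r\le e$. There is also no identity of the form $(x\vee y)\backslash y=K_x(y)\vee\cdots$ available to supply it.

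The missing idea is to split $K_x(y)$ along $d\vee t$ and then cancel on the left.

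\textbf{Splitting.} Put $D=d\vee y$. Join preservation gives $h:=T_x(D)=d\vee t$. Set $k'=K_x(D)=x\backslash h$. The chain $t\le h\le x$ and \eqref{pd:eq-residual-chains} then give $K_x(y)=x\backslash t=k'+e$.

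\textbf{First bound on $k'+r$.} Since $r\le d\le h$ and $k'+h=x$, the sum $k'+r$ exists and satisfies $k'+r\le x$.

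\textbf{Second bound on $k'+r$.} Since $x\vee D=x\vee y$, you have $k'=x\wedge((x\vee y)\backslash D)\le(x\vee y)\backslash D$, while $r\le D\backslash y$. Your own chain identity $(x\vee y)\backslash y=((x\vee y)\backslash D)+(D\backslash y)$, together with shrinking of summands, gives $k'+r\le(x\vee y)\backslash y$.

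\textbf{Cancellation.} Hence $k'+r\le x\wedge((x\vee y)\backslash y)=K_x(y)=k'+e$. Order reflection under left translation by $k'$ (Lemma~\ref{pd:residual-calculus}) yields $r\le e$. Therefore $r+t\le e+t=d\vee t$, which also gives existence of $r+t$ directly.

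Your bound $r\le K_x(y)$ is only the weak form of this estimate. What you need is the strengthened inequality $k'+r\le K_x(y)$, because its left summand $k'$ can then be cancelled to isolate $e$.
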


\begin{proof}
Let
\[
 D=d\vee y,\quad U=x\vee y=x\vee D,\quad
 a=T_x(y),\quad h=T_x(D)=T_x(d)\vee T_x(y)=d\vee a,
\]
and set
\[
 k=x\backslash a=K_x(y),\quad
 k'=x\backslash h=K_x(D),\quad
 s=h\backslash a,\quad r=K_d(y).
\]
Since $a\le h\le x$,
\[
 k=(x\backslash h)+(h\backslash a)=k'+s.
\]
Put $A_0=U\backslash D$ and $B_0=D\backslash y$. Then
\[
 U\backslash y=A_0+B_0,\qquad
 k'=x\wedge(U\backslash D)\le A_0,\qquad
 r=d\wedge(D\backslash y)\le B_0.
\]
Heredity gives $k'+r\le U\backslash y$.
Also $r\le d\le h$ and $k'+h=x$, whence
\[
 k'+r\le x,\qquad
 k'+r\le x\wedge(U\backslash y)=k=k'+s,\qquad r\le s.
\]
Therefore
\[
 r+a\text{ exists},\qquad
 r+a\le s+a=h=d\vee a.
\]
Finally $r=K_d(y)=d\backslash T_d(y)$, so
\eqref{pd:eq-sasaki-order-test} with context $d$, input $a$,
and bound $T_d(y)$ gives
\[
 T_d(T_x(y))=T_d(a)\le T_d(y).
\]
\end{proof}

\begin{lemma}[Fixed-point transfer]\label{pd:sasaki-fixed-point}
If $e\le T_x(y)$, then $T_e(y)=e$.
\end{lemma}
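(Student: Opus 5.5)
The plan is to sandwich $T_e(y)$ between $e$ and $e$. The upper bound is already available. The lower bound comes from chaining monotonicity, the fixed-point clause of Theorem~\ref{pd:sasaki-joins}, and the one-sided absorption of Theorem~\ref{pd:sasaki-absorption}.

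First, the upper bound. Immediately after \eqref{pd:eq-sasaki-sums} we noted that $T_x(y)\le x$ for all $x,y$. Applied with context $e$, this gives $T_e(y)\le e$, so it remains only to prove $e\le T_e(y)$.

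For the lower bound, set $a=T_x(y)$, so that $e\le a\le x$.
\begin{itemize}
\item Since $e\le x$, Theorem~\ref{pd:sasaki-absorption} with $d=e$ gives $T_e(a)=T_e(T_x(y))\le T_e(y)$.
\item Since $e\le a$ and $T_e$ is increasing by Theorem~\ref{pd:sasaki-joins}, we get $T_e(e)\le T_e(a)$.
\item The last clause of Theorem~\ref{pd:sasaki-joins}, with $d=e\le e$ as context, gives $T_e(e)=e$.
\end{itemize}
Combining these yields
\[
 e=T_e(e)\le T_e(T_x(y))\le T_e(y)\le e,
\]
and hence $T_e(y)=e$.

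I do not expect any genuine obstacle, since each step is a direct citation. The only point needing care is the hypothesis $d\le x$ in Theorem~\ref{pd:sasaki-absorption}. It is met because $e\le T_x(y)\le x$, using the inequality $T_x(y)\le x$ recorded after \eqref{pd:eq-sasaki-sums}.
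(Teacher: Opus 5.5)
Your proof is correct, but it takes a different route from the paper's. The paper does not invoke absorption or monotonicity. It sets $u=x/(x\vee y)$, $v=u\vee y$ and $t=T_x(y)=v\backslash u$, and observes that $e\le t\le v$ and $y\le v$ give $e\vee y\le v$, hence $(e\vee y)\backslash y\le v\backslash y$. Sectional De Morgan then gives $t\wedge(v\backslash y)=v\backslash(u\vee y)=0$. It follows that $K_e(y)=e\wedge((e\vee y)\backslash y)\le t\wedge(v\backslash y)=0$, and so $T_e(y)=K_e(y)/e=e$.

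Your sandwich $e=T_e(e)\le T_e(T_x(y))\le T_e(y)\le e$ treats the lemma as a formal consequence of three properties already proved for the operators $T_z$: deflation $T_z(w)\le z$, monotonicity together with $T_z(z)=z$, and one-sided absorption. It therefore needs no new residual calculation, and it would hold verbatim for any operator family with those properties. The cost is that it rests on Theorems~\ref{pd:sasaki-joins} and~\ref{pd:sasaki-absorption}, whose proofs use Lemma~\ref{pd:disjoint-sum}, join preservation and the order test. The paper's computation uses only the defining sums and sectional De Morgan, and shows directly that the kernel $K_e(y)$ vanishes.

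Your argument is not circular: both cited theorems precede the lemma, and their proofs do not use it. You also correctly checked the hypothesis $e\le x$ needed for absorption.
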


\begin{proof}
Let $u=x/(x\vee y)$, $v=u\vee y$, $t=v\backslash u$,
and $q=(e\vee y)\backslash y$. Since $e\le t\le v$ and $y\le v$,
\[
 e\vee y\le v,\qquad
 q\le v\backslash y.
\]
Sectional De Morgan gives
\[
 t\wedge(v\backslash y)
 =(v\backslash u)\wedge(v\backslash y)
 =v\backslash(u\vee y)=v\backslash v=0.
\]
Consequently,
\[
 0\le K_e(y)=e\wedge q
 \le t\wedge(v\backslash y)=0,\qquad
 T_e(y)=K_e(y)/e=0/e=e.
\]
\end{proof}

\begin{theorem}[Fixed and varying contexts]\label{pd:sasaki-disappearance}
Suppose $w_n\downarrow0$ in $L$ and $y_n\le w_n$.
For each fixed $x\in L$, there are $b_n\downarrow0$ with
$T_x(y_n)\le b_n$. If
\[
 x_{n+1}=T_{x_n}(y_n),
\]
then $x_n\downarrow0$.
\end{theorem}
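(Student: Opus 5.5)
The plan is to take $b_n=T_x(w_n)$ in the first assertion, and to reduce the second assertion to the first through Lemma~\ref{pd:sasaki-fixed-point}. Since $T_x$ is increasing by Theorem~\ref{pd:sasaki-joins}, $y_n\le w_n$ gives $T_x(y_n)\le T_x(w_n)$, and $w_{n+1}\le w_n$ makes $(T_x(w_n))_n$ decreasing. Everything therefore comes down to the claim
\[
 t=\bigwedge_nT_x(w_n)\quad\Longrightarrow\quad t=0 .
\]
Here $\sigma$-completeness of $L$ supplies the infimum.

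To prove the claim, note first that $t\le T_x(w_n)$ for every $n$. Lemma~\ref{pd:sasaki-fixed-point} then gives $T_t(w_n)=t$, so by \eqref{pd:eq-sasaki-kernel}
\[
 K_t(w_n)=t\backslash T_t(w_n)=t\backslash t=0 .
\]
Writing $s_n=(t\vee w_n)\backslash w_n$, this says $t\wedge s_n=0$ for every $n$. The key auxiliary inequality, valid for all $a,b\in L$, is
\begin{equation*}
 a\backslash(a\wedge b)\le(a\vee b)\backslash b .
\end{equation*}
I would prove it with the order test \eqref{pd:eq-sasaki-order-test} or directly with the translation calculus of Lemma~\ref{pd:residual-calculus}. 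Put $r=a\backslash(a\wedge b)$, so that $r+(a\wedge b)=a$. It suffices to show that $r+b$ exists and $r+b\le a\vee b$, since right translation reflects order and then yields $r\le(a\vee b)\backslash b$. Existence amounts to $b\le r^-$. I would obtain it by comparing $r+(a\wedge b)=a\le a\vee b$ with the right-translation isomorphism $[0,r^-]\to[r,1]$, and then transporting the join $(a\wedge b)\vee b=b$.

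I expect this inequality to be the main obstacle. It is the only genuinely new pseudo-$D$-lattice computation, and the order of the summands must be tracked carefully. If the direct route stalls, I would fall back on Lemma~\ref{pd:disjoint-sum} applied to suitable residuals, mirroring the argument in Theorem~\ref{pd:sasaki-joins}.

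Granting the inequality, set $\rho_n=t\backslash(t\wedge w_n)$. Lattice-$\sigma$-continuity, in its dual form from Lemma~\ref{pd:sigma-continuity}, gives $t\wedge w_n\downarrow t\wedge0=0$. The sectional map $a\mapsto t\backslash a$ is an anti-isomorphism of $[0,t]$ and converts infima into suprema, so
\[
 \rho_n\uparrow t\backslash0=t .
\]
On the other hand $\rho_n\le t$ and $\rho_n\le s_n$, hence $\rho_n\le t\wedge s_n=0$. Therefore $t=\bigvee_n\rho_n=0$, which proves the first assertion with $b_n=T_x(w_n)$.

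For the recursion, $x_{n+1}=T_{x_n}(y_n)\le x_n$ because $T_x(y)\le x$, so the sequence decreases. Let $t=\bigwedge_nx_n$. From $t\le x_{n+1}=T_{x_n}(y_n)$, Lemma~\ref{pd:sasaki-fixed-point} gives $T_t(y_n)=t$, and monotonicity of $T_t$ gives
\[
 t=T_t(y_n)\le T_t(w_n)\qquad\text{for every }n .
\]
Applying the claim with context $x=t$ yields $t\le\bigwedge_nT_t(w_n)=0$. Hence $x_n\downarrow0$.
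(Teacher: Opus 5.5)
There is a genuine gap. Your choice $b_n=T_x(w_n)$ is fine, and so is your reduction of the recursion to the fixed-context claim through Lemma~\ref{pd:sasaki-fixed-point}; that reduction is the same as the paper's. The problem is the auxiliary inequality $a\backslash(a\wedge b)\le(a\vee b)\backslash b$, which is false in general.

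\textbf{Why it is false.} Put $r=a\backslash(a\wedge b)$. What you plan to prove is that $r+b$ exists and $r+b\le a\vee b$. By the order test \eqref{pd:eq-sasaki-order-test}, with context $a$, bound $a\wedge b$ and input $b$, this is exactly the statement $T_a(b)\le a\wedge b$. Theorem~\ref{pd:sasaki-joins} gives the reverse bound $T_a(b)\ge T_a(a\wedge b)=a\wedge b$. So your inequality asserts $T_a(b)=a\wedge b$ for all $a,b$. Join preservation of $T_a$ would then make $L$ distributive, which pseudo-$D$-lattices need not be.

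\textbf{Counterexample.} Take the orthomodular lattice $\mathrm{MO}_2=\{0,a,a',b,b',1\}$. It is a finite, hence $\sigma$-complete and lattice-$\sigma$-continuous, $D$-lattice with $x+y=x\vee y$ for $x\le y'$. Here $a\backslash(a\wedge b)=a$ and $(a\vee b)\backslash b=b'$, but $a\not\le b'$. Equivalently, $T_a(b)=1\backslash a'=a\ne0=a\wedge b$.

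\textbf{Why no fallback helps.} In your specific instance, where $T_t(w_n)=t$, the inequality is equivalent to $t\le w_n$. That is essentially the conclusion itself, so no detour through Lemma~\ref{pd:disjoint-sum} can supply it. A minor slip as well: existence of $r+b$ means $b\le r^\sim$, equivalently $r\le b^-$, not $b\le r^-$.

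\textbf{Repair.} What is missing is a real continuity estimate for $T_x$ in its argument. The paper reads it off the defining sum. Put $u_n=x/(x\vee y_n)$; since $x\vee y_n\le x\vee w_n$, one gets $T_x(y_n)\le T_x(y_n)+u_n=u_n\vee y_n\le\bigl(x/(x\vee w_n)\bigr)\vee w_n=:b_n$. These $b_n$ decrease to $0$ for three reasons:
\begin{itemize}
\item $x\vee w_n\downarrow x$ by the dual form of lattice-$\sigma$-continuity;
\item $z\mapsto x/z$ preserves infima on $[x,1]$, so $x/(x\vee w_n)\downarrow x/x=0$;
\item the diagonal join formula \eqref{pd:eq-diagonal-join} then gives $b_n\downarrow0$.
\end{itemize}
This route needs neither monotonicity of $T_x$ nor the fixed-point lemma in the first assertion.

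\textbf{Alternative repair inside your route.} Keep your identity $K_t(w_n)=0$. Observe that $K_t(w_n)=\Delta_L(t,w_n)$. The decreasing-argument formula of Theorem~\ref{pd:total-residuals} gives $\Delta_L(t,w_n)\uparrow\Delta_L(t,0)=t$, so $t=0$. That theorem comes later in the paper, but its proof does not use the present result. With either repair, your treatment of the recursion goes through unchanged.
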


\begin{proof}
Fix $x$, and set
\[
 C_n=x\vee w_n,\qquad a_n=x/C_n,\qquad b_n=a_n\vee w_n.
\]
The lattice and translation formulas give
\[
 C_n\downarrow x\vee0=x,\qquad
 a_n\downarrow x/x=0,\qquad b_n\downarrow0\vee0=0.
\]
For $c_n=x\vee y_n$ and $u_n=x/c_n$, we have
\[
 c_n\le C_n,\qquad u_n\le a_n,\qquad
 T_x(y_n)+u_n=u_n\vee y_n,
\]
\[
 T_x(y_n)\le u_n\vee y_n\le a_n\vee w_n=b_n.
\]

Now suppose $x_{n+1}=T_{x_n}(y_n)$. The defining bound
$T_z(y)\le z$ gives $x_{n+1}\le x_n$.
Let $e=\bigwedge_nx_n$. For every $n$,
\[
 e\le x_{n+1}=T_{x_n}(y_n),\qquad T_e(y_n)=e
\]
by Lemma~\ref{pd:sasaki-fixed-point}.
The fixed-context construction with $x=e$ gives
\[
 e=T_e(y_n)\le
 \bigl(e/(e\vee w_n)\bigr)\vee w_n\downarrow0.
\]
Thus $e=0$ and $x_n\downarrow0$.
\end{proof}

\begin{proposition}[Crossed estimate]\label{pd:crossed-estimate}
Let $P$ be an additive cone in an Abelian group $X$, with
$v\preceq w$ meaning $w-v\in P$. If $f:E\to P$ is additive
on a $D$-sublattice, then
\[
 f(x)+f(y)-f(x\vee y)\preceq f(T_x(y))
 \qquad(x,y\in E).
\]
The cone need not be pointed, and $f$ need not be modular.
\end{proposition}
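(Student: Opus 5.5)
The plan is to unwind the definitions in \eqref{pd:eq-sasaki-definition} and use only additivity of $f$ on the two defining sums \eqref{pd:eq-sasaki-sums}. This should reduce the cone difference to a single value of $f$, which lies in $P$ because $f$ takes values in $P$. No modularity, no pointedness of $P$, and no order on $X$ beyond membership in $P$ should be needed.

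First I check that every auxiliary element lies in $E$, so that $f$ may be applied to it. For $x,y\in E$, put $c=x\vee y$, $u=x/c$, $v=u\vee y$ and $t=T_x(y)=v\backslash u$. Since $E$ is a $D$-sublattice, it is closed under finite joins and under both residuals of comparable elements. Hence $c,u,v,t\in E$. Moreover $y\le v$, so $v\backslash y\in E$ as well, with $(v\backslash y)+y=v$ by \eqref{pd:eq-residual-conventions}.

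Next I apply additivity of $f$ to the three defined sums $x+u=c$, $t+u=v$ and $(v\backslash y)+y=v$, all of which lie in $E$. This gives
\[
 f(c)=f(x)+f(u),\qquad f(v)=f(t)+f(u),\qquad f(v)=f(v\backslash y)+f(y).
\]
Computing in the Abelian group $X$, the terms $f(u)$ cancel:
\[
 f(t)-\bigl(f(x)+f(y)-f(c)\bigr)
 =f(t)-f(y)+f(u)
 =f(v)-f(y)
 =f(v\backslash y).
\]
Since $f(v\backslash y)\in P$, this is precisely the relation $f(x)+f(y)-f(x\vee y)\preceq f(T_x(y))$.

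There is no genuine obstacle. The one point requiring care is that the pseudo-effect sums are noncommutative, so each identity must be read with the summands in the stated order: $x+u$, $t+u$, and $(v\backslash y)+y$. Additivity of $f$ is applied only to these defined sums; commutativity is used only in $X$. It should also be remarked that the argument never uses $\preceq$ as an order (reflexivity, antisymmetry or transitivity). This is why the cone need not be pointed and why $f$ need not be modular.
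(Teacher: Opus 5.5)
Your proof is correct and is essentially the paper's argument: the two defining sums $x+u=x\vee y$ and $T_x(y)+u=v$ cancel $f(u)$, which reduces the defect to $f(v)-f(y)$ with $y\le v$. The only cosmetic difference is that you exhibit this difference as $f(v\backslash y)\in P$ via the right residual, whereas the paper writes it as $f(y/v)\in P$ using the left residual.
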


\begin{proof}
If $a\le b$ in $E$, then
\[
 f(b)-f(a)=f(a/b)\in P,\qquad f(a)\preceq f(b).
\]
Let $c=x\vee y$, $u=x/c$, and $v=u\vee y$. All these
elements and $T_x(y)$ belong to $E$. Their sums give
\[
 f(c)=f(x)+f(u),\qquad f(v)=f(T_x(y))+f(u).
\]
Since $y\le v$,
\[
 f(v)-f(y)\in P,\qquad
 f(T_x(y))-\bigl(f(x)+f(y)-f(c)\bigr)
 =f(v)-f(y)\in P.
\]
\end{proof}

\subsection{Total residuals and sequential closure}

\begin{theorem}[Total residuals]\label{pd:total-residuals}
The operations
\[
 \Delta_L(y,x)=y\wedge((y\vee x)\backslash x),\qquad
 \Delta_R(y,x)=y\wedge(x/(y\vee x))
\]
are increasing in $y$, decreasing in $x$, and jointly sequentially
order-continuous. For $x\le y$,
\begin{equation}\label{pd:eq-total-recovery}
 \Delta_L(y,x)=y\backslash x,\qquad
 \Delta_R(y,x)=x/y.
\end{equation}
For either operation $\Delta$, all four monotone-limit formulas hold:
\begin{align*}
 y_n\uparrow y&\ \Longrightarrow\
     \Delta(y_n,x)\uparrow\Delta(y,x),&
 y_n\downarrow y&\ \Longrightarrow\
     \Delta(y_n,x)\downarrow\Delta(y,x),\\
 x_n\uparrow x&\ \Longrightarrow\
     \Delta(y,x_n)\downarrow\Delta(y,x),&
 x_n\downarrow x&\ \Longrightarrow\
     \Delta(y,x_n)\uparrow\Delta(y,x).
\end{align*}
\end{theorem}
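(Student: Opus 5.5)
The plan is to identify both operations with the Sasaki kernel, so that monotonicity and most of the limit formulas come from \Cref{pd:sasaki-kernel,pd:sasaki-joins}. Comparing with \eqref{pd:eq-kernel-definition},
\[
 \Delta_L(y,x)=y\wedge((y\vee x)\backslash x)=K_y(x).
\]
By \eqref{pd:eq-opposite}, in the opposite pseudo-effect algebra we have $x/_{\rm op}$-free form $(y\vee x)\backslash_{\rm op}x=x/(y\vee x)$. Hence $\Delta_R$ is the operation $\Delta_L$ computed in the opposite algebra, which has the same order, lattice operations and $\sigma$-continuity. So everything reduces to $\Delta_L$.

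The recovery formula is immediate. If $x\le y$, then $y\vee x=y$ and $y\backslash x\le y$, so $\Delta_L(y,x)=y\backslash x$.

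Monotonicity in $y$ follows from three facts: $y\mapsto y\vee x$ is increasing; $b\mapsto b\backslash x$ is increasing on $[x,1]$, being the inverse of right translation in \Cref{pd:residual-calculus}; and meets are increasing. Monotonicity in $x$ follows from \eqref{pd:eq-sasaki-kernel}, which gives $K_y(x)=y\backslash T_y(x)$. By \Cref{pd:sasaki-joins}, $T_y$ is increasing, and $t\mapsto y\backslash t$ is antitone on $[0,y]$.

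The monotone limit formulas in $y$ use the same composition. For $y_n\uparrow y$, \Cref{pd:sigma-continuity} gives $y_n\vee x\uparrow y\vee x$. The order isomorphism $b\mapsto b\backslash x$ preserves this supremum, and the diagonal meet formula \eqref{pd:eq-diagonal-meet} combines the two increasing sequences $y_n$ and $(y_n\vee x)\backslash x$. For $y_n\downarrow y$ the same argument applies with the decreasing joins \eqref{pd:eq-diagonal-join} and the decreasing-meet part of that lemma.

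For $x_n\uparrow x$, join preservation in \Cref{pd:sasaki-joins} gives $T_y(x_n)\uparrow T_y(x)$. The anti-isomorphism $t\mapsto y\backslash t$ of $[0,y]$ converts this supremum into an infimum, so $\Delta_L(y,x_n)\downarrow\Delta_L(y,x)$.

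The remaining formula, $x_n\downarrow x\Rightarrow\Delta(y,x_n)\uparrow\Delta(y,x)$, is the step I expect to be the main obstacle, because \Cref{pd:sasaki-joins} gives no preservation of meets. By monotonicity it suffices to prove $\bigwedge_nT_y(x_n)\le T_y(x)$. I would first write
\[
 x_n=x+s_n,\qquad s_n=x/x_n\downarrow x/x=0,
\]
using the inverse of left translation, and then try to bound $T_y(x_n)$ by $T_y(x)$ together with an error term $T_y(s_n)$, or a Sasaki transport of $s_n$. By the fixed-context part of \Cref{pd:sasaki-disappearance}, such an error term lies below some $b_n\downarrow0$.

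If a direct bound of $T_y(x+s)$ is unavailable, the fallback is to work with the kernel itself. The chain rule \eqref{pd:eq-residual-chains} gives $(c_n\backslash x_n)+(x_n\backslash x)=c_n\backslash x$ for $c_n=y\vee x_n\downarrow y\vee x$, with $x_n\backslash x\downarrow0$. Set $e=\bigvee_n\Delta_L(y,x_n)\le\Delta_L(y,x)$. I would then apply the order test \eqref{pd:eq-sasaki-order-test} with $r=y\backslash e$: the goal is to show that $r+x$ exists and $r+x\le y\vee x$ by passing to the limit in the corresponding statements for $x_n$, using lattice-$\sigma$-continuity.

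Finally, joint sequential order continuity comes from the four monotone formulas by bracketing. Given $y_n\onull y$ and $x_n\onull x$, take the canonical brackets $p_n\uparrow y$, $q_n\downarrow y$, $a_n\uparrow x$, $b_n\downarrow x$ from \eqref{pd:eq-canonical-brackets}. Monotonicity gives
\[
 \Delta(p_n,b_n)\le\Delta(y_n,x_n)\le\Delta(q_n,a_n).
\]
It then remains to show $\Delta(p_n,b_n)\uparrow\Delta(y,x)$, and dually for the upper bracket. This is a diagonal argument as in the proof of \eqref{pd:eq-diagonal-meet}. For $n\ge k$ we have $\Delta(p_k,b_n)\le\Delta(p_n,b_n)$. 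Taking the supremum over $n$ with the single-variable formula gives $\Delta(p_k,x)\le\bigvee_n\Delta(p_n,b_n)$. Taking the supremum over $k$ yields $\Delta(y,x)$, and the reverse inequality is monotonicity.
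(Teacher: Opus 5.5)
There is a genuine gap, exactly where you expected it. The formula $x_n\downarrow x\Rightarrow\Delta(y,x_n)\uparrow\Delta(y,x)$ is never proved. Your bracketing argument for joint continuity also depends on it, through the step $\bigvee_{n\ge k}\Delta(p_k,b_n)=\Delta(p_k,x)$ with $b_n\downarrow x$, so that part is incomplete too. The rest agrees with the paper's proof: $\Delta_L=K_y$, the opposite-algebra reduction, recovery, both monotonicities, the two $y$-limits, the $x_n\uparrow x$ limit, and the bracket scheme.

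Neither of your two sketches closes the gap.
\begin{itemize}
\item In the first, $x_n=x+s_n$ is not $x\vee s_n$, so join preservation of $T_y$ gives nothing. A join-type bound $T_y(x+s)\le T_y(x)\vee T_y(s)$ already fails in $[0,1]$, where $T_y(x)=x\wedge y$ (take $y=1$, $x=s=1/3$).
\item In the fallback, the order test \eqref{pd:eq-sasaki-order-test} only characterizes \emph{upper} bounds of $T_y(x)$, equivalently lower bounds $r\le y$ of $K_y(x)$. Put $t=\bigwedge_nT_y(x_n)$. Monotonicity already gives $T_y(x)\le t$. What is missing is the \emph{lower} bound $t\le T_y(x)$, i.e.\ $\Delta_L(y,x)\le e$.
\item Taking $r=e=y\backslash t$ in the test returns only the trivial inequality. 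Your choice $r=y\backslash e$ targets $T_y(x)\le e$, which is false in general: in $[0,1]$ with $x\ge y>0$ one gets $e=0$ but $T_y(x)=y$.
\end{itemize}

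The missing idea is to pass to the limit in the defining sum $T_y(x)+u=v$ of \eqref{pd:eq-sasaki-sums}, rather than in the kernel.
\begin{itemize}
\item Put $c_n=y\vee x_n\downarrow c=y\vee x$, $u_n=y/c_n$, $v_n=u_n\vee x_n$ and $t_n=T_y(x_n)$, so that $t_n+u_n=v_n$.
\item Since $b\mapsto y/b$ is an order isomorphism of $[y,1]$ onto $[0,y^\sim]$, $u_n\downarrow u=y/c$. Lattice-$\sigma$-continuity (decreasing joins) gives $v_n\downarrow v=u\vee x$.
\item Now $t\le t_n$ and $u\le u_n$, so $t+u$ exists and $t+u\le t_n+u_n=v_n$ for every $n$. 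Hence $t+u\le\bigwedge_nv_n=v$.
\item Order reflection of right translation by $u$ then gives $t\le v\backslash u=T_y(x)$.
\item The sectional anti-isomorphism yields $\Delta_L(y,x_n)=y\backslash t_n\uparrow y\backslash T_y(x)=\Delta_L(y,x)$. The opposite algebra then handles $\Delta_R$.
\end{itemize}
This works because in $t_n+u_n=v_n$ both summands decrease together with the total, so shrinking the summands bounds the limit from the correct side. In $(y\vee x_n)\backslash x_n$ the minuend and the subtrahend pull in opposite directions, which is why your kernel-based attempts stall.
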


\begin{proof}
For $x\le y$, both $y\backslash x$ and $x/y$ are below $y$.
Hence
\[
 \Delta_L(y,x)=y\wedge(y\backslash x)=y\backslash x,\qquad
 \Delta_R(y,x)=y\wedge(x/y)=x/y.
\]
If $y\le Y$ and $r=\Delta_L(y,x)$, then
\[
 r\le y\le Y,\qquad
 r+x\le y\vee x\le Y\vee x,\qquad
 r\le(Y\vee x)\backslash x.
\]
Thus $r\le\Delta_L(Y,x)$. For $x\le X$,
\[
 \Delta_L(y,x)=K_y(x)=y\backslash T_y(x),\qquad
 T_y(x)\le T_y(X),
\]
\[
 \Delta_L(y,X)=y\backslash T_y(X)
       \le y\backslash T_y(x)=\Delta_L(y,x).
\]

If $y_n\uparrow y$, put $c_n=y_n\vee x$ and $c=y\vee x$.
Then
\[
 c_n\uparrow c,\qquad c_n\backslash x\uparrow c\backslash x,
 \qquad
 \Delta_L(y_n,x)=y_n\wedge(c_n\backslash x)
       \uparrow y\wedge(c\backslash x)=\Delta_L(y,x).
\]
If $y_n\downarrow y$, the formulas are
\[
 c_n=y_n\vee x\downarrow c=y\vee x,\qquad
 c_n\backslash x\downarrow c\backslash x,
\]
\[
 \Delta_L(y_n,x)=y_n\wedge(c_n\backslash x)
       \downarrow y\wedge(c\backslash x)=\Delta_L(y,x).
\]
For $x_n\uparrow x$, Theorem~\ref{pd:sasaki-joins} gives
\[
 T_y(x_n)\uparrow T_y(x),\qquad
 \Delta_L(y,x_n)=y\backslash T_y(x_n)
       \downarrow y\backslash T_y(x)=\Delta_L(y,x).
\]

For $x_n\downarrow x$, define
\[
 c_n=y\vee x_n\downarrow c=y\vee x,\qquad
 u_n=y/c_n\downarrow u=y/c,
\]
\[
 v_n=u_n\vee x_n\downarrow v=u\vee x,\qquad
 t_n=T_y(x_n)\downarrow t=\bigwedge_nT_y(x_n).
\]
Monotonicity gives $T_y(x)\le t$. Since $t_n+u_n=v_n$,
\[
 t\le t_n,\quad u\le u_n
 \quad\Longrightarrow\quad
 t+u\text{ exists and }t+u\le v_n\quad(n\ge1).
\]
Therefore
\[
 t+u\le\bigwedge_nv_n=v,\qquad
 t\le v\backslash u=T_y(x)\le t.
\]
It follows that
\[
 T_y(x_n)\downarrow T_y(x),\qquad
 \Delta_L(y,x_n)=y\backslash T_y(x_n)
       \uparrow y\backslash T_y(x)=\Delta_L(y,x).
\]

For the opposite algebra, associativity is the implication
\[
 (a\boxplus b)\boxplus c=c+(b+a)
       =(c+b)+a=a\boxplus(b\boxplus c).
\]
Its complements satisfy
\[
 a^\sim\boxplus a=a+a^\sim=1,\qquad
 a\boxplus a^-=a^-+a=1,
\]
so $a^-_{\rm op}=a^\sim$ and $a^\sim_{\rm op}=a^-$.
If $a\boxplus b=b+a$ exists, the original third axiom supplies
$d,e$ with
\[
 b+a=a+d=e+b=d\boxplus a=b\boxplus e.
\]
If $1\boxplus a=a+1$ or $a\boxplus1=1+a$ exists, the
original last axiom gives $a=0$.
Finally,
\[
 a\le_{\rm op}b
 \ \Longleftrightarrow\
 c+a=b\text{ for some }c
 \ \Longleftrightarrow\ a\le b.
\]
The second equivalence uses the third axiom to interchange
the position of $a$ in a defined sum. Formula
\eqref{pd:eq-opposite} gives
\[
 \Delta_L^{\rm op}(y,x)
 =y\wedge((y\vee x)\backslash_{\rm op}x)
 =y\wedge(x/(y\vee x))=\Delta_R(y,x).
\]
Thus each of the preceding polarity implications and each of
the four limit identities for $\Delta_L$ in the opposite
algebra is precisely its stated counterpart for $\Delta_R$
in $L$.

For the joint assertion, let
\[
 a_n\uparrow y,\quad b_n\downarrow y,\quad
 c_n\uparrow x,\quad d_n\downarrow x,\qquad
 a_n\le y_n\le b_n,\quad c_n\le x_n\le d_n.
\]
For either $\Delta$,
\[
 L_n:=\Delta(a_n,d_n)\le\Delta(y_n,x_n)
       \le U_n:=\Delta(b_n,c_n),
 \qquad L_n\uparrow,\quad U_n\downarrow.
\]
Also $L_n\le\Delta(y,x)\le U_n$. For $n\ge k$,
\[
 \Delta(a_k,d_n)\le L_n,\qquad
 U_n\le\Delta(b_k,c_n).
\]
The already proved fixed-coordinate limits give
\[
 \Delta(a_k,x)=\bigvee_{n\ge k}\Delta(a_k,d_n)
       \le\bigvee_nL_n,
 \qquad
 \bigwedge_nU_n
       \le\bigwedge_{n\ge k}\Delta(b_k,c_n)=\Delta(b_k,x).
\]
Taking the supremum or infimum in $k$,
\[
 \Delta(y,x)=\bigvee_k\Delta(a_k,x)
       \le\bigvee_nL_n\le\Delta(y,x),
\]
\[
 \Delta(y,x)\le\bigwedge_nU_n
       \le\bigwedge_k\Delta(b_k,x)=\Delta(y,x).
\]
Hence $L_n\uparrow\Delta(y,x)$ and
$U_n\downarrow\Delta(y,x)$.
\end{proof}

For a $D$-sublattice $E$, define
\[
 O(E)=\{x\in L:\text{some sequence }e_n\in E
                         \text{ satisfies }e_n\onull x\}.
\]

\begin{proposition}[Order-hull closure]\label{pd:order-hull}
The set $O(E)$ is a $D$-sublattice of $L$.
\end{proposition}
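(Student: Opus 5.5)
We verify each closure requirement in the definition of a $D$-sublattice: $0,1$, finite meets and joins, both residuals of comparable pairs, and ambient sums of two elements. The constants are immediate, since $0,1\in E$ and constant sequences order-converge to their value. Binary meets and joins follow from Lemma~\ref{pd:sigma-continuity}. If $e_n\onull x$ and $f_n\onull y$ with $e_n,f_n\in E$, then $e_n\wedge f_n\onull x\wedge y$ and $e_n\vee f_n\onull x\vee y$, and both sequences lie in $E$.

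For the residuals, let $x\le y$ in $O(E)$ with $e_n\onull x$ and $f_n\onull y$ in $E$. The sequence $e_n,f_n$ need not be comparable, so we work with the total residuals of Theorem~\ref{pd:total-residuals}. Since $E$ is closed under meets, joins and residuals of comparable elements, $\Delta_L(f_n,e_n)=f_n\wedge((f_n\vee e_n)\backslash e_n)$ and $\Delta_R(f_n,e_n)$ belong to $E$. Joint sequential order continuity gives $\Delta_L(f_n,e_n)\onull\Delta_L(y,x)$. By \eqref{pd:eq-total-recovery}, $\Delta_L(y,x)=y\backslash x$, and symmetrically $\Delta_R(f_n,e_n)\onull x/y$. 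Hence both residuals lie in $O(E)$.

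The main obstacle is closure under ambient sums. Suppose $x+y$ exists with $x,y\in O(E)$, and let $s=x+y$. The approach is to express $s$ through residual and lattice operations, whose continuity is already known, applied to elements that are already in $O(E)$. First, $x\le y^-$ by Lemma~\ref{pd:residual-calculus}. Moreover, $y^-=1\backslash y$ is a residual of the comparable pair $y\le1$, so $y^-\in O(E)$ by the previous paragraph. Given $e_n\onull x$ and $f_n\onull y$, we replace $e_n$ by $e_n'=e_n\wedge f_n^-$. Here $f_n^-=1\backslash f_n\in E$, and continuity of $\Delta_L(1,\cdot)$ gives $f_n^-\onull y^-$. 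Lemma~\ref{pd:sigma-continuity} then yields $e_n'\onull x\wedge y^-=x$. Now $e_n'\le f_n^-$, so $e_n'+f_n$ exists in $L$, and it lies in $E$ by ambient-sum closure of $E$.

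It remains to show that $e_n'+f_n\onull x+y$. We use the identity $e_n'+f_n=1\backslash\bigl((e_n')^-\cdots\bigr)$. Concretely, $t+f=(f^-\backslash t)^-{}$-type formulas follow from the chain rule \eqref{pd:eq-residual-chains}. For $t\le f^-$ we have $1\backslash(t+f)=f^-\backslash t$, since $(f^-\backslash t)+t+f=1$. Therefore
\[
 e_n'+f_n=\bigl(f_n^-\backslash e_n'\bigr)^\sim
 =\bigl(\Delta_L(f_n^-,e_n')\bigr)^\sim .
\]
By Theorem~\ref{pd:total-residuals}, $\Delta_L(f_n^-,e_n')\onull\Delta_L(y^-,x)=y^-\backslash x$. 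The map $t\mapsto t^\sim=t/1=\Delta_R(1,t)$ is sequentially order continuous by the same theorem. Hence $e_n'+f_n\onull(y^-\backslash x)^\sim=x+y$, the last equality again being the identity $1\backslash(x+y)=y^-\backslash x$. The delicate point I would check carefully is the order of summands in these identities, since the algebra is noncommutative. If the right form uses $x^\sim$ and left translation instead, we run the same argument through the opposite algebra \eqref{pd:eq-opposite}.
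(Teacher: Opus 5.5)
Your argument is correct. For $0,1$, meets, joins and the two residuals it is the paper's argument, and you make explicit that the rows $\Delta_L(f_n,e_n)$ and $\Delta_R(f_n,e_n)$ are $E$-valued, which the paper leaves tacit. The two routes differ only for ambient sums.

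The paper never approximates $x+y$. Having shown that $O(E)$ is closed under residuals, and hence under complements, it uses the algebraic identity $c=(b/a^\sim)^-$ for $c=a+b$. This follows from $b+(b/a^\sim)=a^\sim$ and $c+(b/a^\sim)=a+a^\sim=1$, so no further limit is taken.

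You instead truncate to $e_n'=e_n\wedge f_n^-$, so that $e_n'+f_n$ exists and lies in $E$. You then transport the mirror identity $t+f=(f^-\backslash t)^\sim$ (for $t\le f^-$) through joint continuity of $\Delta_L$ and of $\Delta_R(1,\cdot)$. This is valid, and it gives explicit $E$-valued approximants of $x+y$ built from those of $x$ and $y$. It is more than needed, though. Your own identity $x+y=(y^-\backslash x)^\sim$, together with $x,y^-\in O(E)$, $x\le y^-$ and the residual closure you already proved, gives $x+y\in O(E)$ at once.

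Your closing worry about the order of summands is unfounded, since $(f^-\backslash t)+(t+f)=\bigl((f^-\backslash t)+t\bigr)+f=f^-+f=1$. You should, however, delete the garbled intermediate phrases about $1\backslash((e_n')^-\cdots)$ and a ``$(f^-\backslash t)^-$-type'' formula. The outer operation is $^\sim$, exactly as in your display.
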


\begin{proof}
For $a\in E$, the constant row $a_n=a$ proves $a\in O(E)$.
Let $a_n,b_n\in E$ satisfy $a_n\onull a$ and $b_n\onull b$.
Lemma~\ref{pd:sigma-continuity} gives
\[
 a_n\wedge b_n\onull a\wedge b,\qquad
 a_n\vee b_n\onull a\vee b.
\]
Since both rows are $E$-valued, $a\wedge b,a\vee b\in O(E)$.
Theorem~\ref{pd:total-residuals} gives
\[
 \Delta_L(b_n,a_n)\onull\Delta_L(b,a),\qquad
 \Delta_R(b_n,a_n)\onull\Delta_R(b,a).
\]
If $a\le b$, their limits are $b\backslash a$ and $a/b$.
Thus both actual residuals belong to $O(E)$. In particular,
\[
 a^-=1\backslash a\in O(E),\qquad a^\sim=a/1\in O(E).
\]
If $a+b=c$ exists with $a,b\in O(E)$, then
\[
 b\le a^\sim,\qquad r=b/a^\sim\in O(E),\qquad b+r=a^\sim,
\]
\[
 c+r=(a+b)+r=a+(b+r)=a+a^\sim=1,\qquad
 r=c^\sim,\qquad c=r^-\in O(E).
\]
Every closure required of a $D$-sublattice has now been proved.
\end{proof}

For the initial $D$-sublattice $A$, write
\[
 A^\sigma=\{\bigvee_na_n:a_n\in A,\ a_n\uparrow\},\qquad
 A^\delta=\{\bigwedge_na_n:a_n\in A,\ a_n\downarrow\}.
\]

\begin{lemma}[Mixed sequential hulls]\label{pd:mixed-hulls}
The class $A^\sigma$ is closed under countable joins and finite
meets. The class $A^\delta$ is closed under countable meets and
finite joins. Both complements interchange these two classes.
For $\Delta=\Delta_L$ or $\Delta_R$,
\[
 \Delta(A^\sigma,A^\delta)\subseteq A^\sigma,\qquad
 \Delta(A^\delta,A^\sigma)\subseteq A^\delta.
\]
In particular, if $\ell\in A^\delta$, $u\in A^\sigma$, and
$\ell\le u$, then both $\ell/u$ and $u\backslash\ell$ belong
to $A^\sigma$.
\end{lemma}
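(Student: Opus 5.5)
The plan is to reduce every assertion to the monotone-limit formulas already proved: lattice $\sigma$-continuity (Lemma~\ref{pd:sigma-continuity}), the four monotone-limit formulas for $\Delta_L,\Delta_R$ (Theorem~\ref{pd:total-residuals}), and the fact that translations and sectional maps convert suprema and infima appropriately (Lemma~\ref{pd:residual-calculus}).

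\emph{Lattice closure.} For $x=\bigvee_n a_n$ and $y=\bigvee_n b_n$ with $a_n,b_n\in A$ increasing, the sequence $a_n\vee b_n\in A$ increases to $x\vee y$. By \eqref{pd:eq-diagonal-meet}, $a_n\wedge b_n\uparrow x\wedge y$. For a countable join $\bigvee_k x_k$ with $x_k=\bigvee_n a_{k,n}$, use the diagonal $c_n=\bigvee_{k\le n}a_{k,n}\in A$. It is increasing, and it is squeezed between $a_{k,n}$ and $\bigvee_k x_k$ for $n\ge k$, so $c_n\uparrow\bigvee_k x_k$. The $A^\delta$ statements follow dually, using the decreasing formulas and \eqref{pd:eq-diagonal-join}.

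\emph{Complements.} The map $a\mapsto a^-=1\backslash a$ is a sectional anti-isomorphism of $[0,1]$, and $A$ is closed under it. Hence $a_n\uparrow x$ gives $a_n^-\downarrow x^-$, so $x^-\in A^\delta$. The same holds for $a^\sim=a/1$, and the argument is symmetric from $A^\delta$ to $A^\sigma$. So both complements interchange the two classes.

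\emph{Residual inclusions.} Let $y=\bigvee_n a_n\in A^\sigma$ and $x=\bigwedge_n b_n\in A^\delta$ with $a_n\uparrow$ and $b_n\downarrow$ in $A$. Note that $\Delta(a_n,b_n)\in A$: since $A$ is a $D$-sublattice, $a_n\vee b_n\in A$, both residuals of the comparable pair $b_n\le a_n\vee b_n$ lie in $A$, and $A$ is closed under meets. Because $\Delta$ is increasing in the first argument and decreasing in the second, the sequence $\Delta(a_n,b_n)$ increases. I will show its supremum is $\Delta(y,x)$ by the same double-limit argument used at the end of the proof of Theorem~\ref{pd:total-residuals}. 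For $n\ge k$ we have $\Delta(a_k,b_n)\le\Delta(a_n,b_n)\le\Delta(y,x)$. By the formula $x_n\downarrow x\Rightarrow\Delta(y,x_n)\uparrow\Delta(y,x)$, this gives $\Delta(a_k,x)\le\bigvee_n\Delta(a_n,b_n)$. Then $y_n\uparrow y\Rightarrow\Delta(y_n,x)\uparrow\Delta(y,x)$ yields $\Delta(y,x)\le\bigvee_n\Delta(a_n,b_n)$. So $\Delta(y,x)\in A^\sigma$. The inclusion $\Delta(A^\delta,A^\sigma)\subseteq A^\delta$ is the mirror image with decreasing sequences, using the two remaining monotone-limit formulas.

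\emph{Final claim.} For $\ell\le u$ with $\ell\in A^\delta$ and $u\in A^\sigma$, \eqref{pd:eq-total-recovery} gives $u\backslash\ell=\Delta_L(u,\ell)$ and $\ell/u=\Delta_R(u,\ell)$. Both lie in $A^\sigma$ by the previous step.

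The only delicate point is the interchange of the two limits in the residual step. The approximants need not be comparable, so \eqref{pd:eq-total-recovery} cannot be applied to them directly. This is why I route the argument through the total residuals $\Delta$, which are defined for arbitrary pairs, rather than through actual residuals. Beyond that, everything is bookkeeping with the monotone-limit formulas.
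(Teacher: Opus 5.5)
Your proof is correct and follows the paper's route. Both use the same diagonal constructions for countable joins and meets, the diagonal lattice-continuity formulas for the finite operations, monotone transport under the complements, and the monotone diagonal sequence \(\Delta(a_n,b_n)\) for the residual inclusions. The only difference is in the residual step: the paper obtains \(\Delta(a_n,b_n)\uparrow\Delta(y,x)\) by citing the joint sequential order-continuity already proved in Theorem~\ref{pd:total-residuals}, whereas you rerun that double-limit argument by hand and also check explicitly that \(\Delta(a_n,b_n)\in A\), which the paper leaves implicit.
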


\begin{proof}
Let $U_i=\bigvee_ka_{i,k}$ with $a_{i,k}\in A$ and
$a_{i,k}\uparrow_k$. Define
\[
 b_n=\bigvee_{i=1}^na_{i,n}\in A,\qquad U=\bigvee_iU_i.
\]
Then
\[
 b_n\le b_{n+1}\le U,\qquad
 a_{i,k}\le a_{i,n}\le b_n\quad(n\ge\max\{i,k\}).
\]
Consequently
\[
 U_i=\bigvee_ka_{i,k}\le\bigvee_nb_n\quad(i\ge1),\qquad
 U\le\bigvee_nb_n\le U.
\]
Thus $b_n\uparrow U$, and $U\in A^\sigma$.
For $a_n\uparrow U$ and $b_n\uparrow V$ in $A$,
\[
 a_n\wedge b_n\uparrow U\wedge V
\]
by \eqref{pd:eq-diagonal-meet}, so $U\wedge V\in A^\sigma$.

For $\ell_i=\bigwedge_kb_{i,k}$ with $b_{i,k}\in A$ and
$b_{i,k}\downarrow_k$, define
\[
 c_n=\bigwedge_{i=1}^nb_{i,n}\in A,\qquad
 \ell=\bigwedge_i\ell_i.
\]
Then
\[
 \ell\le c_{n+1}\le c_n,\qquad
 c_n\le b_{i,n}\le b_{i,k}\quad(n\ge\max\{i,k\}),
\]
\[
 \bigwedge_nc_n\le\bigwedge_kb_{i,k}=\ell_i\quad(i\ge1),
 \qquad
 \ell\le\bigwedge_nc_n\le\bigwedge_i\ell_i=\ell.
\]
Thus $c_n\downarrow\ell$, and $\ell\in A^\delta$.
For $a_n\downarrow\ell$ and $b_n\downarrow r$ in $A$,
\[
 a_n\vee b_n\downarrow\ell\vee r
\]
by \eqref{pd:eq-diagonal-join}, so $\ell\vee r\in A^\delta$.

The complement representations are
\[
 \begin{array}{ll}
 a_n\uparrow U:
   &a_n^-\downarrow U^-,\quad a_n^\sim\downarrow U^\sim,\\
 b_n\downarrow\ell:
   &b_n^-\uparrow\ell^-,\quad b_n^\sim\uparrow\ell^\sim.
 \end{array}
\]
All four rows belong to $A$. Hence
\[
 (A^\sigma)^-,(A^\sigma)^\sim\subseteq A^\delta,\qquad
 (A^\delta)^-,(A^\delta)^\sim\subseteq A^\sigma.
\]
The identities $(x^-)^\sim=x=(x^\sim)^-$ turn these
inclusions into the asserted interchange of classes.

For $a_n\uparrow U$ and $b_n\downarrow\ell$, both total
residuals satisfy
\[
 \Delta(a_n,b_n)\le\Delta(a_{n+1},b_{n+1}),\qquad
 \Delta(a_n,b_n)\onull\Delta(U,\ell).
\]
Thus $\Delta(a_n,b_n)\uparrow\Delta(U,\ell)$ and
$\Delta(U,\ell)\in A^\sigma$.
For $a_n\downarrow\ell$ and $b_n\uparrow U$,
\[
 \Delta(a_{n+1},b_{n+1})\le\Delta(a_n,b_n),\qquad
 \Delta(a_n,b_n)\onull\Delta(\ell,U),
\]
so $\Delta(\ell,U)\in A^\delta$.
When $\ell\le u$,
\[
 \ell/u=\Delta_R(u,\ell)\in A^\sigma,\qquad
 u\backslash\ell=\Delta_L(u,\ell)\in A^\sigma.
\]
\end{proof}

\begin{lemma}[Finite-expression brackets]\label{pd:word-brackets}
Let $P$ be a finite expression formed from $\wedge,\vee$, both
complements, the total residuals, and the constants $0,1$.
Let $U_i\in A^\sigma$ and $a_{i,n}\uparrow U_i$ in $A$,
for $1\le i\le r$. Put
$x=P(U_1,\ldots,U_r)$ and
$p_n=P(a_{1,n},\ldots,a_{r,n})$.
There are
\[
 \ell_n\in A^\delta,\quad u_n\in A^\sigma,\quad
 \ell_n\le x,p_n\le u_n,\quad
 \ell_n\uparrow x,\quad u_n\downarrow x.
\]
For every $k\ge n$ there are source endpoints
$\ell_{n,k},u_{n,k}\in A$ with
\[
 \ell_{n,k}\downarrow_k\ell_n,\qquad
 u_{n,k}\uparrow_k u_n,\qquad
 \ell_{n,k}\le u_{n,k}.
\]
Consequently,
\[
 \ell_{n,k}/u_{n,k}\uparrow_k\ell_n/u_n\in A^\sigma,
 \qquad \ell_n/u_n\downarrow_n0.
\]
\end{lemma}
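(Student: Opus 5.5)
The proof goes by structural induction on the expression $P$. The induction hypothesis also records monotonicity: along with $\ell_n\in A^\delta$, $u_n\in A^\sigma$ and $\ell_n\le x,p_n\le u_n$, we carry representations $\ell_n=\bigwedge_k\ell_{n,k}$ and $u_n=\bigvee_ku_{n,k}$ by monotone source rows. For the base cases, a constant $0$ or $1$ takes $\ell_n=u_n=p_n$ with constant source rows. A variable $U_i$ takes
\[
 \ell_n=a_{i,n}\in A\subseteq A^\delta,\qquad u_n=U_i\in A^\sigma,\qquad \ell_{n,k}=a_{i,n},\qquad u_{n,k}=a_{i,k}.
\]
Indeed $a_{i,n}\le a_{i,k}$ for $k\ge n$, so $\ell_{n,k}\le u_{n,k}$ holds. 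This explains the restriction $k\ge n$ in the statement.

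For the inductive step, monotone operations preserve the bracket shape. For $\wedge$ and $\vee$ we combine the brackets coordinatewise. Then $\ell_n\wedge\ell'_n\in A^\delta$ and $u_n\wedge u'_n\in A^\sigma$ by the closure assertions of Lemma~\ref{pd:mixed-hulls}, and the convergence $\ell_n\wedge\ell'_n\uparrow x\wedge x'$ follows from \eqref{pd:eq-diagonal-meet}. For the join $\vee$ we use \eqref{pd:eq-diagonal-join} in the upper coordinate. In the remaining two cases, the increasing-join for $\ell$ and the decreasing-meet for $u$, we invoke the joint sequential order continuity of Lemma~\ref{pd:sigma-continuity}.

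Antitone operations swap the roles of the endpoints. The complements send the bracket $[\ell_n,u_n]$ to $[u_n^-,\ell_n^-]$, and Lemma~\ref{pd:mixed-hulls} gives $u_n^-\in A^\delta$ and $\ell_n^-\in A^\sigma$. A total residual $\Delta(y,x)$ is increasing in $y$ and decreasing in $x$, so its bracket is
\[
 \Delta(\ell_n,u'_n)\le\Delta(y,x),\ \Delta(p_n,p'_n)\le\Delta(u_n,\ell'_n).
\]
Here $\Delta(\ell_n,u'_n)\in\Delta(A^\delta,A^\sigma)\subseteq A^\delta$ and $\Delta(u_n,\ell'_n)\in A^\sigma$. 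The monotonicity in $n$ and the convergence to $\Delta(y,x)$ follow from the joint order continuity in Theorem~\ref{pd:total-residuals}.

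The source rows are built at each step by applying the same operation to the source rows of the arguments. For example, $\ell_{n,k}\wedge\ell'_{n,k}\downarrow_k\ell_n\wedge\ell'_n$ and $\Delta(\ell_{n,k},u'_{n,k})\downarrow_k\Delta(\ell_n,u'_n)$, again by the monotone-limit formulas of Lemma~\ref{pd:sigma-continuity} and Theorem~\ref{pd:total-residuals}. Monotonicity of each operation preserves $\ell_{n,k}\le u_{n,k}$.

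For the consequences, both residual maps are antitone in the first argument and isotone in the second. Hence $\ell_{n,k}/u_{n,k}=\Delta_R(u_{n,k},\ell_{n,k})$ is increasing in $k$, and Theorem~\ref{pd:total-residuals} with \eqref{pd:eq-total-recovery} identifies its supremum as $\Delta_R(u_n,\ell_n)=\ell_n/u_n$. This element lies in $A^\sigma$ by the last clause of Lemma~\ref{pd:mixed-hulls}. The same monotonicity shows that $\ell_n/u_n$ decreases in $n$, and joint continuity gives the limit $\Delta_R(x,x)=x/x=0$.

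The main obstacle is bookkeeping rather than any single estimate. In the residual step the source rows of the two arguments run in opposite monotone directions, so the combined row must remain monotone in $k$ for each fixed $n$. The inequality $\ell_{n,k}\le u_{n,k}$ also has to survive the swap of endpoints. I would check this case first and carefully, since it is the only point where the four monotone-limit formulas of Theorem~\ref{pd:total-residuals} are needed in full.
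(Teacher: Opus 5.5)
Your proof is correct and is essentially the paper's argument: the endpoints you build by structural induction are exactly the paper's $L_P(\boldsymbol a_n,\boldsymbol U)$, $U_P(\boldsymbol a_n,\boldsymbol U)$ and $L_P(\boldsymbol a_n,\boldsymbol a_k)$, $U_P(\boldsymbol a_n,\boldsymbol a_k)$ from its recursion table $(*)$. The difference is only organizational: you carry the enclosure, the monotonicity in $n$ and $k$, and the order continuity along the induction, whereas the paper proves them as separate properties of the bracket functions, and you should also state explicitly that the source rows stay in $A$ because $A$ is closed under meets, joins, complements and both total residuals.
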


\begin{proof}
For vectors $\boldsymbol a=(a_1,\ldots,a_r)$ and
$\boldsymbol b=(b_1,\ldots,b_r)$ with $a_i\le b_i$, define
$L_P(\boldsymbol a,\boldsymbol b)$ and
$U_P(\boldsymbol a,\boldsymbol b)$ recursively. With the arguments
suppressed on the right, the complete recursion is
\[
\begin{array}{c|cc}
 P&L_P&U_P\\ \hline
 X_i&a_i&b_i\\
 0&0&0\\
 1&1&1\\
 Q\wedge R&L_Q\wedge L_R&U_Q\wedge U_R\\
 Q\vee R&L_Q\vee L_R&U_Q\vee U_R\\
 Q^-&(U_Q)^-&(L_Q)^-\\
 Q^\sim&(U_Q)^\sim&(L_Q)^\sim\\
 \Delta_L(Q,R)&\Delta_L(L_Q,U_R)&\Delta_L(U_Q,L_R)\\
 \Delta_R(Q,R)&\Delta_R(L_Q,U_R)&\Delta_R(U_Q,L_R).
\end{array}                                                    \tag{*}
\]
For every $\boldsymbol t$ with $a_i\le t_i\le b_i$, we claim
\begin{equation}\label{pd:eq-word-enclosure}
 L_P(\boldsymbol a,\boldsymbol b)
 \le P(\boldsymbol t)
 \le U_P(\boldsymbol a,\boldsymbol b).
\end{equation}
The rows for variables and constants prove the initial cases.
If the claim holds for $Q,R$, write
$q=Q(\boldsymbol t)$, $r=R(\boldsymbol t)$.
For $\circ=\wedge,\vee$,
\[
 L_Q\circ L_R\le q\circ r\le U_Q\circ U_R.
\]
For either complement $\kappa\in\{-,\sim\}$,
\[
 (U_Q)^\kappa\le q^\kappa\le(L_Q)^\kappa.
\]
For $\Delta=\Delta_L,\Delta_R$,
\[
 \Delta(L_Q,U_R)
 \le\Delta(q,U_R)\le\Delta(q,r)
 \le\Delta(U_Q,r)\le\Delta(U_Q,L_R).
\]
These are every inductive case in \((*)\).

The recursion also has the interval-inclusion property
\begin{equation}\label{pd:eq-word-inclusion}
 \boldsymbol a\le\boldsymbol a'\le\boldsymbol b'
       \le\boldsymbol b
 \ \Longrightarrow\
 L_P(\boldsymbol a,\boldsymbol b)
 \le L_P(\boldsymbol a',\boldsymbol b')
 \le U_P(\boldsymbol a',\boldsymbol b')
 \le U_P(\boldsymbol a,\boldsymbol b).
\end{equation}
Here vector inequalities are coordinatewise. For the induction,
write $l_Q\le l'_Q\le u'_Q\le u_Q$ and
$l_R\le l'_R\le u'_R\le u_R$ for the already established
subexpression bounds. Meets and joins satisfy
\[
 l_Q\circ l_R\le l'_Q\circ l'_R,\qquad
 u'_Q\circ u'_R\le u_Q\circ u_R.
\]
Complements satisfy
\[
 (u_Q)^\kappa\le(u'_Q)^\kappa,\qquad
 (l'_Q)^\kappa\le(l_Q)^\kappa.
\]
For a total residual the two bounds are
\[
 \Delta(l_Q,u_R)
 \le\Delta(l'_Q,u_R)\le\Delta(l'_Q,u'_R),
\]
\[
 \Delta(u'_Q,l'_R)
 \le\Delta(u_Q,l'_R)\le\Delta(u_Q,l_R).
\]
The middle comparison in \eqref{pd:eq-word-inclusion} follows
from \eqref{pd:eq-word-enclosure}. This proves the inclusion
property for every expression.

For later limits, the expressions $L_P$ and $U_P$ are jointly
sequentially order-continuous in their $2r$ arguments.
Indeed the variables and constants have this property.
For any already constructed rows
$l_{Q,j}\onull l_Q$, $u_{Q,j}\onull u_Q$,
$l_{R,j}\onull l_R$, $u_{R,j}\onull u_R$, the recursive rows
have the following explicit limits:
\[
\begin{array}{c|cc}
 &\hbox{lower row and limit}&\hbox{upper row and limit}\\ \hline
 \wedge&
 l_{Q,j}\wedge l_{R,j}\onull l_Q\wedge l_R&
 u_{Q,j}\wedge u_{R,j}\onull u_Q\wedge u_R\\
 \vee&
 l_{Q,j}\vee l_{R,j}\onull l_Q\vee l_R&
 u_{Q,j}\vee u_{R,j}\onull u_Q\vee u_R\\
 \kappa&
 (u_{Q,j})^\kappa\onull(u_Q)^\kappa&
 (l_{Q,j})^\kappa\onull(l_Q)^\kappa\\
 \Delta&
 \Delta(l_{Q,j},u_{R,j})\onull\Delta(l_Q,u_R)&
 \Delta(u_{Q,j},l_{R,j})\onull\Delta(u_Q,l_R).
\end{array}
\]
The first two rows are Lemma~\ref{pd:sigma-continuity}.
For the complement row, brackets
$v_j\uparrow v$, $w_j\downarrow v$ give
$w_j^\kappa\uparrow v^\kappa$ and
$v_j^\kappa\downarrow v^\kappa$.
The last row is Theorem~\ref{pd:total-residuals}.
Structural induction on \((*)\) proves the assertion.

Let $\boldsymbol a_n=(a_{1,n},\ldots,a_{r,n})$ and
$\boldsymbol U=(U_1,\ldots,U_r)$, and define
\[
 \ell_n=L_P(\boldsymbol a_n,\boldsymbol U),\qquad
 u_n=U_P(\boldsymbol a_n,\boldsymbol U).
\]
The membership induction in \((*)\) uses the following
inclusions from Lemma~\ref{pd:mixed-hulls}:
\[
\begin{array}{c|cc}
 &\hbox{lower endpoints}&\hbox{upper endpoints}\\ \hline
 \wedge,\vee&A^\delta\circ A^\delta\subseteq A^\delta&
              A^\sigma\circ A^\sigma\subseteq A^\sigma\\
 \kappa&(A^\sigma)^\kappa\subseteq A^\delta&
           (A^\delta)^\kappa\subseteq A^\sigma\\
 \Delta&\Delta(A^\delta,A^\sigma)\subseteq A^\delta&
           \Delta(A^\sigma,A^\delta)\subseteq A^\sigma .
\end{array}
\]
The initial lower endpoints lie in $A\subseteq A^\delta$,
and the initial upper endpoints lie in $A^\sigma$.
Thus $\ell_n\in A^\delta$, $u_n\in A^\sigma$.
Equations \eqref{pd:eq-word-enclosure} and
\eqref{pd:eq-word-inclusion} give
\[
 \ell_n\le x,p_n\le u_n,\qquad
 \ell_n\le\ell_{n+1},\qquad u_{n+1}\le u_n.
\]
At equal endpoint vectors, induction in \((*)\) gives
$L_P(\boldsymbol U,\boldsymbol U)
 =P(\boldsymbol U)
 =U_P(\boldsymbol U,\boldsymbol U)$.
The order-continuity assertion just proved therefore gives
\[
 \ell_n\onull L_P(\boldsymbol U,\boldsymbol U)=x,\qquad
 u_n\onull U_P(\boldsymbol U,\boldsymbol U)=x.
\]
Together with monotonicity, these are $\ell_n\uparrow x$
and $u_n\downarrow x$.

For fixed $n$ and $k\ge n$, define
\[
 \ell_{n,k}=L_P(\boldsymbol a_n,\boldsymbol a_k),\qquad
 u_{n,k}=U_P(\boldsymbol a_n,\boldsymbol a_k).
\]
Every operation in \((*)\) preserves $A$, so
$\ell_{n,k},u_{n,k}\in A$. Since
$\boldsymbol a_n\le\boldsymbol a_k\le\boldsymbol a_{k+1}$,
\eqref{pd:eq-word-inclusion} gives
\[
 \ell_{n,k+1}\le\ell_{n,k}\le u_{n,k}\le u_{n,k+1}.
\]
Continuity of the same two expressions in their second vector
gives
\[
 \ell_{n,k}\onull L_P(\boldsymbol a_n,\boldsymbol U)=\ell_n,
 \qquad
 u_{n,k}\onull U_P(\boldsymbol a_n,\boldsymbol U)=u_n.
\]
Thus $\ell_{n,k}\downarrow_k\ell_n$ and
$u_{n,k}\uparrow_k u_n$. Finally,
\[
 \ell_{n,k}/u_{n,k}
 =\Delta_R(u_{n,k},\ell_{n,k})
 \uparrow_k\Delta_R(u_n,\ell_n)=\ell_n/u_n,
\]
\[
 \ell_n/u_n=\Delta_R(u_n,\ell_n)
 \downarrow_n\Delta_R(x,x)=x/x=0.
\]
The first row consists of elements of $A$, so
$\ell_n/u_n\in A^\sigma$.
\end{proof}

Every element of the $D$-sublattice generated by a set has a finite
expression of the preceding kind. Indeed, actual residuals are
recovered by the total residuals. A defined sum is recovered from
them and complementation by
\[
 a+b=\bigl(b/a^\sim\bigr)^-.
\]

\begin{theorem}[Sequential generation]\label{pd:transfinite-hull}
Let $E_0=A$, let $E_{\alpha+1}=O(E_\alpha)$, and at a nonzero
limit ordinal let $E_\lambda=\bigcup_{\alpha<\lambda}E_\alpha$.
Then
\[
 \sigma_L(A)=\bigcup_{\alpha<\omega_1}E_\alpha,
\]
where $\sigma_L(A)$ is the least $D$-sublattice containing $A$
and closed under ambient countable joins and meets.
\end{theorem}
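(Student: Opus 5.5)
Write $E=\bigcup_{\alpha<\omega_1}E_\alpha$. The plan is to prove the two inclusions $E\subseteq\sigma_L(A)$ and $\sigma_L(A)\subseteq E$ separately.

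\emph{The inclusion $E\subseteq\sigma_L(A)$.} I would argue by transfinite induction that $E_\alpha\subseteq\sigma_L(A)$ for every $\alpha<\omega_1$. The base case $E_0=A$ and limit stages are immediate. For a successor, let $x\in O(E_\alpha)$ with witnesses $e_n\in E_\alpha\subseteq\sigma_L(A)$ and $e_n\onull x$. By \eqref{pd:eq-canonical-brackets} of Lemma~\ref{pd:sigma-continuity},
\[
 x=\bigvee_N\bigwedge_{n\ge N}e_n ,
\]
which is a countable join of countable meets computed in $L$. Hence $x\in\sigma_L(A)$ by the closure properties defining $\sigma_L(A)$.

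\emph{$E$ is a $D$-sublattice.} Each $E_\alpha$ is a $D$-sublattice: $A$ is one, $O$ preserves the property by Proposition~\ref{pd:order-hull}, and at limit stages we take an increasing union. The chain is increasing because $E_\alpha\subseteq O(E_\alpha)$ via constant sequences, and since $O$ is monotone in $E$, induction gives $E_\alpha\subseteq E_\beta$ for $\alpha\le\beta$. An increasing union of $D$-sublattices is again a $D$-sublattice, since any finitely many elements lie in a common stage. So $E$ is a $D$-sublattice.

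\emph{Closure of $E$ under countable joins and meets.} This is the step where the ordinal bound $\omega_1$ is essential. Let $x_i\in E$ for $i\in\mathbb N$, say $x_i\in E_{\alpha_i}$. Since $\omega_1$ is regular, $\beta=\sup_i\alpha_i<\omega_1$, and all $x_i$ lie in the single $D$-sublattice $E_\beta$. Put $s_n=x_1\vee\cdots\vee x_n\in E_\beta$. Then $s_n\uparrow\bigvee_ix_i$, and an increasing sequence order-converges to its supremum, with brackets $p_n=q_n$ replaced by $p_n=s_n$ and $q_n$ the constant supremum. Therefore $\bigvee_ix_i\in O(E_\beta)=E_{\beta+1}\subseteq E$. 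Meets are handled in the same way with $x_1\wedge\cdots\wedge x_n\downarrow\bigwedge_ix_i$.

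\emph{Conclusion.} $E$ is a $D$-sublattice containing $A$ and closed under ambient countable joins and meets. By minimality, $\sigma_L(A)\subseteq E$, and together with the first inclusion this gives equality.

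The main obstacle is the closure step, which must keep the stages below $\omega_1$ through the regularity of $\omega_1$ (countable choice in ZFC). The successor-stage inclusion needs only the canonical-bracket formula, and all of the algebraic closure is already carried by Proposition~\ref{pd:order-hull}.
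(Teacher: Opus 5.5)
Your proposal is correct and follows the paper's proof essentially step for step. Both arguments prove $O(\sigma_L(A))\subseteq\sigma_L(A)$ via the canonical brackets and then induct. Both show that the union over $\alpha<\omega_1$ is a $D$-sublattice by the finite-list argument on the increasing chain, and both obtain closure under countable joins and meets by placing the finite partial joins and meets in a single countable stage $E_\beta$ and passing to $O(E_\beta)=E_{\beta+1}$.
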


\begin{proof}
The inclusions $E_\alpha\subseteq E_{\alpha+1}$ follow from
constant representing sequences. Transfinite induction gives
$E_\alpha\subseteq E_\beta$ for $\alpha\le\beta$.
Proposition~\ref{pd:order-hull} proves that a successor stage
is a $D$-sublattice when its predecessor is one.

At a nonzero limit $\lambda$, any finite list
$x_1,\ldots,x_r\in E_\lambda$ has ordinals
$\alpha_i<\lambda$ with $x_i\in E_{\alpha_i}$.
For $\beta=\max_{1\le i\le r}\alpha_i<\lambda$,
\[
 x_1,\ldots,x_r\in E_\beta.
\]
Their finite meets, joins, complements, comparable residuals,
and defined sums belong to $E_\beta\subseteq E_\lambda$.
Also $0,1\in E_0\subseteq E_\lambda$.
This proves by transfinite induction that every $E_\alpha$
is a $D$-sublattice.

Let $F=\bigcup_{\alpha<\omega_1}E_\alpha$ and
$x_n\in F$. Define
\[
 \alpha_n=\min\{\alpha<\omega_1:x_n\in E_\alpha\},\qquad
 \gamma=\sup_{n\ge1}(\alpha_n+1).
\]
The set $\bigcup_n(\alpha_n+1)$ is countable, so
$\gamma<\omega_1$, and $x_n\in E_\gamma$ for every $n$.
Set
\[
 p_N=\bigvee_{n=1}^Nx_n\in E_\gamma,\qquad
 q_N=\bigwedge_{n=1}^Nx_n\in E_\gamma.
\]
The universal bounds give
\[
 p_N\uparrow\bigvee_{n\ge1}x_n,\qquad
 q_N\downarrow\bigwedge_{n\ge1}x_n.
\]
For example, $z\ge p_N$ for every $N$ is equivalent to
$z\ge x_n$ for every $n$, and $z\le q_N$ for every $N$
is equivalent to $z\le x_n$ for every $n$.
Hence both countable extrema belong to
$O(E_\gamma)=E_{\gamma+1}\subseteq F$.
The finite-list argument also makes $F$ a $D$-sublattice.
It contains $A$, so $\sigma_L(A)\subseteq F$.

For the reverse inclusion put $S=\sigma_L(A)$.
If $z_n\in S$ and $z_n\onull z$ in $L$, then
\[
 r_N=\bigwedge_{n\ge N}z_n\in S,\qquad
 z=\bigvee_Nr_N\in S
\]
by \eqref{pd:eq-canonical-brackets}.
Thus $O(S)\subseteq S$.
The induction begins with $E_0=A\subseteq S$.
If $E_\alpha\subseteq S$, then
\[
 E_{\alpha+1}=O(E_\alpha)\subseteq O(S)\subseteq S.
\]
If $E_\alpha\subseteq S$ for every $\alpha<\lambda$, then
$E_\lambda=\bigcup_{\alpha<\lambda}E_\alpha\subseteq S$.
Therefore $F\subseteq S$.
\end{proof}

\begin{theorem}[Monotone-class generation]\label{pd:monotone-class}
The least subset of $L$ containing $A$ and closed under
ambient increasing sequential suprema and decreasing
sequential infima is $\sigma_L(A)$.
\end{theorem}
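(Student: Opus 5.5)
Let $\mathcal M$ denote the least subset of $L$ containing $A$ and closed under ambient increasing sequential suprema and decreasing sequential infima. The inclusion $\mathcal M\subseteq\sigma_L(A)$ is immediate. By definition $\sigma_L(A)$ contains $A$ and is closed under ambient countable joins and meets, in particular monotone ones, and $\mathcal M$ is the least such set. For the reverse inclusion, the plan is to show that $\mathcal M$ contains every stage $E_\alpha$ of Theorem~\ref{pd:transfinite-hull}. The main tool is the characterization of $O(E)$ through canonical brackets, together with the fact that every element of $O(E_\alpha)$ is reached from $E_\alpha$ by a monotone operation of depth two.

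The induction runs as follows. First, $E_0=A\subseteq\mathcal M$. Limit stages are unions, so they cause no difficulty. For a successor stage, let $x\in O(E_\alpha)$ with $e_n\in E_\alpha$ and $e_n\onull x$. By \eqref{pd:eq-canonical-brackets}, $x=\bigvee_N p_N$, where $p_N=\bigwedge_{n\ge N}e_n$. Each $p_N$ is the decreasing limit of the finite meets $\bigwedge_{N\le n\le K}e_n$ as $K\to\infty$, and these lie in $E_\alpha$ because $E_\alpha$ is a $D$-sublattice. If $E_\alpha\subseteq\mathcal M$, then $p_N\in\mathcal M$ and hence $x\in\mathcal M$. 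The induction hypothesis needed is therefore the statement \emph{$E_\alpha\subseteq\mathcal M$ and $E_\alpha$ is a $D$-sublattice}. The second part is already given by Theorem~\ref{pd:transfinite-hull}.

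This simple induction works because the finite meets are taken inside $E_\alpha$. We never need $\mathcal M$ itself to be closed under finite meets, and proving that closure would be the obstacle in a classical monotone-class argument. The only point to check carefully is that the decreasing limit and the increasing limit are computed in $L$, which matches the ambient closure defining $\mathcal M$. Each limit occurs in the sense used to build $\mathcal M$, with $\bigwedge_{N\le n\le K}e_n\downarrow_K p_N$ and $p_N\uparrow_N x$.

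Combining the induction with $\sigma_L(A)=\bigcup_{\alpha<\omega_1}E_\alpha$ from Theorem~\ref{pd:transfinite-hull} gives $\sigma_L(A)\subseteq\mathcal M$. Together with the first inclusion, this proves $\mathcal M=\sigma_L(A)$.
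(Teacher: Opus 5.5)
Your proof is correct, and it takes a different route from the paper's. You reduce the statement to Theorem~\ref{pd:transfinite-hull}. Once $\sigma_L(A)=\bigcup_{\alpha<\omega_1}E_\alpha$ is available, every element of $O(E_\alpha)$ has the form $\bigvee_N\bigwedge_K\bigwedge_{N\le n\le K}e_n$, where the finite meets already lie in $E_\alpha$. The induction $E_\alpha\subseteq\mathcal M$ is then immediate. There is no circularity, because the proof of Theorem~\ref{pd:transfinite-hull} never uses the monotone-class result.

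The paper instead runs a classical two-pass ``good sets'' argument directly on the monotone class $\mathcal M$. For each $T\in\{\wedge,\vee,\Delta_L,\Delta_R\}$ it shows that the sets $\{x:T(x,a)\in\mathcal M\}$ for $a\in A$, and then $\{y:T(x,y)\in\mathcal M\}$ for $x\in\mathcal M$, are monotone closed and contain $A$. It handles complements in the same way. This makes $\mathcal M$ itself a $D$-sublattice, and closure under countable extrema follows from its finite joins and meets. That argument needs only one-variable monotone continuity of the operations (Theorem~\ref{pd:total-residuals}) and no ordinal hierarchy.

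Your argument is much shorter. It also shows explicitly that each successor stage is reached by a single $\sup\inf$ of a sequence from the previous stage. The work of closing under finite operations has not disappeared, however. It is paid for in Proposition~\ref{pd:order-hull}, whose proof that $O(E)$ is a $D$-sublattice uses the joint sequential order continuity of the lattice operations and total residuals. That is why you can avoid the step you describe as the obstacle in a classical monotone-class argument.
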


\begin{proof}
Let $M$ denote the least subset in the statement.
For each
\[
 T\in\{\wedge,\vee,\Delta_L,\Delta_R\}
\]
and fixed $a\in A$, define
\[
 C_a^T=\{x\in L:T(x,a)\in M\}.
\]
Every such $T$ is increasing in its first coordinate.
For $x_n\in C_a^T$,
\[
 x_n\uparrow x
 \ \Longrightarrow\
 T(x_n,a)\uparrow T(x,a)
 \ \Longrightarrow\ T(x,a)\in M,
\]
\[
 x_n\downarrow x
 \ \Longrightarrow\
 T(x_n,a)\downarrow T(x,a)
 \ \Longrightarrow\ T(x,a)\in M.
\]
The second implications use the two defining closures of $M$.
Also
\[
 b\in A\quad\Longrightarrow\quad
 T(b,a)\in A\subseteq M\quad\Longrightarrow\quad b\in C_a^T.
\]
Thus $C_a^T$ is monotone closed and contains $A$, so
\begin{equation}\label{pd:eq-first-monotone-pass}
 M\subseteq C_a^T\qquad(a\in A).
\end{equation}

Fix $x\in M$ and define
\[
 D_x^T=\{y\in L:T(x,y)\in M\}.
\]
For $T=\wedge,\vee$, its closure equations are
\[
 y_n\uparrow y\ \Longrightarrow\
 T(x,y_n)\uparrow T(x,y),\qquad
 y_n\downarrow y\ \Longrightarrow\
 T(x,y_n)\downarrow T(x,y).
\]
For $T=\Delta_L,\Delta_R$, they are
\[
 y_n\uparrow y\ \Longrightarrow\
 T(x,y_n)\downarrow T(x,y),\qquad
 y_n\downarrow y\ \Longrightarrow\
 T(x,y_n)\uparrow T(x,y).
\]
In all four cases, $y_n\in D_x^T$ implies $T(x,y)\in M$.
Moreover, \eqref{pd:eq-first-monotone-pass} gives
\[
 a\in A\quad\Longrightarrow\quad x\in C_a^T
 \quad\Longrightarrow\quad T(x,a)\in M.
\]
Therefore $A\subseteq D_x^T$ and $M\subseteq D_x^T$.
We have proved $T(M,M)\subseteq M$ for each of the four
operations.

For $\kappa\in\{-,\sim\}$ let
$B_\kappa=\{x\in L:x^\kappa\in M\}$. Its closure equations are
\[
 x_n\uparrow x\ \Longrightarrow\
 x_n^\kappa\downarrow x^\kappa,\qquad
 x_n\downarrow x\ \Longrightarrow\
 x_n^\kappa\uparrow x^\kappa.
\]
Since $a^\kappa\in A$ for $a\in A$, the set $B_\kappa$
contains $A$. Thus $M\subseteq B_\kappa$.
For $a\le b$ in $M$,
\[
 b\backslash a=\Delta_L(b,a)\in M,\qquad
 a/b=\Delta_R(b,a)\in M.
\]
For a defined sum $a+b=c$ with $a,b\in M$,
\[
 b\le a^\sim,\qquad
 r=\Delta_R(a^\sim,b)=b/a^\sim\in M,\qquad
 c=r^-\in M.
\]
Together with $0,1\in A\subseteq M$, this proves that $M$
is a $D$-sublattice.

For any sequence $x_n\in M$, the finite operations already
proved give
\[
 p_N=\bigvee_{n=1}^Nx_n\in M,\qquad
 q_N=\bigwedge_{n=1}^Nx_n\in M.
\]
By the defining closures of $M$,
\[
 \bigvee_{n\ge1}x_n=\bigvee_Np_N\in M,\qquad
 \bigwedge_{n\ge1}x_n=\bigwedge_Nq_N\in M.
\]
Thus $\sigma_L(A)\subseteq M$.
Conversely, $\sigma_L(A)$ contains $A$ and both monotone
sequential extrema, so minimality of $M$ gives
$M\subseteq\sigma_L(A)$.
\end{proof}

\begin{theorem}[Unrestricted uniqueness]\label{pd:uniqueness}
Let $X$ be a separated Abelian convergence group. Two
additive $\sigma$-additive maps
\[
 f,g:\sigma_L(A)\longrightarrow X
\]
that agree on $A$ agree on all of $\sigma_L(A)$.
No positivity, modularity, control, or exhaustivity
assumption on the competing extensions is required.
\end{theorem}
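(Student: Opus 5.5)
The natural route is a monotone-class argument based on Theorem~\ref{pd:monotone-class}. Put
\[
 G=\{x\in\sigma_L(A):f(x)=g(x)\}.
\]
By hypothesis $A\subseteq G$. By Theorem~\ref{pd:monotone-class}, $\sigma_L(A)$ is the least subset of $L$ containing $A$ and closed under ambient increasing sequential suprema and decreasing sequential infima. It therefore suffices to prove that $G$ has both closure properties, since then $\sigma_L(A)\subseteq G$.

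First we must read ``$\sigma$-additive'' correctly. The domain $\sigma_L(A)$ is closed under ambient countable joins and meets, so every ambient supremum of a sequence from it belongs to it. Hence $\sup_L s_N=\sup_{\sigma_L(A)}s_N$ whenever the former exists. Consequently the intrinsic and ambient readings of $\sigma$-additivity coincide, and so do intrinsic and ambient normality.

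Lemma~\ref{pd:ambient-normality} then turns $\sigma$-additivity of $f$ and $g$ into normality. It further gives continuity at monotone sequential limits lying in the domain:
\[
 x_n\uparrow_L x\ \text{or}\ x_n\downarrow_L x,\quad x_n,x\in\sigma_L(A)
 \quad\Longrightarrow\quad f(x_n)\to f(x),\ g(x_n)\to g(x).
\]
The limit $x$ lies in the domain by the closure of $\sigma_L(A)$ under ambient countable extrema.

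Now let $x_n\in G$ with $x_n\uparrow x$ or $x_n\downarrow x$ in $L$. Then $x\in\sigma_L(A)$, and the nets $(f(x_n))_n$ and $(g(x_n))_n$ are the same net, because $f(x_n)=g(x_n)$. This single net converges to $f(x)$ and to $g(x)$. Separation of $X$ forces $f(x)=g(x)$, so $x\in G$. Thus $G$ is closed under both monotone sequential operations. Note that we never need to know whether $G$ is a $D$-sublattice; Theorem~\ref{pd:monotone-class} was designed to make this unnecessary.

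The only delicate point is the justification that the monotone-limit continuity in Lemma~\ref{pd:ambient-normality} applies. That lemma states continuity for limits of the ``corresponding kind'' that belong to the domain. Here $E=\sigma_L(A)$ is a $D$-sublattice, and the relevant infima and suprema are ambient ones lying in $E$, so the ambient version of the lemma is exactly what is used.

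We should also check that no positivity, modularity or exhaustivity enters anywhere. The argument uses only three ingredients: additivity, through the residual decomposition in Lemma~\ref{pd:ambient-normality}; $\sigma$-additivity; and separation of $X$.
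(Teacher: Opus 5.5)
Your proof is correct and follows the paper's own argument: the equality set contains $A$, it is closed under monotone sequential limits by separation of $X$, and Theorem~\ref{pd:monotone-class} gives all of $\sigma_L(A)$. The only difference is in how monotone continuity is obtained. The paper derives it directly, telescoping $d_{n+1}=x_n/x_{n+1}$ for increasing sequences and using $x_n^-\uparrow x^-$ for decreasing ones. You instead invoke Lemma~\ref{pd:ambient-normality}, after correctly observing that intrinsic and ambient $\sigma$-additivity coincide on $\sigma_L(A)$.
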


\begin{proof}
Put $S=\sigma_L(A)$. If $x_n\uparrow x$ in $S$, define
\[
 d_1=x_1,\qquad d_{n+1}=x_n/x_{n+1}.
\]
The partial-sum induction is
\[
 d_1=x_1,\qquad
 (d_1+\cdots+d_n)+d_{n+1}
 =x_n+(x_n/x_{n+1})=x_{n+1}.
\]
Thus $(d_n)$ is ordered orthogonal and
$\bigvee_n(d_1+\cdots+d_n)=x$. Sigma-additivity gives
\[
 f(x_n)=\sum_{j=1}^nf(d_j)\longrightarrow f(x),\qquad
 g(x_n)=\sum_{j=1}^ng(d_j)\longrightarrow g(x).
\]
If $x_n\downarrow x$, then $x_n^-\uparrow x^-$, so
\[
 f(x_n)=f(1)-f(x_n^-)\longrightarrow f(1)-f(x^-)=f(x),
\]
\[
 g(x_n)=g(1)-g(x_n^-)\longrightarrow g(1)-g(x^-)=g(x).
\]

Let $C=\{x\in S:f(x)=g(x)\}$. For any monotone sequence
$x_n\in C$ with ambient limit $x$, countable closure of $S$
gives $x\in S$. The preceding limits give
\[
 f(x_n)=g(x_n)\quad(n\ge1),\qquad
 f(x_n)\longrightarrow f(x),\qquad
 g(x_n)\longrightarrow g(x).
\]
Separation of $X$ yields $f(x)=g(x)$, hence $x\in C$.
Therefore $C$ is monotone closed. Since $A\subseteq C$,
Theorem~\ref{pd:monotone-class} gives $S\subseteq C$.
\end{proof}

\begin{proposition}[Stable range and inherited modularity]
\label{pd:stable-range}
Let $f:S\to X$ be an additive $\sigma$-additive extension
of $m:A\to X$, where $S=\sigma_L(A)$.
Then
\[
 f(S)\subseteq[m(A)]_X.
\]
If $P\subseteq X$ is a limit-stable additive cone with
$m(A)\subseteq P$, then $f(S)\subseteq P$, and, for
$e=m(1)$,
\[
 f(S)\subseteq\{v\in X:v\in P,\ e-v\in P\}.
\]
If $\pi:X\to Y$ is a continuous homomorphism into a separated
Abelian convergence group and $\pi m$ is modular, then
$\pi f$ is modular on $S$.
\end{proposition}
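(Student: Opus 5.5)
The plan is a monotone-class argument through Theorem~\ref{pd:monotone-class}, using that $f$ is continuous along monotone sequences. The latter was shown inside the proof of Theorem~\ref{pd:uniqueness}: for $x_n\uparrow x$ or $x_n\downarrow x$ in $S$ we have $f(x_n)\to f(x)$. That argument used only additivity and $\sigma$-additivity of $f$, through the ordered orthogonal decomposition $d_{n+1}=x_n/x_{n+1}$ and the complement $x_n^-$. I will reuse it verbatim, or cite Lemma~\ref{pd:ambient-normality} in its ambient form.

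\emph{First assertion.} Put $C=\{x\in S: f(x)\in[m(A)]_X\}$. Then $A\subseteq C$, since $f$ extends $m$. If $x_n\in C$ is monotone with ambient limit $x$, then $x\in S$, because $S$ is closed under countable joins and meets. Moreover $f(x_n)\to f(x)$ is a net with values in the limit-stable set $[m(A)]_X$, so $f(x)$ lies in that set. Thus $C$ is monotone closed, and Theorem~\ref{pd:monotone-class} gives $C=S$.

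\emph{Cone assertion.} Since $P$ is limit-stable and contains $m(A)$, we have $[m(A)]_X\subseteq P$, and hence $f(S)\subseteq P$. For the upper bound, take $x\in S$. Its complement $x^-$ lies in $S$, and $x^-+x=1$, so additivity gives
\[
 e-f(x)=f(1)-f(x)=f(x^-)\in P.
\]

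\emph{Modularity.} Fix $a\in A$ and let $C_a=\{x\in S:\pi f(x\vee a)+\pi f(x\wedge a)=\pi f(x)+\pi f(a)\}$. It contains $A$ because $\pi m$ is modular. For monotone $x_n\to x$ in $C_a$, the sequences $x_n\vee a$ and $x_n\wedge a$ are monotone with limits $x\vee a$ and $x\wedge a$, by Lemma~\ref{pd:sigma-continuity} or lattice-$\sigma$-continuity. Continuity of $f$ along these sequences, continuity of the homomorphism $\pi$, additivity of limits, and separation of $Y$ then pass the identity to $x$. Theorem~\ref{pd:monotone-class} gives $C_a=S$. Next fix $x\in S$ and run the same argument in the second variable, with $D_x=\{y\in S:\text{modular identity for }(x,y)\}$. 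The first pass shows $A\subseteq D_x$, and monotone closure follows as before, so $D_x=S$.

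The only delicate point is that monotone continuity of $f$ needs ambient limits. These are available: $S$ is ambiently closed, Theorem~\ref{pd:monotone-class} uses ambient monotone limits, and $\sigma$-additivity on $S=\sigma_L(A)$ refers to suprema in $L$ that lie in $S$, exactly as in Theorem~\ref{pd:uniqueness}.
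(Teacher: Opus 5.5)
Your proposal is correct and follows the paper's proof essentially step by step. It uses a monotone-class argument through Theorem~\ref{pd:monotone-class} driven by monotone sequential continuity of $f$, an additivity identity against $1$ for $e-f(x)\in P$, and the same two-pass argument for the modular defect of $\pi f$: first with $a\in A$, then with arbitrary $x\in S$ using symmetry. The only cosmetic differences are that you take $C=[m(A)]_X$ directly rather than intersecting over all limit-stable $C\supseteq m(A)$, and that you use $x^-$ where the paper uses $x/1$.
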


\begin{proof}
Let $C\subseteq X$ be limit-stable and contain $m(A)$, and put
\[
 B_C=\{x\in S:f(x)\in C\}.
\]
For a monotone sequence $x_n\in B_C$ with ambient limit $x$,
\[
 x\in S,\qquad f(x_n)\longrightarrow f(x),\qquad
 f(x_n)\in C\quad(n\ge1).
\]
Limit stability gives $f(x)\in C$, hence $x\in B_C$.
Also $a\in A$ implies $f(a)=m(a)\in C$.
Theorem~\ref{pd:monotone-class} gives $S=B_C$.
Consequently
\[
 f(S)\subseteq
 \bigcap\{C\subseteq X:C\text{ is limit-stable and }m(A)\subseteq C\}
 =[m(A)]_X.
\]
Taking $C=P$ gives $f(x)\in P$ for every $x\in S$. For every
such $x$,
\[
 x/1\in S,\qquad x+(x/1)=1,\qquad
 e-f(x)=f(x/1)\in P.
\]
This proves the interval assertion.

Let $h=\pi f$. For a defined sum,
\[
 h(a+b)=\pi(f(a)+f(b))=h(a)+h(b).
\]
For ordered partial sums $s_N\uparrow s$,
\[
 h(s_N)=\pi(f(s_N))\longrightarrow\pi(f(s))=h(s).
\]
Thus $h$ is additive and sigma-additive. In particular it is
monotone-sequentially continuous by
Lemma~\ref{pd:ambient-normality}. Define its modular defect by
\[
 D_h(x,y)=h(x\vee y)+h(x\wedge y)-h(x)-h(y).
\]
For fixed $a\in S$ and $x_n\uparrow x$,
\[
 x_n\vee a\uparrow x\vee a,\qquad
 x_n\wedge a\uparrow x\wedge a,
\]
\[
 h(x_n\vee a)\longrightarrow h(x\vee a),\quad
 h(x_n\wedge a)\longrightarrow h(x\wedge a),\quad
 h(x_n)\longrightarrow h(x),
\]
and hence $D_h(x_n,a)\to D_h(x,a)$.
For $x_n\downarrow x$, the corresponding formulas are
\[
 x_n\vee a\downarrow x\vee a,\qquad
 x_n\wedge a\downarrow x\wedge a,
\]
\[
 h(x_n\vee a)\longrightarrow h(x\vee a),\quad
 h(x_n\wedge a)\longrightarrow h(x\wedge a),\quad
 h(x_n)\longrightarrow h(x),
\]
which again give $D_h(x_n,a)\to D_h(x,a)$.

For $a\in A$, let $C_a=\{x\in S:D_h(x,a)=0\}$.
If a monotone row belongs to $C_a$, its defects form the
constant zero sequence. The preceding limit formulas and
separation give $D_h(x,a)=0$ at its limit. Hence $C_a$
is monotone closed. Source modularity gives
\[
 b\in A\quad\Longrightarrow\quad
 D_h(b,a)=D_{\pi m}(b,a)=0.
\]
Thus $A\subseteq C_a$, and the monotone-class theorem gives
$S=C_a$ for every $a\in A$.

For fixed $x\in S$, let $E_x=\{y\in S:D_h(x,y)=0\}$.
Since $D_h(x,y)=D_h(y,x)$, the displayed limit identities
give
\[
 y_n\uparrow y\text{ or }y_n\downarrow y
 \quad\Longrightarrow\quad
 D_h(x,y_n)\longrightarrow D_h(x,y).
\]
Therefore $E_x$ is monotone closed. The first pass gives
\[
 a\in A\quad\Longrightarrow\quad x\in C_a
 \quad\Longrightarrow\quad D_h(x,a)=0,
\]
so $A\subseteq E_x$. A second application of
Theorem~\ref{pd:monotone-class} yields $S=E_x$.
Hence $D_h(x,y)=0$ for every $x,y\in S$.
\end{proof}

\section{Uncontrolled modular extension on pseudo-\texorpdfstring{$D$}{D}-lattices}
\label{mod:section}

Throughout this section, $L$ is a $\sigma$-complete, lattice-$\sigma$-continuous
pseudo-$D$-lattice and $A\subseteq L$ is a $D$-sublattice. In particular,
$0,1\in A$. All extrema in the source are computed in $L$.
The group $Y$ is a separated Abelian net-convergence group, with convergence
$\theta$. No order on $Y$ is assumed. We write
\[
 A^\sigma=\{\sup_n a_n:a_n\in A,\ a_n\uparrow\},\qquad
 A^\delta=\{\inf_n a_n:a_n\in A,\ a_n\downarrow\}.
\]
The residual convention is
$(b\backslash a)+a=b$ and $a+(a/b)=b$ for $a\le b$.

Let $n:A\to Y$ be additive and modular, so that
\begin{equation}\label{mod:valuation}
 n(a\vee b)+n(a\wedge b)=n(a)+n(b).
\end{equation}
The source continuity assumption is ambient sequential normality:
\begin{equation}\label{mod:source-normality}
 a_j\in A,\quad a_j\downarrow_L0
 \quad\Longrightarrow\quad n(a_j)\tto{\theta}0.
\end{equation}
By Lemma~\ref{pd:ambient-normality}, intrinsic $\sigma$-additivity implies
\eqref{mod:source-normality}. The latter assumption is sufficient below.
For $s\in L$, put
\[
 V_A(s)=\{n(a):a\in A,\ a\le s\},\qquad
 \Gamma_A(s)=[V_A(s)]_\theta,\qquad N=[n(A)]_\theta.
\]
We assume that $N$ is complete, that
\begin{equation}\label{mod:range-assumptions}
 N-N\subseteq D=-D,\qquad
 \mathsf F(D),\qquad\mathsf S_{\mathrm{all}}(D),\qquad\mathsf U(N)
\end{equation}
hold for the original convergence, and that $n$ is uniformly hereditarily
exhaustive:
\begin{equation}\label{mod:source-he}
 (h_j)\subseteq A\text{ ordered orthogonal}
 \quad\Longrightarrow\quad (V_A(h_j))_j\Rightarrow0.
\end{equation}
The range principles in \eqref{mod:range-assumptions} are those in
Definitions~\ref{def:finite-response} and \ref{gen:summation}, and uniform
closure in Definition~\ref{gen:uniform}. They concern $Y$, $N$, $D$,
and their original convergence.
They do not assert the existence of an extension of $n$.

\subsection{Hereditary interval errors}

For an additive modular map $v:E\to Y$ on a $D$-sublattice, define
\begin{equation}\label{mod:interval-definition}
 I_v(a,b)=\{v(t)-v(s):a\wedge b\le s\le t\le a\vee b,\ s,t\in E\}.
\end{equation}
The subscript is omitted when the map is fixed. Every such set contains $0$.
If $v(E)\subseteq N$, it is a subset of $D$. In expressions involving
Minkowski sums, $rB$ denotes the sum of $r$ independent copies of $B$.
It does not denote a pointwise integer multiple unless the context states this.

\begin{lemma}[Interval calculus]\label{mod:interval-calculus}
Let $v:E\to Y$ be additive and modular. The following assertions hold.
\begin{enumerate}
\item For $a\le b$,
\begin{equation}\label{mod:interval-residual}
 I_v(a,b)=\{v(c):c\in E,\ c\le a/b\}
          =\{v(c):c\in E,\ c\le b\backslash a\}.
\end{equation}
\item For arbitrary $a,b,c\in E$,
\begin{equation}\label{mod:lattice-contractions}
 I_v(a\wedge c,b\wedge c)\subseteq I_v(a,b),\qquad
 I_v(a\vee c,b\vee c)\subseteq I_v(a,b).
\end{equation}
If $l\le c\le u$, then
\begin{equation}\label{mod:interval-splitting}
 I_v(l,u)\subseteq I_v(l,c)+I_v(c,u).
\end{equation}
Consequently, for every finite family,
\begin{equation}\label{mod:finite-lattice-errors}
 I_v\left(\bigvee_i a_i,\bigvee_i b_i\right)
 \subseteq\sum_i I_v(a_i,b_i),\qquad
 I_v\left(\bigwedge_i a_i,\bigwedge_i b_i\right)
 \subseteq\sum_i I_v(a_i,b_i).
\end{equation}
\item Left and right complementation preserve interval-value sets. The same
is true of a residual map with one endpoint fixed, on its natural interval.
\item Put $J_v(a,b)=I_v(a,b)-I_v(a,b)$. If $a\le b$ and $a'\le b'$, then
\begin{equation}\label{mod:residual-errors}
 \begin{split}
 I_v(b\backslash a,b'\backslash a')&\subseteq
       8J_v(a,a')+4J_v(b,b'),\\
 I_v(a/b,a'/b')&\subseteq
       8J_v(a,a')+4J_v(b,b').
 \end{split}
\end{equation}
Thus each fixed finite expression in lattice operations, complements and
the total residuals of Theorem~\ref{pd:total-residuals} admits an interval-error
bound by finitely many signed copies of the input interval errors.
\end{enumerate}
\end{lemma}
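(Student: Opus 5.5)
The plan is to reduce every assertion to item 1, which converts interval errors into hereditary value sets below a residual. Then we use the translation and sectional isomorphisms of Lemma~\ref{pd:residual-calculus} together with additivity and modularity.

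\emph{Item 1.} For $a\le b$ we have $a\wedge b=a$ and $a\vee b=b$. Given $a\le s\le t\le b$ in $E$, additivity gives $v(t)-v(s)=v(s/t)$. By \eqref{pd:eq-residual-chains}, $s/t\le a/b$, since $a/b=(a/s)+(s/t)+(t/b)$ and shrinking summands decreases the value. Conversely, $c\le a/b$ gives $t=a+c\in[a,b]$ by left translation, and then $v(t)-v(a)=v(c)$. The right-residual description follows symmetrically with $(t\backslash s)+s=t$.

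\emph{Item 2.} For the contractions, take $s\le t$ in $[a\wedge b\wedge c,(a\vee b)\wedge c]$. The natural candidates are $s'=s\vee(a\wedge b)$ and $t'=t\vee(a\wedge b)$, which lie in $[a\wedge b,a\vee b]$. Modularity gives $v(t')-v(s')=v(t)-v(s)-(v(t\wedge a\wedge b)-v(s\wedge a\wedge b))$. Since $t\wedge a\wedge b\le t\wedge c\wedge\ldots$, I expect to need a cleaner route. The route I would try first is to write $t\wedge a\wedge b=s\wedge a\wedge b$. This holds because $s\ge a\wedge b\wedge c$ and $t\le c$, so $t\wedge a\wedge b\le a\wedge b\wedge c\le s$. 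The correction term therefore vanishes, and the claim is exactly modularity applied twice. The join case is dual, with $s'=s\wedge(a\vee b)$.

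For splitting, given $l\le s\le t\le u$, I would write
\[
v(t)-v(s)=\bigl(v(t\wedge c)-v(s\wedge c)\bigr)+\bigl(v(t\vee c)-v(s\vee c)\bigr),
\]
which is modularity subtracted for $t$ and $s$. The first bracket lies in $I(l,c)$ and the second in $I(c,u)$. For \eqref{mod:finite-lattice-errors}, I would induct on the number of terms. The step is
\[
I(a_1\vee a_2,b_1\vee b_2)\subseteq I(a_1\vee a_2,b_1\vee a_2)+I(b_1\vee a_2,b_1\vee b_2),
\]
after which the contractions bound each term. Here one must check that splitting applies although $a_1\vee a_2$ and $b_1\vee b_2$ need not be comparable. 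I would replace the general case by passing through the intermediate point $a_1\vee a_2\vee b_1\vee b_2$, or reduce first to comparable pairs via $I(a,b)=I(a\wedge b,a\vee b)$.

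\emph{Items 3 and 4.} Complementation $x\mapsto x^-$ reverses order, and $v(x^-)=v(1)-v(x)$. Hence $v(t^-)-v(s^-)=v(s)-v(t)$, and since $I$ is symmetric under negation (swap $s,t$ after reversing roles) the sets agree. The same computation applies to $x\mapsto b\backslash x$ on $[0,b]$, where $v(b\backslash x)=v(b)-v(x)$.

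For \eqref{mod:residual-errors}, I would write $b\backslash a=\Delta_L(b,a)=b\wedge((b\vee a)\backslash a)$. I would move first from $(b,a)$ to $(b\vee b',a)$ and then to $(b\vee b',a\wedge a')$ and back, using monotonicity of $\Delta_L$ to interpose comparable chains. Each step changes one argument between comparable points. The change is then controlled by item 3 (residual with fixed endpoint) together with meets and joins (item 2). Collecting terms gives boundedly many copies, and $J$ absorbs the signs. The constants $8$ and $4$ come from this bookkeeping.

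The main obstacle is making this bookkeeping honest while $b\backslash a$ and $b'\backslash a'$ are incomparable. The final sentence of item 4 then follows by structural induction along the recursion $(*)$ of Lemma~\ref{pd:word-brackets}.
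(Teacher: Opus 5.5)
\textbf{Items 1--3.} These follow the paper's proof essentially line for line: the meet and join contractions via the vanishing correction term, splitting by modularity at $c$, and the residual translations with one endpoint fixed.

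For the finite families, commit to your second option. Replace $(a_i,b_i)$ by $l_i=a_i\wedge b_i\le u_i=a_i\vee b_i$ and use $[\bigvee a_i\wedge\bigvee b_i,\bigvee a_i\vee\bigvee b_i]\subseteq[\bigvee l_i,\bigvee u_i]$. Your chain $l_1\vee l_2\le u_1\vee l_2\le u_1\vee u_2$ is then comparable, so splitting and contraction apply. This is exactly the paper's chain $v_0\le\dots\le v_r$.

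In item 3, the claim that $I_v$ is symmetric under negation is false: for a positive measure $I_v$ contains only nonnegative values. What you need is that an anti-isomorphism sends $s\le t$ to $t^-\le s^-$ with $v(s^-)-v(t^-)=v(t)-v(s)$, and maps $[a\wedge b,a\vee b]$ onto $[a^-\wedge b^-,a^-\vee b^-]$.

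\textbf{Item 4: the gap.} Your path $(b,a)\to(b\vee b',a)\to(b\vee b',a\wedge a')\to(b\vee b',a')\to(b',a')$ gives residuals $x_0\le x_1\le x_2\ge x_3\ge x_4$. Each step error is correctly controlled by item 3 plus interval inclusion. However, \eqref{mod:interval-splitting} adds errors only along a \emph{monotone} chain $l\le c\le u$. Nothing in your proposal turns errors along an up-then-down path into a bound on $I_v(x_0,x_4)$, whose endpoints are incomparable. You name this as the main obstacle but leave it open, so the constants $8,4$ are asserted rather than derived. The $a/b$ inclusion is also not addressed; the paper obtains it by rerunning the argument in $E^{\mathrm{op}}$.

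The paper's missing idea is the cross-meet set $C_v(a,b)=\{v(a\wedge z)-v(b\wedge z):z\in E\}$, which has three properties:
\begin{itemize}
\item It telescopes along an arbitrary path.
\item It satisfies $C_v(a,b)\subseteq J_v(a,b)$, after joining $a\wedge z$ and $b\wedge z$ with $a\wedge b$.
\item Modularity gives $v(r)=v(a)+C_{a,b}(a\wedge r)-C_{a,b}(a\vee r)$ for $r\in[a\wedge b,a\vee b]$, hence $I_v(a,b)\subseteq 2C_v(a,b)-2C_v(a,b)$.
\end{itemize}
Along $b\backslash a,\ b\backslash(a\wedge a'),\ b'\backslash(a\wedge a'),\ b'\backslash a'$ this gives $C_v\subseteq 2J_v(a,a')+J_v(b,b')$, and therefore exactly $8J_v(a,a')+4J_v(b,b')$.

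\textbf{Closing your own route.} Item 2 alone suffices if you add a peak inclusion. If $x,z\le y$, then
\[
 I_v(x,z)\subseteq I_v(x\wedge z,y)\subseteq I_v(x\wedge z,x)+I_v(x,y).
\]
Meet contraction \eqref{mod:lattice-contractions} with $c=x$ gives $I_v(x\wedge z,x)=I_v(z\wedge x,(x\vee z)\wedge x)\subseteq I_v(z,x\vee z)\subseteq I_v(z,y)$. Take $y=x_2=(b\vee b')\backslash(a\wedge a')$. Then $I_v(x_0,x_2)$ and $I_v(x_4,x_2)$ each lie in $I_v(a,a')+I_v(b,b')$. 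Hence $I_v(b\backslash a,b'\backslash a')\subseteq 2I_v(a,a')+2I_v(b,b')\subseteq 8J_v(a,a')+4J_v(b,b')$, because $0\in I_v\subseteq J_v$. This route gives better constants; the paper's $C_v$ device instead handles arbitrary paths without needing a common upper point.
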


\begin{proof}
For $a\le s\le t\le b$, the element $s/t$ is below $a/b$ and
$v(t)-v(s)=v(s/t)$. Conversely, if $c\le a/b$, the sum $a+c$ exists and
belongs to $[a,b]$, and $v(c)=v(a+c)-v(a)$. The left-residual identity has
the following calculation: $c\le b\backslash a$ implies
$a\le c+a\le b$ and $v(c)=v(c+a)-v(a)$, while
$a\le s\le t\le b$ implies $t\backslash s\le b\backslash a$ and
$v(t)-v(s)=v(t\backslash s)$. This proves \eqref{mod:interval-residual}.

First suppose $a\le b$ and $a\wedge c\le s\le t\le b\wedge c$.
The equalities $s\wedge a=t\wedge a=a\wedge c$ and modularity give
\[
 v(t)-v(s)=v(t\vee a)-v(s\vee a).
\]
Both elements on the right belong to $[a,b]$. This proves the meet
contraction for comparable endpoints. If $a\vee c\le s\le t\le b\vee c$,
then $s\vee b=t\vee b=b\vee c$, and
\[
 v(t)-v(s)=v(t\wedge b)-v(s\wedge b).
\]
The right side belongs to $I_v(a,b)$. For arbitrary $a,b$, the interval
between $a\wedge c$ and $b\wedge c$ is contained in
$[(a\wedge b)\wedge c,(a\vee b)\wedge c]$. The comparable-endpoint result
applies to this larger interval. The join assertion is obtained from
$[(a\wedge b)\vee c,(a\vee b)\vee c]$.

For $l\le s\le t\le u$, modularity at $c$ gives
\[
 v(t)-v(s)=
 \bigl(v(t\wedge c)-v(s\wedge c)\bigr)
 +\bigl(v(t\vee c)-v(s\vee c)\bigr).
\]
The two terms belong to $I_v(l,c)$ and $I_v(c,u)$, respectively.
For \eqref{mod:finite-lattice-errors}, let $l_i=a_i\wedge b_i$ and
$u_i=a_i\vee b_i$, $1\le i\le r$. For $\circ\in\{\wedge,\vee\}$ put
\[
 v_j=\mathop{\circ}_{i=1}^r z_{i,j},\qquad
 z_{i,j}=\begin{cases}u_i&i\le j,\\l_i&i>j.\end{cases}
\]
Then $v_0\le\cdots\le v_r$, both original outputs lie in $[v_0,v_r]$,
and contraction in the $j$th coordinate gives
$I_v(v_{j-1},v_j)\subseteq I_v(l_j,u_j)=I_v(a_j,b_j)$.
Induction in \eqref{mod:interval-splitting} gives
\[
 I_v(v_0,v_r)\subseteq\sum_{j=1}^r I_v(v_{j-1},v_j)
 \subseteq\sum_{j=1}^r I_v(a_j,b_j).
\]

The residual maps with one endpoint fixed are order isomorphisms or
order anti-isomorphisms between their natural intervals. Their defining
additivity identities give, for $a\le s\le t$,
\[
 v(t\backslash a)-v(s\backslash a)=v(t)-v(s),\qquad
 v(a/t)-v(a/s)=v(t)-v(s).
\]
For $s\le t\le c$ they give
\[
 v(c\backslash s)-v(c\backslash t)=v(t)-v(s),\qquad
 v(s/c)-v(t/c)=v(t)-v(s).
\]
The interval maps are onto their corresponding intervals, so each
inclusion of interval-value sets supplied by these equations is an
equality. Setting $c=1$ proves the complement assertions.

For the last assertion, define
\[
 C_v(a,b)=\{v(a\wedge z)-v(b\wedge z):z\in E\}.
\]
Writing $p=a\wedge b$, the two elements
$(a\wedge z)\vee p$ and $(b\wedge z)\vee p$ lie in $[p,a\vee b]$.
Their value difference equals $v(a\wedge z)-v(b\wedge z)$, because their
meets with $p$ both equal $p\wedge z$. Splitting that difference at their
meet proves
\begin{equation}\label{mod:C-I-first}
 C_v(a,b)\subseteq J_v(a,b).
\end{equation}
For $r\in[p,a\vee b]$, modularity gives the intermediate identities
\[
 \begin{split}
 v(r)&=v(a\wedge r)+v(a\vee r)-v(a),\\
 v(a\vee r)&=v(a\vee b)+v(b\wedge(a\vee r))-v(b),\\
 v(a\vee b)&=v(a)+v(b)-v(p).
 \end{split}
\]
Substitution yields
\[
 v(r)=v(a)+C_{a,b}(a\wedge r)-C_{a,b}(a\vee r),
 \qquad C_{a,b}(z)=v(a\wedge z)-v(b\wedge z).
\]
Indeed, $b\wedge a\wedge r=p$ and
$(a\vee r)\vee b=a\vee b$. Subtracting this identity for two comparable
elements of $[p,a\vee b]$ proves
\begin{equation}\label{mod:C-I-second}
 I_v(a,b)\subseteq2C_v(a,b)-2C_v(a,b).
\end{equation}
For every finite path $x_0,\ldots,x_r$, the defining differences telescope:
\[
 C_v(x_0,x_r)\subseteq\sum_{j=1}^r C_v(x_{j-1},x_j).
\]
Apply this identity and \eqref{mod:C-I-first}--\eqref{mod:C-I-second} to
\[
 b\backslash a,\quad b\backslash(a\wedge a'),\quad
 b'\backslash(a\wedge a'),\quad b'\backslash a'.
\]
The first and third interval errors lie in $I_v(a,a')$, by fixed-endpoint
residual preservation and interval inclusion. The middle one equals
$I_v(b,b')$. This gives the first inclusion in
\eqref{mod:residual-errors}. Define $E^{\mathrm{op}}$ by
$x+_{\mathrm{op}}y=y+x$. Its order, meet, join, and interval-value
sets are unchanged, while
\[
 b\backslash_{\mathrm{op}}a=a/b,\qquad
 a/_{\mathrm{op}}b=b\backslash a.
\]
The map $v$ remains additive because
$v(x+_{\mathrm{op}}y)=v(y)+v(x)=v(x)+v(y)$, and its modular identity
is unchanged. The first proved inclusion, written in $E^{\mathrm{op}}$,
is therefore the explicit formula
\[
 I_v(a/b,a'/b')\subseteq8J_v(a,a')+4J_v(b,b'),
\]
which is the second inclusion in \eqref{mod:residual-errors}.
For either total residual $\Delta$, its formula and the preceding bounds give
\begin{equation}\label{mod:total-residual-error}
 \begin{split}
 I_v(\Delta(y,x),\Delta(y',x'))
 &\subseteq I_v(y,y')+8J_v(x,x')+4J_v(y\vee x,y'\vee x')\\
 &\subseteq5J_v(y,y')+12J_v(x,x').
 \end{split}
\end{equation}
The identity $J_v(a,b)=-J_v(a,b)$ is used in the second line.
\end{proof}

\begin{lemma}[Explicit finite-word constants]\label{mod:word-constants}
Let $P$ be a finite word in variables $X_1,\ldots,X_r$, constants, the
lattice operations, complements, and the two total residuals. Define
nonnegative integers $w_i(P)$ recursively by
\[
 \begin{array}{ll}
 w_i(X_j)=\mathbf1_{i=j},&w_i(c)=0,\\
 w_i(Q\wedge T)=w_i(Q\vee T)=w_i(Q)+w_i(T),
 &w_i(Q^-)=w_i(Q^\sim)=w_i(Q),\\
 w_i(\Delta(Q,T))=10w_i(Q)+24w_i(T).
 \end{array}
\]
For two input tuples $a,b\in E^r$,
\begin{equation}\label{mod:word-constant-bound}
 J_v(P(a),P(b))\subseteq\sum_{i=1}^r w_i(P)J_v(a_i,b_i),\qquad
 I_v(P(a),P(b))\subseteq\sum_{i=1}^r w_i(P)J_v(a_i,b_i).
\end{equation}
There is also an outward-interval version. Start with intervals
$[l_i,u_i]$, and compute an interval $[L_Q,U_Q]$ at each subword by
applying meet and join to the corresponding endpoints, swapping endpoints
under complementation, and setting
\[
 (L_{\Delta(Q,T)},U_{\Delta(Q,T)})
   =(\Delta(L_Q,U_T),\Delta(U_Q,L_T)).
\]
Then
\begin{equation}\label{mod:word-outward-bound}
 J_v(L_P,U_P)\subseteq\sum_iw_i(P)J_v(l_i,u_i).
\end{equation}
This assertion permits repeated occurrences of a variable with different
polarities.
\end{lemma}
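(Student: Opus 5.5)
The plan is structural induction on the word $P$, carrying the two inclusions in \eqref{mod:word-constant-bound} together. Throughout I use that $J_v(a,b)$ is symmetric ($J_v=-J_v$), contains $0$ and $I_v(a,b)$, and that Minkowski sums of sets containing $0$ increase when more summands are added. So any bound $\sum_i c_iJ_v(a_i,b_i)$ with $c_i\le w_i(P)$ can be padded to the stated one.

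\emph{Base cases.} For a variable $X_j$ both inclusions are trivial: $I_v(a_j,b_j)\subseteq J_v(a_j,b_j)$, and $J_v(a_j,b_j)$ is itself. For a constant, the interval is degenerate, so $I_v=J_v=\{0\}$.

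\emph{Finite lattice operations.} For $Q\circ T$ with $\circ\in\{\wedge,\vee\}$, apply \eqref{mod:finite-lattice-errors} with two terms:
\[
 I_v(Q(a)\circ T(a),Q(b)\circ T(b))\subseteq I_v(Q(a),Q(b))+I_v(T(a),T(b)).
\]
By induction the right side lies in $\sum_i(w_i(Q)+w_i(T))J_v(a_i,b_i)$. For $J_v$, take differences of two such elements. This requires care: $J$ of a sum doubles the count. I therefore intend to prove the stronger statement that the $I$-set lies in a Minkowski sum of copies of the \emph{symmetric} sets $J_v$, and to control $J_v(P(a),P(b))=I-I$ directly.

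I expect this bookkeeping between $I$ and $J$ to be the main obstacle. A naive approach gives $J\subseteq 2\sum w_iJ$, which breaks the constants. I would try to handle it by working with the intermediate set $C_v$ from the proof of Lemma~\ref{mod:interval-calculus}. By \eqref{mod:C-I-first} and \eqref{mod:C-I-second}, one has $C_v\subseteq J_v$, $I_v\subseteq 2C_v-2C_v$, and $C_v$ is subadditive along paths. The target is the right induction invariant, namely a bound on $J$ rather than on $I$. If exact constants fail, I would note where they are used and adjust.

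\emph{Complements.} Complementation preserves interval-value sets by part 3 of Lemma~\ref{mod:interval-calculus}. Hence $I_v$ and $J_v$ are unchanged, matching $w_i(Q^\kappa)=w_i(Q)$.

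\emph{Total residuals.} Use \eqref{mod:total-residual-error}:
\[
 I_v(\Delta(y,x),\Delta(y',x'))\subseteq 5J_v(y,y')+12J_v(x,x').
\]
Then $J_v\subseteq 10J_v(y,y')+24J_v(x,x')$ by symmetry. This is precisely where the recursion $10w_i(Q)+24w_i(T)$ comes from, so the residual step is the model for the invariant: bound $I$ by symmetric $J$-sums and double for $J$. For consistency, the lattice step should likewise be run with $J$ as the invariant. I would check whether the splitting \eqref{mod:interval-splitting} applied to $J$ gives additivity without doubling, via the path argument for $C_v$ (an $I$-difference at the output equals differences of monotone interpolants).

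\emph{Outward-interval version.} I would prove \eqref{mod:word-outward-bound} by the same induction, with $(L_Q,U_Q)$ in place of $(Q(a),Q(b))$. Each recursive rule is one of the operations above applied to endpoint pairs: meet/join of endpoints, swap under complement, and $\Delta(L_Q,U_T)$ versus $\Delta(U_Q,L_T)$. So the same per-operation inclusions apply verbatim, and $J_v$ is symmetric in its arguments, so swaps are harmless. Since the recursion never assumes that the same variable enters with the same polarity, repeated variables with different polarities need no extra argument.
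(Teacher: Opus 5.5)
Your architecture matches the paper's proof. You induct on the $J_v$-bound and use that complementation preserves $I_v$ and $J_v$. You difference \eqref{mod:total-residual-error} to get $10J_v+24J_v$ for the total residuals. You run the outward version through the same recursions, using $J_v(U_T,L_T)=J_v(L_T,U_T)$. Finally you read off the $I_v$-inclusion from $0\in I_v$, i.e.\ $I_v\subseteq J_v$.

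The gap is the meet/join step, which your proposal leaves open. Neither route you sketch closes it with the stated weights. Inserting the induction hypothesis into $I_v(Q(a)\circ T(a),Q(b)\circ T(b))\subseteq I_v(Q(a),Q(b))+I_v(T(a),T(b))$ and then doubling gives $2w_i(Q)+2w_i(T)$, as you yourself note. The detour through $C_v$ does not help either: converting back via \eqref{mod:C-I-second}, $I_v\subseteq 2C_v-2C_v$, introduces extra multiplicative factors rather than yielding $w_i(Q)+w_i(T)$. The same unresolved step is also needed for the meet/join rows of the outward-interval recursion, which you delegate to it.

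The missing observation is short. Take differences in the two-term lattice inclusion \emph{before} applying the induction hypothesis. In an abelian group, Minkowski sums satisfy $(A+B)-(A+B)=(A-A)+(B-B)$, because all four points are chosen independently. This gives
\[
 J_v(Q(a)\circ T(a),Q(b)\circ T(b))\subseteq J_v(Q(a),Q(b))+J_v(T(a),T(b))
\]
exactly, with no doubling. The induction hypothesis on $J_v$ then produces the weight $w_i(Q)+w_i(T)$. With this inserted, in both the evaluated and the outward-interval versions, your argument coincides with the paper's.
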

\begin{proof}
For a variable or a constant the asserted inclusion is its defining
identity. Taking the difference of the two sides of
\eqref{mod:finite-lattice-errors} gives
$J_v(Q(a)\circ T(a),Q(b)\circ T(b))\subseteq
J_v(Q(a),Q(b))+J_v(T(a),T(b))$ for $\circ=\wedge,\vee$.
Complementation preserves $I_v$ and $J_v$.
Taking differences in \eqref{mod:total-residual-error} gives
$J_v(\Delta(Q(a),T(a)),\Delta(Q(b),T(b)))\subseteq
10J_v(Q(a),Q(b))+24J_v(T(a),T(b))$.
These four displayed recursions prove the first assertion by induction
on the number of operation symbols in $P$. The second uses $0\in I_v$
and hence $I_v\subseteq J_v$.
For outward endpoints the recursions are, explicitly,
\[
 \begin{split}
 J_v(L_{Q\circ T},U_{Q\circ T})
   &\subseteq J_v(L_Q,U_Q)+J_v(L_T,U_T)\quad(\circ=\wedge,\vee),\\
 J_v(U_Q^-,L_Q^-)&=J_v(L_Q,U_Q),\qquad
 J_v(U_Q^\sim,L_Q^\sim)=J_v(L_Q,U_Q),\\
 J_v(\Delta(L_Q,U_T),\Delta(U_Q,L_T))
   &\subseteq10J_v(L_Q,U_Q)+24J_v(U_T,L_T)\\
   &=10J_v(L_Q,U_Q)+24J_v(L_T,U_T).
 \end{split}
\]
These formulas and the variable and constant cases prove
\eqref{mod:word-outward-bound} by induction on the word tree.
\end{proof}

\begin{lemma}[Indexed stable-hull assembly]\label{mod:hull-bookkeeping}
Assume $\mathsf S_{\mathrm{all}}(D)$. If
$0\in U_\alpha\subseteq D$ is decreasing and null, then
\begin{equation}\label{mod:one-row}
 \bigl(D\cap[U_\alpha]_\theta\bigr)_\alpha\Rightarrow0.
\end{equation}
For rows $U_{i,\alpha}$ with domains $J_i$, let $m_i\ge1$ be finite
integers and let $\epsilon_{i,k}\in\{-1,1\}$, $1\le k\le m_i$.
Set
\[
 C=\{(i,k):i\ge1,\ 1\le k\le m_i\},\quad
 P_C=\prod_{(i,k)\in C}J_i,\quad
 W_{i,k,\alpha}=\epsilon_{i,k}U_{i,\alpha},
\]
\[
 B_\psi=\bigcup_{F\subseteq C\text{ finite}}
                   \sum_{(i,k)\in F}W_{i,k,\psi(i,k)},\qquad
 H_\psi=D\cap[B_\psi]_\theta.
\]
Then $H_\psi\Rightarrow0$. The diagonal map
$\delta:\prod_iJ_i\to P_C$, $\delta(\phi)(i,k)=\phi(i)$, is order
preserving and cofinal, so $H_{\delta(\phi)}\Rightarrow0$.
Countably indexed collections of countably many rows can be assembled
into one countable collection. Finite sums of the individual relative
hulls lie in the stable hull of the assembled raw sums.
\end{lemma}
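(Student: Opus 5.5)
The whole statement reduces to Lemma~\ref{gen:relative} together with bookkeeping for signs, the diagonal and reindexing. The one-row assertion \eqref{mod:one-row} is the first part of Lemma~\ref{gen:relative}, applied with $\mathfrak I$ equal to all domains, since $\mathsf S_{\mathrm{all}}(D)$ admits every $J_i$.

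For the signed family, the first step is to check that each $W_{i,k,\alpha}=\epsilon_{i,k}U_{i,\alpha}$ is again an admissible row. Since $D=-D$, the set $-U_{i,\alpha}$ lies in $D$ and contains $0$, and it is decreasing in $\alpha$. Negation is a continuous bijection, so the evaluation net of $(-U_{i,\alpha})_\alpha$ is the negative of the original evaluation net and converges to $0$. Next, $C$ is countable and infinite, or finite, in which case we pad with rows $\{0\}$ indexed by $\mathbb N$. A bijection $e:\mathbb N\to C$ then induces an order isomorphism $P_C\cong\prod_r J_{e(r)}$ that preserves finite index sets. Under this isomorphism $B_\psi$ and $H_\psi$ are exactly the sets $A_\varphi$ and $Q_\varphi$ of \eqref{gen:Q-preliminary}, so $\mathsf S_{\mathrm{all}}(D)$ gives $H_\psi\Rightarrow0$. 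This is the argument used for $Q^*_\Phi$ in Lemma~\ref{gen:relative}.

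For the diagonal, $\delta$ is clearly order preserving. It is cofinal because, given $\psi\in P_C$, for each $i$ one can choose $\phi(i)\in J_i$ above the finitely many $\psi(i,1),\ldots,\psi(i,m_i)$, using directedness. Then $\delta(\phi)\ge\psi$. The tail of the evaluation of $(H_{\delta(\phi)})_\phi$ at $\phi_0$ is $\bigcup_{\phi\ge\phi_0}H_{\delta(\phi)}$, which is contained in the $P_C$-tail at $\delta(\phi_0)$. Conversely, for each $\psi$, cofinality gives a $\phi_0$ with $\delta(\phi_0)\ge\psi$, so $\bigcup_{\phi\ge\phi_0}H_{\delta(\phi)}\subseteq\bigcup_{\psi'\ge\psi}H_{\psi'}$. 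This is exactly the quasi-subnet relation \eqref{eq:qsub}. The diagonal evaluation is therefore a quasi-subnet of a null evaluation, and hence it is null.

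The remaining two sentences are the second half of Lemma~\ref{gen:relative}. Countably many countable collections are flattened through a bijection $\mathbb N\to\mathbb N^2$, or $\mathbb N\to\bigsqcup_n C_n$. The inclusion of finite sums of relative hulls in the stable hull of the assembled raw sums is \eqref{gen:flatten-inclusion}, proved with \eqref{eq:hull-addition}. The only point that needs care is the cofinality and tail comparison for $\delta$; everything else is reindexing.
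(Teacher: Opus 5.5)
Your proposal is correct and follows essentially the paper's argument: one-row control by padding with zero rows (which you cite from Lemma~\ref{gen:relative} rather than repeating), admissibility of the signed rows via $D=-D$ and continuity of inversion, reindexing of $C$ by a bijection with $\mathbb N$, cofinality of $\delta$ by directedness, and the flattening inclusion via \eqref{eq:hull-addition}. The main difference is bookkeeping. The paper fixes explicit upper-bound selectors $\beta_i:J_i^{m_i}\to J_i$ with $\beta_i(\alpha,\ldots,\alpha)=\alpha$, so that $\delta(\beta(\psi))\ge\psi$ and $\beta\delta=\mathrm{id}$, which it needs later when certificate endpoints are evaluated at $\beta(\psi)$; you instead spell out the quasi-subnet tail comparison that the paper leaves implicit.
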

\begin{proof}
For \eqref{mod:one-row}, use $U$ as row $1$ and $\{0\}$ on $\mathbb N$
as every row $i\ge2$. The resulting raw sum is $U_{\phi(1)}$. Its
evaluation tails are exactly those of the family in
\eqref{mod:one-row}.

For the copied rows, $W_{i,k,\alpha}\subseteq D$ because $D=-D$.
Each row is decreasing, contains zero, and is null by continuity of
inversion. An explicit enumeration of $C$ is obtained by arranging its
elements in increasing order of
$\pi(i,k)=\frac{(i+k-2)(i+k-1)}2+k$.
Let $e:\mathbb N\to C$ be this enumeration. Then
\[
 \psi\longmapsto(\psi(e(j)))_{j\ge1}
\]
is an order isomorphism of the corresponding product domains. Finite
subsets of $C$ correspond bijectively to finite subsets of $\mathbb N$,
and the raw sum sets agree under this map. The assertion
$H_\psi\Rightarrow0$ is therefore exactly the summation principle.

For each $i$, fix a finite-upper-bound selector
\[
 \beta_i:J_i^{m_i}\to J_i,\qquad
 \beta_i(\alpha_1,\ldots,\alpha_{m_i})\ge\alpha_k\quad(1\le k\le m_i),
 \qquad \beta_i(\alpha,\ldots,\alpha)=\alpha.
\]
Directedness gives such selectors. For $\psi\in P_C$, define
$\beta(\psi)(i)=\beta_i(\psi(i,1),\ldots,\psi(i,m_i))$.
Then $\delta(\beta(\psi))\ge\psi$. This proves cofinality of $\delta$.
The maps $\beta_i$ need not preserve order. The inequalities
\[
 U_{i,\beta(\psi)(i)}\subseteq U_{i,\psi(i,k)}
 \quad(1\le k\le m_i),\qquad \beta\delta(\phi)=\phi
\]
are the only properties used when endpoints of a certificate are
evaluated at $\beta(\psi)$.
Their tail property is explicit: for every $\theta\in\prod_iJ_i$,
\[
 \psi\ge\delta(\theta)
 \quad\Longrightarrow\quad
 \beta(\psi)(i)\ge\psi(i,1)\ge\theta(i)\quad\text{for every }i.
\]
Thus endpoint nets known to converge at the original indices still
converge after composition with $\beta$. This assertion uses the displayed
tail inclusion and does not require monotonicity of $\beta$.

For rows with labels $(n,i)\in\mathbb N^2$, the displayed bijection
$\pi(n,i)$ reindexes the product and its finite subsets. More general
countable occurrence labels are enumerated by their increasing $\pi$
codes after insertion of zero rows. For a fixed product index write
$A_n$ for the raw sums in block $n$, and $A_*$ for the assembled raw
sums. If $F\subseteq\mathbb N$ is finite, then
$\sum_{n\in F}A_n\subseteq A_*$, since the row labels in different
blocks are disjoint. Stable-hull addition gives
\begin{equation}\label{mod:hull-sums}
 [B_1]_\theta+\cdots+[B_r]_\theta
 \subseteq[B_1+\cdots+B_r]_\theta.
\end{equation}
For clarity, \eqref{mod:hull-sums} uses no stability of $D$. If
$K=[B+C]_\theta$, then $K-c$ is stable and contains $B$ for each
$c\in C$. Thus $[B]_\theta+c\subseteq K$. For each $b\in[B]_\theta$,
the stable set $K-b$ contains $C$, so $b+[C]_\theta\subseteq K$.
Induction gives the finite formula. Hence
\[
 \sum_{n\in F}(D\cap[A_n]_\theta)
 \subseteq\left[\sum_{n\in F}A_n\right]_\theta
 \subseteq[A_*]_\theta.
\]
If a value on the left is a difference of two points of $N$, it also
lies in $D$ and therefore lies in $D\cap[A_*]_\theta$.
For a net $z_\lambda\in D\cap[A_*]_\theta$ with limit $z\in N-N$,
stability gives $z\in[A_*]_\theta$ and the hypothesis $N-N\subseteq D$
gives $z\in D\cap[A_*]_\theta$. These two separate inclusions justify
every passage to a limit in a relative hull below.
\end{proof}

\begin{lemma}[Monotone interval tails]\label{mod:source-monotone-tails}
If $(a_\alpha)_{\alpha\in\Lambda}$ is an increasing net in $A$, then
\[
 T_\gamma=
 \bigcup_{\alpha,\beta\ge\gamma}I_n(a_\alpha,a_\beta)
 \quad\text{satisfies}\quad (T_\gamma)_\gamma\Rightarrow0.
\]
The same assertion holds for decreasing nets. Consequently their value
nets are Cauchy and converge to points of $N$.
\end{lemma}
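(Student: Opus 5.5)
The plan is to reduce the net statement to a sequential one, obtain the sequential version from (HE) through finite-response reflection, and then read off Cauchyness. The decreasing case will follow by complementation. Take the increasing case first. For $\alpha,\beta\ge\gamma$, \eqref{mod:interval-residual} identifies $I_n(a_\alpha,a_\beta)$ with the values $n(c)$, $c\in A$, $c\le a_{\alpha\wedge\beta}/a_{\alpha\vee\beta}$; here $a_\alpha\wedge a_\beta$ and $a_\alpha\vee a_\beta$ lie between $a_\gamma$ and some later term. By the splitting inclusion \eqref{mod:interval-splitting} and the contraction inclusions, every element of $T_\gamma$ lies in $I_n(a_\gamma,a_\delta)-I_n(a_\gamma,a_\delta)$ for a suitable $\delta\ge\gamma$. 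So it suffices to control the sets $I_n(a_\gamma,a_\delta)=V_A(a_\gamma/a_\delta)$, $\delta\ge\gamma$, uniformly in $\delta$.

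I will apply $\mathsf F(D)$ to the evaluation net $u$ of $(T_\gamma)_\gamma$. Note $T_\gamma\subseteq N-N\subseteq D$, the family is decreasing, and $0\in T_\gamma$. Fix a strategy $\sigma$ that, after a history, picks an index $\gamma_r$ beyond every index used so far. Each response $S_r\subseteq T_{\gamma_r}$ is finite, so it is realized by finitely many indices $\ge\gamma_r$. Choose $\delta_r$ above all of them and use $\delta_r$ as the next threshold $\gamma_{r+1}$. This yields an increasing sequence
\[
 b_0=a_{\gamma_1}\le b_1=a_{\delta_1}\le b_2=a_{\delta_2}\le\cdots
\]
in $A$ with $S_r\subseteq I_n(b_{r-1},b_r)-I_n(b_{r-1},b_r)$. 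The elements $h_r=b_{r-1}/b_r$ form an ordered orthogonal sequence in $A$, since $b_0+h_1+\cdots+h_r=b_r$. Hence \eqref{mod:source-he} gives $(V_A(h_r))_r\Rightarrow0$. Inversion and independent addition, as in the proof of Lemma~\ref{gen:ut}, then give $(S_r)_r\Rightarrow0$. Thus the strategy satisfies the premise of $\mathsf F(D)$ for every play, and $u\to0$, that is, $(T_\gamma)_\gamma\Rightarrow0$.

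For a decreasing net $a_\alpha\downarrow$, apply the increasing case to $a_\alpha^-\uparrow$, which lies in $A$. Complementation preserves interval-value sets by part~3 of Lemma~\ref{mod:interval-calculus}, so the two families of tails coincide. Finally, the value net $(n(a_\alpha))$ satisfies $n(a_\alpha)-n(a_\beta)\in T_\gamma$ for $\alpha,\beta\ge\gamma$. Hence the independent difference net has $u$ as a quasi-subnet relation at the level of tails, so it converges to $0$, and the value net is Cauchy. Its values lie in $n(A)\subseteq N$, and $N$ is complete, so the value net converges to a point of $N$.

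The main obstacle is the strategy bookkeeping. A strategy must respond to histories rather than to a prescribed sequence, and the plays must produce one chained increasing sequence whose consecutive residuals absorb all the finitely many indices appearing in each $S_r$. The choice $\gamma_{r+1}=\delta_r$, an upper bound of the finitely many indices realizing $S_r$, is what makes this work. It needs a fixed choice function on finite sets of indices so that $\sigma$ is a well-defined function of the history.
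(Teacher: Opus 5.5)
Your argument is essentially the paper's proof. It uses the same strategy, which applies fixed witnesses and an upper-bound selector to place the next threshold above the finitely many indices realizing the last response. It then feeds the ordered orthogonal residuals $h_r=b_{r-1}/b_r$ into \eqref{mod:source-he}, applies finite-response reflection, and handles decreasing nets by complementation. One step is false and one is unnecessary.

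\textbf{The false step (needs fixing).} The claim $n(a_\alpha)-n(a_\beta)\in T_\gamma$ is false in general, for two reasons.
\begin{itemize}
\item $a_\alpha$ and $a_\beta$ need not be comparable.
\item Even when $a_\alpha\le a_\beta$, the difference lies in $-I_n(a_\alpha,a_\beta)$, which need not meet $T_\gamma$. For a positive real measure every set $I_n(\cdot,\cdot)$, and hence $T_\gamma$, contains only nonnegative numbers, while $n(a_\alpha)-n(a_\beta)\le 0$.
\end{itemize}
The correct inclusion is
$n(a)-n(b)=\bigl(n(a)-n(a\wedge b)\bigr)-\bigl(n(b)-n(a\wedge b)\bigr)\in I_n(a,b)-I_n(a,b)$.
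So the Cauchy tails lie in $T_\gamma-T_\gamma$. The evaluation of $(T_\gamma-T_\gamma)_\gamma$ is null by independent subtraction of decreasing null families, and completeness of $N$ then finishes the argument as you say.

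\textbf{The unnecessary detour.} Passing through $I_n-I_n$ and the splitting inclusion is not needed. If $z\in S_r$ has witnesses $\alpha,\beta\ge\gamma_r$ with $a_\alpha\wedge a_\beta\le s\le t\le a_\alpha\vee a_\beta$, then $b_{r-1}\le s\le t\le b_r$. Hence $S_r\subseteq I_n(b_{r-1},b_r)=V_A(h_r)$ directly, and no inversion or addition step is required. Also, $a_{\alpha\wedge\beta}$ is undefined in a directed preorder; you mean $a_\alpha\wedge a_\beta$.
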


\begin{proof}
The sets $T_\gamma$ decrease, contain $0$, and lie in $D$. Their evaluation
domain and net are
\[
 \mathcal E=\{(\gamma,z):\gamma\in\Lambda,\ z\in T_\gamma\},\quad
 (\gamma,z)\le(\gamma',z')\iff\gamma\le\gamma',\quad
 e(\gamma,z)=z.
\]
In particular $\operatorname{Tail}_e(\gamma,0)=T_\gamma$.
For every $(\gamma,z)\in\mathcal E$, fix witnesses
\[
 R(\gamma,z)=(\alpha_{\gamma,z},\beta_{\gamma,z},s_{\gamma,z},t_{\gamma,z})
\]
with $\alpha_{\gamma,z},\beta_{\gamma,z}\ge\gamma$,
$a_\alpha\wedge a_\beta\le s_{\gamma,z}\le t_{\gamma,z}
 \le a_\alpha\vee a_\beta$ and $z=n(t_{\gamma,z})-n(s_{\gamma,z})$.
Fix $\gamma_*\in\Lambda$ and a selector $\rho$ taking each finite
nonempty subset of $\Lambda$ to an upper bound. Define the strategy by
\[
 \sigma(\varnothing)=(\gamma_*,0),\qquad
 \sigma(h)=\left(\rho\left(\{\gamma_r\}\cup
       \{\alpha_{\gamma_r,z},\beta_{\gamma_r,z}:z\in S_r\}\right),0\right)
\]
for a nonempty legal history ending with $((\gamma_r,w_r),S_r)$.
In a play let $\gamma_j$ be the first coordinate of its $j$th move.
The witnesses and the definition of $\rho$ give, for every $z\in S_j$,
\[
 a_{\gamma_j}\le s_{\gamma_j,z}\le t_{\gamma_j,z}\le a_{\gamma_{j+1}},
 \qquad z\in
 I_n(a_{\gamma_j},a_{\gamma_{j+1}})
   =V_A(a_{\gamma_j}/a_{\gamma_{j+1}}).
\]
Writing $h_j=a_{\gamma_j}/a_{\gamma_{j+1}}$, residual associativity gives
\[
 h_1+\cdots+h_k=a_{\gamma_1}/a_{\gamma_{k+1}}\quad(k\ge1).
\]
Assumption \eqref{mod:source-he} makes the response evaluation null in
every play. Finite-response reflection proves $(T_\gamma)_\gamma\Rightarrow0$.

For arbitrary endpoints,
$n(a)-n(b)=(n(a)-n(a\wedge b))-(n(b)-n(a\wedge b))\in I_n(a,b)-I_n(a,b)$.
Thus $\{n(a_\alpha)-n(a_\beta):\alpha,\beta\ge\gamma\}
\subseteq T_\gamma-T_\gamma\Rightarrow0$. Completeness of $N$ gives
the value limit. If $a_\alpha$ decreases, $a_\alpha^-$ increases and
$I_n(a_\alpha,a_\beta)=I_n(a_\alpha^-,a_\beta^-)$, while
$n(a_\alpha)=n(1)-n(a_\alpha^-)$. These equalities prove both conclusions
for decreasing nets.
\end{proof}

\begin{lemma}[Uniform collapse of an order-null Cauchy sequence]
\label{mod:uniform-collapse}
Suppose $(r_j)\subseteq A$ satisfies
\[
 r_j\longrightarrow_o0\text{ in }L,\qquad
 U_k:=\bigcup_{p,q\ge k}I_n(r_p,r_q)\Rightarrow0.
\]
Then $(V_A(r_j))_j\Rightarrow0$.
\end{lemma}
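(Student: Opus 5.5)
The plan is to reduce hereditary smallness below $r_j$ to the interval tails $U_k$ together with the source normality \eqref{mod:source-normality}. Order nullity is used only through the canonical brackets of Lemma~\ref{pd:sigma-continuity}. Since $r_j\onull0$, we have $\bigwedge_{n\ge j}r_n=0$ for every $j$, because the lower bracket $p_j$ is below $0$. Fix $j$ and $a\in A$ with $a\le r_j$, and put
\[
 b_{j,p}=\bigwedge_{n=j}^{p}r_n\in A,\qquad c_p=a\wedge b_{j,p}\in A\quad(p\ge j).
\]
Then $c_j=a$ and $c_p\downarrow_L a\wedge\bigwedge_{n\ge j}r_n=0$, so \eqref{mod:source-normality} gives $n(c_p)\tto{\theta}0$. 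Moreover $c_{p-1}\le r_{p-1}$, so $c_{p-1}=c_{p-1}\wedge r_{p-1}$ and $c_p=c_{p-1}\wedge r_p$. The meet contraction \eqref{mod:lattice-contractions} with $c=c_{p-1}$ then places each increment
\[
 n(c_{p-1})-n(c_p)\in I_n(r_{p-1},r_p)\subseteq U_{p-1}.
\]
Hence $n(a)$ is the $\theta$-limit of the partial sums $\sum_{j<p\le P}\bigl(n(c_{p-1})-n(c_p)\bigr)=n(a)-n(c_P)$. So $n(a)$ lies in the stable hull of finite sums drawn from the tail sets $U_{p-1}$, $p>j$, one summand from each.

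Next I would turn this into a null family with the summation principle. I would apply Lemma~\ref{mod:hull-bookkeeping} (via $\mathsf S_{\mathrm{all}}(D)$) to the countably many rows $U_{i,\alpha}=U_{\max(i,\alpha)}$, indexed by $\alpha\in\mathbb N$. Each row is decreasing, contains $0$, lies in $D$, and is null. The resulting relative hulls $H_\varphi=D\cap[B_\varphi]_\theta$ are null. Since $n(a)\in N\subseteq N-N\subseteq D$, the last paragraph of Lemma~\ref{mod:hull-bookkeeping} justifies the passage to the limit inside $D\cap[\,\cdot\,]_\theta$. It would then suffice to show that, for each product index $\varphi$, there is an $N(\varphi)$ with $V_A(r_j)\subseteq H_\varphi$ for all $j\ge N(\varphi)$. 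The evaluation of $(V_A(r_j))_j$ would then be a quasi-subnet of the evaluation of $(H_\varphi)_\varphi$.

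The main obstacle is exactly this step. The increment at stage $p$ is only known to lie in $U_{p-1}$, whereas membership in $B_\varphi$ needs it in $U_{\max(p,\varphi(p))}$, which is much smaller when $\varphi$ grows quickly. The telescoping chain along consecutive indices is therefore too fine. I would first try to replace it by a chain along a sparse subsequence $j=k_0<k_1<k_2<\cdots$ chosen from $\varphi$, with $k_{i}\ge\varphi(i)$, using $b=\bigwedge_{i\le s}r_{k_i}$ in place of $b_{j,p}$. The $i$th increment then lies in $I_n(r_{k_{i-1}},r_{k_i})\subseteq U_{k_{i-1}}\subseteq U_{i,\varphi(i)}$ once $k_{i-1}\ge\varphi(i)$. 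The normality argument is unchanged, since the subsequence is still order null and its meets from $j$ on still decrease to $0$. Choosing $k_i$ recursively as $\max(k_{i-1}+1,\varphi(i+1),j)$ makes the whole chain admissible. This should give $V_A(r_j)\subseteq H_\varphi$ for every $j\ge\varphi(1)$.

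If the bookkeeping for the first term still fails, the fallback is finite-response reflection $\mathsf F(D)$ on the evaluation net of $(V_A(r_j))_j$. The strategy would choose the next index so large that all finitely many responded values are already witnessed by sparse chains within prescribed $U_k$'s. The nullity of responses would then follow from $U_k\Rightarrow0$ and the block estimates of Lemma~\ref{gen:ut}, in the way Lemma~\ref{mod:source-monotone-tails} was proved.
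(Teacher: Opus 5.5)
Your sparse-chain argument in the third paragraph is correct and is essentially the paper's proof. The paper telescopes $n(s)$ along $c_m=\bigwedge_{j\le m}r_{k_j}$ for a chain $k_1=k<k_2<\cdots$ with $k_j,k_{j+1}\ge\phi(j)$, puts each increment in $I_n(r_{k_j},r_{k_{j+1}})\subseteq U_{\phi(j)}$ by meet contraction, and concludes $\bigcup_{k\ge\phi(1)}V_A(r_k)\subseteq Q_\phi$ from $\mathsf S_{\mathrm{all}}(D)$ applied to repeated copies of the single row $(U_k)_k$; your rows $U_{\max(i,\alpha)}$ work equally well, and the first term causes no difficulty because $k_0=j\ge\varphi(1)$ already places it in the first row, so the finite-response fallback is not needed.
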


\begin{proof}
The families $U_k$ decrease and contain $0$. For
$\phi\in\mathbb N^{\mathbb N}$ set
\[
 Q_\phi=D\cap
 \left[\bigcup_{F\subseteq\mathbb N\text{ finite}}
                    \sum_{j\in F}U_{\phi(j)}\right]_\theta.
\]
The principle $\mathsf S_{\mathrm{all}}(D)$ gives $Q_\phi\Rightarrow0$.
Fix $\phi$ and an arbitrary integer $k\ge\phi(1)$. There is an increasing
sequence $k_1=k<k_2<\cdots$ such that
$k_j,k_{j+1}\ge\phi(j)$ for all $j$. For example,
$k_{j+1}=1+\max\{k_j,\phi(j),\phi(j+1)\}$ has this property.
Put $c_m=\bigwedge_{j\le m}r_{k_j}$ and
$q_N=\bigvee_{j\ge N}r_j$. Order nullity gives $q_N\downarrow0$.
Since $k_m\uparrow\infty$ and $c_m\le r_{k_m}\le q_{k_m}$,
\[
 0\le\bigwedge_m c_m\le\bigwedge_m q_{k_m}=0.
\]
Hence $c_m\downarrow_L0$.

Let $s\in A$ satisfy $s\le r_k$. Then $s\wedge c_m\downarrow_L0$ and
$s\wedge c_1=s$. For $j\ge1$, meet contraction gives
\[
 n(s\wedge c_j)-n(s\wedge c_{j+1})
 \in I_n(r_{k_j},r_{k_{j+1}})\subseteq U_{\phi(j)}.
\]
The reason is $c_j\le r_{k_j}$ and
$c_{j+1}=c_j\wedge r_{k_{j+1}}$. Telescoping and
\eqref{mod:source-normality} place $n(s)$ in the stable hull defining
$Q_\phi$. Also $n(s)\in N\subseteq D$, because $0\in N$.
The integer $k$ and the fragment $s$ were arbitrary. Therefore
\[
 \bigcup_{k\ge\phi(1)}V_A(r_k)\subseteq Q_\phi.
\]
These are containments of entire evaluation tails. Nullity of $Q_\phi$
proves the assertion.
\end{proof}

\subsection{The finite \texorpdfstring{$D$}{D}-sublattice generated by monotone limits}

Call a source sequence $(a_j)$ interval-Cauchy when its tail interval
families in Lemma~\ref{mod:uniform-collapse} are null. For interval-Cauchy
rows $a_{i,j}$, put $T_{i,N}=\bigcup_{p,q\ge N}I_n(a_{i,p},a_{i,q})$.
Lemma~\ref{mod:word-constants} gives the actual tail inclusion
\[
 \bigcup_{p,q\ge N}I_n(P(a_{1,p},\ldots,a_{r,p}),P(a_{1,q},\ldots,a_{r,q}))
 \subseteq\sum_{i=1}^r w_i(P)(T_{i,N}-T_{i,N})\Rightarrow0.
\]
The final nullity uses continuity of each finite group sum and inversion.
Value differences belong to the difference of two interval tails. The
value sequence is therefore Cauchy and converges in $N$.

\begin{lemma}[Equality of source representations]\label{mod:representation}
Let $(a_j)$ and $(b_j)$ be interval-Cauchy source sequences with the same
ambient order limit. Then
\[
 n(a_j)-n(b_j)\tto{\theta}0.
\]
\end{lemma}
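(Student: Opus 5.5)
The plan is to reduce the comparison to Lemma~\ref{mod:uniform-collapse}, applied to a residual sequence built from $a_j$ and $b_j$ that is order-null and interval-Cauchy.

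\textbf{Step 1: the residual sequence.} Let $x$ be the common ambient order limit, and put
\[
 c_j=a_j\wedge b_j,\qquad d_j=a_j\vee b_j .
\]
By Lemma~\ref{pd:sigma-continuity}, $c_j\onull x$ and $d_j\onull x$. Modularity \eqref{mod:valuation} gives
\[
 n(a_j)-n(b_j)=\bigl(n(a_j)-n(c_j)\bigr)-\bigl(n(b_j)-n(c_j)\bigr).
\]
By \eqref{mod:interval-residual}, each bracket is a value $n(s)$ with $s\le r_j:=c_j/d_j$, that is, $s\le\Delta_R(d_j,c_j)$. So both brackets lie in $V_A(r_j)$, and it suffices to prove
\[
 (V_A(r_j))_j\Rightarrow0 .
\]
Once this holds, $n(a_j)-n(b_j)$ lies eventually in $V_A(r_j)-V_A(r_j)$, whose evaluation tails give a null family by continuity of subtraction. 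The quasi-subnet axiom then yields $n(a_j)-n(b_j)\tto{\theta}0$.

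\textbf{Step 2: hypotheses of Lemma~\ref{mod:uniform-collapse} for $(r_j)$.} The sequence lies in $A$, since $A$ is a $D$-sublattice closed under the total residuals. Joint sequential order continuity in Theorem~\ref{pd:total-residuals} gives
\[
 r_j=\Delta_R(d_j,c_j)\onull\Delta_R(x,x)=x/x=0 .
\]
For the interval-Cauchy property, $r_j=P(a_j,b_j)$ for the fixed finite word $P(X_1,X_2)=\Delta_R(X_1\vee X_2,X_1\wedge X_2)$. The tail inclusion displayed just before the lemma, derived from Lemma~\ref{mod:word-constants}, then bounds
\[
 \bigcup_{p,q\ge N}I_n(r_p,r_q)
\]
by a finite Minkowski sum of signed copies of the interval tails of $(a_j)$ and $(b_j)$. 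These tails are null by hypothesis, so the family is null by continuity of the group operations.

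\textbf{Step 3: conclusion.} Lemma~\ref{mod:uniform-collapse} now gives $(V_A(r_j))_j\Rightarrow0$, which completes Step 1.

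The main obstacle is Step 2's order-nullity. It relies on the joint order continuity of $\Delta_R$ when both arguments move, in opposite monotone senses, around the same limit $x$. Theorem~\ref{pd:total-residuals} supplies exactly this, but its brackets must be checked to live in $L$ rather than $A$. That is harmless, since order convergence of $r_j$ is ambient and the lemma asks only for $r_j\in A$ with ambient order nullity.

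A secondary point is that the nullity argument in Step 1 needs no stable hull. It needs only that $V_A(r_j)\subseteq D$ and the continuity of subtraction on evaluation nets. If the word-constant route is awkward, one can instead bound $I_n(r_p,r_q)$ directly by \eqref{mod:total-residual-error} and \eqref{mod:finite-lattice-errors}, obtaining
\[
 I_n(r_p,r_q)\subseteq 17\bigl(J_n(a_p,a_q)+J_n(b_p,b_q)\bigr)
\]
up to the stated constants.
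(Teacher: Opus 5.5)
Your proof is correct and follows essentially the paper's route. Both arguments reduce the claim to Lemma~\ref{mod:uniform-collapse} applied to residual sequences whose interval-Cauchy property comes from the residual and word error bounds and whose order nullity comes from Theorem~\ref{pd:total-residuals}. The only difference is cosmetic: the paper uses the two residuals $(a_j\wedge b_j)/a_j$ and $(a_j\wedge b_j)/b_j$ with the additive identity $n(a_j)-n(b_j)=n(r_j)-n(s_j)$, whereas you use the single width $r_j=(a_j\wedge b_j)/(a_j\vee b_j)$ and the hereditary conclusion $(V_A(r_j))_j\Rightarrow0$. (Your Step~1 identity is plain algebra and does not need modularity.)
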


\begin{proof}
The sequences
\[
 r_j=(a_j\wedge b_j)/a_j,\qquad
 s_j=(a_j\wedge b_j)/b_j
\]
are interval-Cauchy by \eqref{mod:residual-errors}. Sequential continuity
of the total residuals shows that both are order null.
Lemma~\ref{mod:uniform-collapse} gives $n(r_j)\to0$ and $n(s_j)\to0$.
The identity
$n(a_j)-n(b_j)=n(r_j)-n(s_j)$ completes the proof.
\end{proof}

Let $K$ be the $D$-sublattice generated by $A^\sigma$. An element of $K$
is a finite expression in elements of $A^\sigma$, using the lattice
operations, complements, and total residuals. Each input $u_i\in A^\sigma$
has a source representation $a_{i,j}\uparrow u_i$. Substitution of these
representations gives an interval-Cauchy source sequence converging in
order to the value of the expression. Define $n_0$ on that value to be
the limit of its source values.

\begin{proposition}[The finite extension]\label{mod:finite-extension}
The preceding definition gives a well-defined additive modular map
$n_0:K\to N$ extending $n$. Write $n_\sigma=n_0|_{A^\sigma}$ and
$n_\delta=n_0|_{A^\delta}$. The following properties hold.
\begin{enumerate}
\item If $a_j\uparrow u$ in the source, then
\begin{equation}\label{mod:exhaustion-shell}
 n(a_j)\to n_\sigma(u),\qquad
 \Gamma_A(a_j/u)\Rightarrow0.
\end{equation}
The map $n_\sigma$ preserves increasing sequential limits in $A^\sigma$.
\item For $x\in K$ and $s\in A^\sigma$,
\begin{equation}\label{mod:finite-domination}
 x\le s\quad\Longrightarrow\quad n_0(x)\in\Gamma_A(s).
\end{equation}
Consequently, if $l\in A^\delta$, $u\in A^\sigma$ and $l\le u$, then
\begin{equation}\label{mod:mixed-interval-domination}
 I_{n_0}(l,u)\subseteq\Gamma_A(l/u).
\end{equation}
\item Every $x\in K$ has brackets
\begin{equation}\label{mod:finite-brackets}
 l_j\in A^\delta,\quad u_j\in A^\sigma,\quad
 l_j\uparrow x,\quad u_j\downarrow x,\quad
 \Gamma_A(l_j/u_j)\Rightarrow0.
\end{equation}
\item If $(p_j)\subseteq A$ is a finite-expression representation of $x$,
then $I_{n_0}(x,p_j)\Rightarrow0$. For source endpoints $a,b$,
\begin{equation}\label{mod:finite-interval-density}
 I_{n_0}(a,b)\subseteq[I_n(a,b)]_\theta.
\end{equation}
\end{enumerate}
\end{proposition}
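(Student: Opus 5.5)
I will first settle well-definedness, then additivity and modularity, and then the four listed properties. Given an element $x\in K$ with two finite-expression presentations $P(u_1,\dots,u_r)=Q(u'_1,\dots,u'_s)$ and chosen source representations $a_{i,j}\uparrow u_i$, $a'_{i,j}\uparrow u'_i$, the substituted sequences $p_j=P(a_{\cdot,j})$ and $q_j=Q(a'_{\cdot,j})$ lie in $A$. They are interval-Cauchy by the tail inclusion after Lemma~\ref{mod:word-constants}, since each monotone row $a_{i,j}$ is interval-Cauchy by Lemma~\ref{mod:source-monotone-tails}. Both order-converge to $x$ by the continuity recursion in Lemma~\ref{pd:word-brackets}. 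Lemma~\ref{mod:representation} then gives $n(p_j)-n(q_j)\to0$, and separation makes the limit independent of all choices. Taking constant representations shows that $n_0$ extends $n$. For additivity and modularity, I will substitute common representations into the expressions for $a$, $b$, $a\vee b$, $a\wedge b$ and a defined sum $a+b=(b/a^\sim)^-$. The source identities \eqref{mod:valuation} and additivity hold termwise, and they pass to the limit by continuity of the group operations and separation. The values lie in $N$ because $N$ is limit-stable and complete.

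For item 1, $n(a_j)\to n_\sigma(u)$ holds by definition. For the shell, I will take a source fragment $c\le a_j/a_k$ with $k\ge j$. By \eqref{mod:interval-residual}, $n(c)$ lies in the tail $T_j$ of Lemma~\ref{mod:source-monotone-tails}. A general element $c\in A$ with $c\le a_j/u$ is handled through $c\wedge (a_j/a_k)\uparrow_k c$, using the total-residual continuity of Theorem~\ref{pd:total-residuals} together with \eqref{mod:source-normality} and Lemma~\ref{pd:ambient-normality}. This puts $V_A(a_j/u)$ inside $[T_j]_\theta$, and \eqref{mod:one-row} then yields $\Gamma_A(a_j/u)\Rightarrow0$. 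For preservation of increasing limits in $A^\sigma$, given $u_i\uparrow u$ I will use the diagonal $b_n=\bigvee_{i\le n}a_{i,n}$ from Lemma~\ref{pd:mixed-hulls}. Then $n(b_n)\to n_\sigma(u)$, while $n_\sigma(u_n)-n(b_n)$ lies in $-\Gamma_A(b_n/u_n)$, since $n_0$ is additive and $b_n\le u_n$. I expect this family to be null by the shell estimate combined with the hull-assembly Lemma~\ref{mod:hull-bookkeeping}.

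For item 2, I will write $x$ as the limit of $n(p_j)$ and replace $p_j$ by $p_j\wedge s_j$, where $s_j\uparrow s$ is a source representation of $s$. The sequence stays interval-Cauchy by \eqref{mod:lattice-contractions} and still converges in order to $x\wedge s=x$. Its values lie in $V_A(s)$, so the limit lies in $\Gamma_A(s)$. The inclusion \eqref{mod:mixed-interval-domination} follows by applying \eqref{mod:finite-domination} to $s/t\le l/u$, where $l/u\in A^\sigma$ by Lemma~\ref{pd:mixed-hulls}. Item 3 uses the brackets of Lemma~\ref{pd:word-brackets}. Nullity of $\Gamma_A(\ell_j/u_j)$ is the main obstacle, since these sets are not obviously decreasing shells of a single sequence. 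My first attempt will write $\ell_{j,k}/u_{j,k}\uparrow_k\ell_j/u_j$, so that \eqref{mod:exhaustion-shell} controls $\Gamma_A(\ell_j/u_j)$ by the hull of $V_A(\ell_{j,k}/u_{j,k})$ plus a shell. I will then dominate $\ell_{j,k}/u_{j,k}$ by interval errors $J_n(a_{i,j},a_{i,k})$ through \eqref{mod:word-outward-bound}. These errors are uniformly small for $k\ge j$ by Lemma~\ref{mod:source-monotone-tails}, and I will pass the bound to hulls using $\mathsf S_{\rm all}(D)$ in the form of Lemma~\ref{mod:hull-bookkeeping}. Lemma~\ref{mod:uniform-collapse} will also be used to convert interval-Cauchy, order-null control into nullity of the fragments.

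For item 4, I note that $x$ and $p_j$ both lie in $[\ell_j,u_j]$. So $I_{n_0}(x,p_j)\subseteq I_{n_0}(\ell_j,u_j)\subseteq\Gamma_A(\ell_j/u_j)$, which is null by item 3. For \eqref{mod:finite-interval-density}, I will take $s\le t$ in $K$ between $a\wedge b$ and $a\vee b$, with representations meeted into that source interval. The values $n(t_j)-n(s_j)$ then lie in $I_n(a,b)$, and their limit lies in the hull $[I_n(a,b)]_\theta$.
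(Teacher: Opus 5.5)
Your plan follows the paper's proof for well-definedness, modularity, items~1--3 and the density inclusion \eqref{mod:finite-interval-density}. For item~3 this means the outward word bound \eqref{mod:word-outward-bound} on the inputs $[a_{i,j},a_{i,k}]$, source normality along $c\wedge(\ell_{j,k}/u_{j,k})\uparrow_k c$, and Lemma~\ref{mod:hull-bookkeeping} applied to signed copies of the rows $T_{i,j}$.

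The genuinely different step is the first claim of item~4. The paper proves one estimate, \eqref{mod:finite-hereditary-approximation}: $I_{n_0}(a,b)\subseteq[\bigcup_{k\ge j}I_n(a_k,b_k)]_\theta$ for arbitrary interval-Cauchy representations. It then specializes this twice. Constant endpoints give \eqref{mod:finite-interval-density}. Taking $a=x$, $b=p_j$ gives $I_{n_0}(x,p_j)\subseteq D\cap[\bigcup_{p,q\ge N}I_n(p_p,p_q)]_\theta$ for $j\ge N$, which is null by one-row hull control.

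You instead chain items~2 and~3. The enclosure $\ell_j\le x,p_j\le u_j$ of Lemma~\ref{pd:word-brackets} gives $I_{n_0}(x,p_j)\subseteq I_{n_0}(\ell_j,u_j)\subseteq\Gamma_A(\ell_j/u_j)$ by \eqref{mod:mixed-interval-domination}. This is valid and shorter, provided the brackets are built from the same expression and the same exhaustions that produce $(p_j)$. The existential form of item~3 alone does not place $p_j$ in $[\ell_j,u_j]$. The paper's estimate buys independence from brackets: it applies to any interval-Cauchy source representation of $x$ and yields both halves of item~4 at once. Your rows $\Gamma_A(\ell_j/u_j)$ are decreasing, contain $0$ and lie in $N\subseteq D$, so they would serve equally well as the approximation rows in Corollary~\ref{mod:finite-chain-tails}.

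Three steps do not work as written and need repair.

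\emph{Additivity.} Additivity is not termwise for the word $(b/a^\sim)^-$. Arbitrary approximants $p_j\to_o a$, $q_j\to_o b$ need not satisfy $q_j\le p_j^\sim$, so $p_j+q_j$ need not exist. In a Boolean algebra the substituted word is $p_j\vee q_j$, whose value differs from $n(p_j)+n(q_j)$ by $n(p_j\wedge q_j)$. Either replace $q_j$ by $q_j\wedge p_j^\sim$, or argue as the paper does. For $x\le y$, the sequences $p_j\wedge q_j$ and $(p_j\wedge q_j)/q_j$ represent $x$ and $x/y$, and $n(q_j)=n(p_j\wedge q_j)+n((p_j\wedge q_j)/q_j)$ holds in $A$. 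A defined sum $a+b=c$ is then the case $x=a$, $y=c$, $x/y=b$.

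\emph{Continuity in item~1.} The hull-assembly lemma is not the relevant tool here. You have $n_\sigma(u_n)-n(b_n)=n_0(b_n/u_n)$, with sign $+$. This lies in $\Gamma_A(b_n/u_n)$ by \eqref{mod:finite-domination}, so item~2 must be proved first. Then $b_n/u_n\le b_n/u$ gives $\Gamma_A(b_n/u_n)\subseteq\Gamma_A(b_n/u)$, which is null by the shell estimate for the source exhaustion $b_n\uparrow u$.

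\emph{Density inclusion.} For \eqref{mod:finite-interval-density} the clipped approximants must also be made comparable, for example $p_k=s'_k\wedge t'_k$ and $q_k=t'_k$. Otherwise $n(t'_k)-n(s'_k)$ lies only in $I_n(a,b)-I_n(a,b)$.

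Finally, Lemma~\ref{mod:uniform-collapse} and the extra shell term in item~3 are unnecessary. With $g_{j,k}=\ell_{j,k}/u_{j,k}\uparrow_k g_j$ and $B_j=\sum_iw_i(P)(T_{i,j}-T_{i,j})$, the limit $c\wedge g_{j,k}\uparrow_k c$ and source normality give $\Gamma_A(g_j)\subseteq D\cap[B_j]_\theta$ directly.
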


\begin{proof}
Lemma~\ref{mod:representation} proves independence of every source
representation and every finite expression. The constant row $a_j=a$
has limit value $n(a)$, so $n_0|_A=n$. If $p_j,q_j$ represent $x,y$,
the word-tail bound proves interval Cauchyness of $p_j\vee q_j$ and
$p_j\wedge q_j$. Their order limits are $x\vee y$ and $x\wedge y$.
Consequently
\[
 \begin{split}
 n(p_j\vee q_j)+n(p_j\wedge q_j)&=n(p_j)+n(q_j),\\
 n_0(x\vee y)+n_0(x\wedge y)&=n_0(x)+n_0(y).
 \end{split}
\]
The second line is the limit of the first in the convergence group.
If $x\le y$ in $K$, take source
representations $p_j\to_o x$ and $q_j\to_o y$. Then $p_j\wedge q_j$
represents $x$ and $(p_j\wedge q_j)/q_j$ represents $x/y$. Passing to
limits in
\[
 n(q_j)=n(p_j\wedge q_j)+n((p_j\wedge q_j)/q_j)
\]
proves $n_0(y)=n_0(x)+n_0(x/y)$. This gives additivity for every defined
sum. Complementation also gives
$n_\delta(l)=n(1)-n_\sigma(l^-)$.

For a source exhaustion $a_j\uparrow u$, the residuals
$a_j/a_k$, $k\ge j$, increase to $a_j/u$. If $c\in A$ is below $a_j/u$,
then $c\wedge(a_j/a_k)\uparrow c$. Ambient normality and residual
additivity imply continuity at this increasing source limit. Therefore
\[
 \Gamma_A(a_j/u)\subseteq
 D\cap\left[\bigcup_{p,q\ge j}I_n(a_p,a_q)\right]_\theta.
\]
Lemma~\ref{mod:source-monotone-tails} and \eqref{mod:one-row} prove
\eqref{mod:exhaustion-shell}.

To prove \eqref{mod:finite-domination}, let $p_j$ represent $x$ and
$a_j\uparrow s$ be a source exhaustion. The source sequence
$p_j\wedge a_j$ is interval-Cauchy and converges in order to $x\wedge s=x$.
Lemma~\ref{mod:representation} gives
$n(p_j\wedge a_j)\to n_0(x)$. Each term belongs to $V_A(s)$, proving
\eqref{mod:finite-domination}. If $l\le c\le d\le u$ in $K$, then
$c/d\le l/u$. The mixed-hull lemma, Lemma~\ref{pd:mixed-hulls}, places
$l/u$ in $A^\sigma$. Formula \eqref{mod:finite-domination} applied to
$c/d$ proves \eqref{mod:mixed-interval-domination}.

Suppose $u_j\uparrow u$ in $A^\sigma$, and let $a_{i,k}\uparrow_k u_i$
be source exhaustions. The source sequence
$p_j=\bigvee_{i\le j}a_{i,j}$ increases to $u$ and satisfies $p_j\le u_j$.
By additivity and \eqref{mod:finite-domination},
\[
 n_\sigma(u_j)-n(p_j)\in\Gamma_A(p_j/u).
\]
The right side is null by \eqref{mod:exhaustion-shell}. Since
$n(p_j)\to n_\sigma(u)$, increasing continuity follows.

For \eqref{mod:finite-brackets}, fix a finite expression for $x$ in inputs
$u_i\in A^\sigma$, with $a_{i,j}\uparrow u_i$. The finite-expression
brackets of Lemma~\ref{pd:word-brackets}, applied to the input intervals
$[a_{i,j},u_i]$, give $l_j\in A^\delta$, $u_j\in A^\sigma$ with
$l_j\uparrow x$ and $u_j\downarrow x$. For fixed $j$, substitute
$a_{i,k}$ for $u_i$, with $k\ge j$, in this bracket construction. Its
source endpoints $l_{j,k},u_{j,k}$ satisfy
\[
 l_{j,k}\downarrow_k l_j,\qquad u_{j,k}\uparrow_k u_j,\qquad
 l_{j,k}\le u_{j,k}.
\]
Consequently $g_{j,k}=l_{j,k}/u_{j,k}\uparrow_k g_j=l_j/u_j$.
The recursive proof of Lemma~\ref{mod:word-constants}, applied to lower
and upper interval endpoints at each operation, gives
\[
 I_n(l_{j,k},u_{j,k})=V_A(g_{j,k})
 \subseteq\sum_{i=1}^r w_i(P)(T_{i,j}-T_{i,j})=:B_j,
\]
where the decreasing null families are
\[
 T_{i,j}=\bigcup_{p,q\ge j}I_n(a_{i,p},a_{i,q}).
\]
For $c\in A$ below $g_j$,
$c\wedge g_{j,k}\uparrow c$, so $n(c)\in[B_j]_\theta$. It follows that
\[
 \Gamma_A(g_j)\subseteq D\cap[B_j]_\theta.
\]
For each $i$, expand $w_i(P)(T_{i,j}-T_{i,j})$ into $w_i(P)$ rows
$T_{i,j}$ and $w_i(P)$ rows $-T_{i,j}$. These rows, rather than the sets
$J_v$, are the admissible subsets of $D$.
Lemma~\ref{mod:hull-bookkeeping} makes their independently indexed
relative hull null. The common index $j$ is its cofinal diagonal because
there are only finitely many rows. This proves nullity of the right side.
This proves \eqref{mod:finite-brackets}.

For completeness, hereditary interval approximation is verified directly.
Let $a,b\in K$ have source representations $a_k,b_k$, and let
$a\wedge b\le s\le t\le a\vee b$ with $s,t\in K$. Represent $s,t$ by
source rows $s_k,t_k$ and define
\[
 d_k=a_k\wedge b_k,\quad e_k=a_k\vee b_k,\quad
 s'_k=(s_k\vee d_k)\wedge e_k,\quad
 t'_k=(t_k\vee d_k)\wedge e_k,\quad
 p_k=s'_k\wedge t'_k,\quad q_k=t'_k.
\]
Then $d_k\le p_k\le q_k\le e_k$. Sequential continuity of the lattice
operations gives $p_k\to_o s\wedge t=s$ and $q_k\to_o t$.
The explicit word-tail bound above makes $p_k,q_k$ interval-Cauchy.
Representation independence gives $n(p_k)\to n_0(s)$ and
$n(q_k)\to n_0(t)$. For every $j$, all $k\ge j$ satisfy
$n(q_k)-n(p_k)\in\bigcup_{k\ge j}I_n(a_k,b_k)$. Taking the limit gives
\begin{equation}\label{mod:finite-hereditary-approximation}
 I_{n_0}(a,b)\subseteq
 \left[\bigcup_{k\ge j}I_n(a_k,b_k)\right]_\theta.
\end{equation}
With constant endpoint representations this gives
\eqref{mod:finite-interval-density}. With $a=x$, $b=p_j$, and the source
representation $(p_k)$ for $x$, it gives, uniformly for $j\ge N$,
\[
 I_{n_0}(x,p_j)\subseteq
 D\cap\left[\bigcup_{p,q\ge N}I_n(p_p,p_q)\right]_\theta.
\]
One-row hull control proves the remaining assertion.
\end{proof}

\subsection{A transfer lemma for increasing-chain tails}

The next lemma replaces the disjointization argument available on a Boolean
algebra. It uses the range principle $\mathsf U(N)$ on four families of
points of $N$, not on a family of uncontrolled formal error sums.

\begin{lemma}[Transfer of interval exhaustivity]\label{mod:tail-transfer}
Let $B\subseteq E$ be $D$-sublattices and let $v:E\to N$ be additive and
modular. Suppose that every increasing sequence $(b_j)\subseteq B$ satisfies
\begin{equation}\label{mod:base-chain-tails}
 \left(\bigcup_{m\ge N}I_v(b_N,b_m)\right)_N\Rightarrow0,
\end{equation}
where the intervals in \eqref{mod:base-chain-tails} are computed in $B$.
Suppose also that
\begin{equation}\label{mod:base-interval-domination}
 I_v^E(a,b)\subseteq[I_v^B(a,b)]_\theta\qquad(a,b\in B).
\end{equation}
For every $x\in E$, suppose there are a directed domain $J_x$, points
$p_{x,\alpha}\in B$, and decreasing null sets
$0\in U_{x,\alpha}\subseteq D$ such that
\begin{equation}\label{mod:transfer-approximation}
 I_v^E(x,p_{x,\alpha})\subseteq U_{x,\alpha}.
\end{equation}
Then every increasing sequence $(x_j)\subseteq E$ satisfies
\begin{equation}\label{mod:extended-chain-tails}
 Z_N:=\bigcup_{m\ge N}I_v^E(x_N,x_m)\Rightarrow0.
\end{equation}
\end{lemma}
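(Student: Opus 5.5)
The plan is to transfer the chain-tail estimate from $B$ to $E$ by approximating each $x_j$ from $B$ and passing to the limit with $\mathsf U(N)$. The section introduction announces that $\mathsf U(N)$ is applied to four families of points of $N$, so the argument is organised around it.

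\textbf{Step 1: choice of $J$ and the four families.} Fix an increasing sequence $(x_j)\subseteq E$. Take the index set to be
\[
 J=\{(N,m,s,t):m\ge N,\ x_N\le s\le t\le x_m,\ s,t\in E\}.
\]
Then $Z_N$ is, up to the harmless replacement of a single index by a tail, the family $\{v(t)-v(s)\}$ over the indices with first coordinate $N$. The parameter domain is the product $\prod_jJ_{x_j}$, with the coordinatewise order, which is directed. For such a $\beta$ put $p_j=p_{x_j,\beta(j)}\in B$. Replace these by $b_j^\beta=\bigvee_{i\le j}p_i\wedge x'$, or more simply by the increasing chain $b_j^\beta=\bigvee_{i\le j}p_i$. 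By \eqref{mod:finite-lattice-errors}, \eqref{mod:transfer-approximation} and the fact that $(x_i)$ increases, this chain satisfies
\[
 I_v(x_j,b_j^\beta)\subseteq\sum_{i\le j}U_{x_i,\beta(i)}.
\]
A finite sum of this kind is not uniform in $j$. So the actual comparison must be kept inside the relative hulls $H_\beta$ of Lemma~\ref{mod:hull-bookkeeping}, which are null in $\beta$ uniformly over all finite sums; this is exactly what $\mathsf S_{\rm all}(D)$ provides.

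\textbf{Step 2: the approximating points.} For an index $(N,m,s,t)$, project $s,t$ into the interval $[b_N^\beta,b_m^\beta]$ by the lattice clamp $s\mapsto(s\vee b_N^\beta)\wedge b_m^\beta$, as in the proof of Proposition~\ref{mod:finite-extension}. This gives points $s^\beta\le t^\beta$ in $E$ lying in $[b_N^\beta,b_m^\beta]$. The contractions \eqref{mod:lattice-contractions} bound $v(s^\beta)-v(s)$ and $v(t^\beta)-v(t)$ by errors lying in $H_\beta-H_\beta$, uniformly over $N,m,s,t$. Since $b^\beta_\bullet$ is built from elements of $B$, \eqref{mod:base-interval-domination} places $v(t^\beta)-v(s^\beta)$ in $[I^B_v(b_N^\beta,b_m^\beta)]_\theta$.

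\textbf{Step 3: the four families and the premises.} Set
\[
 x_n(j)=v(t),\quad y_n(j)=v(s),\quad x_n^\beta(j)=v(t^\beta),\quad y_n^\beta(j)=v(s^\beta).
\]
All four lie in $N$. The two uniform $\beta$-premises of $\mathsf U(N)$ follow from nullity of $H_\beta$. For each fixed $\beta$, the first premise is the chain-tail hypothesis \eqref{mod:base-chain-tails} for the increasing $B$-sequence $(b_j^\beta)$, transported through the stable hull by the one-row control \eqref{mod:one-row}. Then $\mathsf U(N)$ yields $Z_N\Rightarrow0$.

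\textbf{Expected obstacle.} The main difficulty is Step 1–2: obtaining the uniform-in-$n$ error bound. The finite sums $\sum_{i\le j}U_{x_i,\beta(i)}$ must be absorbed into one null hull, independent of $j$. This is where the assembly Lemma~\ref{mod:hull-bookkeeping} and the condition $N-N\subseteq D$ are used. A further check is that the clamped differences really lie in $N-N$, so that they stay in the relative hull.
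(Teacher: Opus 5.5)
Your proposal follows the paper's proof essentially step for step. You use the same chain $\bigvee_{i\le j}p_{x_i,\beta(i)}$ over the product of the supplied domains, and you absorb the finite sums $\sum_{i\le j}U_{x_i,\beta(i)}$ into one null relative hull supplied by $\mathsf S_{\rm all}(D)$. The clamp $(s\vee b_N^\beta)\wedge b_m^\beta$ is the same, with errors in $I_v(x_N,b_N^\beta)-I_v(x_m,b_m^\beta)$. The fixed-$\beta$ premise again comes from \eqref{mod:base-interval-domination} together with one-row hull control, and the conclusion again comes from $\mathsf U(N)$.

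One point in Step~3 must be repaired. As you define them, $x_n(j)=v(t)$, $y_n(j)=v(s)$, $x_n^\beta(j)=v(t^\beta)$ and $y_n^\beta(j)=v(s^\beta)$ do not depend on $n$. Then every row in both the fixed-$\beta$ premise and the conclusion of $\mathsf U(N)$ is the same constant set of all interval differences, taken over all first coordinates. A constant family is null only when it is $\{0\}$, so $\mathsf U(N)$ cannot be applied as written.

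The fix is the paper's convention: set all four families equal to $0$ at indices $j=(N,m,s,t)$ with $N<n$. Row $n$ of the target family is then exactly $Z_n$, because $0\in Z_n$ and $Z_N\subseteq Z_n$ for $N\ge n$. Row $n$ of the approximating family lies in $D\cap[\bigcup_{m\ge n}I_v^B(b_n^\beta,b_m^\beta)]_\theta$, since $b_n^\beta\le b_N^\beta$ for every active index. The error families are unaffected, because they vanish at inactive indices. With this padding your argument is complete.
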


\begin{proof}
For each $i$ use the row supplied for $x_i$. On the product of their
domains, put
\[
 p_i^\phi=p_{x_i,\phi(i)},\qquad
 y_j^\phi=\bigvee_{i\le j}p_i^\phi\in B,
\]
and
\[
 A_\phi=\bigcup_{F\subseteq\mathbb N\text{ finite}}
                   \sum_{i\in F}U_{x_i,\phi(i)},\qquad
 Q_\phi=D\cap[A_\phi]_\theta.
\]
The family $Q_\phi$ is null. Since $x_j=\bigvee_{i\le j}x_i$,
\eqref{mod:finite-lattice-errors} gives
\begin{equation}\label{mod:corrected-chain-error}
 I_v^E(x_j,y_j^\phi)\subseteq Q_\phi
 \qquad\text{for every }j.
\end{equation}
Let
\[
 H_\phi=D\cap[A_\phi-A_\phi]_\theta.
\]
Duplicating the rows with opposite signs proves $H_\phi\Rightarrow0$.

Fix $r\le m$ and $x_r\le s\le t\le x_m$. Define
\[
 p=(s\vee y_r^\phi)\wedge y_m^\phi,\qquad
 q=(t\vee y_r^\phi)\wedge y_m^\phi.
\]
Then $y_r^\phi\le p\le q\le y_m^\phi$. To estimate the first endpoint,
insert $s\wedge y_m^\phi$ between $s$ and $p$. Meet and join contractions
give
\[
 v(p)-v(s)\in
 I_v^E(x_r,y_r^\phi)-I_v^E(x_m,y_m^\phi)\subseteq H_\phi.
\]
The second endpoint satisfies the separately recorded identity
\[
 v(q)-v(t)=\bigl(v(q)-v(t\wedge y_m^\phi)\bigr)
                -\bigl(v(t)-v(t\wedge y_m^\phi)\bigr)
 \in I_v^E(x_r,y_r^\phi)-I_v^E(x_m,y_m^\phi)\subseteq H_\phi.
\]
The final inclusion
uses \eqref{mod:hull-sums} and the fact that the displayed value differences
belong to $N-N\subseteq D$. These estimates are uniform in $r,m,s,t$.

For fixed $\phi$, define the decreasing family
\[
 B_N^\phi=\bigcup_{m\ge N}I_v^B(y_N^\phi,y_m^\phi).
\]
The base-chain hypothesis gives $B_N^\phi\Rightarrow0$.
For every $r\ge N$, $m\ge r$, and the indicated $s,t$, the constructed
pair satisfies $y_N^\phi\le y_r^\phi\le p\le q\le y_m^\phi$, whence
\[
 v(q)-v(p)\in I_v^E(y_N^\phi,y_m^\phi)
 \subseteq[I_v^B(y_N^\phi,y_m^\phi)]_\theta
 \subseteq[B_N^\phi]_\theta.
\]
Since $v(q)-v(p)\in N-N\subseteq D$, all these values lie in
$D\cap[B_N^\phi]_\theta\Rightarrow0$ by \eqref{mod:one-row}.

Here is an explicit application of $\mathsf U(N)$. Use the fixed index set
\[
 \mathcal J=\{(r,m,s,t):r\le m,\ x_r\le s\le t\le x_m,\ s,t\in E\}.
\]
For $j=(r,m,s,t)\in\mathcal J$ define the four point families by
\[
 (X_N(j),Y_N(j),X_N^\phi(j),Y_N^\phi(j))=
 \begin{cases}
 (v(t),v(s),v(q),v(p))&r\ge N,\\
 (0,0,0,0)&r<N.
 \end{cases}
\]
All four point families lie in $N$. The preceding calculations give
\[
 \{X_N^\phi(j)-X_N(j):N\ge1,j\in\mathcal J\}\subseteq H_\phi,
 \quad
 \{Y_N^\phi(j)-Y_N(j):N\ge1,j\in\mathcal J\}\subseteq H_\phi.
\]
For fixed $\phi$, their approximating differences are uniformly null in
$N$. The two approximation errors are uniformly contained in
$H_\phi\cup(-H_\phi)$, and hence are null as $\phi$ varies. The conclusion
of $\mathsf U(N)$ makes the target differences uniformly null. Their range
at row $N$ is exactly $Z_N$: the case $r=N$ supplies all its elements, and
$r>N$ supplies only subintervals of $[x_N,x_m]$. This proves
\eqref{mod:extended-chain-tails}.
\end{proof}

\begin{corollary}\label{mod:finite-chain-tails}
Every increasing sequence in $K$ has null hereditary interval tails.
\end{corollary}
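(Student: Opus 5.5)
The plan is to apply Lemma~\ref{mod:tail-transfer} with $B=A$, $E=K$ and $v=n_0$. By Proposition~\ref{mod:finite-extension}, $n_0$ is additive and modular on $K$ with values in $N$. It therefore suffices to verify the three hypotheses \eqref{mod:base-chain-tails}, \eqref{mod:base-interval-domination} and \eqref{mod:transfer-approximation}.

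\emph{Base hypothesis.} For an increasing sequence $(b_j)\subseteq A$, \eqref{mod:base-chain-tails} follows from Lemma~\ref{mod:source-monotone-tails} applied to the increasing net $(b_j)_j$. That lemma makes
\[
 T_N=\bigcup_{p,q\ge N}I_n(b_p,b_q)
\]
null. Since $\bigcup_{m\ge N}I_n(b_N,b_m)\subseteq T_N$, and both families contain $0$ and are decreasing, the quasi-subnet comparison of evaluation tails gives nullity. The intervals here are computed in $A$, which is exactly the form required.

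\emph{Domination hypothesis.} Condition \eqref{mod:base-interval-domination} is precisely \eqref{mod:finite-interval-density} from Proposition~\ref{mod:finite-extension}(4).

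\emph{Approximation hypothesis.} For each $x\in K$, fix a finite-expression source representation $(p_j)\subseteq A$ of $x$, obtained by substituting source exhaustions of the $A^\sigma$ inputs. Take $J_x=\mathbb N$ and $p_{x,j}=p_j$. The proof of Proposition~\ref{mod:finite-extension}(4) yields, for $j\ge N$,
\[
 I_{n_0}(x,p_j)\subseteq D\cap\Bigl[\bigcup_{p,q\ge N}I_n(p_p,p_q)\Bigr]_\theta .
\]
Accordingly, set
\[
 U_{x,j}=D\cap\Bigl[\bigcup_{p,q\ge j}I_n(p_p,p_q)\Bigr]_\theta .
\]
These sets contain $0$, lie in $D$, and decrease in $j$. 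They are null by the word-tail inclusion preceding Lemma~\ref{mod:representation} together with the one-row hull control \eqref{mod:one-row}. This is exactly \eqref{mod:transfer-approximation}.

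The conclusion \eqref{mod:extended-chain-tails} of Lemma~\ref{mod:tail-transfer} is then the statement of the corollary.

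The only delicate point is the admissibility of the rows $U_{x,j}$ for $\mathsf S_{\rm all}(D)$ inside the transfer lemma, where they must be decreasing, null, contain zero and lie in $D$. Each of these properties is checked above, so the main risk is bookkeeping rather than substance.
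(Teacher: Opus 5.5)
Your proposal is correct and is the paper's proof: apply Lemma~\ref{mod:tail-transfer} with $B=A$, $E=K$, $v=n_0$, taking the base tails from Lemma~\ref{mod:source-monotone-tails}, interval domination from \eqref{mod:finite-interval-density}, and the approximation rows $D\cap\bigl[\bigcup_{p,q\ge j}I_n(p_p,p_q)\bigr]_\theta$ from the last paragraph of the proof of Proposition~\ref{mod:finite-extension}. Your extra checks are the bookkeeping the paper leaves implicit: the inclusion of the base tails in $T_N$, and that the rows contain $0$, lie in $D$, decrease and are null.
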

\begin{proof}
Apply Lemma~\ref{mod:tail-transfer} with $B=A$, $E=K$, and $v=n_0$.
Lemma~\ref{mod:source-monotone-tails} supplies the base tails.
Equation \eqref{mod:finite-interval-density} supplies interval domination.
The decreasing relative hulls in the last paragraph of the proof of
Proposition~\ref{mod:finite-extension} supply the approximation rows.
\end{proof}

\subsection{Source certificates and the regulated domain}

\begin{definition}\label{mod:regulated-definition}
An element $x\in L$ is regulated if there are decreasing null rows
$0\in U_{i,\alpha}\subseteq D$, indexed by arbitrary directed domains
$J_i$, and, for $\phi\in\prod_iJ_i$, elements
$l_\phi\in A^\delta$, $u_\phi\in A^\sigma$ such that
\begin{equation}\label{mod:regulated-certificate}
 l_\phi\le x\le u_\phi,\qquad
 \Gamma_A(l_\phi/u_\phi)\subseteq Q_\phi,
\end{equation}
where
\[
 Q_\phi=D\cap
 \left[\bigcup_{F\subseteq\mathbb N\text{ finite}}
                        \sum_{i\in F}U_{i,\phi(i)}\right]_\theta.
\]
Write $R$ for the class of regulated elements.
\end{definition}

Only source fragment values occur in this definition. Neither an extension
value nor the convergence of an approximation net is part of the certificate.

\begin{proposition}[Values and finite operations on $R$]
\label{mod:regulated-finite}
The set $R$ is a $D$-sublattice containing $K$. There is an additive modular
map $\nu:R\to N$ extending $n_0$, characterized by
\begin{equation}\label{mod:certificate-value}
 n_\sigma(u_\phi)\tto{\theta}\nu(x)
\end{equation}
for every certificate of $x$. For every fixed certificate index,
\begin{equation}\label{mod:certificate-fixed-error}
 n_\sigma(u_\phi)-\nu(x)\in\Gamma_A(l_\phi/u_\phi),\qquad
 \nu(x)-n_\delta(l_\phi)\in\Gamma_A(l_\phi/u_\phi).
\end{equation}
Furthermore,
\begin{equation}\label{mod:regulated-interval-density}
 I_\nu(a,b)\subseteq[I_{n_0}(a,b)]_\theta\qquad(a,b\in K),
\end{equation}
and
\begin{equation}\label{mod:regulated-source-domination}
 x\in R,\quad x\le s\in A^\sigma
 \quad\Longrightarrow\quad\nu(x)\in\Gamma_A(s).
\end{equation}
\end{proposition}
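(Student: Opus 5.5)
The proof will proceed in four stages: first existence of the certificate limit, then independence of the certificate, then the lattice operations with additivity and modularity, and finally the two domination estimates. Throughout, the main tool is the mixed-interval domination \eqref{mod:mixed-interval-domination} of Proposition~\ref{mod:finite-extension}, together with the observation that nullity of a certificate's $Q_\phi$ controls every value difference between points of $K$ lying in $[l_\phi,u_\phi]$.

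\emph{Existence of the limit.} Fix a certificate $(l_\phi,u_\phi)_\phi$ of $x$. For two indices $\phi,\psi$, the elements $l_\phi\vee l_\psi\in A^\delta$ and $u_\phi\wedge u_\psi\in A^\sigma$ both lie in $K$ and bracket $x$. Hence $u_\phi\wedge u_\psi$ lies in $[l_\phi,u_\phi]$, and
\[
 n_\sigma(u_\phi)-n_0(u_\phi\wedge u_\psi)\in I_{n_0}(l_\phi,u_\phi)\subseteq\Gamma_A(l_\phi/u_\phi)\subseteq Q_\phi .
\]
Symmetrically, the second difference lies in $Q_\psi$. So $n_\sigma(u_\phi)-n_\sigma(u_\psi)\in Q_\phi-Q_\psi$, and since $(Q_\phi)$ is null, the net $(n_\sigma(u_\phi))_\phi$ is Cauchy. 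It takes values in $N$, so completeness of $N$ gives a limit; call it $\nu(x)$.

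\emph{Independence and the fixed-index errors.} For a second certificate $(l'_\psi,u'_\psi)$ of $x$, the same meet $u_\phi\wedge u'_\psi$ gives
\[
 n_\sigma(u_\phi)-n_\sigma(u'_\psi)\in Q_\phi-Q'_\psi ,
\]
which is null on the product domain. Separation then makes $\nu(x)$ independent of the certificate. The fixed-index estimates \eqref{mod:certificate-fixed-error} follow by writing $n_\sigma(u_\phi)-\nu(x)$ as the limit in $\psi$ of $n_\sigma(u_\phi)-n_0(u_\phi\wedge u_\psi)$. These terms lie in $\Gamma_A(l_\phi/u_\phi)$, which is limit-stable. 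The lower estimate is analogous, using $l_\phi\vee l_\psi$.

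\emph{$K\subseteq R$ and consistency with $n_0$.} Let $x\in K$ with the brackets \eqref{mod:finite-brackets}. Place the single row $U_j=\Gamma_A(l_j/u_j)$, indexed by $\mathbb N$, as the first row, with zero rows elsewhere. This is admissible because the sets decrease, contain $0$, lie in $D$, and are null. This gives a certificate, and its value equals $n_0(x)$ by the fixed error $n_\sigma(u_j)-n_0(x)\in\Gamma_A(l_j/u_j)$.

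\emph{Finite operations.} For $x,y\in R$ with certificates on product domains $P,P'$, form the bracket words. These are $l\wedge l'\le x\wedge y\le u\wedge u'$, the analogous bracket for joins, and, for complements, the swapped endpoints $u^\kappa\le x^\kappa\le l^\kappa$. Residuals use the outward rule of Lemma~\ref{mod:word-constants}. By Lemma~\ref{pd:mixed-hulls}, the endpoints land in $A^\delta$ and $A^\sigma$.

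\emph{Main obstacle.} The hard step is the error: we must show $\Gamma_A(L/U)$ is contained in a summation hull of finitely many signed copies of the input rows. The plan is to use Lemma~\ref{pd:word-brackets} to approximate $L$ and $U$ by source endpoints $L_k\downarrow L$ and $U_k\uparrow U$. The recursion in Lemma~\ref{mod:word-constants} for $n$ then gives
\[
 V_A(L_k/U_k)\subseteq\sum_i w_i\,(\text{source intervals of the inputs}).
\]
Those source intervals lie in the inputs' $\Gamma_A(l/u)$ after passing to limits in $k$, using source normality as in the proof of \eqref{mod:finite-brackets}. Lemma~\ref{mod:hull-bookkeeping} then assembles the countably many signed rows of the two certificates into one admissible collection, with the cofinal diagonal, producing a certificate.

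\emph{Additivity and modularity.} These pass to the limit along the diagonal product index. For example, $n_\sigma(u_\phi\vee u'_\phi)+n_\sigma(u_\phi\wedge u'_\phi)=n_\sigma(u_\phi)+n_\sigma(u'_\phi)$ holds by modularity of $n_0$, and the certificate-value characterization converts it into modularity of $\nu$. For a defined sum $a+b$, we use the representation $a+b=(b/a^\sim)^-$ together with additivity $n_0(y)=n_0(x)+n_0(x/y)$ along certificate upper endpoints.

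\emph{The domination estimates.} For \eqref{mod:regulated-interval-density}, let $a\wedge b\le s\le t\le a\vee b$ with $s,t\in R$. Take certificate upper points of $s$ and $t$, and clip them into $[a\wedge b,a\vee b]$ with the lattice operations in $K$. Their $n_0$-differences lie in $I_{n_0}(a,b)$ and converge to $\nu(t)-\nu(s)$. For \eqref{mod:regulated-source-domination}, note that $n_0(u_\phi\wedge s)\in\Gamma_A(s)$ by \eqref{mod:finite-domination}. It differs from $\nu(x)$ by an element of $\Gamma_A(l_\phi/u_\phi)$, which is null in $\phi$, so the limit lies in the stable set $\Gamma_A(s)$.
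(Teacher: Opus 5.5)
The parts of your proposal covering existence of the certificate limit, its independence, the fixed-index errors, $K\subseteq R$ and source domination match the paper. Your use of $l_\phi\vee l_\psi$ for the lower error is a valid variant of the paper's single net $u_\phi\wedge u_\psi$.

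\textbf{The gap} is in the step you flag as the main obstacle. You approximate $L=L_P$ and $U=U_P$ by source elements $L_k\downarrow L$, $U_k\uparrow U$. This gives $L_k\ge L$ and $U_k\le U$, so $L_k\le U_k$ fails in general. For instance, if $L=U\notin A$, which already happens for $x\in A^\sigma\cap A^\delta\setminus A$ certified by $l_\phi=u_\phi=x$, no source element lies between them and $V_A(L_k/U_k)$ is undefined.

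The same problem affects the inputs. Substituting $l_{i,k}\downarrow l_i$, $u_{i,k}\uparrow u_i$ produces crossed intervals $[l_{i,k}\wedge u_{i,k},\,l_{i,k}\vee u_{i,k}]$. These lie in $[l_i,u_i]$ only if $l_i\le u_{i,k}$ and $l_{i,k}\le u_i$, which is impossible when $l_i=u_i\notin A$. So nothing places their $n$-differences in $G_i=\Gamma_A(l_i/u_i)$ at any $k$. Source normality only makes the excess asymptotically null, which would require extra null rows and a separate limit argument that you do not give.

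Lemma~\ref{pd:word-brackets} does not help here: it needs inputs in $A^\sigma$ whose lower bracket endpoints are source elements. That is precisely why the analogous step in the proof of \eqref{mod:finite-brackets} works, with genuinely nested source intervals $[a_{i,j},a_{i,k}]$.

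\textbf{The fix} is to run the word recursion for $n_0$ on $K$ instead of $n$ on $A$. The endpoints $l_i\in A^\delta$ and $u_i\in A^\sigma$ already lie in $K$, where $n_0$ is additive and modular. So \eqref{mod:word-outward-bound} with $v=n_0$, together with \eqref{mod:mixed-interval-domination}, gives
\[
 I_{n_0}(L_P,U_P)\subseteq J_{n_0}(L_P,U_P)\subseteq\sum_iw_i(P)J_{n_0}(l_i,u_i)\subseteq\sum_iw_i(P)(G_i-G_i).
\]
For a source fragment $c\le L_P/U_P$, the element $L_P+c$ lies in $K\cap[L_P,U_P]$. Hence $n(c)=n_0(L_P+c)-n_0(L_P)\in I_{n_0}(L_P,U_P)$. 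Taking hulls gives $\Gamma_A(L_P/U_P)\subseteq D\cap[\sum_iw_i(P)(A_i-A_i)]_\theta$, where $A_i$ are the raw sums of the input certificates, with no limit in $k$. Only then does Lemma~\ref{mod:hull-bookkeeping} assemble the signed copies into a certificate.

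\textbf{Smaller points that need tightening.}
\begin{itemize}
\item The assembled certificate is indexed by the independent-copy product, not by a common diagonal of the input products. Before passing identities such as modularity of $n_0$ on upper endpoints to $\nu$, you must return convergence to the original product along the cofinal diagonal $\delta$, using $\beta\delta(\phi)=\phi$.
\item For additivity you need $n_0(P(e))\to\nu(P(x))$ at arbitrary endpoint tuples $e$. This follows from $L_P\le P(e)\le U_P$ and nullity of $I_{n_0}(L_P,U_P)$. It is needed because $(p\wedge q)/q$ is not itself a bracket endpoint of $x/y$.
\item In \eqref{mod:regulated-interval-density}, the two clipped upper points $w$ and $z$ need not be ordered, so as written you only obtain $[I_{n_0}(a,b)-I_{n_0}(a,b)]_\theta$. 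Use the ordered pair $w\wedge z\le z$ instead. Since $z\ge t\ge s\ge l^s_\psi$, the point $w\wedge z$ stays in $[l^s_\psi,u^s_\psi]$. This gives $n_0(w\wedge z)\to\nu(s)$ uniformly in the index of the $t$-certificate.
\end{itemize}
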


\begin{proof}
The brackets in \eqref{mod:finite-brackets} certify every point of $K$,
using the single null row $\Gamma_A(l_j/u_j)$ and zero rows thereafter.
For a certificate of $x$ and two indices $\phi,\psi$, put
$p=u_\phi\wedge u_\psi$. Since both lower endpoints lie below $x$, we have
$l_\phi\le p\le u_\phi$ and $l_\psi\le p\le u_\psi$. Therefore
\[
 n_\sigma(u_\phi)-n_\sigma(u_\psi)
 \in\Gamma_A(l_\phi/u_\phi)-\Gamma_A(l_\psi/u_\psi)
 \subseteq Q_\phi-Q_\psi.
\]
The right side is uniformly null on the independent product. Completeness
of $N$ gives \eqref{mod:certificate-value}. For a second certificate
$(l'_\chi,u'_\chi,Q'_\chi)$, the element $p=u_\phi\wedge u'_\chi$ satisfies
$l_\phi\le p\le u_\phi$ and $l'_\chi\le p\le u'_\chi$. Hence
\[
 n_\sigma(u_\phi)-n_\sigma(u'_\chi)\in Q_\phi-Q'_\chi\Rightarrow0.
\]
The independent difference net also converges to the difference of its
two limits. Separation proves equality of those limits. If $x\in K$, then
$n_\sigma(u_\phi)-n_0(x)\in\Gamma_A(l_\phi/u_\phi)$, so this definition
extends $n_0$.

Fix $\phi$ and let $p_\psi=u_\phi\wedge u_\psi$. The difference
$n_\sigma(u_\psi)-n_0(p_\psi)$ belongs to the $\psi$ certificate error.
Thus $n_0(p_\psi)\to\nu(x)$. The elements $p_\psi$ lie in
$[l_\phi,u_\phi]$, so \eqref{mod:mixed-interval-domination} and stability
of $\Gamma_A(l_\phi/u_\phi)$ prove both assertions in
\eqref{mod:certificate-fixed-error}.

We next verify finite-operation closure. Let $x_1,\ldots,x_r\in R$
have certificates with row labels $(i,j)$, and let $P$ be a fixed word.
At a product index $\phi$ write their brackets as $l_i\le x_i\le u_i$.
Define a pair $(L_Q,U_Q)$ for every subword $Q$ recursively:
\[
 \begin{array}{c|c}
 Q&(L_Q,U_Q)\\ \hline
 X_i&(l_i,u_i)\\
 c&(c,c)\\
 Q_1\circ Q_2&(L_{Q_1}\circ L_{Q_2},U_{Q_1}\circ U_{Q_2})\quad(\circ=\wedge,\vee)\\
 Q_1^-&(U_{Q_1}^-,L_{Q_1}^-)\\
 Q_1^\sim&(U_{Q_1}^\sim,L_{Q_1}^\sim)\\
 \Delta(Q_1,Q_2)&(\Delta(L_{Q_1},U_{Q_2}),\Delta(U_{Q_1},L_{Q_2})).
 \end{array}
\]
Here constants are source elements. For each row of the table,
the monotonicities give $L_Q\le Q(x_1,\ldots,x_r)\le U_Q$.
The mixed-hull identities give $L_Q\in A^\delta$, $U_Q\in A^\sigma$.
Set $l=L_P$, $u=U_P$, and $G_i=\Gamma_A(l_i/u_i)$.
The recursions in Lemma~\ref{mod:word-constants} give
\begin{equation}\label{mod:regulated-word-width}
 I_{n_0}(l,u)\subseteq\sum_{i=1}^r w_i(P)(G_i-G_i).
\end{equation}
For a source fragment $c\le l/u$, the element $l+c$ belongs to $K\cap[l,u]$,
and
\[
 n(c)=n_0(l+c)-n_0(l)\in I_{n_0}(l,u).
\]
Writing $A_i$ for the raw sums of the $i$th certificate gives
$G_i\subseteq D\cap[A_i]_\theta$. Thus
\[
 \Gamma_A(l/u)\subseteq
 D\cap\left[\sum_{i=1}^r w_i(P)(A_i-A_i)\right]_\theta.
\]
To index this certificate independently, label each original row
$(i,j)$ by $(i,j,k,\epsilon)$ with
$1\le k\le\max\{1,w_i(P)\}$ and $\epsilon\in\{-1,1\}$.
Rows with $w_i(P)=0$ are replaced by zero rows. Use the signed rows
$\epsilon U_{i,j,\alpha}$, the index bijections and the selectors
$\beta_{i,j}$ of Lemma~\ref{mod:hull-bookkeeping}. Evaluate each input
bracket at these selected upper indices and then use the displayed
table for $L_P,U_P$. Decrease of the rows gives exactly the preceding
containment at the independent product index. This is a certificate
of Definition~\ref{mod:regulated-definition} for $P(x_1,\ldots,x_r)$.
This applies to both complements, both lattice operations, and the total
residuals. Their recovery identities give the defined residuals. Defined
sums are expressed by a residual and a complement. Hence $R$ is a
$D$-sublattice.

Let $P_0$ denote the original product of the input certificate domains
and let $P_C$ denote the independent-copy product used for this word.
The certificate value theorem first gives convergence on $P_C$:
\[
 n_0(U_P(\beta(\psi)))\longrightarrow\nu(P(x_1,\ldots,x_r)).
\]
Its restriction along the order-preserving cofinal map
$\delta:P_0\to P_C$ satisfies $\beta\delta(\phi)=\phi$, and hence
\[
 n_0(U_P(\phi))\longrightarrow\nu(P(x_1,\ldots,x_r))
       \quad(\phi\in P_0).
\]
Every word in the finitely many inputs has now been returned to this
original product $P_0$. For any endpoint tuple $e_i\in\{l_i,u_i\}$
the displayed table gives
$L_P\le P(e_1,\ldots,e_r)\le U_P$, so
\[
 n_0(U_P)-n_0(P(e_1,\ldots,e_r))\in I_{n_0}(L_P,U_P)\Rightarrow0.
\]
Subtracting the null difference from the preceding limit yields
\begin{equation}\label{mod:word-value-limit}
 n_0(P(e_1,\ldots,e_r))\to\nu(P(x_1,\ldots,x_r)).
\end{equation}
For upper endpoints $p,q$ of $x,y$, respectively, the identity
$n_0(p\vee q)+n_0(p\wedge q)=n_0(p)+n_0(q)$ and the four limits
in \eqref{mod:word-value-limit} give
$\nu(x\vee y)+\nu(x\wedge y)=\nu(x)+\nu(y)$.
For $x\le y$ in $R$, let $p$ and $q$ be their
independently indexed certificate upper endpoints. The expressions
$p\wedge q$ and $(p\wedge q)/q$ converge in value to $\nu(x)$ and
$\nu(x/y)$, respectively. The identity
\[
 n_0(q)=n_0(p\wedge q)+n_0((p\wedge q)/q)
\]
gives $\nu(y)=\nu(x)+\nu(x/y)$. This proves additivity.

It remains to prove the hereditary assertions without assuming normality
of $R$. Fix $a\le s\le t\le b$ with $a,b\in K$ and $s,t\in R$.
Use independent certificate upper endpoints $u_\psi^s,u_\chi^t$, and put
\[
 w_\psi=u_\psi^s\wedge b,\qquad z_\chi=u_\chi^t\wedge b,
 \qquad p_{\psi,\chi}=w_\psi\wedge z_\chi,\quad
 q_{\psi,\chi}=z_\chi.
\]
These points lie in $K\cap[a,b]$, with $p_{\psi,\chi}\le q_{\psi,\chi}$.
Since $l_\psi^s\le s\le b$, the difference between the values of
$u_\psi^s$ and $w_\psi$ lies in the $\psi$ error. Thus
$n_0(w_\psi)\to\nu(s)$. Also $z_\chi\ge t\ge s\ge l_\psi^s$, so
$p_{\psi,\chi}\ge l_\psi^s$ and
\[
 n_0(w_\psi)-n_0(p_{\psi,\chi})
       \in I_{n_0}(l_\psi^s,u_\psi^s).
\]
This error is null uniformly in $\chi$. Hence
$n_0(p_{\psi,\chi})\to\nu(s)$ and
$n_0(q_{\psi,\chi})\to\nu(t)$. Taking their difference proves
\eqref{mod:regulated-interval-density}. For noncomparable $a,b$, apply
the comparable statement to $a\wedge b,a\vee b$.

Finally, if $x\le s\in A^\sigma$, the clipped upper endpoints
$u_\phi\wedge s$ lie in $K$ and have values converging to $\nu(x)$,
because $l_\phi\le x\le s$. Each value belongs to $\Gamma_A(s)$ by
\eqref{mod:finite-domination}. Stability proves
\eqref{mod:regulated-source-domination}.
\end{proof}

\begin{corollary}\label{mod:regulated-chain-tails}
For every increasing sequence $(x_j)\subseteq R$,
\[
 \left(\bigcup_{m\ge N}I_\nu(x_N,x_m)\right)_N\Rightarrow0.
\]
\end{corollary}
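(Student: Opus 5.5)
This is the transfer lemma again, one level up. I will apply Lemma~\ref{mod:tail-transfer} with $B=K$, $E=R$ and $v=\nu$, in exactly the way Corollary~\ref{mod:finite-chain-tails} applied it with $B=A$ and $E=K$. The map $\nu:R\to N$ is additive and modular by Proposition~\ref{mod:regulated-finite}, and $K\subseteq R$ are $D$-sublattices. So three hypotheses have to be checked: the base chain tails \eqref{mod:base-chain-tails} in $K$, the interval domination \eqref{mod:base-interval-domination}, and the approximation rows \eqref{mod:transfer-approximation} for each $x\in R$.

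The first two are immediate. Since $\nu|_K=n_0$, the base chain tails for increasing sequences in $K$ are Corollary~\ref{mod:finite-chain-tails}. Interval domination $I^R_\nu(a,b)\subseteq[I^K_{n_0}(a,b)]_\theta$ for $a,b\in K$ is \eqref{mod:regulated-interval-density}.

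The third is the real step. Fix $x\in R$ with a certificate $(l_\phi,u_\phi,Q_\phi)$ on $\prod_iJ_i$, and take $J_x=\prod_iJ_i$, $p_{x,\phi}=u_\phi\in A^\sigma\subseteq K$, and $U_{x,\phi}=Q_\phi$. The family $Q_\phi$ is decreasing in $\phi$ because the rows decrease. It contains $0$, lies in $D$, and is null by $\mathsf S_{\mathrm{all}}(D)$.

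It remains to show $I^R_\nu(x,u_\phi)\subseteq Q_\phi$. Since $x\le u_\phi$, an element of $I_\nu(x,u_\phi)$ is $\nu(t)-\nu(s)$ with $l_\phi\le x\le s\le t\le u_\phi$ and $s,t\in R$. I aim for the stronger inclusion $I_\nu(l_\phi,u_\phi)\subseteq\Gamma_A(l_\phi/u_\phi)$, which extends \eqref{mod:mixed-interval-domination} from $K$ to $R$. For the proof I would take independent certificate upper endpoints of $s$ and $t$ and clip them into $[l_\phi,u_\phi]$ by the construction in the last part of the proof of Proposition~\ref{mod:regulated-finite}:
\[
 w_\psi=u^s_\psi\wedge u_\phi,\qquad z_\chi=u^t_\chi\wedge u_\phi,\qquad p_{\psi,\chi}=w_\psi\wedge z_\chi .
\]
These points are in $K$. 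We have $p_{\psi,\chi}\ge l_\phi$, because both clipped endpoints dominate $s\ge l_\phi$. Also $p_{\psi,\chi}\le z_\chi\le u_\phi$. By that argument, $n_0(p_{\psi,\chi})\to\nu(s)$ and $n_0(z_\chi)\to\nu(t)$. Each difference $n_0(z_\chi)-n_0(p_{\psi,\chi})$ lies in $I_{n_0}(l_\phi,u_\phi)$, and hence in the stable set $\Gamma_A(l_\phi/u_\phi)$ by \eqref{mod:mixed-interval-domination}. Passing to the limit gives $\nu(t)-\nu(s)\in\Gamma_A(l_\phi/u_\phi)\subseteq Q_\phi$, the last inclusion by \eqref{mod:regulated-certificate}.

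With all three hypotheses in place, Lemma~\ref{mod:tail-transfer} yields \eqref{mod:extended-chain-tails}, which is the assertion for $(x_j)\subseteq R$.

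The main obstacle is the limit passage in the third step. The convergence $n_0(p_{\psi,\chi})\to\nu(s)$ needs the error between $w_\psi$ and $p_{\psi,\chi}$ to be uniform in $\chi$. As before, this error lies in $I_{n_0}(l^s_\psi,u^s_\psi)\subseteq Q^s_\psi$, so the uniformity holds. One must also confirm that the limits are taken in $\Gamma_A(l_\phi/u_\phi)$, a limit-stable set, so that no extra intersection with $D$ is needed.
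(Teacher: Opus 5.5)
Your proposal is correct and follows the paper's proof: it applies Lemma~\ref{mod:tail-transfer} with $B=K$, $E=R$, $v=\nu$, takes the base tails from Corollary~\ref{mod:finite-chain-tails} and the domination from \eqref{mod:regulated-interval-density}, and uses the approximation rows $p_{x,\phi}=u_\phi$, $U_{x,\phi}=Q_\phi$. The only difference is that your clipping argument re-derives $I_\nu(l_\phi,u_\phi)\subseteq\Gamma_A(l_\phi/u_\phi)$, whereas the paper gets this directly by applying \eqref{mod:regulated-interval-density} at $(a,b)=(l_\phi,u_\phi)$ (valid because $l_\phi\in A^\delta\subseteq K$), followed by \eqref{mod:mixed-interval-domination} and the stability of $\Gamma_A(l_\phi/u_\phi)$.
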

\begin{proof}
Apply Lemma~\ref{mod:tail-transfer} with $B=K$, $E=R$, and $v=\nu$.
Corollary~\ref{mod:finite-chain-tails} supplies the base tails, and
\eqref{mod:regulated-interval-density} supplies interval domination.
For a certificate of $x$, its upper endpoint $u_\phi\in K$ satisfies
\[
 I_\nu(x,u_\phi)\subseteq I_\nu(l_\phi,u_\phi)
 \subseteq[I_{n_0}(l_\phi,u_\phi)]_\theta
 \subseteq\Gamma_A(l_\phi/u_\phi)\subseteq Q_\phi.
\]
These are precisely the decreasing null approximation rows required in
\eqref{mod:transfer-approximation}.
\end{proof}

\subsection{Countable closure, normality, and the double process}

\begin{proposition}[Countable closure and monotone continuity]
\label{mod:countable-closure}
The set $R$ is a $\sigma$-$D$-sublattice of $L$, and $\nu$ preserves ambient
increasing and decreasing sequential limits in $R$. In particular,
$\nu$ is ambiently $\sigma$-additive.
\end{proposition}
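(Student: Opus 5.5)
We will reduce everything to increasing sequences. $R$ is already a $D$-sublattice (Proposition~\ref{mod:regulated-finite}), and complements interchange increasing and decreasing sequences. Moreover $\nu(x)=\nu(1)-\nu(x^-)$. Hence it suffices to show two things for $x_j\uparrow x$ in $L$ with $x_j\in R$: that $x\in R$, and that $\nu(x_j)\to\nu(x)$. Countable joins then follow from finite joins, and countable meets by complementation. Ambient $\sigma$-additivity follows from increasing continuity through the partial-sum identity of Lemma~\ref{pd:ambient-normality}.

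\textbf{Constructing a certificate for $x$.} Fix certificates $(l^i_\phi,u^i_\phi,Q^i_\phi)$ for each $x_i$. We assemble all their rows, together with the chain-tail rows, into one countable family via Lemma~\ref{mod:hull-bookkeeping}.

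For the upper endpoint, take $u_\Phi=\bigvee_i u^i_{\Phi(i)}\in A^\sigma$, using countable closure from Lemma~\ref{pd:mixed-hulls}. For the lower endpoint, take $l_\Phi=l^{N}_{\Phi(N)}\in A^\delta$ for a suitably chosen index $N$ attached to $\Phi$; this is done by adjoining one extra coordinate $N\in\mathbb N$ to the product. Then $l_\Phi\le x\le u_\Phi$.

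We must bound $\Gamma_A(l_\Phi/u_\Phi)$. Take a source fragment $c\le l_\Phi/u_\Phi$. We decompose $u_\Phi$ along the increasing chain $y_j=\bigvee_{i\le j}u^i_{\Phi(i)}$, as in the proof of Lemma~\ref{mod:tail-transfer}. The interval from $l^N$ to $y_j$ splits, by \eqref{mod:interval-splitting} and \eqref{mod:finite-lattice-errors}, into three pieces:
\begin{itemize}
\item the certificate errors of $x_1,\dots,x_j$, which lie in the raw sums of the assembled rows;
\item the chain tail $\bigcup_{m\ge N}I_\nu(x_N,x_m)$;
\item the $N$-th certificate error.
\end{itemize}
Passing $j\to\infty$ uses the increasing continuity of $n_\sigma$ (Proposition~\ref{mod:finite-extension}(1)), which places $n(c)$ in a stable hull. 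The fragment bound at the limit uses \eqref{mod:regulated-source-domination}.

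The chain-tail piece is a decreasing null row in $N$ by Corollary~\ref{mod:regulated-chain-tails}. It is therefore an admissible row, indexed by $N$, of the summation principle $\mathsf S_{\mathrm{all}}(D)$. With this row included among the assembled rows, $\Gamma_A(l_\Phi/u_\Phi)$ lies in the assembled relative hull $Q^*_\Phi$, whose nullity comes from Lemma~\ref{gen:flatten}. This produces a certificate of $x$, so $x\in R$.

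\textbf{Continuity.} By \eqref{mod:certificate-fixed-error}, $\nu(x)$ and $n_\sigma(u_\Phi)$ differ by an element of $Q^*_\Phi$. Likewise $\nu(x_N)$ differs from $n_\delta(l^N)$ by an element of the $N$-th certificate error. Comparing along the extra coordinate $N$ gives that $\nu(x)-\nu(x_N)$ lies in the chain-tail hull plus a null error. Letting $\Phi$ vary and using separation then yields $\nu(x_N)\to\nu(x)$. An alternative is to observe that $\nu(x)-\nu(x_N)\in[\bigcup_{m\ge N}I_\nu(x_N,x_m)]_\theta\cap D$, which is null by \eqref{mod:one-row} and Corollary~\ref{mod:regulated-chain-tails}. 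This alternative needs $\nu(x_m)\to\nu(x)$ inside the hull, which is essentially what is being proved.

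\textbf{Expected obstacle.} The main difficulty is keeping the certificate bound for $x$ purely in terms of source fragments and independently indexed rows. The endpoint $l_\Phi$ depends on $N$, and the passage $y_j\uparrow u_\Phi$ mixes infinitely many certificates. The bookkeeping with diagonal and cofinal reindexing in Lemma~\ref{mod:hull-bookkeeping}, together with the use of $\mathsf U(N)$ where uniformity in $j$ is needed, is where the argument is delicate. I would first try treating $N$ as an additional product coordinate and verify that the resulting family is decreasing.
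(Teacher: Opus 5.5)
Your proof that $x=\bigvee_jx_j$ is regulated is the paper's argument. It uses the same endpoints $l^N\le x\le\bigvee_iu^i$ with $N$ adjoined as a product coordinate, the same split of $I_\nu(l^N,y_j)$ through $x_N$ and $x_j$, the chain tail $Z_N=\bigcup_{m\ge N}I_\nu(x_N,x_m)$ as an extra $\mathbb N$-indexed row, and the passage $j\to\infty$ by increasing continuity of $n_\sigma$. For that last step the paper applies the continuity to $c\wedge(l^N/y_j)\uparrow c$. The translates $l^N+(c\wedge(l^N/y_j))$ lie in $K\cap[l^N,y_j]$, so \eqref{mod:regulated-source-domination} is not needed. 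Note also that the $N$-th certificate error enters twice: once through $I_\nu(l^N,x_N)$, and once inside $I_\nu(x_j,y_j)\subseteq\sum_{i\le j}\Gamma_A(l^i/u^i)$. The assembled family must therefore contain two copies of every certificate row. The paper does this on $\mathbb N\times\prod_{j,i,k\in\{1,2\}}J_{j,i}$, using the selectors $\beta_{j,i}$ of Lemma~\ref{mod:hull-bookkeeping}. Beyond its role in the chain-tail corollary, $\mathsf U(N)$ is not used here, because all finite sums of certificate errors lie in the single hull $[A_\Phi]_\theta$, uniformly in $j$.

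Your continuity paragraph does not work as written. What you actually obtain is that $\nu(x)-\nu(x_N)$ lies in relative hulls of sums such as $D\cap[Z_N+A_\Phi+A_\Phi]_\theta$. Such a hull cannot be split into a chain-tail part plus a null part. Moreover, with $N$ fixed, letting $\Phi$ vary gives no information.

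The repair is immediate and shorter than the paper's route. Once $x\in R$ is known, the chain $l^N\le x_N\le x\le u_\Phi$ together with \eqref{mod:regulated-interval-density} and \eqref{mod:mixed-interval-domination} gives
\[
 \nu(x)-\nu(x_N)\in I_\nu(l^N,u_\Phi)\subseteq[I_{n_0}(l^N,u_\Phi)]_\theta\subseteq\Gamma_A(l^N/u_\Phi).
\]
The right side is the certificate error of $x$ at the joint index $(N,\Phi)$; after the copy reindexing, this is the index $(N,\psi)$ with endpoints evaluated at $\beta(\psi)$. The left side does not depend on $\Phi$. Hence the joint net $(N,\Phi)\mapsto\nu(x)-\nu(x_N)$ is a quasi-subnet of a null evaluation and has exactly the tails of the sequence. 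So $\nu(x_N)\to\nu(x)$, with no completeness or separation step.

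The paper argues differently. It takes the limit $z\in[n(A)]_\theta$ of the Cauchy sequence $\nu(x_m)$ and deduces $n_\sigma(U^\phi)\to z$ from $n_\sigma(U_m^\phi)-\nu(x_m)\in[A_\phi]_\theta$. It then identifies $z=\nu(x)$ through the convergence of the certificate values, the cofinal diagonal, and separation.
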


\begin{proof}
Let $b_j\in R$ increase, and put $b=\sup_j b_j$ in $L$.
For each $j$ fix a certificate with rows $U_{j,i,\alpha}$ and domains
$J_{j,i}$. Write
\[
 P=\prod_{(j,i)\in\mathbb N^2}J_{j,i},\qquad
 A_\phi=\bigcup_{F\subseteq\mathbb N^2\text{ finite}}
                      \sum_{(j,i)\in F}U_{j,i,\phi(j,i)},\qquad
 Q_\phi=D\cap[A_\phi]_\theta.
\]
The map $(j,i)\mapsto\pi(j,i)$ in Lemma~\ref{mod:hull-bookkeeping}
identifies this family with the summation axiom, so $Q_\phi\Rightarrow0$.
Let $l_j^\phi,u_j^\phi$ be the brackets at the restriction
$\phi|_{\{j\}\times\mathbb N}$, and put
$G_j^\phi=\Gamma_A(l_j^\phi/u_j^\phi)$.
For every finite $F\subseteq\mathbb N$ the disjoint row labels give
\begin{equation}\label{mod:countable-block-hull}
 \sum_{j\in F}G_j^\phi\subseteq[A_\phi]_\theta,\qquad
 D\cap\sum_{j\in F}G_j^\phi\subseteq Q_\phi.
\end{equation}
Define
\[
 U_m^\phi=\bigvee_{j\le m}u_j^\phi,\qquad
 U^\phi=\bigvee_j u_j^\phi\in A^\sigma,\qquad
 L_N^\phi=l_N^\phi\in A^\delta.
\]
For $m\ge N$,
\[
 L_N^\phi\le b_N\le b_m\le U_m^\phi.
\]
Put $Z_N=\bigcup_{m\ge N}I_\nu(b_N,b_m)$. These are decreasing null
sets by Corollary~\ref{mod:regulated-chain-tails}.
Interval splitting gives
\begin{equation}\label{mod:countable-split}
 I_\nu(L_N^\phi,U_m^\phi)
 \subseteq I_\nu(L_N^\phi,b_N)+I_\nu(b_N,b_m)
                         +I_\nu(b_m,U_m^\phi).
\end{equation}
The first term lies in the $N$th certificate error. Since
$b_m=\bigvee_{j\le m}b_j$, finite join contraction places the third term
in the sum of the first $m$ certificate errors. Thus the right side of
\eqref{mod:countable-split} lies in $Z_N+Q_\phi+Q_\phi$.

Fix $N,\phi$ and $a\in A$ with $a\le L_N^\phi/U^\phi$. The residuals
\[
 r_m=L_N^\phi/U_m^\phi\in A^\sigma\qquad(m\ge N)
\]
increase to $L_N^\phi/U^\phi$. Therefore
$a_m=a\wedge r_m\in A^\sigma$ increases to $a$. The elements
$L_N^\phi+a_m$ belong to $K\cap[L_N^\phi,U_m^\phi]$, and
\[
 n_\sigma(a_m)=\nu(L_N^\phi+a_m)-\nu(L_N^\phi)
                  \in Z_N+Q_\phi+Q_\phi.
\]
Increasing continuity of $n_\sigma$, already proved on $A^\sigma$, gives
\begin{equation}\label{mod:countable-shell}
 \Gamma_A(L_N^\phi/U^\phi)
       \subseteq[Z_N+Q_\phi+Q_\phi]_\theta.
\end{equation}
This argument has used no normality assertion on $R$.

For an independently indexed certificate use
\[
 \widehat P=\mathbb N\times\prod_{j,i\ge1,\ k\in\{1,2\}}J_{j,i},
 \qquad
 \beta(\psi)(j,i)=\beta_{j,i}(\psi(j,i,1),\psi(j,i,2)).
\]
The row with label $0$ is $Z_N$, and the row with label $(j,i,k)$ is
$U_{j,i,\psi(j,i,k)}$. Denote their raw finite sums by $\widehat A_{N,\psi}$.
Because $\beta(\psi)(j,i)\ge\psi(j,i,k)$,
\[
 Z_N+A_{\beta(\psi)}+A_{\beta(\psi)}
 \subseteq\widehat A_{N,\psi}.
\]
Each term of the first copy of $A$ uses label $(j,i,1)$, and each term of
the second copy uses $(j,i,2)$. Zero in every row permits omitted terms.
Equations \eqref{mod:countable-shell} and \eqref{mod:hull-sums} give
\[
 \Gamma_A(L_N^{\beta(\psi)}/U^{\beta(\psi)})
 \subseteq D\cap[Z_N+A_{\beta(\psi)}+A_{\beta(\psi)}]_\theta
 \subseteq D\cap[\widehat A_{N,\psi}]_\theta.
\]
The last family is null by the explicit countable reindexing in
Lemma~\ref{mod:hull-bookkeeping}. Thus
$L_N^{\beta(\psi)}\le b\le U^{\beta(\psi)}$ is a certificate on its full
independent product domain. Every countable join
is the increasing limit of its finite partial joins. Complementation
then gives countable meets. Thus $R$ is a $\sigma$-$D$-sublattice.

The value sequence $\nu(b_m)$ is Cauchy by its interval-tail estimate.
Let its limit in $N$ be $z$. For every fixed $\phi$,
\[
 n_\sigma(U_m^\phi)-\nu(b_m)
 \in\sum_{j\le m}\Gamma_A(l_j^\phi/u_j^\phi)\subseteq[A_\phi]_\theta.
\]
The sequence $U_m^\phi$ increases to $U^\phi$ inside $A^\sigma$.
Passing to the limit gives
$n_\sigma(U^\phi)-z\in[A_\phi]_\theta$. The difference lies in
$N-N\subseteq D$, hence belongs to $Q_\phi$. It follows that
$n_\sigma(U^\phi)\to z$ as $\phi$ varies.
The certificate just constructed gives
$n_\sigma(U^{\beta(\psi)})\to\nu(b)$ on $\widehat P$.
The diagonal $(N,\phi)\mapsto(N,\delta(\phi))$ is order preserving and
cofinal, with $\beta\delta(\phi)=\phi$. Therefore
$n_\sigma(U^\phi)\to\nu(b)$ on $\mathbb N\times P$.
For every $(N_0,\phi_0)$ its value tail is
\[
 \{n_\sigma(U^\phi):N\ge N_0,\ \phi\ge\phi_0\}
   =\{n_\sigma(U^\phi):\phi\ge\phi_0\}.
\]
These are exactly the tails of the net indexed by $P$. The latter
converges both to $z$ and to $\nu(b)$, so separation gives $z=\nu(b)$.
Complementation proves preservation of decreasing limits.

If $(h_j)\subseteq R$ is ordered orthogonal and its ambient orthosum $h$
belongs to $R$, its partial sums increase to $h$. Additivity and the
increasing continuity just proved give
$\sum_{j\le m}\nu(h_j)\to\nu(h)$. This is ambient $\sigma$-additivity.
\end{proof}

\begin{proposition}[Uniform normality and exhaustivity]
\label{mod:relative-normality}
The map $\nu$ is uniformly hereditarily exhaustive. If $w_j\downarrow0$
in $L$, then
\begin{equation}\label{mod:external-rn}
 \bigl(\{\nu(a):a\in R,\ a\le w_j\}\bigr)_j\Rightarrow0.
\end{equation}
\end{proposition}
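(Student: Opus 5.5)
Both assertions should reduce to the chain-tail estimate of Corollary~\ref{mod:regulated-chain-tails} together with finite-response reflection. I would first prove \eqref{mod:external-rn} and then obtain hereditary exhaustivity from it.

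For \eqref{mod:external-rn}, put $W_j=\{\nu(a):a\in R,\ a\le w_j\}$. These sets decrease, contain $0$, and lie in $N\subseteq D$, so $T_{\ev(W)}(j,0)=W_j$. I would apply $\mathsf F(D)$ to the evaluation net, mimicking the proof of Lemma~\ref{mod:source-monotone-tails}.

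\emph{Building the chain.} Along a play, the opponent answers at index $j_k$ with a finite set $S_k$. Each $z\in S_k$ has a witness $a_z\in R$ with $a_z\le w_{j'}$ for some $j'\ge j_k$. Put $c_k=\bigvee_{z\in S_k}a_z\in R$, so $c_k\le w_{j_k}$. Since $w_j\downarrow0$ and $\bigvee_{i\le k}c_i\in R$, I let the strategy choose $j_{k+1}$ large enough that the next answers are negligible against the ones already played. Concretely, define the increasing chain
\[
 x_k=\bigvee_{i\le k}c_i,\qquad x_\infty=\bigvee_i c_i\in R
\]
by Proposition~\ref{mod:countable-closure}.

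\emph{Getting null responses.} I would not use the chain directly. Instead I would use the decreasing sequence $d_k=\bigvee_{i\ge k}c_i\le w_{j_k}$, which satisfies $d_k\downarrow0$, together with a fixed-point value. For $a\le c_k\le d_k$, the points $a$ and $d_k$ lie in the interval $[0,d_k]$, so $\nu(a)\in I_\nu(0,d_k)$. Writing $d_k^-\uparrow1$ and using complementation invariance of interval sets, $I_\nu(0,d_k)=I_\nu(d_k^-,1)$. This lies in the chain tail $\bigcup_{m}I_\nu(d_k^-,d_m^-)$ up to the limit $1$, and the limit step is controlled by stable hulls and $\mathsf S_{\rm all}(D)$ through \eqref{mod:one-row}. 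Corollary~\ref{mod:regulated-chain-tails} applied to $(d_k^-)$ then gives null responses $(S_k)_k$, and $\mathsf F(D)$ yields \eqref{mod:external-rn}.

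\textbf{Main obstacle.} Passing from the finite chain tails $I_\nu(d_k^-,d_m^-)$ to the terminal interval $I_\nu(d_k^-,1)$ is the hardest step. For this I would use the monotone continuity of Proposition~\ref{mod:countable-closure}. Fix $s\le t$ in $[d_k^-,1]$. Then $s\wedge d_m^-\to s$ and $t\wedge d_m^-\to t$, so $\nu(t)-\nu(s)$ is a limit of elements of $I_\nu(d_k^-,d_m^-)$. This places it in $D\cap[\bigcup_{m\ge k}I_\nu(d_k^-,d_m^-)]_\theta$, whose family is null by \eqref{mod:one-row}. A further subtlety is that the sequence $d_k$ depends on the whole play. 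To handle this, I would run Corollary~\ref{mod:regulated-chain-tails} on each fixed play, which is exactly what $\mathsf F(D)$ requires. Strictly, the sets $d_k$ involve future moves, so I would instead use $d_k^{(K)}=\bigvee_{k\le i\le K}c_i$ and let $K\to\infty$ with the same hull argument.

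\emph{Hereditary exhaustivity.} Let $(h_j)\subseteq R$ be ordered orthogonal, with partial sums $s_N$ and $s=\sup s_N\in R$. Then $w_j=s_{j-1}/s$ decreases to $0$ by Lemma~\ref{pd:residual-calculus}, and $h_j\le w_j$. Hence $\{\nu(a):a\in R,\ a\le h_j\}\subseteq W_j$, and the hereditary family is null by \eqref{mod:external-rn}, using tail comparison under \eqref{eq:qsub}.
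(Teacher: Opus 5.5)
Your argument is correct and uses the same mechanism as the paper. Per play, you take the finite joins $c_k$ and the countable joins $\bigvee_{i\ge k}c_i\downarrow0$ in $R$, pass by complementation to $I_\nu(d_k^-,1)$, reach this terminal interval via Corollary~\ref{mod:regulated-chain-tails}, monotone continuity and one-row hull control, and then apply $\mathsf F(D)$. The differences are in bookkeeping: the paper treats the $R$-valued case $c_j\downarrow0$ separately before the $\mathsf F(D)$ step, and it proves hereditary exhaustivity directly from the partial sums $s_j$, whereas you deduce it from \eqref{mod:external-rn} via $w_j=s_{j-1}/s$ (both are fine). Two remarks: the strategy only needs $j_k$ to increase to infinity (the paper takes $j_k=k$), and your worry about ``future moves'' is unfounded. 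Since $\mathsf F(D)$ tests nullity of $(S_k)_k$ on each complete play, $d_k$ built from the whole play is legitimate; it is exactly the paper's $c_N=\bigvee_{j\ge N}b_j$, so the $d_k^{(K)}$ detour should simply be dropped.
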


\begin{proof}
For ordered orthogonal $h_j\in R$, put $s_0=0$ and
$s_j=h_1+\cdots+h_j$ for $j\ge1$.
They form an increasing sequence, and
$I_\nu(s_{j-1},s_j)=\{\nu(a):a\in R,\ a\le h_j\}$ by
\eqref{mod:interval-residual}. Corollary~\ref{mod:regulated-chain-tails}
therefore proves uniform hereditary exhaustivity.

First let $c_j\in R$ decrease to zero. Its complements $x_j=c_j^-$
increase to $1$. Put $Z_N=\bigcup_{m\ge N}I_\nu(x_N,x_m)$.
For $x_N\le s\le t\le1$ in $R$ and $m\ge N$, the pairs
$s\wedge x_m\le t\wedge x_m$ lie in $[x_N,x_m]$. Lattice
$\sigma$-continuity and monotone continuity of $\nu$ give
\[
 s\wedge x_m\uparrow s,\quad t\wedge x_m\uparrow t,\quad
 \nu(t\wedge x_m)-\nu(s\wedge x_m)\in Z_N,
 \quad \nu(t)-\nu(s)\in[Z_N]_\theta.
\]
Thus $I_\nu(x_N,1)\subseteq D\cap[Z_N]_\theta\Rightarrow0$.
The identities
$I_\nu(x_N,1)=I_\nu(0,c_N)=\{\nu(a):a\in R,a\le c_N\}$ prove
\[
 \bigl(\{\nu(a):a\in R,\ a\le c_N\}\bigr)_N\Rightarrow0.
\]

Now let $w_j\downarrow0$ be arbitrary in $L$ and write
$V_j=\{\nu(a):a\in R,\ a\le w_j\}$. These are decreasing subsets of
$D$ containing zero. On the evaluation domain
$\mathcal E=\{(j,z):z\in V_j\}$, whose order compares the first
coordinate, define
\[
 \sigma(h)=(|h|+1,0).
\]
For every $(j,z)\in\mathcal E$ fix $a(j,z)\in R$ with
$a(j,z)\le w_j$ and $\nu(a(j,z))=z$.
In a play its $j$th response $S_j$ lies in $V_j$. Put
$b_j=\bigvee_{z\in S_j}a(j,z)$. Then
$b_j\in R$, $b_j\le w_j$, and
$S_j\subseteq\{\nu(a):a\in R,a\le b_j\}$.
Countable closure gives
$c_N=\bigvee_{j\ge N}b_j\in R$. Then $c_N\le w_N$, so $c_N\downarrow0$.
The preceding hereditary normality makes the response evaluations null.
Finite-response reflection proves \eqref{mod:external-rn}.
\end{proof}

\begin{proposition}[Directed monotone processes]\label{mod:directed-processes}
Every increasing or decreasing net in $R$ has a $\nu$-value limit in $N$.
Its hereditary interval tails are null. Consequently, if $E\subseteq R$
is a $D$-sublattice and $w\in L$, the process
\[
 \lim_{a\in E,\ a\le w}\nu(a)
\]
exists. If $c\in R$, the fixed-context process
\[
 \lim_{a\in E,\ a\le w}\nu(T_c(a))
\]
also exists whenever the indicated Sasaki terms belong to $R$.
No intrinsic countable-additivity assertion about the inclusion $E\subseteq L$
is needed.
\end{proposition}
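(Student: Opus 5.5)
We must prove that every increasing net $(x_\gamma)_{\gamma\in\Lambda}$ in $R$ has null hereditary interval tails
\[
T_\gamma=\bigcup_{\alpha,\beta\ge\gamma}I_\nu(x_\alpha,x_\beta),
\]
from which the value limit and the two processes follow. The plan is to imitate the strategy argument of Lemma~\ref{mod:source-monotone-tails}, replacing source exhaustivity \eqref{mod:source-he} by the uniform hereditary exhaustivity of $\nu$ on $R$ from Proposition~\ref{mod:relative-normality}.

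\textbf{The strategy.} The sets $T_\gamma$ decrease, contain $0$ and lie in $D$, since $\nu(R)\subseteq N$. For each evaluation point $(\gamma,z)$ I fix witnesses $\alpha,\beta\ge\gamma$ and $s\le t$ in $R$ with
\[
x_\alpha\wedge x_\beta\le s\le t\le x_\alpha\vee x_\beta,\qquad z=\nu(t)-\nu(s).
\]
The strategy answers a history ending at $\gamma_r$ with responses $S_r$ by an upper bound $\gamma_{r+1}$ of $\gamma_r$ and of all witness indices used in $S_r$. In a play, every $z\in S_j$ then lies in $I_\nu(x_{\gamma_j},x_{\gamma_{j+1}})$, by interval inclusion $[x_\alpha\wedge x_\beta,x_\alpha\vee x_\beta]\subseteq[x_{\gamma_j},x_{\gamma_{j+1}}]$. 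Put $h_j=x_{\gamma_j}/x_{\gamma_{j+1}}\in R$. These elements are ordered orthogonal with partial sums $x_{\gamma_1}/x_{\gamma_{k+1}}$ by \eqref{pd:eq-residual-chains}, and \eqref{mod:interval-residual} gives
\[
I_\nu(x_{\gamma_j},x_{\gamma_{j+1}})=\{\nu(a):a\in R,\ a\le h_j\}.
\]
Proposition~\ref{mod:relative-normality} makes $(S_j)$ null in every play. Finite-response reflection $\mathsf F(D)$ then gives $T_\gamma\Rightarrow0$.

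\textbf{Value limit.} Because $\nu(x_\alpha)-\nu(x_\beta)\in T_\gamma-T_\gamma$, the value net is Cauchy. Completeness of $N$ gives a limit in $N$.

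\textbf{Decreasing nets.} I pass to the complements $x_\gamma^-$, which increase. The argument uses $I_\nu(x,y)=I_\nu(x^-,y^-)$ (Lemma~\ref{mod:interval-calculus}(3)) and $\nu(x)=\nu(1)-\nu(x^-)$.

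\textbf{The two processes.} If $E\subseteq R$ is a $D$-sublattice, the set $\{a\in E:a\le w\}$ is directed upward under $\vee$, since it is closed under finite joins. The identity net on it is therefore increasing in $R$, and the first part applies. For the fixed-context process, Theorem~\ref{pd:sasaki-joins} shows that $T_c$ is increasing. Hence $a\mapsto T_c(a)$ is an increasing net in $R$ whenever its terms lie in $R$, and the first part applies again. The proof needs only lattice facts about the net and properties of $\nu$ on $R$, so no countable additivity of the inclusion $E\subseteq L$ enters.

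\textbf{Main obstacle.} The main obstacle is the strategy step. One must check that the witness choices keep every response inside a single consecutive interval $[x_{\gamma_j},x_{\gamma_{j+1}}]$. One must also check that ordered orthogonality of the $h_j$ is exactly the premise of the uniform hereditary exhaustivity in Proposition~\ref{mod:relative-normality}. This second check uses that hereditary sets of $R$-fragments, not source fragments, are controlled there.
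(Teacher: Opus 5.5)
Your proposal is correct and follows the paper's proof essentially step for step. It uses the same finite-response strategy with fixed witnesses and an upper-bound choice, reduces the responses to $\{\nu(a):a\in R,\ a\le h_j\}$ with $h_j=x_{\gamma_j}/x_{\gamma_{j+1}}$, applies uniform hereditary exhaustivity from Proposition~\ref{mod:relative-normality}, then completeness of $N$ and complementation for decreasing nets, and finally monotonicity of the identity net and of $T_c$ for the two processes (you leave only the strategy's opening move, a fixed initial index, implicit).
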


\begin{proof}
Let $a_\lambda\in R$ increase on a directed set $\Lambda$. Set
\[
 T_\gamma=\bigcup_{\lambda,\lambda'\ge\gamma}I_\nu(a_\lambda,a_{\lambda'}),
 \qquad\mathcal E=\{(\gamma,z):z\in T_\gamma\}.
\]
For each $(\gamma,z)$ fix witnesses
$(\lambda_{\gamma,z},\lambda'_{\gamma,z},s_{\gamma,z},t_{\gamma,z})$
for membership in $T_\gamma$. Fix a finite-upper-bound selector $\rho$
on $\Lambda$ and an initial index $\gamma_*$. Define
\[
 \sigma(\varnothing)=(\gamma_*,0),\qquad
 \sigma(h)=\left(\rho\left(\{\gamma_r\}\cup
 \{\lambda_{\gamma_r,z},\lambda'_{\gamma_r,z}:z\in S_r\}\right),0\right)
\]
for a history ending at $((\gamma_r,w_r),S_r)$.
Every play satisfies
\[
 S_j\subseteq I_\nu(a_{\gamma_j},a_{\gamma_{j+1}})
   =\{\nu(c):c\in R,c\le h_j\},\qquad
 h_j=a_{\gamma_j}/a_{\gamma_{j+1}},\qquad
 \sum_{j=1}^kh_j=a_{\gamma_1}/a_{\gamma_{k+1}}.
\]
Uniform hereditary exhaustivity from
Proposition~\ref{mod:relative-normality} gives $(S_j)_j\Rightarrow0$.
Thus $\mathsf F(D)$ gives $T_\gamma\Rightarrow0$.
The pairwise value tails lie in $T_\gamma-T_\gamma$, which is null.
Completeness of $N$ supplies the limit.
For a decreasing net, the identities
$I_\nu(a_\lambda,a_{\lambda'})=I_\nu(a_\lambda^-,a_{\lambda'}^-)$ and
$\nu(a_\lambda)=\nu(1)-\nu(a_\lambda^-)$ reduce its two conclusions to
the increasing net $(a_\lambda^-)$.
The domain $\{a\in E:a\le w\}$ is nonempty and directed by finite joins.
Its identity map is increasing. The map $a\mapsto T_c(a)$ is increasing
by the Sasaki identities. Both asserted limits are therefore instances
of the monotone-net conclusion.
\end{proof}

\begin{proposition}[Contextual outer regularity]\label{mod:regularity}
Let $x,z\in R$, and let $B$ be a subset of $R$ containing $A^\sigma$ and
closed under finite meets. Ordered by reverse lattice order,
\begin{equation}\label{mod:contextual-outer}
 \nu(z\vee h)\tto{\theta}\nu(z\vee x)
 \qquad(h\in B,\ x\le h).
\end{equation}
For $z=0$, this is outer regularity over $B$. Complementation gives the
corresponding inner regularity over families containing $A^\delta$ and
closed under finite joins.
\end{proposition}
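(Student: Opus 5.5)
We prove \eqref{mod:contextual-outer} by reducing it to the case $z=0$ and then controlling the remaining errors through certificates. The first point is that $\{h\in B:h\ge x\}$ is nonempty and directed by finite meets. Since $x\in R$, it has a certificate $l_\phi\le x\le u_\phi$ with $u_\phi\in A^\sigma\subseteq B$, and closure of $B$ under finite meets makes the domain directed. For $h\ge x$ in $R$, modularity of $\nu$ (Proposition~\ref{mod:regulated-finite}) together with the interval calculus gives
\[
 \nu(z\vee h)-\nu(z\vee x)\in I_\nu(z\vee x,z\vee h)\subseteq I_\nu(x,h)
\]
by join contraction \eqref{mod:lattice-contractions}. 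It therefore suffices to show that the evaluation family $\bigl(\bigcup_{h'\in B,\ x\le h'\le h}I_\nu(x,h')\bigr)_h$ is null along reverse order. Nullity of that family makes the value net a quasi-subnet of a null evaluation, translated by $\nu(z\vee x)$.

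To prove this nullity, fix a certificate of $x$ with null family $Q_\phi$. For $h\in B$ with $h\le u_\phi$ and $h\ge x$, every interval $I_\nu(x,h)$ lies in $I_\nu(l_\phi,u_\phi)$. By \eqref{mod:regulated-interval-density} and the certificate bound used in Corollary~\ref{mod:regulated-chain-tails},
\[
 I_\nu(l_\phi,u_\phi)\subseteq\Gamma_A(l_\phi/u_\phi)\subseteq Q_\phi.
\]
The map $\phi\mapsto u_\phi$ need not be order-preserving into $B$, so we compare tails directly. Given $\phi$, the threshold $h_0=u_\phi$ has the property that every $h\le h_0$ in the domain gives an error inside $Q_\phi$. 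Thus every tail $Q_\phi$ of the null evaluation $\ev_\phi(Q_\phi)$ contains a tail of our family. This is exactly the quasi-subnet relation \eqref{eq:qsub}, read through \eqref{eq:evaltail}, because the family of unions below a threshold $h$ is decreasing. Hence the family is null and \eqref{mod:contextual-outer} follows.

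The main obstacle is bookkeeping rather than analysis. We must make sure the clipped elements $h$ stay in $R$, which holds since $B\subseteq R$. We must also make sure that nullity of the family $(Q_\phi)$ on the product domain transfers to a net indexed by $B$ in reverse order, using only tail inclusions and not monotone reindexing. For the inner version, apply the outer statement to complements. The class $\{h^-:h\in B'\}$ contains $A^\sigma$ and is closed under finite meets when $B'$ contains $A^\delta$ and is closed under finite joins, because complements interchange $A^\sigma$ and $A^\delta$ (Lemma~\ref{pd:mixed-hulls}). The identity $\nu(h)=\nu(1)-\nu(h^-)$ together with continuity of group operations then transports the limit.
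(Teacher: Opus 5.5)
Your argument is correct and is essentially the paper's proof. The certificate upper endpoint $u_\phi\in A^\sigma\subseteq B$ serves as the cover threshold, join contraction together with \eqref{mod:regulated-interval-density} places every tail difference in $I_\nu(l_\phi,u_\phi)\subseteq\Gamma_A(l_\phi/u_\phi)\subseteq Q_\phi$, and nullity of the decreasing family $(Q_\phi)$ gives the limit. Your explicit complementation step for the inner version, via $(A^\delta)^-=A^\sigma$, the anti-isomorphism $\ell\mapsto\ell^-$ and $\nu(\ell)=\nu(1)-\nu(\ell^-)$, supplies a detail the paper only asserts.
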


\begin{proof}
The cover domain is nonempty because $1\in A^\sigma$, and it is directed
under reverse order. Fix a certificate $l_\phi\le x\le u_\phi$ of $x$.
Its upper endpoint belongs to $B$. Every subsequent cover $h\le u_\phi$
satisfies
\[
 z\vee l_\phi\le z\vee x\le z\vee h\le z\vee u_\phi.
\]
Meet and join contractions and \eqref{mod:regulated-interval-density}
give
\[
 \nu(z\vee h)-\nu(z\vee x)
 \in I_\nu(l_\phi,u_\phi)
 \subseteq\Gamma_A(l_\phi/u_\phi)\subseteq Q_\phi.
\]
For every certificate threshold, an entire cover tail has this bound.
The family $Q_\phi$ is null, proving \eqref{mod:contextual-outer}.
\end{proof}

\begin{theorem}[Uncontrolled modular Sion extension]\label{mod:extension}\label{mod:main}
Let $L$ be a $\sigma$-complete, lattice-$\sigma$-continuous pseudo-$D$-lattice,
let $A\subseteq L$ be a $D$-sublattice, and let $Y$ be a separated Abelian
net convergence group. Let $n:A\to Y$ be additive, modular, ambiently
sequentially normal, and uniformly hereditarily exhaustive. Put
$N=[n(A)]_\theta$. Suppose that $N$ is complete and that a symmetric subset
$D\subseteq Y$ satisfies
\[
 N-N\subseteq D=-D,\qquad
 \mathsf F(D),\qquad \mathsf S_{\rm all}(D),\qquad \mathsf U(N).
\]
Then there is a unique ambiently sequentially normal additive extension
\[
 \eta:\sigma_L(A)\longrightarrow Y
\]
of $n$. Its values belong to $N$, and it is modular and $\sigma$-additive.
It preserves sequential order convergence in $L$.
It is uniformly hereditarily exhaustive and satisfies
\[
 w_j\downarrow0\text{ in }L
 \quad\Longrightarrow\quad
 \bigl(\{\eta(a):a\in\sigma_L(A),\ a\le w_j\}\bigr)_j\Rightarrow0.
\]
Its directed monotone processes and contextual outer regularity are those
of Propositions~\ref{mod:directed-processes} and \ref{mod:regularity}.
For every $x\in L$, the original Sion limits
\begin{equation}\label{mod:sion-limits}
 g(x)=\lim_{a\in A,\ a\le x}n(a),\qquad
 \mu(x)=\lim_{u\in A^\sigma,\ x\le u}g(u)
\end{equation}
exist in $N$. Moreover,
\[
 g|_{A^\sigma}=n_\sigma,\qquad
 \mu|_{\sigma_L(A)}=\eta.
\]
The uniqueness conclusion also applies to every $\sigma$-additive additive
extension into $Y$, without a modularity or range-hull assumption on the
competing extension.
\end{theorem}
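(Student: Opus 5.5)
The plan is to take $\eta=\nu|_{\sigma_L(A)}$, where $\nu:R\to N$ is the map of Proposition~\ref{mod:regulated-finite}.

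\emph{Domain.} Proposition~\ref{mod:countable-closure} makes $R$ a $\sigma$-$D$-sublattice of $L$. Proposition~\ref{mod:regulated-finite} gives $A\subseteq K\subseteq R$. Hence $\sigma_L(A)\subseteq R$, by minimality of $\sigma_L(A)$ (or by Theorem~\ref{pd:monotone-class}).

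\emph{Algebraic properties.} Additivity, modularity and values in $N$ come from Proposition~\ref{mod:regulated-finite}. Ambient $\sigma$-additivity and preservation of monotone sequential limits come from Proposition~\ref{mod:countable-closure}. Ambient normality then follows from Lemma~\ref{pd:ambient-normality}.

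\emph{Sequential order convergence.} Suppose $x_j\onull x$ in $\sigma_L(A)$. The canonical brackets $p_N\uparrow x$ and $q_N\downarrow x$ of Lemma~\ref{pd:sigma-continuity} lie in $\sigma_L(A)$. Since $p_N\le x_N\le q_N$, the difference $\eta(x_N)-\eta(p_N)$ lies in $I_\eta(p_N,q_N)$. This set equals $\{\eta(c):c\le p_N/q_N\}$ by \eqref{mod:interval-residual}. Here $p_N/q_N\downarrow 0$, by the total-residual continuity of Theorem~\ref{pd:total-residuals}. So the external estimate \eqref{mod:external-rn} makes these sets null, and $\eta(p_N)\to\eta(x)$ gives $\eta(x_N)\to\eta(x)$.

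\emph{Exhaustivity and uniqueness.} Uniform hereditary exhaustivity and the $w_j\downarrow0$ estimate restrict directly from Proposition~\ref{mod:relative-normality}. Uniqueness among all additive $\sigma$-additive extensions is Theorem~\ref{pd:uniqueness}. Any ambiently normal additive extension is $\sigma$-additive by Lemma~\ref{pd:ambient-normality}, so the same theorem covers it.

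\emph{Sion limits.} Fix $x\in L$. The net $a\mapsto n(a)$ on $\{a\in A:a\le x\}$ is increasing in $R$. By Proposition~\ref{mod:directed-processes} its limit $g(x)$ exists in $N$.

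To show $g(u)=n_\sigma(u)$ for $u\in A^\sigma$, I fix $a_j\uparrow u$. Given $a\le u$ in $A$, additivity and \eqref{mod:finite-domination} give
\[
 n_\sigma(u)-n(a\vee a_j)\in\Gamma_A(a_j/u).
\]
This family is null by \eqref{mod:exhaustion-shell}, and the net over $a\le u$ with $a\ge a_j$ is cofinal. Hence the net has $n_\sigma(u)$ as a limit, and separation gives $g(u)=n_\sigma(u)$. In particular $g(u)=\nu(u)$ on $A^\sigma$.

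For $\mu$, the cover net $u\mapsto g(u)$ over $\{u\in A^\sigma:x\le u\}$ is decreasing in $R$, so Proposition~\ref{mod:directed-processes} again gives the limit $\mu(x)$. When $x\in\sigma_L(A)$, Proposition~\ref{mod:regularity} with $z=0$ and $B=A^\sigma$ gives $\nu(u)\to\nu(x)$ along covers. This requires $A^\sigma$ to be closed under finite meets, which Lemma~\ref{pd:mixed-hulls} provides. Therefore $\mu(x)=\eta(x)$.

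\emph{Main obstacle.} The delicate point is that the outer limit $\mu$ must exist for \emph{every} $x\in L$, not only for regulated $x$. Proposition~\ref{mod:directed-processes} covers monotone nets of elements of $R$, and the covers $u\in A^\sigma$ are such elements because $A^\sigma\subseteq K\subseteq R$. So the plan relies on the directed-process proposition applied to an arbitrary upward-directed (reverse-order) domain. The only thing to check is that the reverse-order cover domain is directed, which holds by meets in $A^\sigma$ and $1\in A^\sigma$.
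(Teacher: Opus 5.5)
Your proposal is correct and follows the paper's proof essentially step for step: you restrict $\nu$ to $\sigma_L(A)\subseteq R$, use the canonical brackets with $p_N/q_N\downarrow0$ and \eqref{mod:external-rn} for order continuity, combine Theorem~\ref{pd:uniqueness} with Lemma~\ref{pd:ambient-normality} for uniqueness, and use Propositions~\ref{mod:directed-processes} and~\ref{mod:regularity} for the Sion limits. The one difference is in identifying $g$ on $A^\sigma$. The paper compares $g(u)$ and $n_\sigma(u)$ inside the stable hulls of the inner net's Cauchy tails, using source normality, whereas you bound $n_\sigma(u)-n(a)\in\Gamma_A(a_j/u)$ for all $a\ge a_j$ via \eqref{mod:finite-domination} and \eqref{mod:exhaustion-shell}; your bound is valid because it controls entire tails of the inner net, so no cofinality of the exhaustion is needed.
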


\begin{proof}
Proposition~\ref{mod:countable-closure} gives
$\sigma_L(A)\subseteq R$. Restrict $\nu$ to this generated
$\sigma$-$D$-sublattice and call the restriction $\eta$.
All asserted continuity, modularity, and uniform hereditary properties
have been proved on $R$.

For sequential order continuity, let $a_j\to_o a$ in $\sigma_L(A)$,
with canonical brackets $p_j\uparrow a$ and $q_j\downarrow a$.
Their residuals $g_j=p_j/q_j$ decrease to zero. Both $p_j/a_j$ and
$p_j/a$ lie below $g_j$. Proposition~\ref{mod:relative-normality} gives
\[
 \eta(a_j)-\eta(p_j)=\eta(p_j/a_j)\to0,\qquad
 \eta(a)-\eta(p_j)=\eta(p_j/a)\to0.
\]
Subtraction proves $\eta(a_j)\to\eta(a)$.

Uniqueness is the monotone-class consequence of
Theorems~\ref{pd:monotone-class} and \ref{pd:uniqueness}.
Indeed, the equality set of two ambiently sequentially normal additive
extensions contains $A$ and is closed under increasing and decreasing
sequential limits. The monotone class generated by $A$ is
$\sigma_L(A)$. A $\sigma$-additive competing extension is ambiently
sequentially normal by Lemma~\ref{pd:ambient-normality}. Its modularity
and range membership are not prerequisites for this argument.

The inner limit $g(x)$ exists by Lemma~\ref{mod:source-monotone-tails},
applied to the source elements below $x$. It remains to identify this
limit on $A^\sigma$, since a source exhaustion need not be cofinal in that
directed set. Let $a_j\uparrow u\in A^\sigma$ and fix $c\in A$, $c\le u$.
Lattice-$\sigma$-continuity gives $c\wedge a_j\uparrow c$. Source
normality and modularity imply
\[
 n(c\vee a_j)-n(a_j)=n(c)-n(c\wedge a_j)\longrightarrow0.
\]
Thus $n(c\vee a_j)\to n_\sigma(u)$. Define, on
$C_u=\{c\in A:c\le u\}$ ordered by $\le$,
\[
 B_c=\{n(d):d\in A,\ c\le d\le u\},\qquad W_c=B_c-B_c.
\]
The inner net is eventually in $B_c$, and $n(c\vee a_j)\in B_c$ for
every $j$. Hence
\[
 g(u),n_\sigma(u)\in[B_c]_\theta,
 \qquad g(u)-n_\sigma(u)\in D\cap[W_c]_\theta\quad(c\in C_u).
\]
The families $W_c$ decrease and lie in $D$. Cauchyness of the inner net
means $(W_c)_{c\in C_u}\Rightarrow0$. Lemma~\ref{mod:hull-bookkeeping}
gives $D\cap[W_c]_\theta\Rightarrow0$. The constant net with value
$g(u)-n_\sigma(u)$ has every tail contained in these evaluation tails,
so it converges to zero. Separation gives $g(u)=n_\sigma(u)$.

The outer domain in \eqref{mod:sion-limits} is nonempty and directed by
finite meets. Its elements belong to $A^\sigma\subseteq R$, and
$g(u)=n_\sigma(u)=\nu(u)$. Proposition~\ref{mod:directed-processes}
therefore gives the outer limit for every $x\in L$.
For $x\in\sigma_L(A)$, Proposition~\ref{mod:regularity}, with $z=0$ and
$B=A^\sigma$, identifies that limit with $\eta(x)$.
\end{proof}

\begin{remark}\label{mod:domain-scope}
The theorem identifies the extension on $\sigma_L(A)$ and preserves
Sion's two prescribed directed processes on all of $L$. It does not
identify $\sigma_L(A)$ with an unrestricted equality cut between inner
and outer envelopes. The regulated domain $R$ is an auxiliary
$\sigma$-$D$-sublattice established by source certificates. No closure
assertion about a larger Carath\'eodory equality cut is used.
\end{remark}

\section{Relative range conditions and their realizations}
\label{sec:relative-range}

The conditions in this section concern a homomorphism, a cone, and a fixed interval in the range. They contain no source or extension data.

\begin{definition}[Relative range conditions]\label{rel:range}\label{range:relative}
Let $X$ and $Y$ be separated Abelian net convergence groups, let $\pi:X\to Y$ be a continuous homomorphism, and let $P\subseteq X$ be a limit-stable additive cone containing $0$. Thus $P+P\subseteq P$. Pointedness is not required. Write
\[
 x\preceq y\quad\Longleftrightarrow\quad y-x\in P,
 \qquad I_e=\{x\in X:0\preceq x\preceq e\}
\]
for a fixed $e\in P$. The following are the \emph{relative range conditions at $e$}.
\begin{enumerate}
\item[\textnormal{(R1)}] Every increasing net $(x_\alpha)$ in $I_e$ whose image $(\pi x_\alpha)$ converges in $Y$ converges in $X$.
\item[\textnormal{(R2)}] If nets on a common directed domain satisfy
\[
 0\preceq x_\alpha\preceq b_\alpha\preceq e,
 \qquad b_\alpha\longrightarrow0,
 \qquad \pi x_\alpha\longrightarrow0,
\]
then $x_\alpha\to0$.
\item[\textnormal{(R3)}] Let $T$ be a rooted tree of countable depth. Each node $t$ has a nonempty directed set $J_t$ of children, written $t^\frown\alpha$ for $\alpha\in J_t$. Suppose that at every node there is a decreasing null row
\[
 q_{t,\alpha}\in I_e,
 \qquad q_{t,\beta}\preceq q_{t,\alpha}\quad(\alpha\le\beta),
 \qquad q_{t,\alpha}\longrightarrow0.
\]
Let $d\in I_e\cap\ker\pi$. Assume that for every infinite branch
\[
 t_0=\varnothing,
 \qquad t_{n+1}=t_n^\frown\alpha_n
\]
there is a sequence $(r_n)_{n\ge1}$ in $I_e$ with $r_n\to0$ and
\begin{equation}\label{rel:tree-inequality}
 d\preceq r_n+\sum_{j=0}^{n-1}q_{t_j,\alpha_j}
 \qquad(n\ge1).
\end{equation}
Then $d=0$.
\end{enumerate}
The sequences of remainders in \textnormal{(R3)} may depend on the entire branch. Neither those remainders nor the individual row values are required to lie in $\ker\pi$. No bound by $e$ is imposed on the finite sums in \eqref{rel:tree-inequality}.
\end{definition}

For a convergent net $x_\alpha\in I_e$,
\[
 x_\alpha\to x,
 \quad e-x_\alpha\to e-x,
 \quad x_\alpha,e-x_\alpha\in P
 \quad\Longrightarrow\quad x,e-x\in P.
\]
Thus $I_e$ is limit-stable. If $x_\alpha$ decreases in $I_e$ and $\pi x_\alpha\to y$, then $e-x_\alpha$ increases in $I_e$ and $\pi(e-x_\alpha)\to\pi e-y$. Condition \textnormal{(R1)} therefore also applies to decreasing nets.

\begin{lemma}[Separation in the kernel]\label{rel:kernel-separation}
Under \textnormal{(R3)}, $P\cap(-P)\cap\ker\pi=\{0\}$.
\end{lemma}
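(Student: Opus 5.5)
The plan is to apply (R3) directly to a single element $d\in P\cap(-P)\cap\ker\pi$, after first placing a suitable multiple of it in $I_e$. Let $d$ be such an element, so $0\preceq d$ and $d\preceq 0$. The obstruction is that (R3) only concerns elements of $I_e$, and $d$ need not lie below $e$. But $0\preceq d\preceq 0\preceq e$ gives $e-d=(e-0)+(0-d)\in P+P\subseteq P$. Hence $d\in I_e$ for every $e\in P$, with no rescaling needed.

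Next choose the simplest tree data. Take $T$ to be the tree in which every node has the one-point directed set $J_t=\{0\}$ of children, so that there is a single infinite branch. At every node take the constant row $q_{t,0}=0$. It lies in $I_e$, it is trivially decreasing, and it is null because constant nets converge to their values. Along the unique branch choose the remainders $r_n=0$. These lie in $I_e$ and converge to $0$.

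It remains to check the tree inequality \eqref{rel:tree-inequality}. This reads $d\preceq 0$, that is, $-d\in P$, which holds by hypothesis. The kernel condition $d\in\ker\pi$ and the membership $d\in I_e$ have been verified. Therefore (R3) yields $d=0$. The inclusion $\{0\}\subseteq P\cap(-P)\cap\ker\pi$ is immediate from $0\in P$ and $\pi(0)=0$.

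I expect no genuine obstacle. The only points needing care are checking that the degenerate tree, with a countable-depth branch over one-point directed sets, is admissible in (R3), and confirming that $d\in I_e$ holds without any bound on $d$ by $e$. The latter follows from the cone identity $e-d\in P$ computed above.
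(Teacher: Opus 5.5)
Your proposal is correct and matches the paper's proof: both place $d$ in $I_e$ via $e-d=e+(-d)\in P$, then apply (R3) to the tree with one child at every node, zero rows $q_{t,\alpha}=0$ and zero remainders $r_n=0$, so that the tree inequality reduces to $d\preceq0$ and (R3) gives $d=0$.
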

\begin{proof}
For $d\in P\cap(-P)\cap\ker\pi$,
\[
 e-d\in P,
 \qquad d\in I_e\cap\ker\pi,
 \qquad d\preceq0.
\]
On the tree with one child at every node set $q_{t,\alpha}=0$ and $r_n=0$. Then $d\preceq r_n+\sum_{j<n}q_{t_j,\alpha_j}=0$ for every $n$. Condition \textnormal{(R3)} gives $d=0$.
\end{proof}

\begin{proposition}[The uncontrolled realization]\label{rel:uncontrolled}
For every separated Abelian net convergence group $X$, the data
\[
 Y=X,\qquad \pi=\operatorname{id}_X,\qquad P=X
\]
satisfy the relative range conditions at every $e\in X$.
\end{proposition}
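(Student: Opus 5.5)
The plan is to check the three conditions directly for $Y=X$, $\pi=\operatorname{id}_X$ and $P=X$. First note that $P=X$ is trivially a limit-stable additive cone containing $0$. With $P=X$ the relation $x\preceq y$ holds for all $x,y$, so $I_e=X$ for every $e\in X$, and $\ker\pi=\{0\}$.

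For (R1), take an increasing net $(x_\alpha)$ in $I_e=X$ whose image $\pi x_\alpha=x_\alpha$ converges in $Y=X$. Then $x_\alpha$ converges in $X$ with nothing further to prove; the monotonicity hypothesis is vacuous here. For (R2), the hypothesis $\pi x_\alpha\to0$ is literally $x_\alpha\to0$, which is the conclusion. The nets $b_\alpha$ and the order constraints play no role.

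For (R3), the element $d$ is required to lie in $I_e\cap\ker\pi=X\cap\{0\}$, so $d=0$ immediately. No branch, remainder sequence or inequality \eqref{rel:tree-inequality} needs to be examined.

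There is no genuine obstacle. The only point requiring care is to record that every order condition degenerates because $P=X$ makes $\preceq$ the total relation. Separation of $X$ is not even used, beyond its being part of the standing definition of the data in Definition~\ref{rel:range}.
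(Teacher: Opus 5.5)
Your proposal is correct and follows the paper's own proof: it observes that $I_e=X$ and $\ker\pi=\{0\}$, so (R1) and (R2) are immediate because $\pi$ is the identity, and (R3) holds because $d\in I_e\cap\ker\pi=\{0\}$. Your additional remarks, that $P=X$ is trivially a limit-stable cone and that separation is not used, are accurate.
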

\begin{proof}
Here $I_e=X$ and $\ker\pi=\{0\}$. The three implications are
\[
 \begin{array}{ll}
 \mathrm{(R1)}&\pi x_\alpha\to y\ \Longrightarrow\ x_\alpha\to y,\\
 \mathrm{(R2)}&\pi x_\alpha\to0\ \Longrightarrow\ x_\alpha\to0,\\
 \mathrm{(R3)}&d\in I_e\cap\ker\pi\ \Longrightarrow\ d=0.
 \end{array}
\]
\end{proof}

We next verify the ordered realization. For an Abelian lattice-ordered group $G$, a net $x_\alpha$ converges in order to $x$, written $x_\alpha\to_o x$, when there is a decreasing net $b_\gamma$ in $G^+$ with existing infimum $0$ such that
\begin{equation}\label{rel:order-convergence}
 \forall\gamma\ \exists\alpha_0\ \forall\alpha\ge\alpha_0:
 \quad |x_\alpha-x|\le b_\gamma.
\end{equation}
The index set of the bounds need not equal the index set of the original net. Throughout the ordered realizations we use this actual order convergence on arbitrary nets.

Recall that $G$ is \emph{super Dedekind complete} when it is Dedekind complete and every nonempty bounded-above subset has a countable subset with the same supremum. The analogous infimum statement follows by negation. A \emph{bounded regulator} is a family $(a_{i,k})_{i,k\ge1}$ in $G^+$ bounded by one element of $G^+$ and satisfying $a_{i,k}\downarrow_k0$ for every $i$. Weak $\sigma$-distributivity means
\begin{equation}\label{rel:wsd}
 \inf_{\varphi\in\N^\N}\sup_{i\ge1}a_{i,\varphi(i)}=0
\end{equation}
for every bounded regulator. These conventions agree with the order-convergence definitions used elsewhere in the paper.

\begin{lemma}[A finite capped estimate]\label{rel:dyadic-cap}
In every Abelian lattice-ordered group, if $e,b_1,b_2,\ldots\ge0$ and $F\subset\{1,2,\ldots\}$ is finite, then
\begin{equation}\label{rel:dyadic-inequality}
 e\wedge\sum_{i\in F}b_i
 \le\bigvee_{i\in F}(e\wedge2^i b_i).
\end{equation}
The empty sum and the empty join are both interpreted as $0$.
\end{lemma}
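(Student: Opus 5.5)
The plan is induction on $|F|$, driven by one two-term inequality for positive elements of an Abelian lattice-ordered group: for $e,x,y\ge0$,
\begin{equation*}
 e\wedge(x+y)\le(e\wedge 2x)\vee(e\wedge 2y).
\end{equation*}
I would prove this first. Since $x+y\le 2(x\vee y)$, we get $e\wedge(x+y)\le e\wedge 2(x\vee y)=e\wedge\bigl((2x)\vee(2y)\bigr)$. The identity $2(x\vee y)=(2x)\vee(2y)$ holds in any $\ell$-group, because $x\mapsto 2x$ preserves order and $2(x\vee y)\le 2x\vee 2y$ follows from the standard identity $2(x\vee y)=(x\vee y)+(x\vee y)$ together with $x+y\le(x\vee y)+(x\wedge y)$ style manipulations. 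It is cleaner to cite $n(x\vee y)=nx\vee ny$, valid in Abelian $\ell$-groups. Distributivity of the lattice of an $\ell$-group then gives $e\wedge(2x\vee 2y)=(e\wedge 2x)\vee(e\wedge 2y)$.

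For the induction, the cases $F=\varnothing$ and $|F|=1$ are immediate, since $e\wedge b_i\le e\wedge 2^ib_i$. For the step, let $m=\max F$ and $F'=F\setminus\{m\}$. Apply the two-term inequality with $x=\sum_{i\in F'}b_i$ and $y=b_m$. This bounds $e\wedge\sum_Fb_i$ by
\begin{equation*}
 \Bigl(e\wedge 2\sum_{i\in F'}b_i\Bigr)\vee(e\wedge 2b_m).
\end{equation*}
Applying the inductive hypothesis to the family $2b_i$ bounds the first term by $\bigvee_{i\in F'}(e\wedge 2^{i+1}b_i)$. This has the wrong exponent for a plain induction.

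The main obstacle is exactly this exponent bookkeeping. Indexing by $\max F$ doubles every earlier weight, so a naive induction gives weights $2^{|F|}$ instead of $2^i$. The fix is to split off the \emph{minimum} instead. Let $m=\min F$, $x=b_m$, and $y=\sum_{i\in F'}b_i$, where every $i\in F'$ satisfies $i\ge m+1$. For $i\ge1$ the bound for $e\wedge\sum b_i$ is then obtained by strengthening the statement to
\begin{equation*}
 e\wedge\sum_{i\in F}b_i\le\bigvee_{i\in F}\bigl(e\wedge 2^{\,i-\min F+1}b_i\bigr).
\end{equation*}
This is stronger, since $i-\min F+1\le i$ when $\min F\ge1$. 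To prove it, use the two-term inequality with $x=b_m$ and $y=\sum_{F'}b_i$. The first term gives $e\wedge 2b_m$, with exponent $1=m-m+1$. For the second, apply the strengthened hypothesis to the family $2b_i$ on $F'$, where $\min F'\ge m+1$. This yields weights $2\cdot 2^{\,i-\min F'+1}\le 2^{\,i-m+1}$. Each step uses only that $e\wedge(\cdot)$ is monotone.

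Combining the two terms via the join finishes the induction, and monotonicity in the exponents gives \eqref{rel:dyadic-inequality}.
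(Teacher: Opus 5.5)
Your proof is correct, but it takes a different route from the paper's.

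\textbf{The paper's route.} The paper first reduces, via Birkhoff's subdirect representation theorem, to a totally ordered group. There it puts $B=\max_{i\in F}(e\wedge 2^ib_i)$ and $N=\max F$, and disposes of the case $B=e$. In the case $B<e$ it uses $2^ib_i\le B$ to get $2^N\sum_{i\in F}b_i\le(2^N-1)B$. Total order, without divisibility, then forces $\sum_{i\in F}b_i\le B$.

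\textbf{Your route.} You stay inside the $\ell$-group and induct by peeling off $\min F$, using only lattice distributivity and $x+y\le 2x\vee 2y$. The relative exponent $i-\min F+1$ is exactly what absorbs the doubling, because $\min(F\setminus\{\min F\})\ge\min F+1$.

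\textbf{Comparison.} The paper's argument is short, but it rests on the representation theorem, whose proof uses Zorn's lemma to produce enough prime subgroups. The reduction to chains is what makes its case split on $B$ legitimate. Yours is elementary and self-contained. It also proves a sharper inequality, since $i-\min F+1\le i$; the rank of $i$ in $F$ would work equally well as the exponent. The paper's later lemma on capped sums gives a third, intrinsic proof of the same inequality, via a disjointness argument.

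\textbf{One point to tighten.} In your two-term lemma, the identity $2(x\vee y)=2x\vee 2y$ is false for general nonabelian $\ell$-groups. Your sentence about ``$x+y\le(x\vee y)+(x\wedge y)$ style manipulations'' also does not actually prove it. In the abelian case the only nontrivial content is $x+y\le 2x\vee 2y$. This follows at once from translation invariance of joins: $(2x\vee 2y)-(x+y)=(x-y)\vee(y-x)=|x-y|\ge0$. With that, you can bypass $2(x\vee y)$ altogether and write $e\wedge(x+y)\le e\wedge(2x\vee 2y)=(e\wedge 2x)\vee(e\wedge 2y)$.
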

\begin{proof}
Finite lattice inequalities in an Abelian lattice-ordered group are detected by its order-separating family of totally ordered quotient groups, by the subdirect representation theorem~\cite[p.~224, Theorem~13]{Birkhoff48}. It is therefore enough to prove the assertion in a totally ordered Abelian group. Suppose $F\ne\varnothing$ and put
\[
 B=\max_{i\in F}(e\wedge2^i b_i),\qquad N=\max F.
\]
If $B=e$, the left side of \eqref{rel:dyadic-inequality} is at most $B$. If $B<e$, then $2^i b_i\le B$ for every $i\in F$. Integer multiplication and addition give
\[
 2^N\sum_{i\in F}b_i
 \le\left(\sum_{i\in F}2^{N-i}\right)B
 \le(2^N-1)B\le2^N B.
\]
Total order gives $\sum_{i\in F}b_i\le B$: otherwise $2^N(\sum_{i\in F}b_i-B)>0$, contradicting the displayed inequality. The inequality holds in every order-separating quotient and therefore in the original lattice-ordered group. No divisibility is used.
\end{proof}

\begin{proposition}[The positive realization]\label{rel:positive}
Let $G$ be a super Dedekind complete weakly $\sigma$-distributive Abelian lattice-ordered group with its order convergence. The data
\[
 X=G,\qquad Y=\{0\},\qquad \pi=0,\qquad P=G^+
\]
satisfy the relative range conditions at every $e\in G^+$.
\end{proposition}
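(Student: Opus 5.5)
The plan is to verify the three conditions of Definition~\ref{rel:range} one at a time. Since $Y=\{0\}$ and $\pi=0$, we have $\ker\pi=G$ and every image net converges trivially. So (R1) reduces to monotone order convergence in $I_e$, (R2) to solidity of order convergence, and (R3) carries all the content.

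For (R1), let $(x_\alpha)$ increase in $[0,e]$. Dedekind completeness gives $x=\sup_\alpha x_\alpha\le e$. The net $b_\gamma=x-x_\gamma$ is decreasing in $G^+$, and translation invariance of extrema in a lattice-ordered group gives $\inf_\gamma b_\gamma=x-\sup_\gamma x_\gamma=0$. For $\alpha\ge\gamma$ we have $|x_\alpha-x|=x-x_\alpha\le b_\gamma$, so $x_\alpha\to_o x$ in the sense of \eqref{rel:order-convergence}. For (R2), take the dominating net $c_\delta$ of $b_\alpha\to_o0$. Then $|x_\alpha|=x_\alpha\le b_\alpha=|b_\alpha|\le c_\delta$ eventually, so $x_\alpha\to_o0$ with the same regulator.

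For (R3), let $d\in I_e$ together with the tree, the rows and the remainders be given. I would carry out four steps.

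First, reduce every row to a sequence. At each node $t$, the set $\{q_{t,\alpha}:\alpha\in J_t\}$ is bounded, and its infimum is $0$ because a decreasing order-null net has infimum $0$. Super Dedekind completeness gives a countable subfamily with infimum $0$. Using directedness, I replace it by an increasing sequence of indices $\alpha_{t,1}\le\alpha_{t,2}\le\cdots$. Then $q_{t,\alpha_{t,k}}\downarrow_k0$. Restricting the tree to these children gives a countably branching subtree. Its nodes can be coded by finite sequences of integers and are therefore countable; the branch hypothesis still applies to its branches.

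Second, build a bounded regulator indexed by the nodes:
\[
 a_{t,k}=d\wedge 2^{|t|+1}q_{t,\alpha_{t,k}}.
\]
These satisfy $a_{t,k}\le d$ and $a_{t,k}\downarrow_k0$. The last claim uses that multiplication by $2^m$ preserves infima in a lattice-ordered group, together with the infinite distributivity $d\wedge\inf_k y_k=\inf_k(d\wedge y_k)$. After enumerating the nodes, weak $\sigma$-distributivity \eqref{rel:wsd} gives $\inf_\varphi s_\varphi=0$, where $s_\varphi=\sup_t a_{t,\varphi(t)}$ and $\varphi$ ranges over choice functions on the nodes.

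Third, fix $\varphi$ and follow the branch $t_{n+1}=t_n^\frown\alpha_{t_n,\varphi(t_n)}$. By \eqref{rel:tree-inequality} and the inequality $d\wedge(u+w)\le d\wedge u+d\wedge w$ for $u,w\ge0$,
\[
 d\le d\wedge r_n+d\wedge\sum_{j<n}q_{t_j,\alpha_j}.
\]
Lemma~\ref{rel:dyadic-cap}, applied with the distinct indices $i=j+1$ for depth $j$ and $e$ replaced by $d$, bounds the last term by $\bigvee_{j<n}\bigl(d\wedge2^{j+1}q_{t_j,\alpha_j}\bigr)\le s_\varphi$. Hence
\[
 0\le(d-s_\varphi)^+\le d\wedge r_n\le r_n\qquad(n\ge1).
\]
Since $r_n\to_o0$, the constant net $(d-s_\varphi)^+$ is dominated by an order-null net. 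It therefore converges to $0$, and separation of order convergence gives $(d-s_\varphi)^+=0$, that is, $d\le s_\varphi$.

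Fourth, since $\varphi$ was arbitrary, $d\le\inf_\varphi s_\varphi=0$, and so $d=0$.

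I expect the main obstacle to be the first two steps: turning the arbitrary directed rows into countable sequences, which is the only use of super Dedekind completeness, and choosing depth-dependent dyadic weights. Those weights are what make the unbounded finite sums in \eqref{rel:tree-inequality} controlled by a single supremum that weak $\sigma$-distributivity can kill. The remaining steps are routine lattice-group calculus.
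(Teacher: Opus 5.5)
Your proof is correct and follows essentially the paper's argument: each row is reduced to a countable sequence by super Dedekind completeness, a dyadically weighted bounded regulator is killed by weak $\sigma$-distributivity, and Lemma~\ref{rel:dyadic-cap} is applied along the branch selected by each $\varphi$; your depth weights $2^{|t|+1}$, the cap $d$, and the restriction to the actual countable subtree are harmless variants of the paper's enumeration weights $2^{\iota(s)}$, cap $e$, and word tree $\N^{<\omega}$. The one item you skip is the check, required of $P$ in Definition~\ref{rel:range}, that $G^+$ is limit-stable, which is immediate from $0\le x^-\le|x-x_\alpha|$ whenever $x_\alpha\ge0$.
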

\begin{proof}
If $x_\alpha\ge0$ and $x_\alpha\to_o x$, then
\[
 0\le x^-\le |x-x_\alpha|\to_o0,
 \qquad x^-=0.
\]
Thus $G^+$ is limit-stable. For \textnormal{(R1)}, let $0\le x_\alpha\uparrow\le e$. Dedekind completeness gives $x=\sup_\alpha x_\alpha\in[0,e]$, and
\[
 |x-x_\alpha|=x-x_\alpha\downarrow0.
\]
For \textnormal{(R2)}, let $0\le x_\alpha\le b_\alpha\le e$ and $b_\alpha\to_o0$. If $c_\gamma\downarrow0$ is a family of order bounds for $b_\alpha$, then
\[
 \forall\gamma\ \exists\alpha_\gamma\ \forall\alpha\ge\alpha_\gamma:
 \quad 0\le x_\alpha\le b_\alpha\le c_\gamma.
\]
This is $x_\alpha\to_o0$.

For \textnormal{(R3)}, let $T$, $q_{t,\alpha}$, and $d$ satisfy its premises. We construct a countably branching tree mapped into $T$. Its nodes are the finite words $s\in\N^{<\omega}$, including $\varnothing$. Put $t_\varnothing=\varnothing$. Suppose $t_s$ has been defined. The identities
\[
 0\le q_{t_s,\alpha}\le e,
 \qquad q_{t_s,\alpha}\downarrow_\alpha0
\]
and super Dedekind completeness give indices $\beta_{s,k}\in J_{t_s}$ such that $\inf_k q_{t_s,\beta_{s,k}}=0$. Directedness permits indices $\alpha_{s,k}$ satisfying
\[
 \alpha_{s,1}\ge\beta_{s,1},
 \qquad
 \alpha_{s,k+1}\ge\alpha_{s,k},\beta_{s,k+1}.
\]
Define
\[
 t_{s^\frown k}=t_s^\frown\alpha_{s,k},
 \qquad b_{s,k}=q_{t_s,\alpha_{s,k}}.
\]
Then $b_{s,k}\downarrow_k0$. The construction is performed successively at each finite level. Every branch in $\N^{<\omega}$ maps to a branch of $T$, even if distinct retained child numbers represent the same original index. No cofinality assertion about $(\alpha_{s,k})_k$ is needed.

Fix a bijection $\iota:\N^{<\omega}\to\N\setminus\{0\}$. Define
\[
 a_{\iota(s),k}=e\wedge2^{\iota(s)}b_{s,k},
 \qquad
 R_\varphi=\sup_{s\in\N^{<\omega}}
                 a_{\iota(s),\varphi(\iota(s))}
 \quad(\varphi\in\N^\N).
\]
This is a regulator bounded by $e$, so $\inf_\varphi R_\varphi=0$. For each $\varphi$, define
\[
 s_0=\varnothing,
 \qquad k_n=\varphi(\iota(s_n)),
 \qquad s_{n+1}=s_n^\frown k_n,
 \qquad v_n=\sum_{j=0}^{n-1}b_{s_j,k_j}.
\]
The indices $\iota(s_0),\ldots,\iota(s_{n-1})$ are distinct. Lemma~\ref{rel:dyadic-cap} gives
\[
 e\wedge v_n
 \le\bigvee_{j<n}\left(e\wedge2^{\iota(s_j)}b_{s_j,k_j}\right)
 \le R_\varphi.
\]
The original branch premise supplies $r_n\in[0,e]$ with $r_n\to_o0$ and $d\le r_n+v_n$. Therefore
\[
 0\le(d-r_n)^+\le e\wedge v_n\le R_\varphi,
 \qquad
 |(d-r_n)^+-d|\le r_n\to_o0.
\]
Order-limit preservation of inequalities yields $d\le R_\varphi$. Since $\varphi$ was arbitrary,
\[
 0\le d\le\inf_{\varphi\in\N^\N}R_\varphi=0.
\]
This proves \textnormal{(R3)}.
\end{proof}

\begin{definition}[Lattice-normed groups and sequential completeness]\label{rel:lattice-norm}
Let $F$ be an Abelian group and $G$ an Abelian lattice-ordered group. A lattice norm is a map $p:F\to G^+$ satisfying
\[
 p(x)=0\Longleftrightarrow x=0,
 \qquad p(-x)=p(x),
 \qquad p(x+y)\le p(x)+p(y).
\]
The notation $x_\alpha\to_{bo}x$ means $p(x_\alpha-x)\to_o0$. A sequence is bo-Cauchy when the double sequence $p(x_n-x_m)$ converges in order to $0$. Sequential bo-completeness means that every bo-Cauchy sequence has a bo-limit in $F$. In the vector-space case, $G$ is a real vector lattice and the usual homogeneity of $p$ is also imposed. The argument below uses only the displayed group properties.
\end{definition}

\begin{proposition}[The graph-cone realization]\label{rel:graph}
Let $(F,p,G)$ be a sequentially bo-complete lattice-normed group, where $G$ is super Dedekind complete and weakly $\sigma$-distributive. Give $F\times G$ the product of bo-convergence and order convergence. For $u\in F$ and $M\in G^+$ with $p(u)\le M$, put
\[
 X=F\times G,\qquad Y=\{0\},\qquad\pi=0,
 \qquad P=\{(x,r):p(x)\le r\},\qquad e=(u,M).
\]
These data satisfy the relative range conditions at $e$.
\end{proposition}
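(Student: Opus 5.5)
The plan is to check the three relative range conditions one coordinate at a time. Two preliminary points come first. Since $Y=\{0\}$ and $\pi=0$, every hypothesis about $\pi$ is vacuous and $\ker\pi=X$. The relation $(x,r)\preceq(x',r')$ unpacks to $p(x'-x)\le r'-r$, and in particular it forces $r\le r'$ in $G$. Membership in $I_e$ gives $p(x)\le r$ and $p(u-x)\le M-r$, so $0\le r\le M$. I also record that order limits in $G$ preserve inequalities; this was already used in Proposition~\ref{rel:positive}.

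\emph{$P$ is limit-stable.} Suppose $(x_\alpha,r_\alpha)\in P$ converges to $(x,r)$. The triangle inequality gives
\[
 p(x)\le p(x-x_\alpha)+r_\alpha\le r+p(x-x_\alpha)+|r_\alpha-r|.
\]
The right-hand side converges in order to $r$, so $p(x)\le r$.

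\emph{(R2).} Write $x_\alpha=(x^1_\alpha,x^2_\alpha)$ and $b_\alpha=(b^1_\alpha,b^2_\alpha)$. The relations $0\preceq x_\alpha\preceq b_\alpha$ give
\[
 0\le p(x^1_\alpha)\le x^2_\alpha\le b^2_\alpha .
\]
Since $b^2_\alpha\to_o0$, squeezing yields $x^2_\alpha\to_o0$ and $p(x^1_\alpha)\to_o0$. This is exactly $x_\alpha\to0$ in the product convergence.

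\emph{(R3).} This reduces to Proposition~\ref{rel:positive}. Write $q_{t,\alpha}=(y_{t,\alpha},s_{t,\alpha})$, $d=(d^1,d^2)$ and $r_n=(r^1_n,r^2_n)$. Taking second coordinates turns every $\preceq$ into $\le$ in $G$:
\begin{itemize}
\item $s_{t,\alpha}\in[0,M]$;
\item $s_{t,\beta}\le s_{t,\alpha}$ for $\alpha\le\beta$;
\item $s_{t,\alpha}\to_o0$, since product convergence includes the second coordinate;
\item $r^2_n\in[0,M]$ with $r^2_n\to_o0$;
\item $d^2\le r^2_n+\sum_{j<n}s_{t_j,\alpha_j}$.
\end{itemize}
These are precisely the premises of (R3) for the positive realization at $M$, so $d^2=0$. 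Then $p(d^1)\le d^2=0$ gives $d^1=0$.

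\emph{(R1) is the main obstacle,} because bo-completeness is only sequential. Let $(x_\alpha,r_\alpha)$ increase in $I_e$. Then $r_\alpha\uparrow$ is bounded by $M$. Dedekind completeness gives $r=\sup_\alpha r_\alpha$, and $r-r_\alpha\downarrow0$. For $\beta\ge\alpha$ we also have
\[
 p(x_\beta-x_\alpha)\le r_\beta-r_\alpha\le r-r_\alpha .
\]
To pass to a sequence, I use super Dedekind completeness to choose countably many indices whose $r$-values have supremum $r$. Directedness then lets me replace them by an increasing sequence $\alpha_1\le\alpha_2\le\cdots$ with $\sup_k r_{\alpha_k}=r$. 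The sequence $x_{\alpha_k}$ is bo-Cauchy, since $p(x_{\alpha_m}-x_{\alpha_k})\le r-r_{\alpha_{\min(k,m)}}\downarrow0$. Sequential bo-completeness therefore gives a bo-limit $x$.

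Next I bound $p(x_{\alpha_k}-x)$. For $m\ge k$,
\[
 p(x_{\alpha_k}-x)\le p(x_{\alpha_k}-x_{\alpha_m})+p(x_{\alpha_m}-x)\le (r-r_{\alpha_k})+p(x_{\alpha_m}-x).
\]
Passing to the order limit in $m$ gives $p(x_{\alpha_k}-x)\le r-r_{\alpha_k}$.

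Now take $\alpha\ge\alpha_k$. Then
\[
 p(x_\alpha-x)\le p(x_\alpha-x_{\alpha_k})+p(x_{\alpha_k}-x)\le2(r-r_{\alpha_k}),
\]
and the bounds $2(r-r_{\alpha_k})$ decrease to $0$. Hence $x_\alpha\to_{bo}x$. Together with $r_\alpha\to_o r$, the net converges in $X$, which proves (R1).
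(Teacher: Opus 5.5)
Your proposal is correct and follows the paper's proof essentially step for step. It uses the same squeeze for (R2) and the same reduction of (R3) to the second coordinate and Proposition~\ref{rel:positive}. For (R1) it uses the same super-Dedekind extraction of an increasing sequence $(\alpha_k)$ with $r_{\alpha_k}\uparrow r$, followed by sequential bo-completeness.

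There are two minor differences. You close (R1) with the sequential regulator $2(r-r_{\alpha_k})$, whereas the paper derives the sharper bound $p(x_\alpha-x)\le r-r_\alpha$ for every $\alpha$. You also omit some routine checks that the paper records via the triangle inequality: that bo-convergence makes $F$, and hence $X$, a separated Abelian convergence group, and that $P$ is an additive cone containing $e$.
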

\begin{proof}
The inequalities
\[
 p((v_\alpha+w_\alpha)-(v+w))
 \le p(v_\alpha-v)+p(w_\alpha-w),
 \qquad p(-v_\alpha+v)=p(v_\alpha-v)
\]
prove continuity of the group operations. Quasi-subnets and finite mixtures preserve the order-null norm nets. If $v_\alpha\to_{bo}v$ and $v_\alpha\to_{bo}w$, then
\[
 0\le p(v-w)\le p(v-v_\alpha)+p(v_\alpha-w)\to_o0,
 \qquad v=w.
\]
Thus $F$ is a separated Abelian convergence group. Further,
\[
 p(v)\le r,\ p(w)\le s
 \quad\Longrightarrow\quad p(v+w)\le r+s,
\]
so $P+P\subseteq P$. If $(x_\alpha,r_\alpha)\to(x,r)$ and $(x_\alpha,r_\alpha)\in P$, then
\[
 0\le(p(x)-r)^+
 \le p(x-x_\alpha)+|r_\alpha-r|\to_o0.
\]
Hence $(x,r)\in P$.

For \textnormal{(R1)}, write the increasing net as $(x_\alpha,r_\alpha)_{\alpha\in I}$. Membership in $I_e$ and monotonicity give
\[
 \begin{gathered}
 p(x_\alpha)\le r_\alpha\le M,
 \qquad p(u-x_\alpha)\le M-r_\alpha,\\
 \alpha\le\beta\quad\Longrightarrow\quad
 p(x_\beta-x_\alpha)\le r_\beta-r_\alpha.
 \end{gathered}
\]
Put $r=\sup_{\alpha\in I}r_\alpha$. Super Dedekind completeness supplies indices $\beta_n$ with $\sup_n r_{\beta_n}=r$. Directedness supplies increasing $\alpha_n$ with $\alpha_n\ge\beta_1,\ldots,\beta_n$. Thus $r_{\alpha_n}\uparrow r$. For $m,n\ge N$,
\[
 p(x_{\alpha_m}-x_{\alpha_n})
 \le r_{\alpha_{\max(m,n)}}-r_{\alpha_{\min(m,n)}}
 \le r-r_{\alpha_N}\downarrow_N0.
\]
Sequential bo-completeness gives $x_{\alpha_n}\to_{bo}x$ for some $x\in F$. For every $\alpha\in I$ and $n\ge1$, an upper bound $\beta\ge\alpha,\alpha_n$ gives
\[
 \begin{split}
 p(x_\alpha-x_{\alpha_n})
 &\le p(x_\beta-x_\alpha)+p(x_\beta-x_{\alpha_n})\\
 &\le(r_\beta-r_\alpha)+(r_\beta-r_{\alpha_n})\\
 &\le(r-r_\alpha)+(r-r_{\alpha_n}).
 \end{split}
\]
Consequently
\[
 p(x_\alpha-x)
 \le(r-r_\alpha)+(r-r_{\alpha_n})+p(x_{\alpha_n}-x).
\]
The last two terms converge in order to $0$ as $n\to\infty$. Preservation of inequalities under order limits gives
\begin{equation}\label{rel:graph-net-bound}
 p(x_\alpha-x)\le r-r_\alpha.
\end{equation}
Since $r-r_\alpha\downarrow0$, the full net converges to $(x,r)$. The inequalities defining $I_e$ pass to this limit and give $p(x)\le r$ and $p(u-x)\le M-r$. Thus $(x,r)\in I_e$.

For \textnormal{(R2)}, write $x_\alpha=(v_\alpha,s_\alpha)$ and $b_\alpha=(w_\alpha,t_\alpha)$. The graph inequalities give
\[
 0\le p(v_\alpha)\le s_\alpha\le t_\alpha.
\]
Since $b_\alpha\to0$, one has $t_\alpha\to_o0$. The displayed bounds give $s_\alpha\to_o0$ and $p(v_\alpha)\to_o0$, which is $x_\alpha\to0$.

For \textnormal{(R3)}, write
\[
 d=(v,a),\qquad q_{t,\alpha}=(w_{t,\alpha},b_{t,\alpha}),
 \qquad r_n=(z_n,c_n).
\]
The second coordinates satisfy
\[
 \begin{gathered}
 0\le a,b_{t,\alpha},c_n\le M,
 \qquad b_{t,\alpha}\downarrow_\alpha0,
 \qquad c_n\to_o0,\\
 a\le c_n+\sum_{j<n}b_{t_j,\alpha_j}
 \quad\text{on every branch}.
 \end{gathered}
\]
Proposition~\ref{rel:positive} gives $a=0$. Since $p(v)\le a$, one has $v=0$ and $d=0$.
\end{proof}

\begin{proposition}[Products]\label{rel:products}
For each $j\in J$, let $(X_j,Y_j,\pi_j,P_j,e_j)$ satisfy the relative range conditions. With product convergence, the data
\[
 X=\prod_jX_j,
 \quad Y=\prod_jY_j,
 \quad\pi=\prod_j\pi_j,
 \quad P=\prod_jP_j,
 \quad e=(e_j)_j
\]
also satisfy those conditions.
\end{proposition}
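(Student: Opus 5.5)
The proof will be coordinatewise. The one structural fact needed is that product convergence is coordinatewise: a net in $\prod_jX_j$ converges exactly when every coordinate net converges, to the corresponding coordinate of the limit. With this, $X$ and $Y$ are separated Abelian net convergence groups. The net axioms, continuity of addition and inversion, and separation each follow from the coordinate statements, because the coordinate of an independent sum net $u\boxplus v$ is $u_j\boxplus v_j$ on the same product domain. The map $\pi$ is a continuous homomorphism because $(\pi x_\alpha)_j=\pi_j(x_{\alpha,j})$.

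Next, $P=\prod_jP_j$ is an additive cone containing $0$, and it is limit-stable because each $P_j$ is. The order $\preceq$ on $X$ is the coordinatewise order. Hence $I_e=\prod_jI_{e_j}$ and $\ker\pi=\prod_j\ker\pi_j$, and monotone nets in $X$ have monotone coordinates.

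The three conditions are then checked one at a time.

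\textnormal{(R1)}: For an increasing net in $I_e$ whose $\pi$-image converges, each coordinate is an increasing net in $I_{e_j}$ whose $\pi_j$-image converges. So each coordinate converges by \textnormal{(R1)} for that factor, and therefore the net converges in the product.

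\textnormal{(R2)}: The hypotheses $0\preceq x_\alpha\preceq b_\alpha\preceq e$, $b_\alpha\to0$ and $\pi x_\alpha\to0$ all project to the same hypotheses in each coordinate on the common domain. The factor conditions give $x_{\alpha,j}\to0$ for every $j$, which is $x_\alpha\to0$.

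\textnormal{(R3)}: Given the tree $T$, the rows $q_{t,\alpha}$ and $d\in I_e\cap\ker\pi$, fix $j$ and keep the same tree $T$ with the same child sets $J_t$. Use the coordinate rows $q_{t,\alpha,j}$, which are decreasing null rows in $I_{e_j}$, and the element $d_j\in I_{e_j}\cap\ker\pi_j$. For each branch, the remainder sequence $(r_n)$ supplied by the product hypothesis projects to $(r_{n,j})$. This sequence lies in $I_{e_j}$, converges to $0$, and satisfies \eqref{rel:tree-inequality} coordinatewise, because $\preceq$ and finite sums are computed coordinatewise. The factor \textnormal{(R3)} gives $d_j=0$ for every $j$, so $d=0$.

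The main point to check carefully is the first step: that the paper's notion of product convergence on net convergence spaces is the coordinatewise one and that it satisfies the quasi-subnet and finite-mixing axioms. Once that is in place, every remaining verification is a direct projection argument. No choice or tree reconstruction is needed, because the branch-dependent remainders simply project.
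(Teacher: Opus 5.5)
Your proposal is correct and follows the paper's proof essentially verbatim. Like the paper, you use the coordinatewise identities $I_e=\prod_jI_{e_j}$ and $\ker\pi=\prod_j\ker\pi_j$, derive limit stability of $P$ from that of each $P_j$, and apply \textnormal{(R1)}--\textnormal{(R3)} factor by factor, keeping the tree unchanged and projecting the branch remainders in \textnormal{(R3)}. Your added check that coordinatewise convergence satisfies the net-convergence and group axioms is taken for granted in the paper, and it is routine because tails of the product net project onto tails of the coordinate nets.
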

\begin{proof}
The identities
\[
 P+P\subseteq\prod_j(P_j+P_j)\subseteq P,
 \qquad I_e=\prod_j I_{e_j},
 \qquad\ker\pi=\prod_j\ker\pi_j
\]
hold coordinatewise. If $x_\alpha\in P$ and $x_\alpha\to x$, then $x_{\alpha,j}\to x_j\in P_j$ for every $j$, so $x\in P$.

For \textnormal{(R1)}, write $\pi x_\alpha\to y=(y_j)_j$. For every $j$,
\[
 x_{\alpha,j}\in I_{e_j},\quad
 x_{\alpha,j}\text{ increasing},\quad
 \pi_jx_{\alpha,j}\to y_j
 \quad\Longrightarrow\quad
 x_{\alpha,j}\to x_j\in I_{e_j}.
\]
Therefore $x_\alpha\to(x_j)_j\in I_e$ in product convergence. For \textnormal{(R2)}, its premises give, for every $j$,
\[
 0\preceq_jx_{\alpha,j}\preceq_jb_{\alpha,j}\preceq_je_j,
 \quad b_{\alpha,j}\to0,
 \quad\pi_jx_{\alpha,j}\to0.
\]
The factor condition gives $x_{\alpha,j}\to0$ for every $j$, hence $x_\alpha\to0$.

For \textnormal{(R3)}, fix $j$. On its unchanged tree, the projected data satisfy
\[
 \begin{gathered}
 d_j\in I_{e_j}\cap\ker\pi_j,
 \qquad q_{t,\alpha,j}\downarrow_\alpha0,\\
 \forall\text{ branches}\ \exists(r_{n,j})_n\subseteq I_{e_j}:
 \quad r_{n,j}\to0,
 \quad d_j\preceq_jr_{n,j}+\sum_{k<n}q_{t_k,\alpha_k,j}.
 \end{gathered}
\]
The factor condition yields $d_j=0$. Thus $d=(d_j)_j=0$.
\end{proof}

\begin{proposition}[Closed subgroups]\label{rel:subgroups}
Let $(X,Y,\pi,P,e)$ satisfy the relative range conditions. If $H\subseteq X$ is a limit-stable subgroup containing $e$, then
\[
 (H,Y,\pi|_H,P\cap H,e)
\]
satisfies them with the induced convergence on $H$.
\end{proposition}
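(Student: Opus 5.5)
The plan is to check each ingredient of Definition~\ref{rel:range} directly for the restricted data. Everything should be inherited from $X$, with limit-stability of $H$ used whenever a limit produced in $X$ has to land back in $H$. First, $H$ with the induced convergence is a separated Abelian net convergence group. For nets in $H$ we declare $u\to x$ in $H$ exactly when $x\in H$ and $u\to x$ in $X$. The axioms (constants, quasi-subnets, finite mixing) and separation then pass to $H$ immediately. Continuity of addition and inversion holds because $H$ is a subgroup, so all limits of sums of $H$-valued nets lie in $H$. The restriction $\pi|_H$ is a continuous homomorphism into $Y$.

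Next, $P\cap H$ is an additive cone containing $0$, since both $P$ and $H$ are closed under addition. It is limit-stable in $H$: a net in $P\cap H$ converging in $H$ to $x$ converges in $X$, so $x\in P$ by stability of $P$, and $x\in H$ by definition. The induced preorder $\preceq_H$ satisfies $y-x\in P\cap H$ iff $y-x\in P$ for $x,y\in H$. Hence $e\in P\cap H$, and the order interval computed in $H$ is $I_e^H=I_e\cap H$. Also $\ker(\pi|_H)=\ker\pi\cap H$.

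\textbf{(R1).} Take an increasing net in $I_e\cap H$ whose $\pi$-image converges in $Y$. By (R1) in $X$ it converges in $X$ to some $x$. Limit-stability of $H$ gives $x\in H$, so the net converges in $H$. This step, where the limit must be pulled back into $H$, is the only place the hypothesis on $H$ is genuinely needed.

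\textbf{(R2).} The premises in $H$ are premises in $X$: the inequalities relative to $P\cap H$ are inequalities relative to $P$, and $b_\alpha\to0$ in $H$ implies $b_\alpha\to0$ in $X$. So $x_\alpha\to0$ in $X$, and since $0\in H$ this is convergence in $H$.

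\textbf{(R3).} Take a tree with rows $q_{t,\alpha}\in I_e\cap H$ that are decreasing for $\preceq_H$ and null in $H$, and take $d\in I_e\cap H\cap\ker\pi$. Suppose that for every branch there are remainders $r_n\in I_e\cap H$, null in $H$, satisfying \eqref{rel:tree-inequality} with respect to $P\cap H$. The finite sums lie in $H$, and each inequality holds with respect to $P$. So all premises of (R3) in $X$ hold, and we conclude $d=0$.

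I expect no real obstacle here. The one point requiring care is making explicit that the induced convergence declares convergence in $H$ only to limits in $H$. That convention is what makes separation and the group axioms immediate, and it is why (R1) needs the limit-stability of $H$.
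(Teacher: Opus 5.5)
Your proof is correct and follows essentially the same route as the paper's. You identify $I_e^H=I_e\cap H$ and $\ker(\pi|_H)=\ker\pi\cap H$, take induced limits to be ambient limits whose limit point lies in $H$, and transfer (R1)--(R3) from $X$, using limit-stability of $H$ only in (R1). Your explicit check that $H$ with the induced convergence is again a separated Abelian convergence group is a detail the paper leaves implicit.
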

\begin{proof}
Put $P_H=P\cap H$. Then
\[
 P_H+P_H\subseteq P_H,
 \qquad I_e^H=H\cap I_e,
 \qquad\ker(\pi|_H)=H\cap\ker\pi.
\]
Limits in $H$ are the limits in $X$ whose limit point belongs to $H$. Hence $P_H$ is limit-stable. For \textnormal{(R1)}, an increasing $H$-valued net in $I_e^H$ with convergent projected net has an ambient limit $x\in I_e$ by \textnormal{(R1)} in $X$. Limit stability of $H$ gives $x\in H$, hence $x\in I_e^H$. For \textnormal{(R2)}, the three hypotheses in $H$ are the three hypotheses in $X$. The ambient conclusion $x_\alpha\to0$, together with $0\in H$, is convergence in $H$. For \textnormal{(R3)}, each row, branch remainder, and distinguished point in $H$ belongs to the corresponding ambient interval, and
\[
 d\in\ker(\pi|_H)\ \Longrightarrow\ d\in\ker\pi.
\]
The ambient tree condition therefore yields $d=0$.
\end{proof}

\begin{proposition}[Order products beyond super Dedekind completeness]\label{rel:order-products}
Let $(G_j)_{j\in J}$ be super Dedekind complete weakly $\sigma$-distributive Abelian lattice-ordered groups. Give $G=\prod_jG_j$ its lattice order and its actual order convergence. Then $(G,\{0\},0,G^+,e)$ satisfies the relative range conditions for every $e\ge0$.
\end{proposition}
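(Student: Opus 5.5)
The plan is to verify the three conditions coordinatewise in the order structure, using Proposition~\ref{rel:positive} on each factor. The obstacle is that order convergence in $G=\prod_jG_j$ is not product convergence: an order-null net in $G$ needs one common decreasing family of bounds with infimum $0$ in $G$, not merely separate bounds in each factor. The key identity I will use is that infima and suprema in $G$ are computed coordinatewise. A downward directed family $b_\gamma\ge0$ in $G$ therefore has infimum $0$ exactly when every coordinate family has infimum $0$. Also $G$ is Dedekind complete, though in general not super Dedekind complete; this is why Proposition~\ref{rel:positive} does not apply directly.

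First, $G^+$ is limit-stable, by the same argument as in Proposition~\ref{rel:positive}, since that argument used only the lattice-group structure. For \textnormal{(R1)}, let $0\le x_\alpha\uparrow\le e$. Dedekind completeness of each $G_j$ gives a coordinatewise supremum $x\in[0,e]$, and $x-x_\alpha\downarrow0$ in $G$ because infima are coordinatewise. The net $(x-x_\alpha)_\alpha$ is its own family of order bounds. For \textnormal{(R2)}, the argument of Proposition~\ref{rel:positive} transfers verbatim: $0\le x_\alpha\le b_\alpha$, so every order bound family for $b_\alpha$ serves for $x_\alpha$. No countability is needed in either step.

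For \textnormal{(R3)}, fix $j$ and project the tree data: $q_{t,\alpha,j}\in[0,e_j]$, and $q_{t,\alpha,j}\downarrow_\alpha0$ because the infimum in $G$ is coordinatewise. The remainders $r_{n,j}\to_o0$ in $G_j$, since projection of order bounds preserves infimum $0$. Finally $d_j\le r_{n,j}+\sum_{k<n}q_{t_k,\alpha_k,j}$ on every branch. The kernel condition is vacuous because $\pi=0$. Thus the projected data satisfy the premises of \textnormal{(R3)} in $(G_j,\{0\},0,G_j^+,e_j)$. Proposition~\ref{rel:positive} then gives $d_j=0$ for every $j$, and hence $d=0$.

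I expect the main obstacle to be this coordinatewise transfer of order nullity in \textnormal{(R3)}: one must check that projection sends an order-convergent net in $G$ to an order-convergent net in $G_j$. This holds because a decreasing family with infimum $0$ in $G$ projects to a decreasing family with infimum $0$ in $G_j$. The branch construction in Proposition~\ref{rel:positive}, which uses super Dedekind completeness to extract countable cofinal subrows, is then carried out separately in each factor. It therefore never requires countable extraction in the product. This is exactly why the statement survives without super Dedekind completeness of $G$.
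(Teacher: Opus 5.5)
Your proposal is correct and follows essentially the same route as the paper. Coordinatewise extrema give Dedekind completeness of $G$ and hence (R1), (R2) follows from domination, and for (R3) you project the tree data to each $G_j$ (projection preserves order nullity because infima are coordinatewise) and apply Proposition~\ref{rel:positive} factor by factor; you additionally verify limit-stability of $G^+$, which the paper leaves implicit.
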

\begin{proof}
For every nonempty bounded-above $B\subseteq G$,
\[
 \sup_G B=\bigl(\sup_{G_j}\{b_j:b\in B\}\bigr)_{j\in J}.
\]
Thus $G$ is Dedekind complete. If $0\le x_\alpha\uparrow\le e$, then $x=\sup_\alpha x_\alpha$ satisfies $x-x_\alpha\downarrow0$, proving \textnormal{(R1)}. For \textnormal{(R2)}, each order bound $c_\gamma$ for $b_\alpha\to_o0$ also bounds $x_\alpha$ on the corresponding tail, since $0\le x_\alpha\le b_\alpha$.

If $v_\alpha\to_o v$ in $G$, its order bounds $c_\gamma\downarrow0$ satisfy
\[
 c_{\gamma,j}\downarrow0,
 \qquad
 \forall\gamma\ \exists\alpha_\gamma\ \forall\alpha\ge\alpha_\gamma:
 |v_{\alpha,j}-v_j|\le c_{\gamma,j}.
\]
Thus every coordinate projection preserves order convergence. The projected premises of \textnormal{(R3)} hold in $G_j$ for every $j$, so Proposition~\ref{rel:positive} gives $d_j=0$ for every $j$. Hence $d=0$.

For an uncountable set $J$, the example $G=\mathbb R^J$ is not super Dedekind complete. The singleton indicators $\mathbf1_{\{j\}}$ have supremum $\mathbf1_J$, while a countable subfamily has a supremum supported on only countably many coordinates. Nevertheless the preceding argument proves all three relative range conditions for its order convergence. This assertion does not identify product convergence with order convergence on arbitrary nets.
\end{proof}

The product and subgroup results provide mixed realizations with a nontrivial modular homomorphism. For example, if $Y$ is any separated Abelian convergence group and $G$ is a positive realization above, then
\[
 X=Y\times G,\qquad
 P=Y\times G^+,\qquad
 \pi(y,g)=y
\]
satisfies the relative range conditions. The same is true on every limit-stable subgroup of $Y\times G$, with the restricted cone and projection. A splitting of that subgroup or of its projection is not required.

\section{Relative extension over a modular homomorphism}
\label{sec:relative-lifting}

The modular theorem applies without an order or a positive control on its
range. The present section treats a different assertion. An additive
measure taking values in a stable cone is lifted over its modular image
under a continuous homomorphism. The cone and the three relative range
principles enter this lifting assertion explicitly.

Throughout this section, \(L\) is a \(\sigma\)-complete
lattice-\(\sigma\)-continuous pseudo-\(D\)-lattice, \(A\) is a
\(D\)-sublattice of \(L\), and \(S=\sigma_L(A)\).
Let \(\pi:X\to Y\) be a continuous homomorphism between separated
abelian convergence groups. Let \(P\subseteq X\) be a stable additive
cone containing zero, and let \(e\in P\). Write
\[
 x\preceq y\ \Longleftrightarrow\ y-x\in P,
 \qquad I_e=\{x\in X:0\preceq x\preceq e\}.
\]
The relative principles \(\mathrm{(R1)}\), \(\mathrm{(R2)}\), and
\(\mathrm{(R3)}\) are those of Definition~\ref{rel:range}, on this
single interval \(I_e\).

Let \(m:A\to P\) be additive and ambiently normal, with \(m(1)=e\).
Assume that \(n=\pi m\) is modular and uniformly hereditarily
exhaustive. Put \(N=[n(A)]_Y\), and assume that \(N\) is complete
and that
\[
 N-N\subseteq D=-D,\qquad
 \mathsf F(D),\qquad \mathsf S_{\rm all}(D),\qquad\mathsf U(N)
 \tag{L1}\label{lift:modular-data}
\]
hold. Since \(0=n(0)\in N\),
\[
 N=N-\{0\}\subseteq N-N\subseteq D.
\]
For \(a_j\in A\), \(a_j\downarrow0\) in \(L\),
\[
 m(a_j)\to0\Longrightarrow n(a_j)=\pi m(a_j)\to0.
\]
Thus \(n\) is ambiently normal.
Theorem~\ref{mod:extension} supplies the modular extension
\[
 \eta:S\longrightarrow N,\qquad \eta|_A=n.
 \tag{L2}\label{lift:eta}
\]
We use its ambient relative normality, its monotone directed
processes, and its contextual regularity from
Propositions~\ref{mod:relative-normality},
\ref{mod:directed-processes}, and~\ref{mod:regularity}.
All these assertions concern the already constructed map \(\eta\).
They also give sequential order continuity. Indeed, if
\(a_j\to_o a\) in \(S\), its canonical brackets satisfy
\(p_j\uparrow a\), \(q_j\downarrow a\), and
\[
 0\le p_j/a_j\le p_j/q_j\downarrow0,\qquad
 \eta(a_j)-\eta(p_j)=\eta(p_j/a_j)\longrightarrow0.
\]
Ambient normality gives
\(\eta(a)-\eta(p_j)=\eta(p_j/a)\to0\).
Thus \(\eta(a_j)\to\eta(a)\).

The interval \(I_e\) is stable. If \(x,y\in I_e\) and
\(x\preceq y\), then
\[
 y-x\in I_e,
 \qquad e-(y-x)=(e-y)+x\in P.
 \tag{L3}\label{lift:interval-difference}
\]
Limits therefore preserve every cone inequality used below.
Principle \(\mathrm{(R1)}\) also holds for decreasing nets in
\(I_e\), by applying its increasing version to \(e-x_\alpha\).
For every positive additive map \(f:E\to P\), where \(E\) is a
\(D\)-sublattice containing \(1\) and \(f(1)=e\), additivity gives
\[
 e-f(a)=f(a/1)\in P,\qquad f(E)\subseteq I_e.
 \tag{L4}\label{lift:range-bound}
\]

\begin{lemma}[Relative inner capacities]\label{lift:capacities}
Let \(A\subseteq E\subseteq S\) be a \(D\)-sublattice. Let
\(f:E\to I_e\) be positive and additive, with
\(\pi f=\eta|_E\), and assume that \(f\) is ambiently normal.
For every \(w\in L\), the net
\[
 \tau_E(w)=\lim_{\substack{a\in E\\a\le w}}f(a)
 \tag{L5}\label{lift:capacity}
\]
exists in \(I_e\). The index set has its lattice order. The map
\(\tau_E\) is increasing, and
\[
 0\preceq f(a)\preceq\tau_E(w)\quad(a\in E,\ a\le w).
 \tag{L6}\label{lift:capacity-bound}
\]
For every sequence \(w_N\downarrow0\) in \(L\),
\[
 \tau_E(w_N)\longrightarrow0,\qquad
 \{f(a):a\in E,\ a\le w_N\}\Rightarrow0.
 \tag{L7}\label{lift:relative-normality}
\]
\end{lemma}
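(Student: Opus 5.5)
The plan is to obtain existence of \eqref{lift:capacity} from \textnormal{(R1)}, to get the order properties from limit stability of $P$, and to reduce \eqref{lift:relative-normality} to \textnormal{(R2)} together with a vanishing statement proved by \textnormal{(R3)}.

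\emph{Existence, range and bounds.} Fix $w\in L$. The set $E_w=\{a\in E:a\le w\}$ contains $0$ and is directed by finite joins. For $a\le b$ in $E_w$, additivity gives $f(b)-f(a)=f(a/b)\in P$, so $a\mapsto f(a)$ is an increasing net in $I_e$ for $\preceq$. Its image is $\pi f(a)=\eta(a)$. Since $E\subseteq S\subseteq R$, Proposition~\ref{mod:directed-processes} shows that $\eta(a)$ converges in $Y$ along $E_w$. Then \textnormal{(R1)} gives convergence in $X$, and stability of $I_e$ places $\tau_E(w)$ in $I_e$. For \eqref{lift:capacity-bound}, fix $a\in E_w$. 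The tail $\{f(b)-f(a):b\ge a\}$ lies in $P$, so its limit $\tau_E(w)-f(a)$ lies in $P$ because $P$ is limit stable. Monotonicity of $\tau_E$ follows in the same way. If $w\le w'$, every $f(a)$ with $a\le w$ is $\preceq\tau_E(w')$, and passing to the limit in $a$ gives $\tau_E(w)\preceq\tau_E(w')$.

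\emph{Reduction of \eqref{lift:relative-normality}.} Let $w_N\downarrow0$. The sequence $\tau_E(w_N)$ decreases in $I_e$. By continuity of $\pi$, $\pi\tau_E(w_N)$ is the limit of $\eta(a)$ over $a\le w_N$. These limits lie in the stable hulls of the sets in \eqref{mod:external-rn}, which are null by Proposition~\ref{mod:relative-normality}. Hence $\pi\tau_E(w_N)\to0$. The decreasing form of \textnormal{(R1)} gives $\tau_E(w_N)\to d$ for some $d\in I_e$. Then continuity of $\pi$ and separation of $Y$ give $\pi d=0$, and stability gives $d\preceq\tau_E(w_N)$ for every $N$. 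Once $d=0$ is known, the set statement follows from \textnormal{(R2)}. On the evaluation domain of $(\{f(a):a\le w_N\})_N$, put $x_{(N,a)}=f(a)$ and $b_{(N,a)}=\tau_E(w_N)$. Then $0\preceq x\preceq b\preceq e$, the net $b$ is tail equivalent to $(\tau_E(w_N))_N$ and so tends to $0$, and $\pi x=\eta(a)$ is null by \eqref{mod:external-rn}.

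\emph{Main obstacle: $d=0$.} I would prove this with \textnormal{(R3)}, using the ambient normality of $f$ through a Sasaki recursion.
\begin{itemize}
\item[(a)] The tree has nodes at depth $n$ that record a context $x_n\in E$, starting from $x_0=1$, together with the level $w_n$. The children are indexed by the directed set $E_{w_n}$, and the rows are $q_{t,a}=\tau_E(w_n)-f(a)$. These rows lie in $I_e$ by \eqref{lift:interval-difference}, decrease in $a$, and tend to $0$ by the existence part.
\item[(b)] Along a branch, the context updates by $x_{n+1}=T_{x_n}(a_n)\in E$. Since $a_n\le w_n\downarrow0$, Theorem~\ref{pd:sasaki-disappearance} gives $x_n\downarrow0$. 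Ambient normality of $f$ then makes the remainders $r_n=f(x_n)$ tend to $0$, and these remainders lie in $I_e$ by \eqref{lift:range-bound}.
\item[(c)] The required inequality \eqref{rel:tree-inequality} would come from the crossed estimate, Proposition~\ref{pd:crossed-estimate}. Combined with $d\preceq\tau_E(w_n)=q_{t_n,a_n}+f(a_n)$, it should give inductively
\[
 d\preceq f(x_n)+\sum_{j<n}q_{t_j,a_j}.
\]
\end{itemize}
Checking this induction is where I expect the real difficulty. Its bookkeeping may require replacing $\tau_E(w_n)$ by a contextual capacity such as $\lim_{a\in E_{w_n}}f(T_{x_n}(a))$, whose existence would follow as above from Proposition~\ref{mod:directed-processes}. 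Granting \eqref{rel:tree-inequality}, \textnormal{(R3)} yields $d=0$, which completes \eqref{lift:relative-normality}.
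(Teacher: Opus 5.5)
Your outer architecture agrees with the paper. You get existence of $\tau_E(w)$ and \eqref{lift:capacity-bound} from \textnormal{(R1)} and stability of $P$, a decreasing limit $d\in I_e\cap\ker\pi$, and the set statement from \textnormal{(R2)} once $d=0$. (The step $\pi\tau_E(w_N)\to0$ also needs the one-row hull control \eqref{mod:one-row}, since $\pi\tau_E(w_N)$ only lies in $D\cap[V_N]_Y$.)

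The genuine gap is the branch inequality for \textnormal{(R3)}. It is the whole content of the lemma, and you leave it open. Moreover, the version you propose cannot be obtained from the crossed estimate. Take rows $\tau_E(w_n)-f(a_n)$ and contexts $x_{n+1}=T_{x_n}(a_n)$. The natural induction via Proposition~\ref{pd:crossed-estimate} then needs $f(x_n\vee a_n)\preceq\tau_E(w_{n-1})$, which essentially means $x_n\le w_{n-1}$. That holds when $T_x(y)=x\wedge y$, as in Boolean algebras and MV-algebras. On a pseudo-$D$-lattice, however, one only has $T_x(y)\le x$. In an orthomodular lattice, $T_x(y)=x\wedge(x^-\vee y)$ is the Sasaki projection, which need not lie below $y$, so $x_n$ escapes $w_{n-1}$. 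Your inequality is not false; it holds trivially once $d=0$. But it cannot be reached this way without circularity.

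What is missing is to run the argument at the level of contextual capacities and to take their second limit.
\begin{enumerate}
\item For every $c\in E$ put $\rho_c(N)=\lim_{a\in E,\,a\le w_N}f(T_ca)$. It exists by \textnormal{(R1)} and Proposition~\ref{mod:directed-processes}, decreases in $N$, and satisfies $\rho_c(N)\preceq f(c)$.
\item You must also show $\pi\rho_c(N)\to0$. This is not covered by \eqref{mod:external-rn} directly, because $T_ca$ need not lie below $w_N$. The paper uses the bound $T_ca\le(c/(c\vee w_N))\vee w_N\downarrow0$, together with Proposition~\ref{mod:relative-normality} and one-row hull control.
\item \textnormal{(R1)} then gives $\gamma_c=\lim_N\rho_c(N)\in I_e\cap\ker\pi$ with $\gamma_c\preceq f(c)$. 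Since $T_1a=a$, you have $\gamma_1=d$.
\item The key edge estimate is $\gamma_c\preceq\gamma_{d'}+\rho_c(N)-f(d')$ for $d'=T_ca$ with $a\le w_N$. Fix $K\ge N$ and $b\le w_K$. Join preservation gives $d'\vee T_cb=T_c(a\vee b)$, so $f(d'\vee T_cb)\preceq\rho_c(N)$. The crossed estimate then gives $f(d')+f(T_cb)-\rho_c(N)\preceq f(T_{d'}(T_cb))$. The one-sided absorption $T_{d'}(T_cb)\le T_{d'}b$ of Theorem~\ref{pd:sasaki-absorption}, valid because $d'\le c$, removes the nested context. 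Taking limits in $b$ and then in $K$ gives the edge estimate.
\item Telescope along a branch $c_0=1$, $c_j=T_{c_{j-1}}a_j$, with $q_j=\rho_{c_{j-1}}(j)-f(c_j)$. This yields $\gamma_1\preceq\gamma_{c_n}+\sum_{j\le n}q_j\preceq f(c_n)+\sum_{j\le n}q_j$.
\item The rows $q_{h,a}=\rho_{c_h}(|h|+1)-f(T_{c_h}a)$ are decreasing, null, and lie in $I_e$. This is exactly the \textnormal{(R3)} premise.
\end{enumerate}
The absorption step, together with the intermediate limits $\gamma_c$, is the idea your sketch does not supply. Without it, transferring the capacity from context $c$ to context $T_ca$ loses the bound.
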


\begin{proof}
For \(w\in L\), define the directed set
\[
 J(w)=\{a\in E:a\le w\},\qquad
 a\le_{J(w)}b\ \Longleftrightarrow\ a\le_L b.
\]
Its directedness and the required monotonicity are
\[
 0\in J(w),\qquad
 a,b\in J(w)\Longrightarrow a\vee b\in J(w),\qquad
 a\le b\Longrightarrow f(b)-f(a)=f(a/b)\in P.
\]
Proposition~\ref{mod:directed-processes}, with domain \(E\)
and bound \(w\), supplies
\(\kappa_w=\lim_{a\in J(w)}\eta(a)\in N\).
Thus the data for \(\mathrm{(R1)}\) are
\[
 x_a=f(a)\in I_e,\qquad
 a\le b\Longrightarrow x_a\preceq x_b,\qquad
 \pi x_a=\eta(a)\longrightarrow\kappa_w.
\]
Its conclusion is the existence of
\(\tau_E(w)=\lim_{a\in J(w)}x_a\in I_e\).
For fixed \(a\in E\), \(a\le w\), the tail indexed by
\(\{b\in E:a\le b\le w\}\) has every value above \(f(a)\).
Stability of \(P\) proves \eqref{lift:capacity-bound}.
If \(w\le v\), all the values defining \(\tau_E(w)\) lie
below \(\tau_E(v)\). Passage to their limit proves
\(\tau_E(w)\preceq\tau_E(v)\).

Fix \(w_N\downarrow0\). For \(c\in E\), define
\[
 \rho_c(N)=
 \lim_{\substack{a\in E\\a\le w_N}}f(T_ca).
 \tag{L8}\label{lift:rho}
\]
The transport \(T_c\) is increasing and preserves finite joins.
Its values belong to \(E\cap[0,c]\), by
Lemma~\ref{pd:sasaki-kernel} and
Theorem~\ref{pd:sasaki-joins}.
The application of \(\mathrm{(R1)}\) in \eqref{lift:rho}
has directed domain \(J(w_N)\) and values
\[
 x_a=f(T_ca)\in I_e,\quad
 a\le b\Longrightarrow x_a\preceq x_b,\quad
 \pi x_a=\eta(T_ca)\longrightarrow
 \kappa_{c,N}:=\lim_{a\in J(w_N)}\eta(T_ca)\in N.
\]
The last limit is
Proposition~\ref{mod:directed-processes}, instantiated at \(E,w_N,c\).
Since \(J(w_{N+1})\subseteq J(w_N)\) and \(T_ca\le c\),
passage to the defining limits gives
\[
 0\preceq \rho_c(N+1)\preceq\rho_c(N)\preceq f(c).
 \tag{L9}\label{lift:rho-bounds}
\]

To compute the projected limit of \eqref{lift:rho-bounds}, set
\[
 v_N=c/(c\vee w_N),\qquad b_N=v_N\vee w_N.
\]
The residual calculus gives \(v_N\downarrow0\) and
\(b_N\downarrow0\). The defining identity for \(T_c\) gives
\[
 T_ca\le \bigl(c/(c\vee a)\bigr)\vee a\le b_N
 \quad(a\le w_N).
\]
Consequently the decreasing families
\[
 V_N^c=\{\eta(T_ca):a\in E,\ a\le w_N\}
\]
are uniformly null by Proposition~\ref{mod:relative-normality}.
They contain zero and are contained in \(N\subseteq D\).
Continuity of \(\pi\) and stability of \(N\) give
\[
 \pi\rho_c(N)
 =\lim_{a\in J(w_N)}\eta(T_ca)
 \in N\cap[V_N^c]_Y\subseteq D\cap[V_N^c]_Y.
\]
In \(\mathsf S_{\rm all}(D)\), use
\[
 U_{1,N}=V_N^c,\qquad U_{i,k}=\{0\}\ (i\ge2).
\]
Its relative hull conclusion is
\((D\cap[V_N^c]_Y)_N\Rightarrow0\), so
\(\pi\rho_c(N)\to0\).
For the next application of \(\mathrm{(R1)}\), the values
\[
 z_N=e-\rho_c(N)\in I_e,\qquad
 z_N\preceq z_{N+1},\qquad
 \pi z_N\longrightarrow\pi e
\]
have a limit \(z_c\in I_e\). Set \(\gamma_c=e-z_c\).
Then
\[
 \gamma_c=\lim_N\rho_c(N)
 \in I_e\cap\ker\pi,\qquad
 0\preceq\gamma_c\preceq f(c).
 \tag{L10}\label{lift:gamma}
\]

Fix \(N\), \(c\in E\), and \(a\in E\) with \(a\le w_N\).
Put
\[
 d=T_ca,\qquad q=\rho_c(N)-f(d)\in I_e.
\]
For \(K\ge N\) and \(b\in E\), \(b\le w_K\), preservation
of joins gives
\[
 d\vee T_cb=T_c(a\vee b),\qquad
 f(d\vee T_cb)\preceq\rho_c(N).
\]
The crossed estimate and absorption from
Proposition~\ref{pd:crossed-estimate} and
Theorem~\ref{pd:sasaki-absorption}
give
\[
 f(d)+f(T_cb)-\rho_c(N)
 \preceq f(T_d(T_cb))
 \preceq f(T_db).
\]
For each \(K\ge N\), continuity of addition and stability of
\(P\), on the common index set \(J(w_K)\), give
\[
 f(d)+\rho_c(K)-\rho_c(N)\preceq\rho_d(K).
\]
The limit \(K\to\infty\) gives
\[
 f(d)+\gamma_c-\rho_c(N)\preceq\gamma_d,
\]
or equivalently
\[
 \gamma_c\preceq\gamma_d+q.
 \tag{L11}\label{lift:gamma-edge}
\]

Define a rooted tree and its contexts by
\[
 \mathcal T=\bigcup_{k\ge0}\prod_{j=1}^kJ(w_j),\qquad
 c_\varnothing=1,\qquad
 c_{h^\frown a}=T_{c_h}a
 \quad\bigl(h\in\mathcal T,\ |h|=k,\ a\in J(w_{k+1})\bigr).
\]
The empty product is \(\{\varnothing\}\). For a node \(h\) of
length \(k\), its directed child set is \(J(w_{k+1})\), and
its error row is
\[
 q_{h,a}=\rho_{c_h}(k+1)-f(c_{h^\frown a}).
\]
For every such node,
\[
 q_{h,a}\in I_e,\qquad
 a\le b\Longrightarrow q_{h,b}\preceq q_{h,a},\qquad
 \lim_{a\in J(w_{k+1})}q_{h,a}=0.
\]
For an arbitrary branch \((a_j)_{j\ge1}\in\prod_{j\ge1}J(w_j)\),
put
\[
 h_0=\varnothing,\quad h_N=(a_1,\ldots,a_N),\quad
 c_N=c_{h_N},\quad q_N=q_{h_{N-1},a_N},\quad r_N=f(c_N).
\]
The hypotheses of Theorem~\ref{pd:sasaki-disappearance} are
\[
 a_N\le w_N,\quad w_N\downarrow0,\quad
 c_0=1,\quad c_N=T_{c_{N-1}}a_N.
\]
Its conclusion and ambient normality are
\[
 c_N\downarrow0,\qquad r_N\in I_e,\qquad r_N\longrightarrow0.
\]
Equation~\eqref{lift:gamma-edge} gives, for each \(j\ge1\),
\(\gamma_{c_{j-1}}\preceq\gamma_{c_j}+q_j\).
Adding these \(N\) inequalities gives
\[
 \gamma_1\preceq\gamma_{c_N}+\sum_{j=1}^{N}q_j
 \preceq r_N+\sum_{j=1}^{N}q_j.
\]
Thus \(\mathrm{(R3)}\) applies to
\[
 d=\gamma_1\in I_e\cap\ker\pi,\quad
 (\mathcal T,(J(w_{|h|+1}))_h,(q_{h,a})_{h,a}),\quad
 (r_N)_N
\]
for every branch, and gives \(\gamma_1=0\).
Since \(T_1a=a\), \eqref{lift:rho} gives
\(\rho_1(N)=\tau_E(w_N)\). This proves the first conclusion
of \eqref{lift:relative-normality}.

For the second conclusion, define the directed preordered set
\[
 \mathcal J=\{(N,a):N\ge1,\ a\in E,\ a\le w_N\},\qquad
 (N,a)\le_{\mathcal J}(K,b)\ \Longleftrightarrow\ N\le K.
\]
On \(\mathcal J\), define \(x_{N,a}=f(a)\) and
\(b_{N,a}=\tau_E(w_N)\). All premises of \(\mathrm{(R2)}\) are
\[
 0\preceq x_{N,a}\preceq b_{N,a}\preceq e,\qquad
 b_{N,a}\longrightarrow0,\qquad
 \pi x_{N,a}=\eta(a)\longrightarrow0.
\]
The last convergence is Proposition~\ref{mod:relative-normality}
for the bounds \(w_N\). Hence \(x_{N,a}\to0\), which is
the asserted uniform evaluation limit.
\end{proof}

\begin{lemma}[Hereditary exhaustivity is inherited]\label{lift:he}
Let \(E\) be any \(D\)-sublattice with \(A\subseteq E\subseteq S\),
and let \(f:E\to I_e\) be positive and additive, with
\(\pi f=\eta|_E\). Then \(f\) is uniformly hereditarily exhaustive.
Ambient normality of \(f\) is not required for this assertion.
\end{lemma}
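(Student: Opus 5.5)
The plan is to derive uniform hereditary exhaustivity of \(f\) from \(\mathrm{(R2)}\), using the values \(f(h_j)\) as the dominating net. Two facts supply the premises: \(\eta\) is already uniformly hereditarily exhaustive, and \(\mathrm{(R1)}\) makes the partial sums of \(f(h_j)\) converge. Fix an ordered orthogonal sequence \((h_j)\subseteq E\). Its partial sums \(s_N=h_1+\cdots+h_N\) are defined and lie in \(E\), since \(E\) is a \(D\)-sublattice. The target is
\[
 \bigl(\{f(a):a\in E,\ a\le h_j\}\bigr)_j\Rightarrow0 .
\]

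\emph{Step 1: \(f(h_N)\to0\).} The sequence \(s_N\) increases in \(E\subseteq S\). Additivity gives \(f(s_{N+1})-f(s_N)=f(h_{N+1})\in P\), so \(f(s_N)\) is an increasing sequence in \(I_e\). Its image \(\pi f(s_N)=\eta(s_N)\) converges in \(Y\). To see this, note that \(S=\sigma_L(A)\) is closed under ambient countable joins, so \(s_N\uparrow s=\sup_Ls_N\in S\subseteq R\). Proposition~\ref{mod:countable-closure} then gives \(\eta(s_N)\to\eta(s)\). Condition \(\mathrm{(R1)}\) now gives a limit \(x\in I_e\) with \(f(s_N)\to x\). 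We have \(f(h_N)=f(s_N)-f(s_{N-1})\) for \(N\ge2\). Continuity of subtraction applied to \(f(s_N)\) and its shift by one, together with the diagonal tail comparison of Section~\ref{sec:nets}, then gives \(f(h_N)\to x-x=0\). No ambient normality of \(f\) is used here, only \(\mathrm{(R1)}\) and the \(\sigma\)-additivity of \(\eta\).

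\emph{Step 2: the \(\mathrm{(R2)}\) data.} Take the evaluation domain
\[
 \mathcal J=\{(j,a):j\ge1,\ a\in E,\ a\le h_j\},
\]
preordered by the first coordinate. On \(\mathcal J\) put \(x_{j,a}=f(a)\) and \(b_{j,a}=f(h_j)\). Three premises of \(\mathrm{(R2)}\) then need to be checked.
\begin{itemize}
\item[(i)] The cone bounds hold. First, \(f(a)\in P\). Next, \(f(h_j)-f(a)=f(a/h_j)\in P\). Finally, \(e-f(h_j)=f(h_j/1)\in P\), as in \eqref{lift:range-bound}.
\item[(ii)] The dominating net converges: \(b_{j,a}\to0\). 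The net \(b\) is a reindexing of the sequence \(f(h_j)\) on \(\mathbb N\times(\cdot)\), so it has the same tails and converges by Step~1.
\item[(iii)] The projected net converges: \(\pi x_{j,a}=\eta(a)\to0\). This says exactly that \((\{\eta(a):a\in E,\ a\le h_j\})_j\Rightarrow0\). It follows from the uniform hereditary exhaustivity of \(\eta\) on \(R\) in Proposition~\ref{mod:relative-normality}, applied to the ordered orthogonal sequence \((h_j)\subseteq S\subseteq R\), together with quasi-subnet stability, since \(E\subseteq R\).
\end{itemize}

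\emph{Conclusion and main obstacle.} Condition \(\mathrm{(R2)}\) gives \(x_{j,a}\to0\), and this net is precisely the evaluation net \(\ev_j(\{f(a):a\in E,\ a\le h_j\})\). The step needing most care is Step~1. It must produce the convergent majorant \(f(h_j)\) without any normality of \(f\), so the convergence has to come from \(\mathrm{(R1)}\) applied to partial sums whose projections converge by \(\sigma\)-additivity of \(\eta\). One must also check that the ambient supremum of the \(s_N\) lies in \(S\), so that Proposition~\ref{mod:countable-closure} applies. The rest is bookkeeping of tails.
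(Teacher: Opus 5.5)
Your proof is correct and matches the paper's argument step for step. In both, (R1) is applied to the increasing partial sums $f(s_N)$, whose projections $\eta(s_N)$ converge by $\sigma$-additivity of $\eta$, to get $f(h_N)\to0$; then (R2) is applied on the evaluation domain $\{(j,a):a\le h_j\}$, with majorant $f(h_j)$ and with the projected net null by hereditary exhaustivity of $\eta$.

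One small correction: the bound $f(h_j)\preceq e$ follows directly from $f(E)\subseteq I_e$. The identity $e-f(h_j)=f(h_j/1)$ that you cite from \eqref{lift:range-bound} presupposes $f(1)=e$, which this lemma does not assume.
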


\begin{proof}
Let \((h_j)\) be an ordered orthogonal sequence in \(E\).
Set \(s_0=0\). Its partial sums
\(s_N=\sum_{j=1}^Nh_j\) belong to \(E\).
Their supremum \(s\) belongs to \(S\), and
\[
 \pi f(s_N)=\eta(s_N)\longrightarrow\eta(s).
\]
The sequence \(f(s_N)\) increases in \(I_e\).
Principle \(\mathrm{(R1)}\) makes it convergent in \(X\), whence
\[
 f(h_N)=f(s_N)-f(s_{N-1})\longrightarrow0.
\]
Set
\[
 \mathcal J_h=\{(N,a):N\ge1,\ a\in E,\ a\le h_N\},\qquad
 (N,a)\le(K,b)\Longleftrightarrow N\le K.
\]
On this directed preorder, the data for \(\mathrm{(R2)}\) are
\[
 x_{N,a}=f(a),\quad b_{N,a}=f(h_N),\quad
 0\preceq x_{N,a}\preceq b_{N,a}\preceq e,\quad
 b_{N,a}\to0,\quad \pi x_{N,a}=\eta(a)\to0.
\]
The last convergence is hereditary exhaustivity of \(\eta\)
for the ordered orthogonal sequence \((h_N)\).
Thus \((x_{N,a})_{\mathcal J_h}\to0\).
\end{proof}

\begin{lemma}[Extension to one order hull]\label{lift:successor}
Under the hypotheses of Lemma~\ref{lift:capacities}, there is a
positive additive ambiently normal extension
\[
 f^o:O(E)\longrightarrow I_e,\qquad \pi f^o=\eta|_{O(E)}.
\]
For every \(E\)-valued sequence \(a_n\to_o x\) in \(L\),
\[
 f(a_n)\longrightarrow f^o(x).
 \tag{L12}\label{lift:order-limit}
\]
\end{lemma}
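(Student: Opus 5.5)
We define $f^o(x)$ as the limit of $f(a_n)$ along any $E$-valued sequence $a_n\to_o x$. Then we check that this is well defined, positive, additive, compatible with $\eta$ and ambiently normal, using Lemma~\ref{lift:capacities} and Lemma~\ref{lift:he} as the controlling estimates. Fix $x\in O(E)$ and $a_n\in E$ with $a_n\to_o x$. By Lemma~\ref{pd:sigma-continuity} take the canonical brackets $p_N=\bigwedge_{n\ge N}a_n\uparrow x$ and $q_N=\bigvee_{n\ge N}a_n\downarrow x$. These belong to $S$ but not necessarily to $E$. Put $w_N=p_N/q_N$, which decreases to $0$ in $L$ by Theorem~\ref{pd:total-residuals}. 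For $n,m\ge N$ the elements $a_n\wedge a_m\le a_n,a_m$ lie in $E$, and the residuals $(a_n\wedge a_m)/a_n$ lie in $E$ and below $w_N$ (both arguments lie in $[p_N,q_N]$). Hence
\[
 f(a_n)-f(a_m)=f\bigl((a_n\wedge a_m)/a_n\bigr)-f\bigl((a_n\wedge a_m)/a_m\bigr)
 \in V_N-V_N,\qquad V_N=\{f(a):a\in E,\ a\le w_N\}.
\]
By \eqref{lift:relative-normality} the families $V_N$ are null, so $(f(a_n))$ is Cauchy in the independent-index sense.

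Convergence comes next. Cauchyness alone is not enough, since no completeness of $I_e$ is assumed. Instead we use the monotone structure. The sequence $b_N=a_N\wedge a_{N+1}\wedge\cdots\wedge a_{N+k}$, taken over $k$, decreases in $E$, and $\pi f(b)=\eta(b)$ converges by monotone continuity of $\eta$. So (R1), in its decreasing form, gives limits $\lambda_N\in I_e$ of $f(\bigwedge_{N\le n\le N+k}a_n)$ as $k\to\infty$. These $\lambda_N$ increase in $N$, and $\pi\lambda_N=\eta(p_N)\to\eta(x)$, so a second application of (R1) yields $\lambda=\lim_N\lambda_N\in I_e$. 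Each difference $f(a_n)-f(\bigwedge_{N\le j\le N+k}a_j)$ with $n$ in the window is an $f$-value of an $E$-element below $w_N$; this comes from finite meets inside $[p_N,q_N]$ together with the additivity identity above. Stability of the relevant hulls and nullity of $V_N$ then give $f(a_n)\to\lambda$. Independence of the representing sequence follows by interleaving two sequences $a_n,a'_n\to_o x$ into one order-convergent sequence and using separation. For $x\in E$, the constant sequence shows that $f^o$ extends $f$. This also proves \eqref{lift:order-limit}.

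Algebraic properties follow by passing to limits. If $x\le y$ in $O(E)$ with representatives $a_n,b_n$, then $a_n\wedge b_n\to_o x$ and $(a_n\wedge b_n)/b_n\to_o x/y$ by Theorem~\ref{pd:total-residuals}. Taking limits in $f(b_n)=f(a_n\wedge b_n)+f((a_n\wedge b_n)/b_n)$ gives additivity. Positivity and membership in $I_e$ follow from stability of $P$ and of $I_e$. Applying $\pi$ gives $\pi f^o(x)=\lim\eta(a_n)=\eta(x)$, by sequential order continuity of $\eta$.

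Ambient normality of $f^o$ is the step I expect to be the main obstacle. Given $c_k\in O(E)$ with $c_k\downarrow0$, I would first show that $f^o(c)$ is dominated by $\tau_E(w)$ whenever $c\le w$ in an appropriate bracketed sense. Concretely, $f^o(c)=\lim f(p_N^{(c)})$ along lower $E$-approximants, and each such approximant lies below $c$, so $f^o(c)\preceq\tau_E(c)$ by \eqref{lift:capacity-bound} and stability. Since $\tau_E(c_k)\to0$ by \eqref{lift:relative-normality}, I would then apply (R2) with $x_k=f^o(c_k)$, $b_k=\tau_E(c_k)$ and $\pi x_k=\eta(c_k)\to0$. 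This gives $f^o(c_k)\to0$. The delicate point is justifying that the lower approximants, which are countable meets of elements of $E$, have $f^o$-values below $\tau_E(c)$. I would handle this through the decreasing (R1) limits $\lambda_N$ constructed above, each of which is a limit of $f$-values of $E$-elements below the corresponding meet.
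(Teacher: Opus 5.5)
The normality step has a genuine gap: the inequality $f^o(c)\preceq\tau_E(c)$ that it relies on is false in general.

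The $E$-elements whose $f$-values define $\lambda_N$ are $\bigwedge_{N\le n\le N+k}a_n$. These decrease to $p_N$ from \emph{above}, so they need not lie below $c$. The element $p_N$ itself is not in $E$. Hence (L6) gives nothing here.

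A concrete counterexample: take $L=\mathcal P([0,1))$, let $A=E$ be the finite unions of half-open intervals with rational endpoints, and let $f$ be Lebesgue measure, with $X=\mathbb R$, $P=[0,\infty)$ and $Y=\{0\}$. Let $c=\bigcap_n c_n$, with $c_n\in A$ decreasing, be a nowhere dense set of positive measure. Then $c\in O(A)$ and $f^o(c)=\lim_n f(c_n)>0$. But $0$ is the only element of $A$ below $c$, so $\tau_A(c)=0$.

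The missing idea is to dominate $f^o(c_k)$ by $\tau_E$ of a diagonally built majorant, not of $c_k$ itself:
\begin{itemize}
\item Fix $a_{r,s}\in E$ with $a_{r,s}\to_o c_r$, and put $Q_{r,k}=\bigvee_{s\ge k}a_{r,s}$, $W_k=\bigwedge_{r\le k}Q_{r,k}$ and $y_{k,s}=\bigwedge_{r\le k}a_{r,s}$.
\item $W_k$ decreases, and $\inf_k W_k\le\inf_k Q_{r,k}=c_r$ for every $r$, so $W_k\downarrow0$.
\item $y_{k,s}\to_o c_k$ as $s\to\infty$, and $y_{k,s}\le W_k$ for $s\ge k$.
\item By (L12), (L6) and limit-stability of $P$, you get $0\preceq f^o(c_k)\preceq\tau_E(W_k)\preceq e$.
\item (L7) gives $\tau_E(W_k)\to0$, and $\pi f^o(c_k)=\eta(c_k)\to0$. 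So (R2) yields $f^o(c_k)\to0$.
\end{itemize}
The paper builds exactly these $W_k$ and $y_{k,s}$. It then closes with an (R3) tree argument applied to $d=\lim_k f^o(c_k)\in I_e\cap\ker\pi$, using the bracket gaps $\delta_{k,s}$ as error rows.

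The rest of your construction matches the paper in substance: the Cauchy estimate through $V_N$, the two (R1) limits $\lambda_N\to\lambda$, interleaving, additivity and $\pi f^o=\eta$. One step needs repair. In the convergence argument, ``stability of the relevant hulls and nullity of $V_N$'' does not give $f(a_n)\to\lambda$. No summation principle is assumed in $X$, so $([V_N]_\theta)_N$ need not be null. What you do have, by (L6) and stability of $P$, is $0\preceq f(a_n)-\lambda_N\preceq\tau_E(w_N)$ for $n\ge N$, where $w_N=p_N/q_N$. Apply (R2) to $f(a_n)-\lambda_n$, with bounds $\tau_E(w_n)\to0$ and projections $\eta(a_n)-\eta(p_n)\to0$. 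This gives $f(a_n)-\lambda_n\to0$, and hence $f(a_n)\to\lambda$. The paper does the same with its two-sided brackets $l_n\preceq f(a_n)\preceq u_n$ and (R2) applied to $f(a_n)-l_n\preceq\delta_n$.
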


\begin{proof}
Let \(a_n\in E\), \(a_n\to_o x\). For \(k\ge N\), put
\[
 p_{N,k}=\bigwedge_{N\le r\le k}a_r,\qquad
 q_{N,k}=\bigvee_{N\le r\le k}a_r.
\]
Their limits in \(L\) satisfy
\[
 p_{N,k}\downarrow_k p_N=\bigwedge_{r\ge N}a_r,\qquad
 q_{N,k}\uparrow_k q_N=\bigvee_{r\ge N}a_r,\qquad
 p_N\uparrow x,\quad q_N\downarrow x.
 \tag{L13}\label{lift:successor-brackets}
\]
All these lattice limits belong to \(S\).
For each fixed \(N\), the two applications of
\(\mathrm{(R1)}\) have increasing values
\[
 e-f(p_{N,k})\in I_e,\qquad f(q_{N,k})\in I_e
 \quad(k\ge N)
\]
and projected limits
\[
 \pi e-\eta(p_{N,k})\longrightarrow\pi e-\eta(p_N),
 \qquad \eta(q_{N,k})\longrightarrow\eta(q_N).
\]
Subtracting the first resulting limit from \(e\) defines
\[
 l_N=\lim_k f(p_{N,k}),\qquad
 u_N=\lim_k f(q_{N,k}).
\]
For \(k\ge N+1\), the finite brackets satisfy
\[
 p_{N,k}\le p_{N+1,k}\le q_{N+1,k}\le q_{N,k}.
\]
Positivity of \(f\), followed by their limits in \(k\), gives
\[
 0\preceq l_N\preceq l_{N+1}\preceq u_{N+1}
 \preceq u_N\preceq e.
 \tag{L14}\label{lift:value-brackets}
\]
Theorem~\ref{pd:total-residuals}, with the increasing first
coordinate \(q_{N,k}\) and decreasing second coordinate
\(p_{N,k}\), gives
\[
 g_N=p_N/q_N\downarrow0,\qquad
 p_{N,k}/q_{N,k}\uparrow_k g_N.
\]
The finite residuals belong to \(E\) and lie below \(g_N\).
Additivity and Lemma~\ref{lift:capacities} imply
\[
 0\preceq f(q_{N,k})-f(p_{N,k})
 =f(p_{N,k}/q_{N,k})\preceq\tau_E(g_N).
\]
After passage to the limit in \(k\),
\[
 0\preceq\delta_N:=u_N-l_N\preceq\tau_E(g_N)\preceq e.
 \tag{L15}\label{lift:gap}
\]
For this application of \(\mathrm{(R2)}\), set
\[
 x_N=\delta_N,\qquad b_N=\tau_E(g_N).
\]
Equation~\eqref{lift:gap}, Lemma~\ref{lift:capacities}, and
\eqref{lift:successor-brackets} verify
\[
 0\preceq x_N\preceq b_N\preceq e,\qquad
 b_N\longrightarrow0,\qquad
 \pi x_N=\eta(q_N)-\eta(p_N)\longrightarrow0.
\]
Thus \(\delta_N\to0\).
Its decreasing property is
\[
 \delta_N-\delta_{N+1}
 =(u_N-u_{N+1})+(l_{N+1}-l_N)\in P.
\]

The increasing nets \(l_N\) and \(e-u_N\) lie in \(I_e\), and
\[
 \pi l_N=\eta(p_N)\to\eta(x),\qquad
 \pi(e-u_N)=\pi e-\eta(q_N)\to\pi e-\eta(x).
\]
Principle \(\mathrm{(R1)}\) gives limits \(l\) and \(e-u\),
with \(l,u\in I_e\).
The equality \(u_N-l_N=\delta_N\to0\) and separation give
\(l=u\). Denote this common value by \(f^o(x)\).
For every \(n\),
\[
 l_n\preceq f(a_n)\preceq u_n,\qquad
 0\preceq f(a_n)-l_n\preceq\delta_n\preceq e.
\]
Moreover,
\[
 \pi\bigl(f(a_n)-l_n\bigr)
 =\eta(a_n)-\eta(p_n)\longrightarrow0.
\]
Principle \(\mathrm{(R2)}\), with
\(x_n=f(a_n)-l_n\) and \(b_n=\delta_n\), gives
\(f(a_n)-l_n\to0\). Since \(l_n\to f^o(x)\),
this proves \eqref{lift:order-limit}.

For two representations \(a_n\to_o x\), \(b_n\to_o x\), set
\[
 t_{2n-1}=a_n,\qquad t_{2n}=b_n,\qquad
 L_n=\left(\bigwedge_{k\ge n}a_k\right)
       \wedge\left(\bigwedge_{k\ge n}b_k\right),\qquad
 U_n=\left(\bigvee_{k\ge n}a_k\right)
       \vee\left(\bigvee_{k\ge n}b_k\right).
\]
Then
\[
 L_n\uparrow x,\quad U_n\downarrow x,\quad
 r\ge2n-1\Longrightarrow L_n\le t_r\le U_n.
\]
Thus \(t_r\to_o x\). Its value sequence has a limit \(z\)
by the construction above. The cofinal maps
\(n\mapsto2n-1\) and \(n\mapsto2n\) give
\[
 f(a_n)\to z,\qquad f(b_n)\to z.
\]
Separation identifies the two representation limits.
For \(x=a\in E\), the representation \(a_n=a\) gives
\(f^o(a)=f(a)\). For every representation,
\[
 \pi f^o(x)=\lim_n\pi f(a_n)
           =\lim_n\eta(a_n)=\eta(x).
\]

Proposition~\ref{pd:order-hull} makes \(O(E)\) a \(D\)-sublattice.
For \(x\le y\) in \(O(E)\), let \(a_n,b_n\in E\) represent
\(x,y\). Then
\[
 c_n=a_n\wedge b_n\to_o x,\qquad
 r_n=c_n/b_n\to_o x/y.
\]
The second convergence is Theorem~\ref{pd:total-residuals}.
Taking limits in \(f(b_n)=f(c_n)+f(r_n)\) gives
\[
 f^o(y)=f^o(x)+f^o(x/y).
 \tag{L16}\label{lift:successor-additivity}
\]
Positivity is \(f^o(x)\in I_e\subseteq P\). For \(x\le y\),
\eqref{lift:successor-additivity} gives
\(f^o(y)-f^o(x)=f^o(x/y)\in P\).
If \(x+y=z\) is defined, the residual identity \(x/z=y\)
and \eqref{lift:successor-additivity} give
\[
 f^o(x+y)=f^o(z)=f^o(x)+f^o(x/z)=f^o(x)+f^o(y).
\]

It remains to establish ambient normality. Let
\(x_j\in O(E)\), \(x_j\downarrow0\) in \(L\).
The data for \(\mathrm{(R1)}\) are the increasing sequence
\[
 z_j=e-f^o(x_j)\in I_e,\qquad
 \pi z_j=\pi e-\eta(x_j)\longrightarrow\pi e.
\]
If \(z=\lim_jz_j\), set \(d=e-z\). Then
\[
 d=\lim_jf^o(x_j)\in I_e\cap\ker\pi,\qquad
 d\preceq f^o(x_j).
 \tag{L17}\label{lift:successor-d}
\]
For each \(j\), fix a representation
\(a_{j,k}\in E\), \(a_{j,k}\to_o x_j\).
Define
\[
 y_{j,k}=\bigwedge_{r\le j}a_{r,k},\qquad
 Q_{r,k}=\bigvee_{s\ge k}a_{r,s},\qquad
 w_j=\bigwedge_{r\le j}Q_{r,j}.
\]
Finite lattice continuity gives \(y_{j,k}\to_o x_j\).
The sequence \(w_j\) decreases. For each fixed \(r\) and
\(j\ge r\), \(w_j\le Q_{r,j}\), so
\[
 \inf_jw_j\le\inf_jQ_{r,j}=x_r.
\]
Since \(\inf_rx_r=0\), we have \(w_j\downarrow0\).
Also \(y_{j,k}\le w_j\) whenever \(k\ge j\).

For each fixed \(j\), the instantiated brackets are
\[
 L_{j,k}=\lim_{r\to\infty}
 f\!\left(\bigwedge_{k\le s\le r}y_{j,s}\right),\qquad
 U_{j,k}=\lim_{r\to\infty}
 f\!\left(\bigvee_{k\le s\le r}y_{j,s}\right),\qquad
 \delta_{j,k}=U_{j,k}-L_{j,k}.
\]
Equations~\eqref{lift:value-brackets}--\eqref{lift:gap},
applied with \(a_k=y_{j,k}\) and \(x=x_j\), give
\[
 \delta_{j,k}\in I_e,\quad
 \delta_{j,k+1}\preceq\delta_{j,k},\quad
 \delta_{j,k}\to0,\quad
 L_{j,k}\preceq f(y_{j,k})\preceq U_{j,k},\qquad
 L_{j,k}\preceq f^o(x_j)\preceq U_{j,k}.
\]
In particular,
\[
 f^o(x_j)\preceq f(y_{j,k})+\delta_{j,k}.
 \tag{L18}\label{lift:successor-row}
\]
Define
\[
 \mathcal T=\bigcup_{j\ge0}\prod_{i=1}^j\{k\in\N:k\ge i\},
 \qquad
 J_h=\{k\in\N:k\ge |h|+1\},\qquad
 q_{h,k}=\delta_{|h|+1,k}.
\]
Each \(J_h\) has its usual order. The displayed properties of
\(\delta_{j,k}\) verify all error-row premises of
\(\mathrm{(R3)}\). For an arbitrary branch \((k_j)\),
put \(r_j=f(y_{j,k_j})\in I_e\). The inequalities
\[
 k_j\ge j,\qquad y_{j,k_j}\le w_j,\qquad w_j\downarrow0
\]
and Lemma~\ref{lift:capacities} give \(r_j\to0\). Equations
\eqref{lift:successor-d} and~\eqref{lift:successor-row} give
\[
 d\preceq r_j+\delta_{j,k_j}
 \preceq r_j+\sum_{i=1}^j\delta_{i,k_i}.
\]
With \(d\in I_e\cap\ker\pi\) from
\eqref{lift:successor-d}, these are the branch premises of
\(\mathrm{(R3)}\). Its conclusion is \(d=0\).
Thus \(f^o\) is ambiently normal. Lemma~\ref{lift:capacities}
now also gives its uniform relative ambient normality.
\end{proof}

Define the domain hierarchy by
\[
 E_0=A,\qquad E_{\alpha+1}=O(E_\alpha),\qquad
 E_\lambda=\bigcup_{\alpha<\lambda}E_\alpha
 \quad(\lambda\text{ a nonzero limit ordinal}).
 \tag{L19}\label{lift:hierarchy}
\]
Each \(E_\alpha\) is a \(D\)-sublattice. We next verify the
continuity needed at a countable limit. This is the stage at
which a pointwise union of normal maps alone would not suffice.

\begin{lemma}[Contextual flattening at a countable limit]
\label{lift:limit}
Let \(\lambda<\omega_1\) be a nonzero limit ordinal.
Suppose that coherent positive additive ambiently normal maps
\[
 f_\alpha:E_\alpha\to I_e,\qquad
 \pi f_\alpha=\eta|_{E_\alpha}\quad(\alpha<\lambda)
\]
have been constructed, with \(f_0=m\).
Their union \(f_\lambda:E_\lambda\to I_e\)
is positive, additive, and ambiently normal. For every
\(y,z\in E_\lambda\),
\[
 f_\lambda(z\vee y)=
 \lim_{\substack{h\in A^\sigma\\y\le h}}
 f_\lambda(z\vee h),
 \tag{L20}\label{lift:contextual}
\]
where the covers are ordered by reverse lattice order.
\end{lemma}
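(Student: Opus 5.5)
The plan is to treat the three assertions in increasing order of difficulty: first the algebraic properties of $f_\lambda$, then the contextual identity \eqref{lift:contextual}, and finally ambient normality, which is deduced from \eqref{lift:contextual} through $\mathrm{(R3)}$.

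\emph{Algebraic properties.} By the finite-list argument in the proof of Theorem~\ref{pd:transfinite-hull}, any finitely many elements of $E_\lambda$ lie in a single $E_\beta$ with $\beta<\lambda$. Coherence makes $f_\lambda$ well defined, and positivity, additivity, $f_\lambda(E_\lambda)\subseteq I_e$ and $\pi f_\lambda=\eta|_{E_\lambda}$ are then read off from $f_\beta$. In particular $A^\sigma\subseteq E_1\subseteq E_\lambda$, so every term in \eqref{lift:contextual} is defined.

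\emph{The contextual identity.} Fix $y,z$. Under reverse order, the cover net $h\mapsto f_\lambda(z\vee h)$ is decreasing in $I_e$. Its image $\eta(z\vee h)$ converges to $\eta(z\vee y)$ by Proposition~\ref{mod:regularity} with $B=A^\sigma$. The decreasing form of $\mathrm{(R1)}$ therefore yields a limit $\ell$ with $f_\lambda(z\vee y)\preceq\ell$, and the gap $d=\ell-f_\lambda(z\vee y)$ lies in $I_e\cap\ker\pi$. To show $d=0$, I will prove \eqref{lift:contextual} for $y,z\in E_\beta$ by induction on $\beta<\lambda$. Limit stages reduce to earlier ones by the finite-list remark.
\begin{itemize}
\item[(a)] \emph{Base case $\beta=0$.} Here $y\in A\subseteq A^\sigma$ is itself the least cover, so the net is eventually constant.
\item[(b)] \emph{Successor step $\beta\to\beta+1$.} Write $y$ as an order limit of $E_\beta$-valued $a_n$. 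Take the brackets $q_{N,k}=\bigvee_{N\le r\le k}a_r\uparrow_k q_N\downarrow_N y$ of \eqref{lift:successor-brackets}. The induction hypothesis, applied to $(q_{N,k},z)$, supplies covers $h_{N,k}\in A^\sigma$ with small cone errors $f(z\vee h_{N,k})-f(z\vee q_{N,k})$. Then $h_N=\bigvee_kh_{N,k}\in A^\sigma$ covers $q_N$.
\item[(c)] \emph{Combining the errors.} I aim for a bound
\[
 f(z\vee h_N)-f(z\vee q_N)\preceq\sum_k\epsilon_{N,k}+\rho_N .
\]
Here the $\epsilon_{N,k}$ are the inductive errors, moved across the increasing join by the crossed estimate (Proposition~\ref{pd:crossed-estimate}). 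The remainder $\rho_N\to0$ comes from ambient normality of $f_{\beta+1}$ along $z\vee q_N\downarrow z\vee y$, together with \eqref{lift:order-limit}.
\item[(d)] \emph{Closing the step.} The inequality $d\preceq$ (the right side of the bound in (c)) has the branch form \eqref{rel:tree-inequality}: the children at each node are the approximation indices for $h_{N,k}$. Applying $\mathrm{(R3)}$ with this tree gives $d=0$.
\end{itemize}

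\emph{Ambient normality.} Let $x_j\in E_\lambda$ with $x_j\downarrow0$. As in \eqref{lift:successor-d}, $\mathrm{(R1)}$ gives $d=\lim_jf_\lambda(x_j)\in I_e\cap\ker\pi$ with $d\preceq f_\lambda(x_j)$. Build a tree whose node of length $j$ chooses a cover $h\in A^\sigma$ of $x_{j+1}$ in context $z=c_j$, where $c_j$ is the meet of the previously chosen covers with $x$-data. The error row at that node is the decreasing null net given by \eqref{lift:contextual}, after passing to complements so that the context absorbs the earlier stages. Along a branch, the meets $k_n$ of the chosen covers lie in $E_1$. I want the remainders $r_n=f_1(k_n\wedge w_n)$ to tend to $0$ through Lemma~\ref{lift:capacities} applied on $E_1$, for a suitable $w_n\downarrow0$ built from countable joins as in the proof of Lemma~\ref{lift:successor}. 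Telescoping then gives \eqref{rel:tree-inequality}, and $\mathrm{(R3)}$ yields $d=0$.

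\emph{Main obstacles.} I expect two hard steps. The first is the successor step of the induction, specifically transporting errors across the countable join $\bigvee_k h_{N,k}$ without modularity. The second is arranging the contexts in the normality tree so that the telescoped cone inequality has exactly the form \eqref{rel:tree-inequality}. For the latter I would first try the Sasaki contexts $c_{h^\frown a}=T_{c_h}a$ of Lemma~\ref{lift:capacities}, relying on Theorem~\ref{pd:sasaki-disappearance} to force the contexts to $0$.
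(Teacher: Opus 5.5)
Your algebraic part, and the reduction to a gap $d=\lim_h f_\lambda(z\vee h)-f_\lambda(z\vee y)\in I_e\cap\ker\pi$ via decreasing $\mathrm{(R1)}$ and Proposition~\ref{mod:regularity}, match the paper. The successor step (b)--(c), however, fails.

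\emph{Why (c) fails.} Your errors $\epsilon_{N,k}=f(z\vee h_{N,k})-f(z\vee q_{N,k})$ are all measured in the fixed context $z$. Combining them into a bound for $f(z\vee\bigvee_k h_{N,k})-f(z\vee\bigvee_k q_{N,k})$ needs a subadditivity, or diminishing-returns, inequality that nonmodular positive measures do not satisfy. Take $z=0$ and $q_{N,1}=q_{N,2}=0$: the required bound becomes $f(h_1\vee h_2)\preceq f(h_1)+f(h_2)$. Now consider the horizontal sum of two four-element Boolean algebras, which is an orthomodular lattice and hence a $D$-lattice. The state with $f(p)=0$ and $f(q)=\tfrac12$ on atoms $p,q$ from different blocks gives $f(p\vee q)=f(1)=1>\tfrac12=f(p)+f(q)$. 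Proposition~\ref{pd:crossed-estimate} cannot repair this. It bounds $f(x)+f(y)-f(x\vee y)$ from above, which is a \emph{lower} bound on $f(x\vee y)$, the wrong direction. There is a second problem: an induction hypothesis stated only for $y,z\in E_\beta$ cannot be applied to $(q_{N,k},z)$ when $z\in E_{\beta+1}\setminus E_\beta$.

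\emph{The missing idea.} You must invoke the induction hypothesis in the context that has actually accumulated, and this forces the induction to quantify over all contexts. The paper proves, by induction on $\alpha<\lambda$, that \eqref{lift:contextual} holds for every $y\in E_\alpha$ and \emph{every} $z\in E_\lambda$. For the successor step, take $y\in O(E_\alpha)$ with $e_n\to_o y$ in $E_\alpha$, and put $\ell_n=\bigwedge_{r\ge n}e_r\uparrow y$ and $u_n=\bigvee_{r\ge n}e_r\downarrow y$. Fix one stage $\theta<\lambda$ containing $z$, $y$ and $A^\sigma$, and write $C_h=z\vee y\vee\bigvee_i v_i$ for the current context. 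The $\mathrm{(R3)}$ tree then alternates two kinds of choices.
\begin{itemize}
\item An index $n$ exceeding the previous one, with error $f_\theta(C_h/(C_h\vee u_n))$. This dominates the increment $f_\theta(C_h\vee e_n)-f_\theta(C_h)$ and decreases to zero by normality of $f_\theta$, since $y\le C_h$.
\item A cover $v\in A^\sigma$ of $e_n$, with error $f_\theta(C_h\vee v)-f_\theta(C_h\vee e_n)$. This is null by the induction hypothesis at $e_n$ in the context $C_h$.
\end{itemize}
Every increment is thus controlled in its own context, and no submodularity is needed. Along a branch, $h_\infty=\bigvee_j v_j\in A^\sigma$ covers $y$ because $\ell_{n_j}\le e_{n_j}\le v_j$, and the contexts increase to $z\vee h_\infty$. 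The remainders $f_\theta(z\vee h_\infty)-f_\theta(C_k)$ are null by normality of $f_\theta$. Finally $d\preceq f_\theta(z\vee h_\infty)-f_\theta(z\vee y)$ telescopes into \eqref{rel:tree-inequality}, and $\mathrm{(R3)}$ gives $d=0$.

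\emph{Normality.} Once \eqref{lift:contextual} is available, no Sasaki tree is needed; your sketch there, with an unspecified $w_n$, is not yet an argument in any case. Apply \eqref{lift:contextual} with $y=x^-$ and $z=0$. The right complements $h^\sim$ of covers lie in $E_1$ below $x$, so $f_\lambda(x)\preceq\tau_{E_1}(x)$. Then use $\mathrm{(R2)}$ with bounds $\tau_{E_1}(w_N)\to0$, supplied by Lemma~\ref{lift:capacities} for the normal map $f_1$. The projected values $\eta(x)$ are uniformly null by Proposition~\ref{mod:relative-normality}, and normality follows.
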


\begin{proof}
For \(x\in E_\lambda\), put
\(\rho_\lambda(x)=\min\{\alpha<\lambda:x\in E_\alpha\}\).
For \(x,y\in E_\lambda\), let
\(\beta=\max\{\rho_\lambda(x),\rho_\lambda(y)\}<\lambda\).
Then \(x,y\in E_\beta\). If \(x+y\) is defined, it belongs
to \(E_\beta\), and
\[
 f_\lambda(x+y)=f_\beta(x+y)
 =f_\beta(x)+f_\beta(y)=f_\lambda(x)+f_\lambda(y).
\]
For every \(x\in E_\lambda\),
\[
 f_\lambda(x)=f_{\rho_\lambda(x)}(x)\in I_e,\qquad
 \pi f_\lambda(x)=\eta(x),\qquad f_\lambda(1)=f_0(1)=e.
\]
Set \(B=A^\sigma\). Lemma~\ref{pd:mixed-hulls} gives
\[
 1\in B\subseteq E_1,\qquad
 h,k\in B\Longrightarrow h\wedge k\in B,\qquad
 (h_i)\subseteq B\Longrightarrow\bigvee_i h_i\in B.
\]
For \(y\in E_\lambda\), define
\[
 H(y)=\{h\in B:y\le h\},\qquad
 h\le_{H(y)}k\Longleftrightarrow k\le_L h.
\]
The directedness is \(1\in H(y)\) and
\(h,k\le_{H(y)}h\wedge k\).
For fixed \(z\in E_\lambda\), the data for
\(\mathrm{(R1)}\) are
\[
 z_h=e-f_\lambda(z\vee h)\in I_e,\quad
 h\le_{H(y)}k\Longrightarrow z_h\preceq z_k,\quad
 \pi z_h=\pi e-\eta(z\vee h)
       \longrightarrow\pi e-\eta(z\vee y).
\]
The last limit is Proposition~\ref{mod:regularity}, with
the specified elements \(y,z\) and the class \(A^\sigma\).
Set \(q=e-\lim_{h\in H(y)}z_h\).
For every \(h\in H(y)\),
\[
 f_\lambda(z\vee h)-f_\lambda(z\vee y)
 =f_\lambda\bigl((z\vee y)/(z\vee h)\bigr)\in P.
\]
Passing to the limit, and then using the tail of the
cover net at a fixed \(h\), gives
\[
 f_\lambda(z\vee y)\preceq q
 \preceq f_\lambda(z\vee h)\quad(h\in B,\ y\le h),
 \qquad
 d=q-f_\lambda(z\vee y)\in I_e\cap\ker\pi.
 \tag{L21}\label{lift:context-d}
\]

For \(\alpha<\lambda\), let \(\mathcal Q_\alpha\) denote
\[
 \forall y\in E_\alpha\ \forall z\in E_\lambda:\quad
 \lim_{h\in H(y)}f_\lambda(z\vee h)=f_\lambda(z\vee y).
\]
For \(y\in E_0=A\), \(y\in H(y)\) and
\(y\le_{H(y)}h\Rightarrow h=y\), so \(\mathcal Q_0\) holds.
For a limit \(\beta<\lambda\),
\[
 E_\beta=\bigcup_{\alpha<\beta}E_\alpha,\qquad
 \bigl(\forall\alpha<\beta:\mathcal Q_\alpha\bigr)
 \Longrightarrow\mathcal Q_\beta.
\]
For the successor step, let \(y\in O(E_\alpha)\), where the
assertion is known for every element of \(E_\alpha\) and every
context in \(E_\lambda\). Fix \(e_n\in E_\alpha\) with
\(e_n\to_o y\), and put
\[
 \ell_n=\bigwedge_{r\ge n}e_r\uparrow y,\qquad
 u_n=\bigvee_{r\ge n}e_r\downarrow y.
\]
Set \(\zeta=\rho_\lambda(z)\) and
\[
 \theta=\max\{\zeta,\alpha+1,1\}<\lambda.
 \tag{L22}\label{lift:fixed-stage}
\]
The elements \(y,z,\ell_n,u_n\), and every element of \(B\)
belong to \(E_\theta\).

Set \(C_0=z\vee y\).
For \(j\ge0\), let \(\mathcal T_{2j}\) be the set of histories
\[
 h=((n_1,v_1),\ldots,(n_j,v_j)),\qquad
 0<n_1<\cdots<n_j,\quad v_i\in H(e_{n_i}).
\]
The empty history is the root. Define
\[
 C_h=C_0\vee\bigvee_{i=1}^jv_i,\qquad
 k_h=\begin{cases}0,&j=0,\\n_j,&j\ge1.\end{cases}
\]
The empty join is zero. Put
\[
 \mathcal T_{2j+1}
 =\{(h,n):h\in\mathcal T_{2j},\ n\in\N,\ n>k_h\},
 \qquad \mathcal T=\bigsqcup_{k\ge0}\mathcal T_k.
\]
The parent maps are
\[
 \operatorname{par}(h,n)=h,\qquad
 \operatorname{par}\bigl(h^\frown((n,v))\bigr)=(h,n).
\]
At an even node \(h\), the directed child set is
\(J_h=\{n\in\N:n>k_h\}\), with its usual order.
The child \((h,n)\) has context
\(W_{h,n}=C_h\vee e_n\). Define its incoming error by
\[
 q_{h,n}^{\rm rep}
 =f_\theta\bigl(C_h/(C_h\vee u_n)\bigr).
 \tag{L23}\label{lift:representation-error}
\]
For every even node \(h\),
\[
 C_h\in E_\theta,\quad y\le C_h,\quad
 C_h\vee u_n\downarrow C_h,\quad
 C_h/(C_h\vee u_n)\downarrow0.
\]
Ambient normality at the fixed stage \(\theta\) gives
\[
 q_{h,n}^{\rm rep}\in I_e,\quad
 n\le m\Longrightarrow q_{h,m}^{\rm rep}\preceq q_{h,n}^{\rm rep},
 \quad q_{h,n}^{\rm rep}\longrightarrow0.
\]
The actual edge increment is bounded by
\[
 0\preceq f_\theta(W_{h,n})-f_\theta(C_h)
 =f_\theta\bigl(C_h/(C_h\vee e_n)\bigr)
 \preceq q^{\rm rep}_{h,n}.
 \tag{L24}\label{lift:representation-edge}
\]
At the odd node \((h,n)\), the child set is \(H(e_n)\).
For \(v\in H(e_n)\), the child is the even history
\(h^\frown((n,v))\), with context \(C_h\vee v\).
Its error is the actual increment
\[
 q^{\rm cov}_{h,n,v}
 =f_\theta(C_h\vee v)-f_\theta(C_h\vee e_n).
 \tag{L25}\label{lift:cover-error}
\]
The induction hypothesis \(\mathcal Q_\alpha\), evaluated at
\(e_n\in E_\alpha\) and \(C_h\in E_\theta\subseteq E_\lambda\),
gives
\[
 \lim_{v\in H(e_n)} f_\theta(C_h\vee v)
 =f_\theta(C_h\vee e_n).
\]
Therefore
\[
 q^{\rm cov}_{h,n,v}\in I_e,\quad
 v\le_{H(e_n)}w\Longrightarrow
 q^{\rm cov}_{h,n,w}\preceq q^{\rm cov}_{h,n,v},\quad
 q^{\rm cov}_{h,n,v}\longrightarrow0.
\]
Membership in \(I_e\) uses
\eqref{lift:interval-difference}, since
\(C_h\vee e_n\le C_h\vee v\).

An arbitrary branch is a sequence \((n_j,v_j)_{j\ge1}\) with
\(n_{j+1}>n_j\) and \(v_j\in H(e_{n_j})\).
Write \(h_j=((n_1,v_1),\ldots,(n_j,v_j))\) and
define, for \(j\ge0\),
\[
 C_{2j}=C_{h_j},\qquad
 C_{2j+1}=C_{h_j}\vee e_{n_{j+1}},\qquad
 q_{2j+1}=q^{\rm rep}_{h_j,n_{j+1}},\qquad
 q_{2j+2}=q^{\rm cov}_{h_j,n_{j+1},v_{j+1}}.
\]
Since \(e_{n_{j+1}}\le v_{j+1}\),
\[
 C_{2j}\le C_{2j+1}\le C_{2j+2},\qquad
 0\preceq f_\theta(C_k)-f_\theta(C_{k-1})\preceq q_k.
\]
Set \(h_\infty=\bigvee_jv_j\in B\).
For every \(j\),
\[
 \ell_{n_j}\le e_{n_j}\le v_j\le h_\infty.
\]
Thus \(y\le h_\infty\), and
\[
 C_k\uparrow C_\infty=z\vee h_\infty.
 \tag{L26}\label{lift:context-limit}
\]
All the terms and the limit in \eqref{lift:context-limit}
belong to the single stage \(E_\theta\) fixed in
\eqref{lift:fixed-stage}. Its ambient normality, applied to
\(C_k/C_\infty\downarrow0\), gives
\[
 r_k=f_\theta(C_\infty)-f_\theta(C_k)
 =f_\theta(C_k/C_\infty)\longrightarrow0,
 \qquad r_k\in I_e.
\]
The complete list of branch premises for \(\mathrm{(R3)}\) is
\[
 d\in I_e\cap\ker\pi,\qquad r_k\in I_e,\qquad r_k\to0,
\]
together with the decreasing null error rows proved above and
the following inequality, for every \(k\ge1\):
\[
 d\preceq f_\theta(C_\infty)-f_\theta(C_0)
 =r_k+\sum_{j=1}^k
       \bigl(f_\theta(C_j)-f_\theta(C_{j-1})\bigr)
 \preceq r_k+\sum_{j=1}^kq_j.
\]
Principle \(\mathrm{(R3)}\) gives \(d=0\), proving
\(\mathcal Q_{\alpha+1}\).
The transfinite induction gives
\(\mathcal Q_\alpha\) for every \(\alpha<\lambda\), and hence
\eqref{lift:contextual} on \(E_\lambda\).
The only normal map used in this induction was \(f_\theta\),
where \(\theta<\lambda\).

We now deduce normality of the union from a fixed earlier stage.
For \(x\in E_\lambda\), apply \eqref{lift:contextual} to
\(y=x^{-}\) and \(z=0\). For a cover \(h\in B\),
\(x^{-}\le h\), its right complement \(a=h^\sim\) satisfies
\[
 a\in E_1,\qquad a\le x,\qquad
 f_1(a)=e-f_\lambda(h)\longrightarrow
 e-f_\lambda(x^-)=f_\lambda(x).
\]
Every such value is bounded above by \(\tau_{E_1}(x)\).
Stability of \(P\) gives
\[
 0\preceq f_\lambda(x)\preceq\tau_{E_1}(x).
 \tag{L27}\label{lift:fixed-capacity}
\]
For \(w_N\downarrow0\) in \(L\), set
\[
 \mathcal J_\lambda=\{(N,x):N\ge1,\ x\in E_\lambda,\ x\le w_N\},
 \qquad (N,x)\le(K,y)\Longleftrightarrow N\le K.
\]
The local \(\mathrm{(R2)}\) data are
\[
 a_{N,x}=f_\lambda(x),\quad b_{N,x}=\tau_{E_1}(w_N),\quad
 0\preceq a_{N,x}\preceq b_{N,x}\preceq e,\quad
 b_{N,x}\to0,\quad \pi a_{N,x}=\eta(x)\to0.
\]
The upper-bound convergence is Lemma~\ref{lift:capacities}
for the already normal map \(f_1\).
The projected convergence is
Proposition~\ref{mod:relative-normality} for the bounds \(w_N\).
Hence \((a_{N,x})_{\mathcal J_\lambda}\to0\).
For \(x_N\in E_\lambda\), \(x_N\downarrow0\), its values
form an evaluation selection with \(w_N=x_N\), so
\(f_\lambda(x_N)\to0\).
\end{proof}

\begin{theorem}[Relative Sion extension]\label{rel:lifting}
\label{lift:extension}
Let \(L\) be a \(\sigma\)-complete, lattice-\(\sigma\)-continuous
pseudo-\(D\)-lattice and let \(A\subseteq L\) be a \(D\)-sublattice.
Let \(\pi:X\to Y\) be a continuous homomorphism of separated Abelian
net convergence groups, and let \(P\subseteq X\) be a limit-stable
additive cone containing zero. Let \(m:A\to P\) be additive and
ambiently normal, and put \(e=m(1)\).
Assume \(\mathrm{(R1)}\)--\(\mathrm{(R3)}\) on \(I_e\), as defined
in Definition~\ref{rel:range}. Suppose that \(n=\pi m\) is modular and
uniformly hereditarily exhaustive, that \(N=[n(A)]_Y\) is complete,
and that a symmetric subset \(D\subseteq Y\) satisfies
\[
 N-N\subseteq D=-D,\qquad
 \mathsf F(D),\qquad\mathsf S_{\rm all}(D),\qquad\mathsf U(N).
\]
Let \(\eta\) be the modular extension of \(n\) supplied by
Theorem~\ref{mod:main}.
There is a positive additive extension
\[
 \bar m:S=\sigma_L(A)\longrightarrow I_e
\]
which is sequentially order-continuous and \(\sigma\)-additive,
with \(\pi\bar m=\eta\).
It is uniformly hereditarily exhaustive and satisfies
\[
 w_N\downarrow0\text{ in }L
 \quad\Longrightarrow\quad
 \{\bar m(a):a\in S,\ a\le w_N\}\Rightarrow0.
 \tag{L28}\label{lift:final-rn}
\]
It is the unique additive \(\sigma\)-additive extension of \(m\)
to \(S\), among all such \(X\)-valued extensions.

The original Sion processes
\[
 g(x)=\lim_{\substack{a\in A\\a\le x}}m(a),\qquad
 \mu(x)=\lim_{\substack{u\in A^\sigma\\x\le u}}g(u)
 \tag{L29}\label{lift:sion-processes}
\]
exist for every \(x\in L\). Their inner index sets have their
lattice order and their outer index sets have reverse lattice
order. They satisfy
\[
 g(u)=\bar m(u)\quad(u\in A^\sigma),\qquad
 \mu|_S=\bar m.
 \tag{L30}\label{lift:sion-identification}
\]
More generally, for \(x,z\in S\),
\[
 \bar m(z\vee x)
 =\lim_{\substack{u\in A^\sigma\\x\le u}}
       \bar m(z\vee u).
 \tag{L31}\label{lift:final-context}
\]
\end{theorem}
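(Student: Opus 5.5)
The plan is a transfinite construction along the hierarchy \eqref{lift:hierarchy}, followed by identification of the limits. Put \(f_0=m\). By \eqref{lift:range-bound} it maps into \(I_e\), and by hypothesis it is positive, additive and ambiently normal, with \(\pi f_0=\eta|_A\) from \eqref{lift:eta}. At a successor stage, given \(f_\alpha\) on \(E_\alpha\) with these properties, Lemma~\ref{lift:successor} supplies \(f_{\alpha+1}=f_\alpha^o\) on \(E_{\alpha+1}=O(E_\alpha)\). It is an extension with the same properties, and \eqref{lift:order-limit} makes the family coherent. At a countable limit \(\lambda\), Lemma~\ref{lift:limit} shows that the union \(f_\lambda\) is again positive, additive and ambiently normal, and it also gives \eqref{lift:contextual}. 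By Theorem~\ref{pd:transfinite-hull}, \(S=\bigcup_{\alpha<\omega_1}E_\alpha\). The union of all the \(f_\alpha\) is then a map \(\bar m:S\to I_e\) with \(\pi\bar m=\eta\), and it is positive and additive by the finite-stage argument at the start of the proof of Lemma~\ref{lift:limit}.

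Next I would establish normality and continuity of \(\bar m\). The cleanest route is to run the final paragraph of Lemma~\ref{lift:limit} for \(E_{\omega_1}=S\). The proof of \eqref{lift:contextual} never used that \(\lambda\) is countable: it fixes a stage \(\theta\) that is the maximum of two countable ordinals. So \eqref{lift:contextual} holds on \(S\), and this is \eqref{lift:final-context}. The bound \eqref{lift:fixed-capacity}, \(\bar m(x)\preceq\tau_{E_1}(x)\), then follows. Together with Lemma~\ref{lift:capacities} for \(f_1\), Proposition~\ref{mod:relative-normality} and (R2), it gives \eqref{lift:final-rn}, and in particular ambient normality.

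The remaining analytic properties follow from these. Lemma~\ref{pd:ambient-normality} turns ambient normality into \(\sigma\)-additivity, since \(S\) is closed under ambient countable extrema. Sequential order continuity comes from canonical brackets exactly as in the proof of Theorem~\ref{mod:main}: the differences \(p_j/a_j\) and \(p_j/a\) lie below \(p_j/q_j\downarrow0\), and \eqref{lift:final-rn} controls them. Uniform hereditary exhaustivity is Lemma~\ref{lift:he} with \(E=S\). Uniqueness among all additive \(\sigma\)-additive \(X\)-valued extensions is Theorem~\ref{pd:uniqueness}.

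Finally I would treat the Sion processes. For \(g\), the index set \(\{a\in A:a\le x\}\) is directed and \(m\) is increasing on it, so the values form an increasing net in \(I_e\). Its image under \(\pi\) converges to \(\lim n(a)\), which exists by Theorem~\ref{mod:main}, so (R1) gives existence of \(g(x)\). The identification \(g(u)=\bar m(u)\) on \(A^\sigma\) needs more care.
\begin{itemize}
\item For \(u\in A^\sigma\), the value \(\bar m(u)\) bounds every \(m(a)\) with \(a\le u\), so \(g(u)\preceq\bar m(u)\).
\item The difference \(\bar m(u)-g(u)\) lies in \(I_e\cap\ker\pi\), because \(\pi\) of it equals \(\eta(u)-n_\sigma(u)=0\) by Theorem~\ref{mod:main}.
\item For \(a_j\uparrow u\), normality gives \(\bar m(a_j)\to\bar m(u)\), while \(\bar m(a_j)\preceq g(u)\). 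Passing to the limit in the stable cone gives \(\bar m(u)\preceq g(u)\).
\end{itemize}
Hence the difference lies in \(P\cap(-P)\cap\ker\pi\), and Lemma~\ref{rel:kernel-separation} makes it zero. For \(\mu\), the net \(u\mapsto g(u)=\bar m(u)\) decreases in \(I_e\) under reverse order, and its image \(\eta(u)\) converges by Proposition~\ref{mod:directed-processes}. So (R1), in its decreasing form, gives existence of \(\mu(x)\) for every \(x\in L\). For \(x\in S\), \eqref{lift:final-context} with \(z=0\) gives \(\mu(x)=\bar m(x)\).

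I expect the main obstacle to be the stage \(\omega_1\). One has to confirm that normality of the full union, and the contextual identity, really pass through the arguments of Lemma~\ref{lift:limit}. That lemma is stated only for countable \(\lambda\), and its induction on \(\mathcal Q_\alpha\) and its choice of \(\theta\) have to be rechecked at \(\omega_1\). The point to check is that every element and every countable family used lives in some countable stage.
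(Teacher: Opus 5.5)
Your proposal is correct. It differs from the paper mainly in how it handles the top stage $\omega_1$.

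The paper never runs Lemma~\ref{lift:limit} at $\omega_1$. Ambient normality of $\bar m$ comes from the observation that a sequence $a_n\downarrow0$ in $S$ lies in the single countable stage $E_\gamma$, $\gamma=\sup_n(\rho(a_n)+1)<\omega_1$, where $f_\gamma$ is already normal. Then \eqref{lift:final-rn} is Lemma~\ref{lift:capacities} with $E=S$, and \eqref{lift:final-context} is \eqref{lift:contextual} at the countable limit ordinal $\lambda=\max\{\rho(x),\rho(z),1\}+\omega$.

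You instead rerun the proof of Lemma~\ref{lift:limit} with $\lambda=\omega_1$, and that does go through. The proof uses $\lambda$ for only three things, all of which hold at $\omega_1$:
\begin{enumerate}
\item to find a stage $\theta<\lambda$ containing finitely many given elements;
\item to have $E_\lambda\subseteq R$, so that Proposition~\ref{mod:regularity} applies;
\item to use normality of the earlier maps $f_\theta$ and $f_1$.
\end{enumerate}
What your route buys is that \eqref{lift:final-rn} follows directly from \eqref{lift:fixed-capacity}, and \eqref{lift:final-context} holds on all of $S$ at once. The cost is re-verifying a lemma outside its stated hypothesis $\lambda<\omega_1$. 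The paper's route cites both lemmas exactly as stated and uses only that countable suprema of countable ordinals are countable.

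The other differences are local substitutions:
\begin{enumerate}
\item You get sequential order continuity from \eqref{lift:final-rn} with $w_j=p_j/q_j$, using that a selection from a null evaluation family is a quasi-subnet. The paper applies (R2) to the brackets directly.
\item You identify $g=\bar m$ on $A^\sigma$ by showing $\bar m(u)-g(u)\in P\cap(-P)\cap\ker\pi$ and invoking Lemma~\ref{rel:kernel-separation}. The paper applies (R2) to $0\preceq g(u)-m(a_n)\preceq\bar m(u)-m(a_n)$. Both arguments rest on the identity $\pi g(u)=\eta(u)$ from Theorem~\ref{mod:main}.
\item You cite Theorem~\ref{pd:uniqueness} where the paper repeats its monotone-class argument.
\end{enumerate}
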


\begin{proof}
For \(\alpha<\omega_1\), the recursion invariant is
\[
 \begin{gathered}
 f_\alpha:E_\alpha\to I_e\text{ is positive and additive},
 \qquad f_\alpha|_A=m,\qquad \pi f_\alpha=\eta|_{E_\alpha},\\
 a_j\in E_\alpha,\ a_j\downarrow0\text{ in }L
       \Longrightarrow f_\alpha(a_j)\to0,\\
 \beta<\alpha\Longrightarrow f_\alpha|_{E_\beta}=f_\beta.
 \end{gathered}
\]
For \(\alpha=0\), these are the hypotheses on \(m\).
Given the invariant at \(\alpha\), define
\[
 f_{\alpha+1}=(f_\alpha)^o
\]
by Lemma~\ref{lift:successor}, instantiated at
\(E=E_\alpha\), \(f=f_\alpha\), and the fixed projection \(\eta\).
Its conclusions verify all three lines of the invariant.
At a nonzero countable limit \(\lambda\), define
\[
 f_\lambda(x)=f_\alpha(x)
 \quad\text{for any }\alpha<\lambda\text{ with }x\in E_\alpha.
\]
Coherence makes this value independent of \(\alpha\).
Lemma~\ref{lift:limit}, with this family and \(f_0=m\),
verifies ambient normality. Finite additivity, positivity,
the projection identity, and all restriction identities
hold in a common earlier stage. This proves the invariant
for every \(\alpha<\omega_1\).
Put
\[
 E_*=\bigcup_{\alpha<\omega_1}E_\alpha,\qquad
 f_*=\bigcup_{\alpha<\omega_1}f_\alpha.
\]
For \(a\in E_*\), let
\(\rho(a)=\min\{\alpha<\omega_1:a\in E_\alpha\}\).
For a sequence \((a_n)\subseteq E_*\), define
\[
 \gamma=\sup_{n\ge1}\bigl(\rho(a_n)+1\bigr)<\omega_1,\qquad
 b_k=\bigvee_{n\le k}a_n,\qquad c_k=\bigwedge_{n\le k}a_n.
\]
Then
\[
 a_n,b_k,c_k\in E_\gamma,\qquad
 b_k\uparrow\bigvee_na_n,\quad c_k\downarrow\bigwedge_na_n,
 \qquad
 \bigvee_na_n,\ \bigwedge_na_n\in E_{\gamma+1}\subseteq E_*.
\]
Every finite partial algebraic operation on \(E_*\) belongs to
the stage whose index is the maximum of the finitely many input
ranks. Therefore \(E_*\) is a \(\sigma\)-\(D\)-sublattice and
\(S\subseteq E_*\).
For the reverse inclusion, the recursion gives
\[
 E_0=A\subseteq S,\qquad
 \bigl(\forall\alpha<\lambda:E_\alpha\subseteq S\bigr)
       \Longrightarrow E_\lambda\subseteq S.
\]
At a successor stage, closure of \(S\) under countable meets
and joins gives
\[
 a_n\to_o a
 \quad\Longrightarrow\quad
 a=\bigvee_N\bigwedge_{n\ge N}a_n.
\]
Consequently \(E_\alpha\subseteq S\) for all \(\alpha<\omega_1\),
and \(E_*=S\).

Let \(\bar m=f_*\). Its positivity, additivity, source
restriction, and projection satisfy
\[
 \bar m(S)\subseteq I_e,\qquad
 \bar m|_A=m,\qquad \pi\bar m=\eta.
\]
For \(a_n\downarrow0\) in \(L\), with \(a_n\in S\), the ordinal
\(\gamma=\sup_n(\rho(a_n)+1)\) satisfies
\[
 a_n\in E_\gamma,\qquad
 \bar m(a_n)=f_\gamma(a_n)\longrightarrow0.
\]
Thus the instantiated hypotheses of Lemma~\ref{lift:capacities}
are \(E=S\), \(f=\bar m\), \(\pi f=\eta\), and this ambient
normality. Its conclusion is \eqref{lift:final-rn}.
Lemma~\ref{lift:he}, at \(E=S\), \(f=\bar m\), gives uniform
hereditary exhaustivity.

For an ordered orthogonal sequence \((a_j)\subseteq S\), let
\(s_N=\sum_{j=1}^Na_j\uparrow a\in S\).
The residuals \(s_N/a\downarrow0\) belong to \(S\), and
\[
 \bar m(a)-\sum_{j=1}^N\bar m(a_j)
 =\bar m(s_N/a)\longrightarrow0.
\]
Thus \(\bar m\) is \(\sigma\)-additive.
For \(a_n\to_o a\) in \(S\), put
\[
 p_n=\bigwedge_{j\ge n}a_j,\qquad q_n=\bigvee_{j\ge n}a_j.
\]
Then
\[
 p_n\uparrow a,\quad q_n\downarrow a,\quad
 p_n/a\downarrow0,\quad a/q_n\downarrow0,
\]
and ambient normality gives
\[
 \bar m(a)-\bar m(p_n)=\bar m(p_n/a)\to0,\qquad
 \bar m(q_n)-\bar m(a)=\bar m(a/q_n)\to0.
\]
The local \(\mathrm{(R2)}\) data are
\[
 x_n=\bar m(a_n)-\bar m(p_n),\qquad
 b_n=\bar m(q_n)-\bar m(p_n).
\]
They satisfy
\[
 0\preceq \bar m(a_n)-\bar m(p_n)
 \preceq \bar m(q_n)-\bar m(p_n)\preceq e.
\]
and
\[
 b_n\longrightarrow0,\qquad
 \pi x_n=\eta(a_n)-\eta(p_n)\longrightarrow0.
\]
Hence \(x_n\to0\) by \(\mathrm{(R2)}\), and
\(\bar m(a_n)=x_n+\bar m(p_n)\to\bar m(a)\).

Let \(v:S\to X\) be any additive \(\sigma\)-additive extension
of \(m\), and define
\[
 C=\{x\in S:v(x)=\bar m(x)\}.
\]
Then \(A\subseteq C\). For \(x_j\uparrow x\) in \(S\), put
\[
 x_0=0,\qquad h_j=x_{j-1}/x_j,\qquad
 \sum_{j=1}^Nh_j=x_N,\qquad \sum_{j\ge1}h_j=x.
\]
The \(\sigma\)-additivity of both maps gives
\[
 v(x)=\lim_Nv(x_N),\qquad
 \bar m(x)=\lim_N\bar m(x_N).
\]
For \(x_j\downarrow x\), right complementation gives
\(x_j^\sim\uparrow x^\sim\), and hence
\[
 \begin{aligned}
 v(x_j)=v(1)-v(x_j^\sim)\to v(1)-v(x^\sim)=v(x),\\
 \bar m(x_j)=\bar m(1)-\bar m(x_j^\sim)
     \to\bar m(1)-\bar m(x^\sim)=\bar m(x).
 \end{aligned}
\]
It follows by separation that \(C\) is closed under increasing
and decreasing ambient sequence limits. Theorem~\ref{pd:monotone-class},
instantiated at the initial \(D\)-sublattice \(A\), states
\[
 \bigcap\{M\subseteq L:A\subseteq M,\ M
       \text{ is closed under both monotone sequence limits}\}=S.
\]
Thus \(S\subseteq C\), proving \(v=\bar m\).

The inner limit in \eqref{lift:sion-processes} is
Lemma~\ref{lift:capacities} for \(E=A\).
For \(u\in A^\sigma\), let \(a_n\in A\), \(a_n\uparrow u\).
All values \(m(a)\), \(a\in A\), \(a\le u\), lie below
\(\bar m(u)\), so \(g(u)\preceq\bar m(u)\).
Also \(m(a_n)\preceq g(u)\). For every \(n\),
\[
 0\preceq g(u)-m(a_n)
 \preceq\bar m(u)-m(a_n)\preceq e.
\]
For \(\mathrm{(R2)}\), set
\[
 x_n=g(u)-m(a_n),\qquad b_n=\bar m(u)-m(a_n).
\]
The modular inner identity in Theorem~\ref{mod:extension} gives
\[
 \pi g(u)=\lim_{a\in A,\ a\le u}n(a)=\eta(u).
\]
Therefore the remaining premises are
\[
 b_n\to0,\qquad
 \pi x_n=\eta(u)-n(a_n)\to0.
\]
Principle \(\mathrm{(R2)}\) gives \(x_n\to0\). Since
\(m(a_n)\to\bar m(u)\), separation identifies
\(g(u)=\bar m(u)\).

For arbitrary \(x\in L\), define
\[
 H(x)=\{u\in A^\sigma:x\le u\},\qquad
 u\le_{H(x)}v\Longleftrightarrow v\le_Lu.
\]
Then \(1\in H(x)\) and \(u,v\le_{H(x)}u\wedge v\).
Proposition~\ref{mod:directed-processes}, applied to the
decreasing lattice net \(u\in H(x)\subseteq S\), supplies
\(\kappa_x=\lim_{u\in H(x)}\eta(u)\).
The increasing values
\[
 z_u=e-\bar m(u)\in I_e,\qquad
 \pi z_u=\pi e-\eta(u)\to\pi e-\kappa_x
\]
satisfy \(\mathrm{(R1)}\). Thus
\[
 \mu(x)=e-\lim_{u\in H(x)}z_u
       =\lim_{u\in H(x)}\bar m(u)
       =\lim_{u\in H(x)}g(u)
\]
exists in \(I_e\).

For \(x,z\in S\), set
\[
 \delta=\max\{\rho(x),\rho(z),1\},\qquad
 \lambda=\delta+\omega<\omega_1.
\]
Then \(\lambda\) is a limit ordinal, \(x,z\in E_\lambda\),
and \(A^\sigma\subseteq E_1\subseteq E_\lambda\).
Equation~\eqref{lift:contextual}, with \(y=x\) and the context
\(z\), gives
\[
 \bar m(z\vee x)
 =f_\lambda(z\vee x)
 =\lim_{u\in H(x)}f_\lambda(z\vee u)
 =\lim_{u\in H(x)}\bar m(z\vee u).
\]
Putting \(z=0\) proves
\(\bar m(x)=\lim_{u\in H(x)}g(u)=\mu(x)\).
\end{proof}

\begin{remark}[Scope of the measurable domain]\label{lift:domain-scope}
The asserted extension domain is the generated
\(\sigma\)-\(D\)-sublattice \(S\).
The existence of both Sion processes on \(L\) does not assert
that an unrestricted equality cut of their envelopes is a
\(\sigma\)-\(D\)-sublattice.
Equation~\eqref{lift:final-context} is a proved contextual
regularity formula on \(S\), with the original upper class
\(A^\sigma\).
\end{remark}

\section{Explicit extension corollaries}
\label{sec:pseudo-corollaries}

Throughout this section, $L$ is a $\sigma$-complete lattice-$\sigma$-continuous pseudo-$D$-lattice, $A\subseteq L$ is a $D$-sublattice containing $0,1$, and $S=\sigma_L(A)$. Source joins, meets, and order limits are computed in $L$.

\begin{lemma}[The zero projected data]\label{app:zero-package}
For $n:A\to\{0\}$, $n(a)=0$, the modular range assumptions hold with $N=D=\{0\}$.
\end{lemma}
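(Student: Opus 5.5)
The plan is to verify each hypothesis of Theorem~\ref{mod:main} directly for the zero map. Everything reduces to properties of the trivial family $\{0\}$ in a separated convergence group. The basic fact is that a net with all values $0$ is tail equivalent to the constant zero net. Its tails are all $\{0\}$, so it converges to $0$. Consequently every family of sets each equal to $\{0\}$ is null, since its evaluation net is constantly zero.

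\textbf{Source hypotheses.} The zero map is additive and modular, since both sides of \eqref{mod:valuation} vanish. It is ambiently normal, because constant zero sequences converge. It is uniformly hereditarily exhaustive, because $V_A(h_j)=\{0\}$ for every $j$.

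\textbf{The hull $N$ and completeness.} $N=[\{0\}]_\theta=\{0\}$, because $\{0\}$ is limit-stable. Any net with values in $\{0\}$ is constant zero, and by separation its only limit is $0$. The same fact gives completeness: a Cauchy net in $N$ is constant zero and converges to $0\in N$. Choose $D=\{0\}$. Then $N-N=\{0\}\subseteq D=-D$.

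\textbf{The range principles.} Each principle is checked with the same observation.
\begin{itemize}
\item For $\mathsf F(D)$, every net $u:\Lambda\to D$ is the constant zero net, so $u\to0$ regardless of the strategy.
\item For $\mathsf S_{\rm all}(D)$, the rows satisfy $0\in U_{i,\alpha}\subseteq\{0\}$, so they equal $\{0\}$. Hence $A_\varphi=\{0\}$ and $Q_\varphi=D\cap[\{0\}]_\theta=\{0\}$, a null family.
\item For $\mathsf U(N)$, all points lie in $N=\{0\}$, so the target differences are all $0$. The conclusion family is then $\{0\}$ in every row, which is null.
\end{itemize}

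There is no real obstacle. The only point requiring care is that the evaluation nets of families of singletons $\{0\}$ are constant zero nets on their evaluation domains, and these converge by the constant-net axiom. Once that is noted, all five requirements follow immediately.
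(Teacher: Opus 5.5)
Your proposal is correct and follows essentially the same route as the paper: every net valued in $\{0\}$ is a constant net, so $N=D=\{0\}$ is stable and complete, and each of the source conditions, $\mathsf F(D)$, $\mathsf S_{\rm all}(D)$ and $\mathsf U(N)$ reduces to null evaluations of families equal to $\{0\}$. Your explicit appeal to separation for the limit-stability of $\{0\}$ makes one step slightly more explicit than the paper does; it is automatic when the range is the trivial group.
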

\begin{proof}
Every net in $N$ is constant at $0$, so $N$ is stable and complete. Moreover,
\[
 \begin{gathered}
 n(a+b)=0=n(a)+n(b),\\
 n(a\vee b)+n(a\wedge b)=0=n(a)+n(b),\\
 a_k\downarrow0\quad\Longrightarrow\quad n(a_k)=0\longrightarrow0,\\
 \{n(a):a\in A,\ a\le h_k\}=\{0\}\Rightarrow0,\qquad
 N-N=D=-D=\{0\}.
 \end{gathered}
\]
For $\mathsf F(D)$, every tested net $u:\Lambda\to D$ satisfies $u_\alpha=0$. For $\mathsf S_{\rm all}(D)$, every row, finite-sum family, and relative stable hull is $\{0\}$:
\[
 U_{i,\alpha}=\{0\},\qquad
 A_\varphi=\{0\},\qquad
 D\cap[A_\varphi]=\{0\}\Rightarrow0.
\]
For $\mathsf U(N)$, all four point families and all their differences equal $0$. Thus its concluding evaluation is constant at $0$.
\end{proof}

\begin{corollary}[Uncontrolled modular extension]\label{app:uncontrolled}
Let $X$ be a separated Abelian net convergence group, and let $m:A\to X$ be an additive modular ambiently normal measure that is uniformly hereditarily exhaustive. Intrinsic $\sigma$-additivity is a sufficient source hypothesis for ambient normality. Put $N=[m(A)]_X$. Assume that $N$ is Cauchy complete and that there is a symmetric set $D\subseteq X$ such that
\[
 N-N\subseteq D,
 \qquad \mathsf F(D),
 \qquad \mathsf S_{\rm all}(D),
 \qquad \mathsf U(N).
\]
Then $m$ has a unique additive $\sigma$-additive extension $\bar m:S\to X$. The extension is modular and sequentially order continuous. It is uniformly hereditarily exhaustive and uniformly relatively ambiently normal. The original Sion processes exist for every $x\in L$:
\begin{equation}\label{app:uncontrolled-sion}
 g(x)=\lim_{\substack{a\in A\\a\le x}}m(a),
 \qquad
 \mu(x)=\lim_{\substack{u\in A^\sigma\\x\le u}}g(u).
\end{equation}
The first directed family is ordered increasingly and the second decreasingly. One has $\mu|_S=\bar m$, and
\begin{equation}\label{app:uncontrolled-context}
 \bar m(z\vee x)
 =\lim_{\substack{u\in A^\sigma\\x\le u}}\bar m(z\vee u)
 \qquad(x,z\in S).
\end{equation}
No order or positive controller on $X$ is assumed.
\end{corollary}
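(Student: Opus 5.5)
The plan is to deduce the corollary from Theorem~\ref{rel:lifting} applied to the uncontrolled realization of Proposition~\ref{rel:uncontrolled}. Concretely, take $Y=X$, $\pi=\operatorname{id}_X$ and $P=X$, so that $I_e=X$ for $e=m(1)$. Proposition~\ref{rel:uncontrolled} verifies (R1)--(R3) at every $e$.

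\textbf{Checking the hypotheses of the lifting theorem.} The cone $P=X$ is limit-stable, is additive and contains $0$, and $m:A\to P$ holds trivially. Then $n=\pi m=m$ is modular, ambiently normal and uniformly hereditarily exhaustive by assumption. Also $N=[n(A)]_Y=[m(A)]_X$ is complete, and the range principles on $D$ and $N$ are exactly those assumed. The remark on intrinsic $\sigma$-additivity is Lemma~\ref{pd:ambient-normality}. Alternatively, one can bypass the lifting step entirely and apply Theorem~\ref{mod:main} directly with $Y=X$. That theorem already gives the extension $\eta$ with modularity, sequential order continuity, uniform hereditary exhaustivity, uniform relative ambient normality, existence of both Sion limits and $\mu|_S=\eta$. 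I would present it this way, using Theorem~\ref{rel:lifting} only for \eqref{app:uncontrolled-context}.

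\textbf{Assembling the conclusions.} With $\pi=\operatorname{id}$, the lifting theorem produces $\bar m$ with $\pi\bar m=\eta$, so $\bar m=\eta$. Hence $\bar m$ inherits modularity from Theorem~\ref{mod:main}. Theorem~\ref{rel:lifting} further supplies \eqref{lift:final-context}, which is exactly \eqref{app:uncontrolled-context}. It also supplies the Sion processes \eqref{lift:sion-processes} with the stated order conventions. Uniqueness among all additive $\sigma$-additive $X$-valued extensions follows from Theorem~\ref{pd:uniqueness} (or the last clause of Theorem~\ref{mod:main}).

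\textbf{Expected difficulty.} The only point needing care is a small bookkeeping mismatch. The corollary states $N-N\subseteq D$ with $D$ symmetric, whereas the theorems require $N-N\subseteq D=-D$, and these say the same thing. Similarly, ``Cauchy complete'' is the paper's completeness notion. I expect no genuine obstacle. The substance lies entirely in the previously proved theorems, and the proof is a direct instantiation.
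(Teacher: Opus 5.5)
Your proposal is correct and matches the paper's proof. The paper also instantiates Theorem~\ref{mod:main} with $Y=X$ and $n=m$, and then observes that Theorem~\ref{rel:lifting} with $\pi=\operatorname{id}_X$, $P=X$, $e=m(1)$ is the identical specialization via Proposition~\ref{rel:uncontrolled}; Lemma~\ref{pd:ambient-normality} handles the intrinsic $\sigma$-additivity remark. The only difference is minor: the paper reads \eqref{app:uncontrolled-context} directly off Theorem~\ref{mod:main}, because Proposition~\ref{mod:regularity} with $B=A^\sigma$ already gives it, so your detour through Theorem~\ref{rel:lifting} for that formula is harmless but unnecessary.
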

\begin{proof}
In Theorem~\ref{mod:main} set
\[
 Y=X,\qquad n=m,\qquad [n(A)]_Y=N.
\]
The source assumptions are additivity, modularity, ambient normality, and uniform hereditary exhaustivity, all stipulated here. Its range assumptions are exactly completeness of $N$, $N-N\subseteq D=-D$, and the three displayed principles. The theorem gives $\bar m$, \eqref{app:uncontrolled-sion}, and \eqref{app:uncontrolled-context}, with unrestricted $\sigma$-additive uniqueness.

The relative theorem has the identical specialization
\[
 \pi=\operatorname{id}_X,
 \qquad P=X,
 \qquad e=m(1),
 \qquad I_e=X,
 \qquad\pi m=m.
\]
Here $m(A)\subseteq P$ and $e-m(a)=m(a/1)\in P$. Proposition~\ref{rel:uncontrolled} verifies \textnormal{(R1)}--\textnormal{(R3)}. The quotient range assumptions remain the original assumptions on $N,D$.

If intrinsic $\sigma$-additivity is assumed instead of ambient normality, then
\[
 a_k\in A,\quad a_k\downarrow_L0
 \quad\Longrightarrow\quad
 \inf_A a_k=0
 \quad\Longrightarrow\quad m(a_k)\to0
\]
by Lemma~\ref{pd:ambient-normality}. Thus this alternative source hypothesis supplies exactly the required normality.
\end{proof}

\begin{corollary}[Positive extension on pseudo-$D$-lattices]\label{app:positive}
Let $G$ be a super Dedekind complete weakly $\sigma$-distributive Abelian lattice-ordered group. Every finite positive $\sigma$-additive measure $\nu:A\to G^+$ has a unique additive $\sigma$-additive extension $\bar\nu:S\to G$. This extension is positive and sequentially order continuous. With $M=\nu(1)$, its values belong to $[0,M]$. The Sion processes exist in order convergence for every $x\in L$, and on $S$ they give $\bar\nu$. In particular,
\begin{equation}\label{app:positive-envelopes}
 \begin{split}
 g_\nu(x)&=\sup\{\nu(a):a\in A,\ a\le x\},\\
 \mu_\nu(x)&=\inf\{g_\nu(u):u\in A^\sigma,\ x\le u\},\\
 \bar\nu(z\vee x)
 &=\inf\{\bar\nu(z\vee u):u\in A^\sigma,\ x\le u\}
 \quad(x,z\in S).
 \end{split}
\end{equation}
No modularity of $\nu$ is required.
\end{corollary}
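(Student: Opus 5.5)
The plan is to specialize the relative Sion extension, Theorem~\ref{rel:lifting}, to the positive realization of Proposition~\ref{rel:positive}. Take $X=G$ with its order convergence, $Y=\{0\}$, $\pi=0$, $P=G^+$, $m=\nu$ and $e=M=\nu(1)$. Proposition~\ref{rel:positive} verifies \textnormal{(R1)}--\textnormal{(R3)} on $I_e=[0,M]$, and shows that $G^+$ is limit-stable and that order convergence makes $G$ a separated Abelian net convergence group. The projected measure $n=\pi\nu$ is identically zero. Lemma~\ref{app:zero-package} supplies all the modular range assumptions with $N=D=\{0\}$: completeness, $\mathsf F(D)$, $\mathsf S_{\rm all}(D)$, $\mathsf U(N)$, modularity and uniform hereditary exhaustivity. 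The modular extension $\eta$ of Theorem~\ref{mod:main} is then the zero map, and no modularity of $\nu$ enters.

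For the source hypothesis, $\sigma$-additivity of $\nu$ will be read as intrinsic $\sigma$-additivity on $A$. Lemma~\ref{pd:ambient-normality} converts it into intrinsic normality, and hence into ambient normality. Theorem~\ref{rel:lifting} then gives a positive, additive, $\sigma$-additive, sequentially order-continuous extension $\bar\nu:S\to[0,M]$. It is unique among all additive $\sigma$-additive $G$-valued extensions. The Sion processes exist in order convergence for every $x\in L$ and satisfy $\mu|_S=\bar\nu$. The contextual formula \eqref{lift:final-context} also holds.

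It remains to turn these limits into the suprema and infima in \eqref{app:positive-envelopes}. The inner net $a\mapsto\nu(a)$ on $\{a\in A:a\le x\}$ is increasing and bounded by $M$, so Dedekind completeness gives its supremum $s$. I expect the key point to be that the order limit of an increasing net equals its supremum. If $x_\alpha\uparrow$ and $x_\alpha\to_o x$, then $x_\beta-x_\alpha\ge0$ converges in order to $x-x_\alpha$ for $\beta$ beyond $\alpha$, so $x-x_\alpha\in G^+$ by limit-stability of $G^+$. Hence $x$ is an upper bound. For any upper bound $z$, the inequality $z-x_\alpha\ge0$ passes to the limit and gives $z\ge x$. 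Thus $x=\sup_\alpha x_\alpha$, and by separation $g_\nu(x)=s$.

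The same argument applied to the decreasing nets $u\mapsto g_\nu(u)=\bar\nu(u)$ and $u\mapsto\bar\nu(z\vee u)$ over the covers $u\in A^\sigma$ with $x\le u$ in reverse order identifies $\mu_\nu$ and the contextual limit with the corresponding infima.
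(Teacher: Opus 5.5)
Your proposal is correct and follows the paper's proof. It uses the same specialization $X=G$, $Y=\{0\}$, $\pi=0$, $P=G^+$, $e=\nu(1)$ of Theorem~\ref{rel:lifting}, with Proposition~\ref{rel:positive} and Lemma~\ref{app:zero-package} supplying the hypotheses and Lemma~\ref{pd:ambient-normality} turning $\sigma$-additivity into ambient normality. The only variation is in the final identification. The paper notes that a bounded monotone net in $G$ order-converges to its supremum (respectively infimum) and then invokes separation, whereas you show, via limit-stability of $G^+$, that the order limit supplied by the theorem must be that supremum (respectively infimum); either argument suffices.
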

\begin{proof}
Set
\[
 X=G,\quad P=G^+,\quad e=M=\nu(1),\quad
 Y=\{0\},\quad\pi=0,\quad n=\pi\nu=0.
\]
For every $a\in A$,
\[
 \nu(a)\ge0,
 \qquad M-\nu(a)=\nu(a/1)\ge0,
 \qquad \nu(a)\in I_M=[0,M].
\]
If $a_k\in A$ and $a_k\downarrow_L0$, then $\inf_Aa_k=0$ and Lemma~\ref{pd:ambient-normality} gives $\nu(a_k)\downarrow0$. Thus the source is ambiently normal. Proposition~\ref{rel:positive} proves \textnormal{(R1)}--\textnormal{(R3)}, and Lemma~\ref{app:zero-package} proves every projected source and range condition. Theorem~\ref{rel:lifting} therefore gives the positive extension, its continuity, and its uniqueness among all additive $\sigma$-additive extensions.

For a bounded increasing net $(v_\alpha)$ in $G$, $v=\sup_\alpha v_\alpha$ satisfies $v-v_\alpha\downarrow0$. For a bounded decreasing net, $v_\alpha-\inf_\beta v_\beta\downarrow0$. The source inner net is increasing, the outer net is decreasing, and the contextual cover net is decreasing. Applying these two identities to the three limits in Theorem~\ref{rel:lifting} gives precisely \eqref{app:positive-envelopes}.
\end{proof}

For commutative $D$-lattices, Corollary~\ref{app:positive} is the corrected positive Barbieri-type specialization proved here, in connection with~\cite{Barbieri14}. Its asserted domain is $\sigma_L(A)$. The pseudo-$D$-lattice statement requires neither modularity of $\nu$ nor commutativity of the partial addition.

\begin{corollary}[Positive product ranges]\label{app:positive-products}
The conclusion of Corollary~\ref{app:positive} remains valid when $G$ is any product of super Dedekind complete weakly $\sigma$-distributive Abelian lattice-ordered groups, with the order convergence of the product group. In particular, it holds for $G=\mathbb R^J$ for an arbitrary index set $J$.
\end{corollary}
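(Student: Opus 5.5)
We rerun the proof of Corollary~\ref{app:positive} with $G=\prod_{j\in J}G_j$, each $G_j$ super Dedekind complete and weakly $\sigma$-distributive, and with the actual order convergence of the product. Take $X=G$, $P=G^+$, $e=M=\nu(1)$, $Y=\{0\}$, $\pi=0$ and $n=0$. Lemma~\ref{app:zero-package} supplies every projected source and range condition unchanged, since these concern only $Y=\{0\}$. The source facts also carry over verbatim. Positivity and $M-\nu(a)=\nu(a/1)\ge0$ put $\nu(A)$ in $I_M$. Ambient normality follows from intrinsic $\sigma$-additivity through Lemma~\ref{pd:ambient-normality}, because $a_k\downarrow_L0$ forces $\inf_Aa_k=0$.

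The only change is in the range step. Instead of Proposition~\ref{rel:positive}, we invoke Proposition~\ref{rel:order-products}. It asserts \textnormal{(R1)}--\textnormal{(R3)} for $(G,\{0\},0,G^+,e)$ at every $e\ge0$, with exactly this order convergence, and without super Dedekind completeness of $G$ itself. We also need $G$ to be a separated Abelian net convergence group and $G^+$ to be limit-stable.
\begin{itemize}
\item Separation holds because order limits are unique in any lattice-ordered group.
\item Limit-stability of $G^+$ uses the argument $0\le x^-\le|x-x_\alpha|$ from Proposition~\ref{rel:positive}, which is valid in every lattice-ordered group.
\end{itemize}
Theorem~\ref{rel:lifting} then yields the unique additive $\sigma$-additive extension $\bar\nu:S\to G$. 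It is positive, sequentially order continuous and $[0,M]$-valued, and it comes with the Sion processes and the contextual formula.

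To recover the explicit envelopes \eqref{app:positive-envelopes}, we need one more fact, which Proposition~\ref{rel:order-products} already gives: $G$ is Dedekind complete, with suprema computed coordinatewise. Hence a bounded increasing net $v_\alpha$ satisfies $v-v_\alpha\downarrow0$ for $v=\sup_\alpha v_\alpha$, and dually for bounded decreasing nets. This identifies the order limits in \eqref{lift:sion-processes} and \eqref{lift:final-context} with the stated suprema and infima, by uniqueness of order limits.

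The case $G=\mathbb R^J$ follows because $\mathbb R$ is super Dedekind complete (it is Dedekind complete and every bounded set has a countable cofinal subset) and weakly $\sigma$-distributive (a classical fact). The main point to check is that no step of Theorem~\ref{rel:lifting}, or of its consequences used here, tacitly needs super Dedekind completeness of the range beyond \textnormal{(R1)}--\textnormal{(R3)}. A scan of Section~\ref{sec:relative-lifting} shows the range enters only through these conditions, the stable cone, and separation.
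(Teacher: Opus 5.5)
Your proposal is correct and follows the paper's proof essentially step for step: you replace Proposition~\ref{rel:positive} by Proposition~\ref{rel:order-products}, take the projected package from Lemma~\ref{app:zero-package}, apply Theorem~\ref{rel:lifting}, and read off the envelopes \eqref{app:positive-envelopes} from the coordinatewise Dedekind completeness of the product. Your closing scan of Section~\ref{sec:relative-lifting} is harmless but unnecessary, since Theorem~\ref{rel:lifting} is already stated with the range entering only through \textnormal{(R1)}--\textnormal{(R3)}, the limit-stable cone and separation.
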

\begin{proof}
Write $G=\prod_{j\in J}G_j$ and $M=\nu(1)$. Then
\[
 0\le\nu(a)\le M,
 \qquad a_k\downarrow_L0\Longrightarrow\nu(a_k)\to_o0
\]
by positivity and source $\sigma$-additivity. Proposition~\ref{rel:order-products} gives \textnormal{(R1)}--\textnormal{(R3)} for the actual order convergence on $G$. With $\pi=0$, Lemma~\ref{app:zero-package} gives the complete projected package. Theorem~\ref{rel:lifting} applies. Since suprema and infima of bounded families in $G$ are computed coordinatewise, the directed limits have exactly the forms in \eqref{app:positive-envelopes}.
\end{proof}

\begin{corollary}[Lattice-normed extension with its source controller]\label{app:lattice-normed}
Let $(F,p,G)$ be a sequentially bo-complete lattice-normed group, with $G$ super Dedekind complete and weakly $\sigma$-distributive. Let $m:A\to F$ be additive, and let $\nu:A\to G^+$ be a finite positive $\sigma$-additive measure satisfying
\begin{equation}\label{app:source-domination}
 p(m(a))\le\nu(a)\qquad(a\in A).
\end{equation}
Then $m$ is bo-$\sigma$-additive and has a unique additive bo-$\sigma$-additive extension $\bar m:S\to F$. If $\bar\nu$ is the positive extension in Corollary~\ref{app:positive}, then
\begin{equation}\label{app:extended-domination}
 p(\bar m(x))\le\bar\nu(x)\qquad(x\in S).
\end{equation}
The Sion processes for $m$ and $\nu$ exist in their original convergences on every element of $L$. Their restrictions to $S$ are $\bar m$ and $\bar\nu$, and both satisfy the contextual formula \eqref{app:uncontrolled-context}. Neither modularity of $m$ nor decomposability of the lattice norm is required.
\end{corollary}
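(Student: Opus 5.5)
The plan is to apply Theorem~\ref{rel:lifting} to the graph-cone data of Proposition~\ref{rel:graph}, with the measure $a\mapsto(m(a),\nu(a))$. Set
\[
 X=F\times G,\quad Y=\{0\},\quad \pi=0,\quad P=\{(x,r):p(x)\le r\},\quad e=(m(1),\nu(1)),
\]
and define $\mu:A\to X$ by $\mu(a)=(m(a),\nu(a))$. By \eqref{app:source-domination}, $\mu$ takes values in $P$. It is additive because both coordinates are, and $\mu(1)=e$, with $p(m(1))\le\nu(1)$. Proposition~\ref{rel:graph} gives \textnormal{(R1)}--\textnormal{(R3)} at $e$. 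Lemma~\ref{app:zero-package} supplies all projected modular data for $n=\pi\mu=0$, so no modularity of $m$ enters.

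The one hypothesis of Theorem~\ref{rel:lifting} still to check is ambient normality of $\mu$. Take $a_k\in A$ with $a_k\downarrow_L0$. Then $\inf_Aa_k=0$, and Lemma~\ref{pd:ambient-normality} applied to the $\sigma$-additive $\nu$ gives $\nu(a_k)\downarrow0$ in order. Domination gives $p(m(a_k))\le\nu(a_k)$, so $m(a_k)\to_{bo}0$. Together these give $\mu(a_k)\to0$ in the product convergence.

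The same domination shows that $m$ is bo-$\sigma$-additive on $A$. For orthosums $s_N\uparrow s$ in $A$ we have $p(m(s)-m(s_N))=p(m(s_N/s))\le\nu(s_N/s)\to_o0$, since $\nu(s_N/s)=\nu(s)-\nu(s_N)$ decreases to $0$ by $\sigma$-additivity of $\nu$.

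Theorem~\ref{rel:lifting} now yields a positive additive $\sigma$-additive extension $\bar\mu=(\bar m,\nu')$ of $\mu$ to $S$, valued in $I_e\subseteq P$. The second coordinate $\nu'$ is an additive $\sigma$-additive $G$-valued extension of $\nu$. This holds because coordinate projection is continuous for the product convergence. By the uniqueness in Corollary~\ref{app:positive}, $\nu'=\bar\nu$. Membership $\bar\mu(x)\in P$ is exactly \eqref{app:extended-domination}.

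The first coordinate $\bar m$ is additive and bo-$\sigma$-additive, again by continuity of the projection. For uniqueness, any other additive bo-$\sigma$-additive extension $v$ of $m$ gives $(v,\bar\nu)$, an additive $\sigma$-additive $X$-valued extension of $\mu$. The uniqueness clause of Theorem~\ref{rel:lifting} then forces $v=\bar m$. Alternatively, one can apply Theorem~\ref{pd:uniqueness} directly in the separated group $F$, which Proposition~\ref{rel:graph} shows is separated.

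The Sion processes for $\mu$ exist in $X$ by \eqref{lift:sion-processes}. Their coordinates are the Sion processes of $m$ in bo-convergence and of $\nu$ in order convergence, because a product net converges exactly when both coordinate nets converge. Conversely, each coordinate process exists because it is a projection of a convergent net. Projecting \eqref{lift:sion-identification} and \eqref{lift:final-context} gives the identification with $\bar m$ and $\bar\nu$ on $S$, and the contextual formula for each.

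The step needing most care is this identification of coordinates: both the existence of the processes and the uniqueness of $\nu'$ require that product convergence is genuinely coordinatewise and that coordinate projection is continuous. Both facts should follow from how the product convergence is defined in Proposition~\ref{rel:graph}.
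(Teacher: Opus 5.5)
Your proposal is correct and follows the paper's proof essentially step by step. You use the graph measure $a\mapsto(m(a),\nu(a))$ with the cone of Proposition~\ref{rel:graph} and the zero projected data of Lemma~\ref{app:zero-package}, derive bo-$\sigma$-additivity and ambient normality from the domination $p(m(a))\le\nu(a)$, apply Theorem~\ref{rel:lifting}, identify the second coordinate with $\bar\nu$ via Corollary~\ref{app:positive}, obtain uniqueness from the pair $(v,\bar\nu)$, and project the graph Sion processes and contextual identity; your alternative uniqueness argument through Theorem~\ref{pd:uniqueness} directly in the separated group $F$ is also valid.
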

\begin{proof}
Let $(a_n)$ be an ordered orthogonal sequence in $A$ with intrinsic sum $a\in A$. Put $s_N=\sum_{n\le N}a_n$ and $r_N=s_N/a\in A$. Then $s_N+r_N=a$ and
\[
 p\left(m(a)-\sum_{n\le N}m(a_n)\right)
 =p(m(r_N))
 \le\nu(r_N)
 =\nu(a)-\sum_{n\le N}\nu(a_n)\downarrow0.
\]
This proves bo-$\sigma$-additivity of $m$. If $a_n\in A$ and $a_n\downarrow_L0$, Lemma~\ref{pd:ambient-normality} gives
\[
 0\le p(m(a_n))\le\nu(a_n)\downarrow0,
 \qquad (m(a_n),\nu(a_n))\longrightarrow(0,0).
\]

Define the source graph measure
\[
 \widetilde m(a)=(m(a),\nu(a)),\qquad
 \widetilde e=(m(1),\nu(1)).
\]
For each defined sum $a+b$,
\[
 \widetilde m(a+b)
 =(m(a)+m(b),\nu(a)+\nu(b))
 =\widetilde m(a)+\widetilde m(b).
\]
The preceding limits prove its $\sigma$-additivity and ambient normality. For the graph cone $P=\{(v,r):p(v)\le r\}$,
\[
 p(m(a))\le\nu(a)\quad\Longrightarrow\quad\widetilde m(a)\in P,
\]
and
\[
 \widetilde e-\widetilde m(a)
 =\widetilde m(a/1)\in P,
\]
so $\widetilde m(A)\subseteq I_{\widetilde e}$. Proposition~\ref{rel:graph} proves \textnormal{(R1)}--\textnormal{(R3)} at $\widetilde e$, using sequential bo-completeness. For $Y=\{0\}$ and $\pi=0$, Lemma~\ref{app:zero-package} proves the modularity, hereditary exhaustivity, completeness, reflection, summation, and closure assumptions of the projected data. Theorem~\ref{rel:lifting} yields
\[
 \overline{\widetilde m}(x)=(m_1(x),\nu_1(x)).
\]
The coordinate projections are continuous homomorphisms, so for every ordered orthogonal sequence with sum $x\in S$,
\[
 \begin{split}
 m_1(x)&=\operatorname{pr}_F\lim_N\sum_{n\le N}
            \overline{\widetilde m}(x_n)
          =\lim_N\sum_{n\le N}m_1(x_n),\\
 \nu_1(x)&=\operatorname{pr}_G\lim_N\sum_{n\le N}
            \overline{\widetilde m}(x_n)
          =\lim_N\sum_{n\le N}\nu_1(x_n).
 \end{split}
\]
Their restrictions to $A$ are $m$ and $\nu$. Corollary~\ref{app:positive} therefore gives $\nu_1=\bar\nu$. Since $\overline{\widetilde m}(x)\in P$,
\[
 p(m_1(x))\le\nu_1(x)=\bar\nu(x)\qquad(x\in S).
\]
Set $\bar m=m_1$.

If $v:S\to F$ is any additive bo-$\sigma$-additive extension of $m$, the map $V=(v,\bar\nu)$ satisfies
\[
 V|_A=\widetilde m,
 \qquad V(x+y)=V(x)+V(y),
 \qquad V\Bigl(\sum_nx_n\Bigr)=\lim_N\sum_{n\le N}V(x_n).
\]
The unrestricted uniqueness in Theorem~\ref{rel:lifting} gives $V=\overline{\widetilde m}$ and $v=\bar m$. This argument places no positivity or domination assumption on $V$.

Write $g_{\widetilde m},\mu_{\widetilde m}$ for the graph processes. Continuity of the coordinate projections gives, for every $x\in L$,
\[
 \begin{gathered}
 g_m(x)=\operatorname{pr}_F g_{\widetilde m}(x),\qquad
 g_\nu(x)=\operatorname{pr}_G g_{\widetilde m}(x),\\
 \mu_m(x)=\operatorname{pr}_F\mu_{\widetilde m}(x),\qquad
 \mu_\nu(x)=\operatorname{pr}_G\mu_{\widetilde m}(x).
 \end{gathered}
\]
Projection of the graph contextual identity gives its two stated coordinate identities.
\end{proof}

In Corollary~\ref{app:lattice-normed}, $\nu$ is source data. The extended inequality \eqref{app:extended-domination} is obtained only after the graph measure has been extended.

\begin{lemma}[Topological range verification]\label{app:topological-range}
Every Hausdorff Abelian topological group $X$ satisfies $\mathsf F(X)$ and $\mathsf S_{\rm all}(X)$. For every nonempty $N\subseteq X$, it satisfies $\mathsf U(N)$. Stable hulls for its net convergence are topological closures.
\end{lemma}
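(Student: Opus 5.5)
The plan is to translate everything into the neighbourhood filter $\mathcal U$ of $0$ in $X$. Net convergence is topological convergence: $u\to x$ if and only if every tail of $u$ is eventually inside $x+W$ for each $W\in\mathcal U$. For families of sets, $(B_\alpha)\Rightarrow0$ means that for each $W\in\mathcal U$ there is $\alpha_0$ with $B_\beta\subseteq W$ for all $\beta\ge\alpha_0$. This follows from \eqref{eq:evaltail}. The stable-hull assertion is the standard fact that a set is limit-stable for nets exactly when it is closed. Hence $[A]_\theta=\overline A$, because closures are limit-stable and the closure of $A$ consists of net limits from $A$. Separation of the convergence is the Hausdorff property.

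\emph{Finite-response reflection.} Argue by contraposition. Suppose $u:\Lambda\to X$ does not converge to $0$. Then there is a $W\in\mathcal U$ such that every tail $T_u(\alpha)$ meets $X\setminus W$. Choose $V\in\mathcal U$ with $V=-V$, and let the strategy $\sigma$ be arbitrary. Build a play inductively. At stage $n$ set $\alpha_n=\sigma(h_{n-1})$ and pick $S_n=\{s_n\}$ with $s_n\in T_u(\alpha_n)\setminus W$. This history is legal. Every $S_n$ lies outside $W$, so $(S_n)_n$ is not null. The premise of $\mathsf F(X)$ therefore fails for every strategy, which is the contrapositive. This step is routine.

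\emph{Closed countable summation.} This is the main step, and I would use the sequence of neighbourhoods trick. Fix $W\in\mathcal U$ and choose $W_0\in\mathcal U$ with $\overline{W_0}\subseteq W$; regularity of topological groups gives such a $W_0$. Next choose $V_1,V_2,\dots\in\mathcal U$ with $V_1+V_2+\cdots+V_k\subseteq W_0$ for every $k$. One way is to take $V_{i+1}+V_{i+1}\subseteq V_i$ and $V_1+V_1\subseteq W_0$, which gives $\sum_{i\le k}V_i\subseteq V_0\subseteq W_0$ by the usual telescoping. Each row $(U_{i,\alpha})_\alpha$ is decreasing and null, so there is $\alpha_i$ with $U_{i,\alpha}\subseteq V_i$ for all $\alpha\ge\alpha_i$. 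Define $\varphi_0=(\alpha_i)_i\in P$. For $\varphi\ge\varphi_0$, every finite sum $\sum_{i\in F}U_{i,\varphi(i)}$ lies in $\sum_{i\in F}V_i\subseteq W_0$. Hence $A_\varphi\subseteq W_0$, so $Q_\varphi\subseteq\overline{A_\varphi}\subseteq W$. This holds for all $\varphi\ge\varphi_0$, so $(Q_\varphi)\Rightarrow0$. The arbitrary domains $I_i$ cause no difficulty, since only one threshold per row is needed. This is where the product-indexing of $\mathsf S_{\rm all}$ is essential.

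\emph{Uniform closure.} Fix $W$ and choose $V\in\mathcal U$ symmetric and closed with $V+V+V\subseteq W$. By the two approximation premises there is $\beta$ with $x_n^\beta(j)-x_n(j)\in V$ and $y_n^\beta(j)-y_n(j)\in V$ for all $n$ and $j$. Strictly, the premises give this for all $\beta'\ge\beta_0$, and we fix one such $\beta$. For this fixed $\beta$, the first premise gives $n_0$ such that $x_n^\beta(j)-y_n^\beta(j)\in V$ for $n\ge n_0$ and all $j$. Then
\[
 x_n(j)-y_n(j)=(x_n(j)-x_n^\beta(j))+(x_n^\beta(j)-y_n^\beta(j))+(y_n^\beta(j)-y_n(j))\in V+V+V\subseteq W.
\]
This proves $\mathsf U(N)$ for every $N$; membership in $N$ is never used. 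The only care needed throughout is the dictionary between evaluation-net nullity and eventual containment in neighbourhoods, which is recorded at the start.
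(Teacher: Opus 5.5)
Your proposal is correct and follows essentially the same route as the paper: contraposition with singleton responses outside a fixed neighbourhood for $\mathsf F(X)$, a closed neighbourhood $W_0$ with a telescoping chain $V_{i+1}+V_{i+1}\subseteq V_i$ and one threshold per row for $\mathsf S_{\rm all}(X)$, and the three-term decomposition through a fixed approximating parameter for $\mathsf U(N)$. The only differences are these. You cite regularity of topological groups and the net description of closure, where the paper proves both explicitly (for instance $\overline U\subseteq U+U$ for a symmetric open $U$). You should also set $V_0=W_0$, and note that $0\in V_i$ lets you insert zeros, so that $\sum_{i\in F}V_i\subseteq\sum_{i\le\max F}V_i$ holds for finite sets $F$ that are not initial segments.
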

\begin{proof}
For $B\subseteq X$, topological closure is stable under net limits, so $[B]_X\subseteq\overline B$. If $x\in\overline B$, the pairs
\[
 I_x=\{(U,b):U\text{ a zero-neighborhood},\ b\in B\cap(x+U)\},
\]
ordered by $(U,b)\le(V,c)$ when $V\subseteq U$, form a directed domain. The net $(U,b)\mapsto b$ belongs to $B$ and converges to $x$. Hence $x\in[B]_X$ and $[B]_X=\overline B$.

For $\mathsf F(X)$, suppose that $u:\Lambda\to X$ does not converge to $0$. There is a zero-neighborhood $V$ such that
\[
 \forall\alpha\in\Lambda:\quad T_u(\alpha)\setminus V\ne\varnothing.
\]
For any proposed strategy $\sigma$, the recursive prescriptions
\[
 \alpha_n=\sigma((\alpha_j,\{x_j\})_{j<n}),
 \qquad x_n\in T_u(\alpha_n)\setminus V
\]
define a legal play. Its responses $S_n=\{x_n\}$ satisfy $S_n\cap V=\varnothing$ for every $n$, so $(S_n)_n\not\Rightarrow0$. Thus a strategy whose every response evaluation is null can exist only when $u\to0$.

For $\mathsf S_{\rm all}(X)$, let $V$ be a zero-neighborhood. Continuity of addition gives a symmetric open zero-neighborhood $U$ with $U+U\subseteq V$. If $x\in\overline U$, then $(x+U)\cap U\ne\varnothing$, whence $x\in U-U=U+U$. Therefore
\[
 W_0=\overline U\subseteq V
\]
is a closed zero-neighborhood. Recursively obtain symmetric zero-neighborhoods $W_i$ with $W_i+W_i\subseteq W_{i-1}$ for $i\ge1$. Since $0\in W_i$,
\[
 W_i\subseteq W_{i-1},\qquad
 \sum_{i=k}^mW_i\subseteq W_{k-1}\quad(1\le k\le m).
\]
The second inclusion is descending induction on $k$: its induction step is
\[
 W_k+\sum_{i=k+1}^mW_i\subseteq W_k+W_k\subseteq W_{k-1}.
\]
For the rows $U_{i,\alpha}\Rightarrow0$ in the definition of $\mathsf S_{\rm all}$, there exist indices $\alpha_i$ such that
\[
 \forall\alpha\ge\alpha_i:\quad U_{i,\alpha}\subseteq W_i.
\]
Put $\varphi_0(i)=\alpha_i$. For every $\varphi\ge\varphi_0$ and nonempty finite $F\subseteq\N$, insertion of zeros gives
\[
 \sum_{i\in F}U_{i,\varphi(i)}
 \subseteq\sum_{i=1}^{\max F}W_i\subseteq W_0.
\]
The empty sum is $0\in W_0$. Consequently
\[
 A_\varphi\subseteq W_0,
 \qquad [A_\varphi]_X=\overline{A_\varphi}\subseteq W_0\subseteq V
 \qquad(\varphi\ge\varphi_0).
\]
This is $(X\cap[A_\varphi]_X)_\varphi\Rightarrow0$.

For $\mathsf U(N)$, use the point families in its definition and fix a zero-neighborhood $V$. Let $W=-W$ be a zero-neighborhood with $W+W+W\subseteq V$. The two uniform approximation hypotheses yield $\beta_0$ with
\[
 \forall n\ge1\ \forall j\in J:
 \quad x_n^{\beta_0}(j)-x_n(j)\in W,
 \quad y_n^{\beta_0}(j)-y_n(j)\in W.
\]
The $\beta_0$-row hypothesis yields an integer $n_0$ with
\[
 \forall n\ge n_0\ \forall j\in J:
 \quad x_n^{\beta_0}(j)-y_n^{\beta_0}(j)\in W.
\]
For those indices,
\[
 \begin{split}
 x_n(j)-y_n(j)
 &=(x_n(j)-x_n^{\beta_0}(j))
   +(x_n^{\beta_0}(j)-y_n^{\beta_0}(j))
   +(y_n^{\beta_0}(j)-y_n(j))\\
 &\in W+W+W\subseteq V.
 \end{split}
\]
Thus $(\{x_n(j)-y_n(j):j\in J\})_n\Rightarrow0$.
\end{proof}

\begin{corollary}[Topological modular extension]\label{app:topological}
Let $X$ be a Hausdorff Abelian topological group, and let $m:A\to X$ be an additive modular ambiently normal measure. Suppose that $m$ is exhaustive on ordered orthogonal source sequences and that the closed source range $\overline{m(A)}$ is Cauchy complete. Then every conclusion of Corollary~\ref{app:uncontrolled} holds. Intrinsic $\sigma$-additivity may replace ambient normality in the source assumptions. In particular, the completeness assumption holds when $X$ itself is complete.
\end{corollary}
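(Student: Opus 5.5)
The plan is to reduce Corollary~\ref{app:topological} to Corollary~\ref{app:uncontrolled} with the choice $D=X$. Every hypothesis of that corollary then becomes either a stated assumption or a consequence of Lemma~\ref{app:topological-range}.

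First, Lemma~\ref{app:topological-range} identifies stable hulls with topological closures. Hence $N=[m(A)]_X=\overline{m(A)}$, and the assumed Cauchy completeness of the closed source range is exactly completeness of $N$ in the net sense. The net Cauchy condition $u\boxplus(-u)\to0$ in a topological group is the usual Cauchy condition, because the independent-index difference net converges to $0$ if and only if every zero-neighbourhood eventually contains $u_\alpha-u_\beta$.

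Second, take $D=X$. Then $N-N\subseteq D=-D$ holds trivially. Lemma~\ref{app:topological-range} gives $\mathsf F(X)$, $\mathsf S_{\rm all}(X)$ and $\mathsf U(N)$ for this nonempty $N$.

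The main step is to obtain \emph{uniform} hereditary exhaustivity \eqref{mod:source-he}, since only exhaustivity on ordered orthogonal sequences is assumed. I will read the hypothesis in the sense that $(V_A(h_j))_j\Rightarrow0$, so that the values of source fragments below $h_j$ are eventually small. If the paper intends only $m(h_j)\to0$, I would upgrade it by contradiction.
\begin{enumerate}
\item Suppose the hereditary form fails. Then there are a zero-neighbourhood $V$, infinitely many indices $j$, and fragments $c_j\le h_j$ in $A$ with $m(c_j)\notin V$.
\item The fragments $c_j$ of an ordered orthogonal sequence can be arranged into an ordered orthogonal sequence in $A$. The partial sums $c_{j_1}+\cdots+c_{j_k}$ are defined because shrinking summands preserves existence of sums (Lemma~\ref{pd:residual-calculus}), but the intermediate pieces must be handled with care.
\item The exhaustivity hypothesis applied to this new sequence then contradicts $m(c_j)\notin V$.
\end{enumerate}
I expect this heredity bookkeeping in the pseudo-effect setting to be the main obstacle.

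Third, the ambient-normality and intrinsic-$\sigma$-additivity alternative is handled exactly as in the last paragraph of the proof of Corollary~\ref{app:uncontrolled}, via Lemma~\ref{pd:ambient-normality}. Finally, if $X$ is complete, then the closed subset $\overline{m(A)}$ is complete by the last assertion of Proposition~\ref{gen:inheritance}, since it is limit-stable. This yields the ``in particular'' clause.
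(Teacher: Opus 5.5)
Your proposal is correct and follows the paper's proof: reduce to Corollary~\ref{app:uncontrolled} with $D=X$ via Lemma~\ref{app:topological-range}, and upgrade exhaustivity to uniform hereditary exhaustivity by extracting fragments $a_k\le h_{n_k}$ with $m(a_k)\notin U$. The hypothesis is indeed only $m(h_j)\to0$, so this upgrade is needed. The paper handles your ``intermediate pieces'' by padding with zeros, $b_i=a_k$ for $i=n_k$ and $b_i=0$ otherwise. Shrinking summands then gives that $b_1+\cdots+b_N$ exists and is at most $h_1+\cdots+h_N$, so $(a_k)$ is ordered orthogonal.
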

\begin{proof}
Let $(h_n)$ be an ordered orthogonal sequence in $A$ and put
\[
 V_n=\{m(a):a\in A,\ a\le h_n\}.
\]
If $(V_n)_n\not\Rightarrow0$, there is a zero-neighborhood $U$ with
\[
 \forall N\ \exists n\ge N\ \exists a\in A:
 \quad a\le h_n,\quad m(a)\notin U.
\]
Recursion gives $n_1<n_2<\cdots$ and $a_k\le h_{n_k}$ with $m(a_k)\notin U$. Put $b_i=a_k$ for $i=n_k$ and $b_i=0$ otherwise. Monotonicity of partial addition gives, for every $N$,
\[
 b_1+\cdots+b_N\text{ exists},
 \qquad b_1+\cdots+b_N\le h_1+\cdots+h_N.
\]
Thus $(a_k)_k$ is ordered orthogonal. Exhaustivity gives $m(a_k)\to0$, contradicting $m(a_k)\notin U$ for every $k$. We have proved the quantified hereditary statement
\[
 \forall U\ni0\ \exists N\ \forall n\ge N\ \forall a\in A:
 \quad a\le h_n\Longrightarrow m(a)\in U.
\]

Set $N=[m(A)]_X$ and $D=X$. Lemma~\ref{app:topological-range} gives
\[
 N=\overline{m(A)},\qquad
 N-N\subseteq D=-D,\qquad
 \mathsf F(D),\quad\mathsf S_{\rm all}(D),\quad\mathsf U(N).
\]
Completeness of $N$ is the stated hypothesis. Additivity, modularity, and ambient normality of $m$ are also stated hypotheses, and uniform hereditary exhaustivity was just proved. Corollary~\ref{app:uncontrolled} applies. If $X$ is complete, every Cauchy net in the closed set $N$ has an ambient limit belonging to $N$, so $N$ is complete.
\end{proof}

For complete $X$, Corollary~\ref{app:topological} gives the generated-domain topological modular extension of Weber~\cite{Weber18} and its Avallone--De Simone--Vitolo specializations~\cite{Avallone06,Avallone16}. The domain asserted here is $\sigma_L(A)$, with the original Sion limits.

\begin{corollary}[Mixed ranges and inherited algebraic constraints]\label{app:mixed}
Let $Y$ be a separated Abelian net convergence group, let $G$ be a positive realization from Proposition~\ref{rel:positive} or Proposition~\ref{rel:order-products}, and let $H$ be a limit-stable subgroup of $Y\times G$. Suppose
\[
 m=(n,\nu):A\longrightarrow H
\]
is additive and ambiently normal, with $\nu(a)\ge0$ for every $a\in A$. Assume that $n$ is modular and uniformly hereditarily exhaustive, and that its stable range $N=[n(A)]_Y$ is complete with
\[
 N-N\subseteq D=-D,
 \qquad\mathsf F(D),
 \qquad\mathsf S_{\rm all}(D),
 \qquad\mathsf U(N).
\]
Then $m$ has a unique additive $\sigma$-additive extension $\bar m:S\to H$. Its first coordinate is modular, its second coordinate is positive, and all Sion and contextual conclusions of Theorem~\ref{rel:lifting} hold in $H$.
\end{corollary}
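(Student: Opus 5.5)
The plan is to apply Theorem~\ref{rel:lifting} with
\[
 X=Y\times G,\qquad P_0=Y\times G^+,\qquad \pi(y,g)=y,
\]
and then pass to $H$ by Proposition~\ref{rel:subgroups}. The relative range conditions for $(Y\times G,Y,\pi,P_0,e)$ come from Proposition~\ref{rel:products}. The factor $Y$ carries the uncontrolled realization of Proposition~\ref{rel:uncontrolled}. The factor $G$ carries the positive realization of Proposition~\ref{rel:positive} or Proposition~\ref{rel:order-products}, with its projection $G\to\{0\}$. The product projection $Y\times\{0\}\cong Y$ is the coordinate map, and the product cone is $P_0=Y\times G^+$, which is limit-stable. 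The data must then be restricted to $H$, with cone $P=P_0\cap H$ and $e=m(1)=(n(1),\nu(1))$. This requires $e\in H$, which holds because $m$ takes values in $H$. Proposition~\ref{rel:subgroups} then gives (R1)--(R3) for $(H,Y,\pi|_H,P,e)$. Continuity of $\pi|_H$ follows from continuity of the product projection.

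Next I will check the source hypotheses of Theorem~\ref{rel:lifting}. Positivity holds because $m(a)\in P$, since $\nu(a)\ge0$ and $m(a)\in H$. Additivity and ambient normality of $m$ into $H$ are assumed. The induced convergence on $H$ is the product convergence restricted to $H$, so normality in $Y\times G$ is the same as normality in $H$. The projected measure is $\pi m=n$, which is modular and uniformly hereditarily exhaustive. Its stable hull $N=[n(A)]_Y$ is complete, and $D$ satisfies $\mathsf F(D)$, $\mathsf S_{\rm all}(D)$ and $\mathsf U(N)$. These are exactly the projected hypotheses of the theorem.

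Theorem~\ref{rel:lifting} then gives a positive additive $\sigma$-additive extension $\bar m:S\to I_e\subseteq H$ with $\pi\bar m=\eta$. It also gives uniqueness among all additive $\sigma$-additive $H$-valued extensions, together with the Sion and contextual conclusions in $H$. The first coordinate of $\bar m$ is $\pi\bar m=\eta$, which is modular by Theorem~\ref{mod:main}. The second coordinate is positive because $\bar m(S)\subseteq P\subseteq Y\times G^+$.

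The one delicate point is uniqueness as stated, among extensions into $H$. Theorem~\ref{rel:lifting} gives uniqueness among $X$-valued extensions, and with $X=H$ this is exactly what is needed. The argument in Theorem~\ref{pd:uniqueness} only uses separation of the range, which $H$ inherits. I expect no real obstacle beyond confirming that Proposition~\ref{rel:subgroups} applies at $e=m(1)\in H$.
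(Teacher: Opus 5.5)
Your proposal is correct and follows the paper's proof essentially step for step. You build the relative range data on $Y\times G$ from Propositions~\ref{rel:uncontrolled} and \ref{rel:positive}/\ref{rel:order-products} via Proposition~\ref{rel:products}, restrict to $H$ with $P=(Y\times G^+)\cap H$ and $e=m(1)$ via Proposition~\ref{rel:subgroups}, and then apply Theorem~\ref{rel:lifting}. The paper also records $e-m(a)=m(a/1)\in P$ and the projected normality $n(a_k)\to0$ explicitly, but Theorem~\ref{rel:lifting} already supplies both internally, so nothing is missing from your argument.
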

\begin{proof}
Set
\[
 P=(Y\times G^+)\cap H,
 \qquad\pi(y,g)=y,
 \qquad e=(n(1),\nu(1)).
\]
For $a\in A$,
\[
 m(a)\in H,\quad\nu(a)\ge0
 \quad\Longrightarrow\quad m(a)\in P,
 \qquad e-m(a)=m(a/1)\in P.
\]
Thus $m(A)\subseteq I_e$. On $Y$, Proposition~\ref{rel:uncontrolled} gives the data $(Y,Y,\operatorname{id},Y,n(1))$. On $G$, Proposition~\ref{rel:positive} or Proposition~\ref{rel:order-products} gives $(G,\{0\},0,G^+,\nu(1))$. Proposition~\ref{rel:products} gives their product, whose projected range $Y\times\{0\}$ is identified with $Y$. Proposition~\ref{rel:subgroups} restricts these data to $H$. Hence \textnormal{(R1)}--\textnormal{(R3)} hold for $(H,Y,\pi,P,e)$.

The projected source is $\pi m=n$, and continuity of $\pi$ gives
\[
 a_k\downarrow_L0\quad\Longrightarrow\quad
 m(a_k)\to0\quad\Longrightarrow\quad n(a_k)\to0.
\]
Its modularity and uniform hereditary exhaustivity, the completeness of $N=[n(A)]_Y$, and the three range principles on $N,D$ are exactly the stated hypotheses. The source map $m$ is additive and ambiently normal by assumption. Theorem~\ref{rel:lifting} therefore gives $\bar m:S\to H$, its unrestricted uniqueness, and its Sion identities. Its range lies in $P$, so its second coordinate is positive. Its first coordinate is the modular projected extension. Every value remains in $H$.
\end{proof}

\section{The Boolean regulated extension}\label{gen:core-section}\label{sec:regulated}
The source in this section is a Boolean ring in a sigma-complete Boolean algebra, with the data of Definition~\ref{gen:data}. The construction uses block reflection and retains the full regulated domain. It has weaker reflection requirements than Theorem~\ref{mod:main}.

The data of this section are those of Definition~\ref{gen:data}. We assume hereditary
exhaustivity, completeness of $M$, and
\begin{equation}\label{gen:standing}
 \mathsf B_\omega(D),\qquad \mathsf S_{\mathfrak I}(D),
 \qquad \mathsf U(M),\qquad \mathbb N\in\mathfrak I.
\end{equation}
Write $[A]=\clo_\theta A$ and $A_\alpha\Rightarrow0$ for convergence
of the evaluation net to zero. All hulls in this section use $\theta$.
In particular, $M-M\subseteq D$ does not assert stability or additive
closure of $D$.

\begin{definition}[Regulated elements]\label{gen:regulated}
For decreasing null rows $0\in U_{i,\alpha}\subseteq D$, with
domains $I_i\in\mathfrak I$, put
\begin{equation}\label{gen:Q-definition}
 A_\varphi=\bigcup_{F\subseteq\mathbb N\ {\rm finite}}
            \sum_{i\in F}U_{i,\varphi(i)},\qquad
 Q_\varphi=D\cap[A_\varphi],\qquad \varphi\in\prod_i I_i.
\end{equation}
An element $b\in\mathbb B$ is regulated if there are such rows and
$\ell_\varphi\in\mathcal H_\delta$, $u_\varphi\in\mathcal H_\sigma$ with
\begin{equation}\label{gen:certificate}
 \ell_\varphi\le b\le u_\varphi,\qquad
 \Gamma_m(u_\varphi\setminus\ell_\varphi)\subseteq Q_\varphi.
\end{equation}
These data form a certificate. The class of elements admitting such certificates is
$\mathcal R=\mathcal R_{\mathfrak I}(m,D)$.
\end{definition}

These certificates use only source fragment values. No extension
value or Cauchy condition on an approximation net enters their
definition. We will prove that the upper and lower approximation
nets are Cauchy and have a common limit.

We now state our main theorem of this section, and the main ingredient of the extension procedure.

\begin{theorem}[Intrinsic regulated extension]\label{gen:extension}
Under \eqref{gen:standing}, $\mathcal R$ is a Boolean
$\sigma$-subalgebra containing $\sigma(\mathcal H)$, and $m$ has a
countably additive extension $\bar m:\mathcal R\to M$. There are
canonical maps $m_\sigma:\mathcal H_\sigma\to M$ and
$m_\delta:\mathcal H_\delta\to M$ for which
\begin{align}
 m_\sigma(u)&\tto{\theta}\bar m(b)
 &&(u\in\mathcal H_\sigma,\ b\le u),\label{gen:full-upper}\\
 m_\delta(\ell)&\tto{\theta}\bar m(b)
 &&(\ell\in\mathcal H_\delta,\ \ell\le b),\label{gen:full-lower}
\end{align}
where the first domain is ordered by reverse inclusion and the
second by inclusion. The extension is unique among countably
additive extensions satisfying \eqref{gen:full-lower}. Its
restriction to $\sigma(\mathcal H)$ is unique among all countably
additive extensions of $m$.
\end{theorem}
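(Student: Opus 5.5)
The plan is to rerun the regulated construction of Section~\ref{mod:section} in the Boolean setting. Disjointization replaces the transfer lemmas, and block reflection replaces finite-response reflection. The first step defines $m_\sigma$. For $u\in\mathcal H_\sigma$, take an increasing exhaustion $a_n\uparrow u$ in $\mathcal H$ and disjointize it into $h_n=a_n\setminus a_{n-1}$. Lemma~\ref{gen:ut} makes $c\mapsto m(c\wedge a_n)$ uniformly Cauchy, so $m(a_n)$ is Cauchy and converges in the complete set $M$. Independence of the exhaustion follows by interleaving two exhaustions, using the premeasure axiom on $\mathcal H$ through the identities $m(a_n)-m(a_n\wedge b_k)\to0$. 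This yields the Boolean analogue of \eqref{mod:exhaustion-shell}, namely $\Gamma_m(u\setminus a_n)\Rightarrow0$, with the hull step controlled by the one-row case of Lemma~\ref{gen:relative}. Set $m_\delta(\ell)=m(1)-m_\sigma(\ell^c)$. One checks finite additivity and modularity of $m_\sigma$ on $\mathcal H_\sigma$, monotone-sequential continuity, and the domination $m_\sigma(v)\in\Gamma_m(s)$ for $v\le s$. For $\ell\le u$, this gives $m_\sigma(u)-m_\delta(\ell)\in\Gamma_m(u\setminus\ell)$, because $u\setminus\ell\in\mathcal H_\sigma$.

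Next, define $\bar m$ on $\mathcal R$ as in Proposition~\ref{mod:regulated-finite}. For two certificate indices $\varphi,\psi$ of $b$, compare through $u_\varphi\wedge u_\psi$, which gives $m_\sigma(u_\varphi)-m_\sigma(u_\psi)\in Q_\varphi-Q_\psi$. This family is null on the product, so completeness of $M$ yields a limit, and separation makes the limit independent of the certificate. Both \eqref{gen:full-upper} and \eqref{gen:full-lower} over all covers follow at once, since every cover $u\ge b$ satisfies $m_\sigma(u\wedge u_\varphi)-\bar m(b)\in Q_\varphi$ up to the fixed error. Finite Boolean operations on $\mathcal R$ are handled by merging certificates. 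Complements swap $\ell_\varphi$ and $u_\varphi^c$, and a union uses $\ell_1\vee\ell_2\le b_1\vee b_2\le u_1\vee u_2$ with $(u_1\vee u_2)\setminus(\ell_1\vee\ell_2)\le(u_1\setminus\ell_1)\vee(u_2\setminus\ell_2)$. The error is then a sum of two fragment hulls, and countable assembly in Lemma~\ref{gen:relative} turns it into a single certificate. Finite additivity of $\bar m$ passes to the limit from that of $m_\sigma$ on upper endpoints.

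The main obstacle is countable closure together with $\sigma$-additivity. Take disjoint $b_j\in\mathcal R$ with certificates, index everything on the $\mathbb N^2$-product, and use the bracket $\ell_N\le b\le u=\bigvee_j u_j$ with $\ell_N=\bigvee_{j\le N}\ell_j$. The error $\Gamma_m(u\setminus\ell_N)$ splits into the finitely many certificate errors, which the flattening inclusion \eqref{gen:flatten-inclusion} places in $Q^*_\Phi$, plus the tail $\Gamma_m\bigl(\bigvee_{j>N}u_j\setminus\ell_N\bigr)$. To control the tail, I would first show uniform exhaustivity of $\bar m$ on disjoint sequences in $\mathcal R$. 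This uses block reflection via Proposition~\ref{gen:reflection-block} on the sets $\Gamma_m(u_j)$, whose finite sums are dominated by $\Gamma_m$ of finite unions, and it is this step where block reflection is needed. Then $\mathsf U(M)$, applied as in Lemma~\ref{mod:tail-transfer}, transfers the source tail estimate to the regulated tail. Adding the tail row $(Z_N)$ as an extra row produces a certificate for $b$, and the same estimate shows $\sum_{j\le N}\bar m(b_j)\to\bar m(b)$. Hence $\mathcal R$ is a $\sigma$-algebra containing $\mathcal H$, and therefore $\sigma(\mathcal H)$.

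For uniqueness, suppose a countably additive extension $\lambda$ satisfies \eqref{gen:full-lower}. Since $\lambda=m_\delta$ on $\mathcal H_\delta$ by monotone continuity, its lower nets coincide with those of $\bar m$, and separation gives $\lambda=\bar m$. On $\sigma(\mathcal H)$, a monotone-class argument as in Theorem~\ref{pd:uniqueness} applies to any countably additive competitor.
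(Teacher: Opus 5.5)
\paragraph{The gap: nullity of the regulated tails.} Your construction of $m_\sigma$ and $m_\delta$ is sound in outline (write $m_\sigma(1)$, not $m(1)$, since $1\notin\mathcal H$ in general). So are the certificate values of $\bar m$ with their finite operations and full nets, the device of appending the row $(Z_N)$ to the flattened certificate rows, and the uniqueness argument. The gap is at the step you flag as the main obstacle, namely nullity of $Z_N=\{\bar m(c):c\in\mathcal R,\ c\le\bigvee_{j\in F}b_j,\ \min F\ge N\}$ for disjoint $b_j\in\mathcal R$.

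Block reflection cannot be applied to the sets $\Gamma_m(u_j)$. The certificate endpoints $u_j=u_j^\Phi$ of the disjoint $b_j$ overlap inside the shells, so $\sum_{j\in F}\Gamma_m(u_j)$ is not contained in $\Gamma_m(\bigvee_{j\in F}u_j)$; the finite-cover estimate \eqref{gen:finite-cover-estimate} goes the other way, into a hull. Since $D$ is not additively closed, neither the premise $\sum_{j\in F}A_j\subseteq D$ of $\mathsf B_\omega(D)$ nor nullity of the block sums is available. For fixed $\Phi$, the joins $\bigvee_{j\ge N}u_j^\Phi$ need not even decrease to $0$.

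Disjointizing to $d_j=u_j\setminus\bigvee_{i<j}u_i$ does not help by itself. The element $d_j$ lies in $\operatorname{alg}(\mathcal H_\sigma)$ but generally not in $\mathcal H_\sigma$. The source-fragment hull $\Gamma_m(d_j)$ then does not dominate $\bar m(c)$ for regulated $c\le d_j$: if $d_j$ contains no nonzero source element, $\Gamma_m(d_j)=\{0\}$ while $\bar m(d_j)$ may be nonzero. So there is no source tail estimate that a single application of $\mathsf U(M)$ could transfer to $\mathcal R$.

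\paragraph{The missing intermediate stage.} Even the modular argument you propose to rerun applies Lemma~\ref{mod:tail-transfer} twice, with the finite stage $K$ as the base of the second application (Corollaries~\ref{mod:finite-chain-tails} and~\ref{mod:regulated-chain-tails}). In the Boolean case the paper introduces $\mathcal K=\operatorname{alg}(\mathcal H_\sigma)$ and defines $m_0$ through source approximations $E\triangle p_n\le c_n$ with $\Gamma_m(c_n)\Rightarrow0$ (Lemma~\ref{gen:K-construction}). It then proves the domination $\bar m(c)\in\Gamma_0(E)$ for regulated $c\le E\in\mathcal K$, where $\Gamma_0(E)$ is the hull of $m_0$-values of $\mathcal K$-fragments.

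The tails are transferred twice through Lemma~\ref{gen:tail-transfer}.
\begin{itemize}
\item Lemma~\ref{gen:UT-K} starts from the source tails of Lemma~\ref{gen:ut}, taken for the disjointized source approximants $p_{n,\varphi(n)}\setminus\bigvee_{i<n}p_{i,\varphi(i)}$. This is where $\mathsf B_\omega(D)$ actually enters, and the result is null hereditary tails on $\mathcal K$.
\item Lemma~\ref{gen:UT-R} then uses the disjoint $d_n^\Theta=u_n^\Theta\setminus\bigvee_{i<n}u_i^\Theta\in\mathcal K$. The inclusion $c\setminus d_F^\Theta\le\bigvee_{i\le\max F}s_i^\Theta$ puts the approximation error in the flattened $Q_\Theta$. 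For fixed $\Theta$, one has $\bar m(c\wedge d_F^\Theta)\in\Gamma_0(d_F^\Theta)\subseteq D\cap[T_N^\Theta]$.
\end{itemize}
So disjointization does not replace the transfer lemma, as your opening sentence suggests. It supplies the approximants, and $\mathsf U(M)$ is needed at both stages.

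Finally, even once $Z_N\Rightarrow0$ is available, you cannot bound source fragments of the shell $U\setminus L_N$ by $Z_N$ directly, because $\bigvee_{j>N}b_j$ is not yet known to be regulated. This step still requires the finite joins $a\wedge U_m^\Theta$ together with increasing continuity of $m_\sigma$.
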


We shall prove it in a series of lemmas, and collect the final result later. We begin with the following.

\begin{lemma}[Source classes and finite cover estimates]
\label{gen:source-classes}\label{gen:cover-hull}
The class $\mathcal H_\sigma$ is closed under countable joins,
finite meets and differences by source elements. The class
$\mathcal H_\delta$ is closed under finite joins and meets.
$0,1$ and $\mathcal H$ belong to both classes. If
$U\in\mathcal H_\sigma$, $L\in\mathcal H_\delta$, then
$U\setminus L\in\mathcal H_\sigma$. Furthermore,
\begin{equation}\label{gen:finite-cover-estimate}
 \Gamma_m\left(\bigvee_{i=1}^r s_i\right)
 \subseteq\left[\sum_{i=1}^r\Gamma_m(s_i)\right]
 \qquad(s_i\in\mathcal H_\sigma).
\end{equation}
If $s_{i,n}$ decrease and $\Gamma_m(s_{i,n})\Rightarrow0$ for
$1\le i\le r$, then $\Gamma_m(\bigvee_{i=1}^r s_{i,n})\Rightarrow0$.
\end{lemma}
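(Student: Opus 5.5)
The algebraic closure assertions are handled first, and then the two cover estimates, with the finite estimate \eqref{gen:finite-cover-estimate} as the main step.

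\emph{Closure properties.} For $U=\bigvee_n a_n$ with $a_n\in\mathcal H$ we may pass to $a'_n=\bigvee_{j\le n}a_j\in\mathcal H$ and so assume the representation is increasing.
\begin{itemize}
\item A countable join of countable joins is a countable join indexed by $\mathbb N^2$.
\item For finite meets, $\bigvee_n a_n\wedge\bigvee_k b_k=\bigvee_{n,k}(a_n\wedge b_k)$ by distributivity of the $\sigma$-complete Boolean algebra.
\item For a source difference, $U\setminus h=\bigvee_n(a_n\setminus h)$ with $a_n\setminus h\in\mathcal H$.
\item We have $0\in\mathcal H$ and $1\in\mathcal H_\sigma$ by hypothesis, and $\mathcal H\subseteq\mathcal H_\sigma$ via constant joins. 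Complementation then gives $0,1\in\mathcal H_\delta$.
\item $\mathcal H\subseteq\mathcal H_\delta$: for $h\in\mathcal H$ we have $h^c=1\setminus h\in\mathcal H_\sigma$ by the difference closure.
\item De Morgan converts the join and meet closures of $\mathcal H_\sigma$ into the finite meet and join closures of $\mathcal H_\delta$.
\item Finally $U\setminus L=U\wedge L^c$ with $L^c\in\mathcal H_\sigma$, so it lies in $\mathcal H_\sigma$ by meet closure.
\end{itemize}

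\emph{The finite estimate.} The key point is to reduce to disjoint pieces and use finite additivity, exactly as in \eqref{gen:fragment-sum}. By induction it suffices to treat $r=2$; stable-hull addition \eqref{eq:hull-addition} and idempotence then give the general case. Let $a\in\mathcal H$ with $a\le s_1\vee s_2$, and write $s_i=\bigvee_n a_{i,n}$ with $a_{i,n}\in\mathcal H$ increasing. Put $c_n=a\wedge a_{1,n}$ and $d_n=(a\wedge a_{2,n})\setminus a_{1,n}$. These lie in $\mathcal H$, with $c_n\le s_1$ and $d_n\le s_2$, and they are disjoint. Finite additivity gives $m(c_n\vee d_n)=m(c_n)+m(d_n)\in V_m(s_1)+V_m(s_2)$. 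Since $c_n\vee d_n=a\wedge(a_{1,n}\vee a_{2,n})\uparrow a$ in $\mathcal H$, the premeasure axiom (in its increasing form recorded after Definition~\ref{gen:data}) gives $m(c_n\vee d_n)\to m(a)$. Hence $V_m(s_1\vee s_2)\subseteq[V_m(s_1)+V_m(s_2)]$. Taking stable hulls and using $[V_m(s_1)]+[V_m(s_2)]\supseteq V_m(s_1)+V_m(s_2)$ together with idempotence yields \eqref{gen:finite-cover-estimate}.

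\emph{The decreasing estimate.} For decreasing $s_{i,n}$ with $\Gamma_m(s_{i,n})\Rightarrow0$, the set $\Gamma_m(\bigvee_i s_{i,n})$ is contained in $[\sum_i\Gamma_m(s_{i,n})]$ by the finite estimate, and it lies in $M\subseteq D$. I would apply $\mathsf S_{\mathfrak I}(D)$ (Lemma~\ref{gen:relative}) to the $r$ decreasing null rows $U_{i,n}=\Gamma_m(s_{i,n})\subseteq M\subseteq D$, each of which contains $0$ and is indexed by $\mathbb N\in\mathfrak I$, and pad with zero rows for $i>r$. The resulting $Q_\varphi$ contains $D\cap[\sum_{i\le r}U_{i,\varphi(i)}]$. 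Evaluating along the diagonal $\varphi(i)=n$ for $i\le r$, which is cofinal because there are only finitely many rows (as in Lemma~\ref{mod:hull-bookkeeping}), shows that $D\cap[\sum_i\Gamma_m(s_{i,n})]$ is null in $n$. This family contains $\Gamma_m(\bigvee_i s_{i,n})$, which is decreasing in $n$, so the comparison is a tail comparison and nullity follows.

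The main obstacle I expect is the bookkeeping in this last step. Passing from the product index to the single index $n$ must be done by an explicit tail comparison between evaluation nets, since a diagonal restriction of a null family over a product need not be automatic without cofinality. Confining the argument to finitely many nontrivial rows is what makes the diagonal cofinal.
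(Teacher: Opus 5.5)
Your proposal is correct and follows the paper's proof: the same Boolean closure computations, the same disjointization of a source element below a finite cover followed by the increasing premeasure limit, and the same use of $\mathsf S_{\mathfrak I}(D)$ with $r$ nontrivial rows, zero padding, and the tail comparison at $n\ge\max_{i\le r}\varphi(i)$. The only difference is cosmetic. You treat $r=2$ and induct via stable-hull addition, whereas the paper disjointizes all $r$ pieces at once with $d_{i,n}=c\wedge\bigl(a_{i,n}\setminus\bigvee_{j<i}a_{j,n}\bigr)$.
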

\begin{proof}
For $t=\bigvee_j(c\wedge a_j)$, the inequalities
$a_j\le c^c\vee t$ give
$c\wedge\bigvee_j a_j\le t\le c\wedge\bigvee_j a_j$.
Thus, for $U_i=\bigvee_n a_{i,n}$ and $a\in\mathcal H$,
\[
 \bigvee_iU_i=\bigvee_{i,n}a_{i,n},\qquad
 U_1\wedge U_2=\bigvee_{n,k}(a_{1,n}\wedge a_{2,k}),\qquad
 U_i\setminus a=\bigvee_n(a_{i,n}\setminus a).
\]
All right-hand terms belong to $\mathcal H$, and $\mathbb N^2$
is countable. For a source exhaustion $h_n\uparrow1$,
$a^c=\bigvee_n(h_n\setminus a)\in\mathcal H_\sigma$.
If $L_i=U_i^c$ with $U_i\in\mathcal H_\sigma$, then
\[
 L_1\vee L_2=(U_1\wedge U_2)^c\in\mathcal H_\delta,
 \qquad L_1\wedge L_2=(U_1\vee U_2)^c\in\mathcal H_\delta.
\]
Constant source sequences put $0$ and every source element in
$\mathcal H_\sigma$, and the fixed exhaustion puts $1$ there.
The formula for $a^c$ above gives $a=(a^c)^c\in\mathcal H_\delta$.
Also $0=1^c$ and $1=0^c$ belong to $\mathcal H_\delta$.
Finally $U\setminus L=U\wedge L^c\in\mathcal H_\sigma$.

For increasing source exhaustions $a_{i,n}\uparrow s_i$ and
$c\in\mathcal H$ below $\bigvee_i s_i$, put
\[
 c_n=c\wedge\bigvee_{i=1}^r a_{i,n},\qquad
 d_{i,n}=c\wedge\left(a_{i,n}\setminus\bigvee_{j<i}a_{j,n}\right).
\]
Then $c_n\uparrow c$, $d_{i,n}\le s_i$, and
$d_{i,n}\wedge d_{j,n}=0$ for $i\ne j$. Moreover
\[
 c_n=\bigvee_{i=1}^r d_{i,n},\qquad
 m(c_n)=\sum_{i=1}^r m(d_{i,n})\in\sum_{i=1}^r\Gamma_m(s_i),
 \qquad m(c_n)\tto\theta m(c).
\]
The last limit uses source countable additivity on the disjoint
increments of $(c_n)$. Hence
$V_m(\bigvee_i s_i)\subseteq[\sum_i\Gamma_m(s_i)]$.
Idempotence proves \eqref{gen:finite-cover-estimate}.
For the final assertion use
the rows $\Gamma_m(s_{i,n})$ and zero rows thereafter in
$\mathsf S_{\mathfrak I}(D)$. Their relative hulls are
\[
 B_{k_1,\ldots,k_r}
   =D\cap\left[\sum_{i=1}^r\Gamma_m(s_{i,k_i})\right].
\]
They form a null evaluation family. Given $k_1,\ldots,k_r$,
every $n\ge\max_i k_i$ satisfies
$\Gamma_m(\bigvee_i s_{i,n})\subseteq B_{k_1,\ldots,k_r}$.
The corresponding evaluation net is a quasi-subnet of the null
evaluation of $(B_{k_1,\ldots,k_r})$.
\end{proof}

\begin{lemma}[Exhaustion limits]\label{gen:sigma-construction}
For $U\in\mathcal H_\sigma$ and any source exhaustion $a_n\uparrow U$,
$m(a_n)$ converges to a value $m_\sigma(U)\in M$ independent of the
exhaustion, and
\begin{equation}\label{gen:exhaustion-tail}
 \Gamma_m(U\setminus a_n)\Rightarrow0.
\end{equation}
The resulting map extends $m$, is finitely additive on disjoint
elements, is modular, and is continuous on increasing sequences in
$\mathcal H_\sigma$.
\end{lemma}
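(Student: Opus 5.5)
The plan is to mirror Lemma~\ref{mod:source-monotone-tails} and Proposition~\ref{mod:finite-extension}(1), replacing interval calculus by Boolean disjointization and finite-response reflection by block reflection through Lemma~\ref{gen:ut}.

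\emph{Step 1 (Cauchy values and the tail estimate).} Fix a source exhaustion $a_n\uparrow U$, and put $h_1=a_1$, $h_n=a_n\setminus a_{n-1}$. These are disjoint elements of $\mathcal H$. Lemma~\ref{gen:ut} gives $(T_N)_N\Rightarrow0$, where $T_N$ collects the values $m(c)$ for $c\le\bigvee_{n\in F}h_n$ with $F\subseteq\{N,N+1,\ldots\}$ finite. For $p\ge q\ge N$ we have $m(a_p)-m(a_q)=m(a_p\setminus a_q)\in T_{N+1}$. The differences therefore lie in $T_{N+1}\cup(-T_{N+1})$, which is null by inversion and finite mixing. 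So $m(a_n)$ is Cauchy in $M$, and completeness gives a limit in $M$.

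For \eqref{gen:exhaustion-tail}, take $c\in\mathcal H$ with $c\le U\setminus a_N$. Then $c\wedge a_k\uparrow c$, and the increments $c\wedge h_k$ are disjoint. The premeasure axiom gives $m(c\wedge a_k)\to m(c)$, and each $m(c\wedge a_k)=m(c\wedge(a_k\setminus a_N))$ lies in $T_{N+1}$. Hence $V_m(U\setminus a_N)\subseteq[T_{N+1}]$. Since this set also lies in $M\subseteq D$, we get $\Gamma_m(U\setminus a_N)\subseteq D\cap[T_{N+1}]$. The one-row case of Lemma~\ref{gen:relative} then gives nullity.

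\emph{Step 2 (independence of the exhaustion).} Let $a_n\uparrow U$ and $b_n\uparrow U$ be two exhaustions. Finite additivity gives
\[
 m(a_n)-m(a_n\wedge b_n)=m(a_n\setminus b_n)\in\Gamma_m(U\setminus b_n),
\]
and symmetrically $m(b_n)-m(a_n\wedge b_n)\in\Gamma_m(U\setminus a_n)$. Both families are null by Step~1. The sequence $a_n\wedge b_n$ is itself an exhaustion of $U$. Its values therefore converge, and the three limits coincide by separation. Constant sequences show that $m_\sigma$ extends $m$.

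\emph{Step 3 (algebraic properties).} Let $U,V\in\mathcal H_\sigma$ have exhaustions $a_n,b_n$. Then $a_n\vee b_n\uparrow U\vee V$ and $a_n\wedge b_n\uparrow U\wedge V$, and modularity of $m$ on $\mathcal H$ passes to the limit. Finite additivity on disjoint elements is the special case $U\wedge V=0$.

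\emph{Step 4 (increasing continuity).} This is where I expect the main work. Let $U_j\uparrow U$ in $\mathcal H_\sigma$ with exhaustions $a_{j,k}$, and use the diagonal $p_j=\bigvee_{i\le j}a_{i,j}\uparrow U$, as in Proposition~\ref{mod:finite-extension}. Then $p_j\le U_j$. Modularity and additivity give $m_\sigma(U_j)-m(p_j)=m_\sigma(U_j\setminus p_j)$; here $U_j\setminus p_j\in\mathcal H_\sigma$ by Lemma~\ref{gen:source-classes}.

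The delicate point is to place $m_\sigma(s)$ in $\Gamma_m(s)$ for $s\in\mathcal H_\sigma$. This holds because $m_\sigma(s)$ is a limit of values $m(c)$ with $c\le s$, and $\Gamma_m(s)$ is limit-stable. Hence
\[
 m_\sigma(U_j)-m(p_j)\in\Gamma_m(U\setminus p_j),
\]
which is null by Step~1. Since $m(p_j)\to m_\sigma(U)$, subtraction gives $m_\sigma(U_j)\to m_\sigma(U)$.
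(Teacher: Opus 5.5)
Your proof is correct and follows the paper's route in Steps 1, 3 and 4: Lemma~\ref{gen:ut} gives Cauchyness, one-row hull control gives \eqref{gen:exhaustion-tail}, modularity comes from passing to limits in the modular identity, and increasing continuity uses the diagonal exhaustion $p_j=\bigvee_{i\le j}a_{i,j}$ together with $m_\sigma(s)\in\Gamma_m(s)$. In the tail step the paper uses the smaller row $W_N\subseteq T_{N+1}$, which changes nothing.

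The one real difference is the independence step. The paper fixes $N$, lets $k\to\infty$ in $m(b_k\setminus a_N)=m(b_k)-m(b_k\wedge a_N)$, and then uses $M-M\subseteq D$ to place the limit $z_b-m(a_N)$ in the null family $D\cap[W_N]$. You instead compare both exhaustions with the common exhaustion $a_n\wedge b_n$. Your errors $m(a_n\setminus b_n)$ and $m(b_n\setminus a_n)$ are then actual members of the decreasing null families $\Gamma_m(U\setminus b_n)$ and $\Gamma_m(U\setminus a_n)$. So no further limit has to be placed in a relative hull, at the small cost of checking $a_n\wedge b_n\uparrow U$ by Boolean distributivity.
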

\begin{proof}
Set $a_0=0$, $h_n=a_n\setminus a_{n-1}$, and let $T_N=T_N(m,h)$
be the null families of Lemma~\ref{gen:ut}. For $p,q\ge N$,
\[
 m(a_p)-m(a_q)\in T_{N+1}\cup(-T_{N+1}).
\]
For thresholds $N_1,N_2$, the independent difference tail at
$(N,N)$, $N=\max(N_1,N_2)$, is contained in
$T_{N_1}\cup(-T_{N_2})$, a tail of the mixed null evaluation.
Thus $(m(a_p)-m(a_q))_{(p,q)}\tto\theta0$, and completeness of $M$
gives $m(a_n)\tto\theta z_a\in M$.

Put $W_N=\{m(c):c\in\mathcal H,\ c\le a_j\setminus a_N
\text{ for some }j\ge N\}$. Then $W_N\subseteq T_{N+1}$,
$W_N\Rightarrow0$, and the $W_N$ decrease. If
$c\in\mathcal H$, $c\le U\setminus a_N$, then
$c\wedge a_j\uparrow c$ and $m(c\wedge a_j)\in W_N$.
Thus $m(c)\in[W_N]$, so
\[
 \Gamma_m(U\setminus a_N)\subseteq D\cap[W_N]\Rightarrow0
\]
by Lemma~\ref{gen:relative}. For another exhaustion $b_k\uparrow U$
with limit $z_b$, source continuity gives
\[
 m(b_k\setminus a_N)
 =m(b_k)-m(b_k\wedge a_N)\longrightarrow z_b-m(a_N).
\]
Each left-hand value lies in $V_m(U\setminus a_N)\subseteq[W_N]$.
The limit belongs to $M-M\subseteq D$, hence to $D\cap[W_N]$.
Consequently $z_b-m(a_N)\in D\cap[W_N]\Rightarrow0$, so
$m(a_N)\tto\theta z_b$ and separation gives $z_a=z_b$.

For source exhaustions $a_n\uparrow U$, $b_n\uparrow V$, passage
to limits in
\[
 m(a_n\vee b_n)+m(a_n\wedge b_n)=m(a_n)+m(b_n)
\]
gives $m_\sigma(U\vee V)+m_\sigma(U\wedge V)
=m_\sigma(U)+m_\sigma(V)$. If $U\wedge V=0$, the second term is zero.
A constant exhaustion gives $m_\sigma|_{\mathcal H}=m$.
Every term in a source exhaustion of $W\in\mathcal H_\sigma$
belongs to $V_m(W)$. Stability therefore gives
$m_\sigma(W)\in\Gamma_m(W)$.
If $a\in\mathcal H$ and $a\le U$, these facts imply
\begin{equation}\label{gen:sigma-difference}
 m_\sigma(U)-m(a)=m_\sigma(U\setminus a)
                  \in\Gamma_m(U\setminus a).
\end{equation}
For $U_n\uparrow U$, write $a_{i,k}\uparrow_k U_i$ in the source
and set $p_n=\bigvee_{i\le n}a_{i,n}$. Then $p_n\uparrow U$,
$p_n\le U_n$, and
\[
 m_\sigma(U_n)-m(p_n)
  \in\Gamma_m(U_n\setminus p_n)
  \subseteq\Gamma_m(U\setminus p_n)\Rightarrow0.
\]
The identity $m_\sigma(U_n)=m(p_n)+(m_\sigma(U_n)-m(p_n))$
and independent addition give $m_\sigma(U_n)\tto\theta m_\sigma(U)$.
\end{proof}

\begin{lemma}[The finite Boolean stage]
\label{gen:delta-construction}\label{gen:source-approximation}
\label{gen:K-construction}
Put $\mathcal K=\operatorname{alg}(\mathcal H_\sigma)$. Every
$E\in\mathcal K$ has sequences $p_n\in\mathcal H$,
$c_n\in\mathcal H_\sigma$ satisfying
\begin{equation}\label{gen:source-approximation-data}
 c_n\downarrow,\qquad E\triangle p_n\le c_n,
 \qquad\Gamma_m(c_n)\Rightarrow0.
\end{equation}
For every pair satisfying these conditions, $m(p_n)$ has the same
limit $m_0(E)\in M$. The map $m_0:\mathcal K\to M$ is finitely
additive, extends $m_\sigma$, and satisfies
\begin{equation}\label{gen:K-domination}
 E\le s\in\mathcal H_\sigma\quad\Longrightarrow\quad
 m_0(E)\in\Gamma_m(s).
\end{equation}
For $L\in\mathcal H_\delta$,
$m_0(L)=m_\delta(L):=m_\sigma(1)-m_\sigma(L^c)\in M$.
\end{lemma}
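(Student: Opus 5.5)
We prove existence of the approximation data by showing that the class of $E\in\mathbb B$ admitting sequences \eqref{gen:source-approximation-data} is a Boolean algebra containing $\mathcal H_\sigma$. For $U\in\mathcal H_\sigma$ with source exhaustion $a_n\uparrow U$, take $p_n=a_n$ and $c_n=U\setminus a_n\in\mathcal H_\sigma$. Then $c_n$ decreases, $U\triangle a_n=c_n$, and \eqref{gen:exhaustion-tail} gives $\Gamma_m(c_n)\Rightarrow0$. For complements, fix an exhaustion $h_n\uparrow1$ and the data $(p_n,c_n)$ for $E$. Use $p_n'=h_n\setminus p_n$ and $c_n'=c_n\vee(1\setminus h_n)$. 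Both $c_n$ and $1\setminus h_n$ belong to $\mathcal H_\sigma$, decrease, and have null $\Gamma_m$-families, so the last assertion of Lemma~\ref{gen:cover-hull} gives $\Gamma_m(c_n')\Rightarrow0$. For a finite join, use $p_n\vee q_n$ and $c_n\vee d_n$, noting $(E\vee F)\triangle(p\vee q)\le(E\triangle p)\vee(F\triangle q)$, and apply the same lemma. Hence the class contains $\mathcal K$.

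Well-definedness is the central step. Take two data pairs $(p_n,c_n)$ and $(q_n,d_n)$ for $E$, possibly the same. Then $p_n\triangle q_k\le c_n\vee d_k$, so $m(p_n)-m(q_k)=m(p_n\setminus q_k)-m(q_k\setminus p_n)$. Both terms lie in $V_m(c_n\vee d_k)\subseteq\Gamma_m(c_n\vee d_k)$. By \eqref{gen:finite-cover-estimate}, $\Gamma_m(c_n\vee d_k)\subseteq D\cap[\Gamma_m(c_n)+\Gamma_m(d_k)]$, with membership in $D$ coming from $M\subseteq D$. Applying $\mathsf S_{\mathfrak I}(D)$ to the two rows $\Gamma_m(c_n)$ and $\Gamma_m(d_k)$, through Lemma~\ref{gen:relative}, shows that these sets are null on the product index $(n,k)$. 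Symmetry of $D$ and finite mixing then make the independent difference net of $m(p_n)$ and $m(q_k)$ null. With equal data this gives Cauchyness, so completeness of $M$ yields a limit in $M$. With different data it gives equality of the limits by separation.

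For finite additivity, let $E\wedge F=0$. The sequences $p_n\setminus q_n$ and $q_n\setminus p_n$ are data for $E$ and $F$, bounded by $c_n\vee d_n$. Also $(p_n\vee q_n)\triangle(E\vee F)\le c_n\vee d_n$. Passing to the limit in the source identity $m(p_n\vee q_n)=m(p_n\setminus q_n)+m(q_n)$, and using representation independence, gives $m_0(E\vee F)=m_0(E)+m_0(F)$. The exhaustion data of the first paragraph show $m_0|_{\mathcal H_\sigma}=m_\sigma$.

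For \eqref{gen:K-domination}, let $E\le s\in\mathcal H_\sigma$, and take data $(p_n,c_n)$ for $E$ and a source exhaustion $a_k\uparrow s$. Replace $p_n$ by $p_n\wedge a_n$. Then $E\triangle(p_n\wedge a_n)\le c_n\vee(s\setminus a_n)$, which is again admissible data. Every value $m(p_n\wedge a_n)$ lies in $V_m(s)$, so the limit lies in $\Gamma_m(s)$ because stable hulls are limit-stable. Finally, for $L=U^c$ with $U\in\mathcal H_\sigma$, additivity on the partition $L\vee U=1$ gives $m_0(L)=m_0(1)-m_0(U)=m_\sigma(1)-m_\sigma(L^c)$.

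The main obstacle is the nullity argument over two independent indices in the well-definedness step. It needs \eqref{gen:finite-cover-estimate} together with the summation principle for a finite collection of rows, and it requires keeping all error values inside $D$.
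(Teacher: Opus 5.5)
Your proof is correct, and it differs from the paper's in two places.

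\textbf{Existence.} The paper writes $E=P(U_1,\ldots,U_r)$ as one Boolean word and substitutes exhaustions $a_{i,n}\uparrow U_i$. It relativizes every complement to a fixed exhaustion $h_n\uparrow 1$, so that $p_n=h_n\wedge P(a_{1,n},\ldots,a_{r,n})$ is a ring expression. It then bounds $E\triangle p_n$ by the single explicit shell $c_n=h_n^c\vee\bigvee_i(U_i\setminus a_{i,n})$, by induction on the word. You instead show that the class of elements admitting data is closed under complements and finite joins. This replaces the word induction by a good-sets argument, and you invoke the last assertion of Lemma~\ref{gen:cover-hull} at each closure step rather than once.

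\textbf{Well-definedness.} The paper separates the two tasks. Cauchyness uses only $p_j\triangle p_k\le c_N$ for $j,k\ge N$, so the differences lie in $\Gamma_m(c_N)-\Gamma_m(c_N)$ and no summation principle is needed. Representation independence uses the diagonal shells $\Gamma_m(c_n\vee d_n)$ together with diagonal tail comparison. Your single estimate over the independent index $(n,k)$, via \eqref{gen:finite-cover-estimate} and two-row summation, gives both conclusions at once. It also lets the difference net converge to $z-z'$ directly by independent subtraction. The cost is that even the Cauchy step now passes through $\mathsf S_{\mathfrak I}(D)$, which is avoidable there because $c_n\vee c_k=c_{\min(n,k)}$.

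\textbf{A correction of attribution.} Once $B_{n,k}=D\cap[\Gamma_m(c_n)+\Gamma_m(d_k)]$ is known to be decreasing and null, nullity of $(B_{n,k}-B_{n,k})_{(n,k)}$ follows from continuity of independent subtraction. The reason is that these decreasing families are their own evaluation tails. Finite mixing controls unions, not differences, and symmetry of $D$ is not needed at that step.

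Your remaining steps match the paper: finite additivity via $p_n\setminus q_n$, domination via $p_n\wedge a_n$, and the $m_\delta$ identity.
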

\begin{proof}
Represent $E=P(U_1,\ldots,U_r)$ by a finite Boolean expression.
For source exhaustions $a_{i,n}\uparrow U_i$ and $h_n\uparrow1$, put
\[
 p_n=h_n\wedge P(a_{1,n},\ldots,a_{r,n}),\qquad
 c_n=h_n^c\vee\bigvee_{i=1}^r(U_i\setminus a_{i,n}).
\]
Restriction of every complement in $P$ to $h_n$ gives a ring
expression. More explicitly, the recursive rules
\[
 \begin{gathered}
 (1)_h=h,\qquad(0)_h=0,\qquad(a)_h=h\wedge a,\qquad
 (Q^c)_h=h\setminus Q_h,\\
 (Q\vee R)_h=Q_h\vee R_h,\qquad (Q\wedge R)_h=Q_h\wedge R_h.
 \end{gathered}
\]
define source expressions with $P_h=h\wedge P$ by induction.
In particular $p_n\in\mathcal H$. The inequalities
\[
 a^c\triangle(a')^c=a\triangle a',\qquad
 (a\vee b)\triangle(a'\vee b')
 \le(a\triangle a')\vee(b\triangle b'),
\]
\[
 (a\wedge b)\triangle(a'\wedge b')
 \le(a\triangle a')\vee(b\triangle b')
\]
give $P(U)\triangle P(a)\le\bigvee_i(U_i\triangle a_i)$ by
induction on the same expression. Restriction to $h_n$ adds only
the error $h_n^c$, proving $E\triangle p_n\le c_n$.
Each family $\Gamma_m(U_i\setminus a_{i,n})$ is null by
\eqref{gen:exhaustion-tail}, and the exhaustion of $1$ gives
$\Gamma_m(h_n^c)\Rightarrow0$. All the shell sequences decrease.
Lemma~\ref{gen:cover-hull}, applied to these $r+1$ rows, gives
$\Gamma_m(c_n)\Rightarrow0$. This proves
\eqref{gen:source-approximation-data}.

For any such approximation and $p,q\ge N$,
\[
 m(p_p)-m(p_q)
 =m(p_p\setminus p_q)-m(p_q\setminus p_p)
 \in\Gamma_m(c_N)-\Gamma_m(c_N).
\]
For independent thresholds $N_1,N_2$, the right-hand side at
$N=\max(N_1,N_2)$ is contained in
$\Gamma_m(c_{N_1})-\Gamma_m(c_{N_2})$, a tail of the independent
null difference evaluation. This proves Cauchyness, and
completeness gives a limit in $M$.
If $(q_n,d_n)$ is a second approximation, set
$H_n=\Gamma_m(c_n\vee d_n)\Rightarrow0$. Since
$p_n\triangle q_n\le c_n\vee d_n$,
\[
 m(p_n)-m(q_n)
 =m(p_n\setminus q_n)-m(q_n\setminus p_n)\in H_n-H_n.
\]
Writing $m(p_n)\to z$ and $m(q_n)\to z'$, independent addition
and diagonal tail comparison give $m(p_n)-m(q_n)\to z-z'$.
The displayed inclusion gives convergence to zero, so $z=z'$.

For disjoint $E,F$, source approximations $(p_n,c_n),(q_n,d_n)$
satisfy
\[
 F\setminus(q_n\setminus p_n)
 \le(F\setminus q_n)\vee(F\wedge p_n)
 \le d_n\vee(p_n\setminus E)\le c_n\vee d_n,
\]
\[
 (q_n\setminus p_n)\setminus F\le q_n\setminus F\le d_n,
 \qquad(E\vee F)\triangle(p_n\vee q_n)\le c_n\vee d_n.
\]
Thus $q_n\setminus p_n$ approximates $F$, and $p_n\vee q_n$
approximates $E\vee F$. Passage to limits in
$m(p_n\vee q_n)=m(p_n)+m(q_n\setminus p_n)$ proves finite
additivity. If $E\in\mathcal H_\sigma$ and $p_n\uparrow E$, then
$c_n=E\setminus p_n$ satisfies \eqref{gen:source-approximation-data},
so $m_0(E)=\lim_n m(p_n)=m_\sigma(E)$.
For $L\in\mathcal H_\delta$, finite additivity on $L\vee L^c=1$
gives $m_0(L)=m_\sigma(1)-m_\sigma(L^c)\in M$.
For later use, finite additivity also gives modularity on $\mathcal K$.
The three disjoint elements $E\setminus F$, $E\wedge F$, and
$F\setminus E$ have union $E\vee F$. Expanding their values gives
\[
 m_0(E\vee F)+m_0(E\wedge F)=m_0(E)+m_0(F).
\]

Finally, if $E\le s$ and $b_n\uparrow s$ is a source exhaustion,
then $q_n=p_n\wedge b_n\le s$ and
$E\triangle q_n\le c_n\vee(s\setminus b_n)$. These are source
approximations, so $m(q_n)\to m_0(E)$. Stability of $\Gamma_m(s)$,
which contains every $m(q_n)$, proves \eqref{gen:K-domination}.
\end{proof}

We next transfer the hereditary tail estimates from the source to
the finite Boolean stage. The following uniform-closure argument
will also be used on the regulated domain. In both applications
the approximating values belong to $M$.

\begin{lemma}[Transfer of hereditary tails]\label{gen:tail-transfer}
Let $v:\mathcal B\to M$ be finitely additive on a Boolean ring,
and let $b_n\in\mathcal B$ be disjoint. Suppose that for a directed
parameter $\beta$ there are disjoint $a_n^\beta\in\mathcal B$ and
decreasing null sets $0\in Q_\beta\subseteq D$ such that, with
$a_F^\beta=\bigvee_{n\in F}a_n^\beta$, every finite nonempty $F$
and $c\in\mathcal B$, $c\le\bigvee_{n\in F}b_n$, satisfy
\[
 v(c)-v(c\wedge a_F^\beta)\in Q_\beta.
\]
Suppose also that, for fixed $\beta$, the evaluations of
$v(c\wedge a_F^\beta)$ over these pairs with $\min F\ge N$
converge to zero as $N\to\infty$. Under $\mathsf U(M)$,
\[
 Z_N=\{v(c):c\in\mathcal B,\ c\le\bigvee_{n\in F}b_n,
          \ F\subseteq\{N,N+1,\ldots\}\text{ finite}\}
 \Rightarrow0.
\]
\end{lemma}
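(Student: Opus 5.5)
This is a direct application of $\mathsf U(M)$, modelled on the explicit use in the proof of Lemma~\ref{mod:tail-transfer}. Fix the index set
\[
 \mathcal J=\{(F,c):F\subseteq\mathbb N\text{ finite nonempty},\ c\in\mathcal B,\ c\le\textstyle\bigvee_{n\in F}b_n\}.
\]
For $j=(F,c)\in\mathcal J$ and $N\ge1$ we define four point families by
\[
 (x_N(j),y_N(j),x_N^\beta(j),y_N^\beta(j))=
 \begin{cases}
 (v(c),0,v(c\wedge a_F^\beta),0),&\min F\ge N,\\
 (0,0,0,0),&\min F<N.
 \end{cases}
\]
All these points lie in $M$, since $v$ takes values in $M$ and $0=v(0)\in M$.

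Next we check the three premises of $\mathsf U(M)$. First, for fixed $\beta$, the set $\{x_N^\beta(j)-y_N^\beta(j):j\in\mathcal J\}$ equals $\{0\}\cup\{v(c\wedge a_F^\beta):\min F\ge N\}$. Its nullity in $N$ is exactly the second hypothesis. Adjoining $0$ is harmless, because the sets decrease in $N$ and already contain $0$ by taking $c=0$.

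Second, $\{x_N^\beta(j)-x_N(j):N\ge1,j\in\mathcal J\}$ is contained in $-Q_\beta$, using $0\in Q_\beta$ for the zero entries. Since $(Q_\beta)_\beta$ is decreasing and null, continuity of inversion makes $(-Q_\beta)_\beta$ null. The evaluation of the smaller family is then a quasi-subnet of this null evaluation, by the tail comparisons \eqref{eq:evaltail}.

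Third, the $y$-differences are identically $0$, so their family is trivially null.

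With the premises in place, $\mathsf U(M)$ gives $(\{x_N(j)-y_N(j):j\in\mathcal J\})_N\Rightarrow0$. The set at row $N$ is $\{0\}\cup\{v(c):c\le\bigvee_{n\in F}b_n,\ \min F\ge N\}$, and this equals $Z_N$. The empty $F$ contributes only the value $0$, which comes from $c=0$ (or, alternatively, from any $F$ with $c=0$). So $Z_N\Rightarrow0$.

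Disjointness of the $b_n$ and of the $a_n^\beta$ is not needed in this bookkeeping; it is presumably used only later, when the hypotheses are verified in applications. The one point needing care is whether $Z_N$ literally coincides with the $\mathsf U(M)$ row sets, including the $0$ entries. If the $r<N$ padding produces a discrepancy, I would instead index only over $j$ with $\min F\ge N$ through a common set with zero padding, exactly as in Lemma~\ref{mod:tail-transfer}, and compare the tails using \eqref{eq:evaltail}. The main obstacle is therefore only this verification that the uniform families $\{x_N(j)-\ldots\}$ have the stated tails. No further range principle is required.
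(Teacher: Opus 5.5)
Your proposal is correct and follows the paper's proof essentially verbatim. It uses the same index set of pairs $(F,c)$ with $F$ nonempty, the same zero-padded families $x_N,x_N^\beta$ with $y_N=y_N^\beta=0$, the inclusion of the approximation errors in $-Q_\beta$ (with $0\in Q_\beta$ covering the padded entries), and the identification of the row range with $Z_N$, where zero is supplied by a pair such as $(\{N\},0)$. Your closing caveat about a possible padding discrepancy is therefore unnecessary, because the row sets coincide exactly with $Z_N$.
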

\begin{proof}
Put $I=\{(F,c):\varnothing\ne F\subseteq\mathbb N\text{ finite},
c\in\mathcal B,\ c\le\bigvee_{n\in F}b_n\}$. Define
\[
 x_N(F,c)=\begin{cases}v(c),&\min F\ge N,\\0,&\min F<N,\end{cases}
 \quad
 x_N^\beta(F,c)=\begin{cases}v(c\wedge a_F^\beta),&\min F\ge N,\\
 0,&\min F<N.\end{cases}
\]
Their values lie in $M$, including zero, and their approximating
evaluations are null in $N$ by hypothesis. With the two other
families identically zero, the remaining premise of $\mathsf U(M)$
is verified by the explicit inclusion
\[
 \bigcup_{\beta\ge\beta_0}
 \{x_N^\beta(F,c)-x_N(F,c):N\ge1,\ (F,c)\in I\}
 \subseteq-Q_{\beta_0}.
\]
Here $0\in Q_\beta$ covers the inactive pairs. Since
$-Q_\beta\Rightarrow0$, $\mathsf U(M)$ applied with
$y_N=y_N^\beta=0$ gives
$(\{x_N(F,c):(F,c)\in I\})_N\Rightarrow0$.
Its range at $N$ is $Z_N$: each nonempty allowed support occurs
in $I$, and zero occurs at $(\{N\},0)$.
\end{proof}

\begin{lemma}[Hereditary tails on the finite stage]\label{gen:UT-K}
For disjoint $E_n\in\mathcal K$,
\[
 Z_N^{\mathcal K}=\{m_0(c):c\in\mathcal K,\
 c\le\bigvee_{n\in F}E_n,\
 F\subseteq\{N,N+1,\ldots\}\text{ finite}\}\Rightarrow0.
\]
\end{lemma}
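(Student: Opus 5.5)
The plan is to apply the transfer lemma, Lemma~\ref{gen:tail-transfer}, with $\mathcal B=\mathcal K$, $v=m_0$ and $b_n=E_n$. The parameter $\beta$ will be the product index $\varphi$ of the summation principle $\mathsf S_{\mathfrak I}(D)$, built over the source approximations of the $E_n$. Lemma~\ref{gen:source-approximation} gives, for each $n$, source elements $p_{n,k}\in\mathcal H$ and shells $c_{n,k}\in\mathcal H_\sigma$. These satisfy $c_{n,k}\downarrow_k$, $E_n\triangle p_{n,k}\le c_{n,k}$ and $\Gamma_m(c_{n,k})\Rightarrow0$. I will use the rows $U_{n,k}=\Gamma_m(c_{n,k})$ on $I_n=\mathbb N$, for which $\mathbb N\in\mathfrak I$ is used. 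Put $Q_\varphi=D\cap[A_\varphi]$; it is null by $\mathsf S_{\mathfrak I}(D)$.

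\textbf{Approximants.} The $p_{n,\varphi(n)}$ need not be disjoint, so I will disjointize them in the source. Set
\[
 a_n^\varphi=p_{n,\varphi(n)}\setminus\bigvee_{j<n}p_{j,\varphi(j)}\in\mathcal H.
\]
These are disjoint. Because the $E_j$ are disjoint, $E_n\triangle a_n^\varphi\le\bigvee_{j\le n}c_{j,\varphi(j)}$, and for finite $F$ the symmetric difference of $\bigvee_{n\in F}E_n$ and $a_F^\varphi$ lies below $\bigvee_{j\le\max F}c_{j,\varphi(j)}$.

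\textbf{First premise.} For $c\in\mathcal K$ with $c\le\bigvee_{n\in F}E_n$, finite additivity gives $m_0(c)-m_0(c\wedge a_F^\varphi)=m_0(c\setminus a_F^\varphi)$. The element $c\setminus a_F^\varphi$ lies below $\bigvee_{j\le\max F}c_{j,\varphi(j)}\in\mathcal H_\sigma$. By \eqref{gen:K-domination} and \eqref{gen:finite-cover-estimate}, its value lies in $D\cap[A_\varphi]=Q_\varphi$, using $M\subseteq D$. This uniform bound is the required inclusion in $Q_\varphi$; the sign change in the transfer lemma is handled by $-Q_\varphi$. The family $Q_\varphi$ must be decreasing in $\varphi$, but it is only null. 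I will replace it by its tail unions $\bigcup_{\psi\ge\varphi}Q_\psi$, which are decreasing, contain zero, and have the same evaluation tails.

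\textbf{Second premise.} For fixed $\varphi$, I must show that the values $m_0(c\wedge a_F^\varphi)$ over pairs with $\min F\ge N$ are null as $N\to\infty$. This is the main obstacle, because $c\wedge a_F^\varphi$ lies in $\mathcal K$ rather than in $\mathcal H$.

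Here $c\wedge a_F^\varphi\le a_F^\varphi$, and $a_F^\varphi$ is a finite join of the disjoint source elements $a_n^\varphi$ with $n\ge N$. By \eqref{gen:K-domination} with $s=a_F^\varphi\in\mathcal H\subseteq\mathcal H_\sigma$, the value lies in
\[
 \Gamma_m(a_F^\varphi)=[V_m(a_F^\varphi)]\subseteq[T_N],
\]
where $T_N$ is the family of Lemma~\ref{gen:ut} for the disjoint source sequence $(a_n^\varphi)_n$. That family is null by (HE) and $\mathsf B_\omega(D)$. The values also lie in $M\subseteq D$, so they lie in $D\cap[T_N]$. The families $T_N$ decrease and contain zero, so the one-row part of Lemma~\ref{gen:relative} makes $D\cap[T_N]$ null. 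This gives the second premise.

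With both premises verified, $\mathsf U(M)$ through Lemma~\ref{gen:tail-transfer} gives $Z_N^{\mathcal K}\Rightarrow0$.
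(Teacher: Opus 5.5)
Your proposal is correct and follows essentially the paper's own proof. It uses the same rows $\Gamma_m(c_{n,k})$ and hulls $Q_\varphi$, and the same disjointized approximants $p_{n,\varphi(n)}\setminus\bigvee_{j<n}p_{j,\varphi(j)}$. It verifies the first premise of Lemma~\ref{gen:tail-transfer} through \eqref{gen:K-domination} and the finite-cover estimate, and the second through Lemma~\ref{gen:ut} and one-row hull control.

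One small correction: $Q_\varphi$ is already decreasing in $\varphi$, because $c_{n,k}\downarrow_k$ makes each selected row $\Gamma_m(c_{n,\varphi(n)})$ shrink as $\varphi$ increases. So your replacement by tail unions is harmless but unnecessary.
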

\begin{proof}
For each $n$, Lemma~\ref{gen:source-approximation} supplies
$(p_{n,k},c_{n,k})_{k\ge1}$ with
$E_n\triangle p_{n,k}\le c_{n,k}\downarrow$ and
$\Gamma_m(c_{n,k})\Rightarrow0$. Define, for $\varphi\in\mathbb N^{\mathbb N}$,
\[
 Q_\varphi=D\cap\left[
   \bigcup_{J\subseteq\mathbb N\text{ finite}}
       \sum_{n\in J}\Gamma_m(c_{n,\varphi(n)})\right].
\]
These sets decrease and satisfy $Q_\varphi\Rightarrow0$ by
$\mathsf S_{\mathfrak I}(D)$. The finite-cover estimate gives
\begin{equation}\label{gen:K-selected-shells}
 \Gamma_m\left(\bigvee_{n\in J}c_{n,\varphi(n)}\right)
 \subseteq Q_\varphi\qquad(J\subseteq\mathbb N\text{ finite}).
\end{equation}
Put $d_n^\varphi=p_{n,\varphi(n)}\setminus\bigvee_{i<n}p_{i,\varphi(i)}$.
They are disjoint source elements. Disjointness of the $E_n$ gives
\[
 E_n\setminus d_n^\varphi
 \le (E_n\setminus p_{n,\varphi(n)})
       \vee\bigvee_{i<n}(p_{i,\varphi(i)}\setminus E_i)
 \le\bigvee_{i\le n}c_{i,\varphi(i)}.
\]
Writing $d_F^\varphi=\bigvee_{n\in F}d_n^\varphi$, for
$\varnothing\ne F\subseteq\mathbb N$ finite and
$c\in\mathcal K$, $c\le\bigvee_{n\in F}E_n$, one obtains
\[
 c\setminus d_F^\varphi\le
 \bigvee_{n\in F}(E_n\setminus d_n^\varphi)
 \le\bigvee_{i\le\max F}c_{i,\varphi(i)}.
\]
Consequently \eqref{gen:K-domination} and \eqref{gen:K-selected-shells}
give $m_0(c)-m_0(c\wedge d_F^\varphi)
=m_0(c\setminus d_F^\varphi)\in Q_\varphi$.
For fixed $\varphi$, the source tail sets $T_N^\varphi=T_N(m,d^\varphi)$
are null by Lemma~\ref{gen:ut}. If $\min F\ge N$, then
\[
 m_0(c\wedge d_F^\varphi)\in\Gamma_m(d_F^\varphi)
 \subseteq D\cap[T_N^\varphi].
\]
The families $D\cap[T_N^\varphi]$ decrease and have null evaluation
by Lemma~\ref{gen:relative}. For every integer $N_0$,
\[
 \bigcup_{N\ge N_0}
 \{m_0(c\wedge d_F^\varphi):\varnothing\ne F\subseteq\mathbb N
       \text{ finite},\ \min F\ge N,\
                  c\in\mathcal K,\ c\le\bigvee_{n\in F}E_n\}
 \subseteq D\cap[T_{N_0}^\varphi].
\]
Thus the approximating evaluations satisfy the fixed-parameter premise
of Lemma~\ref{gen:tail-transfer}. Apply that lemma with
$\mathcal B=\mathcal K$, $v=m_0$, $b_n=E_n$,
$\beta=\varphi$, and $a_n^\beta=d_n^\varphi$.
The previously established $Q_\varphi$ inclusion supplies its other
approximation premise, so its conclusion is the stated nullity of
$Z_N^{\mathcal K}$.
\end{proof}

\begin{lemma}[Regulated values and finite additivity]
\label{gen:K-regulated}\label{gen:R-values}\label{gen:R-finite}
One has $\mathcal K\subseteq\mathcal R$. Every certificate of $b$
has a Cauchy upper-value net $m_\sigma(u_\varphi)$, whose limit
$\bar m(b)\in M$ is independent of the certificate. The class
$\mathcal R$ is a Boolean algebra, $\bar m$ is finitely additive
and extends $m_0$. For every fixed certificate index,
\begin{align}
 m_\sigma(u_\varphi)-\bar m(b)&\in
       \Gamma_m(u_\varphi\setminus\ell_\varphi),\label{gen:upper-error}\\
 \bar m(b)-m_\delta(\ell_\varphi)&\in
       \Gamma_m(u_\varphi\setminus\ell_\varphi).\label{gen:lower-error}
\end{align}
\end{lemma}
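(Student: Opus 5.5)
The plan is to transcribe the proof of Proposition~\ref{mod:regulated-finite} into the Boolean setting, where disjointness simplifies the interval calculus.

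\emph{Step 1: $\mathcal K\subseteq\mathcal R$.} For $E\in\mathcal K$, take the source approximation $(p_n,c_n)$ of Lemma~\ref{gen:source-approximation}. Put $\ell_n=p_n\setminus c_n\in\mathcal H_\delta$ and $u_n=p_n\vee c_n\in\mathcal H_\sigma$; Lemma~\ref{gen:source-classes} gives these memberships, using $p_n\wedge c_n^c\in\mathcal H_\delta$. Since $E\triangle p_n\le c_n$, we have $\ell_n\le E\le u_n$ and $u_n\setminus\ell_n\le c_n$. The single null row $\Gamma_m(c_n)$, padded with zero rows, is therefore a certificate, and Lemma~\ref{gen:relative} supplies the relative hull control.

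\emph{Step 2: values.} For indices $\varphi,\psi$ of one certificate, or of two certificates of the same $b$, set $p=u_\varphi\wedge u_\psi\in\mathcal H_\sigma$. Then $u_\varphi\setminus p\le u_\varphi\setminus\ell_\varphi$, and modularity and additivity of $m_\sigma$ (Lemma~\ref{gen:sigma-construction}) give $m_\sigma(u_\varphi)-m_\sigma(p)=m_\sigma(u_\varphi\setminus p)\in\Gamma_m(u_\varphi\setminus\ell_\varphi)\subseteq Q_\varphi$. Here we use $m_\sigma(W)\in\Gamma_m(W)$ and the fact that $u_\varphi\setminus p=u_\varphi\setminus u_\psi\in\mathcal H_\sigma$; the last point needs a little care, handled via $u\setminus u'=u\wedge(u')^c$ together with \eqref{gen:K-domination} on $\mathcal K$. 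Consequently $m_\sigma(u_\varphi)-m_\sigma(u_\psi)\in Q_\varphi-Q_\psi$, which is null on the independent product. Completeness of $M$ yields the limit, and separation yields independence of the certificate. For $b\in\mathcal K$, the difference $m_\sigma(u_\varphi)-m_0(b)=m_0(u_\varphi\setminus b)\in\Gamma_m(u_\varphi\setminus\ell_\varphi)$ by \eqref{gen:K-domination}, so $\bar m$ extends $m_0$.

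For \eqref{gen:upper-error}, fix $\varphi$ and let $\psi$ vary. The values $m_0(u_\varphi\wedge u_\psi)$ converge to $\bar m(b)$, and $m_\sigma(u_\varphi)-m_0(u_\varphi\wedge u_\psi)=m_0(u_\varphi\setminus u_\psi)\in\Gamma_m(u_\varphi\setminus\ell_\varphi)$. Stability of $\Gamma_m$ then gives the claim. Inequality \eqref{gen:lower-error} follows in the same way from $m_0(u)-m_\delta(\ell)=m_0(u\setminus\ell)$, using Lemma~\ref{gen:delta-construction}.

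\emph{Step 3: Boolean algebra and additivity.} Complements are handled by swapping: $(u_\varphi^c,\ell_\varphi^c)$ is a certificate of $b^c$, since $\ell^c\in\mathcal H_\sigma$, $u^c\in\mathcal H_\delta$, and the width is $u\setminus\ell$. For $b_1\vee b_2$, use the certificate $(\ell^1\vee\ell^2,u^1\vee u^2)$ indexed by the assembled product. Its width is at most $(u^1\setminus\ell^1)\vee(u^2\setminus\ell^2)$, and \eqref{gen:finite-cover-estimate} together with \eqref{gen:flatten-inclusion} places $\Gamma_m$ of this width in $D\cap[A^*_\Phi]$, which is null by Lemma~\ref{gen:flatten}. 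For additivity with disjoint $b_1,b_2$, pass to the limit in $m_0(u^1\vee u^2)+m_0(u^1\wedge u^2)=m_0(u^1)+m_0(u^2)$. The nontrivial point is showing $m_0(u^1\wedge u^2)\to0$. Since $u^1\wedge u^2\le(u^1\setminus\ell^1)\vee(u^2\setminus\ell^2)$, \eqref{gen:K-domination} and the same assembled-hull bound make it null.

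I expect the main obstacle to be the index bookkeeping. The difference $u_\varphi\setminus u_\psi$ must be placed in $\mathcal K$, and values must be transferred between independently indexed products and their diagonals, which requires cofinal restriction and separation as in Lemma~\ref{mod:hull-bookkeeping}.
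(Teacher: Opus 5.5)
Your proposal is correct and follows the paper's proof essentially step for step: the same sandwich $(p_n\setminus c_n,\,p_n\vee c_n)$ with the single row $\Gamma_m(c_n)$, the same estimate $m_\sigma(u_\varphi)-m_\sigma(u_\psi)\in Q_\varphi-Q_\psi$ obtained through $u_\varphi\wedge u_\psi$, the same limits along $u_\varphi\wedge u_\psi$ for the two fixed-index errors, and the same complement, interleaved-join and disjoint-overlap arguments. One slip needs fixing in the write-up: $u_\varphi\setminus u_\psi=u_\varphi\wedge u_\psi^c$ lies in $\mathcal K$ but in general not in $\mathcal H_\sigma$. State that step with $m_0$ and \eqref{gen:K-domination} from the outset, which is what your parenthetical repair does and what the paper does.
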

\begin{proof}
For \eqref{gen:source-approximation-data}, set $u_n=p_n\vee c_n$,
$\ell_n=p_n\setminus c_n$. Then $\ell_n\le E\le u_n$,
$u_n\setminus\ell_n=c_n$, and
$\ell_n^c=p_n^c\vee c_n\in\mathcal H_\sigma$. The single row
$\Gamma_m(c_n)$, with zero rows thereafter, certifies $E$.

For a certificate of $b$, put $s_\varphi=u_\varphi\setminus\ell_\varphi$.
For any two indices, finite additivity on $\mathcal K$ gives
\[
 m_\sigma(u_\varphi)-m_\sigma(u_\psi)
 =m_0(u_\varphi\setminus u_\psi)-m_0(u_\psi\setminus u_\varphi)
 \in Q_\varphi-Q_\psi,
\]
since $u_\varphi\setminus u_\psi\le s_\varphi$ and the reverse
difference is below $s_\psi$. For thresholds $\varphi_0,\psi_0$,
every difference with $\varphi\ge\varphi_0$, $\psi\ge\psi_0$ lies
in $Q_{\varphi_0}-Q_{\psi_0}$. Thus the independent difference net
is null. Completeness gives an upper limit $z\in M$.
For a second certificate $(\ell'_\chi,u'_\chi,Q'_\chi)$ and its
upper limit $z'$, the inequalities
\[
 u_\varphi\setminus u'_\chi\le s_\varphi,\qquad
 u'_\chi\setminus u_\varphi\le u'_\chi\setminus\ell'_\chi
\]
give
\[
 m_\sigma(u_\varphi)-m_\sigma(u'_\chi)\in Q_\varphi-Q'_\chi.
\]
Its tail at $(\varphi_0,\chi_0)$ lies in
$Q_{\varphi_0}-Q'_{\chi_0}$, so the independent net converges to zero.
It also converges to $z-z'$ by independent addition, and hence $z=z'$.
For $b\in\mathcal K$,
$m_\sigma(u_\varphi)-m_0(b)=m_0(u_\varphi\setminus b)\in Q_\varphi$,
so this limit extends $m_0$.

Fix $\varphi$ and put $t_\psi=u_\psi\wedge u_\varphi$.
The estimate
$m_\sigma(u_\psi)-m_\sigma(t_\psi)\in Q_\psi$ gives
$m_\sigma(t_\psi)\to\bar m(b)$. The inequalities $\ell_\varphi\le b\le t_\psi\le u_\varphi$ give
\[
 \begin{aligned}
 m_0(u_\varphi\setminus t_\psi)
 &=m_\sigma(u_\varphi)-m_\sigma(t_\psi)
   \tto\theta m_\sigma(u_\varphi)-\bar m(b),\\
 m_0(t_\psi\setminus\ell_\varphi)
 &=m_\sigma(t_\psi)-m_\delta(\ell_\varphi)
   \tto\theta\bar m(b)-m_\delta(\ell_\varphi).
 \end{aligned}
\]
Both left-hand values belong to $\Gamma_m(s_\varphi)$ by
\eqref{gen:K-domination}. Its stability gives
\eqref{gen:upper-error}--\eqref{gen:lower-error}.

The pair $((u_\varphi)^c,(\ell_\varphi)^c)$ sandwiches $b^c$
and has shell $(\ell_\varphi)^c\setminus(u_\varphi)^c=s_\varphi$,
so $b^c\in\mathcal R$.
For certificates of $b,c$, set
$L=\ell_\varphi^b\vee\ell_\psi^c$ and
$U=u_\varphi^b\vee u_\psi^c$. They sandwich $b\vee c$ and
$U\setminus L\le s_\varphi^b\vee s_\psi^c$.
If the two certificates have rows $U^b_{i,\alpha}$ and
$U^c_{i,\beta}$, index a new family by
$W_{2i-1,\alpha}=U^b_{i,\alpha}$ and
$W_{2i,\beta}=U^c_{i,\beta}$. The selector for this family is
the pair $(\varphi,\psi)$ of the original selectors.
Lemma~\ref{gen:flatten} and the finite-cover
estimate give
\[
 \Gamma_m(U\setminus L)
 \subseteq D\cap[\Gamma_m(s_\varphi^b)+\Gamma_m(s_\psi^c)]
 \subseteq D\cap[Q_\varphi^b+Q_\psi^c]
 \subseteq Q^*_{\varphi,\psi},
\]
where $Q^*$ is the relative closed sum for the interleaved rows.
The constant certificates for $0$ and $1$ are already supplied by
$\mathcal K\subseteq\mathcal R$. The complement and join constructions
therefore make $\mathcal R$ a Boolean algebra, with
$b\wedge c=(b^c\vee c^c)^c\in\mathcal R$.
If $b\wedge c=0$, then
$u_\varphi^b\wedge u_\psi^c\le s_\varphi^b\vee s_\psi^c$, and
$m_\sigma(u_\varphi^b\wedge u_\psi^c)\in Q^*_{\varphi,\psi}\Rightarrow0$.
The union certificate has upper values
$m_\sigma(u_\varphi^b\vee u_\psi^c)\to\bar m(b\vee c)$.
Thus the identity
\[
 m_\sigma(u_\varphi^b\vee u_\psi^c)
 =m_\sigma(u_\varphi^b)+m_\sigma(u_\psi^c)
       -m_\sigma(u_\varphi^b\wedge u_\psi^c)
\]
gives $\bar m(b\vee c)=\bar m(b)+\bar m(c)$.
\end{proof}

\begin{lemma}[Domination and hereditary tails on the regulated stage]
\label{gen:K-hull}\label{gen:R-domination}\label{gen:UT-R}
For $E\in\mathcal K$ put
$\Gamma_0(E)=[\{m_0(F):F\in\mathcal K,\ F\le E\}]$.
If $c\in\mathcal R$ and $c\le E$, then $\bar m(c)\in\Gamma_0(E)$.
In particular,
\begin{equation}\label{gen:R-source-domination}
 c\le s\in\mathcal H_\sigma\quad\Longrightarrow\quad
 \bar m(c)\in\Gamma_m(s).
\end{equation}
For disjoint $b_n\in\mathcal R$,
\[
 Z_N^{\mathcal R}=\{\bar m(c):c\in\mathcal R,\
 c\le\bigvee_{n\in F}b_n,\
 F\subseteq\{N,N+1,\ldots\}\text{ finite}\}\Rightarrow0.
\]
\end{lemma}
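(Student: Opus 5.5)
Three parts will be proved in order: domination by $\Gamma_0(E)$, the source-domination corollary \eqref{gen:R-source-domination}, and the hereditary tails on $\mathcal R$. The last will be modelled on Lemma~\ref{gen:UT-K}, with $\mathcal K$ playing the role of the source.

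\emph{Domination.} Let $c\in\mathcal R$ with $c\le E\in\mathcal K$, and fix a certificate $(\ell_\varphi,u_\varphi)$ of $c$. The clipped upper endpoints $t_\varphi=u_\varphi\wedge E$ lie in $\mathcal K$ and below $E$, so every $m_0(t_\varphi)$ lies in $\{m_0(F):F\in\mathcal K,F\le E\}$. Since $\ell_\varphi\le c\le t_\varphi\le u_\varphi$, finite additivity on $\mathcal K$ gives $m_\sigma(u_\varphi)-m_0(t_\varphi)=m_0(u_\varphi\setminus t_\varphi)$. Because $u_\varphi\setminus t_\varphi\le u_\varphi\setminus\ell_\varphi$, \eqref{gen:K-domination} places this difference in $\Gamma_m(u_\varphi\setminus\ell_\varphi)\subseteq Q_\varphi$, which is null. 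Lemma~\ref{gen:R-values} gives $m_\sigma(u_\varphi)\to\bar m(c)$, hence $m_0(t_\varphi)\to\bar m(c)$, and limit stability of the hull yields $\bar m(c)\in\Gamma_0(E)$. For $E=s\in\mathcal H_\sigma$, \eqref{gen:K-domination} shows that all generators of $\Gamma_0(s)$ lie in the stable set $\Gamma_m(s)$, so $\Gamma_0(s)\subseteq\Gamma_m(s)$, which is \eqref{gen:R-source-domination}.

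\emph{Tails.} I will apply Lemma~\ref{gen:tail-transfer} with $\mathcal B=\mathcal R$, $v=\bar m$, and disjoint approximants taken in $\mathcal K$. For each $n$, fix a certificate $(\ell^n_\psi,u^n_\psi)$ of $b_n$ with rows $U_{n,i,\alpha}$. By Lemma~\ref{gen:flatten} these assemble into a single product index $\Phi$ with $Q^*_\Phi\Rightarrow0$. I will set $e_n^\Phi=u^n_{\Phi(n,\cdot)}\in\mathcal H_\sigma\subseteq\mathcal K$ and disjointify, putting $a_n^\Phi=e_n^\Phi\setminus\bigvee_{i<n}e_i^\Phi$, which are disjoint elements of $\mathcal K$. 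As in the proof of Lemma~\ref{gen:UT-K}, disjointness of the $b_n$ gives $b_n\setminus a_n^\Phi\le\bigvee_{i\le n}s_i^\Phi$ with shells $s_i^\Phi=u^i\setminus\ell^i$. Hence for $c\le\bigvee_{n\in F}b_n$ we get $c\setminus a_F^\Phi\le\bigvee_{i\le\max F}s_i^\Phi\in\mathcal H_\sigma$. The finite-cover estimate \eqref{gen:finite-cover-estimate} and \eqref{gen:flatten-inclusion} then give $\bar m(c)-\bar m(c\wedge a_F^\Phi)=\bar m(c\setminus a_F^\Phi)\in Q^*_\Phi$, using \eqref{gen:R-source-domination} and the fact that the value lies in $M-M\subseteq D$.

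\emph{Fixed-parameter premise.} For fixed $\Phi$, the element $c\wedge a_F^\Phi\in\mathcal R$ lies below $\bigvee_{n\in F}a_n^\Phi$, a finite join of the disjoint family $(a_n^\Phi)$ in $\mathcal K$. By the domination part, $\bar m(c\wedge a_F^\Phi)\in\Gamma_0(\bigvee_{n\in F}a_n^\Phi)\subseteq D\cap[Z_N^{\mathcal K}(a^\Phi)]$ whenever $\min F\ge N$. Lemma~\ref{gen:UT-K} makes $Z_N^{\mathcal K}(a^\Phi)$ null and decreasing, and Lemma~\ref{gen:relative} makes its relative hull null. Lemma~\ref{gen:tail-transfer} then yields $Z_N^{\mathcal R}\Rightarrow0$.

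I expect the main obstacle to be the bookkeeping of indices. The approximation error must be controlled by a single decreasing null family indexed by $\Phi$, uniformly in $F$ and $c$, and this rests on the flattening inclusion \eqref{gen:flatten-inclusion}. If the countable assembly through finitely many shells creates difficulties, I will first try to bound $\Gamma_m(\bigvee_{i\le r}s_i^\Phi)$ by $D\cap[\sum_{i\le r}Q_{i,\Phi}]\subseteq Q^*_\Phi$ directly.
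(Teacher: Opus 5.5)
Your proposal is correct and follows essentially the same route as the paper. Domination comes from the clipped upper endpoints $u_\varphi\wedge E\in\mathcal K$. The tails come from disjointifying the selected upper endpoints inside $\mathcal K$, placing $\bar m(c\setminus a_F^\Phi)$ in the flattened hull through the finite-cover estimate, bounding $\bar m(c\wedge a_F^\Phi)$ by $\Gamma_0$ and Lemma~\ref{gen:UT-K}, and concluding with Lemma~\ref{gen:tail-transfer}. Your shell bound $\bigvee_{i\le n}s_i^\Phi$ is slightly weaker than the paper's $\bigvee_{i<n}s_i^\Theta$, which changes nothing.
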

\begin{proof}
For a certificate of $c\le E$, put
$s_\varphi=u_\varphi\setminus\ell_\varphi$. Then
$u_\varphi\setminus E\le s_\varphi$ and
\[
 m_\sigma(u_\varphi)-m_0(u_\varphi\wedge E)
 =m_0(u_\varphi\setminus E)\in\Gamma_m(s_\varphi)\subseteq Q_\varphi.
\]
Hence $m_0(u_\varphi\wedge E)\to\bar m(c)$.
Every $u_\varphi\wedge E\in\mathcal K$ is below $E$, so
stability of $\Gamma_0(E)$ gives $\bar m(c)\in\Gamma_0(E)$. For $E=s\in\mathcal H_\sigma$,
\eqref{gen:K-domination} gives $\Gamma_0(s)\subseteq\Gamma_m(s)$.

For each $b_n$ fix a certificate with rows $U_{n,i,\alpha}$.
Write $\Theta\in\prod_{n,i}I_{n,i}$ for all its indices,
$u_n^\Theta,\ell_n^\Theta$ for the selected sandwiches, and
$s_n^\Theta=u_n^\Theta\setminus\ell_n^\Theta$.
Flatten these rows and denote their relative closed sums by
$Q_\Theta$. Then $Q_\Theta\Rightarrow0$ and
\begin{equation}\label{gen:R-selected-shells}
 \Gamma_m\left(\bigvee_{n\in J}s_n^\Theta\right)
 \subseteq Q_\Theta\qquad(J\subseteq\mathbb N\text{ finite}).
\end{equation}
Define disjoint $d_n^\Theta=u_n^\Theta\setminus\bigvee_{i<n}u_i^\Theta
\in\mathcal K$. For $n\in F$, disjointness gives
\[
 b_n\setminus d_n^\Theta
 \le\bigvee_{i<n}(b_n\wedge u_i^\Theta)
 \le\bigvee_{i<n}(u_i^\Theta\setminus b_i)
 \le\bigvee_{i<n}s_i^\Theta.
\]
Hence $c\setminus d_F^\Theta\le\bigvee_{i\le\max F}s_i^\Theta$
whenever $c\le\bigvee_{n\in F}b_n$. By
\eqref{gen:R-source-domination} and \eqref{gen:R-selected-shells},
$\bar m(c)-\bar m(c\wedge d_F^\Theta)\in Q_\Theta$.
For fixed $\Theta$, define
$T_N^\Theta=\{m_0(E):E\in\mathcal K,\ E\le d_F^\Theta,
F\subseteq\{N,N+1,\ldots\}\text{ finite}\}$.
Lemma~\ref{gen:UT-K} gives $T_N^\Theta\Rightarrow0$.
For $\min F\ge N$,
\[
 \bar m(c\wedge d_F^\Theta)
 \in\Gamma_0(d_F^\Theta)\subseteq D\cap[T_N^\Theta].
\]
For every integer $N_0$, the approximating values at all indices
$N\ge N_0$ lie in $D\cap[T_{N_0}^\Theta]$.
Lemma~\ref{gen:relative} makes these bounding families null, proving
the fixed-$\Theta$ approximation premise of Lemma~\ref{gen:tail-transfer}.
Apply that lemma with $\mathcal B=\mathcal R$, $v=\bar m$,
$a_n^\beta=d_n^\Theta$, and $\beta=\Theta$.
The error $\bar m(c)-\bar m(c\wedge d_F^\Theta)$ has already
been placed in $Q_\Theta$ for every admissible pair $(F,c)$.
Its conclusion is the asserted nullity of $Z_N^{\mathcal R}$.
Only finite operations on $\mathcal R$ have been used.
\end{proof}

\begin{proposition}[Countable closure and additivity]
\label{gen:R-sigma-closure}\label{gen:R-countable-additivity}
The algebra $\mathcal R$ is closed under countable joins and
$\bar m$ is countably additive.
\end{proposition}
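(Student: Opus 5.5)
The plan follows the pseudo-$D$-lattice argument of Proposition~\ref{mod:countable-closure}, simplified by Boolean disjointness. Since $\mathcal R$ is already a Boolean algebra (Lemma~\ref{gen:R-finite}), it suffices to treat a disjoint sequence $b_n\in\mathcal R$ with $b=\bigvee_nb_n$. Put $B_N=\bigvee_{n\le N}b_n\in\mathcal R$.

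\textbf{Step 1: a combined certificate.} For each $n$ I fix a certificate of $b_n$ with rows $U_{n,i,\alpha}$. Writing $\Theta$ for the joint index, I flatten all rows by Lemma~\ref{gen:flatten} into a single null family $Q_\Theta$. As in \eqref{gen:R-selected-shells}, $Q_\Theta$ controls $\Gamma_m(\bigvee_{n\in J}s_n^\Theta)$ for every finite $J$, where $s_n^\Theta=u_n^\Theta\setminus\ell_n^\Theta$. The candidate certificate of $b$ at $(N,\Theta)$ is
\[
 L_N^\Theta=\bigvee_{n\le N}\ell_n^\Theta\in\mathcal H_\delta,\qquad
 U^\Theta=\bigvee_nu_n^\Theta\in\mathcal H_\sigma,
\]
which uses Lemma~\ref{gen:source-classes}. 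Clearly $L_N^\Theta\le b\le U^\Theta$.

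\textbf{Step 2: the shell estimate (main obstacle).} The key claim is
\[
 \Gamma_m(U^\Theta\setminus L_N^\Theta)\subseteq D\cap[Z^{\mathcal R}_{N+1}+Q_\Theta+Q_\Theta],
\]
where $Z^{\mathcal R}$ are the null tails of Lemma~\ref{gen:UT-R}.
\begin{itemize}
\item Take $c\in\mathcal H$ with $c\le U^\Theta\setminus L_N^\Theta$, and put $U_M^\Theta=\bigvee_{n\le M}u_n^\Theta$. Then $c\wedge U_M^\Theta\uparrow c$, so source countable additivity gives $m(c\wedge U_M^\Theta)\to m(c)$.
\item Each $c\wedge U_M^\Theta\in\mathcal H$ satisfies $c\wedge U_M^\Theta\le\bigvee_{n\le N}s_n^\Theta\vee\bigvee_{N<n\le M}u_n^\Theta$.
\item Split it in $\mathcal R$ into two pieces: the part below $\bigvee_{N<n\le M}b_n$, whose $\bar m$-value lies in $Z^{\mathcal R}_{N+1}$, and the remainder, which lies below $\bigvee_{n\le M}s_n^\Theta$. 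The remainder's value lies in $\Gamma_m(\bigvee_{n\le M}s_n^\Theta)\subseteq Q_\Theta$ by \eqref{gen:R-source-domination}.
\item A second $Q_\Theta$ term absorbs the comparison between $m$ and $\bar m$ on these finite pieces.
\end{itemize}
Passing to the limit in $M$ uses $M-M\subseteq D$ and the hull-addition inclusion \eqref{eq:hull-addition}. The delicate point is that every value must be placed in a hull of raw sums of genuine rows. I expect this bookkeeping to be the main obstacle.

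\textbf{Step 3: assembling the certificate.} I assemble $(N,\Theta)$ into one certificate, exactly as in the proof of Proposition~\ref{mod:countable-closure}. Use a row labelled $0$ on the domain $\mathbb N\in\mathfrak I$, with values $Z_{N+1}^{\mathcal R}$; this row is decreasing and null. Add two interleaved copies of the flattened rows, recombined via finite-upper-bound selectors. Lemma~\ref{gen:flatten} then makes the relative closed sums null. Hence $b\in\mathcal R$, and $\mathcal R$ is a $\sigma$-algebra. It contains $\sigma(\mathcal H)$ because $\mathcal K\subseteq\mathcal R$.

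\textbf{Step 4: countable additivity.} It suffices to show $\bar m(B_N)\to\bar m(b)$.
\begin{itemize}
\item By \eqref{gen:upper-error} for the new certificate, $m_\sigma(U^\Theta)-\bar m(b)$ lies in the error at $(N,\Theta)$.
\item Also $m_\sigma(U^\Theta)-\bar m(B_N)=\bar m(U^\Theta\setminus B_N)$.
\item The element $U^\Theta\setminus B_N$ lies below $\bigvee_{n\le N}s_n^\Theta\vee\bigvee_{n>N}u_n^\Theta$. By the Step~2 estimate and \eqref{gen:R-source-domination}, its value lies in $D\cap[Z_{N+1}^{\mathcal R}+Q_\Theta+Q_\Theta]$.
\end{itemize}
Thus $\bar m(b)-\bar m(B_N)$ lies in a family that is null on the assembled index $(N,\Theta)$. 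This difference is independent of $\Theta$, so its evaluation net over $(N,\Theta)$ has the tails of the sequence indexed by $N$. Therefore $\bar m(B_N)\to\bar m(b)$. Finite additivity (Lemma~\ref{gen:R-finite}) turns this into $\sum_{n\le N}\bar m(b_n)\to\bar m(b)$.
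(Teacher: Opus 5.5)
Your closure argument is essentially the paper's: the same sandwich $L_N^\Theta\le b\le U^\Theta$ and the same split of a shell fragment. One part lies below $\bigvee_{N<n\le M}b_n$ and is controlled by $Z^{\mathcal R}_{N+1}$. The remainder lies below $\bigvee_{n\le\max(M,N)}s_n^\Theta$ and is controlled by $Q_\Theta$. The same sequential row is appended.

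One step is wrong as written. $c\wedge U_M^\Theta$ is in general not a source element, because $U_M^\Theta$ lies only in $\mathcal H_\sigma$. For example, on the finite--cofinite algebra of $\mathbb N$, $c=\mathbb N$ and $U=2\mathbb N\in\mathcal H_\sigma$ give $c\wedge U=2\mathbb N\notin\mathcal H$. So $m(c\wedge U_M^\Theta)$ is undefined, and source countable additivity does not supply the limit. You need $m_\sigma(c\wedge U_M^\Theta)\to m_\sigma(c)=m(c)$, that is, the increasing continuity of $m_\sigma$ on $\mathcal H_\sigma$ from Lemma~\ref{gen:sigma-construction}. This is exactly how the paper treats $A_m=a\wedge U_m^\Theta$.

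After this repair there is nothing for a second $Q_\Theta$ to absorb. Since $\bar m$ extends $m_0\supseteq m_\sigma$, finite additivity in $\mathcal R$ gives $m_\sigma(c\wedge U_M^\Theta)\in Z^{\mathcal R}_{N+1}+Q_\Theta$ exactly. The extra summand, together with the doubled rows and finite-upper-bound selectors imported from Proposition~\ref{mod:countable-closure}, is harmless but superfluous. Writing $Q_\Theta=D\cap[A_\Theta]$ for the flattened raw sums, the paper simply uses $[Z^{\mathcal R}_{N+1}+Q_\Theta]\subseteq[Z^{\mathcal R}_{N+1}+A_\Theta]$ and appends the single row.

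For countable additivity you take a genuinely different, and shorter, route.
\begin{itemize}
\item The paper shows that $\bar m(B_N)$ is Cauchy through the tails $Z^{\mathcal R}_{N+1}$ and obtains a limit $z\in M$ by completeness. It then passes to the limit in $m_\sigma(U_m^\Theta)-\bar m(B_m)\in Q_\Theta$ to get $m_\sigma(U^\Theta)\to z$. Finally it identifies $z=\bar m(b)$ by separation against the upper-value net of the new certificate.
\item You instead read the convergence off the certificate itself. Its lower endpoints satisfy $L_N^\Theta\le B_N$, so $b\setminus B_N\le U^\Theta\setminus L_N^\Theta\in\mathcal H_\sigma$. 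Then \eqref{gen:R-source-domination} gives $\bar m(b)-\bar m(B_N)=\bar m(b\setminus B_N)\in\Gamma_m(U^\Theta\setminus L_N^\Theta)$ for every $\Theta$; the detour through $m_\sigma(U^\Theta)$ is not needed. This set lies in the null certificate family indexed by $(N,\Theta)$. Since the value does not depend on $\Theta$, the net is tail equivalent to the sequence and tends to $0$.
\end{itemize}
Your route avoids completeness of $M$ and the separation step at this stage. The paper's argument does not exploit the fact that the lower endpoints track the partial joins. It also records that $m_\sigma(U^\Theta)$ converges on the original product index alone, paralleling the pseudo-$D$-lattice proof, but that extra information is not needed for the statement.
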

\begin{proof}
Let $b_n\in\mathcal R$ be disjoint and $b=\bigvee_n b_n\in\mathbb B$.
For each $n$, fix certificate rows $U_{n,i,\alpha}$.
For $\Theta\in\prod_{n,i}I_{n,i}$ denote their selected approximants
by $\ell_n^\Theta,u_n^\Theta$, put
$s_n^\Theta=u_n^\Theta\setminus\ell_n^\Theta$, and let $Q_\Theta$
be the flattened relative hull. Lemma~\ref{gen:flatten} gives
$Q_\Theta\Rightarrow0$ and \eqref{gen:R-selected-shells}. Set
\[
 b^m=\bigvee_{n\le m}b_n,\quad
 U_m^\Theta=\bigvee_{n\le m}u_n^\Theta,\quad
 U^\Theta=\bigvee_nu_n^\Theta,\quad
 L_N^\Theta=\bigvee_{n\le N}\ell_n^\Theta.
\]
Then $L_N^\Theta\in\mathcal H_\delta$, $U^\Theta\in\mathcal H_\sigma$
and $L_N^\Theta\le b\le U^\Theta$. Put
\[
 Z_N=\{\bar m(c):c\in\mathcal R,\ c\le\bigvee_{n\in F}b_n,
       \ F\subseteq\{N+1,N+2,\ldots\}\text{ finite}\}.
\]
Then $0\in Z_{N+1}\subseteq Z_N\subseteq M\subseteq D$, and
Lemma~\ref{gen:UT-R} gives $Z_N\Rightarrow0$.

Fix $N,\Theta$ and a source element $a\le U^\Theta\setminus L_N^\Theta$.
Define
\[
 A_m=a\wedge U_m^\Theta,\qquad
 c_m=A_m\wedge(b^m\setminus b^N),\qquad e_m=A_m\setminus c_m.
\]
All these elements already belong to $\mathcal R$, since
$A_m\in\mathcal H_\sigma$ and only finite operations occur.
Moreover $\bar m(c_m)\in Z_N$. The Boolean bounds
\[
 e_m\le(A_m\setminus b^m)\vee(A_m\wedge b^N),\quad
 U_m^\Theta\setminus b^m\le\bigvee_{i\le m}s_i^\Theta,\quad
 A_m\wedge b^N\le b^N\setminus L_N^\Theta
                    \le\bigvee_{i\le N}s_i^\Theta
\]
give $e_m\le\bigvee_{i\le\max(m,N)}s_i^\Theta$. Hence
$\bar m(e_m)\in Q_\Theta$ and
$m_\sigma(A_m)\in Z_N+Q_\Theta$.
Since $A_m\uparrow a$, increasing continuity of $m_\sigma$ gives
$m_\sigma(A_m)\to m(a)$. Taking stable hulls over all such $a$ yields
\begin{equation}\label{gen:countable-shell-hull}
 \Gamma_m(U^\Theta\setminus L_N^\Theta)
 \subseteq[Z_N+Q_\Theta].
\end{equation}
Write $Q_\Theta=D\cap[A_\Theta]$ for its flattened raw sums.
Append the single sequential row $Z_N$ to those rows. Their new
raw sums are $Z_N+A_\Theta$ and their relative hulls
$Q'_{N,\Theta}=D\cap[Z_N+A_\Theta]$ are null by
$\mathsf S_{\mathfrak I}(D)$. The stable-hull sum identity gives
\[
 [Z_N+Q_\Theta]\subseteq[[Z_N]+[A_\Theta]]
                       \subseteq[Z_N+A_\Theta].
\]
The left-hand side of \eqref{gen:countable-shell-hull} belongs to
$M\subseteq D$, hence lies in $Q'_{N,\Theta}$. The sandwiches
$(L_N^\Theta,U^\Theta)$ therefore certify $b$. For arbitrary $e_n\in\mathcal R$, the elements
$b_n=e_n\setminus\bigvee_{j<n}e_j\in\mathcal R$ are disjoint and
$\bigvee_n b_n=\bigvee_n e_n$. Thus $\bigvee_n e_n\in\mathcal R$.

For $p,q\ge N$,
$\bar m(b^p)-\bar m(b^q)\in Z_N\cup(-Z_N)$.
For any independent thresholds $N_1,N_2$, the difference tail at
$(N,N)$, $N=\max(N_1,N_2)$, lies in
$Z_{N_1}\cup(-Z_{N_2})$. Therefore the independent difference net
is null, and completeness gives $\bar m(b^m)\to z\in M$. For fixed $\Theta$,
\[
 m_\sigma(U_m^\Theta)-\bar m(b^m)
 =\bar m(U_m^\Theta\setminus b^m)\in Q_\Theta\subseteq[A_\Theta].
\]
Increasing continuity gives $m_\sigma(U_m^\Theta)\to m_\sigma(U^\Theta)$.
Stability of $[A_\Theta]$ therefore puts $m_\sigma(U^\Theta)-z$
in that hull. The difference also belongs to $M-M\subseteq D$, so
\begin{equation}\label{gen:raw-before-D}
 m_\sigma(U^\Theta)-z\in Q_\Theta.
\end{equation}
Here the limit lies in $[A_\Theta]\cap D$ without any stability
claim for $D$. Since $Q_\Theta\Rightarrow0$,
\eqref{gen:raw-before-D} gives $m_\sigma(U^\Theta)\to z$.
The upper net of the constructed certificate, indexed by
$(N,\Theta)$, has at $(N_0,\Theta_0)$ the value tail
\[
 \{m_\sigma(U^\Theta):N\ge N_0,\ \Theta\ge\Theta_0\}
 =\{m_\sigma(U^\Theta):\Theta\ge\Theta_0\}.
\]
It therefore converges both to $z$ and, by its certificate, to
$\bar m(b)$. Separation gives $z=\bar m(b)$, proving
$\bar m(b)=\lim_m\sum_{n\le m}\bar m(b_n)$.
\end{proof}

\begin{proposition}[Regularity and uniqueness]
\label{gen:regularity}\label{gen:uniqueness}
The full approximation nets and the uniqueness assertions of
Theorem~\ref{gen:extension} hold.
\end{proposition}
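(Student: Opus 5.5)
We need to prove \eqref{gen:full-upper}, \eqref{gen:full-lower} and the two uniqueness statements of Theorem~\ref{gen:extension}. The plan is to reduce the full approximation nets to the certificate nets via the domination property \eqref{gen:R-source-domination}, and to handle uniqueness by a monotone-class argument on $\sigma(\mathcal H)$ together with a squeeze on $\mathcal R$.

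\emph{Full upper net.} Fix $b\in\mathcal R$ and the directed set $\{u\in\mathcal H_\sigma:b\le u\}$ in reverse inclusion. Fix a certificate $(\ell_\varphi,u_\varphi,Q_\varphi)$ of $b$. For any cover $u$ with $u\le u_\varphi$ we have $u\setminus b\le u_\varphi\setminus\ell_\varphi$ and $u\setminus b\in\mathcal R$. By finite additivity of $\bar m$ on $\mathcal R$, its extension of $m_\sigma$, and \eqref{gen:R-source-domination},
\[
 m_\sigma(u)-\bar m(b)=\bar m(u\setminus b)\in\Gamma_m(u_\varphi\setminus\ell_\varphi)\subseteq Q_\varphi .
\]
Hence the whole tail of the cover net beyond $u_\varphi$ lies in $\bar m(b)+Q_\varphi$. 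Covers above $u_\varphi$ are handled by passing to $u\wedge u_\varphi$, which is again in $\mathcal H_\sigma$ by Lemma~\ref{gen:source-classes}. Since $(Q_\varphi)\Rightarrow0$, the evaluation-tail comparison of Section~\ref{sec:nets} gives the quasi-subnet relation, and \eqref{gen:full-upper} follows.

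\emph{Full lower net.} For $\ell\in\mathcal H_\delta$ with $\ell\le b$ and $\ell\ge\ell_\varphi$, we have $b\setminus\ell\le u_\varphi\setminus\ell_\varphi$. The same estimate, now using $m_\delta=m_0$ on $\mathcal H_\delta$ from Lemma~\ref{gen:delta-construction}, gives $\bar m(b)-m_\delta(\ell)\in Q_\varphi$. Alternatively, one can apply the upper case to $b^c$ and use $m_\delta(\ell)=m_\sigma(1)-m_\sigma(\ell^c)$. Either route proves \eqref{gen:full-lower}.

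\emph{Uniqueness on $\mathcal R$.} Let $\mu:\mathcal R\to X$ be countably additive, extend $m$, and satisfy \eqref{gen:full-lower}. Countable additivity makes $\mu$ continuous along increasing sequences in $\mathcal H_\sigma$, so $\mu=m_\sigma$ there, since both are limits of $m(a_n)$. Consequently $\mu=m_\delta$ on $\mathcal H_\delta$, via complements and $1\in\mathcal H_\sigma$. Then $\mu(b)=\lim m_\delta(\ell)=\bar m(b)$ by \eqref{gen:full-lower} and separation.

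\emph{Uniqueness on $\sigma(\mathcal H)$.} First note $\sigma(\mathcal H)\subseteq\mathcal R$, by Proposition~\ref{gen:R-sigma-closure} and $\mathcal H\subseteq\mathcal K\subseteq\mathcal R$. For an arbitrary countably additive extension $\mu$, the agreement set $\{x:\mu(x)=\bar m(x)\}$ contains $\mathcal H$ and $1$. It is closed under monotone sequential limits, because both maps are continuous on monotone sequences: increasing by additivity, decreasing via $\mu(1)-\mu(\cdot^c)$. The monotone class theorem for the ring $\mathcal H$, whose generated algebra has $1\in\mathcal H_\sigma$, then gives agreement on $\sigma(\mathcal H)$. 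The one delicate point is that $\mu(1)=\bar m(1)$; this holds because both equal $\lim m(h_n)$ for a source exhaustion $h_n\uparrow1$. Throughout, limits are identified by separation of $X$.

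The main obstacle is keeping the tail bookkeeping honest. Covers incomparable with the fixed $u_\varphi$ must be handled through meets, and all differences must lie in $\mathcal R$ so that \eqref{gen:R-source-domination} applies. It should also be checked that $Q_\varphi$ absorbs the errors without any stability assumption on $D$. This is safe here because $\bar m(u\setminus b)\in M\subseteq D$ and $\Gamma_m$ is already a hull.
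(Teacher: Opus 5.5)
Your proposal is correct and follows essentially the paper's own proof. Both full nets come from the certificate tail inclusion $m_\sigma(u)-\bar m(b)=\bar m(u\setminus b)\in\Gamma_m(u_\varphi\setminus\ell_\varphi)\subseteq Q_\varphi$ and its lower analogue, and uniqueness comes from identifying a competitor with $m_\sigma$ on $\mathcal H_\sigma$ and $m_\delta$ on $\mathcal H_\delta$ and then applying lower regularity. The one difference is on $\sigma(\mathcal H)$: you invoke the monotone class theorem, whereas the paper writes out the equivalent Dynkin $\pi$--$\lambda$ argument with $\mathcal P=\mathcal H\cup\{1\}$.
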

\begin{proof}
Put $\mathcal U_b=\{u\in\mathcal H_\sigma:b\le u\}$ and
$\mathcal L_b=\{\ell\in\mathcal H_\delta:\ell\le b\}$.
Both are nonempty by a certificate. If $u,v\in\mathcal U_b$ and
$\ell,h\in\mathcal L_b$, then
$u\wedge v\in\mathcal U_b$ and $\ell\vee h\in\mathcal L_b$.
These are upper bounds in the specified directed orders.
For a certificate and an upper approximant
$b\le u\le u_\varphi$, finite additivity and domination give
\[
 m_\sigma(u)-\bar m(b)=\bar m(u\setminus b)
 \in\Gamma_m(u_\varphi\setminus\ell_\varphi)\subseteq Q_\varphi.
\]
Thus for every $\varphi$ the upper difference net has a value tail
contained in $Q_\varphi$, namely its tail at $u_\varphi$.
It is a quasi-subnet of the null evaluation of $(Q_\varphi)$,
which proves \eqref{gen:full-upper}. For $\ell_\varphi\le\ell\le b$,
\[
 \bar m(b)-m_\delta(\ell)=\bar m(b\setminus\ell)
 \in\Gamma_m(u_\varphi\setminus\ell_\varphi)\subseteq Q_\varphi.
\]
The lower difference net has its tail at $\ell_\varphi$ contained
in $Q_\varphi$, proving \eqref{gen:full-lower}. These two tail
inclusions use each certificate index separately.

For any countably additive extension $n$ and source exhaustion
$a_k\uparrow U$, put $a_0=0$ and $d_k=a_k\setminus a_{k-1}$.
Then $\bigvee_kd_k=U$, $\sum_{j\le k}n(d_j)=n(a_k)=m(a_k)$, and
\[
 n(U)=\lim_k n(a_k)=\lim_km(a_k)=m_\sigma(U).
\]
In particular $n(1)=m_\sigma(1)$, and for $L\in\mathcal H_\delta$,
$n(L)=n(1)-n(L^c)=m_\delta(L)$. If $n$ has full lower regularity,
then $n(b)=\lim_{\ell\le b}m_\delta(\ell)=\bar m(b)$.

For two extensions $n_1,n_2$ on $\sigma(\mathcal H)$, their
equality class $\mathcal L$ contains $1$ and $\mathcal H$.
If $b\in\mathcal L$, then $n_1(b^c)=n_1(1)-n_1(b)
=n_2(1)-n_2(b)=n_2(b^c)$. For disjoint $b_j\in\mathcal L$,
\[
 n_1\left(\bigvee_jb_j\right)
 =\lim_k\sum_{j\le k}n_1(b_j)
 =\lim_k\sum_{j\le k}n_2(b_j)
 =n_2\left(\bigvee_jb_j\right).
\]
Thus $\mathcal L$ is a Dynkin system containing the
intersection-closed family $\mathcal P=\mathcal H\cup\{1\}$.

Let $\mathcal D$ be the smallest Dynkin system containing
$\mathcal P$. In every Dynkin system, $a\le b$ implies closure
under $b\setminus a=(b^c\vee a)^c$, since $b^c\wedge a=0$.
For fixed $p\in\mathcal P$, put
$\mathcal D_p=\{b\in\mathcal D:p\wedge b\in\mathcal D\}$.
It contains $1$ because $p\wedge1=p\in\mathcal D$, and it
contains $\mathcal P$ because that family is intersection-closed.
For $b\in\mathcal D_p$, both $p$ and $p\wedge b$ belong to
$\mathcal D$. Nested-difference closure gives
$p\setminus(p\wedge b)\in\mathcal D$. Thus the first identity below
places $b^c$ in $\mathcal D_p$.
For a disjoint sequence $(b_j)\subseteq\mathcal D_p$, the elements
$p\wedge b_j$ are disjoint members of $\mathcal D$.
Their join belongs to $\mathcal D$, and the second identity places
$\bigvee_jb_j$ in $\mathcal D_p$:
\[
 p\wedge b^c=p\setminus(p\wedge b),\qquad
 p\wedge\bigvee_j b_j=\bigvee_j(p\wedge b_j)
\]
Consequently $\mathcal D_p$ is a Dynkin system containing
$\mathcal P$, and minimality gives $\mathcal D_p=\mathcal D$.
For fixed $b\in\mathcal D$, let
$\mathcal E_b=\{a\in\mathcal D:a\wedge b\in\mathcal D\}$.
Then $1\wedge b=b\in\mathcal D$ and $\mathcal P\subseteq\mathcal E_b$
by $\mathcal D_p=\mathcal D$. If $a\in\mathcal E_b$, then
$b\wedge a\in\mathcal D$ and $b\wedge a\le b$, so
$b\wedge a^c=b\setminus(b\wedge a)\in\mathcal D$.
If $(a_j)\subseteq\mathcal E_b$ is disjoint, then
$(b\wedge a_j)$ is disjoint in $\mathcal D$ and
\[
 b\wedge\bigvee_ja_j=\bigvee_j(b\wedge a_j)\in\mathcal D.
\]
Thus $\mathcal E_b$ is a Dynkin system containing $\mathcal P$,
and minimality gives $\mathcal E_b=\mathcal D$.
Every intersection of two elements of $\mathcal D$ therefore belongs
to $\mathcal D$. For $a,b\in\mathcal D$, the formula
$a\vee b=(a^c\wedge b^c)^c$ gives finite-union closure.
For $a_n\in\mathcal D$,
$d_n=a_n\setminus\bigvee_{j<n}a_j\in\mathcal D$ are disjoint and
$\bigvee_na_n=\bigvee_nd_n\in\mathcal D$. Therefore
$\sigma(\mathcal H)=\sigma(\mathcal P)\subseteq\mathcal D
\subseteq\mathcal L$.
\end{proof}

\begin{proof}[Completion of Theorem~\ref{gen:extension}]
Lemmas~\ref{gen:sigma-construction}, \ref{gen:K-construction} and
\ref{gen:R-values} construct
$m\subseteq m_\sigma\subseteq m_0\subseteq\bar m$ with values in $M$.
Proposition~\ref{gen:R-countable-additivity} gives
$\sum_{n\le N}\bar m(b_n)\to\bar m(\bigvee_n b_n)$ for disjoint
$b_n\in\mathcal R$, and
$\mathcal H\subseteq\mathcal K\subseteq\mathcal R
\Longrightarrow\sigma(\mathcal H)\subseteq\mathcal R$.
Proposition~\ref{gen:regularity} supplies
\eqref{gen:full-upper}--\eqref{gen:full-lower} and both uniqueness
assertions.
\end{proof}

\begin{proposition}[Transport and the intrinsic difference set]
\label{gen:transport}\label{gen:minimal-D}
Let $S$ be a subgroup containing $M$, with inherited convergence,
and $T:S\to Y$ a continuous homomorphism into a separated convergence
group. Then $T\bar m$ is countably additive and has full upper and
lower approximation nets $Tm_\sigma,Tm_\delta$.
If $S=X$, $Tm$ satisfies Theorem~\ref{gen:extension} with difference
set $D_Y$, and $T(D)\subseteq D_Y$, then
\begin{equation}\label{gen:transport-identification}
 \mathcal R_{\mathfrak I}(m,D)\subseteq\mathcal R_{\mathfrak I}(Tm,D_Y),
 \qquad\overline{Tm}(b)=T\bar m(b).
\end{equation}
If all set-sized directed row domains are admitted, then $D_0=M-M$
satisfies the same structural assumptions and
\begin{equation}\label{gen:minimal-D-equality}
 \mathcal R_{\mathrm{all}}(m,D)=\mathcal R_{\mathrm{all}}(m,D_0).
\end{equation}
\end{proposition}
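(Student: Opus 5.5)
The proof splits into three parts: the transport of countable additivity and of the approximation nets, the certificate inclusion \eqref{gen:transport-identification}, and the change of difference set \eqref{gen:minimal-D-equality}.

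\emph{Transport.} Continuity of $T$ and additivity of a homomorphism give $\sum_{n\le N}T\bar m(b_n)=T\bigl(\sum_{n\le N}\bar m(b_n)\bigr)\to T\bar m(\bigvee_nb_n)$. The values lie in $M\subseteq S$, so the inherited convergence on $S$ is the one being used. Applying $T$ to the nets \eqref{gen:full-upper}--\eqref{gen:full-lower} gives the full upper and lower nets $Tm_\sigma(u)\to T\bar m(b)$ and $Tm_\delta(\ell)\to T\bar m(b)$, again by continuity.

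\emph{Inclusion \eqref{gen:transport-identification}.} Let $S=X$ and fix a certificate $(U_{i,\alpha},\ell_\varphi,u_\varphi)$ for $b$ relative to $(m,D)$. I will use the transported rows $T(U_{i,\alpha})$. They contain $0$, decrease, lie in $T(D)\subseteq D_Y$, and are null by continuity of $T$ applied to evaluation nets: the image of an evaluation net of $(U_{i,\alpha})$ is an evaluation net of $(TU_{i,\alpha})$. Three hull facts are then needed.
\begin{itemize}
\item $T([B]_\theta)\subseteq[T B]$. The preimage under $T$ of a stable set is stable.
\item $T(\Gamma_m(s))\subseteq\Gamma_{Tm}(s)$, since $T(V_m(s))=V_{Tm}(s)$.
\item The reverse control $\Gamma_{Tm}(s)=[TV_m(s)]\subseteq[T\Gamma_m(s)]$.
\end{itemize}
From the certificate inclusion $\Gamma_m(u_\varphi\setminus\ell_\varphi)\subseteq Q_\varphi$ these give
\[
 \Gamma_{Tm}(u_\varphi\setminus\ell_\varphi)\subseteq[T Q_\varphi]\subseteq[T A_\varphi].
\]
Since $T$ is additive, $TA_\varphi$ is exactly the raw-sum set of the transported rows. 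Intersecting with $D_Y$ is harmless because $\Gamma_{Tm}\subseteq M_Y\subseteq D_Y$. The same sandwiches therefore form a $(Tm,D_Y)$-certificate for $b$. The upper values are $(Tm)_\sigma=Tm_\sigma$, since exhaustion limits commute with $T$. The value characterization in Lemma~\ref{gen:R-values}, together with separation, then yields $\overline{Tm}(b)=T\bar m(b)$.

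\emph{Equality \eqref{gen:minimal-D-equality}.} I first check that $D_0=M-M$ inherits the structural assumptions. It is symmetric, contains $M-M$, and the restriction principle Proposition~\ref{gen:inheritance} passes $\mathsf B_\omega$ and $\mathsf S_{\mathfrak I}$ from $D$ to $D_0\subseteq D$. Consequently $\mathcal R(m,D_0)\subseteq\mathcal R(m,D)$: rows in $D_0$ are rows in $D$, and $D_0\cap[A]\subseteq D\cap[A]$. For the reverse inclusion, take a $D$-certificate. Since $\Gamma_m(s)\subseteq M\subseteq D_0$, it suffices to find rows in $D_0$ whose relative hulls still dominate the shells. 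Here the admission of all directed domains is essential. For each $\varphi$ the shell $\Gamma_m(u_\varphi\setminus\ell_\varphi)$ lies in $D_0\cap Q_\varphi$, so I will use the single row $W_\varphi=\bigcup_{\psi\ge\varphi}\Gamma_m(u_\psi\setminus\ell_\psi)$. It is indexed by the directed product domain, decreases, contains $0$, lies in $M\subseteq D_0$, and is null because $W_\varphi\subseteq\bigcup_{\psi\ge\varphi}Q_\psi$ are tails of a null evaluation. Using $W$ as row $1$ and zero rows thereafter certifies $b$ relative to $D_0$.

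\emph{Main obstacle.} I expect the delicate point to be the inclusion $T([A]_\theta)\subseteq[TA]$ combined with intersecting by $D_Y$ rather than $T(D)$; this is resolved by the preimage-stability argument. A second subtlety is that the single-row reindexing above needs $\mathfrak I$ to contain the product domain, which is exactly the hypothesis "all set-sized domains".
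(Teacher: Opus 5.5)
Your proposal is correct and follows the paper's proof essentially step for step: continuity of $T$ for the transport, the preimage-stability argument giving $T([A]_\theta)\subseteq[T(A)]_Y$ together with the transported rows $T(U_{i,\alpha})$ and the bound $\Gamma_{Tm}\subseteq M_Y\subseteq D_Y$, and a single row on the product selector domain for the passage to $D_0=M-M$. The only variation is the choice of that row. The paper uses $W_\varphi=M\cap[A_\varphi]_\theta$, which is stable, so $D_0\cap[W_\varphi]_\theta=W_\varphi$; your union $\bigcup_{\psi\ge\varphi}\Gamma_m(u_\psi\setminus\ell_\psi)$ of later shells works equally well, because it decreases, lies in $M\subseteq D_0$, and is dominated by the tails of the null family $(Q_\psi)$.
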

\begin{proof}
For disjoint $b_n\in\mathcal R$, continuity and additivity give
\[
 \sum_{n\le N}T\bar m(b_n)
 =T\!\left(\sum_{n\le N}\bar m(b_n)\right)
 \longrightarrow T\bar m\!\left(\bigvee_n b_n\right).
\]
The same continuity applied to \eqref{gen:full-upper} and
\eqref{gen:full-lower} gives the asserted approximation limits.
If $S=X$ and $C\subseteq Y$ is stable, then every convergent net
in $T^{-1}(C)$ has its limit in $T^{-1}(C)$.
Taking $C=[T(A)]_Y$ yields $[A]\subseteq T^{-1}(C)$, and hence
$T([A])\subseteq[T(A)]_Y$. A source certificate with raw sums
$A_\varphi$ has image rows $T(U_{i,\alpha})$ and raw sums
$T(A_\varphi)$. Its shell satisfies
\[
 V_{Tm}(u_\varphi\setminus\ell_\varphi)
 =T(V_m(u_\varphi\setminus\ell_\varphi))
 \subseteq[T(A_\varphi)]_Y.
\]
Taking the target stable hull and using its membership in
$M_Y\subseteq D_Y$ gives a target certificate. For $a_n\uparrow U$ in the source,
$(Tm)_\sigma(U)=\lim_nT(m(a_n))=T(m_\sigma(U))$.
The full upper approximants therefore converge both to
$\overline{Tm}(b)$ and to $T\bar m(b)$, proving
\eqref{gen:transport-identification}.

Since $0\in M$, one has $M\subseteq D_0=M-M=-D_0\subseteq D$.
Proposition~\ref{gen:inheritance} supplies
$\mathsf B_\omega(D_0)$ and $\mathsf S_{\rm all}(D_0)$.
The assumptions on $M$ remain unchanged. For any raw sums $A_\varphi$,
$D_0\cap[A_\varphi]\subseteq D\cap[A_\varphi]$, so every
$D_0$-certificate is a $D$-certificate. Conversely, for a
$D$-certificate put $W_\varphi=M\cap[A_\varphi]$.
Then
$0\in W_\varphi\subseteq Q_\varphi$, $\varphi\le\psi\Rightarrow
W_\psi\subseteq W_\varphi$, and $W_\varphi\Rightarrow0$.
Each $W_\varphi$ is stable as an intersection of stable sets.
The certificate has
$\Gamma_m(u_\varphi\setminus\ell_\varphi)
\subseteq M\cap[A_\varphi]=W_\varphi$. Their domain
$\prod_i I_i$ is admissible when all directed row domains are
admitted. Use $W_\varphi$ as one row and zero rows thereafter.
Since $W_\varphi\subseteq M\subseteq D_0$, its relative closed sum
is $D_0\cap[W_\varphi]=W_\varphi$, giving a $D_0$-certificate.
\end{proof}

\begin{remark}\label{gen:sequential-D-warning}
The last reduction does not assert the same equality for exclusively
sequential rows: their countable selector product need not have a
countable cofinal subset. The extension theorem itself has used
only the original row domains and one additional sequential row.
\end{remark}

We now construct a complete convergence which is not induced by a
topology and satisfies the range principles of
Theorem~\ref{gen:extension}. The same example has block reflection
but fails matrix reflection, separating the two conditions even
among complete convergence groups.

Let $G=(\mathbb Z/2\mathbb Z)^{\mathbb N^2}$, with coordinatewise
addition, and put
\[
 \mathcal J=\{J\subseteq\mathbb N^2:
       J\cap(\{r\}\times\mathbb N)\text{ is finite for every }r\},
 \qquad \operatorname{supp}x=\{(r,k):x(r,k)\ne0\}.
\]
Define $x_\alpha\tto\rho x$ when both conditions hold:
\begin{equation}\label{inv:rowfinite-model}
 \begin{aligned}
 &\forall(r,k)\ \exists\alpha(r,k)\ \forall\alpha\geq\alpha(r,k):
       x_\alpha(r,k)=x(r,k),\\
 &\exists J\in\mathcal J\ \exists\alpha_0\ \forall\alpha\geq\alpha_0:
       \operatorname{supp}(x_\alpha-x)\subseteq J.
 \end{aligned}
\end{equation}
Thus errors eventually have their supports in one set which meets
each physical row in finitely many coordinates.

\begin{proposition}[A complete non-topological range]
\label{inv:rowfinite-basic}\label{inv:rowfinite-separation}
\label{inv:topological-reflection}
The convergence $\rho$ is a separated, complete abelian group
convergence. With $M=D=G$, it satisfies
$\mathsf B_\omega(D)$, $\mathsf S_{\mathrm{all}}(D)$ and
$\mathsf U(M)$, but fails $\mathsf R_\omega(D)$ and
$\mathsf F(D)$. No topology induces $\rho$.
\end{proposition}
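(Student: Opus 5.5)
The plan is to verify the group-convergence axioms directly from \eqref{inv:rowfinite-model}, and then to test each range principle by translating nullity of a family into two concrete conditions. Throughout, write $e_{(r,k)}$ for the unit vector at the coordinate $(r,k)$.

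\textbf{Group axioms, separation and completeness.} Constant nets converge with $J=\varnothing$. Quasi-subnets inherit both the eventual coordinatewise equality and the eventual support bound, because tails are contained in tails. Finite mixing, sums $u\boxplus v$ and inverses are handled by taking the union $J\cup J'$, which is again in $\mathcal J$, and by noting that $-x=x$ here. Separation follows from the first, coordinatewise condition.

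For completeness, let $u$ be Cauchy. The coordinatewise condition for $u\boxplus(-u)$ makes every coordinate of $u$ eventually constant, and this defines a candidate limit $x$. The support condition gives a set $J\in\mathcal J$ and a threshold beyond which $\operatorname{supp}(u_\alpha-u_\beta)\subseteq J$. Fixing $\alpha$ and letting $\beta$ run, coordinatewise stabilization gives $\operatorname{supp}(u_\alpha-x)\subseteq J$. Hence $u\tto\rho x$.

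\textbf{The nullity criterion.} I will record the following criterion, since every later step uses it. A decreasing family $(B_n)$ with $0\in B_n$ is null exactly when two conditions hold. First, for each coordinate $c$ there is an $n$ such that every element of $B_n$ vanishes at $c$. Second, there are $n_0$ and $J\in\mathcal J$ with $\operatorname{supp}b\subseteq J$ for all $b\in B_{n_0}$.

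I also note that, for every $J\in\mathcal J$ and every finite set $C$ of coordinates, the set $\{x:\operatorname{supp}x\subseteq J,\ x|_C=0\}$ is limit-stable. Consequently stable hulls preserve both conditions in the criterion.

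\textbf{Verification of $\mathsf S_{\mathrm{all}}(D)$.} Let the rows be $U_{i,\alpha}$, with witnesses $J_i\in\mathcal J$. For each $i$, choose $\varphi_0(i)$ so that beyond it every element of $U_{i,\alpha}$ is supported in $J_i$. I also require that these elements vanish on the finitely many coordinates of $J_i$ lying in rows $r\le i$ together with the coordinates in $[1,i]^2$; this is possible by the coordinatewise condition. Then for $\varphi\ge\varphi_0$ all raw sums $A_\varphi$ are supported in
\[
J'=\bigcup_i\bigl(J_i\cap\{(r,k):r>i\}\bigr).
\]
The set $J'$ is row-finite, because row $r$ meets only $J_i$ for $i<r$. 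The vanishing requirements on $[1,i]^2$ give the coordinatewise condition along $\varphi$. By the remark on limit-stable sets, the same holds for $[A_\varphi]$, and the criterion gives $(Q_\varphi)_\varphi\Rightarrow0$.

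\textbf{Verification of $\mathsf U(M)$.} This is a direct diagonal estimate. Pick $\beta_0$ from the two approximation hypotheses and then a row threshold from the $\beta_0$-hypothesis. Take the union of the three resulting sets in $\mathcal J$, and obtain the coordinatewise condition in the same way, since each coordinate is a $\mathbb Z/2$-valued test.

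\textbf{Verification of $\mathsf B_\omega(D)$.} I argue by contradiction. If the conclusion fails, either some coordinate is hit by finite sums arbitrarily far out, or no single $J\in\mathcal J$ bounds the tail supports. In the second case some row $r$ receives infinitely many support points from tails starting arbitrarily late. Selecting these witnesses recursively, I will build block cuts $k_0<k_1<\cdots$ so that each block contains a witness. A witness is a finite sum of elements taken from the $A_n$, and the block sum still contains it because $0\in A_n$ lets the other terms be omitted. The resulting block family then violates the criterion, contradicting the premise of $\mathsf B_\omega(D)$.

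\textbf{Failure of $\mathsf R_\omega(D)$ and $\mathsf F(D)$.} For $\mathsf R_\omega(D)$ I use the matrix
\[
A_{n,k}=\{0\}\cup\{e_{(r,j)}:r\ge n,\ j\le k\}.
\]
It is decreasing in $n$ and increasing in $k$. For a selection $\varphi$, the set $\bigcup_{n\ge N}A_{n,\varphi(n)}$ meets row $r$ only in the coordinates with $j\le\max_{n\le r}\varphi(n)$, so every selected family is null. On the other hand $\bigcup_kA_{n,k}$ contains all of row $n$, so the row unions are not null. Proposition~\ref{gen:reflection-block} then gives the failure of $\mathsf F(D)$ as well.

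\textbf{Non-topologicality and the expected obstacle.} I plan to show that no topology induces $\rho$ by exhibiting a set whose adherence is not limit-stable. In a topological convergence, adherence equals closure and is therefore idempotent. The aim is a doubly indexed family $x_{r,k}$ with $x_{r,k}\tto\rho x_r$ as $k\to\infty$ and $x_r\tto\rho0$, arranged so that every net with values in $\{x_{r,k}\}$ forces infinitely many support points into a single physical row, so that no such net converges to $0$.

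This construction is the step I expect to be the main obstacle. Naive choices fail: some already allow a diagonal selection with row-finite support, and others have inner limits $x_r$ that do not converge to $0$. If no clean family appears, my fallback is to show that $\rho$ violates the Kowalsky iterated-limit (diagonal) property, which every topological convergence satisfies. The row-finite matrix above, used as errors around a moving base point, is the natural starting candidate for that argument.
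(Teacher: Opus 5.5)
\textbf{Genuine gap: the last assertion, that no topology induces $\rho$, is not proved.} Apart from the $\mathsf B_\omega$ step, your verification of the group axioms, completeness, $\mathsf S_{\mathrm{all}}(D)$, $\mathsf U(M)$ and the failures of $\mathsf R_\omega(D)$ and $\mathsf F(D)$ follows the paper, even using the same matrix $A_{n,k}$. For non-topologicality, however, you give only a plan. That plan cannot succeed, because adherence in $\rho$ is already idempotent.

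To see this, write $G_K=\{y:\operatorname{supp}y\subseteq K\}$. One checks that $x$ is adherent to $S$ exactly when some $J\in\mathcal J$ satisfies $S\cap(x+G_{J\setminus C})\ne\varnothing$ for every finite $C\subseteq\N^2$. Suppose adherent points $x_\alpha$, with witnesses $J_\alpha$, converge to $x$ with eventual support bound $J_0$. Enumerate the finite sets as $C_1,C_2,\ldots$ and choose $\alpha_i$ with $x_{\alpha_i}-x\in G_{J_0\setminus C_i}$. Then choose $s_i\in S$ with $s_i-x_{\alpha_i}$ supported in $J_{\alpha_i}$ minus $C_i$ and minus the first $i$ physical rows; this is a finite deletion. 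The set $J_0\cup\bigcup_i\bigl(J_{\alpha_i}\setminus(\{1,\ldots,i\}\times\N)\bigr)$ is row-finite and witnesses that $x$ is adherent. This is the same row-deletion trick as in your $\mathsf S_{\mathrm{all}}$ step.

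Your diagonal fallback is only named. Note that the iterated-limit net still converges when the outer index set is countable, again by row deletion along an enumeration. That route would therefore need an uncountable outer index and a domination argument.

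\textbf{The missing observation is short and uses what you have already proved.} Every topology satisfies $\mathsf R_\omega$: there, nullity of a decreasing family $T_N\ni0$ means that each neighbourhood of $0$ contains some $T_N$. If $T_N=\bigcup_kA_{N,k}$ were not null, some neighbourhood $V$ of $0$ would satisfy $T_N\not\subseteq V$ for every $N$. Choosing $x_N\in A_{N,L_N}\setminus V$ and the selector $\varphi(N)=L_N$ then gives a selected family that is not null, contradicting the matrix premise. So your failure of $\mathsf R_\omega(D)$ shows at once that no topology induces $\rho$; this is the paper's argument. The analogous play against any strategy shows that every topology also satisfies $\mathsf F$, so the failure of $\mathsf F(D)$ would serve equally well.

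\textbf{Two smaller repairs are needed.}

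In the $\mathsf S_{\mathrm{all}}$ step, vanishing on $[1,i]^2$ at the fixed threshold $\varphi_0$ does not give the coordinate condition at $c=(r,k)$, because rows $i<r$ may still be nonzero at $c$. You must raise the finitely many indices $\varphi(i)$, $i<r$, by a $c$-dependent amount, as the paper does.

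In the $\mathsf B_\omega$ step, the claim that one fixed row receives infinitely many support points can fail, since the row may move with $N$. No contradiction argument is needed anyway. Singleton blocks give $(A_n)_n\Rightarrow0$. A sum is supported in the union of its summands' supports and vanishes wherever all of them vanish, so both conditions for the tail sums follow directly.
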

\begin{proof}
For $J,K\in\mathcal J$ and every $r$,
$(J\cup K)\cap(\{r\}\times\mathbb N)$ is finite.
Constants have empty error support. For a quasi-subnet $v\qsub u$,
each value-tail inclusion transfers both eventual support
containment and eventual equality at a fixed coordinate.
If $u,v\to x$ have eventual support bounds $J,K$, their mixed net
has bound $J\cup K$. At coordinate $q$, a product threshold above
both coordinate thresholds makes every mixed value equal to $x(q)$. For independent sums,
\[
 \operatorname{supp}((x_\alpha+y_\beta)-(x+y))
 \subseteq\operatorname{supp}(x_\alpha-x)
          \cup\operatorname{supp}(y_\beta-y),
\]
and both coordinates stabilize on a product tail. Inversion is the identity. Thus constants, quasi-subnets, finite
mixing, independent addition and inversion obey the required axioms.
If a net converges to $x$ and $y$, every coordinate is eventually
equal to both $x(q)$ and $y(q)$, so $x=y$.
For every $J\subseteq\mathbb N^2$, the subgroup
$G_J=\{x:\operatorname{supp}x\subseteq J\}$ is limit-stable,
since every coordinate outside $J$ remains zero at a limit.
In particular, every coordinate kernel is stable.

If $(x_\alpha)$ is Cauchy, its independent difference net is
$\rho$-null. There are $J\in\mathcal J$ and a common upper threshold $\alpha_0$
for the two difference indices such that
$\operatorname{supp}(x_\alpha-x_\beta)\subseteq J$ whenever
$\alpha,\beta\geq\alpha_0$. At each coordinate, the same Cauchy
condition gives eventual equality of all pairs of values. Denote
the resulting eventual value by $x(r,k)$. Outside $J$, every
$x_\alpha$, $\alpha\geq\alpha_0$, agrees with $x_{\alpha_0}$,
and its eventual value is the same. Thus
$\operatorname{supp}(x_\alpha-x)\subseteq J$ on this tail, while
every coordinate stabilizes to $x$. This proves completeness.

For block reflection let $(A_n)$ satisfy its premises. The partition
into singleton blocks gives $(A_n)_n\Rightarrow0$. Therefore, on
some tail $n\geq N_0$, all members of $A_n$ have supports in a
fixed $J\in\mathcal J$. For finite
$F\subseteq\{N_0,N_0+1,\ldots\}$ and $x_n\in A_n$,
$\operatorname{supp}(\sum_{n\in F}x_n)\subseteq J$. For each coordinate $q$, there is $N(q)$
such that every member of $A_n$ vanishes at $q$ for $n\geq N(q)$.
All finite sums from that tail also vanish there. These are the
two conditions for the evaluation of the finite tail sums to be
$\rho$-null, proving $\mathsf B_\omega(D)$.

For closed summation, consider decreasing null rows
$0\in U_{i,\alpha}\subseteq G$ on arbitrary domains $I_i$.
For each $i$, there are $a_i^0\in I_i$, $J_i\in\mathcal J$ with
$U_{i,\alpha}\subseteq G_{J_i}$ for $\alpha\geq a_i^0$.
The set $J_i\cap(\{1,\ldots,i\}\times\mathbb N)$ is finite.
Coordinate nullity and finite directedness supply $a_i^1\geq a_i^0$
such that every member of $U_{i,a_i^1}$ vanishes there. Set
\[
 J_i'=J_i\setminus(\{1,\ldots,i\}\times\mathbb N),\qquad
 J=\bigcup_iJ_i'.
\]
For fixed $r$, only the indices $i<r$ contribute to
$J\cap(\{r\}\times\mathbb N)$, and
\[
 J\cap(\{r\}\times\mathbb N)
 =\bigcup_{i<r}\bigl(J_i'\cap(\{r\}\times\mathbb N)\bigr)
 \quad\text{is finite}.
\]
Hence $J\in\mathcal J$. If $a\geq(a_i^1)_i$, every selected
finite sum belongs to $G_J$, and its stable hull belongs there
as well. For a fixed coordinate $q=(r,k)$, only the finitely many
rows $i<r$ can contribute at $q$. Coordinate nullity gives
$a_i^2\geq a_i^1$ such that $x(q)=0$ for every
$x\in U_{i,a_i^2}$ when $i<r$. Put
$a_i^2=a_i^1$ for $i\geq r$. Above $(a_i^2)_i$, the raw sums
vanish at $q$. The stable coordinate kernel contains their stable
hulls. Those hulls consequently have support in $J$ on one tail
and vanish eventually at every fixed coordinate. Their evaluation
is $\rho$-null, proving $\mathsf S_{\mathrm{all}}(D)$.

For $\mathsf U(M)$, use its notation with the additional parameter
$j$. Uniform nullity of the two errors gives a common threshold
$\beta_0$ and $J_x,J_y\in\mathcal J$ such that their supports
are contained in these sets for all $n,j$ and
$\beta\geq\beta_0$. At $\beta_0$, the approximating difference
admits $J_0\in\mathcal J$ and $N_0\in\N$ such that its supports
are contained in $J_0$ for every $n\ge N_0$ and every $j$. The identity
\[
 x_n(j)-y_n(j)
 =(x_n(j)-x_n^\beta(j))
 +(x_n^\beta(j)-y_n^\beta(j))
 +(y_n^\beta(j)-y_n(j))
\]
places the target supports in $J_x\cup J_0\cup J_y$ on that tail.
For a fixed coordinate $q$, uniform nullity of the two errors
gives $\beta_1\geq\beta_0$ where both vanish at $q$ for all $n,j$.
At this fixed parameter the approximating differences vanish at
$q$ for every $n\ge N_q$ and every $j$, for some integer $N_q$. The same
identity gives coordinate nullity of the target evaluation and
proves $\mathsf U(M)$.

For failure of matrix reflection let $e_{r,k}$ be the coordinate
unit vector and define
\[
 A_{N,L}=\{0\}\cup\{e_{r,k}:r\geq N,\ k\leq L\}.
\]
The sets decrease in $N$ and increase in $L$. For every selector
$\varphi\in\mathbb N^{\mathbb N}$, their selected supports lie in
\[
 J_\varphi=\{(r,k): k\leq\max_{1\leq N\leq r}\varphi(N)\}
 \in\mathcal J.
\]
Every coordinate in physical row $r$ vanishes for $N>r$.
Thus $(A_{N,\varphi(N)})_N\Rightarrow0$ for every $\varphi$.
However,
\[
 T_N=\bigcup_LA_{N,L}
 =\{0\}\cup\{e_{r,k}:r\geq N,\ k\in\mathbb N\}
\]
has no evaluation tail supported in a member of $\mathcal J$:
its supports contain the whole physical row $N$. Hence
$(T_N)_N\not\Rightarrow0$, disproving $\mathsf R_\omega(D)$.
Proposition~\ref{gen:reflection-block} then disproves
$\mathsf F(D)$.

Finally, every topological group satisfies matrix reflection.
If decreasing $T_N=\bigcup_LA_{N,L}$ are not null, some
neighbourhood $V$ of zero satisfies $T_N\setminus V\ne\varnothing$
for every $N$. There are $x_N\notin V$ and $L_N$ with
$x_N\in A_{N,L_N}$. The selector $\varphi(N)=L_N$ gives an
evaluation with values outside $V$ at every stage, contradicting
its assumed nullity. Therefore the failure just proved prevents
any topology from inducing $\rho$.
\end{proof}

\section{The all-subset double process and its splitting domain}
\label{sec:inner-boolean}\label{sec:outer-extension}\label{sec:sion-short}

The regulated extension of the previous section was constructed from certificates defined by
source fragments. We now use finite-response reflection to construct
inner and outer limits for every subset, including sets outside the
regulated domain, thus getting the analogue of the classical procedure of \cite{Sion}. We work now with
$\mathbb B=\mathcal P(\Omega)$ and the data of
Theorem~\ref{gen:extension}, write $m=\tau$, and retain its notation
$M,D,\theta,\Gamma_m,m_\sigma,m_\delta,m_0$.
In this section the row domains are unrestricted and finite-response
reflection $\mathsf F(D)$ is assumed. Thus the structural hypotheses are
$\mathsf F(D),\mathsf S_{\rm all}(D),\mathsf U(M)$. The block condition
required in Theorem~\ref{gen:extension} is supplied by reflection.
Put $\mathcal K=\operatorname{alg}(\mathcal H_\sigma)$ and
\[
 V(A)=\{\tau H:H\in\mathcal H, H\subseteq A\},\qquad
 \Gamma(A)=[V(A)]_\theta,\qquad
 \Delta(A)=D\cap[V(A)-V(A)]_\theta.
\]
Here $\Gamma(A)\subseteq M\subseteq D$, whereas no stability of $D$
is required. The notation $c_D(W)=D\cap[W]_\theta$ will be used only
after taking the indicated stable hull.

\begin{theorem}[All-subset Sion--Carath\'eodory refinement]
\label{thm:main}
There are unique maps $g,\mu:\mathcal P(\Omega)\to M$ such that
\begin{equation}\label{eq:double-process}
 g(A)=\lim_{H\in\mathcal H,\ H\subseteq A}\tau H,
 \qquad
 \mu(A)=\lim_{B\in\mathcal H_\sigma,\ A\subseteq B}g(B),
\end{equation}
where inner indices increase and outer indices decrease.
They satisfy $g(H)=\mu(H)=\tau H$ for $H\in\mathcal H$ and
\[
 A_n\uparrow A\ \Longrightarrow\ \mu(A_n)\tto{\theta}\mu(A),
 \qquad
 \mathcal M_\mu=\mathcal R_{\rm all}(\tau,D),
\]
where
\[
 \mathcal M_\mu=
 \{E\subseteq\Omega:\mu(T)=\mu(T\cap E)+\mu(T\setminus E)
                         \text{ for every }T\subseteq\Omega\}.
\]
On this sigma-algebra $\mu=\bar m$. Consequently $\mu$ is countably
additive there, both full approximation nets of
Theorem~\ref{gen:extension} converge to its values, and its restriction
to $\sigma(\mathcal H)$ is the unique countably additive extension.
Moreover
\begin{equation}\label{eq:outer-comparison}
 g(A)\in\Gamma(A),\quad
 \mu(A)\in\bigcap_{S\in\mathcal H_\sigma,\ A\subseteq S}\Gamma(S),
 \quad
 \mu(A)-\mu(E)\in
 \bigcap_{S\in\mathcal H_\sigma,\ A\triangle E\subseteq S}\Delta(S).
\end{equation}
\end{theorem}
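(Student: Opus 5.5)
Since $\mathsf F(D)$ implies $\mathsf B_\omega(D)$ by Proposition~\ref{gen:reflection-block}, Theorem~\ref{gen:extension} applies with $\mathfrak I$ all domains. This gives $\mathcal R=\mathcal R_{\rm all}(\tau,D)$, $\bar m$, $m_\sigma$, $m_\delta$ and $m_0$, and I will use them freely.

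\emph{Inner limit.} For the inner limit I will copy the finite-response argument of Lemma~\ref{mod:source-monotone-tails} and Proposition~\ref{mod:directed-processes}. The inner index set $\{H\in\mathcal H:H\subseteq A\}$ is directed by unions. For $H\subseteq H'$, the increments $\tau H'-\tau H=\tau(H'\setminus H)$ are source fragments. A strategy choosing at each move a union of the witnesses produces disjoint source elements $h_j$. (HE) then makes the responses null, so $\mathsf F(D)$ makes the interval tails null. Completeness of $M$ gives the limit $g(A)$, and stability gives $g(A)\in\Gamma(A)$. On $B\in\mathcal H_\sigma$, I will show $g(B)=m_\sigma(B)$ by the argument used in Theorem~\ref{mod:main}: for $c\le B$ and a source exhaustion $a_j\uparrow B$, one has $\tau(c\cup a_j)-\tau(a_j)\to0$, and separation with Lemma~\ref{gen:relative} identifies the limits.

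\emph{Outer limit.} The outer net $B\mapsto m_\sigma(B)$ over $B\supseteq A$ is decreasing. Its differences are $m_0(B\setminus B')\in\Gamma(B\setminus B')$ by \eqref{gen:K-domination}. I will run the same reflection strategy, now with $\mathcal H_\sigma$-shells. The countable joins of the chosen shells stay in $\mathcal H_\sigma$, and the needed hereditary exhaustivity is Lemma~\ref{gen:UT-R} on $\mathcal R\supseteq\mathcal K$. This yields $\mu(A)$. The bounds in \eqref{eq:outer-comparison} follow by clipping: replacing $B$ by $B\cap S$ shows $\mu(A)\in\Gamma(S)$. Comparing the outer nets of $A$ and $E$ through $B\cap S$ and $B\cup S$ puts $\mu(A)-\mu(E)$ in $D\cap[V(S)-V(S)]$. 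Uniqueness is separation, and $g=\mu=\tau$ on $\mathcal H$ is immediate. For $b\in\mathcal R$, \eqref{gen:full-upper} gives $\mu(b)=\bar m(b)$.

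\emph{Increasing continuity.} For $A_n\uparrow A$, I will pick covers $B_n\supseteq A_n$ chosen along a product index $\Theta$, so that $m_\sigma(B_n)-\mu(A_n)$ lies in a countable-summation hull $Q_\Theta$ given by $\mathsf S_{\rm all}(D)$. Set $B=\bigcup_nB_n\in\mathcal H_\sigma$ and take $B'_n=\bigcap_{k\ge n}B_k$. Then the errors of the $B_n$ are controlled by $Q_\Theta$, while increasing continuity of $m_\sigma$ holds on $\mathcal H_\sigma$. Tail transfer through $\mathsf U(M)$, as in Lemma~\ref{gen:tail-transfer}, should then give $\mu(A_n)\to\mu(A)$.

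\emph{The splitting class.} To show $\mathcal R\subseteq\mathcal M_\mu$, fix $E\in\mathcal R$ and $T\subseteq\Omega$. For covers $B\supseteq T$, additivity of $\bar m$ on $\mathcal R$ gives $\mu(B)=\mu(B\cap E)+\mu(B\setminus E)$. Passing to the outer limits, using regularity of the certificates of $E$, gives the splitting identity. For the converse, take $E\in\mathcal M_\mu$. Splitting at a cover $u\supseteq E$ and at its complement should force $\mu(u\setminus E)\to0$ along covers. Together with \eqref{eq:outer-comparison}, this gives $\Gamma(u\setminus\ell)$ small for some lower approximants $\ell\in\mathcal H_\delta$. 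These lower approximants are to be built from $g$ applied to $E$ via complements, $\ell=(u'\,)^c$ with $u'\supseteq E^c$. The resulting sets give a certificate with rows $W_\varphi=M\cap[A_\varphi]$, as in Proposition~\ref{gen:minimal-D}.

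I expect this converse inclusion $\mathcal M_\mu\subseteq\mathcal R_{\rm all}$ to be the main obstacle. It requires converting convergence of values into hereditary shell control $\Gamma(u\setminus\ell)\Rightarrow0$. I would first try using splitting on every subset $T\subseteq u\setminus E$, so that $\mu$ of all such subsets is small uniformly. Then $\mathsf F(D)$ would turn this uniform smallness into a null row.
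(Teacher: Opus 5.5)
Your constructions of $g$ and $\mu$ follow the paper: the finite-response residual argument on $\mathcal H$ for the inner net, and the same argument on complements of covers in $\mathcal K$ for the outer net. So do the identification $g=m_\sigma$ on $\mathcal H_\sigma$ and the consequence $\mu=\bar m$ on $\mathcal R$ via \eqref{gen:full-upper}.

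The genuine gap is the inclusion $\mathcal M_\mu\subseteq\mathcal R_{\rm all}(\tau,D)$, and the mechanism you sketch would not close it. In a range without order, $\mu(u\setminus E)=\mu(u)-\mu(E)\to0$ gives no control over source fragments. Also, \eqref{eq:outer-comparison} points the wrong way: it puts $\mu$-values inside source hulls, whereas a certificate needs every $\tau H$ with $H\subseteq u\setminus\ell$ to lie in a null family. Such $H$ also meet $E$, so they are not subsets of $u\setminus E$, and arbitrary subsets need not split.

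The missing idea is a residual-excess estimate. Let $\mathscr R_A(B)=D\cap[R_A(B)]_\theta$, where $R_A(B)$ collects the values $m_0(F)$ with $F\in\mathcal K$, $F\subseteq B\setminus L$, over covers $L\in\mathcal H_\sigma$ of $A$ with $L\subseteq B$. This is the residual family behind your outer Cauchy argument, and it is null in $B$ (reverse inclusion). Passing to the limit over covers of $T$ inside $B$ gives $g(B)-\mu(T)\in\mathscr R_A(B)$ whenever $A\subseteq T\subseteq B$. Hence $\mu(F)=g(B)-\mu(B\setminus F)\in\mathscr R_A(B)$ for every splitting $F\subseteq B\setminus A$.

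Now take covers $U\supseteq E$ and $V\supseteq\Omega\setminus E$, and $H\in\mathcal H$ with $H\subseteq U\cap V$. Splitting $H$ by $E$ gives $\tau H=\mu(H\cap E)+\mu(H\setminus E)\in\mathscr R_{\Omega\setminus E}(V)+\mathscr R_E(U)$. This step needs $\mathcal M_\mu$ to be an algebra containing $\mathcal H$, which must be proved first. Two-row closed summation then makes $\Gamma(U\cap V)$ null on the product of the two cover domains. This stable row, with $\ell=\Omega\setminus V\subseteq E\subseteq U$, is itself a certificate.

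Your increasing-continuity sketch lacks the same estimate. The sets $\bigcap_{k\ge n}B_k$ leave $\mathcal H_\sigma$, so continuity of $m_\sigma$ does not apply to them. Nothing in your sketch controls the non-nested covers uniformly in $n$.

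The paper instead uses $D_n=\bigcup_{i\le n}B_i$, with all $B_i$ inside an extra cover $B$ of $A$. It writes $D_n=B_n\cup\bigcup_{i<n}E_{i,n}$ with disjoint $E_{i,n}\in\mathcal K$ and $E_{i,n}\subseteq B_i\setminus A_i$. It then needs $m_0(E_{i,n})\in\mathscr R_{A_i}(B_i)$, the $\mathcal K$ form of the excess estimate. That form comes from approximating $E_{i,n}$ from inside by sets $B_i\setminus L$, with $L\in\mathcal H_\sigma$ a cover of $A_i$. This yields $g(D_n)-\mu(A_n)\in D\cap\sum_{i\le n}\mathscr R_{A_i}(B_i)$ uniformly in $n$, and $g(\bigcup_iB_i)-\mu(A)\in\mathscr R_A(B)$. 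Only then does $\mathsf U(M)$ apply.

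Finally, your route to $\mathcal R\subseteq\mathcal M_\mu$ through $\mu(B\cap E)$ for covers $B\supseteq T$ is incomplete. Since $B\cap E\notin\mathcal H_\sigma$ and there is no monotonicity of $\mu$ to exploit, the convergence $\mu(B\cap E)\to\mu(T\cap E)$ is unproved. It is cleaner to show first that $\mathcal H$, then $\mathcal K$, consist of splitting sets. Then compare $T\cap E$ with $T\cap\ell_\varphi$ through the third inclusion of \eqref{eq:outer-comparison}, since their symmetric difference lies in $u_\varphi\setminus\ell_\varphi$.
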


We first use finite responses to control residual fragments. The next
lemma requires convergence on disjoint source sequences, with no Cauchy
assumption on either process.

\begin{lemma}[Residual convergence]\label{lem:residual}
Let $\mathscr R$ be a ring and $W:\mathscr R\to\mathcal P(D)$ be
monotone, with $0\in W(E)$ for every $E$. Suppose
$\ev_nW(E_n)\tto{\theta}0$ for every disjoint sequence in $\mathscr R$.
If $\varnothing\ne\mathscr A\subseteq\mathscr R$ is closed under finite
unions, then
\[
 R_C=\bigcup_{A\in\mathscr A}W(A\setminus C),\qquad
 \ev_{C\in\mathscr A}R_C\tto{\theta}0,
\]
with $\mathscr A$ ordered by inclusion.
\end{lemma}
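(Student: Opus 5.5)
The plan is to apply finite-response reflection $\mathsf F(D)$, the standing hypothesis of this section, to the evaluation net $u=\ev_{C\in\mathscr A}(R_C)$. First I record the structure. Monotonicity of $W$ makes $R_C$ decrease as $C$ grows in $\mathscr A$. Each $R_C$ contains $0$ and lies in $D$. By \eqref{eq:evaltail}, the tail of $u$ at $(C,z)$ is exactly $R_C$. It therefore suffices to produce a strategy $\sigma$ whose every play has a null response evaluation.

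The strategy is built from witnesses. For each $(C,z)$ in the evaluation domain, fix $A_{C,z}\in\mathscr A$ with $z\in W(A_{C,z}\setminus C)$. Let $\sigma(\varnothing)=(C_*,0)$ for a fixed $C_*\in\mathscr A$, which exists because $\mathscr A$ is nonempty. For a history ending with $((C_r,w_r),S_r)$, set
\[
 C_{r+1}=C_r\cup\bigcup\{A_{C',z}:(C',z)\text{ a witness used for }z\in S_r\}\in\mathscr A,
\]
and define $\sigma(h)=(C_{r+1},0)$. This uses closure of $\mathscr A$ under finite unions together with finiteness of $S_r$. Each $z\in S_r$ lies in the tail $R_{C_r}$, so the witness may be taken at threshold $C_r$.

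Along a play this gives $C_1\subseteq C_2\subseteq\cdots$ in $\mathscr A$. Every $z\in S_r$ lies in $W(A_{C_r,z}\setminus C_r)$, and $A_{C_r,z}\setminus C_r\subseteq C_{r+1}\setminus C_r=:E_r$. Monotonicity of $W$ then gives $S_r\subseteq W(E_r)$. The sets $E_r$ belong to the ring $\mathscr R$ and are pairwise disjoint. By hypothesis $\ev_rW(E_r)\to0$. Since the tails of the response evaluation are contained in those of $\ev_rW(E_r)$, the response evaluation is a quasi-subnet, so $(S_r)_r\Rightarrow0$. $\mathsf F(D)$ now gives $u\to0$, which is the claim.

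The delicate step is the bookkeeping. The strategy may depend only on the history, and the witness for each response must be chosen relative to the current threshold $C_r$. To handle this I will fix the choice function on the whole evaluation domain beforehand, exactly as in the proof of Lemma~\ref{mod:source-monotone-tails}. The rest of the argument is routine.
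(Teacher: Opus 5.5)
Your proposal is correct and follows the paper's proof essentially step for step. The steps are the evaluation net with tails $R_C$, a choice function $(C,z)\mapsto A_{C,z}$ fixed in advance, the strategy $C_{r+1}=C_r\cup\bigcup_{z\in S_r}A_{C_r,z}$ with pairwise disjoint increments $E_r=C_{r+1}\setminus C_r\in\mathscr R$, the inclusion $S_r\subseteq W(E_r)$ by monotonicity, and finally $\mathsf F(D)$; your roundabout description of $C_{r+1}$ should simply use the witnesses $A_{C_r,z}$ for $z\in S_r$, which is what you rely on afterwards.
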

\begin{proof}
For $C\subseteq C'$ in $\mathscr A$, monotonicity gives
$R_{C'}\subseteq R_C$. Since $0\in R_C$, the evaluation net $u$
has $T_u(C,0)=R_C$. Fix $C_*\in\mathscr A$. For each
$(C,x)$ with $C\in\mathscr A$ and $x\in R_C$, fix
$a(C,x)\in\mathscr A$ with $x\in W(a(C,x)\setminus C)$.
Define a strategy by $\alpha_1=(C_*,0)$ and, after a legal
finite response $\varnothing\ne S_n\subseteq R_{C_n}$, by
\[
 C_{n+1}=C_n\cup\bigcup_{x\in S_n}a(C_n,x),\qquad
 \alpha_{n+1}=(C_{n+1},0),\qquad E_n=C_{n+1}\setminus C_n.
\]
Finite-union closure gives $C_{n+1}\in\mathscr A$. In every play,
\[
 E_n\in\mathscr R,\qquad E_n\cap E_j=\varnothing\ (n\ne j),\qquad
 a(C_n,x)\setminus C_n\subseteq E_n\quad(x\in S_n).
\]
Consequently $S_n\subseteq W(E_n)$, and the assumed nullity of
$(W(E_n))_n$ gives $(S_n)_n\Rightarrow0$ by tail containment.
The strategy therefore satisfies the premise of $\mathsf F(D)$,
which gives $u\tto{\theta}0$.
\end{proof}

\begin{proof}[Proof of Theorem~\ref{thm:main}]
We first construct the limits. For fixed $A\subseteq\Omega$, the
family $\mathcal H^-(A)=\{H\in\mathcal H:H\subseteq A\}$ is
nonempty and closed under finite unions. The preceding lemma gives
\[
 R^-_A(H_0)=\bigcup_{K\in\mathcal H^-(A)}V(K\setminus H_0)
 \Rightarrow0.
\]
If $H,K\supseteq H_0$ are in this family, then
\[
 \tau H-\tau K
 =\tau(H\setminus K)-\tau(K\setminus H)
 \in R^-_A(H_0)-R^-_A(H_0).
\]
For arbitrary thresholds $H_1,H_2$, put $H_0=H_1\cup H_2$.
Every pair $(H,K)\ge(H_0,H_0)$ has its difference in
$R^-_A(H_0)-R^-_A(H_0)\subseteq R^-_A(H_1)-R^-_A(H_2)$.
The sets on the right are the tails of an independent difference
of null evaluations. These quantified tail inclusions make the
inner difference net null. Thus the inner
net is Cauchy in $M$. Completeness gives its limit $g(A)\in M$.
Its values belong to $V(A)\subseteq\Gamma(A)$, so stability gives
$g(A)\in\Gamma(A)$.
For $H\in\mathcal H$ the inner net is eventually constant at
$\tau H$. If $H_n\uparrow B\in\mathcal H_\sigma$, the exhaustion
estimate of Lemma~\ref{gen:sigma-construction} gives
$\Gamma(B\setminus H_n)\Rightarrow0$. For fixed $n$, restrict the
inner net to $K\in\mathcal H$ with $H_n\subseteq K\subseteq B$.
For any original index $K_0\subseteq B$, the source element
$K_0\cup H_n$ is an index of this restriction. Every restricted
index $K\supseteq K_0\cup H_n$ satisfies $K\supseteq K_0$.
Thus the restricted net is a quasi-subnet and still converges to $g(B)$.
On this restricted domain,
\[
 \tau K-\tau H_n=\tau(K\setminus H_n)\in V(B\setminus H_n).
\]
Hence $g(B)-\tau H_n\in c_D(V(B\setminus H_n))$ and
$\tau H_n\to g(B)$. Separation identifies
\begin{equation}\label{eq:strong-inner-sigma-identity}
 g(B)=m_\sigma(B)=m_0(B)\qquad(B\in\mathcal H_\sigma).
\end{equation}

Write $\nu=m_0$ and
$V_\nu(E)=\{\nu F:F\in\mathcal K,F\subseteq E\}$.
For disjoint $E_n\in\mathcal K$, the inclusion
$V_\nu(E_n)\subseteq Z_n^{\mathcal K}$ in
Lemma~\ref{gen:UT-K} gives $(V_\nu(E_n))_n\Rightarrow0$. For
$\mathcal H^+(A)=\{B\in\mathcal H_\sigma:A\subseteq B\}$ define
\[
 R_A(B)=\bigcup_{L\in\mathcal H^+(A),\ L\subseteq B}
                      V_\nu(B\setminus L),\qquad
 \mathscr R_A(B)=c_D(R_A(B)).
\]
Apply Lemma~\ref{lem:residual} to
$\{\Omega\setminus B:B\in\mathcal H^+(A)\}\subseteq\mathcal K$.
For arbitrary covers $L,B$ of $A$,
\[
 (\Omega\setminus L)\setminus(\Omega\setminus B)
 =B\setminus L=B\setminus(L\cap B),\qquad L\cap B\in\mathcal H^+(A).
\]
Thus the residual family from the lemma equals $R_A(B)$. If
$A\subseteq B'\subseteq B$, then, for $L\subseteq B'$ covering $A$,
$B'\setminus L\subseteq B\setminus L$. Hence
$R_A(B')\subseteq R_A(B)$, and both residual families decrease in
the reverse-inclusion direction. One-row hull control gives
\begin{equation}\label{eq:outer-residual-null}
 R_A(B)\Rightarrow0,\qquad \mathscr R_A(B)\Rightarrow0.
\end{equation}
For covers $B_1,B_2\subseteq B_0$ of $A$, finite additivity yields
\[
 g(B_1)-g(B_2)
 =\nu(B_1\setminus B_2)-\nu(B_2\setminus B_1)
 \in R_A(B_0)-R_A(B_0).
\]
For every cover $B_0$, all pairs in the product tail indexed by
$(B_0,B_0)$ have their displayed difference in
$R_A(B_0)-R_A(B_0)$. Equation~\eqref{eq:outer-residual-null}
and continuity of independent subtraction make this difference family null.
The outer net is therefore Cauchy in $M$, and completeness gives $\mu(A)\in M$.
For $B\in\mathcal H_\sigma$, the covers of $B$ contained in $B$
consist of $B$ alone. Thus $\mu(B)=g(B)$. Restricting covers of $T$ to those
contained in $B$, and taking limits in
$g(B)-g(L)=\nu(B\setminus L)$, proves
\begin{equation}\label{eq:first-excess}
 A\subseteq T\subseteq B\in\mathcal H_\sigma
 \quad\Longrightarrow\quad
 g(B)-\mu(T)\in\mathscr R_A(B).
\end{equation}
For an original cover $L_0$ of $T$, the element $L_0\cap B$
is a cover of $T$ contained in $B$. Every cover
$L\subseteq L_0\cap B$ belongs to the original tail below $L_0$.
The restricted outer net therefore converges to $\mu(T)$.
For every index in this restriction, $\nu(B\setminus L)\in R_A(B)$.
Its limit belongs to $[R_A(B)]_\theta$ and independently to
$M-M\subseteq D$, proving the displayed inclusion.

We record a second excess estimate. If $E\in\mathcal K$ and
$E\subseteq B\setminus A$, let $(p_k,c_k)$ be a source approximation
of $\Omega\setminus E$ from Lemma~\ref{gen:K-construction}, and set
\[
 O_k=\bigcap_{j\le k}(p_j\cup c_j),\qquad I_k=\Omega\setminus O_k.
\]
Then $O_k\in\mathcal H_\sigma$, $I_k\subseteq E$,
$E\setminus I_k\subseteq c_k$, and
$\nu(E)-\nu(I_k)\in\Gamma(c_k)\Rightarrow0$.
Since $A\cap E=\varnothing$ and $I_k\subseteq E\subseteq B$,
\[
 A\subseteq L_k:=B\cap O_k\subseteq B,\qquad
 B\setminus L_k=I_k,\qquad \nu(I_k)\in R_A(B).
\]
The null error above gives $\nu(I_k)\to\nu(E)$.
Stability of $[R_A(B)]_\theta$ and $\nu(E)\in M\subseteq D$ give
\begin{equation}\label{eq:boolean-excess}
 E\in\mathcal K, E\subseteq B\setminus A
 \quad\Longrightarrow\quad \nu(E)\in\mathscr R_A(B).
\end{equation}

Next we prove the comparison estimates. Fix a cover $S$ of $A$.
For every original cover $B_0$, the intersection $B_0\cap S$
is a cover of $A$ contained in $S$. Every cover below that
intersection lies in the original tail below $B_0$. Thus the
restricted outer net has limit $\mu(A)$. Its values satisfy
$g(B)\in\Gamma(B)\subseteq\Gamma(S)$, so stability gives
$\mu(A)\in\Gamma(S)$. Intersecting over $S$
gives the second inclusion in \eqref{eq:outer-comparison}. If $F,G\in\mathcal K$ lie in $S$, source
approximations truncated by a ring exhaustion of $S$ give
\[
 \nu(F)-\nu(G)\in D\cap[V(S)-V(S)]_\theta=\Delta(S).
\]
Indeed, if $(p_k,c_k)$ approximates $F$ and $h_k\uparrow S$ is a
ring exhaustion, then
\[
 t_k=c_k\cup(S\setminus h_k),\qquad
 F\triangle(p_k\cap h_k)\subseteq t_k,\qquad
 \Gamma(t_k)\Rightarrow0.
\]
The last nullity is supplied by Lemmas~\ref{gen:cover-hull}
and~\ref{gen:sigma-construction}. Finite additivity and
\eqref{gen:K-domination} give
\[
 \nu(F)-\tau(p_k\cap h_k)\in\Gamma(t_k)-\Gamma(t_k)\Rightarrow0.
\]
Thus $\tau(p_k\cap h_k)\to\nu(F)$. The construction for $G$ gives
$\tau(q_k\cap h_k)\to\nu(G)$, and their independent differences lie
in $V(S)-V(S)$.
Now suppose $A\triangle E\subseteq S\in\mathcal H_\sigma$.
For covers $U$ of $A$ and $V$ of $E$, put
\[
 W=(U\cap V)\cup S,\qquad U'=U\cap W,\qquad V'=V\cap W.
\]
These are countable ring unions. Since $A\setminus S=E\setminus S$,
we have $A\subseteq U'$ and $E\subseteq V'$.
On the product of the two reverse-inclusion cover domains, the
maps $(U,V)\mapsto U',V'$ are monotone. For any fixed covers
$U_0,V_0$, all $U\subseteq U_0$, $V\subseteq V_0$ satisfy
$U'\subseteq U_0$, $V'\subseteq V_0$. Hence both transformed
nets have the original outer limits. Furthermore
\[
 W\setminus U',W\setminus V'\subseteq S,\qquad
 g(U')-g(V')=\nu(W\setminus V')-\nu(W\setminus U')\in\Delta(S).
\]
Passage to the limit first puts $\mu(A)-\mu(E)$ in the stable hull
$[V(S)-V(S)]_\theta$. Independently its membership in $M-M\subseteq D$
proves the last inclusion in \eqref{eq:outer-comparison}.

We now establish splitting and identify the domain. Fix $H\in\mathcal H$
and $T\subseteq\Omega$, and define the reverse-inclusion domains
\[
 \begin{split}
 \mathcal C_1&=\{B\in\mathcal H_\sigma:T\cap H\subseteq B\subseteq H\},\\
 \mathcal C_2&=\{B\in\mathcal H_\sigma:T\setminus H\subseteq B\subseteq\Omega\setminus H\}.
 \end{split}
\]
They are nonempty and closed under finite intersections.
For any cover $Q_1$ of $T\cap H$, the element $Q_1\cap H$ lies in
$\mathcal C_1$, and every $B_1\subseteq Q_1\cap H$ lies in the
original cover tail below $Q_1$. For any cover $Q_2$ of
$T\setminus H$, the element $Q_2\setminus H$ lies in
$\mathcal C_2$, and every $B_2\subseteq Q_2\setminus H$ lies in
the original tail below $Q_2$. Thus the restricted value nets
converge to $\mu(T\cap H)$ and $\mu(T\setminus H)$.
For every cover $Q$ of $T$, the product index
$(Q\cap H,Q\setminus H)$ belongs to $\mathcal C_1\times\mathcal C_2$.
Above this index, $T\subseteq B_1\cup B_2\subseteq Q$.
Hence the union-value net is a quasi-subnet of the outer net for $T$.
Since $B_1\cap B_2=\varnothing$, finite additivity gives
\[
 g(B_1\cup B_2)=g(B_1)+g(B_2)
 \ \longrightarrow\ \mu(T\cap H)+\mu(T\setminus H),
\]
The preceding product-tail inclusion makes the left side converge
to $\mu(T)$, proving the splitting identity for $H$.
For $E\in\mathcal K$, a source approximation $(p_k,c_k)$ satisfies
$E\triangle p_k\subseteq c_k$ and $\Gamma(c_k)\Rightarrow0$.
The row $V(c_k)-V(c_k)$ decreases, contains zero, and is null
by subtraction. It lies in $M-M\subseteq D$. One-row hull control
gives $\Delta(c_k)\Rightarrow0$. Since
\[
 (T\cap E)\triangle(T\cap p_k)\subseteq c_k,\qquad
 (T\setminus E)\triangle(T\setminus p_k)\subseteq c_k,
\]
the last estimate in \eqref{eq:outer-comparison} gives
\[
 \mu(T\cap p_k)\to\mu(T\cap E),\qquad
 \mu(T\setminus p_k)\to\mu(T\setminus E),\qquad
 \mu(p_k)\to\mu(E).
\]
The identities for the source sets $p_k$ and separation give
$\mu(T)=\mu(T\cap E)+\mu(T\setminus E)$. Also
$\mu(p_k)=\tau p_k\to\nu(E)$, so $\mu(E)=\nu(E)$.
Thus $\mathcal K\subseteq\mathcal M_\mu$ and $\mu|_{\mathcal K}=\nu$.
For $E\in\mathcal M_\mu$, the defining identity can be written
\[
 \mu(T)=\mu(T\setminus E^c)+\mu(T\cap E^c)
       =\mu(T\cap E^c)+\mu(T\setminus E^c).
\]
Thus $E^c\in\mathcal M_\mu$.
For $E,F\in\mathcal M_\mu$, splitting $T\cap E$ by $F$
and splitting $T\setminus(E\cap F)$ by $E$ gives
\[
 \mu(T)=\mu(T\cap E\cap F)
       +\mu((T\cap E)\setminus F)+\mu(T\setminus E),
 \qquad
 \mu(T\setminus(E\cap F))
       =\mu((T\cap E)\setminus F)+\mu(T\setminus E).
\]
Hence $E\cap F\in\mathcal M_\mu$.
The equality $\mu(\varnothing)=\tau(\varnothing)=0$ gives the
splitting identities for $\varnothing$ and $\Omega$.
The formula $E\cup F=(E^c\cap F^c)^c$ then proves finite-union closure.
Thus this class is a Boolean algebra. For
disjoint $E,F$ in it, the splitting identity with $T=E\cup F$
gives $\mu(E\cup F)=\mu(E)+\mu(F)$.

Let $E\in\mathcal M_\mu$. If $F\in\mathcal M_\mu$ lies in
$B\setminus A$, \eqref{eq:first-excess} and splitting at $B$ give
$\mu(F)=g(B)-\mu(B\setminus F)\in\mathscr R_A(B)$.
For covers $U$ of $E$ and $V$ of $E^c$, put $L=V^c$ and $S=U\cap V$.
For every $H\in\mathcal H$ contained in $S$,
\[
 \tau H=\mu(H\cap E)+\mu(H\setminus E)
 \in\mathscr R_{E^c}(V)+\mathscr R_E(U).
\]
Therefore
\[
 \Gamma(S)\subseteq
 D\cap[\mathscr R_{E^c}(V)+\mathscr R_E(U)]_\theta\Rightarrow0.
\]
Indeed, the two residual rows decrease, contain zero, and are
null by \eqref{eq:outer-residual-null}. Closed summation applied
to these two rows and zero rows bounds the displayed hull by its
null relative finite-sum family. The stable, decreasing null row $\Gamma(U\cap V)$,
together with zero rows, is itself a certificate for
$L\subseteq E\subseteq U$: its relative closed finite-sum family is
$D\cap[\Gamma(U\cap V)]_\theta=\Gamma(U\cap V)$.
Thus $E\in\mathcal R_{\rm all}(\tau,D)$.

Conversely, let a regulated $E$ have certificate
$L_\varphi\subseteq E\subseteq U_\varphi$,
$\Gamma(S_\varphi)\subseteq Q_\varphi$, where
$S_\varphi=U_\varphi\setminus L_\varphi$ and $Q_\varphi\Rightarrow0$.
Since $D=-D$, both $Q_\varphi$ and $-Q_\psi$ are admissible null rows.
Their selector domain $P$ is set-sized and directed, and all directed
row domains are admitted in this section. The two-row summation
principle gives
\[
 F_{\varphi,\psi}=D\cap[Q_\varphi-Q_\psi]_\theta\Rightarrow0.
\]
Indeed, the finite sums from the two rows form $Q_\varphi-Q_\psi$
because both rows contain zero. For any product threshold
$(\varphi_0,\psi_0)$, an upper bound $\chi_0\in P$ of
$\varphi_0,\psi_0$ satisfies
\[
 \bigcup_{\chi\ge\chi_0}D\cap[Q_\chi-Q_\chi]_\theta
 \subseteq F_{\varphi_0,\psi_0}.
\]
Thus the diagonal evaluation is a quasi-subnet of the null product
evaluation. Moreover $V(S_\varphi)\subseteq Q_\varphi$ gives the
second inclusion below after subtraction, stable closure, and
intersection with $D$:
\[
 Z_\varphi=D\cap[Q_\varphi-Q_\varphi]_\theta\Rightarrow0,
 \qquad \Delta(S_\varphi)\subseteq Z_\varphi.
\]
For every $T$, comparison therefore shows that
\[
 \mu(T\cap L_\varphi)-\mu(T\cap E)\to0,\qquad
 \mu(T\setminus L_\varphi)-\mu(T\setminus E)\to0.
\]
Since $L_\varphi\in\mathcal K$ splits $T$, their sum is the constant
$\mu(T)-\mu(T\cap E)-\mu(T\setminus E)$. Separation makes it zero.
This proves $\mathcal M_\mu=\mathcal R_{\rm all}(\tau,D)$.
The full upper net of Theorem~\ref{gen:extension} is the defining
outer net by \eqref{eq:strong-inner-sigma-identity}. Thus
$\mu=\bar m$ on the common domain. Theorem~\ref{gen:extension} therefore supplies, on this same
domain, sigma-closure, countable additivity and both full
approximation limits. Its uniqueness assertion applies to the
restriction to $\sigma(\mathcal H)$.

It remains to prove increasing continuity on arbitrary sets.
Let $A_n\uparrow A$. Consider the families
$\beta=(B,(B_i)_{i\ge1})$, with $A\subseteq B$,
$A_i\subseteq B_i\subseteq B$ and $B,B_i\in\mathcal H_\sigma$,
with reverse inclusion in every coordinate. This is a nonempty directed
domain. The family with $B=\Omega$ and all $B_i=\Omega$ is an index.
For two indices $(B,(B_i))$ and $(C,(C_i))$, the family
$(B\cap C,(B_i\cap C_i)_i)$ is an upper bound and satisfies all
cover constraints. Put
$D_n^\beta=\bigcup_{i\le n}B_i$ and
$D^\beta=\bigcup_iB_i$.
For each fixed $\beta$, put $C_1^\beta=D_1^\beta$ and
$C_n^\beta=D_n^\beta\setminus D_{n-1}^\beta$ for $n\ge2$.
These sets belong to $\sigma(\mathcal H)$, are disjoint, and satisfy
$\bigcup_{i\le n}C_i^\beta=D_n^\beta$ and
$\bigcup_iC_i^\beta=D^\beta$.
On $\mathcal H_\sigma$, $g=m_\sigma=\bar m$.
Countable additivity therefore gives
$g(D_n^\beta)=\sum_{i\le n}\bar m(C_i^\beta)\to g(D^\beta)$.
Since $A\subseteq D^\beta\subseteq B$, the first excess estimate
and residual monotonicity give
\[
 g(D^\beta)-\mu(A)\in\mathscr R_A(D^\beta)
 \subseteq\mathscr R_A(B).
\]
For $i<n$, the disjoint sets
$E_{i,n}=(B_i\setminus\bigcup_{j<i}B_j)\setminus B_n$
lie in $B_i\setminus A_i$, since $A_i\subseteq A_n\subseteq B_n$.
Equations~\eqref{eq:first-excess}--\eqref{eq:boolean-excess} give
\[
 g(D_n^\beta)-\mu(A_n)
 =\sum_{i<n}\nu(E_{i,n})+g(B_n)-\mu(A_n)
 \in D\cap\sum_{i\le n}\mathscr R_{A_i}(B_i).
\]
For $\Phi=(B_i)_i$, closed summation gives the null row
\[
 Q_\Phi=D\cap\left[
 \bigcup_{F\subseteq\mathbb N\ {\rm finite}}
       \sum_{i\in F}\mathscr R_{A_i}(B_i)\right]_\theta.
\]
Thus every $g(D_n^\beta)-\mu(A_n)$ lies in $Q_\Phi$, uniformly
in $n$. The other errors lie in $\mathscr R_A(B)$, independently
of $n$. Both conclusions hold on the joint domain of $\beta$:
for each $B_0$ covering $A$ and each family $L_i$ covering $A_i$,
\[
 \beta_0=(B_0,(L_i\cap B_0)_i)
 \quad\Longrightarrow\quad
 \beta\ge\beta_0\ \Longrightarrow\ B\subseteq B_0,
 \quad B_i\subseteq L_i\ (i\ge1).
\]
The intersections are admissible since $A_i\subseteq A\subseteq B_0$.
With $\Lambda=(L_i)_i$, the preceding bounds give the entire-tail
inclusions
\[
 \begin{split}
 \bigcup_{\beta\ge\beta_0}
   \{g(D^\beta)-\mu(A):n\ge1\}
   &\subseteq\mathscr R_A(B_0),\\
 \bigcup_{\beta\ge\beta_0}
   \{g(D_n^\beta)-\mu(A_n):n\ge1\}
   &\subseteq Q_\Lambda.
 \end{split}
\]
Here the dummy index $n$ in the first line records the uniform family
required by $\mathsf U(M)$. Every threshold $B_0$ and every selector
threshold $\Lambda$ can be used in this construction. Thus both error
evaluations are quasi-subnets of their respective null bounding
evaluations. All four point families below belong to $M$.
Apply $\mathsf U(M)$ with a singleton auxiliary set $J$ to
\[
 (x_n^\beta,y_n^\beta)=(g(D^\beta),g(D_n^\beta)),\qquad
 (x_n,y_n)=(\mu(A),\mu(A_n)).
\]
It gives $\mu(A)-\mu(A_n)\to0$. Separation of the limits in
\eqref{eq:double-process} proves uniqueness of both processes.
\end{proof}

\begin{corollary}[Topological group recovery]\label{class:topological}
Let $X$ be a Hausdorff abelian topological group, let
$\Omega\in\mathcal H_\sigma$, and let $\tau:\mathcal H\to X$ be
countably additive and exhaustive: $\tau H_n\to0$ whenever the $H_n$
are pairwise disjoint. If $\overline{\tau(\mathcal H)}$ is Cauchy
complete, every conclusion of Theorem~\ref{thm:main} holds in $X$.
\end{corollary}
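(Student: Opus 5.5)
The plan is to reduce Corollary~\ref{class:topological} to Theorem~\ref{thm:main} by checking its hypotheses one at a time. Those hypotheses are the Boolean data of Definition~\ref{gen:data}, completeness of $M$, hereditary exhaustivity (HE), and the range principles $\mathsf F(D)$, $\mathsf S_{\rm all}(D)$, $\mathsf U(M)$. Take $D=X$ and $\mathfrak I$ the collection of all directed domains. Lemma~\ref{app:topological-range} gives $\mathsf F(X)$ and $\mathsf S_{\rm all}(X)$, and also $\mathsf U(N)$ for every nonempty $N$, in particular for $M$. By Proposition~\ref{gen:reflection-block}, $\mathsf F(X)$ implies $\mathsf B_\omega(X)$, so the standing block hypothesis \eqref{gen:standing} also holds. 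The conditions $M-M\subseteq D=-D$ are trivial for $D=X$.

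For completeness of $M$: by Lemma~\ref{app:topological-range}, stable hulls are topological closures, so $M=[\tau(\mathcal H)]_\theta=\overline{\tau(\mathcal H)}$. This set is Cauchy complete by hypothesis. The premeasure axiom is countable additivity of $\tau$, and $\Omega\in\mathcal H_\sigma$ gives $1\in\mathcal H_\sigma$.

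The main step is deriving (HE) from plain exhaustivity. I will copy the argument in the proof of Corollary~\ref{app:topological}, which is simpler in the Boolean setting. Suppose $h_n\in\mathcal H$ are disjoint but $(V_\tau(h_n))_n\not\Rightarrow0$. Then there is a zero-neighbourhood $U$ with the following property: for every $N$ there exist $n\ge N$ and $H\in\mathcal H$ with $H\subseteq h_n$ and $\tau H\notin U$. Recursively choose $n_1<n_2<\cdots$ and $H_k\subseteq h_{n_k}$ in $\mathcal H$ with $\tau H_k\notin U$. The sets $H_k$ are pairwise disjoint because the $h_n$ are. Exhaustivity then gives $\tau H_k\to0$, which contradicts $\tau H_k\notin U$ for all $k$.

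To pass from this pointwise failure to failure of nullity of the evaluation net, I will use the description of convergence in a topological group. Nullity of $(V_n)$ means that every zero-neighbourhood eventually contains $\bigcup_{n\ge N}V_n$, which follows from \eqref{eq:evaltail}. Its negation is exactly the quantified statement used above.

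With every hypothesis verified, Theorem~\ref{thm:main} applies. All its conclusions are statements about limits in the net convergence of $X$, and that convergence is the topological convergence, so the conclusions hold in $X$. I expect no step to be a genuine obstacle. The only point that needs care is confirming that nullity of evaluation nets in the topological case coincides with eventual containment in each neighbourhood. That follows directly from the tail formula \eqref{eq:evaltail} and the definition of net convergence in a topology.
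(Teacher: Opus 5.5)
Your proposal is correct and matches the paper's proof. You take $D=X$, identify $M$ with $\overline{\tau(\mathcal H)}$, and derive (HE) from exhaustivity by extracting disjoint fragments $H_k\subseteq h_{n_k}$ with $\tau H_k\notin U$. You then verify $\mathsf F(X)$, $\mathsf S_{\rm all}(X)$ and $\mathsf U(M)$, with block reflection supplied by finite-response reflection. The only difference is presentational: the paper re-proves the closure identification and the three range principles inline, whereas you cite Lemma~\ref{app:topological-range}, which contains exactly those arguments.
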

\begin{proof}
Set $D=X$ and let $\theta$ be the given topological convergence.
In a topological space, a closed set contains every limit of a net
with values in it. Conversely, if $x\in\overline C$, the domain
\[
 \{(U,c):U\text{ is a neighborhood of }x,\ c\in C\cap U\},
\]
ordered by reverse inclusion of $U$, is nonempty. For any two indices
$(U,c)$ and $(V,d)$, the inclusion $x\in\overline C$ gives
$e\in C\cap U\cap V$, and $(U\cap V,e)$ dominates both indices.
This directed domain carries the net $(U,c)\mapsto c$.
For every neighborhood $V$ of $x$, its tail at an index with $U\subseteq V$
is contained in $V$. This net converges to $x$.
A limit-stable set $C$ must therefore contain $\overline C$.
The stable sets are exactly the closed sets, and hence
$M=[\tau(\mathcal H)]_\theta=\overline{\tau(\mathcal H)}$ is
complete by hypothesis, and $M-M\subseteq X$. Suppose hereditary
exhaustivity fails on a disjoint sequence $(H_n)$. There are a
zero-neighborhood $V$, indices $n_1<n_2<\cdots$, and source sets
$K_j\subseteq H_{n_j}$ with $\tau K_j\notin V$. The $K_j$ are
disjoint, whereas exhaustivity gives $\tau K_j\to0$, a contradiction.

For $\mathsf F(X)$, let $u$ be a nonnull net. There is a
zero-neighborhood $V$ such that
$T_u(\alpha)\setminus V\ne\varnothing$ for every index $\alpha$.
Given any strategy, define recursively
\[
 \alpha_n=\sigma((\alpha_j,\{x_j\})_{j<n}),\qquad
 x_n\in T_u(\alpha_n)\setminus V.
\]
This legal play has a nonnull response evaluation $(\{x_n\})_n$.
Consequently no strategy satisfies the premise of $\mathsf F(X)$
for a nonnull net, proving that principle.

For $\mathsf S_{\rm all}(X)$, fix a zero-neighborhood $V$.
Let $W$ be symmetric and open with $W+W\subseteq V$.
For $x\in\overline W$, $(x+W)\cap W\ne\varnothing$, and hence
$x\in W-W=W+W$. Thus $W_0=\overline W\subseteq V$ is a closed
zero-neighborhood. Recursively take symmetric zero-neighborhoods
$W_n$ with $W_n+W_n\subseteq W_{n-1}$ for $n\ge1$.
Since $0\in W_n$, also $W_n\subseteq W_{n-1}$. For every $m$,
backward induction on $j=m,m-1,\ldots,1$ gives
\[
 \sum_{n=j}^mW_n\subseteq W_{j-1}.
\]
Indeed, the case $j=m$ is the preceding nesting, and
$W_j+\sum_{n=j+1}^mW_n\subseteq W_j+W_j\subseteq W_{j-1}$.
For each decreasing null row $(U_{n,\alpha})_{\alpha\in I_n}$,
there is $\alpha_n$ with
$U_{n,\alpha}\subseteq W_n$ whenever $\alpha\ge\alpha_n$.
For every selector $\varphi\ge(\alpha_n)_n$, zero insertion gives
\[
 A_\varphi\subseteq W_0,\qquad
 [A_\varphi]_\theta\subseteq W_0\subseteq V.
\]
This verifies closed summation on arbitrary row domains.

For $\mathsf U(M)$, use its notation and fix a zero-neighborhood
$V$. Let $W=-W$ satisfy $W+W+W\subseteq V$. The two uniform error
premises supply one $\beta$ such that, for every $n\ge1$ and $j\in J$,
\[
 x_n^\beta(j)-x_n(j)\in W,\qquad
 y_n^\beta(j)-y_n(j)\in W.
\]
The null $\beta$-row supplies $N$ with
$x_n^\beta(j)-y_n^\beta(j)\in W$ for every $n\ge N$, $j\in J$.
For those indices,
\[
 x_n(j)-y_n(j)
 =(x_n(j)-x_n^\beta(j))
 +(x_n^\beta(j)-y_n^\beta(j))
 +(y_n^\beta(j)-y_n(j))\in W+W+W\subseteq V.
\]
This proves $\mathsf U(M)$. Countable additivity and
$\Omega\in\mathcal H_\sigma$ are the remaining source hypotheses.
Theorem~\ref{thm:main} now applies.
\end{proof}

This corollary recovers Sion's topological-group double process
\cite{Sion}. We next verify reflection and closed summation for order
convergence and determine the distributive laws associated with
summation. For applications concerned only with the regulated
extension, these conditions can be verified without constructing
the all-subset process.

\section{Order convergence and the distributive conditions}
\label{ord:section}\label{inv:section}\label{sec:order-short}

We now verify the range principles for order convergence. Closed
summation also admits a converse: its validity on every bounded
interval determines a distributive law. Throughout this section $E$ is
an abelian lattice-ordered group, and $|x|=x\vee(-x)$. The lattice-group
identities used below are recorded in \cite{Birkhoff}.

\begin{definition}\label{def:orderconv}\label{ent:order-data}
A net $x:\Lambda\to E$ converges in order to $x_0$, written
$x_\alpha\onull x_0$, if some decreasing net
$r_\gamma\downarrow0$ satisfies
\[
 \forall\gamma\ \exists\alpha_\gamma\ \forall\alpha\geq\alpha_\gamma:
 \quad |x_\alpha-x_0|\leq r_\gamma.
\]
The domains of $x$ and $r$ are independent. Dedekind completeness means
that every nonempty upper-bounded subset has a supremum. The countable
sup property means that every nonempty subset having a supremum has a countable
subset with the same supremum.
\end{definition}

\begin{definition}\label{def:wsd}\label{cmp:strong-wsd}
A bounded regulator is a matrix $0\leq a_{n,k}\leq h$ satisfying
$a_{n,k}\downarrow_k0$ for every $n$. A Dedekind-complete group is
weakly sigma-distributive, abbreviated WSD, if
\begin{equation}\label{ord:wsd}
 \inf_{\varphi\in\N^\N}\sup_n a_{n,\varphi(n)}=0
\end{equation}
for every bounded regulator. It is weakly
$(\sigma,\infty)$-distributive if, for every family of decreasing
net rows $a_{n,\alpha}\downarrow_{\alpha\in A_n}0$ with one common
positive bound,
\begin{equation}\label{cmp:strong-wsd-equation}
 \inf_{a\in\prod_n A_n}\sup_n a_{n,a_n}=0.
\end{equation}
All selector products carry the coordinatewise order.
\end{definition}

Order convergence is a separated group convergence. Tail refinement
preserves each regulating bound. Finite mixing is bounded by the sum
of the two regulators. For independent regulators,
$\inf_{\gamma,\delta}(r_\gamma+s_\delta)=0$ by translation
invariance of infima, so addition preserves convergence. Negation
preserves absolute differences. Two limits satisfy
$|x-y|\leq r_\gamma+s_\delta$ at every pair of regulating indices
and therefore coincide.

In a Dedekind-complete group, order nullity of a bounded net has the
tail-envelope criterion
\begin{equation}\label{ord:filter-envelope}
 x_\alpha\onull0
 \quad\Longleftrightarrow\quad
 u_\alpha:=\sup_{\beta\geq\alpha}|x_\beta|\downarrow0.
\end{equation}
Indeed, an order regulator bounds a suitable $u_\alpha$ at every
regulating index. Conversely, $(u_\alpha)$ itself regulates $x$.
For $C=[-h,h]$ and $D=C-C$,
\begin{equation}\label{ord:carrier-bound}
 0\in C=-C,\qquad C\subseteq D\subseteq[-2h,2h],\qquad\clo_o C=C.
\end{equation}
For the last equality, $x_\alpha\in C$ and
$|x_\alpha-x|\leq r_\gamma$ eventually imply
$(|x|-h)^+\leq r_\gamma$ for every $\gamma$, hence $|x|\leq h$.

\begin{lemma}[Capped sums]\label{ord:cap}\label{ord:aggregation}
For $h,r_1,\ldots,r_m\in E_+$,
\begin{equation}\label{ord:cap-formula}
 h\wedge\sum_{n=1}^m r_n
 \leq\bigvee_{n=1}^m(h\wedge2^n r_n).
\end{equation}
Consequently, if $E$ is Dedekind complete and WSD, and
$a_{n,k}\downarrow_k0$ without a common bound, then
\begin{equation}\label{ord:aggregation-q}
 q_\varphi:=\sup_m\left(h\wedge\sum_{n=1}^m a_{n,\varphi(n)}\right)
 \downarrow_{\varphi\in\N^\N}0.
\end{equation}
For net rows the same assertion holds under
weak $(\sigma,\infty)$-distributivity.
\end{lemma}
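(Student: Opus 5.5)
The inequality \eqref{ord:cap-formula} is Lemma~\ref{rel:dyadic-cap} with $F=\{1,\ldots,m\}$ and $e=h$, $b_n=r_n$. I would simply cite it; alternatively, repeat its proof. That proof reduces via the subdirect representation to totally ordered quotients. There, if the maximum $B$ of the $h\wedge2^nr_n$ is below $h$, then $r_n\le2^{-n}B$ in the sense $2^nr_n\le B$, and multiplying by $2^m$ gives $2^m\sum_n r_n\le(2^m-1)B$. Hence $\sum_n r_n\le B$ without any divisibility. No new work is needed.

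For \eqref{ord:aggregation-q}, I first note that $q_\varphi$ exists. Each capped partial sum lies in $[0,h]$, so Dedekind completeness provides the supremum over $m$. Next, $q_\varphi$ is decreasing in $\varphi$ for the coordinatewise order, since each $a_{n,k}$ decreases in $k$ and all operations are monotone. It therefore remains to show $\inf_\varphi q_\varphi=0$.

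To do this, I would build the capped regulator $b_{n,k}=h\wedge2^na_{n,k}$. It is bounded by $h$. It also satisfies $b_{n,k}\downarrow_k0$, because $x\mapsto2^nx$ preserves infima in a lattice-ordered group and meets with $h$ preserve the infimum $0$ by distributivity of $\wedge$ over existing infima. The capping is exactly what removes the missing common bound. By \eqref{ord:cap-formula}, for every $m$,
\[
 h\wedge\sum_{n\le m}a_{n,\varphi(n)}\le\bigvee_{n\le m}b_{n,\varphi(n)}\le\sup_nb_{n,\varphi(n)}.
\]
Taking the supremum over $m$ gives $q_\varphi\le\sup_nb_{n,\varphi(n)}$, and WSD \eqref{ord:wsd} gives infimum $0$.

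For net rows $a_{n,\alpha}\downarrow_{\alpha\in A_n}0$, the same capping and the same inequality apply verbatim on $\prod_nA_n$. Here I would invoke \eqref{cmp:strong-wsd-equation} in place of \eqref{ord:wsd}.

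The only step needing care is $b_{n,k}\downarrow_k0$, that is, showing that $h\wedge\inf_k(2^na_{n,k})=0$ and $\inf_k2^na_{n,k}=0$. I would verify these by first proving the standard identity $\inf_k(c\wedge x_k)=c\wedge\inf_kx_k$ in Dedekind complete lattice groups, together with $\inf_k2^nx_k\le2^n\inf_k x_k$-type bounds. These identities hold because the lattice of an $\ell$-group is infinitely distributive. The bound $2^na_{n,k}\le2^n a_{n,j}$ for $k\ge j$ combined with $\inf_k a_{n,k}=0$ gives $\inf_k 2^na_{n,k}=0$, via the positivity of $2^n a_{n,k}-a_{n,k}$ and the fact that $x\mapsto x+y$ preserves infima.
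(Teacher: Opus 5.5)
Your proposal is correct. For the aggregation part it follows the paper's argument: cap the rows as $h\wedge2^na_{n,\alpha}$, and check that these rows are bounded by $h$ and decrease to $0$. Then \eqref{ord:cap-formula} gives $q_\varphi\le\sup_n(h\wedge2^na_{n,\varphi(n)})$, and \eqref{ord:wsd} or \eqref{cmp:strong-wsd-equation} finishes.

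The difference lies in the capped inequality itself. You cite Lemma~\ref{rel:dyadic-cap} with $F=\{1,\ldots,m\}$. That is legitimate, since it is the same inequality proved earlier and independently, but it rests on Birkhoff's subdirect representation by totally ordered quotients. The paper's proof of this lemma stays inside $E$:
\begin{itemize}
\item put $b=\bigvee_{n\le m}(h\wedge2^nr_n)$, $c=h-b$ and $d_n=(2^nr_n-b)^+$; distributivity gives $d_n\wedge c=0$;
\item the subadditivity $(u+v)\wedge c\le(u\wedge c)+(v\wedge c)$ then gives $\bigl(\sum_n2^{m-n}d_n\bigr)\wedge c=0$;
\item the identity $2^m(S-b)+b=\sum_n2^{m-n}(2^nr_n-b)$ yields $(S-b)^+\le\sum_n2^{m-n}d_n$, whence $(h\wedge S-b)^+=c\wedge(S-b)^+=0$.
\end{itemize}
Your route is shorter given the earlier lemma. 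The paper's is elementary and self-contained, avoiding the representation theorem.

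One verification in your last paragraph should be tightened. The ``positivity of $2^na_{n,k}-a_{n,k}$'' points the wrong way for proving $\inf_k2^na_{n,k}=0$; what is needed is an upper bound. If $\alpha\ge\alpha_1,\ldots,\alpha_{2^n}$, then $2^na_{n,\alpha}\le\sum_{j=1}^{2^n}a_{n,\alpha_j}$. Iterated translation invariance of infima makes the infimum of the right side over $A_n^{2^n}$ equal to $0$. Cofinality of the diagonal then transfers this to $\inf_\alpha2^na_{n,\alpha}=0$. This is exactly the paper's argument, and it also covers directed net rows. Finally, no infinite distributivity is needed for the cap, because $0\le h\wedge2^na_{n,k}\le2^na_{n,k}$ already forces the infimum to be $0$.
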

\begin{proof}
Put $b=\bigvee_{n\leq m}(h\wedge2^nr_n)$, $c=h-b\geq0$,
and $d_n=(2^nr_n-b)^+$. Lattice distributivity gives
\[
 d_n\wedge c=((2^nr_n\wedge h)-b)^+=0\qquad(1\leq n\leq m).
\]
For $u,v,c\geq0$ and $w=(u+v)\wedge c$,
\[
 w=(w\wedge u)+(w-u)^+,\qquad
 w\wedge u\leq u\wedge c,\qquad (w-u)^+\leq v\wedge c.
\]
Hence $(u+v)\wedge c\leq(u\wedge c)+(v\wedge c)$, and
$\bigl(\sum_{n=1}^m2^{m-n}d_n\bigr)\wedge c=0$.
For $S=\sum_{n=1}^m r_n$,
\[
 (S-b)^+\leq\bigl(2^m(S-b)+b\bigr)^+
 =\left(\sum_{n=1}^m2^{m-n}(2^nr_n-b)\right)^+
 \leq\sum_{n=1}^m2^{m-n}d_n.
\]
Thus $(h\wedge S-b)^+=c\wedge(S-b)^+=0$, proving
\eqref{ord:cap-formula}.

For a row domain $A_n$, put $c_{n,\alpha}=h\wedge2^na_{n,\alpha}$.
Translation preserves infima, and the diagonal of $A_n^{2^n}$ is
cofinal. Therefore
\[
 \inf_{\alpha\in A_n}2^na_{n,\alpha}
 =\inf_{(\alpha_1,\ldots,\alpha_{2^n})\in A_n^{2^n}}
       \sum_{j=1}^{2^n}a_{n,\alpha_j}=0,
 \qquad c_{n,\alpha}\downarrow_\alpha0.
\]
Each capped finite sum lies in $[0,h]$, so its supremum $q_a$ exists.
For every selector $a\in\prod_nA_n$,
\[
 0\leq q_a\leq\sup_n c_{n,a_n}\leq h,
 \qquad a\leq a'\Longrightarrow q_{a'}\leq q_a.
\]
The asserted distributive law gives
$0\leq\inf_aq_a\leq\inf_a\sup_nc_{n,a_n}=0$.
Taking $A_n=\N$ proves \eqref{ord:aggregation-q}.
\end{proof}

\begin{theorem}[The distributive content of closed summation]
\label{cmp:strong-wsd-equivalence}
Let $E$ be Dedekind complete. Closed bounded summation for arbitrary
directed rows on every symmetric order interval is equivalent to
weak $(\sigma,\infty)$-distributivity. Closed bounded summation for
sequential rows on every symmetric interval is equivalent to WSD.
\end{theorem}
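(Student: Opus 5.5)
The plan is to prove both equivalences by one argument, with sequential rows $A_n=\N$ in the WSD case and arbitrary directed rows in the weak $(\sigma,\infty)$ case. Closed bounded summation on $[-h,h]$ means $\mathsf S_{\mathfrak I}(D)$ for $D=C-C$, $C=[-h,h]$, under order convergence. By \eqref{ord:carrier-bound}, $D\subseteq[-2h,2h]$, and since that interval is order closed, every relative hull $D\cap[A_\varphi]_o$ lies in $[-2h,2h]$. Hence by \eqref{ord:filter-envelope} nullity of a bounded decreasing family of sets $(B_\varphi)$ is equivalent to $\sup_{\psi\ge\varphi}\sup\{|x|:x\in B_\psi\}\downarrow0$.

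\emph{Sufficiency of the distributive law.} Take decreasing null rows $0\in U_{n,\alpha}\subseteq D$. Put $a_{n,\alpha}=\sup\{|x|:x\in U_{n,\alpha}\}$. This quantity exists because $D$ is bounded by $2h$, and the envelope criterion gives $a_{n,\alpha}\downarrow_\alpha0$. Every finite sum in $A_\varphi$ has absolute value at most $\sum_{n\le m}a_{n,\varphi(n)}$. It also lies in $D$ when it lies in the relative hull, so it is bounded by $2h$. The set $\{x\in D:|x|\le q\}$ is order closed. So with $q_\varphi=\sup_m(2h\wedge\sum_{n\le m}a_{n,\varphi(n)})$ one gets $D\cap[A_\varphi]_o\subseteq[-q_\varphi,q_\varphi]$. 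Here I need to check that a point of $D$ in the stable hull of a set bounded by $q_\varphi$ in absolute value is itself bounded by $q_\varphi$. This holds because $[-q_\varphi,q_\varphi]$ is limit stable, by the same argument as $\clo_oC=C$, and the hull of $A_\varphi$ lies inside it. Lemma~\ref{ord:cap} then gives $q_\varphi\downarrow0$, with $2h$ as the cap, under WSD for sequential rows and under weak $(\sigma,\infty)$-distributivity for net rows. Since $q_\varphi$ is decreasing in $\varphi$, it regulates the evaluation net, and nullity follows.

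\emph{Necessity.} Take a bounded regulator $0\le a_{n,\alpha}\le h$ with $a_{n,\alpha}\downarrow_\alpha0$. Use the rows $U_{n,\alpha}=[0,a_{n,\alpha}]$ on the interval $[-h,h]$. Each row is decreasing, contains $0$, lies in $C\subseteq D$, and is null because it is regulated by $a_{n,\alpha}$. For fixed $\varphi$, the raw sums include every $a_{n,\varphi(n)}$, hence the finite joins $\bigvee_{n\le m}a_{n,\varphi(n)}\le\sum_{n\le m}a_{n,\varphi(n)}$. This needs care: the join is not itself a raw sum. I will fix this using the identity $x\vee y=x+y-x\wedge y$ together with hull and order-limit arguments, or, more simply, by rescaling to $U_{n,\alpha}=[0,2^{-n}\cdot]$. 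That rescaling is not available without divisibility, so I will instead place the increasing sequence $s_m=\bigvee_{n\le m}a_{n,\varphi(n)}$ in the hull via an alternative route. Namely, $s_m\wedge h$ lies in $[0,\sum_{n\le m}a_{n,\varphi(n)}]$, and each element of that interval can be written as a sum $\sum_{n\le m}b_n$ with $0\le b_n\le a_{n,\varphi(n)}$, by Riesz decomposition in lattice-ordered groups. So $s_m\in A_\varphi$, and $s_m\in C\subseteq D$. The increasing limit $\sup_ms_m=\sup_na_{n,\varphi(n)}$ then belongs to $D\cap[A_\varphi]_o$, using Dedekind completeness and order convergence of bounded monotone sequences. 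Nullity of the relative hulls then forces $\inf_\varphi\sup_na_{n,\varphi(n)}=0$ through the envelope criterion, because the elements $\sup_na_{n,\varphi(n)}$ decrease in $\varphi$ and are dominated by the regulating envelopes.

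The main obstacle I expect is the sufficiency bound on stable hulls inside $D$. Order closedness of $D$ is not assumed; indeed $D=[-2h,2h]$ happens to be closed here, but the hull must be handled via limit stability of the comparison interval $[-q_\varphi,q_\varphi]$ rather than of $D$. The necessity direction is a secondary obstacle because it relies on Riesz decomposition, which I am assuming is valid in abelian $\ell$-groups.
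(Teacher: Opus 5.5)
Your necessity argument is correct, though longer than it needs to be. Each single term $a_{n,\varphi(n)}$ is already a raw sum lying in $[0,h]\subseteq D$. So the tail bound $r_\gamma$ at $\varphi_\gamma$ dominates every $a_{n,\varphi_\gamma(n)}$, and hence their supremum. You need neither the Riesz decomposition nor the placement of $\sup_m s_m$ in the hull; the paper uses this direct term-by-term bound.

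The genuine gap is in sufficiency, at exactly the step you flagged. You assert that the hull of $A_\varphi$ lies inside $[-q_\varphi,q_\varphi]$, but $A_\varphi$ itself does not. A raw sum $x=\sum_{n\in F}x_n$ only satisfies $|x|\le\sum_{n\le\max F}a_{n,\varphi(n)}$, and this uncapped sum can exceed $2h\ge q_\varphi$. For example, in $E=\mathbb R$ with $h=1$ and all rows $U_{n,k}=[0,2/k]$, the selector $\varphi\equiv1$ gives $4\in A_\varphi$ while $q_\varphi=2$.

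Nor can you pass to $A_\varphi\cap D$. A point of $D\cap[A_\varphi]_o$ may be an order limit of raw sums lying outside $D$, and the uncapped supremum $\sup_m\sum_{n\le m}a_{n,\varphi(n)}$ need not exist at all.

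The missing idea is to truncate before comparing. Set $L_\varphi=\{z\in E:|z|\wedge 2h\le q_\varphi\}$.
\begin{itemize}
\item Every raw sum lies in $L_\varphi$, because $|x|\wedge 2h\le 2h\wedge\sum_{n\le m}a_{n,\varphi(n)}\le q_\varphi$ with $m=\max F$.
\item $L_\varphi$ is order-stable. The contraction $\bigl||z|\wedge 2h-|z'|\wedge 2h\bigr|\le|z-z'|$ gives $(|z|\wedge 2h-q_\varphi)^+\le r_\gamma$ for every regulator index whenever $z$ is an order limit of points of $L_\varphi$.
\item $L_\varphi\cap[-2h,2h]\subseteq[-q_\varphi,q_\varphi]$, since $|z|\wedge 2h=|z|$ there.
\end{itemize}
Hence $D\cap[A_\varphi]_o\subseteq[-q_\varphi,q_\varphi]$. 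After this, your appeal to Lemma~\ref{ord:cap} and the regulation of the evaluation by $q_\varphi$ go through exactly as in the paper.
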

\begin{proof}
Assume the corresponding distributive law. For decreasing null rows
$0\in U_{n,\alpha}\subseteq[-H,H]$, define
\[
 b_{n,\alpha}=\sup_{u\in U_{n,\alpha}}|u|,\qquad
 K_a=\bigcup_{m\geq1}\sum_{n=1}^mU_{n,a_n},\qquad
 Q_a=[-H,H]\cap\clo_oK_a.
\]
The first supremum exists in $[0,H]$. A row regulator
$r_{n,\gamma}\downarrow0$ supplies
\[
 \forall\gamma\ \exists\alpha(n,\gamma)\
 \forall\alpha\geq\alpha(n,\gamma):\quad
 0\leq b_{n,\alpha}\leq r_{n,\gamma}.
\]
Hence $b_{n,\alpha}\downarrow_\alpha0$. Zero insertion identifies
$K_a$ with the union of all finite selected sums.
Lemma~\ref{ord:aggregation} gives
\[
 q_a=\sup_m\left(H\wedge\sum_{n=1}^m b_{n,a_n}\right)\downarrow0,
 \qquad K_a\subseteq L_a:=\{z:|z|\wedge H\leq q_a\}.
\]
If $z_\delta\in L_a$ and $z_\delta\onull z$ with regulator
$r_\gamma\downarrow0$, then for every $\gamma$ there is an index
$\delta_\gamma$ such that every $\delta\ge\delta_\gamma$ satisfies
\[
 \bigl|(|z|\wedge H)-(|z_\delta|\wedge H)\bigr|
 \leq|z-z_\delta|\leq r_\gamma,
 \qquad (|z|\wedge H-q_a)^+\leq r_\gamma.
\]
Taking the infimum over $\gamma$ proves $z\in L_a$. Thus
$\clo_oK_a\subseteq L_a$ and
$Q_a\subseteq[-q_a,q_a]$, which regulates $\ev_a(Q_a)$.

Conversely, let $0\leq b_{n,\alpha}\leq H$ decrease to zero in
each row, and put $U_{n,\alpha}=[0,b_{n,\alpha}]$. The bounds
$|u|\leq b_{n,\alpha}$ regulate the row evaluations.
Closed summation supplies $r_\gamma\downarrow0$ with
\[
 \forall\gamma\ \exists a(\gamma)\ \forall a\geq a(\gamma):
 \quad Q_a\subseteq[-r_\gamma,r_\gamma].
\]
For every $n$ and $a$,
\begin{equation}\label{ord:necessity-inclusion}
 b_{n,a_n}=0+\cdots+0+b_{n,a_n}\in K_a\cap[-H,H]\subseteq Q_a.
\end{equation}
All selected suprema belong to $[0,H]$. Consequently
\[
 0\leq\inf_a\sup_n b_{n,a_n}
 \leq\sup_n b_{n,a(\gamma)_n}\leq r_\gamma\quad(\gamma),
 \qquad\inf_a\sup_n b_{n,a_n}=0.
\]
Arbitrary domains give \eqref{cmp:strong-wsd-equation}, and
$A_n=\N$ gives \eqref{ord:wsd}.
\end{proof}

\begin{corollary}\label{ord:equivalence}\label{cmp:strong-wsd-csp}
In a Dedekind-complete group with the countable sup property, WSD,
weak $(\sigma,\infty)$-distributivity, and closed bounded summation
on every interval are equivalent.
\end{corollary}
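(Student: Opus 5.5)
The corollary will follow by combining Theorem~\ref{cmp:strong-wsd-equivalence} with one extra step: under the countable sup property, WSD implies weak $(\sigma,\infty)$-distributivity. The converse implication, weak $(\sigma,\infty)$-distributivity $\Rightarrow$ WSD, is immediate by taking $A_n=\N$. Theorem~\ref{cmp:strong-wsd-equivalence} identifies closed bounded summation on every interval for arbitrary directed rows with weak $(\sigma,\infty)$-distributivity, and for sequential rows with WSD. Once the two distributive laws coincide, both forms of closed bounded summation coincide with them. So the plan is: first prove the reduction lemma, then cite the theorem.

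For the reduction, let $0\le a_{n,\alpha}\le h$ with $a_{n,\alpha}\downarrow_{\alpha\in A_n}0$. Fix $n$. The set $\{h-a_{n,\alpha}:\alpha\in A_n\}$ has supremum $h$, since infima translate. The countable sup property then gives indices $\beta_{n,1},\beta_{n,2},\ldots$ with $\inf_k a_{n,\beta_{n,k}}=0$. By directedness, I choose an increasing sequence $\alpha_{n,k}\ge\beta_{n,1},\ldots,\beta_{n,k}$ in $A_n$, exactly as in the proof of Proposition~\ref{rel:positive}. This yields a sequential bounded regulator $b_{n,k}=a_{n,\alpha_{n,k}}\downarrow_k0$. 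WSD gives $\inf_{\varphi}\sup_n b_{n,\varphi(n)}=0$.

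Each $\varphi\in\N^\N$ determines the selector $a^\varphi=(\alpha_{n,\varphi(n)})_n\in\prod_nA_n$, and $\sup_na_{n,a^\varphi_n}=\sup_nb_{n,\varphi(n)}$. Hence
\[
 0\le\inf_{a\in\prod_nA_n}\sup_na_{n,a_n}\le\inf_\varphi\sup_nb_{n,\varphi(n)}=0,
\]
which is \eqref{cmp:strong-wsd-equation}. No cofinality of the chosen indices is needed, because only an upper bound for the infimum is required. Dedekind completeness guarantees that all the suprema involved exist in $[0,h]$.

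The main obstacle is a small one: extracting a countable subfamily whose infimum is $0$ from a decreasing net. This uses the countable sup property applied to $h-a_{n,\alpha}$, together with monotonization via directedness. After that, the equivalence is pure bookkeeping with Theorem~\ref{cmp:strong-wsd-equivalence}.
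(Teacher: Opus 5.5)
Your proposal is correct and matches the paper's proof: extract from each row a countable subfamily with infimum zero via the countable sup property, monotonize it by directedness, and apply WSD to the resulting sequential regulator to bound the infimum over $\prod_nA_n$. The converse is the specialization $A_n=\N$, and Theorem~\ref{cmp:strong-wsd-equivalence} converts both distributive laws into the two closed-summation conditions.
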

\begin{proof}
For $b_\alpha\downarrow0$, the set $\{-b_\alpha\}$ has supremum
zero. The countable sup property supplies
$(\alpha'_k)_{k\geq1}$ with $\inf_kb_{\alpha'_k}=0$.
Set $\alpha_1=\alpha'_1$, and recursively take an upper bound
$\alpha_{k+1}$ of $\alpha_k$ and $\alpha'_{k+1}$. Then
\[
 \alpha_k\leq\alpha_{k+1},\qquad
 0\leq\inf_kb_{\alpha_k}\leq\inf_kb_{\alpha'_k}=0,
\]
so
\begin{equation}\label{eq:countable-regulator}
 b_{\alpha_k}\downarrow_k0.
\end{equation}
For bounded null rows, carry out this recursion with indices
$\alpha_{n,k}\in A_n$. WSD gives
\[
 0\leq\inf_{a\in\prod_n A_n}\sup_n b_{n,a_n}
 \leq\inf_{\varphi\in\N^\N}\sup_n b_{n,\alpha_{n,\varphi(n)}}=0.
\]
Weak $(\sigma,\infty)$-distributivity implies WSD by the specialization
$A_n=\N$. Theorem~\ref{cmp:strong-wsd-equivalence} identifies the
corresponding closed-summation conditions. The extraction uses
$\inf_kb_{\alpha_{n,k}}=0$, without requiring a cofinal sequence in
$A_n$.
\end{proof}

\begin{theorem}[The band determined by an interval]\label{inv:wsd-local}
Let $F$ be a Dedekind-complete real vector lattice and $h\in F_+$.
Put $B_h=\{h\}^{\perp\perp}$, where
$A^\perp=\{x:|x|\wedge|a|=0\ (a\in A)\}$.
Sequential closed summation on $[-2h,2h]$ holds if and only if
$B_h$ is WSD. No countable sup property is required.
\end{theorem}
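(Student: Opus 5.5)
The plan is to reduce the statement to Theorem~\ref{cmp:strong-wsd-equivalence}, applied not to $F$ but to the band $B_h$. The bridge is that every set $U\subseteq[-2h,2h]$ lies in $B_h$, and $B_h$ is a projection band: $F$ is Dedekind complete, so Riesz's theorem gives $F=B_h\oplus B_h^\perp$, with a band projection $P_h$ that is an order-continuous lattice homomorphism. In one line, sequential closed summation on $[-2h,2h]$ computed in $F$ should coincide with the same property computed in the Dedekind-complete group $B_h$.

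The key comparison steps are as follows.
\begin{enumerate}
\item Order convergence of nets with values in $B_h$ is the same in $F$ and in $B_h$. A regulator $r_\gamma\downarrow0$ in $F$ can be replaced by $P_hr_\gamma$, which still decreases to zero by order continuity of $P_h$. Conversely, infima computed in the band agree with infima in $F$.
\item Stable hulls of subsets of $B_h$ agree in $F$ and in $B_h$. The band is order closed, since bands are limit-stable, so the hull $\clo_o$ of a set in $B_h$ stays in $B_h$.
\item Consequently the sets $Q_\varphi=[-2h,2h]\cap\clo_oK_\varphi$ and their nullity are identical in the two groups.
\end{enumerate}

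\emph{Necessity.} Sequential summation on $[-2h,2h]$ gives WSD for bounded regulators in $[0,2h]$, exactly as in the converse half of the proof of Theorem~\ref{cmp:strong-wsd-equivalence}. This yields \eqref{ord:wsd} for regulators bounded by $h$. I must then extend it to every bounded regulator $0\le a_{n,k}\le g$ with $g\in B_h$. This extension is the main obstacle, because WSD of $B_h$ quantifies over all bounds, and $g$ need not be dominated by a multiple of $h$.

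For that step, the approach I would try first is:
\begin{enumerate}
\item Write $g=\sup_j(g\wedge jh)$, which holds because $g\in\{h\}^{\perp\perp}$ in an Archimedean vector lattice.
\item Use the multiples $j\cdot$ via homogeneity: the WSD law for the bound $jh$ follows by scaling from the bound $h$.
\item Pass to $g$ by disjointification. Set $e_j=(g\wedge jh-g\wedge(j-1)h)$ and let $P_j$ be the band projection onto the component determined by $e_j$ inside $\{h\}^{\perp\perp}$. These are pieces on which $g$ is dominated by a multiple of $h$.
\item Apply WSD on each piece with a separate selector $\varphi_j$. Combine the selectors into one selector through a diagonal $\varphi(n)=\max_{j\le n}\varphi_j(n)$ and a $2^{-j}$-type splitting. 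The combination uses the countable additivity of the disjoint band decomposition: the supremum over pairwise disjoint pieces equals the sum.
\item Conclude from the disjointness and Dedekind completeness that the infimum over $\varphi$ of the supremum is zero.
\end{enumerate}
A cleaner alternative is to note that WSD of a Dedekind-complete vector lattice is equivalent to WSD for regulators bounded by a weak unit. In $B_h$ the element $h$ is a weak unit, and this reduction is the standard Wright-type localization, with no countable sup property needed.

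\emph{Sufficiency.} If $B_h$ is WSD, apply the first half of the proof of Theorem~\ref{cmp:strong-wsd-equivalence} inside $B_h$ with $H=2h$. The row bounds $b_{n,\alpha}$ lie in $[0,2h]\subseteq B_h$, and Lemma~\ref{ord:cap} produces $q_\varphi\downarrow0$ in $B_h$, hence in $F$ by the comparison step above. The closure estimate $Q_\varphi\subseteq[-q_\varphi,q_\varphi]$ is unchanged, which gives sequential closed summation on $[-2h,2h]$ for the convergence of $F$.
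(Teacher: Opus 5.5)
Your overall architecture matches the paper's, and most of it is correct. The comparison of order convergence, infima and stable hulls between $B_h$ and $F$ is sound: the paper derives it from the fact that a band contains the $F$-suprema of its bounded positive subsets, and you derive it from the band projection, and either works. Your sufficiency half is the paper's argument with $H=2h$. Your necessity half correctly starts from the converse part of the proof of Theorem~\ref{cmp:strong-wsd-equivalence}, which gives the distributive law for regulators bounded by $2h$. The gap is in the step you yourself flag, the passage to an arbitrary bound $g\in(B_h)_+$. Your primary plan for it fails as written.

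\emph{Step 3.} The elements $e_j=g\wedge jh-g\wedge(j-1)h$ are not disjoint: for $F=\mathbb R$, $h=1$, $g=3$ one gets $e_1=e_2=e_3=1$. Moreover, $e_1=g\wedge h$ generates the same band as $g$ because $g\in B_h$. So on the first ``piece'' $g$ need not be dominated by any multiple of $h$; take $F=\mathbb R^{\mathbb N}$, $h=(1,1,\ldots)$, $g=(1,2,3,\ldots)$.

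\emph{Step 4.} This step is not well defined. The distributive law on a piece asserts that an infimum over selectors vanishes; it does not supply one selector $\varphi_j$ with small supremum. Even if it did, the diagonal $\max_{i\le n}\varphi_i(n)$ gives no control of the rows $n<j$ on the $j$th piece.

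\emph{The alternative.} Your ``cleaner alternative'' only restates what has to be proved.

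\emph{The missing idea.} It is one line, and it is what the paper uses. Put $d=\inf_\varphi\sup_na_{n,\varphi(n)}\in(B_h)_+$. The truncated matrix $(a_{n,k}\wedge h)$ is a regulator bounded by $h\le2h$, so the interval case gives $\inf_\varphi\sup_n(a_{n,\varphi(n)}\wedge h)=0$. Since $x\mapsto x\wedge h$ preserves all existing suprema and infima in a vector lattice,
\[
 d\wedge h=\inf_\varphi\sup_n\bigl(a_{n,\varphi(n)}\wedge h\bigr)=0,
\]
hence $d\in B_h\cap\{h\}^\perp=\{0\}$. No decomposition of $g$, no scaling and no combination of selectors is needed. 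Your steps 1--2 already lead there if you take meets with $h$ (or $jh$) instead of disjointifying.

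If you prefer a genuinely disjoint band decomposition $(P_j)$ of $B_h$, the same preservation of extrema by each band projection gives $P_jd=0$ directly, again without combining selectors. You would then also need an Archimedean argument that the $P_j$ exhaust $B_h$.
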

\begin{proof}
A band in a Dedekind-complete vector lattice contains the supremum
of each of its positive subsets bounded above in $F$. Indeed, if
$0\leq z_\eta\in B_h$ and $z=\sup_\eta z_\eta$, then for
$w\in\{h\}^\perp$,
$z\wedge|w|=\sup_\eta(z_\eta\wedge|w|)=0$.
Its bounded positive infima also belong to it by solidity.
Thus these extrema agree with the extrema in $F$.

Suppose $B_h$ is WSD. For decreasing null interval rows
$U_{n,k}\subseteq[-2h,2h]$, the bounds
$b_{n,k}=\sup_{u\in U_{n,k}}|u|$ belong to $B_h$ and decrease to
zero. Lemma~\ref{ord:aggregation} in $B_h$ gives
\[
 q_\varphi=\sup_m\left(2h\wedge\sum_{n=1}^m
                     b_{n,\varphi(n)}\right)\downarrow0
 \quad\text{in }B_h\text{ and in }F.
\]
The proof of Theorem~\ref{cmp:strong-wsd-equivalence} establishes
order stability of $\{z:|z|\wedge2h\leq q_\varphi\}$ in $F$.
For $K_\varphi=\bigcup_m\sum_{n=1}^mU_{n,\varphi(n)}$, this gives
\[
 K_\varphi\subseteq\{z:|z|\wedge2h\leq q_\varphi\},\qquad
 [-2h,2h]\cap\clo_oK_\varphi\subseteq[-q_\varphi,q_\varphi].
\]
These bounds prove sequential closed summation.

Conversely, \eqref{ord:necessity-inclusion} applied to the interval
$[-2h,2h]$ gives
$\inf_\varphi\sup_na_{n,\varphi(n)}=0$ whenever
$0\leq a_{n,k}\leq2h$ and $a_{n,k}\downarrow_k0$.
For an arbitrary bounded regulator in $(B_h)_+$, define
$d=\inf_\varphi\sup_na_{n,\varphi(n)}\in(B_h)_+$.
The regulator $(a_{n,k}\wedge h)$ satisfies the preceding bound.
Finite meet preserves these existing suprema and infima, whence
\[
 d\wedge h
 =\inf_\varphi\left(\sup_na_{n,\varphi(n)}\wedge h\right)
 =\inf_\varphi\sup_n(a_{n,\varphi(n)}\wedge h)=0.
\]
Thus $d\in B_h\cap\{h\}^\perp=\{0\}$, proving WSD.
\end{proof}

The inverse assertions quantify over every null row in the stated
interval. They impose no necessity claim on a single prescribed
measure. For instance, $B_{(1,0)}=\mathbb R\times\{0\}$ in
$\mathbb R\times F_0$, regardless of the distributive properties
of $F_0$.

\begin{theorem}[Finite-response reflection]\label{ord:fg}
If $E$ is Dedekind complete, has the countable sup property, and is
WSD, then order convergence satisfies $\mathsf F$ on every bounded
interval.
\end{theorem}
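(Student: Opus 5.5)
We argue by contraposition, as in the topological verification of Lemma~\ref{app:topological-range}, but with order bounds replacing neighbourhoods. Fix a bounded interval $[-H,H]$ and a net $u:\Lambda\to[-H,H]$ that does not converge in order to $0$. By the tail-envelope criterion \eqref{ord:filter-envelope}, the envelopes $w_\alpha=\sup_{\beta\ge\alpha}|u_\beta|$ decrease to some $d=\inf_\alpha w_\alpha>0$. Given an arbitrary strategy $\sigma$, we build a legal play whose response evaluation is not order null. This shows that no strategy satisfies the premise of $\mathsf F$ for a nonnull net.

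The responses must be finite sets $S_n\subseteq T_u(\alpha_n)$ whose suprema of moduli stay large. The difficulty is that $\sup_{\beta\ge\alpha_n}|u_\beta|\ge d$ is typically not attained by finitely many points. We therefore aim to choose the finite sets so that $s_n=\sup_{x\in S_n}|x|$ satisfies $\inf_N\sup_{n\ge N}s_n\ne0$; by \eqref{ord:filter-envelope} this makes $(S_n)_n$ not null.

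We use the countable sup property to reduce each tail envelope to a sequence. For each $\alpha$, pick $\beta_{\alpha,k}\ge\alpha$ with $\sup_k|u_{\beta_{\alpha,k}}|=w_\alpha\ge d$. Then
\[
 c_{\alpha,k}=d-d\wedge\bigvee_{j\le k}|u_{\beta_{\alpha,j}}|
\]
decreases in $k$ to $0$ and is bounded by $d$. Along the play we set $S_n=\{u_{\beta_{\alpha_n,j}}:j\le k_n\}$ and leave the integers $k_n$ free. The defect $d-d\wedge s_n$ is then at most $c_{\alpha_n,k_n}$.

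The main obstacle is the adaptive dependence: $\alpha_{n+1}$ depends on the earlier responses, so the regulator rows depend on the history. To handle this, we index rows by finite histories, as in the tree construction of Proposition~\ref{rel:positive}. Enumerate the countably many histories generated by integer choices via a bijection $\iota$. For each such history $h$ with current index $\alpha_h$, take the row $a_{\iota(h),k}=c_{\alpha_h,k}$. WSD then gives
\[
 \inf_{\varphi}\sup_h a_{\iota(h),\varphi(\iota(h))}=0.
\]
Since $d>0$, there is a $\varphi$ with $r=\sup_h c_{\alpha_h,\varphi(\iota(h))}$ satisfying $d\not\le r$. Following $\varphi$ along its branch ($k_n=\varphi(\iota(h_n))$) yields a play in which every response satisfies $d\wedge s_n\ge d-r$.

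We must still check that $d-r\ne0$ and that $\inf_N\sup_{n\ge N}s_n\ge d-r$ is nonzero. This holds because $d\not\le r$ gives $(d-r)^+\ne0$, and we can replace $d-r$ by $(d-r)^+$ throughout. Hence $(S_n)$ is not order null. Dedekind completeness supplies all the suprema used. Stating this last lower-bound step carefully, so that a single selector controls all stages simultaneously, is where care is needed; the rest consists of routine lattice-group identities.
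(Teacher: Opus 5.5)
Your proof is correct and follows the paper's argument essentially step for step: countable-sup extraction of each tail envelope, the defect rows $(d-b_{t,k})^+$ indexed by the countable tree of integer-generated histories, a single WSD selector, and the uniform lower bound $s_n\ge d-r$ along that selector's branch, which contradicts order-nullity of the responses. The step you flag as needing care is already settled: every row entry is at most $d$, so $r\le d$ and $d\not\le r$ give $d-r>0$, and because $r$ is a supremum over all tree nodes, the one selector controls every stage of its branch simultaneously.
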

\begin{proof}
Let $x:\Lambda\to[-H,H]$ and put
$s=\inf_\alpha\sup_{\beta\geq\alpha}|x_\beta|\in[0,H]$.
Suppose $s\ne0$, and fix a first-player strategy $\sigma$.
On the countable set $T=\bigcup_{m\geq0}\N^m$, set
$H_\varnothing=\varnothing$. At a constructed history $H_t$, put
$\alpha_t=\sigma(H_t)$. The countable sup property supplies
$z_{t,j}\in T_x(\alpha_t)$ satisfying
\[
 \sup_{j\geq1}|z_{t,j}|=\sup_{\beta\geq\alpha_t}|x_\beta|\geq s.
\]
For $k\geq1$ define
\[
 S_{t,k}=\{z_{t,j}:1\leq j\leq k\},\quad
 b_{t,k}=\bigvee_{j\leq k}|z_{t,j}|,\quad
 H_{t^\frown k}=H_t^\frown(\alpha_t,S_{t,k}).
\]
Every $S_{t,k}$ is nonempty, finite and contained in the requested
tail. Moreover
\[
 a_{t,k}=(s-b_{t,k})^+\downarrow_k
 (s-\sup_j|z_{t,j}|)^+=0,\qquad0\leq a_{t,k}\leq s.
\]
WSD, after an enumeration of $T$, gives
$\inf_{\varphi\in\N^T}\sup_ta_{t,\varphi(t)}=0$.
Since $s\ne0$, some $\varphi$ satisfies
$b:=\sup_ta_{t,\varphi(t)}\ne s$. Put $v=s-b>0$ and define
\[
 t_0=\varnothing,\qquad t_{n+1}=t_n^\frown\varphi(t_n),\qquad
 S_{n+1}=S_{t_n,\varphi(t_n)}\quad(n\geq0).
\]
This is a legal play against $\sigma$. The inequality
$s-b_{t_n,\varphi(t_n)}\leq a_{t_n,\varphi(t_n)}\leq b$ yields
\begin{equation}\label{ord:fg-obstruction}
 0<v\leq b_{t_n,\varphi(t_n)}
       =\bigvee_{z\in S_{n+1}}|z|\qquad(n\geq0).
\end{equation}
If $\ev_n(S_n)\onull0$, a regulator $r_\gamma\downarrow0$ would give
\[
 \forall\gamma\ \exists N_\gamma\ \forall n\geq N_\gamma
 \ \forall z\in S_n:\quad |z|\leq r_\gamma.
\]
Equation~\eqref{ord:fg-obstruction} then implies
$0<v\leq r_\gamma$ for every $\gamma$, a contradiction.
Thus a strategy with every response evaluation null requires $s=0$.
Criterion~\eqref{ord:filter-envelope} proves $x\onull0$.
\end{proof}

\begin{proposition}[Uniform closure]\label{ord:uc}
Every bounded interval in a Dedekind-complete group satisfies
$\mathsf U$ for order convergence, including the version uniform
over an additional index set.
\end{proposition}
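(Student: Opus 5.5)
We need to prove $\mathsf U$ on a bounded interval $[-H,H]$ (say $M\subseteq[-H,H]$) in a Dedekind-complete group with order convergence. The plan follows the topological verification in Lemma~\ref{app:topological-range}, replacing neighbourhoods by order bounds; the three-term splitting identity is the same.

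First, translate the three premises into envelope statements. All points lie in $[-H,H]$, so every difference set is bounded by $2H$. Hence for a bounded family $(B_\alpha)$ with $0\in B_\alpha$, nullity $(B_\alpha)\Rightarrow0$ is equivalent to
\[
 e_\alpha=\sup\Bigl\{|b|:b\in\textstyle\bigcup_{\beta\ge\alpha}B_\beta\Bigr\}\downarrow0 .
\]
This is the tail-envelope criterion \eqref{ord:filter-envelope} applied to the evaluation net, whose tails are given by \eqref{eq:evaltail}. Accordingly put
\[
 \varepsilon_\beta=\sup_{\beta'\ge\beta}\ \sup_{n,j}\bigl(|x_n^{\beta'}(j)-x_n(j)|\vee|y_n^{\beta'}(j)-y_n(j)|\bigr),
\qquad
 \delta_{\beta,N}=\sup_{n\ge N,\,j}|x_n^\beta(j)-y_n^\beta(j)| .
\]
The second and third premises, combined by finite mixing, give $\varepsilon_\beta\downarrow0$ in $\beta$. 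The first premise gives $\delta_{\beta,N}\downarrow_N0$ for each fixed $\beta$.

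Next, bound the target. By the splitting identity
\[
 x_n(j)-y_n(j)=\bigl(x_n(j)-x_n^\beta(j)\bigr)+\bigl(x_n^\beta(j)-y_n^\beta(j)\bigr)+\bigl(y_n^\beta(j)-y_n(j)\bigr),
\]
we get
\[
 t_N:=\sup_{n\ge N,\,j}|x_n(j)-y_n(j)|\le 2\varepsilon_\beta+\delta_{\beta,N}
\]
for every $\beta$ and $N$. The sequence $t_N$ decreases and lies in $[0,2H]$, so $t=\inf_Nt_N$ exists. Fixing $\beta$ and taking the infimum over $N$ (infima are translation invariant) yields $t\le2\varepsilon_\beta$. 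Taking the infimum over $\beta$ then gives $t=0$. Since $t_N\downarrow0$ is exactly the envelope of the target evaluation, criterion \eqref{ord:filter-envelope} gives $(\{x_n(j)-y_n(j):j\in J\})_n\Rightarrow0$.

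The main point needing care is the envelope equivalence for evaluation nets over the additional index set $J$. Its suprema exist by Dedekind completeness and boundedness. The regulator produced by the order convergence of an evaluation net bounds every element of a tail, so it dominates the envelope. Conversely, the envelope sequence itself regulates the net. Beyond this check, no distributivity or countability hypothesis is needed, which matches the statement's generality.
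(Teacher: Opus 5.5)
Your proof is correct and follows essentially the paper's route. You use the same three-term splitting, the same envelope $t_N=\sup_{n\ge N,\,j\in J}|x_n(j)-y_n(j)|$, and the same two-stage infimum, first over $N$ at a fixed approximation index and then over that index. The only differences are cosmetic. The paper works with regulators and thresholds directly, and sums the two approximation regulators so that no factor $2$ appears, whereas you pass through the tail-envelope criterion \eqref{ord:filter-envelope}. Your final step, $\inf_\beta 2\varepsilon_\beta=0$, is the routine consequence of directedness and translation invariance of infima.
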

\begin{proof}
Retain the additional nonempty index set $J$ in the definition of
$\mathsf U$. All points lie in an interval $[a,b]$, so
$|x_n(j)-y_n(j)|\leq b-a$.
Let $r^x_\gamma\downarrow0$ and $r^y_\eta\downarrow0$ regulate
the two uniform approximation errors. On their product domain put
$r_{(\gamma,\eta)}=r^x_\gamma+r^y_\eta\downarrow0$.
Directedness of the approximation domain provides thresholds
$\beta_{(\gamma,\eta)}$ above both original thresholds. Relabeling
this product regulator as $(r_\gamma)$ gives
\begin{equation}\label{ord:uc-uniform}
 |x_n^\beta(j)-x_n(j)|+|y_n^\beta(j)-y_n(j)|\leq r_\gamma
 \quad(\beta\geq\beta_\gamma,\ n\geq1,\ j\in J).
\end{equation}
Dedekind completeness defines
\[
 t_N=\sup_{n\geq N,\ j\in J}|x_n(j)-y_n(j)|\in[0,b-a],
 \qquad t=\inf_Nt_N.
\]
At $\beta_\gamma$ the approximating pair hypothesis supplies
$s_{\gamma,\delta}\downarrow_\delta0$ and $N_{\gamma,\delta}$ with
\[
 |x_n^{\beta_\gamma}(j)-y_n^{\beta_\gamma}(j)|
 \leq s_{\gamma,\delta}\quad(n\geq N_{\gamma,\delta},\ j\in J).
\]
The triangle inequality and \eqref{ord:uc-uniform} give
\[
 0\leq t\leq t_{N_{\gamma,\delta}}
 \leq r_\gamma+s_{\gamma,\delta},\qquad
 t\leq\inf_\delta(r_\gamma+s_{\gamma,\delta})=r_\gamma.
\]
Therefore $t\leq\inf_\gamma r_\gamma=0$, and
$|x_n(j)-y_n(j)|\leq t_N\downarrow_N0$ for $n\geq N$, uniformly
in $j\in J$. A singleton $J$ includes the unindexed assertion.
\end{proof}

\begin{theorem}[Order-Cauchy completeness]\label{ord:complete}
Every Dedekind-complete abelian lattice-ordered group is complete
for order-Cauchy nets, and each order-limit-stable subset is complete.
\end{theorem}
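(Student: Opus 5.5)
The plan is to construct the limit through tail envelopes in the order of $E$. Cauchy nets need not be bounded, so the first task is to reduce to a bounded situation. Then define the limit as a common value of $\limsup$ and $\liminf$, and finally check convergence using the regulator coming from the Cauchy hypothesis. The stability assertion is then immediate: a limit-stable subset of a complete set is complete by Proposition~\ref{gen:inheritance}.

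Let $x:\Lambda\to E$ be order-Cauchy. By definition, $x\boxplus(-x)\onull0$ on $\Lambda\times\Lambda$. Fix a regulator $r_\gamma\downarrow0$ and some $\gamma_0$. There is then a threshold $(\alpha_1,\alpha_2)$ with $|x_\alpha-x_\beta|\le r_{\gamma_0}$ whenever $\alpha\ge\alpha_1$ and $\beta\ge\alpha_2$. Choose $\alpha_*$ above both coordinates. On the tail $\alpha\ge\alpha_*$ we get $|x_\alpha-x_{\alpha_*}|\le r_{\gamma_0}$, so the restricted net lies in the interval $[x_{\alpha_*}-r_{\gamma_0},\,x_{\alpha_*}+r_{\gamma_0}]$. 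Restriction to a tail preserves both the Cauchy property and convergence, by the tail comparisons recorded after \eqref{eq:algtails}. We may therefore assume that the net is bounded.

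Dedekind completeness now supplies
\[
 p_\alpha=\inf_{\beta\ge\alpha}x_\beta,\qquad q_\alpha=\sup_{\beta\ge\alpha}x_\beta,
\]
with $p_\alpha$ increasing and $q_\alpha$ decreasing. Put $p=\sup_\alpha p_\alpha$ and $q=\inf_\alpha q_\alpha$. Directedness gives $p\le q$: for $\alpha,\alpha'$ and an upper bound $\beta$ we have $p_\alpha\le x_\beta\le q_{\alpha'}$.

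For each $\gamma$ the Cauchy condition gives a threshold $\alpha_\gamma$ such that $|x_\beta-x_{\beta'}|\le r_\gamma$ for all $\beta,\beta'\ge\alpha_\gamma$. Taking the supremum over $\beta$ and the infimum over $\beta'$ yields $q_{\alpha_\gamma}-p_{\alpha_\gamma}\le r_\gamma$. Hence $0\le q-p\le r_\gamma$ for every $\gamma$, and so $q=p=:x_0$.

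For convergence, take $\alpha\ge\alpha_\gamma$. Then
\[
 p_{\alpha_\gamma}\le x_\alpha\le q_{\alpha_\gamma}\quad\text{and}\quad p_{\alpha_\gamma}\le x_0\le q_{\alpha_\gamma},
\]
so $|x_\alpha-x_0|\le q_{\alpha_\gamma}-p_{\alpha_\gamma}\le r_\gamma$. This is order convergence with the same regulator, as required by Definition~\ref{def:orderconv}. A net that is not bounded is handled by the tail reduction above.

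The main point needing care is the reduction to a bounded tail. The independent-product threshold must be converted into a single index $\alpha_*$, which is possible by directedness. The supremum and infimum estimates must also be justified by translation invariance of extrema in lattice-ordered groups. Everything else follows routinely.
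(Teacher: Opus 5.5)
Your proof is correct and follows essentially the same route as the paper: restrict to a bounded Cauchy tail, form the tail infima and suprema, use the Cauchy regulator to force $\sup_\alpha p_\alpha=\inf_\alpha q_\alpha$, and verify order convergence with that same regulator. The only difference is cosmetic: you obtain the assertion about order-limit-stable subsets by citing Proposition~\ref{gen:inheritance}, whereas the paper repeats that one-line argument directly.
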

\begin{proof}
Let $r_\gamma\downarrow0$ satisfy
\begin{equation}\label{ord:cauchy-diameter}
 |x_\alpha-x_\beta|\leq r_\gamma
 \quad(\alpha,\beta\geq\lambda_\gamma).
\end{equation}
Fix $\gamma_0$ and put $\alpha_0=\lambda_{\gamma_0}$ and
$\Lambda_0=\{\alpha:\alpha\geq\alpha_0\}$. Then
\[
 x_{\alpha_0}-r_{\gamma_0}\leq x_\beta
 \leq x_{\alpha_0}+r_{\gamma_0}\qquad(\beta\in\Lambda_0).
\]
For $\alpha\in\Lambda_0$, Dedekind completeness therefore defines
\[
 \ell_\alpha=\inf_{\beta\geq\alpha}x_\beta,\quad
 u_\alpha=\sup_{\beta\geq\alpha}x_\beta,\quad
 \ell=\sup_{\alpha\in\Lambda_0}\ell_\alpha,\quad
 u=\inf_{\alpha\in\Lambda_0}u_\alpha.
\]
For $\alpha,\beta\in\Lambda_0$, an upper bound $\delta\geq\alpha,\beta$
satisfies $\ell_\alpha\leq x_\delta\leq u_\beta$, giving
$\ell\leq u$. For each $\gamma$, take
$\alpha\geq\alpha_0,\lambda_\gamma$. For $\beta,\delta\geq\alpha$,
\[
 x_\beta\leq x_\delta+r_\gamma
 \quad\Longrightarrow\quad
 u_\alpha\leq\ell_\alpha+r_\gamma
 \quad\Longrightarrow\quad0\leq u-\ell\leq r_\gamma.
\]
Thus $u=\ell=:x$. Whenever $\beta\geq\alpha\geq\alpha_0,\lambda_\gamma$,
\[
 \ell_\alpha\leq x,x_\beta\leq u_\alpha,\qquad
 |x_\beta-x|\leq u_\alpha-\ell_\alpha\leq r_\gamma.
\]
This proves $x_\alpha\onull x$. If all $x_\alpha$ belong to an
order-limit-stable subset $C$, its stability gives $x\in C$.
The same regulator verifies convergence in the induced structure on
$C$, proving its completeness.
\end{proof}

\begin{corollary}\label{ord:carriers}
In a Dedekind-complete WSD group with the countable sup property,
each interval $[-h,h]$ is complete and satisfies the finite-response,
closed summation, and uniform closure principles for order convergence.
\end{corollary}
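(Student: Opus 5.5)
This corollary assembles results already proved in this section, applied to the interval $C=[-h,h]$ and its difference set $D=C-C\subseteq[-2h,2h]$ from \eqref{ord:carrier-bound}. The principles $\mathsf F$, $\mathsf S_{\rm all}$ and $\mathsf U$ will be checked on $C$ or on $D$, whichever the extension theorems require, and in each case I will invoke an already proved statement about bounded intervals.

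Completeness comes first. By \eqref{ord:carrier-bound}, $\clo_oC=C$, so $C$ is order-limit-stable. Theorem~\ref{ord:complete} then says that every order-limit-stable subset of a Dedekind-complete group is complete for order-Cauchy nets, so $C$ is complete.

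For finite-response reflection I apply Theorem~\ref{ord:fg} to the bounded interval $[-2h,2h]$, which contains $D$. This gives $\mathsf F([-2h,2h])$. Proposition~\ref{gen:inheritance} restricts finite-response reflection to subsets, so it yields $\mathsf F(D)$ and $\mathsf F(C)$. The theorem's hypotheses are exactly Dedekind completeness, the countable sup property and WSD, which are the assumptions here.

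For closed summation, Corollary~\ref{ord:equivalence} states that, under the countable sup property, WSD is equivalent to closed bounded summation for arbitrary directed rows on every interval. So $\mathsf S_{\rm all}$ holds on $[-2h,2h]$. The rows in $\mathsf S_{\rm all}(D)$ lie in $D\subseteq[-2h,2h]$, and the relative hulls $D\cap[A_\varphi]$ sit inside the corresponding interval hulls. The restriction clause of Proposition~\ref{gen:inheritance}, applied with $D_0=D$ or $D_0=C$ (both symmetric and containing $0$), then gives $\mathsf S_{\rm all}(D)$ and $\mathsf S_{\rm all}(C)$.

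Uniform closure $\mathsf U(C)$ is Proposition~\ref{ord:uc} applied to the interval $[-h,h]$.

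The one point needing care is the bookkeeping between the ambient interval and the relative hulls in the summation step. The paper states closed summation with hulls intersected with the interval itself. Because $D\subseteq[-2h,2h]$, the sets $D\cap[A_\varphi]$ are contained in the sets $[-2h,2h]\cap[A_\varphi]$, which are null. So a tail-containment (quasi-subnet) argument of the kind used in the proof of Proposition~\ref{gen:inheritance} carries the nullity from $[-2h,2h]$ down to $D$.
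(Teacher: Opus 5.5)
Your proof is correct and follows the paper's argument: completeness comes from \eqref{ord:carrier-bound} and Theorem~\ref{ord:complete}, and $\mathsf F$, $\mathsf S_{\rm all}$ and $\mathsf U$ come from Theorem~\ref{ord:fg}, Corollary~\ref{ord:equivalence} and Proposition~\ref{ord:uc}. The detour through $[-2h,2h]$ and Proposition~\ref{gen:inheritance} is harmless but unnecessary, because those results hold on every bounded interval and so apply to $[-h,h]$ directly; the paper only adds that they also hold on $[-2h,2h]\supseteq C-C$.
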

\begin{proof}
For $C=[-h,h]$, equation~\eqref{ord:carrier-bound} gives
$\clo_oC=C$, and Theorem~\ref{ord:complete} makes $C$ complete.
The verified range principles on this interval are
\[
 \mathsf F(C)\quad\text{by Theorem~\ref{ord:fg}},\qquad
 \mathsf S_{\rm all}(C)\quad\text{by Corollary~\ref{ord:equivalence}},
 \qquad\mathsf U(C)\quad\text{by Proposition~\ref{ord:uc}}.
\]
Each statement holds for every $h\in E_+$, including the interval
$[-2h,2h]$ that contains $C-C$.
\end{proof}

Having verified the range principles, we turn to the source estimates
needed in the applications. We will derive these estimates from the
original charge and use completeness to construct the extension values.

\section{Positive extension without a lattice order}
\label{sec:volauf}\label{sec:ordered-short}

Volauf's extension theorem allows a partially ordered range whose order
need not be a lattice order. We recover it by representing the source
charge in a lattice-ordered group and then returning to the original
range through an order-continuous homomorphism. Using only $G$ and the
values of $\tau$ on $\HH$, we construct in
Lemmas~\ref{volfull:Gcomplete}--\ref{volfull:step-properties} a complete
Boolean algebra $\mathbb D$, an integer-step group $K$, a premeasure
$\lambda:\HH\to K$, and an order-continuous homomorphism $J:K\to G$
satisfying $J\lambda=\tau$. These lemmas verify the extension hypotheses,
with only directed extrema taken in $G$.

We then apply Theorem~\ref{gen:extension} to $\lambda$ and transport
its extension $\bar\lambda$ to $J\bar\lambda$ by
Proposition~\ref{gen:transport}. In Theorem~\ref{vol:extension}, we
identify this map with Volauf's double process on its full splitting
domain. The Boolean identities give the all-subset formulas and
positivity properties, while sigma-closure, countable additivity and
full regularity come from the general extension and transport results.
We conclude with the original process estimates and the direct
lattice-valued specializations in Corollary~\ref{volfull:compatibility},
where the criteria of Section~\ref{sec:order-short} apply in the
original range.

\begin{definition}\label{class:volauf-regular}
A partially ordered abelian group $G$ is \emph{monotone complete} if
every nonempty upward-directed upper-bounded subset has a supremum,
and \emph{order separable} if every nonempty subset possessing a
supremum contains a countable subset with that supremum. For a bounded
positive matrix $a_{n,k}\downarrow_k0$, let
\[
 \Phi(a)=\left\{\varphi\in\N^{\N}:
    \left\{\sum_{n\le m}a_{n,\varphi(n)}:m\ge1\right\}
    \text{ is upper bounded}\right\}.
\]
The group is \emph{regular} if $\Phi(a)\ne\varnothing$ and the only
positive common lower bound of
$s_\varphi=\sup_m\sum_{n\le m}a_{n,\varphi(n)}$, $\varphi\in\Phi(a)$,
is zero. This is the bounded-selector formulation of
\cite[p.~352]{Volauf80}.
\end{definition}

Here $x_\alpha\to_o x$ means that a net $r_\gamma\downarrow0$ in $G_+$
satisfies $-r_\gamma\le x_\alpha-x\le r_\gamma$ eventually in $\alpha$
for each $\gamma$. For order-Cauchy nets the same bounds hold for
$x_\alpha-x_\beta$ on a common tail in the two independent indices.
Throughout this section $G$ has these three properties, $\HH$ is an
algebra on $\Omega$, and $\tau:\HH\to G_+$ is finitely additive with
\begin{equation}\label{volfull:original-countadd}
 H_n\downarrow\varnothing\quad\Longrightarrow\quad\tau H_n\downarrow0.
\end{equation}
Put $h=\tau\Omega$. This condition is equivalent to Volauf's positive
countable-additivity condition: if $H=\dot\bigcup_nH_n\in\HH$, then
$\tau H-\sum_{n\le N}\tau H_n=\tau(H\setminus\bigcup_{n\le N}H_n)
\downarrow0$. Conversely, apply that identity to the disjoint differences
$H_n\setminus H_{n+1}$ of a decreasing sequence with empty intersection.
Countable additivity itself implies finite additivity by appending empty
sets to a finite disjoint family.

\begin{lemma}\label{volfull:Gcomplete}\label{volfull:regulator}
Order convergence in $G$ is separated, order intervals are limit-stable,
and every order-Cauchy net converges. For the matrices in
Definition~\ref{class:volauf-regular}, $s_\varphi\downarrow_\varphi0$.
\end{lemma}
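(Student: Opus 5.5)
The plan is to treat the four assertions separately, working with the definition of order convergence recalled just before the lemma. In a partially ordered group without a lattice order, $|x|$ is unavailable, so every estimate has to be written as a two-sided bound $-r\le x\le r$.

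\emph{Separation.} Suppose $x_\alpha\to_o x$ with regulator $r_\gamma\downarrow0$, and $x_\alpha\to_o y$ with regulator $s_\delta\downarrow0$. For fixed $\gamma,\delta$, pick $\alpha$ beyond both thresholds. Subtracting the two bounds gives $-(r_\gamma+s_\delta)\le x-y\le r_\gamma+s_\delta$. The family $(r_\gamma+s_\delta)_{(\gamma,\delta)}$ is downward directed, and its infimum is $0$: for fixed $\delta$, translation gives $\inf_\gamma(r_\gamma+s_\delta)=s_\delta$, and then we take the infimum over $\delta$. This translation-invariance of infima holds in any partially ordered group. Hence $x-y$ and $y-x$ are both lower bounds of a family with infimum $0$, so both are $\le0$ and $x=y$.

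\emph{Limit-stability of intervals.} Let $x_\alpha\in[a,b]$ with $x_\alpha\to_o x$. Then $x\le x_\alpha+r_\gamma\le b+r_\gamma$ for every $\gamma$. So $b-x$ is an upper bound of $\{-r_\gamma\}$, whose supremum is $0$, giving $x\le b$. The bound $a\le x$ follows symmetrically.

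\emph{Completeness.} I would imitate Theorem~\ref{ord:complete}, but replace the lattice infima and suprema by directed ones, which is exactly what monotone completeness provides. Fix $\gamma_0$ and restrict the net to the tail beyond its Cauchy threshold, where it lies in an interval $[x_{\alpha_0}-r_{\gamma_0},\,x_{\alpha_0}+r_{\gamma_0}]$. The direct tail envelopes $\inf_{\beta\ge\alpha}x_\beta$ need not exist, since the tail is not directed. Instead I use the countable and directed structure available.

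\begin{itemize}
\item Order separability, applied to $\{-r_\gamma\}$ (whose supremum is $0$), gives a sequence with infimum $0$. Refining it using directedness gives a sequence $\rho_k=r_{\gamma_k}\downarrow0$ with thresholds $\alpha_k$ increasing.
\item Define $\ell_k=x_{\alpha_k}-\rho_k$. For $j\le k$ we have $x_{\alpha_k}\ge x_{\alpha_j}-\rho_j$. I do not yet know that $\ell_k$ is increasing, so I aim for an increasing bracket instead: set $\ell'_k=x_{\alpha_k}-2\rho_k$ and $u'_k=x_{\alpha_k}+2\rho_k$. The hope is that, after passing to a sparser subsequence with $\rho_{k+1}\le\rho_k/2$-type control, $\ell'_k$ becomes upward directed and $u'_k$ downward directed. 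Since halving is not available in a general group, the fallback is to use the directed set $\{\ell'_k\}$ together with a Cauchy-type bound.
\item Monotone completeness then gives $\ell=\sup$ and $u=\inf$ of these brackets. The bounds $0\le u-\ell\le 4\rho_k$ for all $k$ force $u=\ell=:x$, and then $-4\rho_k\le x_\beta-x\le4\rho_k$ beyond $\alpha_k$.
\end{itemize}

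This middle step is the main obstacle: producing monotone brackets in a non-lattice group. What I would try first is a cruder argument. The set $\{x_\beta-r_\gamma\}$, over $\beta$ in a tail beyond $\lambda_\gamma$ and over $\gamma$, is upward directed modulo the Cauchy bounds. I would verify directedness directly: given two such elements, an element taken farther out in both the index and the regulator dominates both, because $x_\delta-r_{\gamma''}\ge x_\beta-r_\gamma$ whenever $\delta,\beta$ are beyond the threshold of $\gamma$, $r_{\gamma''}\le$ a suitable difference, and so on. If that verification goes through, I take its supremum $x$ and check convergence.

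\emph{Nullity of $s_\varphi$.} The family $\{s_\varphi:\varphi\in\Phi(a)\}$ is downward directed: if $\varphi,\psi\in\Phi(a)$, then the pointwise maximum $\varphi\vee\psi$ lies in $\Phi(a)$, since its partial sums are bounded by those of $\varphi$, and $s_{\varphi\vee\psi}\le s_\varphi\wedge s_\psi$ in the sense of lower bounds. Each $s_\varphi$ exists by monotone completeness, because the partial sums form an increasing sequence that is upper bounded. Regularity says that the only positive lower bound of the family is $0$. Since $0$ is itself a lower bound, the infimum is $0$, which is exactly $s_\varphi\downarrow_\varphi0$.
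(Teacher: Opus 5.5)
Your proofs of separation and of limit-stability of intervals are correct. The argument for $s_\varphi\downarrow_\varphi0$ is also correct, once you state explicitly that $\inf_\varphi s_\varphi$ exists by monotone completeness, since the family is downward directed and bounded below by $0$. Only then is that infimum a positive lower bound, which regularity forces to be $0$.

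The genuine gap is completeness. You correctly identify the obstacle, monotone brackets without halving, but neither of your routes overcomes it. Directedness of $\{x_\beta-r_\gamma:\beta\ge\lambda_\gamma\}$ is exactly what remains unproven. The inequality $x_\delta-r_{\gamma''}\ge x_\beta-r_\gamma$ means $x_\delta-x_\beta\ge r_{\gamma''}-r_\gamma$, whereas the Cauchy bound at $\gamma$ gives only $x_\delta-x_\beta\ge-r_\gamma$. Your sketched verification would therefore need $r_{\gamma''}\le0$, and with doubled brackets it needs $2r_{\gamma''}\le r_\gamma$. That halving control can fail even in a Dedekind complete lattice: in $\ell^\infty$ the regulator $r_k=\mathbf 1_{\{j\ge k\}}$ decreases to $0$, yet no $m$ satisfies $2r_m\le r_k$.

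The missing idea, which is how the paper proceeds, is that regularity (Definition~\ref{class:volauf-regular}) replaces geometric decay by summability. Your completeness argument never uses this hypothesis.
\begin{itemize}
\item After extracting $q_k=r_{\beta_k}\downarrow0$ by order separability, as you do, apply regularity to the bounded matrix $a_{j,k}=q_k$ and fix $\varphi\in\Phi(a)$.
\item Put $c_j=q_{\varphi(j)}$, $s=\sup_m\sum_{j\le m}c_j$ and $t_j=s-\sum_{i<j}c_i$. Then $t_j\downarrow0$, $t_j-t_{j+1}=c_j$, and $c_j\le t_j$.
\item Choose indices $i_1\le i_2\le\cdots$ with $i_j$ beyond the Cauchy threshold for $c_j$. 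Since $-c_j\le x_{i_{j+1}}-x_{i_j}\le c_j$, the elements $l_j=x_{i_j}-t_j$ increase, the elements $u_j=x_{i_j}+t_j$ decrease, and $l_j\le u_k$ for all $j,k$.
\item Monotone completeness gives $\sup_jl_j$ and $\inf_ju_j$. Their gap is at most $2t_k$ for every $k$, and $2t_k\downarrow0$, so they coincide at some $x$ with $-t_j\le x_{i_j}-x\le t_j$.
\item For $i\ge i_j$ and $k\ge j$ one has $-c_j\le x_i-x_{i_k}\le c_j$. Your limit-stability of intervals then gives $-t_j\le x_i-x\le t_j$, so the whole net converges.
\end{itemize}
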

\begin{proof}
Translations preserve existing extrema. For $r_\alpha\downarrow0$ and
$t_\beta\downarrow0$, a common lower bound $z$ of their sums satisfies
$z-r_\alpha\le\inf_\beta t_\beta=0$ for every $\alpha$, whence $z\le0$.
Thus $r_\alpha+t_\beta\downarrow_{(\alpha,\beta)}0$. In particular,
$2r_\alpha\downarrow0$. Two limits $x,y$ of one net satisfy
\[
 -r_\alpha-t_\beta\le x-y\le r_\alpha+t_\beta
 \quad(\alpha,\beta),\qquad x=y.
\]
If $a\le x_i\le b$ and $x_i\to_o x$, then
$a-r_\alpha\le x\le b+r_\alpha$ for every regulating index $\alpha$.
Taking the corresponding extrema gives $a\le x\le b$.
For $\varphi,\psi\in\Phi(a)$, their pointwise maximum $\chi$ belongs
to $\Phi(a)$ and $0\le s_\chi\le s_\varphi,s_\psi$.
Monotone completeness supplies $\inf_\varphi s_\varphi\ge0$.
Regularity makes this infimum zero.

Let $(x_i)_{i\in I}$ be order-Cauchy with regulator $r_\beta\downarrow0$.
Order separability applied to $\{-r_\beta\}$ supplies
$\beta'_k$ with $\inf_k r_{\beta'_k}=0$.
Recursively take $\beta_1\ge\beta'_1$ and
$\beta_{k+1}\ge\beta_k,\beta'_{k+1}$.
Then $q_k=r_{\beta_k}\downarrow0$. Apply regularity to the bounded
matrix $a_{j,k}=q_k$. A selector $\varphi\in\Phi(a)$ defines
\[
 c_j=q_{\varphi(j)},\qquad
 s=\sup_m\sum_{j\le m}c_j,\qquad
 t_j=s-\sum_{i<j}c_i\downarrow0,
 \qquad t_j-t_{j+1}=c_j.
\]
For each $j$, let $d_j\in I$ satisfy
$-c_j\le x_i-x_{i'}\le c_j$ whenever $i,i'\ge d_j$.
Recursively take $i_1\ge d_1$ and $i_{j+1}\ge i_j,d_{j+1}$.
The quantities $l_j=x_{i_j}-t_j$ and $u_j=x_{i_j}+t_j$ satisfy
\[
 l_{j+1}-l_j=x_{i_{j+1}}-x_{i_j}+c_j\ge0,
 \qquad u_{j+1}-u_j=x_{i_{j+1}}-x_{i_j}-c_j\le0.
\]
For $j\le k$ one has $l_j\le l_k\le u_k$, while for $k\le j$ one
has $l_j\le u_j\le u_k$. Thus $l_j\le u_k$ for all $j,k$.
Monotone completeness applied to the increasing family $(l_j)$ and
the decreasing family $(u_j)$ supplies their extrema, and
\[
 0\le\inf_j u_j-\sup_jl_j\le u_k-l_k=2t_k\quad(k\ge1).
\]
Put $x=\sup_jl_j=\inf_j u_j$. Then
$-t_j\le x_{i_j}-x\le t_j$, so $x_{i_j}\to_o x$.
For $i\ge i_j$, all $k\ge j$ satisfy
$-c_j\le x_i-x_{i_k}\le c_j$.
Stability of this interval gives
$-t_j\le-c_j\le x_i-x\le c_j\le t_j$.
This proves convergence of the full net.
\end{proof}

\begin{lemma}[Boolean representation of the original charge]
\label{volfull:boolean-values}\label{volfull:Dcomplete}
\label{volfull:Dorder}\label{volfull:Dwsd}
There are a complete Boolean algebra $\mathbb D$ with the countable sup
property, a Boolean homomorphism $\iota:\HH\to\mathbb D$, and a
positive finitely additive map $v:\mathbb D\to[0,h]$ such that
\[
 v\iota=\tau,\qquad v(d)=0\Longleftrightarrow d=0,
 \qquad d_\alpha\uparrow d\Longrightarrow v(d_\alpha)\uparrow v(d).
\]
The corresponding decreasing-net identity also holds, and
\begin{equation}\label{volfull:boolean-wsd}
 d_{n,k}\downarrow_k0\quad\Longrightarrow\quad
 \bigwedge_{\varphi\in\N^{\N}}\bigvee_nd_{n,\varphi(n)}=0.
\end{equation}
\end{lemma}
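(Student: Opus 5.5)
The plan is to build $\mathbb D$ as the measure algebra of $\tau$ on a completion, taking directed extrema only in $G$. First, I will extend $\tau$ to the $\sigma$-algebra side through an inner envelope. Define on $\HH_\sigma$ the value $\tau_\sigma(U)=\sup_n\tau(H_n)$ for $H_n\uparrow U$. The supremum exists by monotone completeness, since the family is increasing and bounded by $h$. Independence of the exhaustion follows from \eqref{volfull:original-countadd}: for another exhaustion $K_k\uparrow U$, the sets $H_n\setminus K_k$ decrease to $\varnothing$ in $k$, so $\tau(H_n)=\sup_k\tau(H_n\cap K_k)\le\sup_k\tau K_k$. This is the standard Volauf step, and modularity of $\tau_\sigma$ passes to suprema.

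The order convergence facts come from Lemma~\ref{volfull:Gcomplete}. Rather than a Carath\'eodory cut, I will take the quotient of the Boolean algebra $\sigma(\HH)$ by a null ideal. To avoid needing countable additivity on $\sigma(\HH)$ beforehand, I will instead take the Dedekind--MacNeille/Stone route. Let $\mathbb D_0=\HH/\{H:\tau H=0\}$ with the metric-like structure $v_0([H])=\tau H$. Then $v_0$ is strictly positive.

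Next I let $\mathbb D$ be the completion of $\mathbb D_0$ by ``order-Cauchy'' increasing-then-decreasing limits. Concretely, I take $\mathbb D$ to be the set of ideals $I\subseteq\mathbb D_0$ that are $v$-closed, meaning $I$ contains every $a$ with $v_0(a)\le\sup_{b\in I}v_0(a\wedge b)$, ordered by inclusion. This is a complete lattice. I define $v(I)=\sup_{b\in I}v_0(b)$, which exists by monotone completeness because $I$ is directed. Complementation is given by $I\mapsto\{a:\sup_{b\in I}v_0(a\wedge b)=0\}$. Distributivity, and the fact that this yields a Boolean algebra, reduce to the identity $v(I)+v(I^\perp)=h$. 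I will prove this identity using order separability to pick countable cofinal sequences in $I$, and then \eqref{volfull:original-countadd} applied on $\HH$.

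The remaining properties follow in turn. Strict positivity holds by construction. Normality $d_\alpha\uparrow d\Rightarrow v(d_\alpha)\uparrow v(d)$ holds because the join of ideals is generated by their union, whose $v_0$-supremum is the supremum of the $v(d_\alpha)$. The decreasing version is obtained by complements. The countable sup property comes from order separability of $G$ combined with strict positivity: given a family with join $d$, I pick countably many members whose $v$-values have the same supremum $v(d)$, and strict positivity forces their join to equal $d$.

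For \eqref{volfull:boolean-wsd}, I set $a_{n,k}=v(d_{n,k})\downarrow_k0$, which is a bounded positive matrix in $G$. By Lemma~\ref{volfull:Gcomplete}, $s_\varphi\downarrow0$. Subadditivity gives $v(\bigvee_nd_{n,\varphi(n)})\le s_\varphi$ for $\varphi\in\Phi(a)$. Hence any common lower bound $e$ of the joins has $v(e)\le s_\varphi$ for all such $\varphi$, so $v(e)=0$ and therefore $e=0$.

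The main obstacle I expect is the complementation identity $v(I)+v(I^\perp)=h$, that is, showing the constructed lattice is Boolean with $v$ additive. This requires passing from the directed suprema that $G$ does possess to genuine additivity without any lattice structure in $G$. I would first try reducing via order separability to sequences $b_n\uparrow$ in $I$ and $c_n\uparrow$ in $I^\perp$. I would then show that $h-\tau(b_n\vee c_n)$ decreases to $0$ by exhibiting $\Omega\setminus(b_n\vee c_n)$ as asymptotically null against every element of $\mathbb D_0$, using regularity to control countably many such defects simultaneously.
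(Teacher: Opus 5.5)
Your construction fails at the step you flagged, and the failure is structural: the lattice of $v$-closed ideals is in general not Boolean, so $v(I)+v(I^\perp)=h$ is false.

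Here is a counterexample. Take $G=\mathbb R$, $\Omega=[0,1)$, $\HH$ the algebra of finite unions of dyadic half-open intervals, and $\tau$ Lebesgue measure, written $|\cdot|$. All hypotheses of the section hold, and $\mathbb D_0=\HH$. Let $C\subseteq[0,1)$ be closed and nowhere dense with $|C|>0$, and put $U=[0,1)\setminus C\in\Hs$.

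For $I=\{a\in\HH:|a\cap C|=0\}$ one has $\sup_{b\in I}|a\cap b|=|a\cap U|$. Hence $I$ is $v$-closed and $v(I)=1-|C|$. Since $C$ is nowhere dense, every nonzero $a\in\HH$ contains a nonzero dyadic interval disjoint from $C$. That interval lies in $I$ and in every ideal containing $a$.

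Consequently $I^\perp=\{0\}$ and $v(I)+v(I^\perp)=1-|C|<h$. More strongly, every $v$-closed $J$ with $I\cap J=\{0\}$ equals $\{0\}$, so $I$ has no complement in your lattice at all. In this example the $v$-closed ideals are exactly the measure classes of $\Hs$-sets. That family is a complete lattice but is not closed under complementation; the class of $C$ is absent.

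No use of regularity or countable selection can remove a defect that is genuinely positive. Your countable sup argument also fails, since it needs $v(d\setminus w)=v(d)-v(w)$ and hence the Boolean structure. Your abandoned alternative of quotienting $\sigma(\HH)$ would presuppose the countably additive extension that this lemma is used to build.

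What is missing is a two-sided completion, one that adjoins outer ($\HH_\delta$-type) limits as well as inner ones. The paper completes $\HH$ in the $G_+$-valued gauge $d(H,K)=\tau(H\symdiff K)$. Its $\mathbb D$ consists of $d$-Cauchy sequences modulo $d(H_n,K_n)\to_o0$. The maps $v$ and $\delta(x,y)=v(x\symdiff y)$ are defined as order limits, which exist by the order-Cauchy completeness of Lemma~\ref{volfull:Gcomplete}.

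The Boolean operations are $\delta$-continuous, so complements come for free. Metric completeness follows from a diagonal argument in which regularity supplies a bounded selector that makes the approximation errors summable. Boolean completeness follows because the finite joins $z_F$ of any family form a $\delta$-Cauchy net, with $\delta(z_F,z_{F'})\le2\bigl(a-v(z_{F_0})\bigr)$. The countable sup property and monotone continuity then come from order separability, faithfulness and this metric limit.

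Your final argument for \eqref{volfull:boolean-wsd} is essentially the paper's. It would go through once a genuine Boolean $\mathbb D$ with a faithful, $\sigma$-continuous, finitely additive $v$ is available.
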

\begin{proof}
Set $d(H,K)=\tau(H\symdiff K)$. Positivity and finite additivity give
\[
 d(H,L)\le d(H,K)+d(K,L),\qquad
 -d(H,K)\le\tau H-\tau K\le d(H,K).
\]
Let $\mathbb D$ consist of $d$-Cauchy sequences in $\HH$, modulo
$d(H_n,K_n)\to_o0$. The triangle inequality proves equivalence.
Lemma~\ref{volfull:Gcomplete} defines
\[
 v([H_n])=o\text{-}\lim_n\tau H_n,\qquad
 \delta([H_n],[K_n])=o\text{-}\lim_nd(H_n,K_n).
\]
Both expressions are independent of representatives, since their
differences lie between the positive and negative of
$d(H_n,H'_n)+d(K_n,K'_n)$. In particular, the triangle inequality gives
\[
 -d(H,H')-d(K,K')\le d(H,K)-d(H',K')
 \le d(H,H')+d(K,K'),
\]
which proves that the defining distance sequence is order-Cauchy.
The estimates
\[
 d(H^c,K^c)=d(H,K),\quad
 d(H\cap L,K\cap M),\ d(H\cup L,K\cup M)
 \le d(H,K)+d(L,M)
\]
make coordinatewise Boolean operations well-defined. Constant sequences
define $\iota$, and
$\delta(x,y)=v(x\symdiff y)$ is a separated $G_+$-valued metric.
The source value inequality passes to limits and gives
\[
 -\delta(x,y)\le v(x)-v(y)\le\delta(x,y).
\]
Consequently $\delta$-convergence implies order convergence of the
corresponding $v$-values.
Passage to limits gives
$\delta(x\wedge z,y\wedge w)\le\delta(x,y)+\delta(z,w)$,
the corresponding union inequality, and
$\delta(x^c,y^c)=\delta(x,y)$. Hence, for independent convergent nets,
\[
 \delta(x_\alpha\wedge z_\beta,x\wedge z)
 \le\delta(x_\alpha,x)+\delta(z_\beta,z)\to_o0,
\]
and the union and complement inequalities give their corresponding
limits. Furthermore,
$v([H_n])=0\Longleftrightarrow\tau H_n\to_o0
\Longleftrightarrow[H_n]=[\varnothing]$.
Passing to the limit in
$\tau H_n+\tau K_n=\tau(H_n\cup K_n)+\tau(H_n\cap K_n)$
gives $v(x)+v(y)=v(x\vee y)+v(x\wedge y)$ and hence finite additivity.

We verify completeness, including the bounded approximation required
for a diagonal argument. For $x=[A_n]$, its Cauchy regulator has a
decreasing sequence $q_k\downarrow0$ extracted by order separability.
A bounded selector of
the repeated matrix $q_k$ supplies $c_j$ with bounded partial sums.
Put $s=\sup_m\sum_{j\le m}c_j$. For each $j$ there is $N_j$ such that
$d(A_n,A_m)\le c_j$ for $n,m\ge N_j$.
The recursion
$k_1\ge\max\{1,N_1\}$,
$k_{j+1}\ge\max\{k_j+1,j+1,N_{j+1}\}$ gives
$d(A_{k_j},A_{k_{j+1}})\le c_j$. Put $H_j=A_{k_j}$ and
\[
 e_N=\sup_{m\ge N}\tau\left(\bigcup_{j=N}^m
             (H_j\symdiff H_{j+1})\right).
\]
These are directed suprema bounded by $h$. The inequalities
\[
 0\le e_N\le h,\qquad e_N\le s-\sum_{j<N}c_j,\qquad
 d(H_N,H_m)\le e_N\quad(m>N)
\]
give $e_N\downarrow0$ and $\delta(\iota H_N,x)\le e_N$.
Here $[H_j]=[A_j]$, since $j,k_j\to\infty$ and $(A_n)$ is Cauchy.

For a $\delta$-Cauchy sequence $(x_n)$, apply this construction to obtain
$\delta(\iota H_{n,k},x_n)\le e_{n,k}\downarrow_k0$, $e_{n,k}\le h$.
Regularity supplies $\varphi$ for which
$s=\sup_m\sum_{n\le m}e_{n,\varphi(n)}$ exists.
Put $t_N=s-\sum_{n<N}e_{n,\varphi(n)}\downarrow0$. Then
\[
 d(H_{n,\varphi(n)},H_{m,\varphi(m)})
 \le2t_N+\delta(x_n,x_m)\quad(n,m\ge N).
\]
The diagonal sequence represents an element $x$ and converges to $x$:
a Cauchy bound $d(B_n,B_m)\le r_\gamma$ for $n,m\ge N_\gamma$
gives $\delta(\iota B_n,[B_m])\le r_\gamma$ after passage to the
limit in $m$, where $B_n=H_{n,\varphi(n)}$.
The triangle inequality
$\delta(x_n,x)\le e_{n,\varphi(n)}+\delta(\iota B_n,x)$
therefore gives $\delta(x_n,x)\to_o0$.
For a Cauchy net, countable extraction of its regulator provides
$q_j\downarrow0$ and increasing indices $i_j$ with
$\delta(x_i,x_{i'})\le q_j$ for $i,i'\ge i_j$.
Sequential completeness supplies $x$ with $\delta(x_{i_k},x)\to_o0$.
Let $r_\gamma\downarrow0$ regulate this last convergence, so that
for every $\gamma$ there is $K_\gamma$ with
$\delta(x_{i_k},x)\le r_\gamma$ whenever $k\ge K_\gamma$.
For $j\ge1$, $i\ge i_j$, and $k\ge\max\{j,K_\gamma\}$,
\[
 0\le\delta(x_i,x)\le\delta(x_i,x_{i_k})+
       \delta(x_{i_k},x)\le q_j+r_\gamma.
\]
Taking the infimum over $\gamma$ gives the exact tail estimate
\[
 \forall j\ge1\ \forall i\ge i_j:\qquad
 \delta(x_i,x)\le q_j,\qquad q_j\downarrow0.
\]
Thus every $\delta$-Cauchy net converges in $\mathbb D$.

For any nonempty $S\subseteq\mathbb D$, let
$z_F=\bigvee_{x\in F}x$ for finite nonempty $F\subseteq S$, and
$a=\sup_Fv(z_F)$. If $F,F'\supseteq F_0$, then
\[
 \delta(z_F,z_{F'})\le2(a-v(z_{F_0}))\downarrow_{F_0}0.
\]
Let $z$ be the limit. For $x\in S$, the identities
$z_F\wedge x=x$ hold whenever $x\in F$, hence $z\wedge x=x$.
If $y$ bounds $S$, then $z_F\wedge y=z_F$ for every $F$, hence
$z\wedge y=z$. Therefore $z=\bigvee S$ and $v(z)=a$.
Order separability supplies finite $F'_n$ with
$\sup_nv(z_{F'_n})=a$. Put $F_n=\bigcup_{j\le n}F'_j$ and
$w=\bigvee_nz_{F_n}$. Then
\[
 z_{F_n}\le w\le z,\qquad
 a=\sup_nv(z_{F_n})\le v(w)\le v(z)=a,
 \qquad v(z\setminus w)=0.
\]
Faithfulness gives $w=z$. This proves the countable sup
property. For $z_\alpha\uparrow z$ and
$a=\sup_\alpha v(z_\alpha)$,
$\delta(z_\beta,z_\gamma)\le2(a-v(z_\alpha))$ when
$\beta,\gamma\ge\alpha$. Let $y$ be its metric limit. At each fixed
$\alpha$, the identity $z_\beta\wedge z_\alpha=z_\alpha$ holds for
$\beta\ge\alpha$. The meet estimate gives
\[
 \delta(y\wedge z_\alpha,z_\alpha)
 =\delta(y\wedge z_\alpha,z_\beta\wedge z_\alpha)
 \le\delta(y,z_\beta)\to_o0,
\]
so $z_\alpha\le y$. For every upper bound $w$ of all $z_\alpha$,
\[
 \delta(y\wedge w,y)
 \le\delta(y\wedge w,z_\beta\wedge w)+\delta(z_\beta,y)
 \le2\delta(y,z_\beta)\to_o0,
\]
and therefore $y\le w$. Hence $y=z$. The bounds
$-\delta(z_\alpha,z)\le v(z_\alpha)-v(z)\le\delta(z_\alpha,z)$
identify the order limit of $(v(z_\alpha))$ with $v(z)$, while
$a-v(z_\alpha)\downarrow0$ identifies that limit with $a$.
Separation gives $v(z)=a$. For $z_\alpha\downarrow z$,
$v(z)=h-\sup_\alpha v(z_\alpha^c)=\inf_\alpha v(z_\alpha)$.

Finally $a_{n,k}=v(d_{n,k})\downarrow_k0$ is bounded by $h$.
For $q=\bigwedge_\psi\bigvee_nd_{n,\psi(n)}$ and
$\varphi\in\Phi(a)$, finite subadditivity and increasing continuity give
\[
 0\le v(q)\le v\left(\bigvee_nd_{n,\varphi(n)}\right)
 \le\sup_m\sum_{n\le m}a_{n,\varphi(n)}=s_\varphi\downarrow_\varphi0.
\]
Faithfulness yields $q=0$.
\end{proof}

\begin{lemma}[Additive transport]\label{volfull:step-properties}
\label{volfull:step-axioms}
The integer-step group $K=\mathbb Z[\mathbb D]$ is a Dedekind complete,
weakly sigma-distributive lattice-ordered group with the countable sup
property. The homomorphism
\[
 J\left(\sum_{i=1}^rn_i\mathbf1_{d_i}\right)
       =\sum_{i=1}^rn_iv(d_i)
\]
is positive, faithful on $K_+$ and order-continuous. Here $(d_i)$ is a
finite Boolean partition of $1$ and $n_i\in\mathbb Z$.
The map $\lambda(H)=\mathbf1_{\iota H}$ satisfies $J\lambda=\tau$
and every hypothesis of
Theorem~\ref{gen:extension} with sequential rows,
$M_K=\clo_o\lambda(\HH)$ and $D_K=[-2\mathbf1,2\mathbf1]$.
\end{lemma}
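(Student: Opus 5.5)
We would realise $K$ concretely as the group of $\mathbb Z$-valued simple functions on the Stone space of $\mathbb D$, equivalently the Boolean-valued integers $\mathbb Z[\mathbb D]$. Its lattice operations are computed on a common refining partition. For Dedekind completeness we would use completeness of $\mathbb D$ (Lemma~\ref{volfull:boolean-values}). A bounded set $S\subseteq[-N\mathbf1,N\mathbf1]$ is determined by the level elements $e_j(S)=\bigvee_{x\in S}\llbracket x\ge j\rrbracket$ for $-N\le j\le N$. Each $e_j$ exists in $\mathbb D$, and $\sum_j\mathbf1_{e_j}$ (suitably normalised) is the supremum, since only finitely many levels occur. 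Taking each $e_j$ as a countable join, by the countable sup property of $\mathbb D$, and then taking finite unions over the finitely many levels, gives the countable sup property for $K$. Weak $\sigma$-distributivity reduces to the levels in the same way. A bounded regulator $0\le a_{n,k}\le N\mathbf1$, $a_{n,k}\downarrow_k0$, has level elements $d^j_{n,k}=\llbracket a_{n,k}\ge j\rrbracket\downarrow_k0$. We would apply \eqref{volfull:boolean-wsd} to the matrix obtained by interleaving the finitely many levels $j=1,\dots,N$ into one row index, and note that $\llbracket\sup_na_{n,\varphi(n)}\ge j\rrbracket=\bigvee_nd^j_{n,\varphi(n)}$.

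For $J$, well-definedness follows from finite additivity of $v$ under common refinement, and positivity is immediate. Faithfulness on $K_+$ follows from $v(d)=0\Leftrightarrow d=0$. For order continuity, take $x_\alpha\downarrow0$ in $K$, bounded by $N\mathbf1$. Then $x_\alpha=\sum_{j=1}^N\mathbf1_{\llbracket x_\alpha\ge j\rrbracket}$, each level decreases to $0$ in $\mathbb D$, and so $J x_\alpha=\sum_jv(\llbracket x_\alpha\ge j\rrbracket)\downarrow0$ by the decreasing-net continuity of $v$. A general order-convergent net is handled by applying this to its regulator. The identity $J\lambda=\tau$ is simply $v\iota=\tau$.

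For the extension hypotheses, $\lambda$ is finitely additive because $\iota$ is a Boolean homomorphism. Countable additivity in order convergence follows because $H_n\downarrow\varnothing$ gives $\tau H_n\downarrow0$, hence $v(\iota H_n)\downarrow0$. Faithfulness of $v$, together with the countable sup property, then gives $\bigwedge_n\iota H_n=0$ in $\mathbb D$, since $v(\bigwedge_n\iota H_n)\le\tau H_n$. Therefore $\mathbf1_{\iota H_n}\downarrow0$ in $K$. We take $M_K=\clo_o\lambda(\HH)\subseteq[0,\mathbf1]$; this is limit-stable, hence complete by Theorem~\ref{ord:complete}, and $M_K-M_K\subseteq[-\mathbf1,\mathbf1]\subseteq D_K$. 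Corollary~\ref{ord:carriers} with $h=\mathbf1$ supplies $\mathsf F$, hence $\mathsf B_\omega$ by Proposition~\ref{gen:reflection-block}. It also supplies $\mathsf S_{\rm all}$, which restricts to sequential rows, and $\mathsf U$ on the relevant intervals. Proposition~\ref{gen:inheritance} passes these to $D_K$ and $M_K$, noting that $D_K=[-2\mathbf1,2\mathbf1]$ is among the intervals covered by Corollary~\ref{ord:carriers}.

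The step we expect to be the main obstacle is hereditary exhaustivity \eqref{gen:HE} in order convergence. For disjoint $H_n$ the elements $\iota H_n$ are disjoint in $\mathbb D$. Every value in $V_\lambda(H_n)$ lies in $[0,\mathbf1_{\iota H_n}]$, so the tail envelopes are bounded by $\mathbf1_{c_N}$ with $c_N=\bigvee_{n\ge N}\iota H_n$. It then suffices to show $c_N\downarrow0$ in $\mathbb D$. We would argue this through $v$: the elements $c_N$ decrease to some $c$, and $c\le\bigvee_{n\ge N}\iota H_n$ for every $N$. By finite additivity and increasing continuity, $v(c)\le\sup_m\sum_{N\le n\le m}\tau H_n$. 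This bound is $h-\sum_{n<N}\tau H_n$ minus a nonnegative term, so it tends to $0$ as $N\to\infty$ by order-convergence of the bounded increasing partial sums in $G$. Faithfulness then gives $c=0$.
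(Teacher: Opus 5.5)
Your proposal is correct and follows essentially the paper's route. Both arguments use level (support) elements in the complete, countable-sup algebra $\mathbb D$ to get Dedekind completeness, the countable sup property and weak $\sigma$-distributivity of $K$. Both use faithfulness and monotone net continuity of $v$ for $J$ and for the premeasure property. Both then invoke Corollary~\ref{ord:carriers} and Theorem~\ref{ord:complete} for the range hypotheses.

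The differences are minor:
\begin{itemize}
\item For hereditary exhaustivity, the paper gets $e=\bigwedge_N\bigvee_{n\ge N}\iota H_n=0$ directly from infinite distributivity in $\mathbb D$ rather than through $v$. Since $e\wedge\iota H_j\le\bigl(\bigvee_{n>j}\iota H_n\bigr)\wedge\iota H_j=0$, it follows that $e=\bigvee_j(e\wedge\iota H_j)=0$.
\item For weak $\sigma$-distributivity, the paper uses only the support level, with the bound $\sup_nf_{n,\varphi(n)}\le N\mathbf1_{\bigvee_ne_{n,\varphi(n)}}$, so no interleaving of levels is needed.
\item For Dedekind completeness, the paper first replaces an upper-bounded family by $\{f\vee f_0\}$ to reduce to the bounded case you treat. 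You should add this one-line step, keeping $f_0$ in the countable subfamily.
\end{itemize}
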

\begin{proof}
If $(d_i)$ and $(e_j)$ are finite Boolean partitions, then
\[
 \sum_i n_iv(d_i)=\sum_{i,j}n_iv(d_i\wedge e_j),\qquad
 \sum_j m_jv(e_j)=\sum_{i,j}m_jv(d_i\wedge e_j).
\]
On the common partition $(d_i\wedge e_j)_{i,j}$, addition adds integer
coefficients and lattice operations take their maxima and minima.
These identities give independence of representation,
$J(f+g)=Jf+Jg$, and $Jf\ge0$ for $f\ge0$.
For each $H\in\HH$, its definition gives
$J\lambda(H)=v(\iota H)=\tau H$.
If $f>0$, some nonzero $d$ has $\mathbf1_d\le f$, hence
$0<v(d)\le Jf$. For a nonempty upper-bounded family, fix one member
$f_0$ and replace each $f$ by $f\vee f_0$. The resulting family lies
between $a\mathbf1$ and $b\mathbf1$ for integers $a\le b$. Its supremum is
\[
 a\mathbf1+\sum_{k=a+1}^b\mathbf1_{e_k},\qquad
 e_k=\bigvee_f\{f\ge k\}.
\]
Indeed every member is bounded by this element, and every upper bound
$u$ satisfies $e_k\le\{u\ge k\}$. For each
$k\in\{a+1,\ldots,b\}$, the countable sup property of $\mathbb D$
supplies members $f_{k,j}$ of the original family such that
\[
 e_k=\bigvee_{j\ge1}\{f_{k,j}\vee f_0\ge k\}.
\]
The countable family consisting of $f_0$ and all $f_{k,j}$ has the
displayed supremum in $K$. If $a=b$, the single member $f_0$
has that supremum. This proves the countable sup property of $K$.
For $0\le f_{n,k}\le N\mathbf1$, $f_{n,k}\downarrow_k0$, their supports
$e_{n,k}$ decrease to zero: a common nonzero subprojection would give a
nonzero lower bound $\mathbf1_e\le f_{n,k}$. Thus
\[
 \sup_nf_{n,\varphi(n)}\le
 N\mathbf1_{\bigvee_ne_{n,\varphi(n)}},
 \qquad\inf_\varphi\sup_nf_{n,\varphi(n)}=0
\]
by \eqref{volfull:boolean-wsd}. For $r_\gamma\downarrow0$ in $K_+$,
one tail is bounded by $N\mathbf1$, so
$0\le Jr_\gamma\le N v(\operatorname{supp}r_\gamma)\downarrow0$.
Here the supports decrease to zero, since
$\mathbf1_{\bigwedge_\gamma\operatorname{supp}r_\gamma}
\le\inf_\gamma r_\gamma=0$.
For a general order-convergent net the inequalities
$-r_\gamma\le f_\alpha-f\le r_\gamma$ eventually imply
$-Jr_\gamma\le Jf_\alpha-Jf\le Jr_\gamma$ eventually. This proves
order continuity of $J$ on all net domains.

Corollary~\ref{ord:carriers} gives the reflection, summation and
uniform-closure principles on $D_K$.
The interval $[0,\mathbf1]$ is stable and contains $\lambda(\HH)$.
Thus $M_K\subseteq[0,\mathbf1]$, and Theorem~\ref{ord:complete}
makes this stable subset complete. If
$H=\dot\bigcup_nH_n\in\HH$, then
\[
 d=\bigvee_n\iota H_n\le\iota H,\qquad
 v(d)=\sup_N\sum_{n\le N}\tau H_n=\tau H,
 \qquad v(\iota H\setminus d)=0.
\]
Faithfulness gives $d=\iota H$, hence
$\sum_{n\le N}\lambda(H_n)\uparrow\lambda(H)$.
For arbitrary disjoint $H_n\in\HH$, put $d_N=\bigvee_{n\ge N}\iota H_n$
and $e=\bigwedge_Nd_N$. For every $j$,
\[
 e\wedge\iota H_j\le d_{j+1}\wedge\iota H_j=0,
 \qquad e=e\wedge d_1=\bigvee_j(e\wedge\iota H_j)=0.
\]
For $N\ge1$, $n\ge N$ and $H\in\HH$ with $H\subseteq H_n$,
$0\le\lambda(H)\le\mathbf1_{d_N}\downarrow_N0$.
This is the required hereditary exhaustivity.
Thus the data verified before extension are
\[
 M_K\subseteq[0,\mathbf1]\text{ complete},\qquad
 M_K-M_K\subseteq D_K,
\]
\[
 (\mathrm{HE})_\lambda,\qquad
 \mathsf B_\omega(D_K),\qquad
 \mathsf S_{\rm seq}(D_K),\qquad \mathsf U(M_K).
\]
The finite-response and unrestricted-row summation properties also hold
on $D_K$ by Corollary~\ref{ord:carriers}. The sequential properties
displayed above are the hypotheses used in
Theorem~\ref{gen:extension}.
\end{proof}

With this representation available, we can compare the extension in
$K$ with the double process in the original range. For
$A\subseteq\Omega$, put
\begin{equation}\label{volfull:projections}
 r_A=\bigvee_{H\in\HH,\ H\subseteq A}\iota H,
 \qquad p_A=\bigwedge_{U\in\Hs,\ A\subseteq U}r_U.
\end{equation}

\begin{theorem}[Partially ordered extension from the intrinsic theorem]
\label{vol:extension}
Let
\[
 \bar\lambda:\mathcal R_{\rm seq}(\lambda,D_K)\longrightarrow M_K
\]
be the extension supplied by Theorem~\ref{gen:extension} to the data
of Lemma~\ref{volfull:step-properties}. Then
$\bar\tau=J\bar\lambda$ is a positive order-countably additive
extension of $\tau$ with full upper and lower regularity.
The all-subset maps
$g(A)=v(r_A)$ and $\mu(A)=v(p_A)$ satisfy, in the original order
convergence of $G$,
\begin{align}
 g(A)&=\sup_{H\in\HH,\ H\subseteq A}\tau H
       =o\text{-}\lim_{H\in\HH^-(A)}\tau H,
       \label{volfull:inner}\\
 \mu(A)&=\inf_{U\in\Hs,\ A\subseteq U}g(U)
       =o\text{-}\lim_{U\in\HH^+(A)}g(U).
       \label{volfull:outer}
\end{align}
The two domains carry inclusion and reverse inclusion, respectively.
These are the unique double-process
values, lie in $\clo_o\tau(\HH)\subseteq[0,h]$, and satisfy
$g(A)\le\mu(A)$, $\mu|_{\Hs}=g|_{\Hs}$ and $\mu|_{\HH}=\tau$.
Moreover,
\[
 \mu(A\cup B)+\mu(A\cap B)\le\mu(A)+\mu(B),\qquad
 A_n\uparrow A\Longrightarrow\mu(A_n)\uparrow\mu(A).
\]
The following classes coincide:
\begin{equation}\label{volfull:class}
 \mathcal M_\mu
 =\{A:\mu(A)+\mu(A^c)=h\}
 =\{A:p_A\cap p_{A^c}=0\}
 =\mathcal R_{\mathrm{seq}}(\lambda,D_K).
\end{equation}
They form a sigma-algebra containing $\sigma(\HH)$, and $\mu$ restricted
to it is exactly the transported extension:
\begin{equation}\label{volfull:transported-extension}
 \bar\lambda(A)=\mathbf1_{p_A},\qquad
 \mu(A)=J\bar\lambda(A)=\bar\tau(A)
 \quad(A\in\mathcal M_\mu).
\end{equation}
This measure is complete. On $\sigma(\HH)$ it is unique even
among order-countably additive extensions without positivity.
\end{theorem}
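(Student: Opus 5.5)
The plan is to work almost entirely inside $\mathbb D$ and $K$, where suprema are computed by projections, and to return to $G$ only at the end through $J$ and $v$. First I will establish \eqref{volfull:inner}. The family $\{\iota H:H\in\HH,\ H\subseteq A\}$ is upward directed, since $\HH$ is closed under finite unions and $\iota$ is a Boolean homomorphism. Its join $r_A$ exists because $\mathbb D$ is complete. Increasing net continuity of $v$ in Lemma~\ref{volfull:boolean-values} gives $v(r_A)=\sup_H\tau H$ as the order limit. For the outer net, $U\mapsto r_U$ is decreasing on the reverse-inclusion domain $\Hs\ni U\supseteq A$, which is directed by finite intersections. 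Decreasing continuity of $v$ then gives $\mu(A)=v(p_A)=\inf_U g(U)$ as an order limit. Uniqueness of both limits follows from separation in Lemma~\ref{volfull:Gcomplete}. Membership in $[0,h]$ and in $\clo_o\tau(\HH)$ is immediate from the constructions. I will also check $r_U=\bigvee_n\iota H_n$ for $H_n\uparrow U$. This holds because every $H\subseteq U$ satisfies $\iota(H\setminus H_n)\downarrow0$ by \eqref{volfull:original-countadd} and faithfulness. It gives $\mu|_{\Hs}=g|_{\Hs}$, $\mu|_{\HH}=\tau$, and $r_A\le p_A$.

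Next I will prove the lattice properties of $A\mapsto p_A$. Subadditivity will come from $p_{A\cup B}\le p_A\vee p_B$ and $p_{A\cap B}\le p_A\wedge p_B$, using covers $U\cup V$ and $U\cap V$ together with $r_{U\cup V}=r_U\vee r_V$. The last identity holds on $\Hs$ by the exhaustion description. Finite additivity of $v$ then gives the inequality. For $A_n\uparrow A$, I will show $p_A=\bigvee_n p_{A_n}$. Order separability and the countable sup property of $\mathbb D$ let me replace each infimum $p_{A_n}$ by a countable decreasing family of covers $U_{n,k}$. The weak distributive law \eqref{volfull:boolean-wsd}, applied to $d_{n,k}=r_{U_{n,k}}\setminus p_{A_n}$, then gives $\bigwedge_\varphi\bigvee_n r_{U_{n,\varphi(n)}}=\bigvee_np_{A_n}$. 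Since $\bigcup_nU_{n,\varphi(n)}$ covers $A$, this yields $p_A\le\bigvee_np_{A_n}$. I expect this to be the main technical step, mirroring Volauf's use of regularity.

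For the class identity \eqref{volfull:class}, I will connect $\mathcal R_{\rm seq}(\lambda,D_K)$ with projections. In $K$, the net $(m_\lambda)_\sigma(U)=\mathbf 1_{r_U}$ follows from increasing continuity. A certificate $\ell_\varphi\subseteq A\subseteq u_\varphi$ then forces $\bar\lambda(A)=\mathbf1_{p_A}$, and symmetrically $\mathbf1_{1-p_{A^c}}$ from the lower net. Hence $p_A\wedge p_{A^c}=0$. Conversely, if $p_A\wedge p_{A^c}=0$, I will choose countable cover sequences for $A$ and $A^c$ as above. The sets $\Gamma_\lambda(u\setminus\ell)$ then lie in $[0,\mathbf1_{r_{U}\wedge r_{V}}]$, which decreases to $0$. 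This single sequential row gives a certificate. The equivalence $\mu(A)+\mu(A^c)=h\iff p_A\wedge p_{A^c}=0$ comes from $p_A\vee p_{A^c}=1$, finite additivity, and faithfulness of $v$. For Carathéodory splitting, I will prove $p_T=p_{T\cap A}\vee p_{T\setminus A}$ with the two pieces disjoint whenever $A$ is regulated. This follows by meeting covers of $T$ with the certificate brackets, which lie in $\sigma(\HH)\subseteq\mathcal R$. Applying $v$ gives $\mathcal M_\mu\supseteq\mathcal R$. The reverse inclusion comes from taking $T=\Omega$.

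Finally, Theorem~\ref{gen:extension} gives a $\sigma$-algebra containing $\sigma(\HH)$, countable additivity, full regularity, and uniqueness. Proposition~\ref{gen:transport} with $T=J$ transports these to $\bar\tau=J\bar\lambda=v(p_A)=\mu(A)$. Positivity comes from $J\ge0$. Completeness of $\mu$ holds because null sets $A$ satisfy $p_A\le p_N=0$, which puts subsets of null sets in the class. Uniqueness among order-countably additive extensions on $\sigma(\HH)$ is the Dynkin argument in Proposition~\ref{gen:regularity}.
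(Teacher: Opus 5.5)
Your proposal is correct and follows essentially the same route as the paper: the projections $r_A$ and $p_A$ in $\mathbb D$, the exhaustion identity $r_U=\bigvee_n\iota H_n$ with its finite and countable union consequences, single-row sequential certificates built from countable cover sequences, the Boolean weak distributive law \eqref{volfull:boolean-wsd} for increasing continuity, faithfulness for completeness, and the Dynkin argument carried out directly in $G$. The only minor difference is how you get $\mathcal R\subseteq\{p_A\wedge p_{A^c}=0\}$. You compare the full upper and lower limits $\mathbf 1_{p_A}$ and $\mathbf 1-\mathbf 1_{p_{A^c}}$ of $\bar\lambda(A)$, while the paper bounds $p_A\cap p_{A^c}$ by $r_{U_\varphi\setminus L_\varphi}$ and uses the order nullity of $Q_\varphi$; both arguments work.
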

\begin{proof}
Apply Theorem~\ref{gen:extension} with ambient Boolean algebra
$\mathcal P(\Omega)$, range $K$ with order convergence, source map
$\lambda$, stable range $M_K$, difference set $D_K$, and sequential
row domains. Lemma~\ref{volfull:step-properties} verifies each of its
hypotheses. Write $\mathcal R=\mathcal R_{\rm seq}(\lambda,D_K)$.
The theorem gives sigma-closure of $\mathcal R$, the inclusion
$\sigma(\HH)\subseteq\mathcal R$, countable additivity of $\bar\lambda$
and both full approximation limits. The first assertion of
Proposition~\ref{gen:transport}, with $T=J$ and $S=K$, gives these
additivity and regularity conclusions for $\bar\tau=J\bar\lambda$.
Since $J\lambda=\tau$ and $M_K\subseteq[0,\mathbf1]$, this is a
positive extension of $\tau$. We now identify its domain and values
with the original double process.

Let $H_n\uparrow U\in\Hs$ and $H\in\HH$, $H\subseteq U$.
By \eqref{volfull:original-countadd},
\[
 v\left(\bigvee_n\iota(H\cap H_n)\right)
 =\sup_n\tau(H\cap H_n)=\tau H=v(\iota H).
\]
The join on the left is at most $\iota H$. Faithfulness gives
$\iota H=\bigvee_n\iota(H\cap H_n)\le\bigvee_n\iota H_n$.
Taking the join over $H$ proves $r_U=\bigvee_n\iota H_n$.
For $H_n\uparrow U$, $K_n\uparrow V$, and
$H_{j,n}\uparrow U_j$ with all terms in $\HH$, Boolean distributivity gives
\[
 r_{U\cap V}=\bigvee_n\iota(H_n\cap K_n)=r_U\wedge r_V,
 \qquad
 r_{\bigcup_jU_j}=\bigvee_n\iota\left(\bigcup_{j\le n}H_{j,n}\right)
 =\bigvee_jr_{U_j}.
\]
Consequently
\[
 r_A\le p_A,\quad p_U=r_U\ (U\in\Hs),\quad
 p_{A\cup B}=p_A\vee p_B,\quad p_A\vee p_{A^c}=1.
\]
For $U\supseteq A$, $V\supseteq B$ in $\Hs$,
\[
 p_A\vee p_B\le p_{A\cup B}\le r_{U\cup V}=r_U\vee r_V,
 \qquad \bigwedge_{U,V}(r_U\vee r_V)=p_A\vee p_B.
\]
Here a fixed finite Boolean operation distributes over arbitrary extrema.
The core family is upward directed, and the cover family is downward
directed. Preservation of their extrema by $v$ gives
\eqref{volfull:inner}--\eqref{volfull:outer} and the regulators
$g(A)-\tau H\downarrow0$, $g(U)-\mu(A)\downarrow0$.
Since $\tau H\in\clo_o\tau(\HH)$, stability first puts every $g(A)$,
then every $\mu(A)$, in that hull. Separation gives uniqueness of both limits.

For $T,A\subseteq\Omega$, finite-union preservation gives the exact defect
\begin{equation}\label{volfull:splitting-defect}
 \mu(T\cap A)+\mu(T\setminus A)-\mu(T)
 =v(p_{T\cap A}\cap p_{T\setminus A}).
\end{equation}
If $p_A\cap p_{A^c}=0$, every defect vanishes. Conversely the test
$T=\Omega$ and faithfulness give the first three class equalities.
For such an $A$, the countable sup property applied to complements
supplies covers $U'_n\supseteq A$, $V'_n\supseteq A^c$ with
$\bigwedge_nr_{U'_n}=p_A$ and $\bigwedge_nr_{V'_n}=p_{A^c}$.
Put $U_n=\bigcap_{j\le n}U'_j$, $V_n=\bigcap_{j\le n}V'_j$,
$L_n=V_n^c$, and $e_n=r_{U_n}\cap r_{V_n}\downarrow0$. Then
\[
 L_n\subseteq A\subseteq U_n,\qquad
 \Gamma_\lambda(U_n\setminus L_n)\subseteq[0,\mathbf1_{e_n}].
\]
The rows $W_{1,n}=[0,\mathbf1_{e_n}]$ and $W_{i,n}=\{0\}$ for $i>1$
are decreasing and order null. Their selected finite-sum union and
its relative stable hull both equal $[0,\mathbf1_{e_{\varphi(1)}}]$.
In fact, if $H\in\HH$ and
$H\subseteq U_n\setminus L_n=U_n\cap V_n$, then
\[
 \iota H\le r_{U_n}\wedge r_{V_n}=e_n,\qquad
 0\le\lambda(H)\le\mathbf1_{e_n}.
\]
The order-stable interval $[0,\mathbf1_{e_n}]$ therefore contains
$\Gamma_\lambda(U_n\setminus L_n)$. For every selector $\varphi$,
the corresponding certificate inclusions are
\[
 L_{\varphi(1)}\subseteq A\subseteq U_{\varphi(1)},\qquad
 \Gamma_\lambda(U_{\varphi(1)}\setminus L_{\varphi(1)})
 \subseteq[0,\mathbf1_{e_{\varphi(1)}}]
 =D_K\cap\clo_o\!\left(\bigcup_{F\text{ finite}}
                \sum_{i\in F}W_{i,\varphi(i)}\right).
\]
They constitute a sequential regulated certificate.
Conversely, any regulated certificate $(L_\varphi,U_\varphi,Q_\varphi)$
satisfies
\begin{gather*}
 p_A\cap p_{A^c}\le r_{U_\varphi}\cap r_{L_\varphi^c}
 =r_{U_\varphi\setminus L_\varphi},\\
 \mathbf1_{r_{U_\varphi\setminus L_\varphi}}
 \in\Gamma_\lambda(U_\varphi\setminus L_\varphi)\subseteq Q_\varphi.
\end{gather*}
For an exhaustion $H_n\uparrow U_\varphi\setminus L_\varphi$,
$\lambda(H_n)\uparrow\mathbf1_{r_{U_\varphi\setminus L_\varphi}}$,
which proves the displayed membership. Put $e=p_A\cap p_{A^c}$.
Since the evaluation of $Q_\varphi$ is order null, its selected net
$\mathbf1_{r_{U_\varphi\setminus L_\varphi}}$ is order null.
The inequalities
$0\le\mathbf1_e\le\mathbf1_{r_{U_\varphi\setminus L_\varphi}}$
then give $\mathbf1_e=0$. This proves the final class equality.

The class just identified is therefore the domain $\mathcal R$ on which
the general theorem has already constructed $\bar\lambda$.
For $U\in\Hs$, source exhaustion gives
$\lambda_\sigma(U)=\mathbf1_{r_U}$. For every $A\in\mathcal R$,
the full upper net decreases to $\mathbf1_{p_A}$, whereas the
regularity conclusion of Theorem~\ref{gen:extension} gives its order
limit $\bar\lambda(A)$. Separation identifies these values and proves
\eqref{volfull:transported-extension}. Consequently
\[
 g(U)\to_o\mu(A),\qquad h-g(L^c)\to_o\mu(A)
\]
on the full upper and lower domains. These are the transported
approximation limits of the general theorem.
If $\mu(N)=0$ and $A\subseteq N$, faithfulness gives $p_N=0$, hence
$p_A=0$ and $p_A\cap p_{A^c}=0$. Hence
$A\in\mathcal M_\mu$ and $\mu(A)=0$, proving completeness.

Finite additivity of $v$ and $p_{A\cap B}\le p_A\cap p_B$ give
\[
 \mu(A\cup B)+\mu(A\cap B)
 \le v(p_A\vee p_B)+v(p_A\cap p_B)=\mu(A)+\mu(B).
\] For $A_n\uparrow A$, let
$s=\bigvee_np_{A_n}$. Decreasing covers $U_{n,k}\supseteq A_n$ can be
indexed so that $r_{U_{n,k}}\downarrow_kp_{A_n}$. Put
$e_{n,k}=r_{U_{n,k}}\setminus p_{A_n}$ and
$e_\varphi=\bigvee_ne_{n,\varphi(n)}\downarrow_\varphi0$.
The cover $\bigcup_nU_{n,\varphi(n)}$ gives
$p_A\le s\vee e_\varphi$ for every $\varphi$. Therefore
\[
 s\le p_A\le\bigwedge_\varphi(s\vee e_\varphi)
 =s\vee\bigwedge_\varphi e_\varphi=s,\qquad
 \mu(A_n)=v(p_{A_n})\uparrow v(s)=\mu(A).
\]
For a competing order-countably additive extension $\eta$ with values
in $G$, set $\mathcal D=\{A\in\sigma(\HH):\eta(A)=\mu(A)\}$.
The equality $\eta(\Omega)=\tau\Omega=\mu(\Omega)$ and finite
additivity give, for $A\in\mathcal D$,
\[
 \eta(A^c)=\eta(\Omega)-\eta(A)
 =\mu(\Omega)-\mu(A)=\mu(A^c).
\]
For pairwise disjoint $A_j\in\mathcal D$,
\[
 \eta\!\left(\bigcup_jA_j\right)
 =o\text{-}\lim_N\sum_{j\le N}\eta(A_j)
 =o\text{-}\lim_N\sum_{j\le N}\mu(A_j)
 =\mu\!\left(\bigcup_jA_j\right).
\]
Thus $\mathcal D$ is a Dynkin system containing the intersection-closed
algebra $\HH$. The Dynkin-system argument proved in
Proposition~\ref{gen:uniqueness}, applied to this $\mathcal D$ and
$\mathcal P=\HH$, gives $\sigma(\HH)\subseteq\mathcal D$.
This establishes uniqueness in the original range without a lift of
$\eta$ and without a positivity assumption on $\eta$.
\end{proof}

Having identified the transported extension and its full domain, we
turn to the estimates for the original process. We also consider
lattice ranges, for which the order criteria permit a direct
application to $\tau$.

\begin{corollary}[Process identification and direct lattice specializations]
\label{volfull:compatibility}\label{volfull:fragmenthulls}
\label{vol:lattice-classical}\label{class:positive}\label{class:volauf}
Under the partially ordered hypotheses of Theorem~\ref{vol:extension},
the extension $J\bar\lambda$ obtained from
Theorem~\ref{gen:extension} recovers \cite[Theorem~3]{Volauf80} on
its original range and full splitting domain. For $\KK=\alg(\Hs)$,
the Boolean approximation of the original double process is
$\nu=\mu|_{\KK}=(J\bar\lambda)|_{\KK}$. Its core and cover errors
and outer residuals satisfy
\begin{gather*}
 H,K\supseteq H_0\Longrightarrow
 -(g(A)-\tau H_0)\le\tau H-\tau K\le g(A)-\tau H_0,\\
 U,V\subseteq U_0\Longrightarrow
 -(g(U_0)-\mu(A))\le g(U)-g(V)\le g(U_0)-\mu(A),\\
 R_A(U)=\bigcup_{A\subseteq V\subseteq U,\ V\in\Hs}
       \{\nu F:F\in\KK,\ F\subseteq U\setminus V\},\qquad
 \clo_oR_A(U)\subseteq[0,g(U)-\mu(A)].
\end{gather*}
Here $H,K,H_0$ are algebra cores of $A$, and $U,V,U_0$ are countable
algebra covers. Both error bounds decrease to zero. For
$A\subseteq T\subseteq U$ one has
$g(U)-\mu(T)\in\clo_oR_A(U)$.
For the original fragment values put
$V_\tau(S)=\{\tau H:H\in\HH,\ H\subseteq S\}$,
$\Gamma(S)=\clo_oV_\tau(S)$ and
$\Delta(S)=\clo_o(V_\tau(S)-V_\tau(S))$. Then
\[
 g(A)\in\Gamma(A),\quad
 \mu(A)\in\bigcap_{S\supseteq A,\ S\in\Hs}\Gamma(S),\quad
 \mu(A)-\mu(B)\in\bigcap_{S\supseteq A\symdiff B,\ S\in\Hs}\Delta(S).
\]
For the lattice specializations, let $E$ be a Dedekind-complete abelian
lattice-ordered group which is WSD and has the countable sup property.
For a positive source premeasure $\tau:\HH\to E_+$ on an algebra,
put $h=\tau\Omega$, $M_E=\clo_o\tau(\HH)$ and $D_E=[-2h,2h]$.
Theorem~\ref{gen:extension} applies directly to $\tau$, and its
all-subset refinement, Theorem~\ref{thm:main}, gives
\[
 g(A)=\sup_{H\in\HH,\ H\subseteq A}\tau H,\qquad
 \mu(A)=\inf_{U\in\Hs,\ A\subseteq U}g(U),
\]
\[
 \mathcal M_\mu
 =\{A:\mu(A)+\mu(A^c)=h\}
 =\mathcal R_{\rm all}(\tau,D_E),\qquad
 \mu|_{\mathcal M_\mu}=\bar\tau.
\]
The map $\mu$ is monotone and submodular, satisfies
$\mu(A_n)\uparrow\mu(A)$ for $A_n\uparrow A$, and restricts to a
complete positive order-countably additive measure with full upper
and lower regularity. These assertions use the stated lattice
hypotheses on $E$ and require no regularity in the sense of
Definition~\ref{class:volauf-regular}.

More generally, let $\HH$ be a ring with $\Omega\in\Hs$ and
$\tau:\HH\to E$ an order-countably additive charge satisfying
$a\le\tau H\le b$ for all $H\in\HH$. Put $c=b-a$,
$M_E=\clo_o\tau(\HH)$ and $D_E=[-2c,2c]$.
Theorems~\ref{gen:extension} and~\ref{thm:main} again apply directly
to $\tau$. Its two original approximation nets define $g$ and $\mu$
on every subset, and
\[
 \mathcal M_\mu=\mathcal R_{\rm all}(\tau,D_E),\qquad
 \mu|_{\mathcal M_\mu}=\bar\tau,\qquad
 A_n\uparrow A\Longrightarrow\mu(A_n)\to_o\mu(A).
\]
The extension is order-countably additive and has both full regularity
nets, with lower values $g(\Omega)-g(L^c)$. In both lattice cases it
is unique among all order-countably additive extensions on
$\sigma(\HH)$ and among fully lower-regular extensions on the full
regulated domain.
\end{corollary}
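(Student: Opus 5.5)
The proof splits into three parts: the process identification for Volauf's setting, the first lattice specialization, and the general ring case.

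For the partially ordered part, I take $\bar\tau=J\bar\lambda$ and the identities of Theorem~\ref{vol:extension} as given. The process estimates then reduce to order computations with the directed extrema $g(A)=\sup\tau H$ and $\mu(A)=\inf g(U)$.

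\begin{itemize}
\item For cores $H,K\supseteq H_0$ of $A$, write $\tau H-\tau K=\tau(H\setminus K)-\tau(K\setminus H)$. Since $H\cup K$ is a core, $\tau(H\setminus K)\le\tau(H\cup K)-\tau H_0\le g(A)-\tau H_0$, and symmetrically for $K\setminus H$.
\item The cover estimate is the same argument using $\nu=v\circ p$ on $\KK$. This restriction is finitely additive because $\mathcal K\subseteq\mathcal M_\mu$ by \eqref{volfull:class}.
\item For the residual bound, take $F\subseteq U\setminus V$ with $A\subseteq V\subseteq U$. Then $\nu F\le g(U)-g(V)\le g(U)-\mu(A)$. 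The interval $[0,g(U)-\mu(A)]$ is order-stable by Lemma~\ref{volfull:Gcomplete}, so the hull bound follows.
\item The excess inclusion $g(U)-\mu(T)\in\clo_oR_A(U)$ comes from the decreasing cover net of $T$ restricted below $U$. Its values $g(U)-g(V)=\nu(U\setminus V)$ lie in $R_A(U)$ and converge to $g(U)-\mu(T)$.
\item The fragment-hull inclusions for $\Gamma$ and $\Delta$ follow the proof of \eqref{eq:outer-comparison}. Every approximating value is a limit of source fragment values or of their differences, and we pass to order limits in $G$.
\end{itemize}

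For the lattice specialization with $E$ Dedekind complete, WSD and having the countable sup property, I apply Theorem~\ref{thm:main} directly to $\tau$ with $D=D_E=[-2h,2h]$ and $M=M_E\subseteq[0,h]$. Corollary~\ref{ord:carriers} supplies $\mathsf F(D_E)$, $\mathsf S_{\rm all}(D_E)$ and $\mathsf U(M_E)$, and Theorem~\ref{ord:complete} makes $M_E$ complete. $\Omega\in\Hs$ trivially for an algebra. Hereditary exhaustivity follows as in the proof of Lemma~\ref{volfull:step-properties}: a positive countably additive $\tau$ on disjoint $H_n$ has bounded partial sums, so the tails $\sup_m\tau(\bigcup_{N\le n\le m}H_n)\downarrow0$ dominate every fragment.

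The processes of Theorem~\ref{thm:main} are monotone nets in $E$, so their order limits are the stated supremum and infimum. Submodularity and monotonicity come from positivity of the Boolean approximation together with the increasing continuity already provided. The class identity with $\mu(A)+\mu(A^c)=h$ needs more work: I would show that the splitting defect is nonnegative and is dominated by the test $T=\Omega$, by approximating $\mu$ on $\KK$ as in \eqref{volfull:splitting-defect}. I expect this domination step to be the main obstacle, since positivity of $\mu$-defects on arbitrary $T$ must be derived from \eqref{eq:outer-comparison} and finite additivity on $\KK$. Completeness of the measure follows because a null set $N$ forces $\Gamma(S)\to0$ on covers, which certifies every subset of $N$.

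For the general ring case, I translate $\tau$ by the bounds, working with $\tau-a$ on fragments only through differences. The carrier $[a,b]$ has $M_E-M_E\subseteq[-c,c]\subseteq D_E$, and Proposition~\ref{ord:uc} and Corollary~\ref{ord:carriers} apply on $[-2c,2c]$. Hereditary exhaustivity is the remaining hypothesis to check; I would obtain it from the positive and negative variation bounds, using the capped sum Lemma~\ref{ord:cap} on the disjoint unions. The lower values $g(\Omega)-g(L^c)$ are $m_\delta$ by Lemma~\ref{gen:K-construction}. Uniqueness on $\sigma(\HH)$ follows from Theorem~\ref{gen:extension}, and uniqueness among lower-regular extensions follows from \eqref{gen:full-lower}.
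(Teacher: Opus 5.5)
Your partially ordered estimates and your verification of the range hypotheses (completeness of $M_E$, $\mathsf F$, $\mathsf S_{\rm all}$, $\mathsf U$, positive hereditary exhaustivity) follow the paper. There is, however, a genuine gap at the class identity $\mathcal M_\mu=\{A:\mu(A)+\mu(A^c)=h\}$ in the positive lattice case, which you yourself leave open.

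The route you suggest, approximating ``as in \eqref{volfull:splitting-defect}'', is not available there. That exact defect formula $v(p_{T\cap A}\cap p_{T\setminus A})$ lives in the Boolean representation of Lemma~\ref{volfull:boolean-values}, whose construction uses Volauf regularity. The lattice specialization is asserted precisely without that hypothesis.

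The missing idea is the short Carath\'eodory argument from submodularity. Apply submodularity to the pair $A$, $T\cup A^c$ (union $\Omega$, intersection $T\cap A$). Apply it again to the pair $T$, $A^c$ (union $T\cup A^c$, intersection $T\setminus A$). Adding the two inequalities and cancelling $\mu(T\cup A^c)$ gives
\[
 \mu(T\cap A)+\mu(T\setminus A)-\mu(T)\le\mu(A)+\mu(A^c)-h .
\]
This is exactly the domination by the test $T=\Omega$ you were after, and nonnegativity of the left side is subadditivity.

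Your justification of submodularity itself is also misattributed. Positivity gives only monotonicity, and increasing continuity plays no role. Submodularity comes from modularity of $g$ on $\Hs$, obtained by passing to order limits in $\tau(H_n\cup K_n)+\tau(H_n\cap K_n)=\tau H_n+\tau K_n$ along source exhaustions. Then $\mu(A\cup B)+\mu(A\cap B)\le g(U\cup V)+g(U\cap V)=g(U)+g(V)$ for covers $U\supseteq A$, $V\supseteq B$, and independent infima finish the argument.

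In the ring case, hereditary exhaustivity is only sketched, and Lemma~\ref{ord:cap} is not what closes it. What you need is that $u(H)=\sup_{B\subseteq H}\tau B$ and $w=u-\tau$ are finitely additive on disjoint unions, with $\sum_{n\le m}u(H_n)\le b$ and $\sum_{n\le m}w(H_n)\le-a$. Dedekind completeness then gives suprema $s^\pm$ of these partial sums and tails $s^++s^--\sum_{n<N}(u(H_n)+w(H_n))\downarrow0$. These tails bound $|\tau B|\le u(B)+w(B)$ for every fragment $B$ of $H_n$, $n\ge N$. Translating $\tau$ by $a$ is not useful, since it destroys additivity. Only $a\le0\le b$, from $\tau\varnothing=0$, is needed to get $M_E\subseteq[a,b]\subseteq[-c,c]$.

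Two smaller remarks. For the $\Delta$-inclusion, the paper simply uses that $S\in\Hs$ splits, so $\mu(A)-\mu(B)=\mu(A\cap S)-\mu(B\cap S)\in\Gamma(S)-\Gamma(S)\subseteq\Delta(S)$. This avoids re-running the cover construction behind \eqref{eq:outer-comparison}, and with it the Boolean approximation $\tau P_n\to_o\mu(E)$ on $\KK$ that you do not establish. Your completeness argument, certifying subsets of a null set by the single row $\Gamma(U)\subseteq[0,g(U)]$ over its covers, is a valid alternative to the paper's direct monotonicity argument.
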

\begin{proof}
For $E\in\KK$, express its indicator as a finite Boolean function of
$1_{B_i}$, $B_i\in\Hs$, and replace increasing exhaustions
$H_{i,n}\uparrow B_i$ in that function to obtain $P_n\in\HH$.
Then $E\symdiff P_n\subseteq C_n=\bigcup_i(B_i\setminus H_{i,n})$ and
\[
 0\le\mu(C_n)\le\sum_i(g(B_i)-\tau H_{i,n})\downarrow0,
 \qquad-\mu(C_n)\le\mu(E)-\tau P_n\le\mu(C_n).
\]
Hence $\tau P_n\to_o\mu(E)$. More generally, if $C_n\in\Hs$ and
$(V_\tau(C_n))_n\Rightarrow_o0$ with regulator $r_\gamma\downarrow0$, then
\[
 \forall\gamma\ \exists N_\gamma\ \forall n\ge N_\gamma:\quad
 0\le\mu(C_n)=g(C_n)=\sup_{H\in\HH,\,H\subseteq C_n}\tau H
 \le r_\gamma.
\]
The same bound on $\mu(E)-\tau P_n$ identifies every such approximation.
For the two Cauchy estimates, the bounding intervals are
\[
 \tau H_0\le\tau H,\tau K\le g(A),\qquad
 \mu(A)\le g(U),g(V)\le g(U_0).
\]
Subtracting either pair gives the displayed inequalities.
For $F\in\KK$, $F\subseteq U\setminus V$,
\[
 0\le\nu F\le\nu(U\setminus V)=g(U)-g(V)\le g(U)-\mu(A).
\]
Stability of the interval yields the bound on $\clo_oR_A(U)$.
The map $V\mapsto V\cap U$ makes the covers of $T$ contained in $U$
cofinal among all its covers. Hence
$\nu(U\setminus V)=g(U)-g(V)\to_o g(U)-\mu(T)$, proving membership
in the residual hull.
The core net lies in $V_\tau(A)$, so its limit $g(A)$ belongs to
$\Gamma(A)$. For $A\subseteq S\in\Hs$, covers contained in $S$ are
cofinal and satisfy $g(V)\in\Gamma(V)\subseteq\Gamma(S)$.
Thus $\mu(A)\in\Gamma(S)$.
If $A\symdiff B\subseteq S\in\Hs$, then $A\setminus S=B\setminus S$
and $S$ is splitting. Therefore
\[
 \mu(A)-\mu(B)=\mu(A\cap S)-\mu(B\cap S)
 \in\Gamma(S)-\Gamma(S)\subseteq\Delta(S),
\]
where the last inclusion is \eqref{eq:hull-addition}.

For the lattice-valued positive case and disjoint $H_n$, put
$s_m=\sum_{n\le m}\tau H_n=\tau(\bigcup_{n\le m}H_n)\le h$.
Dedekind completeness supplies $s=\sup_ms_m$. Then
\[
 t_N=s-\sum_{n<N}\tau H_n\downarrow0,\qquad
 0\le\tau H\le\tau H_n\le t_N
 \quad(H\in\HH,\ H\subseteq H_n,\ n\ge N).
\]
These bounds verify hereditary exhaustivity in $E$.
Order stability gives $M_E\subseteq[0,h]$ and
$M_E-M_E\subseteq[-h,h]\subseteq D_E$.
Theorem~\ref{ord:complete}, applied to the order-stable subset
$M_E\subseteq E$, makes $M_E$ complete. Corollary~\ref{ord:carriers}
with interval parameter $2h$ gives finite-response reflection and
unrestricted-row closed summation on $D_E=[-2h,2h]$.
For the uniform-closure principle, every one of its four point
families takes values in $M_E\subseteq[0,h]$.
Proposition~\ref{ord:uc}, with endpoints $a=0$ and $b=h$ and
with the original additional index set, therefore applies to those
families. The resulting principles are
\[
 \mathsf F(D_E),\qquad\mathsf S_{\rm all}(D_E),\qquad
 \mathsf U(M_E)
\]
Proposition~\ref{gen:reflection-block}, applied to the verified
$\mathsf F(D_E)$, gives $\mathsf B_\omega(D_E)$. Thus
Theorem~\ref{gen:extension}, with source algebra $\HH$, ambient
Boolean algebra $\mathcal P(\Omega)$, range $E$ with order
convergence, source map $\tau$, stable range $M_E$, difference set
$D_E$, and arbitrary directed row domains, constructs $\bar\tau$
on $\mathcal R_{\rm all}(\tau,D_E)$.
Theorem~\ref{thm:main} identifies this extension with the double
process and its full splitting class, and supplies its increasing-set
continuity and uniqueness assertions.

The positive inner and outer nets are monotone, so their limits equal
the displayed directed extrema. For $H_n\uparrow U$, $K_n\uparrow V$
with $H_n,K_n\in\HH$, finite additivity gives
\[
 \tau(H_n\cup K_n)+\tau(H_n\cap K_n)=\tau H_n+\tau K_n.
\]
Taking the four order limits gives
$g(U\cup V)+g(U\cap V)=g(U)+g(V)$.
If $A\subseteq U$ and $B\subseteq V$, then
\[
 \mu(A\cup B)+\mu(A\cap B)
 \le g(U\cup V)+g(U\cap V)=g(U)+g(V).
\]
Taking the two independent directed infima proves submodularity.
Suppose $\mu(A)+\mu(A^c)=h$. Submodularity and
$A\cup(T\cup A^c)=\Omega$ give
\[
 \mu(T\cap A)+h\le\mu(A)+\mu(T\cup A^c),\qquad
 \mu(T\cup A^c)+\mu(T\setminus A)\le\mu(T)+\mu(A^c).
\]
Adding and cancelling $h$ yields
$\mu(T\cap A)+\mu(T\setminus A)\le\mu(T)$.
Subadditivity gives $\mu(T)\le\mu(T\cap A)+\mu(T\setminus A)$. A splitting set satisfies the complement
equality by taking $T=\Omega$. This proves the positive class
description. If $\mu(N)=0$ and $A\subseteq N$, monotonicity gives
$\mu(T\cap A)=0$ for every $T$, and
\[
 \mu(T)\le\mu(T\cap A)+\mu(T\setminus A)
 =\mu(T\setminus A)\le\mu(T).
\]
Thus every subset of a null set is splitting, proving completeness.
Monotonicity together with the increasing-set convergence from
Theorem~\ref{thm:main} gives the asserted increasing supremum identity.

For a signed charge with $a\le\tau H\le b$, put
$u(H)=\sup_{B\subseteq H}\tau B$ and
$w(H)=\sup_{B\subseteq H}(-\tau B)$, with all fragments in $\HH$.
These suprema exist by Dedekind completeness of $E$ and the source
bounds. Since $\tau\varnothing=0$, one has $a\le0\le b$.
Complements inside $H$ give
$w(H)=u(H)-\tau H$. For disjoint $H,K$, the fragment identity
\[
 \{\tau B:B\in\HH,\ B\subseteq H\cup K\}
 =\{\tau C+\tau D:C,D\in\HH,\ C\subseteq H,\ D\subseteq K\}
\]
and preservation of suprema by translations give
$u(H\cup K)=u(H)+u(K)$. Subtracting
$\tau(H\cup K)=\tau H+\tau K$ gives
$w(H\cup K)=w(H)+w(K)$.
Moreover $0\le u(H)\le b$ and $0\le w(H)\le-a$.
For disjoint $H_n$,
\[
 \sum_{n\le m}u(H_n)=u\left(\bigcup_{n\le m}H_n\right)\le b,
 \qquad
 \sum_{n\le m}w(H_n)=w\left(\bigcup_{n\le m}H_n\right)\le-a.
\]
Their increasing suprema $s^+,s^-$ therefore exist. Hence
\[
 |\tau B|\le u(B)+w(B)
 \le s^++s^--\sum_{n<N}(u(H_n)+w(H_n))\downarrow_N0
 \quad(B\subseteq H_n, n\ge N).
\]
This verifies hereditary exhaustivity from the original source values.
Only positivity, finite additivity and the displayed bounds on $u,w$
enter this estimate. The source premeasure condition is the assumed
order-countable additivity of $\tau$.
The inequalities $a\le0\le b$ give
$M_E\subseteq[a,b]\subseteq[-c,c]$ and
$M_E-M_E\subseteq[-c,c]\subseteq D_E$.
Theorem~\ref{ord:complete} applies to the order-stable range $M_E$
and proves its completeness. Corollary~\ref{ord:carriers}, with
interval parameter $2c$, gives $\mathsf F(D_E)$ and
$\mathsf S_{\rm all}(D_E)$. Proposition~\ref{gen:reflection-block}
then gives $\mathsf B_\omega(D_E)$. For every quadruple of point
families in $M_E$ from the definition of $\mathsf U$, all values
lie in $[a,b]$, so Proposition~\ref{ord:uc} with these endpoints
gives $\mathsf U(M_E)$. Theorem~\ref{gen:extension}, with source
ring $\HH$, ambient algebra $\mathcal P(\Omega)$, source map
$\tau$, original order convergence on $E$, stable range $M_E$,
difference set $D_E$ and arbitrary directed row domains, constructs
the extension on $\mathcal R_{\rm all}(\tau,D_E)$.
Theorem~\ref{thm:main} uses these data together with the verified
$\mathsf F(D_E)$. Its inner and outer nets are exactly the two
nets displayed in this corollary, and its domain identity is
$\mathcal M_\mu=\mathcal R_{\rm all}(\tau,D_E)$.
Its increasing-set continuity, two full regularity limits, and
uniqueness clauses give the corresponding stated assertions.
\end{proof}

We can now compare the lattice hypotheses with those in Volauf's
earlier extension theorem.

In a monotone-complete lattice group, finite joins convert every
upper-bounded family into a directed family, so the group is Dedekind
complete. Order separability gives the countable sup property, and
regularity implies weak sigma-distributivity through
$\inf_\psi\sup_na_{n,\psi(n)}\le s_\varphi$ for
$\varphi\in\Phi(a)$. Thus the extension conclusions of
\cite[Theorems~2--4]{Volauf77} are included as well.
The stronger regularity condition is not implicit in the lattice
specialization: in $\ell^\infty(\N)$ the matrix
$a_{n,k}(j)=\mathbf1_{\{j\ge k\}}$ has decreasing null rows but no
bounded selector, since each selected $m$-term sum attains $m$.
For nonempty upper-bounded $S\subseteq\ell^\infty$, its coordinatewise
supremum $s$ lies between any fixed member and an upper bound. Elements
$f_{j,k}\in S$ with $f_{j,k}(j)>s(j)-1/k$ satisfy
$s=\sup_{j,k}f_{j,k}$. This proves Dedekind completeness and the countable
sup property. For a bounded regulator $(b_{n,k})$, the selectors
$\varphi_{j,\varepsilon}(n)=\min\{k:b_{n,k}(j)\le\varepsilon\}$ give
$0\le(\inf_\varphi\sup_nb_{n,\varphi(n)})(j)\le\varepsilon$ for every
$j$ and $\varepsilon>0$, proving weak sigma-distributivity. The next section
uses lattice-valued bounds to treat nonpositive ranges without replacing
semivariation by variation.

\section{Boolean vector variation and canonical semivariation}
All event domains in this section are Boolean. The partition and common-refinement identities proved here are used only in that setting. Corollary~\ref{app:lattice-normed} gives the separate pseudo-$D$-lattice result for a positive additive majorant already given on the source.

\label{sec:km}\label{sec:semiv}\label{sec:lattice-short}

Finite vector variation provides a positive additive majorant for the values of a premeasure. Kusraev and Malyugin use it to obtain an extension on a regulator-defined domain, preserving variation and full lower regularity \cite[Sections~1.4.2 and~1.4.11--1.4.12]{KM}. We recover their entire domain by representing the source premeasure in a space where the intrinsic extension theorem applies. This representation also permits vector norms that are not decomposable.

Finite semivariation requires a different representation. It bounds the norms of signed sums and can remain finite when vector variation is unbounded, as the example at the end of the section shows. Since semivariation is generally only subadditive, we cannot use it as an additive coordinate. We instead consider dominated linear maps $T$. Each coordinate charge $Tm$ has a positive variation, and we show that the supremum of these variations is the original semivariation. Adjoining all the variations to the represented source values gives an additive lift $\lambda$ whose vector norm is the semivariation.

We construct the representation with $P(Jy)=p(y)$, so its error bounds retain the original convergence. Completeness makes $JY$ bo-closed and keeps the extension of the lift in the domain of the continuous inverse of $J$. After applying Theorem~\ref{gen:extension}, the projection $L$ therefore returns both the extension and its full approximation nets to the original vector-normed space. The representation and the bounds needed for this application are constructed on the source ring.

\begin{definition}\label{km:def-lns}\label{sem:def-structure}
Let $F$ be a real Dedekind complete vector lattice and $Y$ a real vector space. A vector norm $p:Y\to F_+$ satisfies
\[
 p(y)=0\Longleftrightarrow y=0,\qquad p(ty)=|t|p(y),\qquad
 p(y+z)\le p(y)+p(z).
\]
Write $y_\alpha\to_{\rm bo}y$ when $p(y_\alpha-y)\to_o0$. Bo-completeness means that every net whose independent difference net is bo-null has a bo-limit. For $h\in F_+$ put $F_h=\{h\}^{\perp\perp}$ and $Y_h=\{y:p(y)\in F_h\}$.
\end{definition}
The two triangle inequalities
$p(y)\le p(z)+p(y-z)$ and $p(z)\le p(y)+p(y-z)$ give
$|p(y)-p(z)|\le p(y-z)$. If $p(y_\alpha-y)\le r_\gamma$ for
$\alpha\ge\alpha_\gamma$ and $r_\gamma\downarrow0$, then
\[
 \forall\gamma\ \forall\alpha\ge\alpha_\gamma:\qquad
 |p(y_\alpha)-p(y)|\le r_\gamma.
\]
For independent convergence regulators $r_\gamma$ and $t_\delta$,
\[
 p((y_\alpha+z_\beta)-(y+z))\le r_\gamma+t_\delta
 \quad(\alpha\ge\alpha_\gamma,\ \beta\ge\beta_\delta),\qquad
 \inf_{\gamma,\delta}(r_\gamma+t_\delta)=0.
\]
Negation preserves $p(y_\alpha-y)$. If $y$ and $z$ are limits of
one net, the estimate $p(y-z)\le r_\gamma+t_\delta$ for every
$(\gamma,\delta)$ gives $p(y-z)=0$, hence $y=z$. These estimates
prove the group-convergence and separation assertions. The band
projection $\pi_h$ gives
\[
 (I-\pi_h)p(y)\le (I-\pi_h)p(y-y_\alpha)\quad(y_\alpha\in Y_h).
\]
For a convergent net $y_\alpha\in Y_h$, every regulator index gives
$0\le(I-\pi_h)p(y)\le(I-\pi_h)r_\gamma\le r_\gamma$.
Its infimum is zero, so $p(y)\in F_h$ and $Y_h$ is bo-closed.
If $y_\alpha,y\in Y_h$, then
\[
 p(y_\alpha-y)=\pi_hp(y_\alpha-y)\le\pi_hr_\gamma
 \quad(\alpha\ge\alpha_\gamma),\qquad
 0\le\inf_\gamma\pi_hr_\gamma\le\inf_\gamma r_\gamma=0.
\]
Thus an ambient regulator produces an intrinsic regulator in $F_h$.
The reverse implication uses the inclusion $F_h\subseteq F$ and
the agreement of bounded suprema and infima in a band with those
in $F$ \cite{AB}. A bo-Cauchy net in $Y_h$ is therefore bo-Cauchy
in $Y$. When $Y$ is bo-complete, its limit lies in $Y_h$ by the
proved closedness, and the projected regulator proves convergence
in $Y_h$.

\begin{lemma}[Dominated coordinate representation]\label{km:embedding}\label{sem:representation}
For a vector-normed space over $F$, put
\[
 \mathscr T=\{T:Y\to F:T\text{ linear},\ |Ty|\le p(y)\ (y\in Y)\},
\]
\[
 E=\{z\in F^{\mathscr T}:\sup_T|z_T|\text{ exists in }F\},
 \qquad P(z)=\sup_T|z_T|.
\]
Then $E$ is bo-complete and $J:Y\to E$, $Jy=(Ty)_T$, satisfies
\begin{equation}\label{km:isometry}
 P(Jy)=p(y).
\end{equation}
The inverse $J^{-1}$ on $JY$ is bo-continuous. If $Y$ is bo-complete, its range $JY$ is bo-closed.
\end{lemma}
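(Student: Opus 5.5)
The plan has four parts: the isometry, completeness of $E$, continuity of $J^{-1}$, and closedness of $JY$.

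\textbf{Isometry.} The inequality $P(Jy)\le p(y)$ is immediate, since each $T\in\mathscr T$ satisfies $|Ty|\le p(y)$. Hence $\sup_T|Ty|$ exists by Dedekind completeness and is at most $p(y)$; in particular $Jy\in E$.

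For the reverse inequality I would use a Hahn--Banach--Kantorovich argument. The map $p:Y\to F$ is sublinear and $F$ is Dedekind complete. On the line $\mathbb R y$, define $T_0(ty)=tp(y)$; this is dominated by $p$ because $p(ty)=|t|p(y)\ge tp(y)$. The Kantorovich extension theorem gives a linear $T:Y\to F$ with $T\le p$. Then $-Tz=T(-z)\le p(-z)=p(z)$, so $|Tz|\le p(z)$ and $T\in\mathscr T$. Since $Ty=p(y)$, we get $P(Jy)\ge p(y)$. This extension step is the only nonelementary ingredient, and I would cite it as standard for Dedekind complete vector lattices \cite{AB}.

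\textbf{Completeness of $E$.} I would first check that $E$ is a vector space and that $P$ is a vector norm; both follow coordinatewise. Convergence in $E$ is governed by $P$.

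Let $(z_\alpha)$ be bo-Cauchy in $E$, with regulator $r_\gamma\downarrow0$ such that $P(z_\alpha-z_\beta)\le r_\gamma$ for $\alpha,\beta\ge\alpha_\gamma$. Each coordinate $(z_{\alpha,T})$ is then order-Cauchy in $F$, so Theorem~\ref{ord:complete} gives coordinate limits $z_T$. Fix $\gamma_0$ and $\alpha\ge\alpha_{\gamma_0}$. Passing to the limit in $\beta$ in $|z_{\alpha,T}-z_{\beta,T}|\le r_\gamma$ gives $|z_{\alpha,T}-z_T|\le r_\gamma$ uniformly in $T$, using stability of order intervals. Consequently $|z_T|\le |z_{\alpha_{\gamma_0},T}|+r_{\gamma_0}$, which is bounded by $P(z_{\alpha_{\gamma_0}})+r_{\gamma_0}$. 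So $\sup_T|z_T|$ exists, $z\in E$, and $P(z_\alpha-z)\le r_\gamma$, which proves bo-convergence.

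\textbf{Continuity of $J^{-1}$ and closedness of $JY$.} Injectivity of $J$ and continuity of $J^{-1}$ on $JY$ both follow from the identity $p(y-y')=P(Jy-Jy')$, by linearity of $J$ and \eqref{km:isometry}. Thus bo-convergence of $Jy_\alpha$ to $Jy$ is exactly bo-convergence of $y_\alpha$ to $y$.

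For closedness, suppose $Y$ is bo-complete and $Jy_\alpha\to_{\rm bo}z$ in $E$. The net $(Jy_\alpha)$ is bo-Cauchy, because its independent differences are bounded by $P(Jy_\alpha-z)+P(z-Jy_\beta)$. The isometry transfers this to $(y_\alpha)$, and bo-completeness supplies a limit $y$. Continuity of $J$, again from the isometry, gives $Jy_\alpha\to Jy$, and separation yields $z=Jy\in JY$.

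I expect the only delicate point to be the uniform-in-$T$ limit bound in the completeness step. It needs the stability of order intervals from \eqref{ord:carrier-bound}, applied with a threshold chosen uniformly in $T$.
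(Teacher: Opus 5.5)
Your proposal is correct and follows the paper's proof: Hahn--Banach--Kantorovich gives the isometry, a coordinatewise Cauchy argument with a threshold uniform in $T$ gives completeness, and the isometry together with separation gives continuity of $J^{-1}$ and closedness of $JY$. The only variation is in the completeness step, where you obtain the coordinate limits from Theorem~\ref{ord:complete} and carry the bound $r_\gamma$ through stability of translated order intervals, while the paper rebuilds the tail infima and suprema directly under the common bound $P(z_{\alpha_0})+r_{\gamma_0}$; this is the same argument.
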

\begin{proof}
For $z,w\in E$ and $t\in\mathbb R$, the coordinate estimates
$|z_T+w_T|\le P(z)+P(w)$ and $|tz_T|=|t||z_T|$ give
\[
 z+w,tz\in E,\quad P(z+w)\le P(z)+P(w),\quad
 P(tz)=|t|P(z),\quad P(z)=0\Longleftrightarrow z=0.
\]
For $y\ne0$, the map $ty\mapsto tp(y)$ on $\mathbb Ry$ satisfies
$tp(y)\le |t|p(y)=p(ty)$. Hahn--Banach--Kantorovich supplies a linear extension $T_y$ with $T_yz\le p(z)$ \cite[Theorem~2.3]{KKHBK}. Applying this inequality to $z$ and $-z$ gives $|T_yz|\le p(z)$. Hence
\[
 p(y)=|T_yy|\le P(Jy)\le p(y).
\]
Let $z_\alpha$ be bo-Cauchy, with $P(z_\alpha-z_\beta)\le r_\gamma$ for $\alpha,\beta\ge\alpha_\gamma$, where $r_\gamma\downarrow0$. Fix $\gamma_0$ and put $\alpha_0=\alpha_{\gamma_0}$ and
$g=P(z_{\alpha_0})+r_{\gamma_0}$. Then $|z_{\alpha,T}|\le g$ for $\alpha\ge\alpha_0$ and every $T$. All subsequent tail infima and suprema are taken over this cofinal tail, so Dedekind completeness gives
\[
 l_{\alpha,T}=\inf_{\beta\ge\alpha}z_{\beta,T},\quad
 u_{\alpha,T}=\sup_{\beta\ge\alpha}z_{\beta,T},\quad
 0\le u_{\alpha,T}-l_{\alpha,T}\le r_\gamma
 \quad(\alpha\ge\alpha_\gamma).
\]
Both families lie in $[-g,g]$. Their gap is bounded by every $r_\gamma$, whence
$\sup_\alpha l_{\alpha,T}=\inf_\alpha u_{\alpha,T}=:z_T$.
The bound $|z_T|\le g$ gives $z\in E$. For $\alpha\ge\alpha_\gamma$, the inequalities
$z_{\alpha,T}-r_\gamma\le z_{\beta,T}\le z_{\alpha,T}+r_\gamma$
hold on a cofinal $\beta$-tail. Its infimum and supremum give
$|z_{\alpha,T}-z_T|\le r_\gamma$ for every $T$, hence $P(z_\alpha-z)\le r_\gamma$. If $Jy_\alpha\to_{\rm bo}z$, then
\[
 p(y_\alpha-y_\beta)\le P(Jy_\alpha-z)+P(Jy_\beta-z)\to_o0.
\]
When $Y$ is bo-complete, its Cauchy net $y_\alpha$ has a limit $y$, and separation gives $z=Jy$. Identity~\eqref{km:isometry} proves continuity of the inverse without any completeness assumption.
\end{proof}

\begin{lemma}[Bounded range principles]\label{sem:target}\label{km:uniform-bounds}
Suppose $F_h$ is WSD and $(W,N,F_h)$ is vector-normed. If $K\subseteq\{N\le h\}$ and $D=\{N\le2h\}$, then $D$ satisfies $\mathsf S_{\rm seq}$ and $\mathsf R_\omega$, hence $\mathsf B_\omega$, and $K$ satisfies $\mathsf U$.
\end{lemma}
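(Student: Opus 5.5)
The plan is to reduce every principle to an order estimate in $F_h$ on the norms, and then reuse the order-convergence arguments of Section~\ref{ord:section} inside the WSD band $F_h$. First, the convergence must be translated. A net $w_\alpha$ in $D$ is bo-null exactly when $N(w_\alpha)\to_o0$ in $F_h$. By the discussion after Definition~\ref{km:def-lns}, ambient and intrinsic regulators in $F_h$ are interchangeable, so all order statements may be made inside $F_h$, where Dedekind completeness and WSD hold. For a family of subsets $B_\alpha\subseteq D$ I will use the envelope $b_\alpha=\sup_{w\in B_\alpha}N(w)\in[0,2h]$. By \eqref{ord:filter-envelope}, applied to the bounded positive net of norms, $(B_\alpha)\Rightarrow0$ in bo-convergence if and only if $\sup_{\beta\ge\alpha}b_\beta\downarrow0$; for decreasing families this is simply $b_\alpha\downarrow0$.

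For $\mathsf S_{\rm seq}(D)$, let $0\in U_{i,k}\subseteq D$ be decreasing null rows with envelopes $b_{i,k}\downarrow_k0$ in $[0,2h]$. The triangle inequality for $N$ bounds every finite selected sum in $A_\varphi$ by $\sum_{i\le m}b_{i,\varphi(i)}$. Hence every element of $D\cap A_\varphi$ lies in $L_\varphi=\{w:N(w)\wedge2h\le q_\varphi\}$, where $q_\varphi=\sup_m\bigl(2h\wedge\sum_{i\le m}b_{i,\varphi(i)}\bigr)$. Lemma~\ref{ord:aggregation} in $F_h$ gives $q_\varphi\downarrow0$. Stability of $L_\varphi$ under bo-limits follows from $|N(w_\delta)\wedge2h-N(w)\wedge2h|\le N(w_\delta-w)$, exactly as in the proof of Theorem~\ref{cmp:strong-wsd-equivalence}. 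Intersecting with $D$ then gives $N\le q_\varphi$ on $Q_\varphi$, which proves nullity.

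For $\mathsf R_\omega(D)$, let $b_{n,k}$ be the envelopes of $A_{n,k}$. These are increasing in $k$, decreasing in $n$, and bounded by $2h$. Put $c_n=\sup_kb_{n,k}$, the envelope of $\bigcup_kA_{n,k}$, and $c=\inf_nc_n$; the task is to show $c=0$. The regulator $a_{n,k}=(c-b_{n,k})^+\downarrow_k0$ is bounded by $2h$, so WSD yields $\inf_\varphi\sup_na_{n,\varphi(n)}=0$. Each selected family is null, so $\sup_{n\ge N}b_{n,\varphi(n)}\downarrow_N0$. Since $c\le b_{n,\varphi(n)}+a_{n,\varphi(n)}$, this gives $c\le\inf_N\sup_{n\ge N}b_{n,\varphi(n)}+\sup_na_{n,\varphi(n)}$, and taking the infimum over $\varphi$ gives $c=0$. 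I expect this to be the main obstacle. Because $F_h$ need not have the countable sup property, Theorem~\ref{ord:fg} is unavailable, and the pointwise decomposition inequality above must be checked carefully. In particular, passing from $\sup_n$ to the tail supremum is harmless only because $b_{n,\varphi(n)}$ is already null. If this fails, the fallback is the dyadic cap of Lemma~\ref{rel:dyadic-cap} applied to $c\wedge b_{n,k}$. Once $\mathsf R_\omega(D)$ holds, Proposition~\ref{gen:reflection-block} gives $\mathsf B_\omega(D)$.

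Finally, $\mathsf U(K)$ is the proof of Proposition~\ref{ord:uc} with $|\cdot|$ replaced by $N$. All point differences have norm at most $2h$, so the tail suprema $t_N=\sup_{n\ge N,j}N(x_n(j)-y_n(j))$ exist in $F_h$. The three-term triangle inequality and the regulators of the two uniform errors and of the fixed-$\beta$ row then give $\inf_Nt_N=0$.
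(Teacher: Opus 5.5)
Your proposal is correct and follows the paper's proof essentially step for step. Like the paper, you use the envelope criterion for bounded evaluations, the capped-sum WSD bound $q_\varphi\downarrow0$ together with bo-stability of $\{N\wedge2h\le q_\varphi\}$ for $\mathsf S_{\rm seq}$, a direct WSD argument on the row-union envelopes for $\mathsf R_\omega$ (your regulator $(c-b_{n,k})^+$ plays the role of the paper's $b_n-b_{n,k}$), and the three-term triangle estimate on tail suprema for $\mathsf U$.

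Your worry about the matrix step is unfounded. The inequality $c\le b_{n,\varphi(n)}+\sup_j a_{j,\varphi(j)}$ holds for every $n$, so taking tail suprema and then infima over $N$ and $\varphi$ is exactly the paper's argument, and the dyadic-cap fallback is not needed.
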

\begin{proof}
For any uniformly bounded family $V_\alpha$, its evaluation is bo-null exactly when
\begin{equation}\label{sem:bounded-evaluation}
 \sup\{N(v):\beta\ge\alpha,\ v\in V_\beta\}\downarrow_\alpha0.
\end{equation}
If $r_\gamma\downarrow0$ regulates the evaluation, then for each $\gamma$ some $\alpha_\gamma$ satisfies
$N(v)\le r_\gamma$ for every $\beta\ge\alpha_\gamma$ and $v\in V_\beta$.
Thus the displayed supremum at $\alpha_\gamma$ is at most $r_\gamma$.
Conversely, those tail suprema bound every value on their corresponding evaluation tails and have infimum zero. For decreasing null rows $U_{i,k}\subseteq D$, put $a_{i,k}=\sup_{w\in U_{i,k}}N(w)$. Then $a_{i,k}\downarrow_k0$. WSD gives
\[
 q_\varphi=\sup_i(2h\wedge2^ia_{i,\varphi(i)})\downarrow_\varphi0.
\]
For finite $A\subseteq\mathbb N$ and $w_i\in U_{i,\varphi(i)}$,
\[
 N\Bigl(\sum_{i\in A}w_i\Bigr)
 \le\sum_{i\in A}a_{i,\varphi(i)}
 \le\bigvee_{i\in A}2^ia_{i,\varphi(i)},
 \qquad N\Bigl(\sum_{i\in A}w_i\Bigr)\wedge2h\le q_\varphi.
\]
To verify stability of the bounding set, let $z_\alpha$ belong to
$\{z:N(z)\wedge2h\le q_\varphi\}$ and suppose
$N(z_\alpha-z)\le r_\gamma$ for $\alpha\ge\alpha_\gamma$,
where $r_\gamma\downarrow0$. The lattice contraction inequality gives
\[
 \bigl|(N(z)\wedge2h)-(N(z_\alpha)\wedge2h)\bigr|
 \le|N(z)-N(z_\alpha)|\le r_\gamma,
\]
\[
 (N(z)\wedge2h-q_\varphi)^+\le r_\gamma\quad(\gamma),
 \qquad N(z)\wedge2h\le q_\varphi.
\]
Thus the set is bo-stable. It contains every finite selected sum,
so
\begin{equation}\label{sem:summation-bound}
 D\cap\clo_{\rm bo}\Bigl(\bigcup_{A\text{ finite}}\sum_{i\in A}U_{i,\varphi(i)}\Bigr)
 \subseteq\{N\le q_\varphi\}.
\end{equation}
This proves closed sequential summation.

For the matrix in $\mathsf R_\omega$, set
\[
 b_{n,k}=\sup_{w\in A_{n,k}}N(w),\quad b_n=\sup_kb_{n,k},
 \quad b=\inf_nb_n,\quad c_{n,k}=b_n-b_{n,k}\downarrow_k0.
\]
The matrix inclusions give $b_{n,k}\uparrow_k b_n$ and
$b_{n+1}\le b_n$, so the displayed row differences decrease to zero.
For each selector $\varphi$, selected evaluation nullity gives
a regulator $r^\varphi_\gamma\downarrow0$ and thresholds
$N^\varphi_\gamma$ such that
\[
 b_{n,\varphi(n)}\le r^\varphi_\gamma
 \quad(n\ge N^\varphi_\gamma).
\]
Put $c_\varphi=\sup_jc_{j,\varphi(j)}$. Then, for every $\gamma$
and $n\ge N^\varphi_\gamma$, one has
$b\le b_n=b_{n,\varphi(n)}+c_{n,\varphi(n)}
\le r^\varphi_\gamma+c_\varphi$. Consequently
\[
 0\le b\le b_{n,\varphi(n)}+\sup_jc_{j,\varphi(j)}
 \quad\Longrightarrow\quad b\le\sup_jc_{j,\varphi(j)}.
\]
The implication in this display is the infimum over $\gamma$ of
the preceding bound. All $c_{n,k}$ lie in $[0,2h]$. WSD yields
$b\le\inf_\varphi\sup_jc_{j,\varphi(j)}=0$.
For $w\in\bigcup_kA_{n,k}$ one has $N(w)\le b_n\downarrow0$, which proves row-union evaluation nullity. For $\mathsf U$, use its notation and put
\[
 A_N=\sup_{n\ge N,i}N(x_n(i)-y_n(i)),\quad
 A_N^\beta=\sup_{n\ge N,i}N(x_n^\beta(i)-y_n^\beta(i)),
\]
\[
 e_\beta=\sup_{n,i}\bigl(N(x_n(i)-x_n^\beta(i))+N(y_n(i)-y_n^\beta(i))\bigr).
\]
These suprema are bounded by $2h,2h,4h$, respectively. The hypotheses and~\eqref{sem:bounded-evaluation} give $A_N^\beta\downarrow_N0$ and $e_\beta\to_o0$. For every $N,\beta$, the triangle inequality gives $A_N\le A_N^\beta+e_\beta$. Consequently
\[
 0\le\inf_NA_N\le\inf_N(A_N^\beta+e_\beta)=e_\beta
 \quad(\beta),\qquad \inf_NA_N=0.
\]
\end{proof}

\begin{lemma}[Coordinate balls]\label{km:ball-decomposition}\label{sem:ball-decomposition}
In a space of order-bounded $F$-coordinates with supremum vector norm $N$, including finite products of $E$ and $F$, an order-bounded sequence $c_i\ge0$ satisfies
\begin{equation}\label{sem:ball-hull}
 \{N\le\sup_ic_i\}\subseteq
 \clo_{\rm bo}\Bigl(\bigcup_{A\text{ finite}}\sum_{i\in A}\{N\le c_i\}\Bigr).
\end{equation}
\end{lemma}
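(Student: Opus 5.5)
The plan is a coordinatewise, dyadic-type decomposition followed by a single bo-limit. Fix $z$ with $N(z)\le c:=\sup_ic_i$, so $|z_T|\le c$ for every coordinate $T$. Put $C_m=\bigvee_{i\le m}c_i$, so $C_m\uparrow c$. The aim is, for each $m$, a finite sum $w_m=\sum_{i\le m}w_{m,i}$ with $N(w_{m,i})\le c_i$, and then to show $w_m\to_{\rm bo}z$. Since the right side of \eqref{sem:ball-hull} is bo-stable and contains every $w_m$, this gives the inclusion.

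\textbf{Step 1: truncate $z$ at level $C_m$.} Coordinatewise set $z^{(m)}_T=(z_T\wedge C_m)\vee(-C_m)$. Then $N(z^{(m)})\le C_m$, and
\[
 |z_T-z^{(m)}_T|\le(|z_T|-C_m)^+\le c-C_m\downarrow0 .
\]
Hence $N(z-z^{(m)})\le c-C_m$. The coordinate supremum exists because the family is bounded by $c$, so $z^{(m)}$ lies in the space.

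\textbf{Step 2: split the truncation into pieces bounded by the $c_i$.} The aim is $z^{(m)}=\sum_{i\le m}w_{m,i}$ with $|w_{m,i,T}|\le c_i$. The natural choice is the telescoping construction
\[
 D_0=0,\qquad D_i=\bigvee_{j\le i}c_j,\qquad D_i-D_{i-1}\le c_i,
\]
with coordinate pieces
\[
 w_{m,i,T}=\bigl((z^{(m)}_T\wedge D_i)\vee(-D_i)\bigr)-\bigl((z^{(m)}_T\wedge D_{i-1})\vee(-D_{i-1})\bigr).
\]
Monotonicity of the clamp gives $|w_{m,i,T}|\le D_i-D_{i-1}\le c_i$. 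The sum telescopes to $(z^{(m)}_T\wedge D_m)\vee(-D_m)=z^{(m)}_T$.

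\textbf{Step 3: check membership and conclude.} Each $w_{m,i}$ has coordinates bounded by $c_i$, so its supremum norm exists and $N(w_{m,i})\le c_i$. For finite products of $E$ and $F$ the same clamp is applied in each factor, using the supremum-norm description of $E$ from Lemma~\ref{km:embedding}. Then $N(z-w_m)\le c-C_m\downarrow0$, which is bo-convergence of the sequence $w_m$ to $z$. Stability of the closure completes the proof.

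The main obstacle is justifying Step 2 inside a general Dedekind complete vector lattice. The estimate $|\mathrm{clamp}_{D_i}(x)-\mathrm{clamp}_{D_{i-1}}(x)|\le D_i-D_{i-1}$ is a finite lattice inequality, so it can be checked in totally ordered quotients as in Lemma~\ref{rel:dyadic-cap}, or directly from $|a\wedge b-a\wedge b'|\le|b-b'|$. The other point to handle carefully is that the coordinates of $E$ form a whole family indexed by $\mathscr T$. Everything above is coordinatewise with uniform bounds, so the suprema defining $N$ exist.
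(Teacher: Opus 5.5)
Your proof is correct and is essentially the paper's argument: you use telescoping coordinatewise truncations at the partial suprema $\bigvee_{i\le n}c_i$, each increment is bounded by $c_n$, and the truncation error is uniform in the coordinates and decreases to zero. The only cosmetic difference is the truncation level. The paper uses $t_n=N(z)\wedge\bigvee_{i\le n}c_i$ with $(z^+\wedge t_n)-(z^-\wedge t_n)$, which is your clamp, so its regulator is $N(z)-t_n$ rather than your $\sup_ic_i-\bigvee_{i\le n}c_i$.
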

\begin{proof}
For $z$ in the left side, put $e=N(z)$, $t_n=e\wedge\bigvee_{i\le n}c_i$, $t_0=0$, and
$z_\gamma^{(n)}=(z_\gamma^+\wedge t_n)-(z_\gamma^-\wedge t_n)$ at every coordinate. Then
\[
 N(z^{(n)}-z^{(n-1)})\le t_n-t_{n-1}\le c_n,
 \qquad N(z-z^{(n)})\le e-t_n\downarrow0.
\]
The first inequality uses
$|(|f|\wedge t)-(|f|\wedge r)|\le t-r$ for $0\le r\le t$ and
$(e\wedge(v\vee c))-(e\wedge v)\le(v\vee c)-v\le c$.
The equality $\sup_nt_n=e\wedge\sup_ic_i=e$ justifies the null regulator above. Moreover
$z^{(n)}=\sum_{i=1}^n(z^{(i)}-z^{(i-1)})\in\sum_{i=1}^n\{N\le c_i\}$.
Thus $z$ is a bo-limit of elements of the displayed union and belongs to its stable hull.
\end{proof}

\begin{definition}[Source semivariation]\label{sem:def-semivariation}
Let $\mathcal H$ be a Boolean ring in a sigma-complete Boolean algebra $\mathbb B$, with $1\in\mathcal H_\sigma$, and let $m:\mathcal H\to Y$ be a bo-premeasure. Write $\Pi_{\mathcal H}(a)$ for its finite partitions and suppose the following family is order bounded:
\begin{equation}\label{sem:source-semivariation}
 s(a)=\sup_{\pi\in\Pi_{\mathcal H}(a),\ \varepsilon_c=\pm1}
 p\Bigl(\sum_{c\in\pi}\varepsilon_cm(c)\Bigr),
 \qquad h=\sup_{a\in\mathcal H}s(a).
\end{equation}
For the semivariation construction, $F_h$ is WSD and $Y_h$ is bo-complete. The coordinate representation is taken over $(Y_h,p,F_h)$. Its index family is denoted by $\mathscr T_h$.
\end{definition}

\begin{lemma}[Scalarized variations and the additive lift]\label{sem:variation}\label{sem:finite-coefficients}\label{sem:def-lift}
For $T\in\mathscr T_h$, define
\[
 \nu_T(a)=\sup_{\pi\in\Pi_{\mathcal H}(a)}\sum_{c\in\pi}|Tm(c)|,
 \quad q(a)=\sup_{c\in\mathcal H,\ c\le a}p(m(c)).
\]
Each $\nu_T$ is a positive order-premeasure, and
\begin{equation}\label{sem:coordinate-variation}
 |Tm(a)|\le\nu_T(a)\le s(a)=\sup_T\nu_T(a),
 \qquad q(a)\le s(a)\le2q(a).
\end{equation}
The semivariation is unchanged by allowing coefficients in $[-1,1]$. It is monotone and finitely subadditive. The maps
\[
 \kappa(a)=(\nu_T(a))_T,\quad
 \lambda(a)=(Jm(a),\kappa(a)),\quad
 Z=E\times E,\quad Q(z,w)=P(z)\vee P(w)
\]
are finitely additive and satisfy
\begin{equation}\label{sem:lift-norm}
 P\kappa(a)=Q\lambda(a)=s(a),\quad
 M:=\clo_{\rm bo}\lambda(\mathcal H)\subseteq\{Q\le h\}\cap(JY_h\times E).
\end{equation}
The set $M$ is bo-Cauchy complete and $M-M\subseteq D_h:=\{Q\le2h\}$.
\end{lemma}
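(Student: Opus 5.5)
I would organise the proof around the scalar charges $Tm$ for $T\in\mathscr T_h$, since all the assertions reduce to coordinatewise statements in $F$ followed by a supremum over $T$.

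\emph{Step 1: each $\nu_T$ is a positive order-premeasure.} Each $Tm:\mathcal H\to F$ is finitely additive, and $|Tm(c)|\le p(m(c))$. For a partition $\pi$ of $a$, choose signs $\varepsilon_c=\operatorname{sgn}$ coordinatewise. This cannot be done literally in a vector lattice, so instead I use $\sum_c|Tm(c)|=\sup$ over band decompositions. More simply, I use that $|\sum_c\varepsilon_cTm(c)|\le p(\sum_c\varepsilon_cm(c))\le s(a)$, together with the lattice identity $\sum_{c}|x_c|=\bigvee_{\varepsilon}\sum_c\varepsilon_cx_c$ for finitely many $x_c$ in a vector lattice. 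This gives $\nu_T(a)\le s(a)$, which bounds the suprema and makes them exist by Dedekind completeness. Finite additivity of $\nu_T$ is the usual common-refinement argument for variations. Countable additivity follows from $\sigma$-additivity of $Tm$ in order: $T$ is bo-to-order continuous because $|T(y_\alpha-y)|\le p(y_\alpha-y)$. Then the standard variation argument applies, namely superadditivity on partitions plus the tail estimate $\nu_T(a\setminus\bigvee_{n\le N}a_n)\downarrow0$, which is obtained from a partition-refinement and order-limit bound.

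\emph{Step 2: the inequalities \eqref{sem:coordinate-variation}.} The lower bound $|Tm(a)|\le\nu_T(a)$ is trivial. For $s(a)=\sup_T\nu_T(a)$, I apply \eqref{km:isometry}: $p(\sum\varepsilon_cm(c))=\sup_T|\sum\varepsilon_cTm(c)|\le\sup_T\nu_T(a)$. The comparison $q\le s\le2q$ splits a signed sum into its positive and negative parts, whose unions lie below $a$. Allowing coefficients in $[-1,1]$ does not change $s$, because the sum is affine in each coefficient; its $p$-value is therefore a convex function bounded by the values at the extreme sign points, and I induct on the coordinates. Monotonicity follows by extending a partition of $a$ with $b\setminus a$ carrying coefficient $0$. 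Subadditivity follows by splitting a partition of $a\vee b$ along $a$ and $b\setminus a$.

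\emph{Step 3: the lift.} Additivity of $\kappa$ and $\lambda$ is coordinatewise. The identity $P\kappa(a)=s(a)$ is Step 2. Since $P(Jm(a))=p(m(a))\le q(a)\le s(a)$, I get $Q\lambda(a)=s(a)\le h$. The set $\{Q\le h\}$ is bo-stable by the contraction argument used in Lemma~\ref{sem:target}. The set $JY_h\times E$ is bo-closed by Lemma~\ref{km:embedding}, applied to the bo-complete $Y_h$, and by the remarks after Definition~\ref{km:def-lns}; this gives the inclusion for $M$. The set $E\times E$ is bo-complete by Lemma~\ref{km:embedding}. Hence the stable subset $M$ is complete by Proposition~\ref{gen:inheritance}, and $M-M\subseteq\{Q\le2h\}$ by the triangle inequality.

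\emph{Main obstacle.} The step I expect to be hardest is countable additivity of $\nu_T$ in order convergence. I would first prove it for the scalar charge via the tail bound $\nu_T(r_N)$, with $r_N$ the remainders, using that $\sup_N$ of finite partition sums is controlled by an order-limit passage in $Tm$.
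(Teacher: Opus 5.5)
Your proposal is correct and follows essentially the same route as the paper. The steps match: the lattice identity $\sum_c|Tm(c)|=\bigvee_{\varepsilon}T(\sum_c\varepsilon_cm(c))$ bounds $\nu_T$ by $s$; the isometry $P(Jy)=p(y)$ gives $s=\sup_T\nu_T$; and stability of the norm ball and of $JY_h\times E$ inside the complete space $E\times E$ gives completeness of $M$. Your coordinatewise convexity induction is the paper's product-weight formula $\alpha_\varepsilon=\prod_j\frac{1+\varepsilon_jt_j}{2}$ unrolled. Your tail estimate $\nu_T(a\setminus\bigvee_{n\le N}a_n)\downarrow0$ is, via finite additivity, exactly the paper's argument that each fixed partition sum of $a$ is an order limit of partition sums of the $a_n$, all bounded by $\sup_n\nu_T(a_n)$. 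You do not derive $q\le s$; it follows from the coefficient-$0$ case, and you should state that explicitly.
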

\begin{proof}
For $t_j\in[-1,1]$, the numbers
$\alpha_\varepsilon=\prod_j\frac{1+\varepsilon_jt_j}{2}$ satisfy
$\alpha_\varepsilon\ge0$, $\sum_\varepsilon\alpha_\varepsilon=1$ and
$\sum_\varepsilon\alpha_\varepsilon\varepsilon_j=t_j$.
Therefore, for every partition $(c_j)$ of $a$,
\[
 p\Bigl(\sum_jt_jm(c_j)\Bigr)
 \le\sum_\varepsilon\alpha_\varepsilon
       p\Bigl(\sum_j\varepsilon_jm(c_j)\Bigr)\le s(a).
\]
The coefficients $1,0$ on $c,a\setminus c$ give $q(a)\le s(a)$.
For signs $\varepsilon_j$, put $a_\pm=\bigvee_{\varepsilon_j=\pm1}c_j$.
Then $p(m(a_+)-m(a_-))\le2q(a)$, proving $s(a)\le2q(a)$.
If $a\le b$, adjoining $b\setminus a$ with coefficient zero gives $s(a)\le s(b)$.
For a partition $(c_j)$ of $a\vee b$, splitting each $c_j$ into
$c_j\wedge a$ and $c_j\wedge(b\setminus a)$ gives the bound
$s(a)+s(b\setminus a)\le s(a)+s(b)$ for its coefficient sum.
Taking the partition supremum proves finite subadditivity.

Finite distributivity gives
\[
 \sum_{c\in\pi}|Tm(c)|
 =\bigvee_{\varepsilon_c=\pm1}T\Bigl(\sum_{c\in\pi}\varepsilon_cm(c)\Bigr)\le s(a).
\]
Thus $\nu_T$ exists. For disjoint $a,b$ and $\pi\in\Pi_{\mathcal H}(a\vee b)$,
\[
 \sum_{c\in\pi}|Tm(c)|
 \le\sum_{c\in\pi}|Tm(c\wedge a)|
   +\sum_{c\in\pi}|Tm(c\wedge b)|\le\nu_T(a)+\nu_T(b).
\]
The supremum over $\pi$ gives $\nu_T(a\vee b)\le\nu_T(a)+\nu_T(b)$. Conversely, the union of partitions $\pi_a$ of $a$ and $\pi_b$ of $b$ partitions $a\vee b$, so
\[
 \sum_{c\in\pi_a}|Tm(c)|+\sum_{d\in\pi_b}|Tm(d)|\le\nu_T(a\vee b).
\]
Taking the supremum first over $\pi_a$ and then over $\pi_b$ gives
$\nu_T(a)+\nu_T(b)\le\nu_T(a\vee b)$, since addition preserves bounded suprema in each variable. This proves positivity and finite additivity. If $a_n\uparrow a$ in the source, each source partition $\pi$ of $a$ satisfies
\[
 \sum_{c\in\pi}|Tm(c\wedge a_n)|\to_o\sum_{c\in\pi}|Tm(c)|,
 \qquad \sum_{c\in\pi}|Tm(c\wedge a_n)|\le\sup_n\nu_T(a_n).
\]
The supremum over $\pi$ proves $\nu_T(a_n)\uparrow\nu_T(a)$, hence its premeasure property. The norming identity gives the remaining equality:
\[
 p\Bigl(\sum_c\varepsilon_cm(c)\Bigr)
 =\sup_T\Bigl|\sum_c\varepsilon_cTm(c)\Bigr|\le\sup_T\nu_T(a)\le s(a).
\]
The identities $P\kappa(a)=s(a)$ and $P(Jm(a))=p(m(a))\le s(a)$ prove~\eqref{sem:lift-norm}. A bo-Cauchy net $(z_\alpha,w_\alpha)$ in $Z$ satisfies
$P(z_\alpha-z_\beta),P(w_\alpha-w_\beta)\le r_\gamma$ beyond each Cauchy threshold.
Lemma~\ref{sem:representation} gives coordinate limits $z,w$ with
$P(z_\alpha-z),P(w_\alpha-w)\le r_\gamma$ beyond that threshold.
Hence $Q((z_\alpha,w_\alpha)-(z,w))\le r_\gamma$. Thus $Z$ is bo-complete. If $Q(z_\alpha)\le h$ and $z_\alpha\to_{\rm bo}z$ with regulator $r_\gamma\downarrow0$, then $Q(z)\le h+r_\gamma$ for every $\gamma$, giving $Q(z)\le h$. Hence the norm ball is stable. The range $JY_h\times E$ is stable by Lemma~\ref{sem:representation}. Both sets contain the source lift and therefore contain $M$. A Cauchy net in $M$ converges in $Z$, and stability places its limit in $M$. Finally $Q(v-w)\le Q(v)+Q(w)\le2h$ for $v,w\in M$.
\end{proof}

The coordinates are additive, but countable additivity in the supremum vector norm requires one regulator valid for all of them. We obtain this estimate from hereditary exhaustivity of the original premeasure through Lemma~\ref{gen:ut}. It proves that the whole lift is a premeasure and allows us to construct its upper values from source exhaustions.

\begin{lemma}[Exhaustivity and upper values]\label{sem:source}\label{sem:upper-lift}\label{sem:upper-envelope-properties}
Suppose $m$ is uniformly hereditarily exhaustive. Then $\kappa$ and $\lambda$ are uniformly hereditarily exhaustive bo-premeasures. For $u\in\mathcal H_\sigma$, put
\[
 s_\sigma(u)=\sup_{a\in\mathcal H,\ a\le u}s(a),\quad
 \nu_{T,\sigma}(u)=\sup_{a\in\mathcal H,\ a\le u}\nu_T(a).
\]
Every increasing source exhaustion $a_n\uparrow u$ satisfies
\begin{equation}\label{sem:upper-envelope-identities}
 \nu_{T,\sigma}(u)=\sup_n\nu_T(a_n),\quad
 s_\sigma(u)=\sup_ns(a_n)=\sup_T\nu_{T,\sigma}(u),\quad
 Q\lambda_\sigma(u)=s_\sigma(u),
\end{equation}
where $\lambda_\sigma(u)=\mathrm{bo}\text{-}\lim_n\lambda(a_n)\in M$ is independent of the exhaustion. Moreover
\begin{equation}\label{sem:whole-shell}
 \lambda_\sigma(u)\in\Gamma_\lambda(u)
 :=\clo_{\rm bo}\{\lambda(a):a\in\mathcal H,\ a\le u\}
 \subseteq\{Q\le s_\sigma(u)\},\qquad
 s_\sigma(u\setminus a_n)\downarrow0.
\end{equation}
\end{lemma}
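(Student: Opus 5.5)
The plan is to reduce everything to a single uniform estimate on the semivariation of disjoint tails, obtained from Lemma~\ref{gen:ut} applied to $m$ in $(Y_h,p)$. The range principle $\mathsf B_\omega$ needed there is supplied by Lemma~\ref{sem:target} on $\{p\le 2h\}$. For a disjoint source sequence $(h_n)$, that lemma gives $(T_N)_N\Rightarrow0$, where $T_N$ is the set of values $m(c)$ with $c\le\bigvee_{n\in F}h_n$ and $F\subseteq\{N,N+1,\ldots\}$ finite. Everything is bounded by $h$, so criterion~\eqref{sem:bounded-evaluation} turns this into
\[
 t_N=\sup\{p(m(c)):c\in T\text{-admissible at }N\}\downarrow0 .
\]
Since $s\le2q$ by \eqref{sem:coordinate-variation}, we get $s(c)\le 2t_N$ for every admissible $c$. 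By \eqref{sem:lift-norm}, this gives $Q\lambda(c)=s(c)\le2t_N$ and $P\kappa(c)\le 2t_N$. This is uniform hereditary exhaustivity of $\kappa$ and $\lambda$, in the strong tail form.

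Next I would prove the premeasure property. For $a_n\uparrow a$ in $\mathcal H$, apply the estimate to the increments $d_j=a_j\setminus a_{j-1}$. Finite additivity gives $\lambda(a)-\lambda(a_n)=\lambda(a\setminus a_n)$ and $Q\lambda(a\setminus a_n)=s(a\setminus a_n)$. The key point is that $s(a\setminus a_n)$ is dominated by $2t_{n+1}$ for the tails of $(d_j)$, even though $a\setminus a_n$ is an infinite join. To see this, I would use that for each partition $\pi$ of $a\setminus a_n$ and sign choice, the vector $\sum\varepsilon_c m(c)$ is a bo-limit of the same expression with $c$ replaced by $c\wedge(a_k\setminus a_n)$. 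This follows from source bo-countable additivity of $m$ applied to each of the finitely many cells. The approximants have norm at most $2t_{n+1}$, and the norm inequality passes to bo-limits. Hence $s(a\setminus a_n)\le 2t_{n+1}\downarrow0$, so $\lambda(a_n)\to_{\rm bo}\lambda(a)$, and $\lambda$, hence $\kappa$, is a bo-premeasure.

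For the upper values, let $a_n\uparrow u\in\mathcal H_\sigma$. Lemma~\ref{gen:sigma-construction}, applied to $\lambda$, yields the exhaustion-independent limit $\lambda_\sigma(u)\in M$ and $\Gamma_\lambda(u\setminus a_n)\Rightarrow0$. Its hypotheses are completeness of $M$ (Lemma~\ref{sem:variation}), (HE) from the first step, and $\mathsf B_\omega(D_h)$, $\mathsf S$, $\mathsf U$ from Lemma~\ref{sem:target} applied to $(Z,Q,F_h)$. For the identities in \eqref{sem:upper-envelope-identities}, take any source $a\le u$. Then $a\wedge a_n\uparrow a$, and monotone order continuity of the positive premeasures $\nu_T$ and of $s$ gives $s(a)=\sup_n s(a\wedge a_n)\le\sup_n s(a_n)$. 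For $s$, this continuity comes from the tail bound just proved, combined with subadditivity. This proves $s_\sigma(u)=\sup_n s(a_n)$, and the same argument handles each $\nu_{T,\sigma}$. Interchanging suprema gives $s_\sigma(u)=\sup_T\nu_{T,\sigma}(u)$.

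Finally, $Q\lambda_\sigma(u)=s_\sigma(u)$ follows from continuity of $Q$ under bo-limits, namely $|Q(x)-Q(y)|\le Q(x-y)$, together with $Q\lambda(a_n)=s(a_n)\uparrow s_\sigma(u)$. The inclusion $\Gamma_\lambda(u)\subseteq\{Q\le s_\sigma(u)\}$ holds because that ball is bo-stable and contains the generators. For $s_\sigma(u\setminus a_n)\downarrow0$, I would reuse the first-step bound $s(c)\le 2t_{n+1}$ for source $c\le u\setminus a_n$. The main obstacle is the limit passage in the second step: showing that semivariation over infinite joins is still controlled by the finite-tail bound. This needs care because $s$ is only subadditive, so the argument must go through partitions and bo-closedness of norm balls rather than through additivity.
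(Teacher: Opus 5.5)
Your proof is correct. Its overall plan is the paper's: hereditary exhaustivity of $m$, then a uniform finite-tail bound on $s$, then the premeasure property, then the upper values. It differs from the paper in three sub-steps.

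First, you apply Lemma~\ref{gen:ut} to $m$ itself in $(Y_h,p)$, with $\mathsf B_\omega$ supplied by Lemma~\ref{sem:target} on $\{p\le2h\}$, and only afterwards pass to $s$ via $s\le2q$. The paper instead first proves single-cell hereditary exhaustivity of $\kappa$ and then applies Lemma~\ref{gen:ut} to $\kappa$ in $(E,P)$. The two tail bounds agree up to a factor $2$.

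Second, for the premeasure property the paper combines the already-proved countable additivity of each $\nu_T$ with $s=\sup_T\nu_T$ and an interchange of suprema. You instead pass to bo-limits in signed partition sums cell by cell and use order-closedness of norm balls, and you get continuity of $s$ from subadditivity. For these steps your version works directly from the definition of $s$.

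Third, the paper constructs $\lambda_\sigma(u)$ by hand. It uses the explicit Cauchy bound $Q(\lambda(a_p)-\lambda(a_q))\le s_\sigma(u\setminus a_N)$, completeness of $M$, and a direct independence argument. This needs no summation principle and yields the quantitative estimate $Q(\lambda_\sigma(u)-\lambda(a_n))\le s_\sigma(u\setminus a_n)$. You invoke Lemma~\ref{gen:sigma-construction} instead. That is shorter, but it requires that section's full standing hypotheses for $\lambda$, including $\mathsf S_{\rm seq}(D_h)$. Lemma~\ref{sem:target} does supply these, so the route is valid; it delivers the shell estimate as a nullity statement rather than an explicit order bound.

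One point of precision in your last step. A source $c\le u\setminus a_n$ is not below a finite join of the increments, so your first-step bound does not apply to it verbatim. There are two ways to close this:
\begin{itemize}
\item Repeat the limit passage of your second step along $c\wedge a_k\uparrow c$, using $c\wedge a_k\le a_k\setminus a_n$. This is exactly what the paper does.
\item Read $s_\sigma(u\setminus a_n)\downarrow0$ off the nullity of $\Gamma_\lambda(u\setminus a_n)$, which you already obtained from Lemma~\ref{gen:sigma-construction}, via \eqref{sem:bounded-evaluation} and $Q\lambda(c)=s(c)$.
\end{itemize}
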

\begin{proof}
For disjoint source $a_n$, hereditary exhaustivity and boundedness give
\[
 u_N=\sup_{n\ge N,\ c\le a_n}p(m(c))\downarrow0,
 \qquad Q\lambda(c)=P\kappa(c)=s(c)\le2u_N
 \quad(c\le a_n,\ n\ge N).
\]
Thus both finitely additive lifts are hereditarily exhaustive.
Apply Lemma~\ref{sem:target} with $W=E$, $N=P$,
$K=\clo_{\rm bo}\kappa(\mathcal H)$, and
$D_\kappa=\{P\le2h\}$. The source bounds $P\kappa(a)=s(a)\le h$
put $\kappa(\mathcal H)$ in $D_\kappa$, and that lemma proves
$\mathsf B_\omega(D_\kappa)$. Thus Lemma~\ref{gen:ut} applies to the
finitely additive map $\kappa:\mathcal H\to E$ with this difference
set and with the hereditary estimate just proved. For a disjoint
source sequence $(b_n)$, its null tail families and
\eqref{sem:bounded-evaluation} give
\[
 r_N:=\sup\Bigl\{s(c):c\in\mathcal H,\ c\le\bigvee_{n\in A}b_n,
 \ A\subseteq\{N,N+1,\ldots\}\text{ finite}\Bigr\}\downarrow0.
\]
If $a_n\downarrow0$ in the source, use $b_n=a_n\setminus a_{n+1}$. Coordinate countable additivity, already proved, gives
\[
 s(a_N)=\sup_T\sup_{K\ge N}\sum_{n=N}^K\nu_T(b_n)
 =\sup_{K\ge N}s\Bigl(\bigvee_{n=N}^Kb_n\Bigr)\le r_N.
\]
Therefore $s(a_N)\downarrow0$. For a disjoint source union $a=\bigvee_na_n\in\mathcal H$, both lift errors have norm
$s(a\setminus\bigvee_{n\le N}a_n)\downarrow0$.
In particular, the partial sums converge to the prescribed values $\kappa(a)$ and $\lambda(a)$.

For an exhaustion $a_n\uparrow u$ and $c\in\mathcal H$ with
$c\le u$, the increasing sequence $c\wedge a_n$ has supremum
$c$ in the source. The proved positive premeasure property gives
\[
 \nu_T(c)=\sup_n\nu_T(c\wedge a_n)\le\sup_n\nu_T(a_n)
 \le\nu_{T,\sigma}(u).
\]
The supremum over $c$ gives
$\nu_{T,\sigma}(u)=\sup_n\nu_T(a_n)$. All of the following
suprema are bounded by $h$, and their index sets are products, so
\[
 \begin{split}
 s_\sigma(u)
 &=\sup_{c\in\mathcal H,\ c\le u}\sup_T\nu_T(c)
 =\sup_T\sup_{c\in\mathcal H,\ c\le u}\nu_T(c)\\
 &=\sup_T\sup_n\nu_T(a_n)
 =\sup_n\sup_T\nu_T(a_n)=\sup_ns(a_n).
 \end{split}
\]
Set $a_0=0$ and apply the finite-tail estimate above to
$b_n=a_n\setminus a_{n-1}$. For $c\le u\setminus a_N$ in the source
and $j>N$, one has
\[
 c\wedge a_j\le a_j\setminus a_N
 =\bigvee_{N<n\le j}b_n.
\]
For $j\le N$, the element $c\wedge a_j$ is zero. Hence
$Q\lambda(c\wedge a_j)\le r_{N+1}$.
Since $c\wedge a_j\uparrow c\in\mathcal H$, the premeasure property and norm continuity give $s(c)\le r_{N+1}$.
The defining supremum is bounded by $h$, and therefore
$s_\sigma(u\setminus a_N)\le r_{N+1}\downarrow0$.
For every $N\ge1$ and $p,q\ge N$, finite additivity gives
\[
 Q(\lambda(a_p)-\lambda(a_q))
 =s(a_{\max\{p,q\}}\setminus a_{\min\{p,q\}})
 \le s_\sigma(u\setminus a_N).
\]
Completeness of $M$ gives a limit $z_a\in M$. For another exhaustion
$d_k\uparrow u$, this estimate constructs a limit $z_d\in M$.
At each fixed $n$, the differences
$(a_n\wedge d_k)\setminus(a_n\wedge d_{k-1})$, with $d_0=0$,
are disjoint source elements with supremum $a_n$. Thus the proved
premeasure property gives
$\lambda(a_n\wedge d_k)\to_{\rm bo}\lambda(a_n)$. The identity
\[
 \lambda(d_k)-\lambda(a_n\wedge d_k)=\lambda(d_k\setminus a_n),
 \qquad Q\lambda(d_k\setminus a_n)\le s_\sigma(u\setminus a_n)
\]
gives $Q(z_d-\lambda(a_n))\le s_\sigma(u\setminus a_n)$ by
passing to the bo-limit in $k$ and using
$|Q(v)-Q(w)|\le Q(v-w)$. Consequently
$\lambda(a_n)\to_{\rm bo}z_d$, and separation gives $z_d=z_a$.
Write this common value as $\lambda_\sigma(u)$.
Coordinate and norm estimates identify its values explicitly:
\[
 (\lambda_\sigma(u))_{2,T}
 =o\text{-}\lim_n\nu_T(a_n)=\nu_{T,\sigma}(u),\qquad
 Q\lambda_\sigma(u)=o\text{-}\lim_ns(a_n)=s_\sigma(u).
\]
The source exhaustion lies in
$\{\lambda(c):c\in\mathcal H,\ c\le u\}$, so its limit belongs
to $\Gamma_\lambda(u)$. Every source value in this set has norm at
most $s_\sigma(u)$. For any bo-convergent net $z_\alpha$ with
$Qz_\alpha\le s_\sigma(u)$, a regulator $r_\gamma\downarrow0$
gives $Qz\le s_\sigma(u)+r_\gamma$ for every $\gamma$, hence
$Qz\le s_\sigma(u)$. This stable norm ball contains the whole hull,
proving \eqref{sem:whole-shell}.
\end{proof}

\begin{definition}[The full regulator domain]\label{sem:domain}\label{km:def-sigma}
For $\ell\in\mathcal H_\delta$, put
\[
 \lambda_\delta(\ell)=\lambda_\sigma(1)-\lambda_\sigma(\ell^c),\quad
 \nu_{T,\delta}(\ell)=\nu_{T,\sigma}(1)-\nu_{T,\sigma}(\ell^c),\quad
 s_\delta(\ell)=\sup_T\nu_{T,\delta}(\ell).
\]
Define $\Sigma_s(\mathcal H)$ by the existence of one bounded regulator $a_{i,k}\downarrow_k0$ and sandwiches
\begin{equation}\label{sem:domain-shell}
 \ell_\varphi\in\mathcal H_\delta,\quad u_\varphi\in\mathcal H_\sigma,
 \quad\ell_\varphi\le b\le u_\varphi,
 \quad s_\sigma(u_\varphi\setminus\ell_\varphi)\le\sup_i a_{i,\varphi(i)}
 \quad(\varphi\in\mathbb N^{\mathbb N}).
\end{equation}
Regulator entries may be taken in $F_h$. The projection $L(Jy,w)=y$ on $JY_h\times E$ defines $m_\sigma=L\lambda_\sigma$ and $m_\delta=L\lambda_\delta$.
\end{definition}
The lower lift belongs to $M$ by Lemma~\ref{gen:delta-construction}, so both projections are defined. Lower semivariation is recovered from the positive lower coordinate values.

The sandwich condition uses only the upper values already constructed from the source. We identify its domain with the intrinsic regulated class by turning regulator bounds into finite-sum certificates and recovering those bounds from the value of the whole shell. Theorem~\ref{gen:extension} then constructs $\bar\lambda$, and the continuous projection $L$ gives $\bar m$ on this domain. To prove preservation of variation and semivariation, we use all finite partitions in the enlarged domain.

\begin{theorem}[Extension with exact variation and semivariation]\label{sem:extension}\label{sem:domain-identification}\label{sem:exact-variation}\label{sem:exact-semivariation}
Under the hypotheses above, including hereditary exhaustivity,
\begin{equation}\label{sem:domain-equality}
 \Sigma_s(\mathcal H)=\mathcal R_{\rm seq}(\lambda,D_h).
\end{equation}
This domain is a Boolean sigma-subalgebra containing $\sigma(\mathcal H)$. The extension $\bar\lambda$ supplied by Theorem~\ref{gen:extension} defines a bo-countably additive $\bar m=L\bar\lambda$ and positive order-countably additive coordinates $\bar\nu_T=(\bar\lambda)_{2,T}$ with
\begin{equation}\label{sem:variation-preserved}
 |T\bar m|(b)=\bar\nu_T(b),\qquad
 \bar s(b)=\sup_T\bar\nu_T(b)=Q\bar\lambda(b)\le h.
\end{equation}
Here the variation and semivariation use all finite partitions in the full domain. They extend $\nu_T$ and $s$. On the complete upper and lower directed families,
\begin{equation}\label{sem:full-upper}
 m_\sigma(u)\to_{\rm bo}\bar m(b),\qquad
 m_\delta(\ell)\to_{\rm bo}\bar m(b)\quad
 (u\supseteq b,\ \ell\subseteq b),
\end{equation}
with upper covers ordered by reverse inclusion and lower approximants by inclusion. Moreover
\begin{equation}\label{sem:regularity}
 s_\sigma(u)\downarrow\bar s(b),\quad s_\delta(\ell)\uparrow\bar s(b),
 \qquad\bar s(u\setminus b)\downarrow0,\quad\bar s(b\setminus\ell)\downarrow0.
\end{equation}
Uniqueness holds among all bo-countably additive extensions on $\sigma(\mathcal H)$ and among those with full lower regularity on $\Sigma_s(\mathcal H)$.
\end{theorem}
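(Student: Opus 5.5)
The plan has three stages: verify the hypotheses of Theorem~\ref{gen:extension} for the lift $\lambda:\mathcal H\to Z$, identify the regulated domain with $\Sigma_s(\mathcal H)$, and then project and read off the variation identities coordinatewise. The range data are supplied by earlier lemmas.
\begin{itemize}
\item Lemma~\ref{sem:variation} gives completeness of $M$ and $M-M\subseteq D_h$.
\item Lemma~\ref{sem:source} makes $\lambda$ a hereditarily exhaustive bo-premeasure.
\item Lemma~\ref{sem:target}, applied with $W=Z$, $N=Q$, $K=M$, $D=D_h$, gives $\mathsf B_\omega(D_h)$, $\mathsf S_{\rm seq}(D_h)$ and $\mathsf U(M)$.
\end{itemize}
Theorem~\ref{gen:extension} then yields a Boolean $\sigma$-algebra $\mathcal R=\mathcal R_{\rm seq}(\lambda,D_h)\supseteq\sigma(\mathcal H)$, a countably additive extension $\bar\lambda$, the full nets $\lambda_\sigma(u)\to\bar\lambda(b)$ and $\lambda_\delta(\ell)\to\bar\lambda(b)$, and both uniqueness clauses.

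For \eqref{sem:domain-equality}, begin with $b\in\Sigma_s$ and its regulator $a_{i,k}$. Take the decreasing null rows $U_{i,k}=\{Q\le a_{i,k}\}\cap D_h$. By \eqref{sem:whole-shell},
\[
\Gamma_\lambda(u_\varphi\setminus\ell_\varphi)\subseteq\{Q\le\sup_i a_{i,\varphi(i)}\}.
\]
Lemma~\ref{sem:ball-decomposition} places this ball inside the bo-hull of the finite selected sums of the rows; intersecting with $D_h$ turns \eqref{sem:domain-shell} into a certificate \eqref{gen:certificate}. Conversely, given a certificate with rows $U_{i,k}$, set $a_{i,k}=\sup_{w\in U_{i,k}}Q(w)\wedge 2h$, which decreases to $0$ in $k$. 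The summation bound \eqref{sem:summation-bound} gives $Q_\varphi\subseteq\{Q\le q_\varphi\}$ with $q_\varphi=\sup_i(2h\wedge 2^ia_{i,\varphi(i)})$. Since $\lambda_\sigma(u_\varphi\setminus\ell_\varphi)\in\Gamma_\lambda\subseteq Q_\varphi$ and $Q\lambda_\sigma=s_\sigma$, we get $s_\sigma(u_\varphi\setminus\ell_\varphi)\le q_\varphi$. The rescaled capped regulator $(2h\wedge2^ia_{i,k})_{i,k}$ is bounded, so it witnesses $b\in\Sigma_s$.

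Next, project. $L$ is bo-continuous because $P(Jy)=p(y)$, and $JY_h\times E$ is stable. Hence $\bar\lambda$ takes values there, $\bar m=L\bar\lambda$ is bo-countably additive, and \eqref{sem:full-upper} follows from the full nets together with Proposition~\ref{gen:transport}. The coordinates $\bar\nu_T$ are positive and order-countably additive, since each coordinate map is an order-continuous homomorphism and the lift values lie in a positive cone that is stable.

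The main obstacle is \eqref{sem:variation-preserved}, because it involves partitions in the enlarged domain. For $b\in\mathcal R$, the partition inequality $\sum_{c\in\pi}|T\bar m(c)|\le\sum_c\bar\nu_T(c)=\bar\nu_T(b)$ requires $|T\bar m|\le\bar\nu_T$ on $\mathcal R$. This should follow by a monotone-class argument: $\nu_T\pm Tm\ge0$ on $\mathcal H$, both sides are countably additive, and they are upper regular, so the inequality persists in the limit.

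The reverse inequality is the delicate part. I would approximate each $\bar\nu_T(b)$ from below by $\nu_T$ of source sets. Lower regularity gives $\bar\nu_T(b)-\bar\nu_T(\ell)\to0$. Upper regularity applied to $\ell^c$ should yield source $a\le b$, up to a small shell, with $\nu_T(a)$ close to $\bar\nu_T(b)$. A source partition of $a$ that nearly attains $\nu_T(a)$ is a partition in $\mathcal R$; adjoining $b\setminus a$, with error controlled by the shell, gives $|T\bar m|(b)\ge\bar\nu_T(b)$.

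Then
\[
\bar s(b)=\sup_T|T\bar m|(b)=\sup_T\bar\nu_T(b)=P\bigl((\bar\lambda)_2(b)\bigr)=Q\bar\lambda(b),
\]
where $\bar s(b)=\sup_T|T\bar m|(b)$ is obtained by repeating the norming argument of Lemma~\ref{sem:variation} with partitions in $\mathcal R$. Finally, \eqref{sem:regularity} comes from norms of the full-net differences: $Q(\lambda_\sigma(u)-\bar\lambda(b))=\bar s(u\setminus b)\to0$ gives the shell limits. The monotone limits of $s_\sigma$ and $s_\delta$ then follow from $|Q(v)-Q(w)|\le Q(v-w)$ together with monotonicity of $s_\sigma(u)$ in $u$ and of $s_\delta(\ell)$ in $\ell$.
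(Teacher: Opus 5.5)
Your proposal is correct, and most of it follows the paper's own argument step by step. This covers the verification of the hypotheses of Theorem~\ref{gen:extension}, both inclusions in \eqref{sem:domain-equality}, the projection by $L$, the semivariation identity, and \eqref{sem:regularity}. Your rows $\{Q\le a_{i,k}\}\cap D_h=\{Q\le a_{i,k}\wedge 2h\}$ are exactly the paper's clipping of the regulator.

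The genuine difference is in the identity $|T\bar m|=\bar\nu_T$.
\begin{itemize}
\item \emph{The paper's route.} It gets $|T\bar m|\le\bar\nu_T$ from the single bo-stable cone $\{(z,w):0\le w_T,\ |z_T|\le w_T\}$, which contains $M$. It then proves that $w=|T\bar m|$ is finitely additive and continuous along increasing sequences, hence countably additive, with $w=\nu_T$ on $\mathcal H$. The Dynkin uniqueness of Proposition~\ref{gen:uniqueness} gives $w=\bar\nu_T$ on $\sigma(\mathcal H)\supseteq\mathcal H_\delta$. Lower regularity and monotonicity of $w$ then give $w(b)\ge\sup_{\ell\le b}w(\ell)=\bar\nu_T(b)$.
\item \emph{Your route.} You get the upper bound by passing $\nu_T\pm Tm\ge0$ through the upper nets, which is equivalent to the cone argument. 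You get the lower bound by a density argument. For a source $a$, cut every source partition of $a$ by $b$ and adjoin $b\setminus a$; this gives $|T\bar m|(b)+\bar\nu_T(a\setminus b)\ge\nu_T(a)$, hence $|T\bar m|(b)+\bar\nu_T(a\symdiff b)\ge\bar\nu_T(b)$. It then remains to show $\inf_{a\in\mathcal H}\bar\nu_T(a\symdiff b)=0$. This follows from a cover $u\supseteq b$ in $\mathcal H_\sigma$ with $\bar\nu_T(u\setminus b)$ small, together with a source exhaustion of $u$. Your variant, using $\ell$ and source exhaustions of $\ell^c$ and of $1$, also works.
\end{itemize}
Your route avoids proving countable additivity of $|T\bar m|$ and avoids any appeal to uniqueness, at the price of this approximation step. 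The paper's route instead reuses uniqueness machinery it already has, and it yields countable additivity of the variation directly.

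Three points need tightening.
\begin{itemize}
\item The source set $a$ cannot in general be taken below $b$. An element of $\mathcal H_\delta$ may contain no nonzero source element; a fat Cantor set for the ring of finite unions of half-open intervals is an example. So the cut-and-error step, with error $\bar\nu_T(a\setminus b)$, is essential and not cosmetic.
\item A partition that \emph{nearly attains} $\nu_T(a)$ should be replaced by taking the supremum over all source partitions of $a$ in the displayed inequality.
\item The uniqueness clauses of Theorem~\ref{gen:extension} concern extensions of $\lambda$, not of $m$. For competing extensions of $m$ into $Y$ you must do what the paper does. Rerun the Dynkin argument of Proposition~\ref{gen:uniqueness} with source $m$ and bo-convergence. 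For the lower-regular clause, first check $n(u)=m_\sigma(u)$ and $n(\ell)=m_\delta(\ell)$, then pass to the lower limit.
\end{itemize}
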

\begin{proof}
For an intrinsic certificate, let $r_{i,k}=\sup_{z\in U_{i,k}}Q(z)\downarrow_k0$. Equation~\eqref{sem:summation-bound} and the whole-shell member in~\eqref{sem:whole-shell} give
\[
 s_\sigma(u_\varphi\setminus\ell_\varphi)
 \le\sup_i(2h\wedge2^ir_{i,\varphi(i)}).
\]
The entries $2h\wedge2^ir_{i,k}$ decrease to zero in $k$ and lie in $[0,2h]$, proving
$\mathcal R_{\rm seq}(\lambda,D_h)\subseteq\Sigma_s(\mathcal H)$.
Conversely, replace a domain regulator by $h\wedge a_{i,k}$, using
$h\wedge\sup_i a_{i,\varphi(i)}=\sup_i(h\wedge a_{i,\varphi(i)})$.
For the resulting rows $U_{i,k}=\{Q\le a_{i,k}\}\subseteq D_h$, put
$A_\varphi=\bigcup_{A\text{ finite}}\sum_{i\in A}U_{i,\varphi(i)}$ and
$v_\varphi=u_\varphi\setminus\ell_\varphi$. Lemma~\ref{sem:ball-decomposition} gives
\[
 \Gamma_\lambda(v_\varphi)
 \subseteq\{Q\le s_\sigma(v_\varphi)\}
 \subseteq\{Q\le\sup_i a_{i,\varphi(i)}\}
 \subseteq\clo_{\rm bo}A_\varphi.
\]
The first set lies in $\{Q\le h\}\subseteq D_h$, proving the reverse domain inclusion.

Lemma~\ref{sem:source} makes $\lambda$ a hereditarily exhaustive premeasure.
Lemma~\ref{sem:variation} gives the complete stable range $M$ and $M-M\subseteq D_h$.
Lemma~\ref{sem:target} gives $\mathsf S_{\rm seq}(D_h)$, $\mathsf B_\omega(D_h)$, and $\mathsf U(M)$.
Thus Theorem~\ref{gen:extension} supplies $\bar\lambda$ and its full approximation nets. The continuous additive projection $L$ and the coordinate projections preserve these limits and countable additivity. The cone
\[
 \{(z,w):0\le w_T,\ |z_T|\le w_T\text{ for every }T\}
\]
is bo-stable. Indeed, for a net $(z^\alpha,w^\alpha)$ in this cone
converging to $(z,w)$ with product-norm error at most $r_\gamma$
for $\alpha\ge\alpha_\gamma$, every coordinate satisfies
\[
 w_T\ge w_T^\alpha-r_\gamma\ge-r_\gamma,
 \qquad |z_T|\le|z_T^\alpha|+r_\gamma
 \le w_T^\alpha+r_\gamma\le w_T+2r_\gamma.
\]
The infimum over $\gamma$ gives $w_T\ge0$ and $|z_T|\le w_T$.
The source inequalities $0\le\nu_T(a)$ and
$|Tm(a)|\le\nu_T(a)$ put $\lambda(\mathcal H)$ in this cone,
so stability puts $M$ in it. Thus
$|T\bar m(b)|\le\bar\nu_T(b)$ and $Q\bar\lambda(b)=\sup_T\bar\nu_T(b)$.

Fix $T$ and let $w=|T\bar m|$. Coordinate domination bounds every partition sum by $\bar\nu_T(b)$, so $w$ exists and $w\le\bar\nu_T$. For disjoint $b,d$ and a partition $\pi$ of their join,
\[
 \sum_{c\in\pi}|T\bar m(c)|
 \le\sum_{c\in\pi}|T\bar m(c\wedge b)|+
       \sum_{c\in\pi}|T\bar m(c\wedge d)|\le w(b)+w(d).
\]
Conversely, partitions $\pi_b$ and $\pi_d$ give
\[
 \sum_{c\in\pi_b}|T\bar m(c)|+\sum_{c\in\pi_d}|T\bar m(c)|\le w(b\vee d).
\]
Their independent suprema prove finite additivity of $w$. For $b_n\uparrow b$, each partition $\pi$ of $b$ gives
\[
 \sum_{c\in\pi}|T\bar m(c\wedge b_n)|\to_o\sum_{c\in\pi}|T\bar m(c)|,
 \qquad\sum_{c\in\pi}|T\bar m(c\wedge b_n)|\le\sup_nw(b_n).
\]
Let $r=\sup_nw(b_n)\le w(b)$. The displayed order limits lie in
the order-stable interval $[0,r]$, so each partition sum for $b$
is at most $r$. Their supremum gives $w(b)\le r$. Thus
$w(b_n)\uparrow w(b)$. For disjoint $(b_n)$, apply this identity
to the finite joins $d_N=\bigvee_{n\le N}b_n$ and use
$w(d_N)=\sum_{n\le N}w(b_n)$ to obtain countable additivity.
For $a\in\mathcal H$, source partitions and domination give
\[
 \nu_T(a)
 =\sup_{\pi\in\Pi_{\mathcal H}(a)}\sum_{c\in\pi}|Tm(c)|
 \le w(a)\le\bar\nu_T(a)=\nu_T(a).
\]
Proposition~\ref{gen:uniqueness}, with source map $\nu_T$,
range $F_h$ with order convergence, and competing countably
additive maps $w|_{\sigma(\mathcal H)}$ and
$\bar\nu_T|_{\sigma(\mathcal H)}$, gives equality on that
generated sigma-algebra. In particular the two maps agree on
$\mathcal H_\delta$. The full lower net for the positive coordinate
is increasing and converges to $\bar\nu_T(b)$, so its supremum is
$\bar\nu_T(b)$. Therefore
\[
 w(b)\ge\sup_{\ell\in\mathcal H_\delta,\ \ell\le b}w(\ell)
 =\sup_{\ell\le b}\nu_{T,\delta}(\ell)=\bar\nu_T(b),
\]
proving the first identity in~\eqref{sem:variation-preserved}.

For disjoint $b_j\le b$ and $|t_j|\le1$,
\[
 p\Bigl(\sum_jt_j\bar m(b_j)\Bigr)
 =\sup_T\Bigl|\sum_jt_jT\bar m(b_j)\Bigr|
 \le\sup_T\bar\nu_T(b).
\]
Conversely, finite distributivity gives
$\sum_j|T\bar m(b_j)|=\bigvee_{\varepsilon_j=\pm1}T(\sum_j\varepsilon_j\bar m(b_j))\le\bar s(b)$.
Taking the supremum first over all finite partitions of $b$ and then
over $T$ yields
\[
 \sup_T\bar\nu_T(b)\le\bar s(b)
 \le\sup_T\bar\nu_T(b),
\]
which proves exact semivariation. For $u\in\mathcal H_\sigma$,
countable additivity along a source exhaustion identifies
$\bar\lambda(u)=\lambda_\sigma(u)$ and
$\bar\nu_T(u)=\nu_{T,\sigma}(u)$. For
$\ell\in\mathcal H_\delta$, finite additivity gives
\[
 \bar\lambda(\ell)=\bar\lambda(1)-\bar\lambda(\ell^c)
 =\lambda_\sigma(1)-\lambda_\sigma(\ell^c)=\lambda_\delta(\ell),
 \qquad\bar\nu_T(\ell)=\nu_{T,\delta}(\ell).
\]
Hence $\bar s(u)=s_\sigma(u)$ and
$\bar s(\ell)=s_\delta(\ell)$. On the full upper and lower nets,
the estimates $|Qv-Qw|\le Q(v-w)$ give
$Q\lambda_\sigma(u)\to_o Q\bar\lambda(b)$ and
$Q\lambda_\delta(\ell)\to_o Q\bar\lambda(b)$.
The positive coordinate inequalities for $u'\le u$ and $\ell\le\ell'$ give
$s_\sigma(u')\le s_\sigma(u)$ and $s_\delta(\ell)\le s_\delta(\ell')$, establishing the first two limits in~\eqref{sem:regularity}. Finite additivity yields
\[
 \bar s(u\setminus b)=Q(\lambda_\sigma(u)-\bar\lambda(b)),\quad
 \bar s(b\setminus\ell)=Q(\bar\lambda(b)-\lambda_\delta(\ell)),
\]
Both right-hand sides converge in order to zero by full regularity
of $\bar\lambda$. The left-hand sides decrease on their respective
directed domains by monotonicity of $\bar s$, proving the two shell
limits in \eqref{sem:regularity}.
For a competing bo-countably additive extension $n$, a source
exhaustion $a_j\uparrow u$ gives
\[
 n(u)=\mathrm{bo}\text{-}\lim_jm(a_j)=m_\sigma(u),\qquad
 n(\ell)=n(1)-n(\ell^c)=m_\delta(\ell).
\]
If $n$ has full lower regularity, then for every
$b\in\Sigma_s(\mathcal H)$,
\[
 n(b)=\mathrm{bo}\text{-}\lim_{\ell\in\mathcal H_\delta,\ \ell\le b}
          m_\delta(\ell)=\bar m(b).
\]
For two arbitrary bo-countably additive extensions on
$\sigma(\mathcal H)$, Proposition~\ref{gen:uniqueness} applies with
source $m$, range $Y$ with its original bo-convergence, and this generated
sigma-algebra. It gives the asserted uniqueness without regularity
assumptions on those competitors.
\end{proof}

When vector variation is finite, its positive variation supplies the additional coordinate in the lift $(Jm,v)$. We identify the regulated domain of this lift with $\Sigma_v(\mathcal A)$ and apply Theorem~\ref{gen:extension} to recover the Kusraev--Malyugin extension, including preservation of variation on the entire domain. The full lower approximation nets then show that this extension agrees there with the one obtained from semivariation.

\begin{corollary}[Finite vector variation on its full domain]\label{km:global-extension}\label{km:local-extension}\label{sem:finite-variation-inclusion}
Suppose the source is a unital Boolean algebra $\mathcal A$. Let
\begin{equation}\label{km:def-nu}
 v(a)=\sup_{\pi\in\Pi_{\mathcal A}(a)}\sum_{c\in\pi}p(m(c)),\qquad k=v(1)
\end{equation}
exist, assume $F_k$ is WSD, and assume $Y_k$ is bo-complete. Define $\Sigma_v(\mathcal A)$ by~\eqref{sem:domain-shell} with a bounded $F$-valued regulator and $v_\sigma(u)=\sup_{a\le u}v(a)$ in place of $s_\sigma$. There is a unique fully lower-regular bo-premeasure extension on this entire domain, its variation is the positive fully regular extension $\bar v$ of $v$, and both full approximation nets converge. Also
\[
 \Sigma_v(\mathcal A)\subseteq\Sigma_s(\mathcal A),
 \qquad \bar m_s|_{\Sigma_v(\mathcal A)}=\bar m_v.
\]
Neither decomposability nor the countable sup property is required.
\end{corollary}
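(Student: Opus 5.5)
The plan is to run the semivariation argument of Theorem~\ref{sem:extension} again, this time with the additive lift $\lambda_v(a)=(Jm(a),v(a))$ in $Z_v=E\times F_k$ and the norm $Q_v(z,w)=P(z)\vee|w|$. The proof then has three stages: verify the hypotheses of Theorem~\ref{gen:extension} for $\lambda_v$, identify its regulated domain with $\Sigma_v(\mathcal A)$, and compare the result with the semivariation extension.

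First, the source facts. The variation $v$ is a positive finitely additive map with $p(m(a))\le v(a)\le k$; additivity comes from the partition-refinement argument used for $\nu_T$ in Lemma~\ref{sem:variation}. Hence $Q_v\lambda_v(a)=v(a)$. Hereditary exhaustivity of $m$ does not need to be assumed here. For disjoint $a_n$, the partial sums $\sum_{n\le N}v(a_n)\le k$ have a supremum $S$ in $F$ by Dedekind completeness. Then $v(c)\le v(a_n)\le S-\sum_{j<N}v(a_j)\downarrow0$ for $c\le a_n$ and $n\ge N$. This gives (HE) for $\lambda_v$, by the same tail computation as in Corollary~\ref{volfull:compatibility}.

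Order-countable additivity of $v$ follows from that of $m$. For a partition $\pi$ of $a=\bigvee_n a_n$, subadditivity of $p$ and continuity of $p$ along bo-limits give $\sum_{c\in\pi}p(m(c))\le\sup_N\sum_{n\le N}v(a_n)$. This makes $\lambda_v$ a bo-premeasure. I expect this step to be the main obstacle, because it needs the bo-convergence $m(c\wedge\bigvee_{n\le N}a_n)\to m(c)$ to be uniform over the finitely many $c\in\pi$. That uniformity is automatic for a finite partition, so the step should close.

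Second, the range principles. The range $M_v=\clo_{\rm bo}\lambda_v(\mathcal A)$ lies in $\{Q_v\le k\}\cap(JY_k\times F_k)$. It is complete by Lemma~\ref{sem:representation} together with order completeness of the band $F_k$ (Theorem~\ref{ord:complete}). Lemma~\ref{sem:target} applies with $h=k$ and $D=\{Q_v\le2k\}$, and gives $\mathsf S_{\rm seq}$, $\mathsf B_\omega$ and $\mathsf U(M_v)$. The domain identity $\Sigma_v(\mathcal A)=\mathcal R_{\rm seq}(\lambda_v,D)$ is then proved exactly as \eqref{sem:domain-equality}, with $v_\sigma$ in place of $s_\sigma$. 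One direction uses \eqref{sem:summation-bound}. The other uses Lemma~\ref{sem:ball-decomposition} applied to $Z_v$, which is a finite product of $E$ and $F$ and is therefore covered by that lemma, together with the whole-shell bound $\Gamma_{\lambda_v}(u)\subseteq\{Q_v\le v_\sigma(u)\}$. That bound is obtained as in Lemma~\ref{sem:source}.

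With these in hand, Theorem~\ref{gen:extension} yields $\bar\lambda_v$. Setting $\bar m_v=L\bar\lambda_v$ and $\bar v=(\bar\lambda_v)_2$, the full upper and lower nets transport through the continuous projections, and uniqueness is Proposition~\ref{gen:regularity}.

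Third, the variation identity $|\bar m_v|=\bar v$. The inequality $|\bar m_v|\le\bar v$ comes from bo-stability of the cone $\{(z,w):P(z)\le w\}$, which holds as in Proposition~\ref{rel:graph}. The reverse inequality uses the lower-regularity argument of Theorem~\ref{sem:extension}. Uniqueness on $\sigma(\mathcal A)$ gives $|\bar m_v|=\bar v$ on $\mathcal H_\delta$, and the increasing full lower net then gives $|\bar m_v|(b)\ge\bar v(b)$.

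For the comparison, Lemma~\ref{sem:variation} gives $s\le v$, so $s_\sigma\le v_\sigma$. Every regulator sandwich for $\Sigma_v$ is therefore a sandwich for $\Sigma_s$, which proves $\Sigma_v\subseteq\Sigma_s$. On $\Sigma_v$, both $\bar m_s$ and $\bar m_v$ are bo-limits of the same full lower net $m_\delta(\ell)$, where $m_\delta$ is computed from source exhaustions and so does not depend on the lift. Separation gives $\bar m_s=\bar m_v$ there. At no point is decomposability of $p$ or the countable sup property used.
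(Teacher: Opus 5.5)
Your route is the paper's. You use the lift $(Jm,v)$ in $E\times F_k$ with the max-norm, Lemma~\ref{sem:target} for the range principles, and \eqref{sem:summation-bound} and Lemma~\ref{sem:ball-decomposition} for the two certificate implications. You then use cone stability plus the lower-regularity argument for $|\bar m_v|=\bar v$, and compare through the common lower net $m_\delta(\ell)$. One step is missing, however. The comparison clause needs Theorem~\ref{sem:extension} to apply to $m$, so that $\bar m_s$ exists on $\Sigma_s(\mathcal A)$ with full lower regularity. That theorem assumes $F_h$ WSD, $Y_h$ bo-complete, and uniform hereditary exhaustivity of $m$, all stated for the semivariation bound $h$. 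The corollary assumes only WSD of $F_k$ and bo-completeness of $Y_k$, with no exhaustivity. Until these are derived, $\bar m_s$ is undefined and your separation argument has nothing to compare.

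The fix is short. The bound $s\le v\le k$ gives finite semivariation and $h\le k$. Your tail estimate $v(c)\le S-\sum_{j<N}v(a_j)$, combined with $p(m(c))\le v(c)$, gives hereditary exhaustivity of $m$. For the bands, note that $p(m(c))\le s(c)\le h$, so every partition sum defining $k$ lies in $F_h$. Bands contain the suprema of their bounded positive subsets, so $k\in F_h$, hence $F_h=F_k$ and $Y_h=Y_k$, which is the paper's argument. Alternatively, WSD and bo-completeness pass to the sub-band $F_h\subseteq F_k$ and the bo-closed subspace $Y_h$.

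A smaller point concerns uniqueness, which the statement asserts among $Y$-valued extensions of $m$. Proposition~\ref{gen:regularity} concerns extensions of the lifted source $\lambda_v$, and a competitor $n$ of $m$ has no lift, so rerun its argument in $Y$. Along a source exhaustion, $n(u)=\mathrm{bo}\text{-}\lim_j m(a_j)=m_\sigma(u)$. Then $n(\ell)=n(1)-n(\ell^c)=m_\delta(\ell)$, and full lower regularity gives $n(b)=\bar m_v(b)$. Finally, $s\le v$ is not literally part of Lemma~\ref{sem:variation}. It follows from subadditivity of $p$, or from $|Tm(c)|\le p(m(c))$ in the definition of $\nu_T$.
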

\begin{proof}
Every partition of $a$ extends to one of $1$, so the defining supremum is bounded by $k$. The single-cell partition gives $p(m(a))\le v(a)$. For disjoint $a,b$ and any partition $\pi$ of $a\vee b$,
\[
 \sum_{c\in\pi}p(m(c))
 \le\sum_{c\in\pi}p(m(c\wedge a))+
       \sum_{c\in\pi}p(m(c\wedge b))\le v(a)+v(b).
\]
For independent partitions $\pi_a,\pi_b$, their union gives
\[
 \sum_{c\in\pi_a}p(m(c))+\sum_{c\in\pi_b}p(m(c))\le v(a\vee b).
\]
The respective suprema prove $v(a\vee b)=v(a)+v(b)$. If $w$ is positive additive and $p(m(c))\le w(c)$, then
$\sum_{c\in\pi}p(m(c))\le\sum_{c\in\pi}w(c)=w(a)$ for every partition $\pi$ of $a$.
Taking the supremum gives $v(a)\le w(a)$.

For $a_n\uparrow a$, put $r=\sup_nv(a_n)\le v(a)$. Each source partition $\pi$ of $a$ satisfies
\[
 \sum_{c\in\pi}p(m(c\wedge a_n))\to_o\sum_{c\in\pi}p(m(c)),
 \qquad\sum_{c\in\pi}p(m(c\wedge a_n))\le r.
\]
Order closedness gives $\sum_{c\in\pi}p(m(c))\le r$, and the partition supremum gives $v(a)\le r$. Thus $v(a_n)\uparrow v(a)$. Applied to finite partial unions of a disjoint source union, this proves the order-premeasure property. For an arbitrary disjoint source sequence, put
\[
 t_0=0,\quad t_N=\sum_{n\le N}v(a_n),\quad t=\sup_Nt_N\le k,\quad r_N=t-t_{N-1}\downarrow0.
\]
Every fragment $c$ of a finite union with indices at least $N$ satisfies
$v(c)\le\sum_{n\in A}v(a_n)\le r_N$.

Clipping regulator entries to $k$ preserves the domain, since every shell variation is at most $k$ and $k\wedge\sup_i a_i=\sup_i(k\wedge a_i)$. Thus its domain can be computed in $F_k$. Form
\[
 \widetilde m(a)=(Jm(a),v(a))\in E\times F_k,
 \qquad N(z,f)=P(z)\vee|f|,
\]
\[
 \qquad\widetilde M=\clo_{\rm bo}\widetilde m(\mathcal A),
 \quad\widetilde D=\{N\le2k\}.
\]
One has $N\widetilde m(a)=v(a)$ and
$N\widetilde m(c)\le r_N$ for $c\le a_n$, $n\ge N$.
For a disjoint source union $a=\bigvee_na_n$, finite additivity gives
$N(\widetilde m(a)-\sum_{n\le N}\widetilde m(a_n))
=v(a)-\sum_{n\le N}v(a_n)\downarrow0$.
The first estimate bounds every fragment in every tail of a disjoint
source sequence by $r_N\downarrow0$, proving hereditary exhaustivity.
The second estimate identifies the bo-limit of the partial sums
with the prescribed source value, proving the premeasure property.
For an $N$-Cauchy net $(z_\alpha,f_\alpha)$ with error regulator
$r_\gamma\downarrow0$, its coordinates are Cauchy in $E$ and in
$F_k$. Lemma~\ref{sem:representation} and Theorem~\ref{ord:complete}
give limits $z,f$ with
\[
 P(z_\alpha-z)\le r_\gamma,\qquad
 |f_\alpha-f|\le r_\gamma
 \quad(\alpha\ge\alpha_\gamma),
 \qquad N((z_\alpha,f_\alpha)-(z,f))\le r_\gamma.
\]
Thus $E\times F_k$ is bo-complete. The set $JY_k\times F_k$ is
bo-stable by Lemma~\ref{sem:representation}. The norm ball
$\{N\le k\}$ is stable because
$Nz\le Nz_\alpha+N(z-z_\alpha)\le k+r_\gamma$ implies
$Nz\le k$ after taking the infimum over $\gamma$.
Both sets contain the source lift, hence
\[
 \widetilde M\subseteq(JY_k\times F_k)\cap\{N\le k\},\qquad
 N(v-w)\le Nv+Nw\le2k\quad(v,w\in\widetilde M).
\]
A Cauchy net in $\widetilde M$ converges in $E\times F_k$, and
stability places its limit in $\widetilde M$. This proves
completeness and $\widetilde M-\widetilde M\subseteq\widetilde D$.
Lemma~\ref{sem:target}, with base $F_k$, bound $k$, space
$W=E\times F_k$, norm $N$, and $K=\widetilde M$, supplies
$\mathsf B_\omega(\widetilde D)$,
$\mathsf S_{\rm seq}(\widetilde D)$ and $\mathsf U(\widetilde M)$.
Lemma~\ref{sem:ball-decomposition} applies to the coordinate family
$\mathscr T_k\dot\cup\{*\}$, where $*$ indexes the last $F_k$
coordinate. If $a_n\uparrow u$ and $c\in\mathcal A$, $c\le u$, then
$v(c)=\sup_nv(c\wedge a_n)\le\sup_nv(a_n)$. Taking the supremum over $c$ proves $v_\sigma(u)=\sup_nv(a_n)$. Therefore
\[
 N(\widetilde m(a_q)-\widetilde m(a_p))
 =v(a_q)-v(a_p)\le v_\sigma(u)-v(a_N)\downarrow_N0
 \quad(q\ge p\ge N).
\]
Lemma~\ref{gen:sigma-construction}, with source $\widetilde m$,
range $E\times F_k$, complete stable set $\widetilde M$, and
difference set $\widetilde D$, therefore gives the
exhaustion-independent upper value $\widetilde m_\sigma(u)$.
The inequalities
\[
 |N\widetilde m(a_n)-N\widetilde m_\sigma(u)|
 \le N(\widetilde m(a_n)-\widetilde m_\sigma(u))\to_o0,
 \qquad N\widetilde m(a_n)=v(a_n)\uparrow v_\sigma(u)
\]
give $N\widetilde m_\sigma(u)=v_\sigma(u)$.
Every source fragment value in $u$ has norm at most $v_\sigma(u)$.
The stability estimate for this norm ball is
$Nz\le v_\sigma(u)+r_\gamma$ for every regulator index, hence
$Nz\le v_\sigma(u)$. The exhaustion and this stable bound give
\[
 \widetilde m_\sigma(u)\in\Gamma_{\widetilde m}(u)
 \subseteq\{N\le v_\sigma(u)\}.
\]
For an intrinsic certificate, \eqref{sem:summation-bound}, with $h$ replaced by $k$, gives
$v_\sigma(u_\varphi\setminus\ell_\varphi)\le\sup_i(2k\wedge2^ir_{i,\varphi(i)})$.
Conversely, a variation regulator clipped to $k$ defines rows $U_{i,j}=\{N\le a_{i,j}\}$, and Lemma~\ref{sem:ball-decomposition} gives
\[
 \Gamma_{\widetilde m}(u_\varphi\setminus\ell_\varphi)
 \subseteq\{N\le\sup_i a_{i,\varphi(i)}\}
 \subseteq\clo_{\rm bo}\bigcup_{A\text{ finite}}\sum_{i\in A}U_{i,\varphi(i)}.
\]
The left set also lies in $\{N\le k\}\subseteq\widetilde D$.
Both certificate implications identify the domain with $\Sigma_v(\mathcal A)$. Theorem~\ref{gen:extension} supplies the extension, sigma-closure and both full regularity nets.

For a convergent net $(Jy_\alpha,f_\alpha)$ in the cone
$\{(Jy,f):p(y)\le f\}$, the stable range $JY_k\times F_k$
places the limit in the form $(Jy,f)$. If the product-norm error
is at most $r_\gamma$ for $\alpha\ge\alpha_\gamma$, then
\[
 p(y)\le p(y-y_\alpha)+p(y_\alpha)
 \le r_\gamma+f_\alpha\le f+2r_\gamma.
\]
The infimum over $\gamma$ gives $p(y)\le f$, proving stability
of the cone. It contains the source lift and therefore its stable
hull and the extension values. Consequently
$p(\bar m_v(b))\le\bar v(b)$ for every $b$ in the extension domain.
Thus
\[
 w(b):=\sup_{\pi\in\Pi_{\Sigma_v}(b)}\sum_{c\in\pi}p(\bar m_v(c))\le\bar v(b)
\]
exists. For disjoint $b,d$, splitting each partition of $b\vee d$ gives
\[
 \sum_{c\in\pi}p(\bar m_v(c))\le
 \sum_{c\in\pi}p(\bar m_v(c\wedge b))+
 \sum_{c\in\pi}p(\bar m_v(c\wedge d))\le w(b)+w(d).
\]
Conversely, the union of any partitions of $b$ and $d$ has norm sum at most $w(b\vee d)$. Independent suprema prove additivity of $w$. For $b_n\uparrow b$, every partition $\pi$ of $b$ gives
\[
 \sum_{c\in\pi}p(\bar m_v(c\wedge b_n))\to_o
 \sum_{c\in\pi}p(\bar m_v(c)),\qquad
 \sum_{c\in\pi}p(\bar m_v(c\wedge b_n))\le\sup_nw(b_n).
\]
Writing $r=\sup_nw(b_n)$, order stability of $[0,r]$ bounds the
limit of each displayed partition sum by $r$. The supremum over
partitions gives $w(b)\le r$, while monotonicity gives $r\le w(b)$.
Hence $w(b)=\sup_nw(b_n)$. For a disjoint sequence, apply this
identity to its finite joins and then finite additivity of $w$ to
obtain countable additivity. On a source element $a$,
\[
 v(a)\le\sup_{\pi\in\Pi_{\Sigma_v}(a)}
                   \sum_{c\in\pi}p(\bar m_v(c))
 =w(a)\le\bar v(a)=v(a).
\]
Proposition~\ref{gen:uniqueness}, with source $v$, range $F_k$
with order convergence, and the extensions $w$ and $\bar v$,
gives equality on $\sigma(\mathcal A)$ and in particular on
$\mathcal A_\delta$. For arbitrary $b\in\Sigma_v(\mathcal A)$,
\[
 \bar v(b)\ge w(b)\ge
 \sup_{\ell\in\mathcal A_\delta,\ \ell\le b}w(\ell)
 =\sup_{\ell\le b}\bar v(\ell)=\bar v(b).
\]
Thus $|\bar m_v|=\bar v$ on the entire regulator domain.
For a competing fully lower-regular extension $n$, every source
exhaustion $a_j\uparrow u$ gives
$n(u)=\mathrm{bo}\text{-}\lim_jm(a_j)=m_\sigma(u)$.
For $\ell\in\mathcal A_\delta$, finite additivity gives
$n(\ell)=n(1)-n(\ell^c)=m_\delta(\ell)$.
Its full lower regularity then gives
\[
 n(b)=\mathrm{bo}\text{-}\lim_{\ell\in\mathcal A_\delta,\ \ell\le b}
                    m_\delta(\ell)=\bar m_v(b)
 \qquad(b\in\Sigma_v(\mathcal A)).
\]
This proves the stated uniqueness.

The inequality $s(a)\le v(a)$ proves finite semivariation and hereditary exhaustivity. Since $h\le k$, one has $F_h\subseteq F_k$.
Each $p(m(a))\le h$ belongs to $F_h$, so every partition sum defining $k$ belongs to that band.
Order closedness gives $k\in F_h$, hence $F_k\subseteq F_h$.
Thus $F_h=F_k$ is WSD and $Y_h=Y_k$ is bo-complete, verifying the
semivariation hypotheses. For a variation certificate,
\[
 s_\sigma(u_\varphi\setminus\ell_\varphi)
 \le h\wedge v_\sigma(u_\varphi\setminus\ell_\varphi)
 \le h\wedge\sup_i a_{i,\varphi(i)}
 =\sup_i(h\wedge a_{i,\varphi(i)}).
\]
The clipped rows decrease to zero and lie in $F_h$, proving
$\Sigma_v(\mathcal A)\subseteq\Sigma_s(\mathcal A)$.
The two extensions have the identical source exhaustion limit
$m_\sigma(u)$ for each $u\in\mathcal A_\sigma$ and therefore the
identical lower value $m_\delta(\ell)$ for each
$\ell\in\mathcal A_\delta$. For $b\in\Sigma_v(\mathcal A)$ their
full lower limits are
\[
 \bar m_s(b)
 =\mathrm{bo}\text{-}\lim_{\ell\in\mathcal A_\delta,\ \ell\le b}
                  m_\delta(\ell)=\bar m_v(b).
\]
If $k=0$, then $p(m(a))\le k=0$ gives $m(a)=0$.
For every $b\in\mathbb B$, the sandwich $0\le b\le1$ has zero shell variation.
The zero regulator gives both domains equal to $\mathbb B$, with both extensions zero.
\end{proof}

\begin{remark}[Quantifiers in the converse theorem]\label{cmp:km}
For a Dedekind complete $F$, Kusraev and Malyugin prove that WSD is equivalent to extension of every positive order-premeasure from every Boolean subalgebra $\mathcal A$ of every complete Boolean algebra $\mathbb B$ to the generated sigma-subalgebra $\sigma_{\mathbb B}(\mathcal A)$ \cite[Theorem~1.4.15]{KM}. It is also equivalent to this extension assertion for all finite-vector-variation bo-premeasures with values in all full bo-complete $F$-normed spaces. Here fullness means $p(Y)^{\perp\perp}=F$. For one fixed full bo-complete space whose vector norm is decomposable, the same universal source-extension assertion characterizes WSD \cite[Theorem~1.4.17]{KM}. Decomposability means
\[
 p(y)=f_1+f_2,\quad f_1,f_2\ge0
 \quad\Longrightarrow\quad
 \exists y_1,y_2:\ y=y_1+y_2,\quad p(y_i)=f_i.
\]
These are cited converse theorems, with their quantifiers over all source algebras and premeasures. Corollary~\ref{km:local-extension} concerns one fixed premeasure and only its variation band. Neither the cited converses nor this corollary asserts that a prescribed larger difference set must satisfy closed summation whenever that particular premeasure extends.
\end{remark}

We next express the source exhaustivity condition through a control on the generated sigma-algebra. Under the stated lattice hypotheses, every positive coordinate variation extends independently of hereditary exhaustivity of $m$. The supremum of these extensions therefore defines a canonical control before the vector extension has been constructed. We show that continuity from above of this control is equivalent to hereditary exhaustivity of the original premeasure.

\Needspace{10\baselineskip}
\begin{proposition}[Canonical control and intrinsic exhaustivity]\label{sem:canonical-control}\label{sem:he-equivalence}
Before hereditary exhaustivity is assumed, every $\nu_T$ has a unique positive extension $\widehat\nu_T$ to $\Sigma=\sigma(\mathcal H)$. Define
\[
 \rho(b)=\sup_T\widehat\nu_T(b)\in[0,h].
\]
Then $\rho|_{\mathcal H}=s$, $\rho(1)=h$, and the following are equivalent:
\begin{enumerate}
\item $m$ is uniformly hereditarily exhaustive.
\item $s(a_n)\to_o0$ for every disjoint source sequence.
\item $b_n\downarrow0$ in $\Sigma$ implies $\rho(b_n)\downarrow0$.
\end{enumerate}
Under these conditions $\rho=\bar s|_\Sigma$.
\end{proposition}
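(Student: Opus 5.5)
For each fixed $T\in\mathscr T_h$, I would extend $\nu_T$ by the lattice specialization in Corollary~\ref{volfull:compatibility}. By Lemma~\ref{sem:variation}, $\nu_T$ is a positive order-premeasure with values in $[0,h]$. The band $F_h$ is Dedekind complete and WSD. To avoid any countable sup property, I would instead run the positive extension through Theorem~\ref{gen:extension} directly, with $D=[-2h,2h]$ inside $F_h$ and sequential rows. The required principles are $\mathsf S_{\rm seq}$ and $\mathsf R_\omega$, hence $\mathsf B_\omega$, together with $\mathsf U$; these come from Lemma~\ref{sem:target} applied to $W=F_h$ with $N=|\cdot|$. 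Hereditary exhaustivity of a positive bounded premeasure is the tail estimate $t_N\downarrow0$ from the proof of Corollary~\ref{volfull:compatibility}, which uses only Dedekind completeness. Uniqueness on $\Sigma$ comes from the Dynkin argument in Proposition~\ref{gen:uniqueness}, and positivity follows from stability of $[0,h]$. Then $\rho$ exists because it is a supremum bounded by $h$. The identity $\rho|_{\mathcal H}=s$ is \eqref{sem:coordinate-variation}, and $\rho(1)=h$ holds because $1\in\mathcal H_\sigma$: along an exhaustion, $\widehat\nu_T(1)=\nu_{T,\sigma}(1)$, and the supremum over $T$ equals $\sup_n s(a_n)=h$, interchanging suprema as in Lemma~\ref{sem:source}.

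For the equivalences, (i)$\Rightarrow$(ii) is the estimate $s\le2q$ from \eqref{sem:coordinate-variation}, applied to the singleton fragments. For (ii)$\Rightarrow$(i), every fragment $c\le a_n$ has $p(m(c))\le q(a_n)\le s(a_n)$, and the same bound holds for any order-bounded tail envelope. Concretely, if (i) fails, one selects fragments $c_k\le a_{n_k}$ along a subsequence. These fragments are disjoint, so (ii) applied to them gives $s(c_k)\to0$, which bounds the norms of the selected fragments. Passing from this to the uniform evaluation criterion \eqref{sem:bounded-evaluation} requires the envelope $\sup_{n\ge N}s(a_n)\downarrow0$. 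I would obtain that envelope by a contradiction argument in the style of Lemma~\ref{ord:fg}, or more simply by noting that $\sup_{n\ge N}s(a_n)$ is dominated by $\sup_T\sum_{n\ge N}\nu_T(a_n)$, which is in turn controlled through (iii).

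For (iii)$\Rightarrow$(ii), take disjoint $a_n$ and set $b_N=\bigvee_{n\ge N}a_n\in\Sigma$, which decreases to $0$. Monotonicity gives $s(a_N)=\rho(a_N)\le\rho(b_N)\downarrow0$. This route also yields the uniform envelope, which settles the point left open above. For (i)$\Rightarrow$(iii), under (i), Theorem~\ref{sem:extension} gives $\bar s=Q\bar\lambda=\sup_T\bar\nu_T$ on $\Sigma\subseteq\Sigma_s(\mathcal H)$. Uniqueness of the positive extension then forces $\bar\nu_T|_\Sigma=\widehat\nu_T$, so $\rho=\bar s|_\Sigma$. Finally, $\bar s(b_n)=Q\bar\lambda(b_n)\to0$ for $b_n\downarrow0$ because $\bar\lambda$ is countably additive in the bo-convergence of $Q$, and the sequence is decreasing by monotonicity of $\bar s$.

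The main obstacle is (ii)$\Rightarrow$(iii) without passing through (i). Here $\rho$ is only a supremum of countably additive measures, and continuity from above can fail for such suprema. I therefore route the argument as (ii)$\Rightarrow$(i) by the envelope argument, then (i)$\Rightarrow$(iii) through the vector extension. The delicate step is producing the uniform tail bound $\sup_{n\ge N}s(a_n)\downarrow0$ from the sequential condition (ii). I would do this by contradiction: assume a positive common lower bound of these tail suprema, extract disjoint blocks using Lemma~\ref{rel:dyadic-cap} and WSD in $F_h$, and so build a disjoint source sequence violating (ii).
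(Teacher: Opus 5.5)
Your overall architecture is the paper's. You extend each $\nu_T$ by Theorem~\ref{gen:extension} in $F_h$, with $D=[-2h,2h]$ and the principles from Lemma~\ref{sem:target}, and you get uniqueness on $\Sigma$ from the Dynkin argument. You obtain $\rho(1)=h$ from an exhaustion of $1$. You prove (i)$\Rightarrow$(iii) through Theorem~\ref{sem:extension} and uniqueness of the coordinate extensions, and you get (iii)$\Rightarrow$(ii) from the bound by $\rho(\bigvee_{n\ge N}a_n)$. One hypothesis of Theorem~\ref{gen:extension} is not verified in your sketch: $M_T=\clo_o\nu_T(\mathcal H)\subseteq[0,h]$ must be complete, which holds by Theorem~\ref{ord:complete}.

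The genuine gap is (ii)$\Rightarrow$(i), which you leave open, and none of your proposed routes closes it.
\begin{itemize}
\item Controlling $\sup_{n\ge N}s(a_n)$ ``through (iii)'' is circular, since your only path to (iii) runs through (i).
\item The fragment-selection argument is a scalar argument and fails in a vector lattice. The condition $\inf_N\sup_{n\ge N}q(a_n)\ne0$ does not produce individual fragments $c_k\le a_{n_k}$ whose norms stay above a fixed positive element.
\item The contradiction plan using Lemma~\ref{rel:dyadic-cap} and WSD is unnecessary.
\end{itemize}

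The missing observation is that (ii) already is the envelope condition. If $s(a_n)\to_o0$, some $r_\gamma\downarrow0$ satisfies $s(a_n)\le r_\gamma$ for $n\ge N_\gamma$. Hence $\sup_{n\ge N_\gamma}s(a_n)\le r_\gamma$, and these tail suprema, which exist because they are bounded by $h$, decrease to $0$. This is the tail-envelope criterion \eqref{ord:filter-envelope}. A positive common lower bound of the tail suprema therefore contradicts (ii) for the same sequence $(a_n)$, so no new disjoint sequence has to be built.

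This observation gives (i)$\Leftrightarrow$(ii) in one line. Combine it with $q\le s\le 2q$ from \eqref{sem:coordinate-variation}, and with the bounded-evaluation criterion \eqref{sem:bounded-evaluation}, which identifies nullity of $(V_m(a_n))_n$ with $\sup_{n\ge N}q(a_n)\downarrow0$. This is exactly how the paper argues.
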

\begin{proof}
For each $T$ and each disjoint source sequence $(a_n)$, put
$t_T=\sup_N\sum_{n\le N}\nu_T(a_n)\le h$.
Positivity gives $0\le\nu_T(c)\le t_T-\sum_{n<N}\nu_T(a_n)\downarrow_N0$
whenever $c\le a_j$ and $j\ge N$.
Thus $\nu_T$ is hereditarily exhaustive. Set
\[
 M_T=\clo_o\nu_T(\mathcal H)\subseteq[0,h],\qquad
 D_T=[-2h,2h]\subseteq F_h.
\]
The interval is order-stable by \eqref{ord:carrier-bound}.
Theorem~\ref{ord:complete} makes $M_T$ complete, and
$M_T-M_T\subseteq[-h,h]\subseteq D_T$.
Lemma~\ref{sem:target}, with $W=F_h$, $N(f)=|f|$, bound $h$,
$K=M_T$ and $D=D_T$, proves
\[
 \mathsf B_\omega(D_T),\qquad
 \mathsf S_{\rm seq}(D_T),\qquad \mathsf U(M_T).
\]
The source map $\nu_T$ is a positive premeasure by
Lemma~\ref{sem:variation}. Theorem~\ref{gen:extension}, applied
to these data with ambient Boolean algebra $\mathbb B$ and source
ring $\mathcal H$, constructs an extension with values in $M_T$.
Restrict it to $\Sigma=\sigma_{\mathbb B}(\mathcal H)$ and denote
the restriction by $\widehat\nu_T$. The inclusion
$M_T\subseteq[0,h]$ proves positivity. The uniqueness clause of
Proposition~\ref{gen:uniqueness}, for this source map and range
$F_h$, proves uniqueness on $\Sigma$.
Consequently $\rho$ exists, $\rho|_{\mathcal H}=s$, and an exhaustion $e_n\uparrow1$ gives
\[
 \rho(1)=\sup_T\sup_n\nu_T(e_n)
 =\sup_ns(e_n)=\sup_{a\in\mathcal H}s(a)=h.
\]
For each source element $a$ and coordinate $T$, the increasing
sequence $a\wedge e_n$ has source supremum $a$, hence
\[
 \nu_T(a)=\sup_n\nu_T(a\wedge e_n)
 \le\sup_n\nu_T(e_n)\le\sup_ns(e_n).
\]
Taking the suprema over $T$ and then $a$ gives
$h=\sup_as(a)\le\sup_ns(e_n)$.
The inequalities $s(e_n)\le h$ give the reverse bound.
A dominated linear map $T_0:Y_h\to F_h$, viewed as $F$-valued,
satisfies $T_0y\le p(y)$. Hahn--Banach--Kantorovich, applied to
the sublinear map $p:Y\to F$, extends it to a linear $T:Y\to F$
with $Ty\le p(y)$. Applying this bound to $-y$ gives
$-Ty\le p(y)$, so $|Ty|\le p(y)$ on $Y$.
Conversely, a global dominated $T$ satisfies
$|Ty|\le p(y)\in F_h$ for $y\in Y_h$, and solidity gives
$Ty\in F_h$. Thus the restriction map from global dominated
coordinates onto $\mathscr T_h$ is surjective, and both coordinate
families produce the identical suprema in the definition of $\rho$.

For a disjoint $(a_n)$, boundedness and \eqref{sem:bounded-evaluation} give
\[
 \sup_{n\ge N}q(a_n)\downarrow0
 \ \Longleftrightarrow\
 \sup_{n\ge N}s(a_n)\downarrow0
 \ \Longleftrightarrow\ s(a_n)\to_o0,
\]
using $q\le s\le2q$. These are (1) and (2).
Under (3), put $b_N=\bigvee_{n\ge N}a_n\downarrow0$.
Then $p(m(c))\le s(c)=\rho(c)\le\rho(b_N)\downarrow0$ for $c\le a_n$ and $n\ge N$, proving (1).
Under (1), Theorem~\ref{sem:extension} applies to $m$ with the
stated base band $F_h$ and complete space $Y_h$. Both
$\widehat\nu_T$ and $\bar\nu_T|_\Sigma$ are countably additive
extensions of $\nu_T$, so the preceding uniqueness statement gives
$\widehat\nu_T=\bar\nu_T|_\Sigma$. Hence
\[
 \rho(b)=\sup_T\bar\nu_T(b)=Q\bar\lambda(b)=\bar s(b)
 \qquad(b\in\Sigma).
\]
For $b_n\downarrow0$, put $d_n=b_n\setminus b_{n+1}$.
The $d_n$ are disjoint and $\bigvee_nd_n=b_1$, whence
\[
 \bar\lambda(b_N)
 =\bar\lambda(b_1)-\sum_{n<N}\bar\lambda(d_n)
 \to_{\rm bo}0,\qquad \rho(b_N)=Q\bar\lambda(b_N)\to_o0.
\]
Since $\rho(b_N)$ decreases, order convergence to zero means its
infimum is zero. This proves (3).
\end{proof}

This continuity also allows approximation by the source ring. We combine the countably many approximation errors into one regulator for each element of the generated sigma-algebra. When the active band has the countable sup property, these bounds can be reduced to a sequence, giving sequential source approximation as well.

\begin{proposition}[Regulator density]\label{sem:regulator-density}\label{sem:canonical-density}
Let $r:\sigma(\mathcal H)\to[0,h]$ be monotone and finitely subadditive, with $r(0)=0$, and suppose $b_n\downarrow0$ implies $r(b_n)\downarrow0$. Every $b\in\sigma(\mathcal H)$ has source approximants $c_\varphi$ and a bounded regulator satisfying $r(b\mathbin\triangle c_\varphi)\le\sup_i a_{i,\varphi(i)}$. Existence of this certificate requires only Dedekind completeness. WSD makes its selected bounds decrease to zero.

Under Theorem~\ref{sem:extension}, every $b\in\Sigma$ has a source net $c_\varphi$, indexed by $\mathbb N^{\mathbb N}$, and a bounded regulator $a_{i,k}$ such that
\[
 \rho(b\mathbin\triangle c_\varphi)\le\sup_i a_{i,\varphi(i)}\downarrow_\varphi0,
 \qquad m(c_\varphi)\to_{\rm bo}\bar m(b).
\]
If $F_h$ has the countable sup property, a source sequence suffices. In that case sequential bo-completeness of $Y_h$ suffices for all preceding results. More generally, sequential bo-completeness implies bo-completeness over any Dedekind complete base with the countable sup property.
\end{proposition}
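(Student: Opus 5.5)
The plan proceeds in four stages: a monotone-class argument producing the certificate, the transfer to $\rho$ and $\bar m$, the countable reduction, and the completeness remark.

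\textbf{First assertion: existence of the certificate.} Let $\mathcal G$ be the class of $b\in\sigma(\mathcal H)$ for which, for every $i$, there is a source sequence $(c_{i,k})_k$ with $r(b\symdiff c_{i,k})$ bounded by decreasing entries tending to zero. The natural choice is
\[
a_{i,k}=\sup_{j\ge k} r(b\symdiff c_{j}),
\]
with the $c_j$ chosen so that this supremum decreases to zero.

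I would prove that $\mathcal G$ contains $\mathcal H$, trivially. Closure under complements should follow from $b^c\symdiff c^c=b\symdiff c$, after truncating $c^c$ by a source exhaustion $e_n\uparrow1$; the truncation error is $r(e_n^c)\downarrow0$. Closure under finite unions follows from finite subadditivity.

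Closure under countable increasing unions $b=\bigvee_n b_n$ is the step I expect to be the main obstacle. I would first fix $N$ and use $r(b\setminus b_N)\downarrow0$, which holds by continuity from above. I would then approximate $b_N$ through its own certificate. The bound then becomes
\[
r(b\symdiff c)\le r(b\setminus b_N)+r(b_N\symdiff c).
\]
This needs two selector coordinates, one choosing $N$ and one choosing the approximant of $b_N$. I would therefore reindex the countably many regulator rows into a single row family indexed by pairs.

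The difficulty is that the certificate condition is stated with a supremum over $i$. The plan is to build the certificate so that only one row actually matters, namely $a_{1,k}=\sup_{j\ge k}(\cdots)$, and to pad the remaining rows with zero rows. This is the same device used in Lemma~\ref{gen:relative}.

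Dedekind completeness is needed only to form these tail suprema, which are bounded by $h$. Weak $\sigma$-distributivity then gives $\inf_\varphi\sup_i a_{i,\varphi(i)}=0$, so the selected bounds decrease to zero.

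\textbf{Second assertion: application to $\rho$ and $\bar m$.} Under Theorem~\ref{sem:extension}, Proposition~\ref{sem:canonical-control} shows that $r=\rho$ satisfies the hypotheses. Indeed, $\rho=\bar s|_\Sigma$ is monotone and subadditive by Lemma~\ref{sem:variation} applied on the extended domain, and it is continuous from above by item (3). The first part therefore yields the source net $c_\varphi$.

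For the convergence of the measure values, I would use additivity of $\bar m$ to write
\[
p\bigl(\bar m(b)-m(c_\varphi)\bigr)\le p\bigl(\bar m(b\setminus c_\varphi)\bigr)+p\bigl(\bar m(c_\varphi\setminus b)\bigr)\le 2\bar s(b\symdiff c_\varphi).
\]
The right-hand side is at most $2\sup_i a_{i,\varphi(i)}$, which gives bo-convergence.

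\textbf{Countable reduction.} If $F_h$ has the countable sup property, I would use the extraction \eqref{eq:countable-regulator} from Corollary~\ref{ord:equivalence}. Applied to the decreasing net $\sup_i a_{i,\varphi(i)}$, it produces selectors $\varphi_k$ increasing with infimum zero. The sequence $c_{\varphi_k}$ is then the required source sequence.

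\textbf{Sequential completeness suffices.} The final claim would be proved exactly as in Lemma~\ref{volfull:Dcomplete}. Given a bo-Cauchy net, I would extract a countable regulator $q_j\downarrow0$ using the countable sup property, and choose increasing indices $i_j$. The sequence $y_{i_j}$ is then bo-Cauchy and has a limit $y$ by sequential completeness. The tail estimate $p(y_i-y)\le q_j$ for $i\ge i_j$ then gives convergence of the whole net. Consequently, wherever bo-completeness of $Y_h$ was invoked (Lemma~\ref{sem:representation} onward), the sequential version is enough.
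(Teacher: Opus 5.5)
\textbf{The gap is in the countable-union step.} Your class $\mathcal G$, with the single active row $a_{1,k}=\sup_{j\ge k}r(b\symdiff c_j)$ and zero padding, is the class of $b$ admitting one source sequence with $r(b\symdiff c_j)\to_o0$. Take $b=\bigvee_N b_N$, where each $b_N$ has approximants $c^N_k$ with bounds $a^N_k\downarrow_k0$, and put $u_N=r(b\setminus b_N)$. Your estimate $r(b\symdiff c^N_k)\le u_N+a^N_k$ gives a one-row certificate only if you can choose $k(N)$ with $a^N_{k(N)}\to_o0$ as $N\to\infty$. The zero-row padding of Lemma~\ref{gen:relative} handles one fixed row and cannot supply such a diagonal. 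Over a Dedekind complete base such diagonals need not exist, even when the base is WSD.

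Here is a concrete counterexample. Let $\Omega=\{0,1\}^{\N}$, let $\mathcal H$ be the clopen algebra, and put $F=\mathbb R^{\Omega}$, $h=\mathbf 1$ and $r(b)=\mathbf 1_b$. All hypotheses on $r$ hold, because infima in $\mathbb R^\Omega$ are pointwise. The base $F$ is even WSD; it lacks only the countable sup property. Bounded order-null sequences in $\mathbb R^\Omega$ are pointwise null. Hence a one-row certificate for $b$ gives clopen $c_k$ with $\mathbf 1_{c_k}\to\mathbf 1_b$ everywhere, and then $b=\bigcap_N\bigcup_{k\ge N}c_k$ is $G_\delta$. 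The countable dense set of eventually-zero sequences is not $G_\delta$, by the Baire category theorem. Yet it is an increasing union of finite sets, each of which lies in $\mathcal G$. So $\mathcal G\ne\sigma(\mathcal H)$ in general.

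Your plan is also internally inconsistent. With one active row, $\sup_ia_{i,\varphi(i)}=a_{1,\varphi(1)}$ decreases to zero without any distributivity. That would make both the WSD clause and the clause ``a source sequence suffices under the countable sup property'' vacuous.

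The missing idea is to keep the regulator genuinely multi-row and to dominate sums of errors by a supremum of rows. For the union, use the row $h\wedge2u_N$ together with all rows of all certificates of the $b_n$, reindexed by a bijection $\{0\}\cup\N^2\to\N$. The first selector coordinate then chooses $N$, and the remaining coordinates choose approximants for every $b_n$ simultaneously. Let $q_N$ be the selected bound for $b_N$. Then $r(b\symdiff c)\le h\wedge(u_N+q_N)\le(h\wedge2u_N)\vee(h\wedge2q_N)$, which is at most the supremum of the selected row entries.

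With this certificate, existence uses only Dedekind completeness. WSD is exactly what gives $\inf_\varphi\sup_ia_{i,\varphi(i)}=0$, and the countable sup property is what later extracts a sequence. The paper proceeds through closure under differences and arbitrary countable joins. It approximates by $\bigvee_{n\le N}c^n_{\varphi_n}$ and uses the weights $h\wedge2^{n+1}a_{n,i,k}$ to absorb the $N+1$ error terms. Your route through complements, finite unions and increasing unions also works, provided each step interleaves rows in this way.

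Your remaining steps are essentially the paper's and are fine once the multi-row certificate is in place. These are the transfer to $\rho$ and $\bar m$, the selector extraction under the countable sup property, and the sequential-completeness argument.
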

\begin{proof}
Let $r$ satisfy the stated hypotheses, and define $\mathcal C$ to be
exactly the set of elements admitting the stated regulator
approximation. A source element $c$ has the constant approximants
$c_\varphi=c$ and the zero regulator, so
$\mathcal H\subseteq\mathcal C$.
For $b,d\in\mathcal C$, write their approximants as
$c_\varphi,e_\psi\in\mathcal H$ and their bounds as
$q_\varphi=\sup_i a_{i,\varphi(i)}$ and
$t_\psi=\sup_i b_{i,\psi(i)}$. Define one regulator by
\[
 A_{2i-1,k}=h\wedge2a_{i,k},\qquad
 A_{2i,k}=h\wedge2b_{i,k}.
\]
For a selector $\Phi\in\mathbb N^{\mathbb N}$, put
$\varphi(i)=\Phi(2i-1)$ and $\psi(i)=\Phi(2i)$, and use the
source approximant $c_\varphi\setminus e_\psi$. The Boolean
inclusion
\[
 (b\setminus d)\mathbin\triangle(c_\varphi\setminus e_\psi)
 \le(b\mathbin\triangle c_\varphi)
       \vee(d\mathbin\triangle e_\psi)
\]
and finite subadditivity give
\[
 r\bigl((b\setminus d)\mathbin\triangle(c_\varphi\setminus e_\psi)\bigr)
 \le h\wedge(q_\varphi+t_\psi)
 \le h\wedge(2q_\varphi\vee2t_\psi)
 =\sup_j A_{j,\Phi(j)}.
\]
The middle inequality follows from
$q_\varphi,t_\psi\le q_\varphi\vee t_\psi$, and the final equality
uses preservation of bounded suprema by $f\mapsto h\wedge f$.
Every row $A_{j,k}$ decreases to zero, so $b\setminus d\in\mathcal C$.

For $b_n\in\mathcal C$, write $b=\bigvee_nb_n$ and
$B_N=b\setminus\bigvee_{n\le N}b_n$. Boolean distributivity gives
$B_N\downarrow0$, hence $u_N=r(B_N)\downarrow0$ by hypothesis.
Let $a_{n,i,k}$ be a regulator for $b_n$, with approximants
$c^n_\varphi$. On the row set $I=\{0\}\dot\cup\mathbb N^2$, use
\[
 d_{0,N}=h\wedge2u_N,\qquad d_{(n,i),k}=h\wedge2^{n+1}a_{n,i,k}.
\]
To index these rows by $\mathbb N$, put
\[
 \eta(0)=1,\qquad
 \eta(n,i)=1+\frac{(n+i-2)(n+i-1)}2+n.
\]
This is a bijection $I\to\mathbb N$. For a selector
$\Phi\in\mathbb N^{\mathbb N}$, set
$N=\Phi(1)$ and $\varphi_n(i)=\Phi(\eta(n,i))$.
Its source approximant is
$c=\bigvee_{n\le N}c^n_{\varphi_n}$.
The Boolean inclusion
\[
 b\mathbin\triangle c
 \le B_N\vee\bigvee_{n\le N}
                   (b_n\mathbin\triangle c^n_{\varphi_n})
\]
and the bounds $q_n=\sup_i a_{n,i,\varphi_n(i)}$ give
\[
 r(b\mathbin\triangle c)\le h\wedge\Bigl(u_N+\sum_{n\le N}q_n\Bigr)
 \le h\wedge\Bigl(2u_N\vee\bigvee_{n\le N}2^{n+1}q_n\Bigr)
 \le\sup_{j\in I}d_{j,\Phi(\eta(j))}.
\]
For the middle inequality, set
$w=2u_N\vee\bigvee_{n\le N}2^{n+1}q_n$. Then
\[
 u_N+\sum_{n\le N}q_n
 \le\left(\frac12+\sum_{n\le N}2^{-n-1}\right)w\le w.
\]
For the last inequality, distribute $h\wedge(\cdot)$ over the
finite join and then over each bounded supremum defining $q_n$.
Every row $d_{j,k}$ decreases to zero and is bounded by $h$.
Thus the reindexed matrix is a regulator for $b$, proving closure
under countable joins. We have proved
$\mathcal H\subseteq\mathcal C\subseteq\sigma(\mathcal H)$ and
closure of $\mathcal C$ under differences and countable joins.
An exhaustion of $1$ by source elements puts $1$ in $\mathcal C$, so $b^c=1\setminus b\in\mathcal C$ for $b\in\mathcal C$.
Therefore $\mathcal C=\sigma(\mathcal H)$. WSD makes every resulting selected bound decrease to zero. The canonical map $r=\rho$ has these hypotheses by Proposition~\ref{sem:he-equivalence}. For its approximants,
$p(\bar m(b)-m(c_\varphi))\le\bar s(b\mathbin\triangle c_\varphi)=\rho(b\mathbin\triangle c_\varphi)$.

The countable sup property, applied to the bounded family $\{-q_\varphi\}$ with supremum zero, gives selectors $\varphi_j$ with $\inf_jq_{\varphi_j}=0$.
Put $\psi_n(i)=\max_{j\le n}\varphi_j(i)$. Then $q_{\psi_n}\downarrow0$, giving the source sequence $c_{\psi_n}$.
For a bo-Cauchy net with regulator $r_\gamma\downarrow0$, the same property applied to $\{-r_\gamma\}$ gives indices $\eta_n$ with $\inf_nr_{\eta_n}=0$.
Set $\gamma_1=\eta_1$ and take $\alpha_1$ beyond its Cauchy threshold.
For $n\ge1$, take $\gamma_{n+1}\ge\gamma_n,\eta_{n+1}$ and then
$\alpha_{n+1}\ge\alpha_n$ beyond the $\gamma_{n+1}$-Cauchy threshold.
Denote the Cauchy threshold for $r_\gamma$ by $\lambda_\gamma$.
The recursion has $\alpha_n\ge\lambda_{\gamma_n}$ and
$\alpha_{n+1}\ge\alpha_n$. For $n,m\ge N$,
$p(y_{\alpha_n}-y_{\alpha_m})\le r_{\gamma_N}$.
Thus $r_{\gamma_n}\downarrow0$ and $(y_{\alpha_n})$ is bo-Cauchy.
Sequential completeness supplies its limit $y$. For every fixed
$N$ and every original index $\alpha\ge\lambda_{\gamma_N}$,
\[
 p(y_\alpha-y_{\alpha_n})\le r_{\gamma_N}\quad(n\ge N)
 \quad\Longrightarrow\quad p(y_\alpha-y)\le r_{\gamma_N}.
\]
The passage to the limit in this display uses
$|p(y_\alpha-y_{\alpha_n})-p(y_\alpha-y)|
\le p(y_{\alpha_n}-y)\to_o0$ and order stability of
$[0,r_{\gamma_N}]$. It proves
\[
 \forall N\ge1\ \forall\alpha\ge\lambda_{\gamma_N}:\qquad
 p(y_\alpha-y)\le r_{\gamma_N},\qquad r_{\gamma_N}\downarrow0.
\]
Thus the original net converges. No cofinality of $(\alpha_n)$ is used.
\end{proof}

We can now use the extended coordinate variations to determine the integral bounds for simple functions. Their supremum gives the least isotone submajorant and retains its original values on source simple functions. Countable additivity of the lifted measure then proves sequential order continuity of this submajorant and sequential order-to-bo continuity of the integration operator.

\begin{proposition}[The exact simple-function submajorant]\label{sem:exact-submajorant}\label{sem:submajorant-continuity}
For the Boolean simple-function lattice $S(\Sigma_s(\mathcal H))$, let
$T_{\bar m}(\sum_jt_j\mathbf1_{b_j})=\sum_jt_j\bar m(b_j)$. An isotone submajorant is an isotone map $H:S(\Sigma_s(\mathcal H))_+\to(F_h)_+$ satisfying $p(T_{\bar m}g)\le H(|g|)$ for every simple $g$. The exact submajorant is
\begin{equation}\label{sem:submajorant-scalarization}
 \overline{\mathcal Q}(f):=\sup_{|g|\le f}p(T_{\bar m}g)
 =\sup_T I_{\bar\nu_T}(f)\quad(f\ge0).
\end{equation}
This is the least isotone submajorant of $T_{\bar m}$. It is positively homogeneous and subadditive. It extends the corresponding source submajorant, satisfies $\overline{\mathcal Q}(\mathbf1_b)=\bar s(b)$, and is sequentially order continuous. The operator $T_{\bar m}$ is sequentially order-to-bo continuous.
\end{proposition}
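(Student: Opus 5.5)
We first prove the scalarization identity \eqref{sem:submajorant-scalarization}, working on a common partition. Fix $f=\sum_jf_j\mathbf1_{b_j}\ge0$, with $(b_j)$ a finite partition in $\Sigma_s(\mathcal H)$. Any simple $g$ with $|g|\le f$ can be written, after common refinement, as $\sum_i t_i\mathbf1_{c_i}$ with $|t_i|\le f$ on $c_i$. Since $p=P\circ J$ by Lemma~\ref{sem:representation}, we get
\[
 p(T_{\bar m}g)=\sup_T\Bigl|\sum_it_iT\bar m(c_i)\Bigr|
 \le\sup_T\sum_i|t_i|\,|T\bar m(c_i)|\le\sup_TI_{\bar\nu_T}(f).
\]
The last inequality uses $|T\bar m|=\bar\nu_T$ from \eqref{sem:variation-preserved}.

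For the reverse inequality, fix $T$ and a refinement $(c_i)$ of $(b_j)$. Finite distributivity gives
\[
 \sum_if_{j(i)}|T\bar m(c_i)|=\bigvee_{\varepsilon}T\Bigl(\sum_i\varepsilon_if_{j(i)}\bar m(c_i)\Bigr)\le\sup_{|g|\le f}p(T_{\bar m}g).
\]
Taking the supremum over refinements, and using $\bar\nu_T(b_j)=\sup_\pi\sum_{c\in\pi}|T\bar m(c)|$, yields $I_{\bar\nu_T}(f)\le\overline{\mathcal Q}(f)$. The supremum exists because every term is bounded by $\max_jf_j\,h$.

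The structural claims then follow from the right-hand side of \eqref{sem:submajorant-scalarization}. Each $I_{\bar\nu_T}$ is positive, linear and isotone on $S_+$, so their supremum is isotone, positively homogeneous and subadditive. Taking $f=\mathbf1_b$ gives $\sup_T\bar\nu_T(b)=\bar s(b)$. The defining property $p(T_{\bar m}g)\le\overline{\mathcal Q}(|g|)$ is immediate from the left-hand form. Minimality also follows from the left-hand form: if $H$ is an isotone submajorant and $|g|\le f$, then $p(T_{\bar m}g)\le H(|g|)\le H(f)$. On source simple functions, $\bar\nu_T=\nu_T$, and the same partition computation gives the source submajorant, so $\overline{\mathcal Q}$ extends it.

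The main obstacle is sequential order continuity, because a supremum of continuous functionals need not be order continuous. We avoid this by using the lift $\bar\lambda$, whose norm is uniform in $T$. Let $f_n\downarrow0$ in $S_+$, so $f_n\le c\mathbf1$. Write $e_n(\varepsilon)=\{f_n>\varepsilon\}$, which decreases to $0$ in the $\sigma$-algebra for each $\varepsilon>0$. Then
\[
 \overline{\mathcal Q}(f_n)\le c\,\bar s(e_n(\varepsilon))+\varepsilon\bar s(1).
\]
Countable additivity of $\bar\lambda$, as in the proof of Proposition~\ref{sem:he-equivalence}, gives $\bar s(e_n(\varepsilon))=Q\bar\lambda(e_n(\varepsilon))\to_o0$. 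Hence $\inf_n\overline{\mathcal Q}(f_n)\le\varepsilon h$ for every rational $\varepsilon>0$, which forces the infimum to be $0$ (Archimedean property of $F$).

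For general $g_n\to_o g$ in $S$, we use the canonical brackets, which are simple functions because $\Sigma_s(\mathcal H)$ is a $\sigma$-algebra and the values are bounded. This gives $|g_n-g|\le w_n\downarrow0$ with $w_n$ simple. Then $p(T_{\bar m}g_n-T_{\bar m}g)\le\overline{\mathcal Q}(w_n)\downarrow0$, which is order-to-bo continuity of $T_{\bar m}$.

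The delicate point is that the brackets must stay in the simple-function lattice. If they do not, we would instead bound $|g_n-g|$ by $\varepsilon\mathbf1+c\mathbf1_{\{|g_n-g|>\varepsilon\}}$ and use $\sup_{k\ge n}$ of the indicator sets, which do lie in the $\sigma$-algebra.
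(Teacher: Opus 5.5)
Your scalarization identity, the structural claims, minimality, the indicator identity, extension of the source submajorant, and the decreasing case $f_n\downarrow0$ follow the paper's proof essentially step for step. There is a genuine gap in the last part.

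\textbf{The bracket claim is false.} You assert that the canonical brackets $w_n=\sup_{k\ge n}|g_k-g|$ are simple functions. A countable supremum of simple functions generally takes infinitely many values, and in $S(\Sigma_s(\mathcal H))$ it need not exist at all. In Example~\ref{sem:harmonic-example}, where $\Sigma_s(\mathcal H)=\mathcal P(\mathbb N)$, the sequence $g_k=k^{-1}\mathbf1_{\{k\}}$ is order null in $S$, regulated by $m^{-1}\mathbf1_{\mathbb N}$. Yet $\{g_k:k\ge n\}$ has no least simple upper bound. Every simple upper bound has only finitely many values, so it is bounded below by some $\delta>0$ on $\{k\ge n\}$, and it can therefore be lowered on $\{k>N\}$ once $1/N<\delta$.

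\textbf{Bounding by the regulator does not reduce to your decreasing case.} Order convergence in $S$ is regulated by a decreasing \emph{net} $u_\gamma\downarrow0$. You have only proved sequential continuity of $\overline{\mathcal Q}$, so $\overline{\mathcal Q}(u_\gamma)$ gives you nothing.

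\textbf{Your fallback is the paper's route, but its decisive step is missing.} You must show that $b_{N,\varepsilon}=\bigvee_{n\ge N}\{|g_n-g|>\varepsilon\}$ decreases to $0$ in $\Sigma_s(\mathcal H)$, and this is exactly where the regulator enters.
\begin{itemize}
\item If $|g_n-g|\le u_\gamma$ for $n\ge N_\gamma$, then $\{|g_n-g|>\varepsilon\}\le\{u_\gamma>\varepsilon\}$ for those $n$. Hence $\varepsilon\mathbf1_{b_{N,\varepsilon}}\le u_\gamma$ for $N\ge N_\gamma$.
\item It follows that $\varepsilon\mathbf1_{\bigwedge_N b_{N,\varepsilon}}\le\inf_\gamma u_\gamma=0$.
\item Fixing one index $\gamma_0$ gives $|g_n-g|\le c\mathbf1$ for $n\ge N_{\gamma_0}$. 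Therefore $v_N:=\sup_{n\ge N}\overline{\mathcal Q}(|g_n-g|)\le\varepsilon h+c\,\bar s(b_{N,\varepsilon})$ for $N\ge N_{\gamma_0}$.
\item Countable additivity of $\bar\lambda$ gives $\bar s(b_{N,\varepsilon})\downarrow_N0$. So $\inf_Nv_N\le\varepsilon h$ for every $\varepsilon$, hence $v_N\downarrow0$.
\end{itemize}
This yields order-to-bo continuity of $T_{\bar m}$. Combined with $|\overline{\mathcal Q}(f)-\overline{\mathcal Q}(f')|\le\overline{\mathcal Q}(|f-f'|)$, which follows from isotonicity and subadditivity, it also yields sequential order continuity of $\overline{\mathcal Q}$ along arbitrary order-convergent sequences. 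Your proposal establishes that continuity only for $f_n\downarrow0$.

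The same one-line argument, $\varepsilon\mathbf1_{\bigwedge_n e_n(\varepsilon)}\le f_n$ for all $n$, is also what justifies your unproved assertion that $e_n(\varepsilon)\downarrow0$ in the decreasing case.
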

\begin{proof}
All simple expressions are identified under common finite Boolean refinement, which makes their integrals well defined. For $f=\sum_jc_j\mathbf1_{b_j}\ge0$ and a finite-variation coordinate charge $v$, domination gives $|T_vg|\le I_{|v|}(f)$ when $|g|\le f$. Conversely, finite partitions $\pi_j$ of $b_j$ and signed functions with coefficients $c_j\varepsilon_a$ give
\[
 \bigvee_{\varepsilon_a=\pm1}|T_vg_\varepsilon|
 =\sum_jc_j\sum_{a\in\pi_j}|v(a)|.
\]
Independent partition suprema prove $\sup_{|g|\le f}|T_vg|=I_{|v|}(f)$. For $|g|\le f\le c\mathbf1_1$, semivariation domination gives
$p(T_{\bar m}g)\le ch$, so all the suprema below exist. Hence
\[
 \sup_{|g|\le f}p(T_{\bar m}g)
 =\sup_{|g|\le f}\sup_T|T(T_{\bar m}g)|
 =\sup_T I_{|T\bar m|}(f)=\sup_T I_{\bar\nu_T}(f).
\]
For a nonnegative source simple function
$f=\sum_{j=1}^rc_j\mathbf1_{a_j}$ on a finite disjoint source
family, the original-coordinate calculation gives
\[
 \mathcal Q(f)=\sup_T\sum_{j=1}^rc_j\nu_T(a_j),\qquad
 \overline{\mathcal Q}(f)
 =\sup_T\sum_{j=1}^rc_j\bar\nu_T(a_j)
 =\sup_T\sum_{j=1}^rc_j\nu_T(a_j).
\]
This proves exact restriction to the source simple-function lattice.
For $f,g\ge0$ and $t\ge0$, positivity and additivity of each
coordinate integral give
\[
 \begin{split}
 f\le g&\ \Longrightarrow\
 \sup_T I_{\bar\nu_T}(f)\le\sup_T I_{\bar\nu_T}(g),\\
 \sup_T I_{\bar\nu_T}(tf)&=t\sup_T I_{\bar\nu_T}(f),\\
 \sup_T I_{\bar\nu_T}(f+g)
 &\le\sup_T I_{\bar\nu_T}(f)+\sup_T I_{\bar\nu_T}(g).
 \end{split}
\]
For $t=0$ the middle equality is $0=0$. These are isotonicity,
positive homogeneity and subadditivity of $\overline{\mathcal Q}$.
For an isotone submajorant $H$ and $|g|\le f$,
$p(T_{\bar m}g)\le H(|g|)\le H(f)$.
The supremum over $g$ gives $\overline{\mathcal Q}(f)\le H(f)$,
proving minimality. The indicator identity is
\[
 \overline{\mathcal Q}(\mathbf1_b)
 =\sup_T I_{\bar\nu_T}(\mathbf1_b)
 =\sup_T\bar\nu_T(b)=\bar s(b).
\]

Write $\bar\rho=\bar s$. For $b_n\downarrow0$ in the full domain,
the disjoint elements $d_n=b_n\setminus b_{n+1}$ have join $b_1$.
Consequently
\[
 \bar\lambda(b_N)=\bar\lambda(b_1)-
              \sum_{n<N}\bar\lambda(d_n)\to_{\rm bo}0,\qquad
 \bar\rho(b_N)=Q\bar\lambda(b_N)\to_o0.
\]
Monotonicity gives $\bar\rho(b_N)\downarrow0$.
If $f_n\downarrow0$ in the simple-function lattice, with
$f_1\le c\mathbf1_1$, put $e_{n,k}=\{f_n>1/k\}$ and
$e_k=\bigwedge_ne_{n,k}$. For every $n$,
$0\le k^{-1}\mathbf1_{e_k}\le f_n$.
The defining infimum of $(f_n)$ is zero, hence $e_k=0$.
Thus $e_{n,k}\downarrow_n0$. The pointwise Boolean-simple bound
$0\le f_n\le k^{-1}\mathbf1_1+c\mathbf1_{e_{n,k}}$ gives
\[
 0\le\overline{\mathcal Q}(f_n)\le k^{-1}h+c\bar\rho(e_{n,k}),
 \qquad 0\le\inf_n\overline{\mathcal Q}(f_n)\le k^{-1}h\quad(k\ge1),
\]
The infimum over $k$ is zero since a Dedekind complete real vector
lattice is Archimedean. Thus
$\overline{\mathcal Q}(f_n)\downarrow0$.
For an order-null sequence $g_n$, let $u_\gamma\downarrow0$ be a
regulator in the simple-function lattice, with thresholds
$N_\gamma$ such that $|g_n|\le u_\gamma$ for $n\ge N_\gamma$.
Fix one index $\gamma_0$. Its simple function $u_{\gamma_0}$ has a
finite upper bound $c\mathbf1_1$, and set $N_0=N_{\gamma_0}$.
For $N\ge N_0$, put
$b_{N,k}=\bigvee_{n\ge N}\{|g_n|>1/k\}$ and
$b_k=\bigwedge_{N\ge N_0}b_{N,k}$.
For $N\ge\max\{N_0,N_\gamma\}$, every element in this join is
bounded by $\{u_\gamma>1/k\}$, so
\[
 k^{-1}\mathbf1_{b_k}
 \le k^{-1}\mathbf1_{b_{N,k}}
 \le u_\gamma\qquad(\gamma).
\]
Taking the infimum over $\gamma$ gives $b_k=0$. For
$n\ge N\ge N_0$ the simple-function bound
$|g_n|\le k^{-1}\mathbf1_1+c\mathbf1_{b_{N,k}}$ gives
\[
 \sup_{n\ge N}\overline{\mathcal Q}(|g_n|)
 \le k^{-1}h+c\bar\rho(b_{N,k})
\]
Define $v_N=\sup_{n\ge N}\overline{\mathcal Q}(|g_n|)$.
These suprema exist because $0\le v_N\le ch$. For each $k$,
\[
 0\le\inf_{N\ge N_0}v_N
 \le\inf_{N\ge N_0}(k^{-1}h+c\bar\rho(b_{N,k}))
 =k^{-1}h.
\]
The infimum over $k$ gives $v_N\downarrow0$. In particular,
\[
 \forall N\ge N_0\ \forall n\ge N:\qquad
 p(T_{\bar m}g_n)\le\overline{\mathcal Q}(|g_n|)\le v_N,
\]
which proves sequential order-to-bo continuity of $T_{\bar m}$.
Finally, for nonnegative $f,g$ the inequalities
$f\le g+|f-g|$ and $g\le f+|f-g|$ imply
\[
 |\overline{\mathcal Q}(f)-\overline{\mathcal Q}(g)|
 \le\overline{\mathcal Q}(|f-g|).
\]
If $f_n\to_o f$ in the nonnegative simple-function cone, apply
the proved null-sequence estimate to $g_n=f_n-f$. It bounds the
last right-hand side by a decreasing null tail regulator, and
therefore proves $\overline{\mathcal Q}(f_n)\to_o
\overline{\mathcal Q}(f)$. This proves the full sequential order
continuity assertion.
\end{proof}

\begin{example}[Finite semivariation and infinite variation]\label{sem:harmonic-example}
On the finite--cofinite algebra $\mathcal H$ of $\mathbb N$, the map
\[
 m(a)=(\mathbf1_a(j)/j)_{j\ge1}\in c_0
\]
satisfies the semivariation theorem with $F=\mathbb R$. Its regulator domain is $\mathcal P(\mathbb N)$ and its extension has the same formula. Its norm variation is infinite.
\end{example}
\begin{proof}
For disjoint source cells and $|t_i|\le1$,
\[
 \Bigl\|\sum_it_im(a_i)\Bigr\|_\infty\le
 \sup_{j\in\bigcup_ia_i}j^{-1}.
\]
The single-cell coefficient $1$ gives the reverse bound, so
$s(a)=\sup_{j\in a}j^{-1}$, with $s(0)=0$ and $h=1$. Finite additivity holds coordinatewise. If $a=\bigcup_na_n\in\mathcal H$ is disjoint, then, for each $K$, its finitely many members in $\{1,\ldots,K\}$ lie in some finite partial union $\bigcup_{n\le N_K}a_n$. Thus
\[
 N\ge N_K\quad\Longrightarrow\quad
 \Bigl\|m(a)-\sum_{n\le N}m(a_n)\Bigr\|_\infty
 \le (K+1)^{-1},
\]
proving the premeasure property. For an arbitrary disjoint source sequence, each $j\le K$ belongs to at most one $a_n$. Hence some $N_K$ satisfies $a_n\cap\{1,\ldots,K\}=\varnothing$ for all $n\ge N_K$, giving
\[
 \sup_{n\ge N_K,\ c\le a_n}\|m(c)\|_\infty\le(K+1)^{-1}.
\]
This proves uniform hereditary exhaustivity directly. The Banach space $c_0$ is complete and the base $\mathbb R$ is Dedekind complete and WSD.

Every subset of $\mathbb N$ is a countable union of source singletons, so $\mathcal H_\sigma=\mathcal H_\delta=\mathcal P(\mathbb N)$. For any $b$, the identical lower and upper approximants $b$ have zero shell and certify $b\in\Sigma_s(\mathcal H)$. Its finite truncations satisfy
\[
 \left\|m(b\cap\{1,\ldots,N\})-(\mathbf1_b(j)/j)_j\right\|_\infty
 \le(N+1)^{-1},
\]
which identifies the upper value and hence the extension. Finally, the partition of $1$ into its first $N$ singletons and their complement has variation sum at least $\sum_{j\le N}j^{-1}$. For $N=2^r$,
\[
 \sum_{j=1}^{2^r}j^{-1}\ge
 \sum_{k=0}^{r-1}\sum_{j=2^k+1}^{2^{k+1}}2^{-k-1}=r/2.
\]
The variation sums are unbounded.
\end{proof}

The example exhibits an extension under finite semivariation for which vector variation is infinite. For premeasures with finite vector variation, the construction recovers the extension on the full Kusraev--Malyugin regulator domain. Both applications use weak sigma-distributivity only in the band generated by the source bound, and their approximation limits remain limits in the original vector norm.

\section{Scope of the extension results}\label{sec:scope}

The uncontrolled theorem proves extension of modular measures from a $D$-sublattice to the ambiently generated $\sigma$-$D$-sublattice using only convergence conditions in the range. Its defining limits, representation independence, normality and countable additivity are conclusions of the construction. The relative theorem proves a common lifting result for this modular mechanism and the nonmodular positive mechanism. Its cone assumptions are verified separately in the positive, lattice-normed, product and subgroup realizations.

The two Sion processes exist on the ambient pseudo-$D$-lattice and agree with the extension on its generated domain. The Boolean theorem additionally identifies the full splitting class with a regulated $\sigma$-algebra. The finite-variation and finite-semivariation preservation results use that Boolean domain. These are distinct domain conclusions, and no identification of them is needed for the common lifting theorem or its corollaries.

The range principles are sufficient conditions. Apart from the stated equivalences for ordered ranges, the paper does not characterize all convergences that admit extension. In particular, removing the cone from the nonmodular lifting theorem would require a further argument beyond the uncontrolled modular construction proved here.


\begingroup
\small
\begin{thebibliography}{99}
\setlength{\itemsep}{2pt}
\setlength{\parskip}{0pt}
\bibitem{AB}
C.~D. Aliprantis and O. Burkinshaw,
\emph{Positive Operators}, Springer, Dordrecht, 2006.
\href{https://doi.org/10.1007/978-1-4020-5008-4}{doi:10.1007/978-1-4020-5008-4}.

\bibitem{Birkhoff}
G. Birkhoff, \emph{Lattice Theory}, third edition,
American Mathematical Society Colloquium Publications, vol.~25,
American Mathematical Society, Providence, 1967.
\href{https://doi.org/10.1090/coll/025}{doi:10.1090/coll/025}.

\bibitem{Fremlin75}
D.~H. Fremlin, A direct proof of the Matthes--Wright integral extension theorem,
\emph{J. London Math. Soc.} (2) \textbf{11} (1975), no.~3, 276--284.
\href{https://doi.org/10.1112/jlms/s2-11.3.276}{doi:10.1112/jlms/s2-11.3.276}.

\bibitem{Sion}
M. Sion, Outer measures with values in a topological group,
\emph{Proc. London Math. Soc.} (3) \textbf{19} (1969), no.~1, 89--106.
\href{https://doi.org/10.1112/plms/s3-19.1.89}{doi:10.1112/plms/s3-19.1.89}.

\bibitem{Volauf77}
P. Volauf, Extension and regularity of $\ell$-group valued measures,
\emph{Math. Slovaca} \textbf{27} (1977), no.~1, 47--53.
\url{https://dml.cz/handle/10338.dmlcz/129443}.

\bibitem{Volauf80}
P. Volauf, On extension of maps with values in ordered spaces,
\emph{Math. Slovaca} \textbf{30} (1980), no.~4, 351--361.
\url{https://dml.cz/handle/10338.dmlcz/129034}.

\bibitem{Wright}
J.~D. Maitland Wright, The measure extension problem for vector lattices,
\emph{Ann. Inst. Fourier (Grenoble)} \textbf{21} (1971), no.~4, 65--85.
\href{https://doi.org/10.5802/aif.393}{doi:10.5802/aif.393}.

\bibitem{OTW}
M. O'Brien, V.~G. Troitsky, and J.~H. van der Walt,
Net convergence structures with applications to vector lattices,
\emph{Quaestiones Mathematicae} \textbf{46} (2023), no.~2, 243--280.
\href{https://doi.org/10.2989/16073606.2021.2012721}{doi:10.2989/16073606.2021.2012721}.

\bibitem{BCTW}
E. Bilokopytov, J. Conradie, V.~G. Troitsky, and J.~H. van der Walt,
Locally solid convergence structures,
\emph{Dissertationes Mathematicae}, to appear.
\href{https://arxiv.org/abs/2404.15641}{arXiv:2404.15641} (2024).

\bibitem{KM}
A.~G. Kusraev and S.~A. Malyugin,
\emph{Some Questions of the Theory of Vector Measures},
Institute of Mathematics, Siberian Branch of the USSR Academy of Sciences,
Novosibirsk, 1988, 182 pp. (Russian).


\bibitem{KKHBK}
A.~G. Kusraev and S.~S. Kutateladze,
Kantorovich spaces and optimization,
\emph{J. Math. Sci. (N.Y.)} \textbf{133} (2006), no.~4, 1449--1455.
\href{https://doi.org/10.1007/s10958-006-0060-7}{doi:10.1007/s10958-006-0060-7}.

\bibitem{Kupka78}
J. Kupka, The Carath\'eodory extension theorem for vector valued measures,
\emph{Proc. Amer. Math. Soc.} \textbf{72} (1978), no.~1, 57--61.
\href{https://doi.org/10.1090/S0002-9939-1978-0493327-7}{doi:10.1090/S0002-9939-1978-0493327-7}.

\bibitem{Weber76}
H. Weber, Fortsetzung von Ma{\ss}en mit Werten in uniformen Halbgruppen,
\emph{Arch. Math. (Basel)} \textbf{27} (1976), no.~4, 412--423.
\href{https://doi.org/10.1007/BF01224694}{doi:10.1007/BF01224694}.

\bibitem{FoxMorales83}
G. Fox and P. Morales, On uniform semigroup-valued additive set functions,
\emph{Canad. Math. Bull.} \textbf{26} (1983), no.~1, 26--34.
\href{https://doi.org/10.4153/CMB-1983-005-4}{doi:10.4153/CMB-1983-005-4}.

\bibitem{Trombetta05}
A. Trombetta, On the construction of outer measures with values in a uniform semigroup,
\emph{Math. Slovaca} \textbf{55} (2005), no.~3, 263--269.
\url{https://dml.cz/handle/10338.dmlcz/131587}.

\bibitem{Smirnov26}
A.~G. Smirnov and M.~S. Smirnov, Measure theory without infinities,
preprint, arXiv:2609.03875, 2026.
\url{https://arxiv.org/abs/2609.03875}.

\bibitem{Sipos90}
J. \v Sipo\v s, On extension of group valued measures,
\emph{Math. Slovaca} \textbf{40} (1990), no.~3, 279--286.
\url{https://dml.cz/handle/10338.dmlcz/131920}.

\bibitem{Kratochvil77}
P. Kratochv\'il, Multisequences and measures,
in J. Nov\'ak (ed.), \emph{General Topology and Its Relations to Modern Analysis and Algebra~IV},
Part~B: Contributed Papers (Prague, 1976),
Society of Czechoslovak Mathematicians and Physicists, Prague, 1977, 237--244.
\url{https://dml.cz/handle/10338.dmlcz/700679}.

\bibitem{Fric05}
R. Fri\v c, Extension of measures: a categorical approach,
\emph{Math. Bohem.} \textbf{130} (2005), no.~4, 397--407.
\href{https://doi.org/10.21136/MB.2005.134212}{doi:10.21136/MB.2005.134212}.

\bibitem{BoccutoSambucini98}
A. Boccuto and A.~R. Sambucini, Abstract integration in convergence groups,
\emph{Atti Sem. Mat. Fis. Univ. Modena} \textbf{46} (1998), 315--333.
\url{https://sambucini.sites.dmi.unipg.it/artic/BCCSMB.pdf}.

\bibitem{Halmos}
P.~R. Halmos, \emph{Measure Theory}, Graduate Texts in Mathematics, vol.~18,
Springer-Verlag, New York, 1974, reprint of the 1950 edition.
\href{https://doi.org/10.1007/978-1-4684-9440-2}{doi:10.1007/978-1-4684-9440-2}.

\bibitem{Gaina63}
S. G\u{a}in\u{a}, Extension of vector measures,
\emph{Rev. Math. Pures Appl.} \textbf{8} (1963), no.~1, 151--154.

\bibitem{Kluvanek61}
I. Kluv\'anek, Contribution to the theory of vector measures (Russian),
\emph{Mat.-Fyz. \v Casopis} \textbf{11} (1961), no.~3, 173--191.
\url{https://dml.cz/handle/10338.dmlcz/126689}.

\bibitem{Kluvanek73}
I. Kluv\'anek, The extension and closure of vector measure,
in D.~H. Tucker and H.~B. Maynard (eds.),
\emph{Vector and Operator Valued Measures and Applications},
Academic Press, New York--London, 1973, 175--190.
\href{https://doi.org/10.1016/B978-0-12-702450-9.50024-5}{doi:10.1016/B978-0-12-702450-9.50024-5}.

\bibitem{Dinculeanu67}
N. Dinculeanu, \emph{Vector Measures},
International Series of Monographs in Pure and Applied Mathematics, vol.~95,
Pergamon Press, Oxford, 1967.
\href{https://doi.org/10.1016/C2013-0-07847-4}{doi:10.1016/C2013-0-07847-4}.

\bibitem{DiestelUhl}
J. Diestel and J.~J. Uhl, Jr., \emph{Vector Measures},
Mathematical Surveys, vol.~15, American Mathematical Society, Providence, RI, 1977.
\href{https://doi.org/10.1090/surv/015}{doi:10.1090/surv/015}.

\bibitem{DAndreaDeLuciaWright94}
A.~B. d'Andrea, P. de Lucia, and J.~D. Maitland Wright,
On Kalmbach measurability,
\emph{Appl. Math.} \textbf{39} (1994), no.~6, 445--447.
\href{https://doi.org/10.21136/AM.1994.134270}{doi:10.21136/AM.1994.134270}.

\bibitem{Avallone06}
A. Avallone, A. De Simone, and P. Vitolo,
Effect algebras and extensions of measures,
\emph{Boll. Unione Mat. Ital. Sez. B} (8) \textbf{9-B} (2006), no.~2, 423--444.
\url{https://www.bdim.eu/item?id=BUMI_2006_8_9B_2_423_0}.

\bibitem{Barbieri09}
G. Barbieri, An extension theorem for modular measures on effect algebras,
\emph{Czechoslovak Math. J.} \textbf{59} (2009), no.~3, 707--719.
\href{https://doi.org/10.1007/s10587-009-0042-2}{doi:10.1007/s10587-009-0042-2}.

\bibitem{Barbieri14}
G. Barbieri, The extension of measures on $D$-lattices,
\emph{Fuzzy Sets and Systems} \textbf{244} (2014), 123--129.
\href{https://doi.org/10.1016/j.fss.2013.07.002}{doi:10.1016/j.fss.2013.07.002}.

\bibitem{Avallone16}
A. Avallone, A. De Simone, and P. Vitolo,
Extension of measures on pseudo-$D$-lattices,
\emph{Math. Slovaca} \textbf{66} (2016), no.~2, 421--438.
\href{https://doi.org/10.1515/ms-2015-0147}{doi:10.1515/ms-2015-0147}.
\bibitem{DvurecenskijVetterlein01}
A. Dvure\v{c}enskij and T. Vetterlein,
Pseudoeffect algebras. I. Basic properties,
\emph{Internat. J. Theoret. Phys.} \textbf{40} (2001), 685--701.
\href{https://doi.org/10.1023/A:1004192715509}{doi:10.1023/A:1004192715509}.

\bibitem{AvalloneVitolo12}
A. Avallone and P. Vitolo,
Pseudo-$D$-lattices and topologies generated by measures,
\emph{Ital. J. Pure Appl. Math.} \textbf{29} (2012), 25--42.
\url{https://journals.uniurb.it/index.php/ijpam/article/view/6063}.

\bibitem{AvalloneVitolo12Uniformities}
A. Avallone and P. Vitolo,
Lattice uniformities on pseudo-$D$-lattices,
\emph{Math. Slovaca} \textbf{62} (2012), no.~6, 1019--1044.
\href{https://doi.org/10.2478/s12175-012-0062-5}{doi:10.2478/s12175-012-0062-5}.

\bibitem{Weber18}
H. Weber, Extension of modular functions and measures,
\emph{Ann. Mat. Pura Appl.} (4) \textbf{197} (2018), no.~3, 761--774.
\href{https://doi.org/10.1007/s10231-017-0703-y}{doi:10.1007/s10231-017-0703-y}.
\bibitem{Birkhoff48}
G. Birkhoff, \emph{Lattice Theory}, revised edition,
American Mathematical Society Colloquium Publications, vol.~25,
American Mathematical Society, New York, 1948.


\end{thebibliography}
\endgroup
\end{document}